\newtheorem{theorem}[equation]{Theorem}
\newtheorem{multvNtheorem}[equation]{Multiplicator von Neumann Theorem}
\newtheorem{structuretheorem}[equation]{Structure Theorem}
\newtheorem{inversetheorem}[equation]{Inverse Theorem}
\newtheorem{lemma}[equation]{Lemma}
\newtheorem{proposition}[equation]{Proposition}
\newtheorem{corollary}[equation]{Corollary}
\theoremstyle{definition}
\newtheorem{definition}[equation]{Definition}
\theoremstyle{remark}
\newtheorem{remark}[equation]{Remark}
\newtheorem{example}[equation]{Example}
\newcommand{\veps}{\varepsilon}
\newcommand*{\Gb}[1][]{G_{\bullet #1}}
\newcommand*{\Hb}[1][]{H_{\bullet #1}}
\newcommand*{\tHb}[1][]{\tilde H_{\bullet #1}}
\newcommand{\PolyDomain}{\Gamma} 
\newcommand*{\poly}[1][\Gb]{P(\PolyDomain,#1)}
\newcommand*{\polyn}[1][\Gb]{P_{0}(\PolyDomain,#1)}
\newcommand*{\VIP}[1][\Gb]{\mathrm{VIP}(#1)}
\newcommand{\EncloseInParentheses}[1]{
\StrLen{#1}[\arglen]
\ifthenelse{1 < \arglen}{(#1)}{#1}
}
\newcommand*{\PE}[2]{{#1}^{\otimes \EncloseInParentheses{#2}}}
\newcommand*{\HKZ}{\mathcal{Z}}
\newcommand*{\roots}[2][]{\sqrt[#1]{#2}}
\newcommand{\nsubg}{\trianglelefteq}
\newcommand{\subg}{\leq}
\newcommand*{\N}{\mathbb{N}}
\newcommand*{\Z}{\mathbb{Z}}
\newcommand*{\R}{\mathbb{R}}
\newcommand*{\dif}{\mathrm{d}}
\newcommand*{\inv}{^{-1}}
\newcommand*{\dom}{\mathrm{dom}\,}
\newcommand*{\E}{\mathbb{E}}
\newcommand*{\from}{\colon}
\def\<{\left\langle}
\def\>{\right\rangle}
\newcommand*{\Fin}{\mathcal{F}}
\newcommand*{\Fine}{\Fin_{\emptyset}}
\newcommand*{\gb}{g_{\bullet}}
\newcommand*{\hb}{h_{\bullet}}
\newcommand*{\Fb}[1][]{F_{\bullet #1}}
\newcommand*{\Wb}[1][]{W_{\bullet #1}}
\newcommand*{\FE}{\mathrm{FE}}
\newcommand*{\AP}{\mathrm{AP}}
\DeclareMathOperator*{\IPlim}{IP-lim}
\DeclareMathOperator*{\wIPlim}{w-IP-lim}
\DeclareMathOperator{\im}{im}
\DeclareMathOperator{\fix}{fix}
\DeclareMathOperator{\rank}{rank}
\DeclareMathOperator{\lin}{lin}
\DeclareMathOperator{\Ad}{Ad}
\newcommand*{\sD}{\tilde D} 
\newcommand*{\rD}{\hat D}
\newcommand*{\nint}[1]{\lfloor #1 \rceil}
\newcommand*{\dint}[1]{\| #1 \|}
\newcommand*{\floor}[1]{\lfloor #1 \rfloor}
\newcommand*{\C}{\mathbb{C}}
\newcommand*{\T}{\mathbb{T}}
\newcommand*{\bfg}{\mathbf{g}}
\newcommand*{\bfh}{\mathbf{h}}
\newcommand*{\gf}{\bfg f}
\def\<{\left\langle}
\def\>{\right\rangle}
\DeclareMathOperator{\complexity}{cplx}
\newcommand*{\cplx}{\mathbf{c}}
\newcommand*{\vd}{d}
\newcommand*{\Av}[2][\bfg]{\mathcal{A}^{#1}_{#2}}
\newcommand*{\Q}{\mathbb{Q}}
\newcommand*{\m}{\mathrm{m}}
\newcommand*{\id}{\mathrm{id}}
\newcommand{\aveN}{\frac{1}{N}\sum_{n=1}^N}
\newcommand{\Fo}{\Phi}
\newcommand{\ave}[2]{\E_{#1 \in #2}}
\newcommand{\aveFN}{\ave{n}{\Fo_N}}
\newcommand{\aveFMn}{\ave{n}{\Fo_{M}}}
\newcommand{\aveFn}{\ave{n}{\Fo_N}}
\newcommand*{\RTT}[1]{$\mathrm{RTT}(#1)$}
\newcommand{\ul}[1]{\underline{#1}}
\newcommand{\ol}[1]{\overline{#1}}
\newcommand{\ud}{\overline{d}}
\newcommand{\ld}{\underline{d}}
\newcommand{\AG}{\mathcal{G}} 
\newcommand{\lag}{\mathfrak{g}}
\newcommand{\comm}{\mathrm{comm}}
\newcommand*{\cube}[2]{#1^{\square}_{#2}}
\newcommand*{\cuben}[2]{#1^{\tilde\square}_{#2}}
\def\clap#1{\hbox to 0pt{\hss#1\hss}}
\def\mathclap{\mathpalette\mathclapinternal}
\def\mathclapinternal#1#2{%
  \clap{$\mathsurround=0pt#1{#2}$}}
\def\cleardoublepage{\clearpage\if@twoside \ifodd\c@page\else
    \hbox{}
    \thispagestyle{empty}
    \newpage
    \if@twocolumn\hbox{}\newpage\fi\fi\fi}
\makeatother \clearpage{\pagestyle{plain}\cleardoublepage}
\begin{document}
\frontmatter
\begin{titlepage}
\begin{center}
\textbf{\huge Ergodic theorems for polynomials\\ in nilpotent groups}

\vspace{0.1\textheight}
\selectlanguage{dutch}
\textsc{academisch proefschrift}
\vspace{0.1\textheight}

ter verkrijging van de graad van doctor

aan de Universiteit van Amsterdam

op gezag van de Rector Magnificus

prof.\ dr.\ D.C.\ van den Boom

ten overstaan van een door het college voor promoties ingestelde

commissie, in het openbaar te verdedigen in de Agnietenkapel

op donderdag 12 september 2013, te 14:00 uur

\vspace{0.1\textheight}
door
\vspace{0.1\textheight}

{\large Pavel Zorin-Kranich}

\vspace{0.1\textheight}
geboren te St.\ Petersburg, Rusland
\selectlanguage{american}
\end{center}
\end{titlepage}
\clearpage
\thispagestyle{empty}
\selectlanguage{dutch}
\subsubsection*{Promotiecommissie}
\begin{tabular}{ll}
Promotor: & prof.\ dr.\ A.J.\ Homburg\\[1ex]
Copromotor: & dr.\ T.\ Eisner\\[1ex]
Overige leden:
& prof.\ dr.\ E.\ Lesigne\\
& prof.\ dr.\ R.\ Nagel\\
& dr.\ H.\ Peters\\
& prof.\ dr.\ C.\ Thiele\\
& prof.\ dr.\ J.J.O.O.\ Wiegerinck
\end{tabular}

\vspace{1ex}\noindent
Faculteit der Natuurwetenschappen, Wiskunde en Informatica
\selectlanguage{american}

\cleardoublepage
\phantomsection
\addcontentsline{toc}{chapter}{Contents}
\tableofcontents
\chapter{Introduction}
Furstenberg's groundbreaking ergodic theoretic proof \cite{MR0498471} of Szemer\'edi's theorem on arithmetic progressions in dense subsets of integers suggested at least two possible directions for generalization.
One is connected with earlier work of Furstenberg and consists in investigating actions of groups other than $\Z$.
The other looks at polynomial, rather than linear, sequences.
Indeed, in the same article Furstenberg \cite[Theorem 1.2]{MR0498471} gave a short qualitative proof of Sárközy's theorem \cite{MR0466059} on squares in difference sets.

Furstenberg's proof of the multiple recurrence theorem involves three main steps: a structure theorem for measure-preserving systems that exhibits dichotomy between (relative) almost periodicity and (relative) weak mixing, a coloring argument that deals with the almost periodic part of the structure, and a multiple weak mixing argument dealing with the weakly mixing part.
It was the structure theorem whose generalization to $\Z^{d}$-actions required most of the additional work in the multiple recurrence theorem for commuting transformations due to Furstenberg and Katznelson \cite{MR531279}.
The coloring argument carried through using Gallai's multidimensional version of van der Waerden's theorem.
Also the multiple weak mixing argument worked similarly to the case of $\Z$-actions, namely by induction on the number of terms in the multiple ergodic average.

However, in the polynomial case, induction on the number of terms does not seem to work.
This difficulty has been resolved by Bergelson \cite{MR912373} who has found an appropriate induction scheme, called \emph{PET induction}.
Later, jointly with Leibman \cite{MR1325795}, he completed his program by proving a polynomial multiple recurrence theorem by Furstenberg's method, using a polynomial van der Waerden theorem as the main new ingredient.

This is when polynomials in nilpotent groups appeared on the stage.
Both the coloring and the multiple weak mixing steps in Furstenberg's framework involve polynomial maps and PET induction when carried out for nilpotent groups, even if one is ultimately interested in linear sequences, see \cite{MR1650102}.
A similar phenomenon occurs in Walsh's recent proof of norm convergence of nilpotent multiple ergodic averages \cite{MR2912715} (which we extended to arbitrary amenable groups in \cite{arxiv:1111.7292}).
Thus polynomials in nilpotent groups, with Leibman's axiomatization \cite{MR1910931}, seem indispensable for understanding multiple recurrence for nilpotent group actions.

While the work mentioned above concerns Ces\`aro averages of multicorrelation sequences, there are by now at least two alternative approaches to the study of asymptotic behavior of dynamical systems: using IP-limits or using limits along idempotent ultrafilters.
It is the former approach on which we concentrate.
The proof of the IP multiple recurrence theorem due to Furstenberg and Katznelson \cite{MR833409} parallels Furstenberg's earlier averaging arguments, although rigidity replaces almost periodicity, mild mixing weak mixing, and the Hales-Jewett theorem the van der Waerden theorem.
A direct continuation of their work in the polynomial direction has been carried out by Bergelson and McCutcheon \cite{MR1692634}, mixing the techniques outlined above and a polynomial extension of the Hales-Jewett theorem proved earlier by Bergelson and Leibman \cite{MR1715320}.

One of our main results is a nilpotent extension of the IP multiple recurrence theorem \cite{arxiv:1206.0287}.
We take a somewhat novel approach to the structure theorem, obtaining dichotomy between compactness and mixing not on the level of the acting group, but on the level of the group of polynomials with values in the acting group.
We have also found it necessary to prove a new coloring result, sharpening the nilpotent Hales-Jewett theorem due to Bergelson and Leibman \cite{MR1972634}.
On the other hand, the mixing part is handled in essentially the usual way using PET induction.
As remarked earlier, this method compels us to deal with polynomial mappings.
Our arguments heavily rely on an efficient axiomatization of IP-polynomials along the lines of Leibman's work, but incorporating some more recent ideas.

Another reason to study polynomial, rather than linear, sequences in nilpotent groups comes from quantitative equidistribution theory on nilmanifolds (that is, compact homogeneous spaces of nilpotent Lie groups).
Nilmanifolds play an important role in the theory of multiple ergodic averages \cite{MR2150389}, where one is interested in linear orbits of the form $(g^{n}x)$, where $x$ is a point in the nilmanifold and $g$ an element of the structure group.
An obstacle for establishing results that are uniform in all such orbits is the fact that $g$ is drawn from the possibly non-compact structure group.
This can be circumvented by considering polynomial orbits, since every linear orbit on a nilmanifold can be represented as a polynomial orbit with coefficients that come from fixed compact subsets of the structure group.
This is an important ingredient in the proof of the quantitative equidistribution theorem of Green and Tao \cite{MR2877065}.

We review this circle of ideas in order to motivate our proof of the uniform extension of the Wiener-Wintner theorem for nilsequences.
This is a result about universally good weights for the pointwise ergodic theorem, that is, sequences $(a_{n})$ such that for every ergodic measure-preserving system $(X,T)$ and every $f\in L^{\infty}(X)$ the averages
\[
\aveN a_{n} T^{n}f
\]
converge as $N\to\infty$ pointwise almost everywhere.
The Wiener-Wintner theorem for nilsequences \cite[Theorem 2.22]{MR2544760} states that nilsequences are universally good weights, the full measure set on which convergence holds being independent of the nilsequence.
We show that convergence in this result is in fact uniform over a class of nilsequences of bounded complexity provided that $f$ is orthogonal to a certain nilfactor of $(X,T)$ (this is joint work with T.~Eisner \cite{arxiv:1208.3977}).
The explicit description of a full measure set on which the above averages converge also allows us to deduce a version of the Wiener-Wintner theorem for non-ergodic systems (note that an appeal to the ergodic decomposition does not suffice for this purpose).

An opposite extreme to nilsequences in the class of universally good weights for the pointwise ergodic theorem are the random weights provided by Bourgain's return times theorem \cite{MR1557098}.
This result has been generalized to certain multiple ergodic averages by Rudolph \cite{MR1489899} using the machinery of joinings.
In a different direction, a Wiener-Wintner type extension of the return times theorem has been obtained by Assani, Lesigne, and Rudolph \cite{MR1357765} using the Conze-Lesigne algebra.
This suggested to attack the multiple term return times theorem using Host-Kra structure theory, which we do in the last chapter.
This leads us to a joint extension \cite{arxiv:1210.5202} of all aforementioned weighted pointwise ergodic theorems, in which we also identify characteristic factors.
The proof involves a version of Bourgain's orthogonality criterion valid for arbitrary tempered F\o{}lner sequences \cite{arxiv:1301.1884}.

\section*{Acknowledgments}
\addcontentsline{toc}{section}{Acknowledgments}
I owe a large part of my mathematical upbringing to Rainer Nagel and Fulvio Ricci and take this opportunity to express my gratitude.
In connection with this thesis, I thank Vitaly Bergelson, Tanja Eisner, and Nikos Frantzikinakis for asking questions that initiated the research reflected here and for patient explanations that provided me with helpful tools.

This thesis was partially funded by The Netherlands Organisation for Scientific Research
\selectlanguage{dutch}(Nederlandse Organisatie voor Wetenschappelijk Onderzoek).
\selectlanguage{american}

\section*{Summary}
\addcontentsline{toc}{section}{Summary}
In Chapter~\ref{chap:groups} we extend Leibman's theory of polynomials in nilpotent groups \cite{MR1910931} to IP-polynomials.

In Chapter~\ref{chap:mean} we extend Walsh's nilpotent multiple mean ergodic theorem \cite{MR2912715} to polynomial actions of arbitrary amenable groups.

In Chapter~\ref{chap:recurrence} we sharpen the topological nilpotent IP multiple recurrence theorem of Bergelson and Leibman \cite{MR1972634}, prove a nilpotent analog of an IP polynomial ergodic theorem of Bergelson, H\aa{}land Knutson, and McCutcheon \cite{MR2246589}, and use these results to prove a joint extension of the IP polynomial multiple recurrence theorem of Bergelson and McCutcheon \cites{MR1692634,MR2151599} and Leibman's nilpotent polynomial multiple recurrence theorem \cite{MR1650102}.

In Chapter~\ref{chap:fourier} we review the proof of Leibman's orbit closure theorem \cite{MR2122919} due to Green and Tao \cite{MR2877065} and use their ideas to prove a uniform extension of the Wiener-Wintner theorem for nilsequences \cite{MR2544760}.

In Chapter~\ref{chap:RTT} we extend Bourgain's return times theorem \cite{MR1557098} to arbitrary locally compact second countable amenable groups and prove a joint extension of Rudolph's multiple term return times theorem \cite{MR1489899}, the Wiener-Wintner return times theorem \cite{MR1357765}, and the Wiener-Wintner theorem for nilsequences.

\selectlanguage{dutch}
\section*{Samenvatting}
\addcontentsline{toc}{section}{Samenvatting}
Furstenbergs baanbrekende ergodisch-theoretische bewijsvoering van Szemer\'edi's stelling over rekenkundige rijen in grote deelverzamelingen van gehele getallen suggereert tenminste twee mogelijkheden tot generalisatie.
E\'en daarvan hangt samen met eerder werk van Furstenberg en beschouwt werkingen van andere groepen dan $\Z$.
De andere richting beschouwt polynomiale, in plaats van lineaire, rijen.

Het bewijs van Furstenberg kent drie stappen: een structuurstelling voor maatbewarende afbeeldingen die een dichotomie tussen bijna-periodiciteit en zwak-mixing geeft, een kleuring-argument voor bijna-periodiciteit, en een meervoudig-zwak-mixing-argument dat zwak-mixing behandelt.
Het was de structuurstelling waarvoor de generalisatie naar $\Z^d$-acties het meeste extra werk vergde in de meervoudige terugkeerstelling voor commuterende transformaties.
Het kleuring-argument werd gegeneraliseerd met Gallai's meerdimensionale versie van van der Waerden's stelling.
Het meervoudig-zwak-mixing argument werkt analoog voor het geval van $\Z$-acties, namelijk met een inductie op het aantal termen in de meervoudige ergodische gemiddelden.

Voor polynomiale rijen lijkt zo'n inductie op het aantal termen niet te werken. 
Bergelson vond een oplossing met een geschikt inductieschema dat PET-inductie wordt genoemd.
Samen met Leibman voltooide hij het programma voor polynomiale rijen met de methode van Furstenberg, met een polynomiale van der Waerdenstelling als nieuw ingredi\"ent.

Deze ontwikkelingen gaven aanleiding tot de studie van polynomen in nilpotente groepen. 
Polynomen in nilpotente groepen zijn onontbeerlijk voor de studie van meervoudige terugkeerstellingen voor werkingen van nilpotente groepen.

Een alternatieve benadering voor de studie van asymptotisch gedrag van dynamische systemen maakt gebruik van IP-limieten. 
Het bewijs van de IP-meervoudige terugkeerstelling door Furstenberg en Katznelson loopt parallel
aan Furstenbergs eerdere argumenten.

De resultaten in dit proefschrift sluiten aan bij deze cirkel aan idee\"en.  
In Hoofdstuk~\ref{chap:groups} wordt er een analogon van Leibman's theorie van polynomen in nilpotente groepen voor IP-polynomen opgezet.
In Hoofdstuk~\ref{chap:mean} wordt Walsh's meervoudige ergodische stelling tot polynomiale werkingen van middelbare groepen uitgebreid.
In Hoofdstuk~\ref{chap:recurrence} wordt een gemeenschappelijke uitbreiding van zowel de IP-polynomiale als ook de nilpotente meervoudige terugkeerstelling bewezen.
In Hoofdstuk~\ref{chap:fourier} wordt een uniforme versie van de stelling van Wiener--Wintner voor nilrijen bewezen.
In Hoofdstuk~\ref{chap:RTT} wordt een versie van de terugkeertijdenstelling bewezen die zowel de meervoudige, de Wiener--Wintner, als ook de nilrij-uitbreiding omvat.
\selectlanguage{american}

\mainmatter
\chapter{Nilpotent groups}
\label{chap:groups}
\section{General facts}
Here we present in a self-contained way everything that we will need to know about nilpotent groups.
\subsection{Commutators and filtrations}
We use the convention $[a,b]=a\inv b\inv ab$ for commutators and $a^{b}=b\inv a b$ for conjugation.
The following identities, which hold in arbitrary groups and are due to Hall~\cite{0007.29102} (see also \cite[p.~107]{MR0088496}), are fundamental for dealing with commutators efficiently.
\begin{equation}
\label{eq:a-bc}
[a,bc]=[a,c][a,b][[a,b],c]
\end{equation}
\begin{equation}
\label{eq:ab-c}
[ab,c]=[a,c][[a,c],b][b,c]
\end{equation}
\begin{equation}
\label{eq:group-jacobi-identity}
[[a,b],c^{a}] [[c,a],b^{c}] [[b,c],a^{b}] = \id
\end{equation}
Note also for future use the identity
\begin{align}
[xy,uv]
&=
[x,u] [x,v] [[x,v],[x,u]] [[x,u],v] \notag\\
&\quad\cdot [[x,v] [x,u] [[x,u],v],y] \label{eq:ab-cd}\\
&\quad\cdot [y,v] [y,u] [[y,u],v]. \notag
\end{align}
Given subsets $A,B$ of a group $G$ we denote by $[A,B]$ the subgroup generated by the elements $[a,b]$, where $a\in A$, $b\in B$, by $AB$ the set of elements of the form $ab$, $a\in A$, $b\in B$, and by $\<A\>$ the subgroup generated by the elements of $A$.
The subgroup relation is denoted by ``$\subg$'' and the normal subgroup relation by ``$\nsubg$''.
Note that if $A,B \subg G$ are subgroups and one of them is normal, then we have $AB=\<AB\>$.
\begin{theorem}[see e.g.\ {\cite[Theorem 5.2]{MR0207802}}]
\label{thm:group-jacobi}
Let $G$ be a group and $A,B,C \nsubg G$ be normal subgroups.
Then
\[
[[A,B],C] \subg [[C,A],B] [[B,C],A].
\]
\end{theorem}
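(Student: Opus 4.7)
The plan is to deduce the theorem from the group-theoretic Jacobi identity \eqref{eq:group-jacobi-identity}, which rewrites
\[
[[a,b],c^{a}] = [[c,a],b^{c}]^{-1}[[b,c],a^{b}]^{-1}
\]
for arbitrary $a,b,c\in G$. The only real idea is a clever change of variables: I want the left-hand side to be $[[a,b],c]$ rather than $[[a,b],c^{a}]$, so I will apply the identity with $c$ replaced by $c':=ac a\inv$. Normality of $C$ keeps $c'\in C$, and one checks $(c')^{a}=a\inv c' a=c$.

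First I would observe that the subgroups $[[C,A],B]$ and $[[B,C],A]$ are normal in $G$, since the commutator of two normal subgroups is normal, and hence the product
\[
N := [[C,A],B]\,[[B,C],A]
\]
is itself a (normal) subgroup of $G$; in particular the order of the two factors in $N$ is irrelevant. This is the reason the conclusion can be written as a product without specifying an order of elements.

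Next, for arbitrary $a\in A$, $b\in B$, $c\in C$, the substitution above yields
\[
[[a,b],c] = \bigl([[c',a],b^{c'}]\,[[b,c'],a^{b}]\bigr)^{-1}.
\]
Since $c'\in C$ and $A$ is normal, $[c',a]\in[C,A]$; since $B$ is normal, $b^{c'}\in B$, so $[[c',a],b^{c'}]\in[[C,A],B]$. Symmetrically $[[b,c'],a^{b}]\in[[B,C],A]$ using normality of $A$. Both factors lie in $N$, so $[[a,b],c]\in N$.

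Finally I would upgrade this from the generating commutators $[a,b]$ to arbitrary $d\in[A,B]$. Using the identity \eqref{eq:ab-c},
\[
[d_{1}d_{2},c] = [d_{1},c]\,[[d_{1},c],d_{2}]\,[d_{2},c],
\]
together with normality of $N$ in $G$, an easy induction on word length (with the analogous observation for inverses) shows that $[d,c]\in N$ for every $d\in[A,B]$ and $c\in C$. Hence the generators of $[[A,B],C]$ all lie in $N$, which gives the desired inclusion. The only genuinely delicate point is the substitution $c\mapsto c^{a\inv}$; once that is spotted, everything else is bookkeeping with normality.
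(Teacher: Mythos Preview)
Your proof is correct and follows essentially the same route as the paper: reduce to showing $[[a,b],c]\in N$ for generators via a commutator expansion, then invoke the Hall--Witt identity \eqref{eq:group-jacobi-identity} together with normality of $C$ to handle the conjugated argument. The only cosmetic differences are that the paper phrases the substitution as ``$C^{a}=C$'' rather than writing out $c'=aca^{-1}$, and it uses the two-term variant \eqref{eq:ab-c-2} instead of \eqref{eq:ab-c} for the induction on word length.
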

\begin{proof}
Notice the following version of \eqref{eq:ab-c}:
\begin{equation}
\label{eq:ab-c-2}
[ab,c] = [a^{b},c^{b}] [b,c]
\end{equation}
By this identity it suffices to show that for every $a\in A$, $b\in B$, and $c\in C$ the commutator $[[a,b],c]$ is contained in the group on the right.
Since $C^{a} = C$, this follows from \eqref{eq:group-jacobi-identity}.
\end{proof}
\begin{definition}
Let $G$ be a group.
The \emph{lower central series} of $G$ is the sequence of subgroups $G_{i}$, $i\in\N$, defined by $G_{0}=G_{1}:=G$ and $G_{i+1}:=[G_{i},G]$ for $i\geq 1$.
The group $G$ is called \emph{nilpotent (of nilpotency class $d$)} if $G_{d+1}=\{\id\}$.

A \emph{prefiltration} $\Gb$ is a sequence of nested groups
\index{prefiltration}
\begin{equation}
\label{eq:prefiltration}
G_{0}\geq G_{1} \geq G_{2} \geq \dots
\quad\text{such that}\quad
[G_{i},G_{j}]\subset G_{i+j}
\quad\text{for any }i,j\in\N.
\end{equation}
A \emph{filtration} (on a group $G$) is a prefiltration in which $G_{0}=G_{1}$ (and $G_{0}=G$).
\index{filtration}
\end{definition}
We will frequently write $G$ instead of $G_{0}$.
Conversely, most groups $G$ that we consider are endowed with a prefiltration $\Gb$ such that $G_{0}=G$.
A group may admit several prefiltrations, and we usually fix one of them even if we do not refer to it explicitly.

A prefiltration is said to have \emph{length} $d\in\N$ if $G_{d+1}$ is the trivial group and length $-\infty$ if $G_{0}$ is the trivial group.
Arithmetic for lengths is defined in the same way as conventionally done for degrees of polynomials, i.e.\ $d-t=-\infty$ if $d<t$.

\begin{lemma}[see e.g.~{\cite[Theorem 5.3]{MR0207802}}]
\label{lem:lcs-filtration}
Let $G$ be a group.
Then the lower central series $\Gb$ is a filtration.
\end{lemma}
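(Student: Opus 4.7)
My plan is to verify the three conditions defining a filtration on $\Gb$: (i) $G_{0}=G_{1}=G$ (immediate from the definition of the lower central series), (ii) nesting $G_{i+1}\subg G_{i}$, and (iii) the commutator containment $[G_{i},G_{j}]\subg G_{i+j}$. The heart of the argument is (iii); (ii) will be a cheap consequence of normality.

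As a preliminary step I would show, by induction on $i$, that every $G_{i}$ is normal in $G$. The cases $i=0,1$ are trivial. For the inductive step, $G_{i+1}=[G_{i},G]$ is generated by the commutators $[a,b]$ with $a\in G_{i}$, $b\in G$; conjugating by $g\in G$ gives $g\inv[a,b]g=[g\inv a g,g\inv b g]$, and by the inductive normality of $G_{i}$ the first entry still lies in $G_{i}$, so the conjugate lies in $[G_{i},G]=G_{i+1}$. Normality of each $G_{i}$ immediately implies (ii), because $[a,b]=a\inv(b\inv a b)\in G_{i}$ whenever $a\in G_{i}$.

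For (iii) I would induct on $j$ with the statement ``$[G_{i},G_{j}]\subg G_{i+j}$ for every $i\in\N$.'' The case $j=0$ is the normality observation $[G_{i},G]\subg G_{i}$ just established, and the case $j=1$ is the definition $G_{i+1}=[G_{i},G]$. For $j\geq 2$, write $G_{j}=[G_{j-1},G]$ and apply Theorem~\ref{thm:group-jacobi} to the normal subgroups $A=G_{j-1}$, $B=G$, $C=G_{i}$:
\[
[G_{i},G_{j}]=[[G_{j-1},G],G_{i}]\subg [[G_{i},G_{j-1}],G]\,[[G,G_{i}],G_{j-1}].
\]
By the inductive hypothesis applied with second index $j-1$, one has $[G_{i},G_{j-1}]\subg G_{i+j-1}$, so the first factor is contained in $[G_{i+j-1},G]=G_{i+j}$. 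The second factor is $[G_{i+1},G_{j-1}]$, again covered by the inductive hypothesis and contained in $G_{i+j}$. Hence $[G_{i},G_{j}]\subg G_{i+j}$.

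The only real obstacle is ensuring that Theorem~\ref{thm:group-jacobi} is applicable, which requires all three subgroups appearing in the bracket to be normal in $G$; this is why the normality induction must be carried out first. Once that is in place, the rest is a clean double use of the induction hypothesis.
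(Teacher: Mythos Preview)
Your proof is correct and follows essentially the same approach as the paper: first establish normality of each $G_{i}$ by induction, deduce nesting, and then prove $[G_{i},G_{j}]\subg G_{i+j}$ by induction on $j$ using the three subgroups lemma (Theorem~\ref{thm:group-jacobi}) together with two applications of the inductive hypothesis. The only differences are cosmetic---the paper indexes the inductive step as $j\mapsto j+1$ rather than your $j-1\mapsto j$, and treats normality as a separate preliminary rather than the $j=0$ case---but the argument is the same.
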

\begin{proof}
The fact that
\[
[G_{0},G_{i}] = [G_{i},G_{0}] \subset G_{i}
\]
is equivalent to $G_{i}$ being normal in $G$, and this is quickly established by induction on $i$.
This also shows that $G_{i+1} \subseteq G_{i}$ for all $i$.

It remains to show that
\[
[G_{i},G_{j}] \subseteq G_{i+j}
\quad \text{for } i,j\geq 1.
\]
To this end use induction on $j$.
For $j=1$ this follows by definition of $G_{i+1}$, so suppose that the above statement is known for $j$.
Then we have
\begin{multline*}
[G_{i},G_{j+1}]
= [G_{i},[G_{j},G_{1}]]
\subset [[G_{1},G_{i}],G_{j}] [[G_{j},G_{i}],G_{i}]\\
= [G_{i+1},G_{j}] [G_{i+j},G_{1}]
\subset G_{i+1+j}
\end{multline*}
by Theorem~\ref{thm:group-jacobi} and two applications of the inductive hypothesis.
\end{proof}
Let $G$ be a group.
A \emph{simple $n$-fold commutator (on $G$)} is an element of the form
\index{simple $n$-fold commutator}
\[
[[\dots [[g_{1},g_{2}],g_{3}]\dots,g_{n-1}],g_{n}],
\quad g_{1},\dots,g_{n}\in G.
\]
For brevity, we denote simple $n$-fold commutators by $[g_{1},\dots,g_{n}]$.
\begin{lemma}[see {\cite[Problem 5.3.3]{MR0207802}}]
\label{lem:gen-by-simple-comm}
Let $G$ be a group and $\Gb$ the lower central series on $G$.
Then, for every $n\geq 1$, the group $G_{n}$ is generated by the simple $n$-fold commutators on $G$.
\end{lemma}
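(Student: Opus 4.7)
The plan is to induct on $n$. The base case $n=1$ is immediate because a simple $1$-fold commutator is by definition an element of $G_1=G$. For the inductive step, assume $G_n$ is generated by simple $n$-fold commutators, and let $N$ denote the subgroup of $G$ generated by all simple $(n+1)$-fold commutators. A short induction on $n$ shows that every simple $n$-fold commutator lies in $G_n$, hence $N\leq G_{n+1}$, and the task reduces to the reverse inclusion $G_{n+1}\leq N$.

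The key preparatory observation is that $N\nsubg G$. For any $h\in G$, any simple $n$-fold commutator $c=[g_1,\ldots,g_n]$, and any $g\in G$, iterating the elementary identity $[a,b]^h=[a^h,b^h]$ yields
\[
[c,g]^h = [c^h,g^h] = [[g_1^h,\ldots,g_n^h],g^h],
\]
which is again a simple $(n+1)$-fold commutator. Hence all $G$-conjugates of the generators of $N$ lie in $N$.

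Now $G_{n+1}=[G_n,G]$ is generated by commutators $[x,g]$ with $x\in G_n$ and $g\in G$. By the inductive hypothesis, write $x=c_1^{\epsilon_1}\cdots c_r^{\epsilon_r}$ for simple $n$-fold commutators $c_i$ and $\epsilon_i\in\{\pm 1\}$. Applying \eqref{eq:ab-c-2} recursively splits
\[
[x,g] = [c_1^{\epsilon_1},g]^{c_2^{\epsilon_2}\cdots c_r^{\epsilon_r}} \cdot [c_2^{\epsilon_2}\cdots c_r^{\epsilon_r},g]
\]
and reduces the problem, using normality of $N$, to showing that $[c^{\pm 1},g']\in N$ for any simple $n$-fold commutator $c$ and any $g'\in G$. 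The positive case is immediate from the definition of $N$. For the inverse case, apply \eqref{eq:ab-c-2} with $a=c$, $b=c^{-1}$ to the trivial identity $[cc^{-1},g']=\id$:
\[
\id = [c^{c^{-1}},(g')^{c^{-1}}]\cdot [c^{-1},g'] = [c,(g')^{c^{-1}}]\cdot [c^{-1},g'],
\]
so $[c^{-1},g']=[c,(g')^{c^{-1}}]^{-1}$, the inverse of a simple $(n+1)$-fold commutator, which lies in $N$.

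The main obstacle is the bookkeeping forced by the fact that simple commutators are not closed under products or inverses, so a naive splitting of $[x,g]$ produces extra conjugation factors at each step. Setting up the auxiliary subgroup $N$ and verifying its normality in advance disposes of these factors uniformly, after which \eqref{eq:ab-c-2} does the remaining work cleanly.
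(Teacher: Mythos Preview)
Your proof is correct and follows essentially the same route as the paper: both induct on $n$, use \eqref{eq:ab-c-2} to reduce $[x,g]$ to factors of the form $[c^{\pm 1},g']$ with $c$ a simple $n$-fold commutator, and handle the inverse case via $[c^{-1},g']=[c,(g')^{c^{-1}}]^{-1}$. The only cosmetic difference is that you set up the subgroup $N$ and prove its normality explicitly beforehand, whereas the paper absorbs the conjugation directly by noting that $(c^{\sigma})^{h}=(c^{h})^{\sigma}$ with $c^{h}$ again a simple $n$-fold commutator.
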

\begin{proof}
Use induction on $n$.
For $n=1$ the conclusion is trivial, so suppose that the conclusion is known for $n$.

The group $G_{n+1}$ is generated by commutators of the form $[a,b]$ with $a\in G_{n}$ and $b\in G$.
By the inductive hypothesis we have $a = \prod_{i=1}^{M} c_{i}^{\sigma_{i}}$, where $c_{i}$ are simple $n$-fold commutators on $G$ and $\sigma_{i}\in\{\pm 1\}$.

Using \eqref{eq:ab-c-2} and induction on $M$ we see that $[a,b]$ can be written as a product of elements of the form $[c^{\sigma},b]$, where $c$ is a simple $n$-fold commutator and $\sigma\in\{\pm 1\}$.
The commutator $[c,b]$ is clearly a simple $n+1$-fold iterated commutator, and $[c\inv,b] = [c,b^{c}]\inv$ is the inverse of a simple $n+1$-fold iterated commutator.
\end{proof}

\begin{lemma}[{\cite[Lemma 2.6]{MR2122920}}]
\label{lem:abelianization-generates}
Let $G$ be a nilpotent group and $H\leq G$ a subgroup such that $H[G,G]=G$.
Then $H=G$.
\end{lemma}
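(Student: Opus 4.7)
The plan is to prove by induction on $i\geq 2$ that $HG_i = G$, where $\Gb$ denotes the lower central series of $G$ (Lemma~\ref{lem:lcs-filtration}). The base case $i=2$ is precisely the hypothesis $H[G,G]=G$. Since $G$ is nilpotent, $G_{d+1}=\{\id\}$ for some $d$, so once the statement is known for all $i\geq 2$, taking $i=d+1$ yields $H = HG_{d+1} = G$.

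All content lies in the inductive step: assuming $HG_i=G$, I want to show $HG_{i+1}=G$, which reduces to $G_i\subseteq HG_{i+1}$. By Lemma~\ref{lem:gen-by-simple-comm}, $G_i$ is generated by simple $i$-fold commutators $[g_1,\dots,g_i]$, and since $G_{i+1}\nsubg G$, it suffices to place each such generator in $HG_{i+1}$. Using the inductive hypothesis, I write $g_j = h_j z_j$ with $h_j\in H$ and $z_j\in G_i$. The crux becomes to establish
\[
[g_1,\dots,g_i] \equiv [h_1,\dots,h_i] \pmod{G_{i+1}},
\]
because $[h_1,\dots,h_i]\in H$, being an iterated commutator of elements of $H$.

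I would prove this congruence by substituting $h_j z_j$ for $g_j$ one slot at a time, expanding with Hall's identities \eqref{eq:a-bc} and \eqref{eq:ab-c}. The guiding degree count, using $[G_a,G_b]\subseteq G_{a+b}$ from Lemma~\ref{lem:lcs-filtration}, is that each correction term produced either contains a $z_k\in G_i$ commuted against an element of $G$ (hence lies in $[G_i,G]=G_{i+1}$), or is a doubly-iterated bracket already lying in $G_{i+2}\subseteq G_{i+1}$. For example, at the leftmost slot \eqref{eq:ab-c} gives $[h_1 z_1, g_2] = [h_1,g_2]\cdot [[h_1,g_2],z_1]\cdot [z_1,g_2]$, whose last two factors lie in $G_{i+1}$; successive outer commutators $[\,\cdot\,,g_k]$ then absorb this $G_{i+1}$-correction by reapplying \eqref{eq:ab-c}. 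At an interior slot one uses \eqref{eq:a-bc} to peel $z_j$ off from $[\,\cdot\,,h_j z_j]$, with analogous bookkeeping.

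The main obstacle is precisely this substitution bookkeeping: keeping track of where each correction term produced by the Hall identities lives in the lower central series, and repeatedly using normality of $G_{i+1}$ in $G$ to commute such corrections aside. Once the degree accounting is in place, nothing beyond the nested containment $[G_a,G_b]\subseteq G_{a+b}$ is needed to finish.
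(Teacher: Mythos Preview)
Your proof is correct and follows essentially the same strategy as the paper's: both reduce to showing that simple $i$-fold commutators $[g_1,\dots,g_i]$ can be replaced by $[h_1,\dots,h_i]\in H$ after writing $g_j=h_jz_j$. The only difference is packaging: the paper inducts on the nilpotency class $d$ and passes to the quotient $G/G_d$, so that in the final step $G_d$ is \emph{central} and the identity $[g_1,\dots,g_d]=[h_1,\dots,h_d]$ holds exactly, with no Hall-identity bookkeeping needed; you instead induct on $i$ inside $G$ and establish the congruence $[g_1,\dots,g_i]\equiv[h_1,\dots,h_i]\pmod{G_{i+1}}$ directly. Your route is a shade more laborious but equally valid (and is in fact the content of Lemma~\ref{lem:simple-comm-of-prod}, which the paper proves just afterwards).
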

\begin{proof}
Use induction on the nilpotency class $d$ of $G$.
If $d=1$, then $[G,G]=\{\id\}$ and the conclusion holds trivially.
In the inductive step apply the induction hypothesis to $HG_{d}/G_{d}\leq G/G_{d}$.
This yields $HG_{d}=G$.
By Lemma~\ref{lem:gen-by-simple-comm}, the group $G_{d}$ is generated by the simple $d$-fold commutators on $G$.
Since $G_{d}$ is central and $G=HG_{d}$, every such commutator equals a simple $d$-fold commutator on $H$, so that $G_{d}\leq H$, and the conclusion follows.
\end{proof}

\subsection{Commensurable subgroups}
On filtered groups, simple iterated commutators behave like multilinear forms modulo higher order error terms.
\begin{lemma}
\label{lem:simple-comm-of-prod}
Let $G$ be a group and $\Gb$ be the lower central series.
Then we have
\[
[\prod_{i=1}^{m_{1}}g_{1,i},\dots,\prod_{i=1}^{m_{n}}g_{n,i}]
\equiv
\prod_{i_{1}=1}^{m_{1}}\dots \prod_{i_{n}=1}^{m_{n}}[g_{1,i_{1}},\dots,g_{n,i_{n}}] \mod G_{l_{1}+\dots+l_{n}+1}
\]
for any $n\geq 1$, $l_{k}\in\N$, $m_{k}\in\N$, and $g_{k,i}\in G_{l_{k}}$.
\end{lemma}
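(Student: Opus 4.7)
The plan is to proceed by induction on $n$, bootstrapping from the base case $n=2$, and relying throughout on the filtration property $[G_i,G_j]\subset G_{i+j}$ granted by Lemma~\ref{lem:lcs-filtration}, so that every commutator whose weight exceeds $l_1+\dots+l_n$ is automatically an error modulo $G_{l_1+\dots+l_n+1}$.

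\textbf{Base case $n=2$.} I would prove
\[
\bigl[\prod_{i=1}^{m_1}g_i,\prod_{j=1}^{m_2}h_j\bigr] \equiv \prod_{i=1}^{m_1}\prod_{j=1}^{m_2}[g_i,h_j] \pmod{G_{l_1+l_2+1}},
\quad g_i\in G_{l_1},\ h_j\in G_{l_2},
\]
by a double induction on $(m_1,m_2)$. The step on $m_2$ uses \eqref{eq:a-bc}: in $[a,bc]=[a,c][a,b][[a,b],c]$ with $a\in G_{l_1}$, $b\in G_{l_1+l_2-l_2}=G_{l_1}\cdot G_{l_2}\cdot\ldots$ wait — more cleanly, with $b$ a partial product of the $h_j$'s (hence in $G_{l_2}$), the triple commutator $[[a,b],c]$ lies in $[G_{l_1+l_2},G_{l_2}]\subset G_{l_1+2l_2}\subset G_{l_1+l_2+1}$ (the case $l_2=0$ is handled by the identity $G_0=G_1$ from the definition of the lower central series, which lets one assume $l_k\geq 1$). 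The step on $m_1$ is analogous using \eqref{eq:ab-c}. Reordering the factors $[g_i,h_j]$ into the desired product order is free modulo $G_{l_1+l_2+1}$, because $[G_{l_1+l_2},G_{l_1+l_2}]\subset G_{2(l_1+l_2)}\subset G_{l_1+l_2+1}$.

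\textbf{Inductive step $n-1\Rightarrow n$.} Set $A:=[\prod_{i_1}g_{1,i_1},\dots,\prod_{i_{n-1}}g_{n-1,i_{n-1}}]$ and $A':=\prod_{i_1,\dots,i_{n-1}}[g_{1,i_1},\dots,g_{n-1,i_{n-1}}]$. By the inductive hypothesis, $A=A'E$ with $E\in G_{l_1+\dots+l_{n-1}+1}$ and $A'\in G_{l_1+\dots+l_{n-1}}$. Applying \eqref{eq:ab-c} to $[A'E,\prod_{i_n}g_{n,i_n}]$, the tail factor $[E,\prod_{i_n}g_{n,i_n}]$ lies in $[G_{l_1+\dots+l_{n-1}+1},G_{l_n}]\subset G_{l_1+\dots+l_n+1}$, and the middle triple commutator is of even higher weight; hence
\[
[A,\prod_{i_n}g_{n,i_n}] \equiv [A',\prod_{i_n}g_{n,i_n}] \pmod{G_{l_1+\dots+l_n+1}}.
\]
Finally I invoke the $n=2$ case with the groups $G_{l_1+\dots+l_{n-1}}$ and $G_{l_n}$ to expand the remaining bracket as $\prod_{i_1,\dots,i_n}[g_{1,i_1},\dots,g_{n-1,i_{n-1}},g_{n,i_n}]$, with errors again absorbed modulo $G_{l_1+\dots+l_n+1}$; the product can be rearranged into the nested order prescribed by the statement because the simple $n$-fold commutators lie in $G_{l_1+\dots+l_n}$ and commute with each other modulo $G_{l_1+\dots+l_n+1}$.

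\textbf{Main obstacle.} The routine part is expanding and applying Hall's identities; the real bookkeeping is keeping every stray iterated commutator in a subgroup of weight $\geq l_1+\dots+l_n+1$. The delicate boundary cases are those where some $l_k=0$, which are dispatched by exploiting $G_0=G_1$ in the lower central series (so one may tacitly replace every $l_k$ by $\max(l_k,1)$), and ensuring that reorderings of the factors $[g_{1,i_1},\dots,g_{n,i_n}]$ — required to obtain the specific nested product on the right — cost only errors in $[G_{l_1+\dots+l_n},G_{l_1+\dots+l_n}]\subset G_{l_1+\dots+l_n+1}$.
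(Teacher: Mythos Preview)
Your proposal is correct and follows essentially the same approach as the paper: induction on $n$ with the $n=2$ case handled by double induction on $(m_1,m_2)$ via the identities \eqref{eq:a-bc}, \eqref{eq:ab-c}, and the inductive step passing from $[A,\cdot]$ to $[A',\cdot]$ by peeling off the error $E\in G_{l_1+\dots+l_{n-1}+1}$ and then applying the $n=2$ case. You are more explicit than the paper about the $l_k=0$ boundary case and about reordering the commutators, but these are genuine (if routine) points and your treatment of them is fine.
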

\begin{proof}
The case $n=1$ is trivial, and the case $n=2$ follows by induction on $m_{1}$ and $m_{2}$ using \eqref{eq:ab-c}, \eqref{eq:a-bc}, and Lemma~\ref{lem:lcs-filtration}.

Assume that the conclusion holds for $n=2$ and for some other value of $n$, then the conclusion for $n+1$ follows since
\[
[\prod_{i=1}^{m_{1}}g_{1,i},\dots,\prod_{i=1}^{m_{n+1}}g_{n+1,i}]
=
[\prod_{i_{1}=1}^{m_{1}}\dots \prod_{i_{n}=1}^{m_{n}}[g_{1,i_{1}},\dots,g_{n,i_{n}}] c, \prod_{i=1}^{m_{n+1}}g_{n+1,i}]
\]
for some $c\in G_{l_{1}+\dots+l_{n}+1}$ by the induction hypothesis, by \eqref{eq:ab-c}, and using the case $n=2$.
\end{proof}

We will use the above result to obtain some useful facts about finite index subgroups of nilpotent groups.
\begin{definition}
Let $G$ be a group and $H\subset G$.
We write
\[
\roots[r]{H} := \{g\in G : g^{r}\in H\}
\quad\text{and}\quad
\roots{H} := \bigcup_{r\in\N_{>0}} \roots[r]{H}.
\]
\end{definition}
The set $\roots{H}$ is called the \emph{closure of $H$} in \cite{MR1881925}.

Clearly, $G\subset\roots{H}$ is a necessary local condition for $H$ to be a finite index subgroup of $G$.
More interestingly, this condition is also locally sufficient, as will follow from the next result.
\begin{lemma}
Let $G$ be a nilpotent group with a finite generating set $F$.
Let also $H\leq G$ be a subgroup and assume $F\subset\roots{H}$.
Then $[G:H]<\infty$.
\end{lemma}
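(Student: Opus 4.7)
The plan is to induct on the nilpotency class $d$ of $G$, using the lower central series $\Gb$ of $G$ (which is a filtration by Lemma~\ref{lem:lcs-filtration}). By replacing $F$ with $F\cup F\inv$ (an element $f$ satisfies $f^r\in H$ iff $f\inv$ does) we may assume $F$ is symmetric. Choose a single integer $r$ such that $f^r\in H$ for every $f\in F$; this exists because $F$ is finite. For the base case $d=1$, the group $G$ is finitely generated abelian, the set of $r$-th powers $G^r$ is a subgroup, and $G^r$ is generated by $\{f^r : f\in F\}\subset H$, so $G^r\subg H$. Since $G/G^r$ is finitely generated abelian of exponent dividing $r$ it is finite, so $[G:H]\leq [G:G^r]<\infty$.

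For the inductive step, the quotient $G/G_{d}$ is nilpotent of class at most $d-1$ and is finitely generated by the image $\bar F$ of $F$; moreover $\bar f^{r}\in HG_{d}/G_{d}$ for every $\bar f\in\bar F$, so $\bar F\subset \roots{HG_{d}/G_{d}}$. The induction hypothesis yields $[G:HG_{d}]<\infty$. Since $G_{d}$ is central (as $[G_{d},G]=G_{d+1}=\{\id\}$), $HG_{d}$ is a subgroup and $HG_{d}/H\cong G_{d}/(H\cap G_{d})$, so it remains to establish $[G_{d}:H\cap G_{d}]<\infty$.

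To handle this last step I would exploit the fact that on a nilpotent group of class $d$ the simple $d$-fold commutator behaves as an honest multilinear map. By Lemma~\ref{lem:gen-by-simple-comm}, $G_{d}$ is generated by simple $d$-fold commutators $[g_{1},\dots,g_{d}]$ with $g_{i}\in G$; writing each $g_{i}$ as a word in $F$ and applying Lemma~\ref{lem:simple-comm-of-prod} (the congruence is an equality because $G_{d+1}=\{\id\}$) one sees that $G_{d}$ is already generated by the finite set of simple $d$-fold commutators on $F$. For any such commutator $c=[f_{1},\dots,f_{d}]$, expand each slot as the $r$-fold product $f_{k}^{r}=f_{k}\cdots f_{k}$ and apply Lemma~\ref{lem:simple-comm-of-prod} once more: modulo $G_{d+1}=\{\id\}$ and using that $G_{d}$ is abelian, the product collapses to
\[
[f_{1}^{r},\dots,f_{d}^{r}] = [f_{1},\dots,f_{d}]^{r^{d}} = c^{r^{d}}.
\]
The left-hand side lies in $H$ because each $f_{k}^{r}$ does, so every generator of $G_{d}$ has its $r^{d}$-th power in $H\cap G_{d}$. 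Thus $G_{d}/(H\cap G_{d})$ is a finitely generated abelian group of finite exponent, hence finite, finishing the induction.

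The main obstacle is exactly this last step: one must pass from the hypothesis that generators of $G$ have powers in $H$ to the analogous statement for generators of $G_{d}$, and this is where the multilinearity afforded by Lemma~\ref{lem:simple-comm-of-prod}, together with the centrality of $G_{d}$ used to upgrade the congruence to an equality, is indispensable.
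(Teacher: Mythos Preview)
Your proof is correct and follows essentially the same route as the paper's: induct on the nilpotency class, use Lemma~\ref{lem:gen-by-simple-comm} together with Lemma~\ref{lem:simple-comm-of-prod} to see that $G_{d}$ is generated by finitely many simple $d$-fold commutators $[f_{1},\dots,f_{d}]$ on $F$ whose $r^{d}$-th powers lie in $H$, and combine this with the inductive hypothesis applied to $G/G_{d}$. The only cosmetic differences are that the paper starts the induction at $d=0$ rather than $d=1$, and it first bounds $[HG_{d}:H]$ and then replaces $H$ by $HG_{d}$ before invoking the inductive hypothesis, whereas you reverse the order of these two steps.
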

\begin{proof}
Since $F$ is finite, we have in fact $F\subset\roots[r]{H}$ for some $r$.

We use induction on the nilpotency class $d$ of $G$.
If $d=0$, then the conclusion holds trivially.
So assume that the conclusion holds for $d-1$.

Let $\Gb$ be the lower central series of $G$.
Without loss of generality we may assume that $F$ is symmetric, that is, $F\inv = F$.
By Lemma~\ref{lem:gen-by-simple-comm}, the group $G_{d}$ is generated by the simple $d$-fold commutators on $G$.
By Lemma~\ref{lem:simple-comm-of-prod} and since $G_{d+1}$ is trivial, we may take these simple $d$-fold commutators on $F$.
By Lemma~\ref{lem:simple-comm-of-prod} again, we have
\[
[f_{1},\dots,f_{d}]^{r^{d}}
=
[f_{1}^{r},\dots,f_{d}^{r}]
\quad\text{for } f_{1},\dots,f_{d}\in F,
\]
and this is an element of $H$ by the assumption.

Since there are only finitely many simple $d$-fold commutators on $F$, and since the subgroup $G_{d}$ is central in $G$, this readily implies that $H$ has finite index in $HG_{d}$.
Hence, without loss of generality, we may replace $H$ by $HG_{d}$.
In particular, we may assume $G_{d}\subset H$.
The conclusion follows from the identity
\[
[G:H] = [G/G_{d}, H/G_{d}]
\]
and the induction hypothesis.
\end{proof}
The conclusion of this lemma need not hold for solvable groups.
Consider the semidirect product $G=\Z_{2} \ltimes \Z$ that is associated to the inversion action $\pi:\Z_{2} \curvearrowright \Z$ given by $\pi(\bar a)(b)=(-1)^{a}b$.
Consider the generating set $F=\{(\bar 1,0),(\bar 1,1)\}$.
Then $F$ consists of elements of order $2$, so the hypothesis of the lemma holds with $H$ being the trivial subgroup.
On the other hand, $[G:H]=\infty$.

Recall that two subgroups $H,H'\leq G$ are called \emph{commensurable} if $H\cap H'$ has finite index both in $H$ and $H'$.
\index{commensurable subgroups}
The \emph{commensurator} $\comm_{G}(H)$ of a subgroup $H\leq G$ is the set of all $g\in G$ such that $H$ and $gHg\inv$ are commensurable.
\index{commensurator}

\begin{corollary}
\label{cor:finite-ext}
Let $G$ be a nilpotent group and $H\leq G$ a finitely generated subgroup.
Then $\roots{H}$ is a subgroup of $G$ and $H$ has finite index in every finitely generated subgroup of $\roots{H}$ that contains $H$.
In particular, $\roots{H} \leq \comm_{G}(H)$.
\end{corollary}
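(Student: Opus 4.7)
The plan is to bootstrap the preceding lemma. That lemma already says: in a finitely generated nilpotent group $K$ with subgroup $H \leq K$ and finite generating set $F \subset \roots{H}$, one has $[K:H] < \infty$. Corollary~\ref{cor:finite-ext} is essentially packaging this in the right order.

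First I would prove the finite-index claim. Let $K$ be a finitely generated subgroup of $\roots{H}$ containing $H$, with a finite generating set $F_K$, and pick a finite generating set $F_H$ of $H$. Then $F_K \cup F_H$ is a finite generating set of $K$ lying in $\roots{H}$, so applying the previous lemma with ambient group $K$ and subgroup $H$ (all relative to $K$) gives $[K:H] < \infty$.

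Next, to show $\roots{H}$ is a subgroup, note first that $g \in \roots{H}$ implies $g\inv \in \roots{H}$, since $(g\inv)^{r} = (g^{r})\inv$. For closure under products, take $g_{1},g_{2} \in \roots{H}$ and form the finitely generated subgroup $K := \<H, g_{1}, g_{2}\> \leq G$. The combined set $F_H \cup \{g_1,g_2\}$ generates $K$ and lies in $\roots{H}$, so the previous lemma gives $[K:H] < \infty$. By pigeonhole on the finitely many cosets $H, gH, g^{2}H, \dots$, any $g \in K$ satisfies $g^{i}H = g^{j}H$ for some $i<j$, hence $g^{j-i} \in H$; thus $K \subseteq \roots{H}$, and in particular $g_{1}g_{2} \in \roots{H}$. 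This also justifies retroactively that Step~1 applies to arbitrary finitely generated subgroups of $\roots{H}$ containing $H$.

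Finally, for the commensurator inclusion, fix $g \in \roots{H}$ and set $K := \<H,g\>$, which is finitely generated and contained in $\roots{H}$ by the previous step, so $[K:H] < \infty$ by Step~1. Since $g \in K$, we have $gKg\inv = K$, and therefore $[K : gHg\inv] = [K:H] < \infty$. The standard inequality $[K : H \cap gHg\inv] \leq [K:H]\,[K:gHg\inv]$ then yields that $H \cap gHg\inv$ has finite index in both $H$ and $gHg\inv$, proving commensurability. There is no real obstacle here beyond ordering the steps so that the subgroup property of $\roots{H}$ is established together with the finite-index statement rather than before it.
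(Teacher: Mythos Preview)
Your argument is correct and is exactly the intended derivation: the paper states the corollary without proof, leaving it as a direct consequence of the preceding lemma, and you have supplied those details accurately. The only minor remark is that your ``retroactive justification'' is unnecessary --- in Step~1 the hypothesis already guarantees $F_{K}\subset K\subset\roots{H}$ as sets, so the lemma applies immediately without waiting for Step~2.
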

\index{surgroup}
A succinct way of formulating this corollary would be that $\roots H$ is a ``locally finite index surgroup of $H$''.
Note that a property holds \emph{locally} in a group if it holds for every finitely generated subgroup.
For a subgroup $A\leq B$ there seems to be no standard name for the relation of $B$ to $A$: the words ``extension'' and ``supergroup'' are reserved for other purposes.
We will use the word ``surgroup'' in this situation.

If $H\nsubg G$ is normal, then this result reduces to the well-known fact that the torsion elements of a nilpotent group form a subgroup.

Recall that a group is called \emph{Noetherian} if every ascending chain of subgroups is eventually constant.
\index{Noetherian group}
It is well-known that if $K,Q$ are Noetherian groups and we have a short exact sequence
\[
0 \to K \to G \to Q \to 0,
\]
then $G$ is also a Noetherian group.
From this it follows that every finitely generated nilpotent group is Noetherian.
This can be seen by induction on the nilpotency class $l$.
For commutative groups this follows from the structure theorem for finitely generated abelian groups.
Assume that the conclusion is known for groups of nilpotency class $l-1$.
By Lemma~\ref{lem:gen-by-simple-comm} and Lemma~\ref{lem:simple-comm-of-prod}, the commutative group $G_{l}$ is finitely generated, and we can apply the induction hypothesis in the short exact sequence $0\to G_{l} \to G \to G/G_{l} \to 0$.

\subsection{Hirsch length}
We use Hirsch length of a group as a substitute for the concept of the rank of a free $\Z$-module.
Recall that a subnormal series in a group is called \emph{polycyclic} if the quotients of consecutive subgroups in this series are cyclic and a group is called \emph{polycyclic} if it admits a polycyclic series.
\begin{definition}
The \emph{Hirsch length} $h(G)$ of a polycyclic group $G$ is the number of infinite quotients of consecutive subgroups in a polycyclic series of $G$.
\index{Hirsch length}
\end{definition}
Recall that the Hirsch length is well-defined by the Schreier refinement theorem, see e.g.\ \cite[Theorem 5.11]{MR1307623}.
This is due to Hirsch \cite[Theorem 1.42]{0018.14505}.
For a finitely generated  nilpotent group $G$ with a filtration $\Gb$ one has
\[
h(G) = \sum_{i} \rank G_{i}/G_{i+1}.
\]
\begin{lemma}
\label{lem:fin-ind-hirsch}
Let $G$ be a finitely generated nilpotent group.
Then for every subgroup $V\leq G$ we have that $h(V)=h(G)$ if and only if $[G:V]<\infty$.
\end{lemma}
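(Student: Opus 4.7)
The plan is to prove both implications separately. The implication $[G:V]<\infty \Rightarrow h(V)=h(G)$ is a direct computation on any polycyclic series, while for the converse I would induct on the nilpotency class of $G$, isolating the last (abelian) term of the lower central series to carry out the induction.

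For the easy direction, I would pick any polycyclic series $\{\id\}=G_{n}\nsubg \dots \nsubg G_{0}=G$, which exists because every finitely generated nilpotent group is polycyclic. Intersecting with $V$ yields a subnormal series for $V$ whose successive quotients $(V\cap G_{i})/(V\cap G_{i+1})$ inject into $G_{i}/G_{i+1}$. Since $[G:V]<\infty$, also $[G_{i}:V\cap G_{i}]<\infty$, so each of these injections has finite-index image. Consequently infinite cyclic quotients in the $G$-series pull back to infinite cyclic quotients in the $V$-series (and finite quotients to finite quotients), which gives $h(V)=h(G)$.

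For the converse I would induct on the nilpotency class $d$ of $G$, with $d=0$ trivial. For the inductive step I would use the short exact sequence
\[
\{\id\} \to G_{d} \to G \to G/G_{d} \to \{\id\},
\]
noting that $G_{d}$ is central, abelian, and finitely generated (as already observed in the excerpt in connection with Noetherianness). Hirsch length is additive along this sequence by the Schreier refinement theorem cited before the statement. Setting $V':=V\cap G_{d}$ and $\bar V:=VG_{d}/G_{d}$, one has $h(V)=h(V')+h(\bar V)$ and $h(G)=h(G_{d})+h(G/G_{d})$, so the inequalities $h(V')\leq h(G_{d})$ and $h(\bar V)\leq h(G/G_{d})$ combined with $h(V)=h(G)$ force both to be equalities. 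The inductive hypothesis applied to $\bar V\leq G/G_{d}$ then yields $[G:VG_{d}]<\infty$.

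It remains to settle the abelian base case $V'\leq G_{d}$. Writing $G_{d}\cong\Z^{r}\oplus T$ with $T$ finite and $r=h(G_{d})$, the equality $h(V')=r$ forces the projection of $V'$ to $\Z^{r}$ to be a full-rank subgroup, hence of finite index; so $V'$ has finite index in $G_{d}$. Combining $[G:VG_{d}]<\infty$ with $[VG_{d}:V]=[G_{d}:V']<\infty$ gives $[G:V]<\infty$. The only genuinely delicate point in this plan is additivity of Hirsch length along a short exact sequence, but this is a direct consequence of the Schreier refinement theorem and the well-definedness of $h$; everything else is routine bookkeeping.
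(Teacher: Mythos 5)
Your proof is correct, and correctly organized: the forward implication by intersecting a polycyclic series with $V$ and invoking Schreier refinement, the converse by induction on the nilpotency class, peeling off the central abelian top layer $G_{d}$ of the lower central series, applying the inductive hypothesis to $\bar V\leq G/G_{d}$, and then handling the abelian layer $V\cap G_{d}\leq G_{d}$ via the structure theorem.

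The paper takes a related but structurally different route for the converse. It fixes the lower central series once and for all and proves by \emph{ascending} induction on $i$ the auxiliary claim: if $G_{i}\leq V$ and $h(V)=h(G)$ then $[G:V]<\infty$. In the step from $i$ to $i+1$, with $G_{i+1}\leq V$, it uses $\rank V_{j}/V_{j+1}\leq\rank G_{j}/G_{j+1}$ for all $j$ together with equality of the total Hirsch lengths to conclude that $(V\cap G_{i})/G_{i+1}$ has finite index in $G_{i}/G_{i+1}$; it then adjoins a finite set $K\subset G_{i}$ to $V$, observing that centrality of $G_{i}/G_{i+1}$ makes $KV$ a subgroup, that $KV\supseteq G_{i}$, that $[KV:V]<\infty$, and hence $h(KV)=h(G)$ by the easy direction, and finally invokes the claim for $i$ applied to $KV$. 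So where you quotient $G_{d}$ away and induct downward, the paper enlarges $V$ so that it successively swallows $G_{i}$, $G_{i-1}$, $\ldots$, inducting upward. Your route trades the paper's explicit surgroup construction and its appeal to centrality (needed to make $KV$ a subgroup) for the cleaner invocation of additivity of Hirsch length along short exact sequences; the paper only uses the special case $h(G)=\sum_{j}\rank G_{j}/G_{j+1}$ and the easy direction of the lemma itself. Both are correct; yours is arguably tidier, the paper's is slightly more self-contained in the facts about Hirsch length that it invokes.
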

\begin{proof}
If $[G:V]<\infty$, then we can find a finite index subgroup $W\leq V$ that is normal in $G$, and the equality $h(G)=h(W)=h(V)$ follows from the Schreier refinement theorem.

Let now $\Gb$ be the lower central series of $G$.
Let $V\leq G$ be a subgroup with $h(V)=h(G)$ and assume in addition that $G_{i}\leq V$ for some $i=1,\dots,d+1$.
We show that $[G:V]<\infty$ by induction on $i$.
For $i=1$ the claim is trivial and for $i=d+1$ it provides the desired equivalence.

Assume that the claim holds for some $i$.
Let $V_{i} := V\cap G_{i}$ be the filtration on $V$ induced by $\Gb$ and assume $V_{i+1}=G_{i+1}$.
By the assumption we have
\[
\sum_{j=1}^{d} \rank G_{j}/G_{j+1} = h(G) = h(V) = \sum_{j=1}^{d} \rank V_{j}/V_{j+1},
\]
and, since $V_{j}/V_{j+1}\cong V_{j}G_{j+1}/G_{j+1} \leq G_{j}/G_{j+1}$ for every $j$, this implies that $V_{i}/G_{i+1} \leq G_{i}/G_{i+1}$ is a finite index subgroup.
Let $K\subset G_{i}$ be a finite set such that $KV_{i}/G_{i+1}=G_{i}/G_{i+1}$.
Then $KV\leq G$ is a subgroup and a finite index surgroup of $V$.
Moreover, we have $KV\supseteq G_{i}$, and by the first part of the lemma we obtain $h(KV)=h(V)$.

By the induction hypothesis $KV$ has finite index in $G$, so the index of $V$ is also finite.
\end{proof}

\begin{lemma}
\label{lem:fin-ext}
Let $G$ be a finitely generated nilpotent group with a filtration $\Gb$ of length $d$ and let $V\leq G$ a subgroup.
Then for every $j=1,\dots,d+1$ and every $g\in G$ there exist at most finitely many finite index surgroups of $V$ of the form $\<V,gc\>$ with $c\in G_{j}$.
\end{lemma}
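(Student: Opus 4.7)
The plan is to derive the lemma from a stronger finiteness statement that does not even involve the filtration or the special parametrization: in a finitely generated nilpotent group $G$, every subgroup $V$ admits only finitely many finite-index surgroups altogether.

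The key observation is that every finite-index surgroup $H$ of $V$ is contained in $\roots{V}$. Indeed, for $h\in H$, left multiplication by $h$ permutes the finite set of left cosets of $V$ in $H$, so the orbit of the coset $V$ under this action is finite, and some positive power $h^{n}$ fixes $V$. This says $h^{n}\in V$, so $h\in\roots{V}$. Hence every finite-index surgroup sits in the sandwich $V\le H\le\roots{V}$.

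Next I would apply Corollary~\ref{cor:finite-ext} to conclude $[\roots{V}:V]<\infty$. For this, both $V$ and $\roots{V}$ have to be finitely generated; this is where one uses the Noetherian property of finitely generated nilpotent groups, as recalled after Corollary~\ref{cor:finite-ext}, which guarantees that every subgroup of $G$---including $\roots{V}$---is finitely generated. Verifying that the hypotheses of Corollary~\ref{cor:finite-ext} are in place is the one step that requires explicit attention; beyond it the argument is formal.

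With $[\roots{V}:V]$ finite, any subgroup $H$ with $V\le H\le\roots{V}$ is determined by its collection of left cosets of $V$, which is a subset of the finite set $\roots{V}/V$. Consequently there are at most $2^{[\roots{V}:V]}$ finite-index surgroups of $V$ in total, and a fortiori only finitely many of the special form $\<V,gc\>$ with $c\in G_{j}$, for every $j\in\{1,\dots,d+1\}$ and every $g\in G$; the filtration parameter $j$ plays no role in this argument.
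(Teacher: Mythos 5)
Your proof is correct, but it is genuinely different from the paper's, and in fact establishes a stronger statement: in a finitely generated nilpotent group $G$, any subgroup $V$ admits only finitely many finite-index surgroups of \emph{any} form, not merely those of the form $\<V,gc\>$. The orbit argument showing $H\leq\roots{V}$ for every finite-index surgroup $H$ is sound, and the invocation of Corollary~\ref{cor:finite-ext} together with the Noetherian property (established in the paper just before this lemma, so there is no circularity) correctly yields $[\roots{V}:V]<\infty$, from which the $2^{[\roots{V}:V]}$ bound follows. The paper instead argues by descending induction on the filtration index $j$, working in the central quotients $G_{j}/G_{j+1}$ and using the structure theory of finitely generated abelian groups to reduce to the inductive hypothesis one level down; this proof never appeals to $\roots{V}$ being finitely generated and exhibits more explicitly how the filtration controls the possible extensions. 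Your route is shorter and more conceptual, trading the constructive flavor of the paper's induction for a single application of the Noetherian sandwich $V\leq H\leq\roots{V}$; as you observe, the parameter $j$ plays no role. One might add that your argument also makes Corollary~\ref{cor:fin-ext} an immediate special case rather than a consequence derived by setting $j=1$ and $g=1_{G}$.
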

\begin{proof}
We use descending induction on $j$.
The case $j=d+1$ is clear, so assume that the conclusion is known for $j+1$ and consider some $g\in G$.

Let $c_{a}$, $a=0,1$ be elements of $G_{j}$ such that $\<V,gc_{a}\>$ are finite index surgroups of $V$.
Then also $\<VG_{j+1},gc_{a}\>/G_{j+1}$ is a finite index surgroup of $VG_{j+1}/G_{j+1}$, so that there exists an $m>0$ such that $(gc_{a}G_{j+1})^{m}\in VG_{j+1}/G_{j+1}$ for $a=0,1$.

Since the elements $c_{a}G_{j+1}$ are central in $G/G_{j+1}$ this implies $(c_{0}\inv c_{1})^{m}G_{j+1}\in (VG_{j+1}\cap G_{j})/G_{j+1}$.
But the latter group is a subgroup of the finitely generated abelian group $G_{j}/G_{j+1}$, so that $c_{0}\inv c_{1} \in K(VG_{j+1}\cap G_{j})$ for some finite set $K\subset G_{j}$ that does not depend on $c_{0},c_{1}$.

Multiplying $c_{1}$ with an element of $V$ we may assume that $c_{1} \in c_{0}KG_{j+1}$.
By the induction hypothesis for each $g'\in gc_{0}K$ there exist at most finitely many finite index surgroups of the form $\<V,g'c'\>$ with $c'\in G_{j+1}$, so we have only finitely many surgroups of the form $\<V,gc_{1}\>$ as required.
\end{proof}
\begin{corollary}
\label{cor:fin-ext}
Let $G$ be a finitely generated nilpotent group and $V$ be a subgroup.
Then there exist at most finitely many finite index surgroups of $V$ of the form $\<V,c\>$.
\end{corollary}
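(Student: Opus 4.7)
The plan is to derive Corollary~\ref{cor:fin-ext} as a direct specialization of Lemma~\ref{lem:fin-ext}. Observe that the corollary does not impose any restriction on the element $c$ generating the surgroup beyond $c\in G$, and that the hypothesis ``finitely generated nilpotent'' is exactly what is needed to put a filtration of finite length on $G$.

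First I would equip $G$ with its lower central series $\Gb$, which is a filtration by Lemma~\ref{lem:lcs-filtration}. Because $G$ is nilpotent, this filtration has finite length $d$, so the hypotheses of Lemma~\ref{lem:fin-ext} are satisfied. Since $G$ is finitely generated and nilpotent, we have already noted that $G$ is Noetherian, so in particular $V$ is finitely generated; this is not strictly needed for the quoted lemma but clarifies that the finite index surgroups under consideration are themselves finitely generated objects.

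Now apply Lemma~\ref{lem:fin-ext} with the choices $g = \id$ and $j = 1$. In the lower central series one has $G_{1} = G$, and $\<V, \id\cdot c\> = \<V, c\>$, so the set of finite index surgroups of $V$ of the form $\<V, \id\cdot c\>$ with $c\in G_{1}$ coincides with the set of finite index surgroups of $V$ of the form $\<V, c\>$ with $c\in G$. The lemma asserts that the former is finite, yielding the corollary.

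There is no real obstacle here: the only things to check are the cosmetic points that $g = \id$ is an admissible choice and that $G_{1} = G$ in the lower central series, both of which are immediate from the relevant definitions.
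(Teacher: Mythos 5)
Your proof is correct and takes essentially the same approach as the paper: it applies Lemma~\ref{lem:fin-ext} with $j=1$ and $g=1_{G}$, noting that $G_{1}=G$. The only cosmetic difference is that you fix the lower central series while the paper invokes an arbitrary filtration, which changes nothing since $G_{1}=G$ in any filtration.
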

\begin{proof}
Consider any filtration $\Gb$ and apply Lemma~\ref{lem:fin-ext} with $j=1$ and $g=1_{G}$.
\end{proof}
The following example shows that Corollary~\ref{cor:fin-ext} may fail for virtually nilpotent groups.
Consider the semidirect product $G=\Z_{2} \ltimes \Z$ that is associated to the inversion action $\pi:\Z_{2} \curvearrowright \Z$ given by $\pi(\bar a)(b)=(-1)^{a}b$.
Then $G_{2}=[G,G]=2\Z$ is an abelian subgroup of index $4$ and $G_{i+1}=[G,G_{i}]=2^{i}\Z$ for all $i\in\N$, so $G$ is not nilpotent.
Let $V = \{0\} \leq G$ be the trivial subgroup.
Since we have $(\bar{1}a)^{2}=0 \in V$ for any $a\in\Z$, each group of the form $\<V,\bar{1}a\>$ is a surgroup of $V$ with index $2$.
On the other hand, for every value of $a$ we obtain a different surgroup.

\section{Polynomial mappings}
\label{sec:poly}
In this section we set up the algebraic framework for dealing with polynomials with values in a nilpotent group.
We begin with a generalization of Leibman's result that polynomial mappings into a nilpotent group form a group under pointwise operations \cite[Proposition 3.7]{MR1910931}.
Following an idea from the proof of that result by Green and Tao \cite[Proposition 6.5]{MR2877065}, we encode the information that is contained in Leibman's notion of vector degree in a prefiltration indexed by $\N=\{0,1,\dots\}$ (see \cite[Appendix B]{MR2950773} for related results regarding prefiltrations indexed by more general partially ordered semigroups).
The treatment below first appeared in \cite{arxiv:1206.0287}.

Let $\Gb$ be a prefiltration of length $d$ and let $t\in\N$ be arbitrary.
We denote by $\Gb[+t]$ the prefiltration of length $d-t$ given by $(\Gb[+t])_{i}=G_{i+t}$ and by $\Gb[/t]$ the prefiltration of length $\min(d,t-1)$ given by $G_{i/t}=G_{i}/G_{t}$ (this is understood to be the trivial group for $i\geq t$; note that $G_{t}$ is normal in each $G_{i}$ for $i\leq t$ by \eqref{eq:prefiltration}).
These two operations on prefiltrations can be combined: we denote by $\Gb[/t+s]$ the prefiltration given by $G_{i/t+s}=G_{i+s}/G_{t}$, it can be obtained applying first the operation $/t$ and then the operation $+s$ (hence the notation).

If $\Gb$ is a prefiltration and $\bar d = (d_{i})_{i\in\N} \subset\N$ is a superadditive sequence (i.e.\ $d_{i+j}\geq d_{i}+d_{j}$ for all $i,j\in\N$; by convention $d_{-1}=-\infty$) then $\Gb^{\bar d}$, defined by
\begin{equation}
\label{eq:Gbard}
G^{\bar d}_{i} = G_{j} \quad \text{whenever} \quad d_{j-1} < i \leq d_{j},
\end{equation}
is again a prefiltration.

We define $\Gb$-polynomial maps by induction on the length of the prefiltration.
\begin{definition}
\label{def:polynomial}
Let $\PolyDomain$ be any set and $\mathcal{T}$ be a set of partially defined maps $T:\PolyDomain \supset \dom(T) \to \PolyDomain$.
Let $\Gb$ be a prefiltration of length $d\in \{-\infty\}\cup\N$.
A map $g\from\PolyDomain\to G_{0}$ is called \emph{$\Gb$-polynomial} (with respect to $\mathcal{T}$) if either $d=-\infty$ (so that $g$ identically equals the identity) or for every $T\in\mathcal{T}$ there exists a $\Gb[+1]$-polynomial map $D_{T}g$ such that
\index{polynomial map}
\begin{equation}
D_{T}g = g\inv Tg := g\inv (g\circ T)
\quad\text{on}\quad
\dom T.
\end{equation}
We write $\poly$ for the set of $\Gb$-polynomial maps, usually suppressing any reference to the set of maps $\mathcal{T}$.
\end{definition}
Informally, a map $g:\PolyDomain\to G_{0}$ is polynomial if every discrete derivative $D_{T}g$ is polynomial ``of lower degree'' (the ``degree'' of a $\Gb$-polynomial map would be the length of the prefiltration $\Gb$, but we prefer not to use this notion since it is necessary to keep track of the prefiltration $\Gb$ anyway).
The connection with Leibman's notion of vector degree is provided by \eqref{eq:Gbard}: a map has vector degree $\bar d$ with respect to a prefiltration $\Gb$ if and only if it is $\Gb^{\bar d}$-polynomial.

Note that if a map $g$ is $\Gb$-polynomial then the map $gG_{t}$ is $\Gb[/t]$-polynomial for any $t\in\N$ (but not conversely).
We abuse the notation by saying that $g$ is $\Gb[/t]$-polynomial if $gG_{t}$ is $\Gb[/t]$-polynomial.
In assertions that hold for all $T\in\mathcal{T}$ we omit the subscript in $D_{T}$.

The next theorem is the basic result about $\Gb$-polynomials.
\begin{theorem}
\label{thm:poly-group}
For every prefiltration $\Gb$ of length $d\in\{-\infty\}\cup\N$ the following holds.
\begin{enumerate}
\item\label{poly-group:commutator}
Let $t_{i}\in\N$ and $g_{i} \from \PolyDomain \to G$ be maps such that $g_{i}$ is $\Gb[/(d+1-t_{1-i})+t_{i}]$-polynomial for $i=0,1$.
Then the commutator $[g_{0},g_{1}]$ is $\Gb[+t_{0}+t_{1}]$-polynomial.
\item\label{poly-group:product}
Let $g_{0},g_{1} \from \PolyDomain \to G$ be $\Gb$-polynomial maps.
Then the product $g_{0}g_{1}$ is also $\Gb$-polynomial.
\item\label{poly-group:inverse}
Let $g \from \PolyDomain \to G$ be a $\Gb$-polynomial map.
Then its pointwise inverse $g\inv$ is also $\Gb$-polynomial.
\end{enumerate}
\end{theorem}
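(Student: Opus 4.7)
The plan is to establish parts \eqref{poly-group:commutator}, \eqref{poly-group:product}, \eqref{poly-group:inverse} simultaneously by induction on $d$, the length of $\Gb$. The base case $d=-\infty$ is vacuous, since $G_{0}$ is then trivial. For the inductive step, I fix $\Gb$ of length $d\in\N$ and assume the three statements for every prefiltration of length strictly less than $d$; in particular they are available for $\Gb[+1]$, which has length $d-1$.

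Within the inductive step I would first dispose of the commutator statement \eqref{poly-group:commutator}, since it is needed to handle the conjugation terms that appear in the other two parts. Starting from $Tg_{i}=g_{i}(Dg_{i})$, write
\[
D_{T}[g_{0},g_{1}] = [g_{0},g_{1}]\inv [g_{0}(Dg_{0}),g_{1}(Dg_{1})]
\]
and expand the second factor via identity \eqref{eq:ab-cd} with $x=g_{0}$, $y=Dg_{0}$, $u=g_{1}$, $v=Dg_{1}$. The leading summand $[x,u]=[g_{0},g_{1}]$ cancels against $[g_{0},g_{1}]\inv$, leaving a product of (possibly nested) commutators each of which involves at least one factor among $Dg_{0}$, $Dg_{1}$. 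Since $Dg_{i}$ is $\Gb[+1]$-polynomial with the appropriate quotient/shift inherited from $g_{i}$, every such commutator is $\Gb[+t_{0}+t_{1}+1]$-polynomial by the inductive hypothesis for \eqref{poly-group:commutator} applied to $\Gb[+1]$, and the full product is $\Gb[+t_{0}+t_{1}+1]$-polynomial by \eqref{poly-group:product} applied to the shorter prefiltration $\Gb[+t_{0}+t_{1}+1]$.

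Parts \eqref{poly-group:product} and \eqref{poly-group:inverse} now follow from analogous derivative computations together with the commutator statement just established for the current $\Gb$. For the product,
\[
D_{T}(g_{0}g_{1}) = g_{1}\inv(Dg_{0})g_{1}\cdot Dg_{1} = (Dg_{0})\cdot[Dg_{0},g_{1}]\cdot Dg_{1},
\]
where the commutator $[Dg_{0},g_{1}]$ is $\Gb[+1]$-polynomial by \eqref{poly-group:commutator} for $\Gb$ with $t_{0}=1$, $t_{1}=0$, and the three-factor product is $\Gb[+1]$-polynomial by the inductive hypothesis. For the inverse, after computing $D_{T}(g\inv)=g(Dg)\inv g\inv=(Dg)\inv\cdot[(Dg)\inv,g\inv]$, the same combination of \eqref{poly-group:commutator} for $\Gb$ and of \eqref{poly-group:product}, \eqref{poly-group:inverse} for $\Gb[+1]$ finishes the proof.

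The hard part is the bookkeeping inside \eqref{poly-group:commutator}: the expansion of \eqref{eq:ab-cd} produces almost ten nested commutators, and for each of them one must check that the shift and quotient operations $\Gb[/t+s]$ interact correctly with $D_{T}$, conjugation, and the bracket, so that the inductive hypothesis yields precisely the level $\Gb[+t_{0}+t_{1}+1]$. The hypotheses in \eqref{poly-group:commutator} are stated modulo $G_{d+1-t_{1-i}}$ --- reflecting the fact that deeper parts of $g_{i}$ commute trivially with $G_{t_{1-i}}$ --- so throughout the expansion one must verify that these quotient operations survive; this is where Theorem~\ref{thm:group-jacobi} and the Hall identities \eqref{eq:a-bc}, \eqref{eq:ab-c}, \eqref{eq:group-jacobi-identity} do the implicit work.
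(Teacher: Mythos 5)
Your overall skeleton matches the paper's: outer induction on the length $d$, the same derivative formulas $D(g_{0}g_{1})=Dg_{0}\,[Dg_{0},g_{1}]\,Dg_{1}$ and $D(g^{-1})=g(Dg)^{-1}g^{-1}$, and the same commutator identity \eqref{eq:ab-cd}. After cancelling $[g_{0},g_{1}]$ you also arrive at essentially the paper's formula \eqref{eq:derivative-of-commutator}. Parts~\eqref{poly-group:product} and~\eqref{poly-group:inverse} are handled correctly once~\eqref{poly-group:commutator} is available.

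The gap is in how you close the induction for part~\eqref{poly-group:commutator}. You claim that each surviving commutator is ``$\Gb[+t_{0}+t_{1}+1]$-polynomial by the inductive hypothesis for \eqref{poly-group:commutator} applied to $\Gb[+1]$.'' That cannot be right as stated: six of the seven surviving terms contain $g_{0}$ or $g_{1}$ at full depth, and these are maps into $G_{0}=(\Gb)_{0}$, not into $G_{1}=(\Gb[+1])_{0}$; the outer induction hypothesis for the shorter prefiltration $\Gb[+1]$ simply does not accept them as inputs. The term $[[g_{0},g_{1}Dg_{1}],Dg_{0}]$ is a concrete case where this fails. What makes the argument go through in the paper is a second, \emph{descending} induction on $t=t_{0}+t_{1}$ run inside the outer induction on $d$: one starts from the vacuous case $t\geq d+1$ and, to control a term like $[[g_{0},g_{1}Dg_{1}],Dg_{0}]$, uses the inner hypothesis (at $t+1>t$) to reduce the question to showing $[g_{0},g_{1}Dg_{1}]$ is polynomial only modulo $G_{d-t_{0}}$, i.e.\ with respect to $\Gb[/(d-t_{0})+t_{1}]$. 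The prefiltration $\Gb[/(d-t_{0})]$ is strictly shorter than $\Gb$, and only then does the outer induction apply. The quotient-awareness built into the hypothesis of part~\eqref{poly-group:commutator} exists precisely to support this hand-off between the two inductions; without introducing the inner descending induction on $t_{0}+t_{1}$ your ``apply the induction to $\Gb[+1]$'' step does not type-check, and the bookkeeping you defer is not a routine verification but the core of the argument.
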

\begin{proof}
We use induction on $d$.
If $d=-\infty$, then the group $G_{0}$ is trivial and the conclusion hold trivially.
Let $d\geq 0$ and assume that the conclusion holds for all smaller values of $d$.

We prove part \eqref{poly-group:commutator} using descending induction on $t=t_{0}+t_{1}$.
We clearly have $[g_{0},g_{1}]\subset G_{t}$.
If $t\geq d+1$, there is nothing left to show.
Otherwise it remains to show that $D[g_{0},g_{1}]$ is $\Gb[+t+1]$-polynomial.
To this end we use the commutator identity
\begin{multline}
\label{eq:derivative-of-commutator}
D[g_0,g_1]
=
[g_0,D g_1] \cdot [[g_0,D g_1],[g_0,g_1]] \\
\cdot [[g_0,g_1], Dg_1]
\cdot [[g_{0},g_{1}Dg_{1}],Dg_{0}] \cdot [Dg_{0},g_{1}Dg_{1}].
\end{multline}
We will show that the second to last term is $\Gb[+t+1]$-polynomial, the argument for the other terms is similar.
Note that $Dg_{0}$ is $\Gb[/(d+1-t_{1})+t_{0}+1]$-polynomial.
By the inner induction hypothesis it suffices to show that $[g_{0},g_{1}Dg_{1}]$ is $\Gb[/(d-t_{0})+t_{1}]$-polynomial.
But the prefiltration $\Gb[/(d-t_{0})]$ has smaller length than $\Gb$, and by the outer induction hypothesis we can conclude that $g_{1}Dg_{1}$ is $\Gb[/(d-t_{0})+t_{1}]$-polynomial.
Moreover, $g_{0}$ is clearly $\Gb[/(d-t_{0}-t_{1})]$-polynomial, and by the outer induction hypothesis its commutator with $g_{1}Dg_{1}$ is $\Gb[/(d-t_{0})+t_{1}]$-polynomial as required.

Provided that each multiplicand in \eqref{eq:derivative-of-commutator} is $\Gb[+t+1]$-polynomial, we can conclude that $D[g_{0},g_{1}]$ is $\Gb[+t+1]$-polynomial by the outer induction hypothesis.

Part \eqref{poly-group:product} follows immediately by the Leibniz rule
\begin{equation}
\label{eq:leibniz}
D(g_{0}g_{1}) = Dg_{0} [Dg_{0},g_{1}] Dg_{1}
\end{equation}
from \eqref{poly-group:commutator} with $t_{0}=1$, $t_{1}=0$ and the induction hypothesis.

To prove part \eqref{poly-group:inverse} notice that
\begin{equation}
D (g\inv)
=
g (D g)\inv g\inv
=
[g\inv,Dg] (Dg)\inv.
\end{equation}
By the induction hypothesis the map $g\inv$ is $\Gb[/d]$-polynomial, the map $Dg$ is $\Gb[+1]$-polynomial, and the map $(Dg)\inv$ is $\Gb[+1]$-polynomial.
Thus also $D(g\inv)$ is $\Gb[+1]$-polynomial by \eqref{poly-group:commutator} and the induction hypothesis.
\end{proof}
Discarding some technical information that was necessary for the inductive proof we can write the above theorem succinctly as follows.
\begin{corollary}
\label{cor:poly-group}
Let $\Gb$ be a prefiltration of length $d$.
Then the set $\poly$ of $\Gb$-polynomials on $\PolyDomain$ is a group under pointwise operations and admits a canonical prefiltration of length $d$ given by
\[
\poly \geq \poly[{\Gb[+1]}] \geq \dots \geq \poly[{\Gb[+d+1]}].
\]
\end{corollary}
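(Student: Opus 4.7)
The plan is short because Theorem~\ref{thm:poly-group} has already done all the work; the corollary merely repackages it. I need to verify three things: that $\poly$ is a group under pointwise operations, that each $\poly[{\Gb[+t]}]$ is a subgroup with the stated nesting, and that the commutator inclusion $[\poly[{\Gb[+i]}],\poly[{\Gb[+j]}]] \leq \poly[{\Gb[+i+j]}]$ characteristic of a prefiltration holds. Finally, I have to check that the prefiltration has length $d$.

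For the group structure, I would first observe that the identity map $\PolyDomain \to G_0$ is $\Gb$-polynomial directly from Definition~\ref{def:polynomial}: all its discrete derivatives are again identity maps, so the inductive clause is satisfied at every step. Closure of $\poly$ under pointwise products and inverses is exactly Theorem~\ref{thm:poly-group}\eqref{poly-group:product} and \eqref{poly-group:inverse}. Since $\Gb[+t]$ is itself a prefiltration (of length $d-t$), the same two parts of Theorem~\ref{thm:poly-group}, applied to $\Gb[+t]$, show that $\poly[{\Gb[+t]}]$ is a group for every $t$. The nesting $\poly[{\Gb[+t+1]}] \leq \poly[{\Gb[+t]}]$ follows from the general monotonicity principle: if $\Gb'$ and $\Gb''$ are prefiltrations with $G'_i \leq G''_i$ for every $i$, then every $\Gb'$-polynomial is $\Gb''$-polynomial, proved by a straightforward induction on the length of $\Gb'$ using the definition. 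Applied with $G'_i = G_{i+t+1} \leq G_{i+t} = G''_i$ this gives the nesting.

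For the commutator inclusion, let $g_0 \in \poly[{\Gb[+i]}]$ and $g_1 \in \poly[{\Gb[+j]}]$. The remark following Definition~\ref{def:polynomial} (projection onto a quotient preserves polynomiality) shows that $g_0$ is also $\Gb[/(d+1-j)+i]$-polynomial and $g_1$ is $\Gb[/(d+1-i)+j]$-polynomial. These are exactly the hypotheses of Theorem~\ref{thm:poly-group}\eqref{poly-group:commutator} with $t_0=i$, $t_1=j$, so $[g_0,g_1]$ is $\Gb[+i+j]$-polynomial, as required. The length claim is then immediate: $\Gb[+d+1]$ has length $d-(d+1)=-\infty$, meaning its zeroth group $G_{d+1}$ is trivial, so $\poly[{\Gb[+d+1]}]=\{\id\}$.

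There is no real obstacle here; the only thing to be careful about is tracking the index arithmetic—verifying that the hypotheses of Theorem~\ref{thm:poly-group}\eqref{poly-group:commutator}, which are stated in terms of a quotient prefiltration $\Gb[/(d+1-t_{1-i})+t_i]$, are actually weaker than the bare $\Gb[+t_i]$-polynomiality of $g_i$, so that the application in the commutator step is legitimate.
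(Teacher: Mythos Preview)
Your proposal is correct and is exactly the argument the paper intends: the corollary is stated immediately after Theorem~\ref{thm:poly-group} with no separate proof, the paper merely remarking that one is ``discarding some technical information'' from the theorem. You have simply made explicit the verifications (identity element, nesting via monotonicity, commutator inclusion via part~\eqref{poly-group:commutator} after passing to the quotient $\Gb[/(d+1-t_{1-i})]$) that the paper leaves to the reader.
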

Clearly, every subgroup $F\leq\poly$ admits a canonical prefiltration $\Fb$ given by
\begin{equation}
\label{eq:F-prefiltration}
F_{i} := F \cap \poly[{\Gb[+i]}].
\end{equation}
\begin{remark}
If $\PolyDomain$ is a group, then we recover \cite[Proposition 3.7]{MR1910931} setting
\begin{equation}
\label{eq:T-right}
\mathcal{T} = \{T_{b}:n\mapsto nb,\dom(T_{b})=\PolyDomain, \text{ where } b\in\PolyDomain\}.
\end{equation}
\end{remark}
\begin{remark}
Polynomial mappings defined on $\PolyDomain=\Z$ with translation maps \eqref{eq:T-right} are called \emph{polynomial sequences}.
\index{polynomial sequence}
The polynomial sequences fail to form a group if $G$ is replaced by the dihedral group $D_{3}$ that is the smallest non-nilpotent group.
Indeed, let $\delta$ be a rotation and $\sigma$ a reflection in $D_{3}$, then $\delta^{3}=\sigma^{2}=(\sigma\delta)^{2}=1$.
The sequences $(\dots,\sigma,1,\sigma,1,\dots)$ and $(\dots,\sigma\delta,1,\sigma\delta,1,\dots)$ are polynomial (they vanish after any two discrete differentiations), but their pointwise product $(\dots,\delta,1,\delta,1,\dots)$ is not.
\end{remark}

If $\PolyDomain=\Z^{r}$ or $\PolyDomain=\R^{r}$, then examples of polynomial mappings are readily obtained considering $g(n)=T_{1}^{p_{1}(n)}\cdot\dots\cdot T_{l}^{p_{l}(n)}$, where $p_{i}\from\Z^{r}\to\Z$ (resp. $\R^{r}\to\R$) are conventional polynomials and $T_{i} : \Z \to G$ (resp. $\R\to G$) are one-parameter subgroups.

For noncommutative groups $\PolyDomain$ it is not evident that there exist any non-trivial polynomial functions $g\from\PolyDomain\to G$ to some nilpotent group.
However, group homomorphisms are always polynomial.
\begin{example}
\label{ex:homo-poly}
If $\PolyDomain$ is a group and
\begin{equation}
\label{eq:T}
\mathcal{T} = \{T_{a,b}:\PolyDomain\to\PolyDomain, n\mapsto anb,\text{ where } a,b\in\PolyDomain\},
\end{equation}
then every group homomorphism $g\from\PolyDomain\to G_{1}$ is polynomial.
In particular, every homomorphism to a nilpotent group is polynomial with respect to the lower central series.

This can be seen by induction on the length $d$ of the prefiltration $\Gb$ as follows.
If $d=-\infty$, then there is nothing to show.
Otherwise write
\begin{equation}
D_{T_{a,b}}g(n)=g(n)\inv g(anb)=[g(n),g(a)\inv] g(ab).
\end{equation}
By the induction hypothesis $gG_{d}$ is $\Gb[/d]$-polynomial and the constant maps $g(a)\inv$, $g(ab)$ are $\Gb[+1]$-polynomial since they take values in $G_{1}$.
Hence $D_{T_{a,b}}g$ is $\Gb[+1]$-polynomial by Theorem~\ref{thm:poly-group}.
\end{example}
We will encounter further concrete examples of polynomials in Proposition~\ref{prop:mono-poly} and Lemma~\ref{lem:poly-fvip}.

\section{IP-polynomials}
\renewcommand*{\PolyDomain}{\Fine}
In this section we will work with polynomials defined on the partial semigroup\footnote{A partial semigroup \cite{MR1262304} is a set $\Gamma$ together with a partially defined operation $*\from\Gamma\times\Gamma\to\Gamma$ that is associative in the sense that $(a*b)*c=a*(b*c)$ whenever both sides are defined.} $\Fine$ of finite subsets of $\N$ with the operation $\alpha*\beta=\alpha\cup\beta$ that is only defined if $\alpha$ and $\beta$ are disjoint.
\index{partial semigroup}
It is partially ordered by the relation
\[
\alpha<\beta :\iff \max\alpha < \min\beta.
\]
Note that in particular $\emptyset<\alpha$ and $\alpha<\emptyset$ for any $\alpha\in\Fine$.

The set $\mathcal{T}$ is then given by
\begin{equation}
\mathcal{T} = \{T_{\alpha}:\beta\mapsto \alpha*\beta,\dom(T_{\alpha})=\{\beta:\alpha\cap\beta=\emptyset\}, \text{ where } \alpha\in\Gamma\}.
\end{equation}
If $T=T_{\alpha}$, then we also write $D_{\alpha}$ instead of $D_{T_{\alpha}}$.
We write $\VIP \leq \poly$ for the subgroup of polynomials that vanish at $\emptyset$ and call its members \emph{VIP systems}.
\index{VIP system}
For every $g\in\VIP$ and $\beta\in\Fine$ we have
\begin{equation}
\label{eq:polyn-g1}
g(\beta) = g(\emptyset) D_{\beta}g(\emptyset) \in G_{1}.
\end{equation}
Therefore the symmetric derivative $\sD$, defined by
\begin{equation}
\label{eq:symm-der}
\sD_{\beta}g(\alpha) := D_{\beta}g(\alpha)g(\beta)\inv = g(\alpha)\inv g(\alpha\cup\beta) g(\beta)\inv,
\end{equation}
maps $\VIP$ into $\VIP[{\Gb[+1]}]$.
Moreover, $\VIP$ admits the canonical prefiltration of length $d-1$ given by
\[
\VIP \geq \VIP[{\Gb[+1]}] \geq \dots \geq \VIP[{\Gb[+d]}].
\]
There is clearly no need to keep track of values of VIP systems at $\emptyset$, so we consider them as functions on $\Fin:=\Fine\setminus\{\emptyset\}$.

The group $\VIP$ can be alternatively characterized by $\VIP=\{1_{G}\}$ for prefiltrations $\Gb$ of length $d=-\infty,0$ and
\[
g\in\VIP \iff g:\Fin\to G_{1} \text{ and } \forall\beta\in\Fin\, \sD_{\beta}g\in\VIP[{\Gb[+1]}].
\]
This characterization shows that if $G$ is an abelian group with the standard filtration $G_{0}=G_{1}=G$, $G_{2}=\{1_{G}\}$, then $\VIP$ is just the set of IP systems in $G$.

\subsection{IP-polynomials in several variables}
The inductive procedure that has been so far utilized in all polynomial extensions of Szemer\'edi's theorem inherently relies on polynomials in several variables.
We find it more convenient to define polynomials in $m$ variables not on $\Fin^{m}$, but rather on the subset $\Fin^{m}_{<} \subset \Fin^{m}$ that consists of \emph{ordered} tuples, that is,
\[
\Fin^{m}_{<} = \{ (\alpha_{1},\dots,\alpha_{m}) \in \Fin^{m} : \alpha_{1}<\dots<\alpha_{m} \}.
\]
Analogously, $\Fin^{\omega}_{<}$ is the set of infinite increasing sequences in $\Fin$.
We will frequently denote elements of $\Fin^{m}_{<}$ or $\Fin^{\omega}_{<}$ by $\vec\alpha=(\alpha_{1},\alpha_{2},\dots)$.
\begin{definition}
Let $\Gb$ be a prefiltration and $F\leq\VIP$ a subgroup.
We define the set $\PE{F}{m}$ of \emph{polynomial expressions} in $m$ variables by induction on $m$ as follows.
\index{polynomial expression}
We set $\PE{F}{0}=\{1_{G}\}$ and we let $\PE{F}{m+1}$ be the set of functions $g \from \Fin^{m+1}_{<} \to G_{0}$ such that
\[
g(\alpha_{1},\dots,\alpha_{m+1}) = W^{\alpha_{1},\dots,\alpha_{m}}(\alpha_{m+1})S(\alpha_{1},\dots,\alpha_{m}),
\]
where $S\in\PE{F}{m}$ and $W^{\alpha_{1},\dots,\alpha_{m}}\in F$ for every $\alpha_{1}<\dots<\alpha_{m}$.
\end{definition}
Note that $\PE{F}{1}=F$.
Polynomial expressions also behave well with respect to filtrations.
\begin{lemma}
Suppose that $F$ is invariant under conjugation by constant functions.
Then, for every $m$, the set $\PE{F}{m}$ is a group under pointwise operations and admits a canonical prefiltration given by $(\PE{F}{m})_{i}=\PE{(F_{i})}{m}$.

If $K\leq F$ is a subgroup that is invariant under conjugation by constant functions, then $\PE{K}{m} \leq \PE{F}{m}$ is also a subgroup.
\end{lemma}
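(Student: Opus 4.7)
I would proceed by induction on $m$, with $m=0$ trivial since $\PE{F}{0}=\{1_{G}\}$. An element of $\PE{F}{m+1}$ takes the form $g = W \cdot S$ with $(W^{\vec\alpha'})_{\vec\alpha'}$ a family in $F$ and $S \in \PE{F}{m}$. Closure of $\PE{F}{m+1}$ under products follows from
\[
(W_{1}S_{1})(W_{2}S_{2}) = W_{1} \cdot (S_{1} W_{2} S_{1}\inv) \cdot (S_{1}S_{2}),
\]
using conjugation invariance of $F$ to see $S_{1} W_{2} S_{1}\inv \in F$ and the induction hypothesis to see $S_{1}S_{2} \in \PE{F}{m}$; closure under inversion is analogous. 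The same argument applied to $K$, which is also conjugation invariant by hypothesis, yields $\PE{K}{m+1} \leq \PE{F}{m+1}$.

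Before the filtration analysis I would record two auxiliary facts. First, every element of $\PE{F_{k}}{m}$ takes values in $G_{k+1}$: this follows by induction on $m$ since $F_{k} \leq \VIP[{\Gb[+k]}]$ has values in $G_{k+1}$ by \eqref{eq:polyn-g1}, and products of $G_{k+1}$-valued functions remain in $G_{k+1}$. Second, each $F_{k} = F \cap \VIP[{\Gb[+k]}]$ is invariant under conjugation by constants, since $F$ is by hypothesis and $\VIP[{\Gb[+k]}]$ is because $G_{k+1} \nsubg G_{0}$ (a prefiltration property) and polynomiality is preserved under constant conjugation. The first part of the lemma then applies to $F_{k}$, so each $\PE{F_{k}}{m+1}$ is a subgroup of $\PE{F}{m+1}$, and the nesting $\PE{F_{k+1}}{m+1} \leq \PE{F_{k}}{m+1}$ is immediate.

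The heart of the argument is the commutator inclusion $[\PE{F_{i}}{m+1},\PE{F_{j}}{m+1}] \subset \PE{F_{i+j}}{m+1}$. For $g_{a} = W_{a} S_{a} \in \PE{F_{k_{a}}}{m+1}$ with $k_{1}=i$, $k_{2}=j$, expand $[W_{1}S_{1},W_{2}S_{2}]$ via \eqref{eq:ab-cd} into a product of eight commutators whose atomic entries are the $W_{a}$ (depending on $\alpha_{m+1}$) or the $S_{a}$ (constant in $\alpha_{m+1}$). The pure $W$ commutator $[W_{1},W_{2}]$ lies in $[F_{i},F_{j}] \subset F_{i+j}$, viewed in $\PE{F_{i+j}}{m+1}$ with trivial $S$-part; the pure $S$ commutator $[S_{1},S_{2}]$ lies in $\PE{F_{i+j}}{m}$ by induction, viewed in $\PE{F_{i+j}}{m+1}$ with trivial $W$-part. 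For a mixed commutator such as $[W_{1},S_{2}]$, fix $\vec\alpha'$: the constant $S_{2}(\vec\alpha') \in G_{j+1}$ is a $\Gb[+(j+1)]$-polynomial map in $\alpha_{m+1}$, so Theorem~\ref{thm:poly-group} yields $[W_{1},S_{2}] \in \poly[{\Gb[+(i+j+1)]}]$; combining this with $[W_{1},S_{2}] = W_{1}\inv W_{1}^{S_{2}} \in F$ (conjugation invariance) and the vanishing at $\emptyset$ gives $[W_{1},S_{2}] \in F_{i+j+1} \subset F_{i+j}$. Iterated mixed commutators are handled by repeating the same two ingredients---the filtration property of $F$ and the absorption of constants via Theorem~\ref{thm:poly-group}---each atomic step only increasing the filtration index. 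Since $\PE{F_{i+j}}{m+1}$ is a group, the product of all factors lies there.

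The main obstacle is the careful bookkeeping of filtration indices through the Hall expansion \eqref{eq:ab-cd}, especially for the iterated mixed commutators. The key technical insight is to regard ``constants'' in the variable $\alpha_{m+1}$ as polynomial maps whose filtration degree is governed by where their values live in $\Gb$, so that Theorem~\ref{thm:poly-group} absorbs them while preserving or increasing filtration depth.
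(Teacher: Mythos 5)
Your proposal is correct and follows essentially the same route as the paper: induction on $m$, the conjugation trick $W_{1}S_{1}\cdot W_{2}S_{2}=W_{1}(S_{1}W_{2}S_{1}\inv)(S_{1}S_{2})$ (the paper works with $R_{0}R_{1}\inv$ instead, which is equivalent) to get the group law, and then verifying the prefiltration inclusion term-by-term through the Hall expansion \eqref{eq:ab-cd}. The paper merely gestures at \eqref{eq:ab-cd} for the prefiltration step, so your explicit bookkeeping --- that $F_{k}$ is itself conjugation-invariant, that pure $W$ and pure $S$ commutators land in $F_{i+j}$ and $\PE{F_{i+j}}{m}$ respectively, and that mixed commutators are absorbed via Theorem~\ref{thm:poly-group}(\ref{poly-group:commutator}) together with conjugation-invariance of $F$ --- is a faithful fleshing-out rather than a different argument.
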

\begin{proof}
We use induction on $m$.
For $m=0$ there is nothing to show.
Let
\[
R_{j}\in \PE{(F_{t_{j}})}{m+1}:(\alpha_{1},\dots,\alpha_{m+1})\mapsto W_{j}^{\alpha_{1},\dots,\alpha_{m}}(\alpha_{m+1})S_{j}(\alpha_{1},\dots,\alpha_{m}),
\quad j=0,1
\]
be polynomial expressions in $m+1$ variables.
Suppressing the variables $\alpha_{1},\dots,\alpha_{m}$, we have
\[
R_{0}R_{1}\inv(\alpha_{m+1})
=
W_{0}(\alpha_{m+1})
\underbrace{\left(S_{0}S_{1}\inv W_{1}\inv S_{1} S_{0}\inv\right)}_{\in F}(\alpha_{m+1})
S_{0}S_{1}\inv,
\]
so that $R_{0}R_{1}\inv \in \PE{F}{m+1}$.
Hence $\PE{F}{m+1}$ is a group.

In order to show that $\PE{(\Fb)}{m+1}$ is indeed a prefiltration we have to verify that
\begin{align*}
[R_{0},R_{1}]
&=
[W_{0}S_{0},W_{1}S_{1}]
\in (F_{t_{0}+t_{1}})^{m+1}.
\end{align*}
This follows from the identity \eqref{eq:ab-cd}.
It is clear that $\PE{K}{m}\leq \PE{F}{m}$ is a subgroup provided that both sets are groups.
\end{proof}
For every $m\in\N$ there is a canonical embedding $\PE{F}{m}\leq \PE{F}{m+1}$ that forgets the last variable.
Thus we can talk about
\[
\PE{F}{\omega}:=\injlim_{m\in\N} \PE{F}{m} = \bigcup_{m\in\N} \PE{F}{m},
\]
this is a group of maps defined on $\Fin^{\omega}_{<}$ with prefiltration $(\PE{F}{\omega})_{i}=\PE{(F_{i})}{\omega}$.

\subsection{Polynomial-valued polynomials}
It will be beneficial to consider IP-polynomials with values in a group of IP-polynomials, which should in turn have a sufficiently rich structure.
\begin{definition}
Let $\Gb$ be a filtration of length $d$.
A \emph{VIP group} is a subgroup $F\leq\VIP$ that is closed under conjugation by constant functions and under $\sD$ in the sense that for every $g\in F$ and $\alpha\in\Fin$ the symmetric derivative $\sD_{\alpha}g$ lies in $F_{1}$ (defined in \eqref{eq:F-prefiltration}).
\index{VIP group}
\end{definition}
In particular, the group $\VIP$ itself is VIP.
The definition of a VIP group is tailored to the following construction.
\begin{proposition}
\label{prop:substitution-poly}
Let $F\leq \VIP$ be a VIP group.
Then for every $g\in \PE{F}{m}$ the substitution map
\begin{equation}
\label{eq:substitution}
h:\vec\beta=(\beta_{1},\dots,\beta_{m}) \in \Fin^{m}_{<}
\mapsto
(g[\vec\beta] : \vec\alpha\in\Fin^{\omega}_{<} \mapsto g(\cup_{i\in\beta_{1}}\alpha_{i},\dots,\cup_{i\in\beta_{m}}\alpha_{i}))
\end{equation}
lies in $\PE{\VIP[\PE{F}{\omega}]}{m}$.
\end{proposition}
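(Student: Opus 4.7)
The plan is to prove the proposition by induction on $m$. The base case $m = 0$ is trivial since $\PE{F}{0} = \{1_G\}$. For the inductive step, given $g \in \PE{F}{m+1}$ we write, by the very definition of polynomial expressions,
\[
g(\alpha_1, \ldots, \alpha_{m+1}) = W^{\alpha_1, \ldots, \alpha_m}(\alpha_{m+1}) \cdot S(\alpha_1, \ldots, \alpha_m)
\]
with $S \in \PE{F}{m}$ and $W^{\vec\alpha} \in F$. Substituting $\alpha_j \mapsto \cup_{i \in \beta_j}\alpha_i$ and setting $\vec\beta' := (\beta_1, \ldots, \beta_m)$, this factorizes $h$ as
\[
h(\vec\beta) = V^{\vec\beta'}(\beta_{m+1}) \cdot h_S(\vec\beta'),
\]
where $h_S$ is the substitution map attached to $S$ and $V^{\vec\beta'}(\gamma)$ denotes the function $\vec\alpha \mapsto W^{\cup_{i\in\beta_1}\alpha_i, \ldots, \cup_{i\in\beta_m}\alpha_i}(\cup_{i\in\gamma}\alpha_i)$. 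The inductive hypothesis applied to $S$ gives $h_S \in \PE{\VIP[\PE{F}{\omega}]}{m}$, so it remains to show that $V^{\vec\beta'} \in \VIP[\PE{F}{\omega}]$ for every $\vec\beta'$.

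This reduces to checking two things: first, $V^{\vec\beta'}(\gamma) \in \PE{F}{\omega}$ for each $\gamma \in \Fin$; and second, $V^{\vec\beta'}$ is a polynomial map $\Fin \to \PE{F}{\omega}$ vanishing at $\emptyset$ with respect to the canonical prefiltration $(\PE{F}{\omega})_i = \PE{(F_i)}{\omega}$. The first point follows by a secondary induction on $|\gamma|$: using the identity $W^{\vec\alpha''}(\epsilon_1 \cup \epsilon_2) = W^{\vec\alpha''}(\epsilon_1) \cdot \sD_{\epsilon_2}W^{\vec\alpha''}(\epsilon_1) \cdot W^{\vec\alpha''}(\epsilon_2)$ (a direct rearrangement of \eqref{eq:symm-der}), one peels off the $\alpha_i$'s indexed by $\gamma$ one at a time; the VIP group axiom keeps each successive symmetric derivative inside $F$, and each step grows the number of active variables by one, producing an element of $\PE{F}{|\gamma|} \leq \PE{F}{\omega}$. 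Vanishing at $\emptyset$ is immediate because each $W^{\vec\alpha''}$ is VIP.

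The harder half is the second point. Computing the discrete derivative,
\[
(D_\delta V^{\vec\beta'})(\gamma)(\vec\alpha) = \sD_{\cup_{i \in \delta}\alpha_i} W^{\vec\alpha''}(\cup_{i \in \gamma}\alpha_i) \cdot W^{\vec\alpha''}(\cup_{i \in \delta}\alpha_i),
\]
we see from the VIP group axiom that each symmetric derivative of $W^{\vec\alpha''}$ lies in $F_1$, and this drop in filtration should be preserved under iteration. The main obstacle is to propagate this filtration shift correctly through the substitution: one needs a descending induction on the length of $\Gb$, combined with Theorem~\ref{thm:poly-group} and Corollary~\ref{cor:poly-group} to control products and commutators inside $\PE{F}{\omega}$, in order to conclude that the $k$-fold iterated derivative of $V^{\vec\beta'}$ takes values in $\PE{(F_k)}{\omega} = (\PE{F}{\omega})_k$. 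Once this bookkeeping is established, $V^{\vec\beta'}$ is a VIP in $\PE{F}{\omega}$, which completes the induction.
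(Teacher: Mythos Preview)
Your proposal is correct and follows essentially the same route as the paper: induction on $m$, the splitting $g = W \cdot S$, the induction on $|\gamma|$ to place $V^{\vec\beta'}(\gamma)$ in $\PE{F}{\omega}$, and the observation that differentiating in the last $\beta$-variable produces a symmetric derivative of $W$, which drops into $F_1$ by the VIP axiom.

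The paper's argument is slightly more economical at the final step. Instead of computing $D_{\delta}V^{\vec\beta'}$ (which carries the extra factor $W^{\vec\alpha''}(\cup_{i\in\delta}\alpha_i)$), it computes the \emph{symmetric} derivative directly and obtains the clean formula
\[
\sD_{\gamma}\tilde h(\beta)\colon \vec\alpha \mapsto \sD_{\cup_{i\in\gamma}\alpha_i} W^{\vec\alpha''}\big(\cup_{i\in\beta}\alpha_i\big).
\]
This exhibits $\sD_{\gamma}\tilde h$ as a map of \emph{exactly} the same shape as $\tilde h$, but with $W\in F$ replaced by $\sD_{\cup_{i\in\gamma}\alpha_i}W\in F_1$. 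Since $F_1$ is itself a VIP group for the shifted prefiltration, the induction on the length of $\Gb$ closes immediately, without any appeal to Theorem~\ref{thm:poly-group} or commutator manipulations. In your version the extra constant factor forces you to invoke the group structure of $\PE{F}{\omega}$ and track products, which is what leads you to anticipate ``bookkeeping''; switching to $\sD$ eliminates that. (Also, ``descending induction on the length of $\Gb$'' should just be ordinary induction on the length, and the target of the $|\gamma|$-induction is $\PE{F}{\max\gamma}$ rather than $\PE{F}{|\gamma|}$, though this is harmless once everything sits inside $\PE{F}{\omega}$.)
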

\begin{proof}
We proceed by induction on $m$.
In case $m=0$ there is nothing to show, so suppose that the assertion is known for $m$ and consider $g\in\PE{F}{m+1}$.
By definition we have
\[
g(\alpha_{1},\dots,\alpha_{m+1}) = W^{\alpha_{1},\dots,\alpha_{m}}(\alpha_{m+1})S(\alpha_{1},\dots,\alpha_{m})
\]
and
\[
h(\beta_{1},\dots,\beta_{m+1})(\vec\alpha) = W^{\cup_{i\in\beta_{1}}\alpha_{i},\dots,\cup_{i\in\beta_{m}}\alpha_{i}}[\beta_{m+1}](\vec\alpha) S[\beta_{1},\dots,\beta_{m}](\vec\alpha).
\]
In view of the induction hypothesis it remains to verify that the map
\[
\tilde h : \beta\mapsto(\vec\alpha \mapsto W^{\cup_{i\in\beta_{1}}\alpha_{i},\dots,\cup_{i\in\beta_{m}}\alpha_{i}}[\beta](\vec\alpha)),
\quad \beta>\beta_{m}>\dots>\beta_{1},
\]
is in $\VIP[\PE{F}{\omega}]$.
The fact that $\tilde h(\beta)\in\PE{F}{\omega}$ for all $\beta$ follows by induction on $|\beta|$ using the identity
\begin{multline*}
W^{\cup_{i\in\beta_{1}}\alpha_{i},\dots,\cup_{i\in\beta_{m}}\alpha_{i}}[\beta\cup\{b\}](\vec\alpha)
=\\
W^{\cup_{i\in\beta_{1}}\alpha_{i},\dots,\cup_{i\in\beta_{m}}\alpha_{i}}(\alpha_{b}) \sD_{\cup_{i\in\beta} \alpha_{i}}W^{\cup_{i\in\beta_{1}}\alpha_{i},\dots,\cup_{i\in\beta_{m}}\alpha_{i}}(\alpha_{b}) W^{\cup_{i\in\beta_{1}}\alpha_{i},\dots,\cup_{i\in\beta_{m}}\alpha_{i}}[\beta](\vec\alpha)
\end{multline*}
that holds whenever $b>\beta>\beta_{m}>\dots>\beta_{1}$.
In order to see that $\tilde h$ is polynomial in $\beta$ observe that
\[
\sD_{\gamma}\tilde h(\beta) : \vec\alpha \mapsto \sD_{\cup_{i\in\gamma}\alpha_{i}}W^{\cup_{i\in\beta_{1}}\alpha_{i},\dots,\cup_{i\in\beta_{m}}\alpha_{i}}(\cup_{i\in\beta}\alpha_{i}),
\quad
\beta>\gamma>\beta_{m}>\dots>\beta_{1}.
\qedhere
\]
\end{proof}

\subsection{Monomial mappings}
In this section we verify that monomial mappings into nilpotent groups in the sense of Bergelson and Leibman \cite[\textsection 1.3]{MR1972634} are polynomial in the sense of Definition~\ref{def:polynomial}.

For a sequence of finite sets $R=(R_{0},R_{1},\dots)$ only finitely many of which are non-empty and a set $\alpha$ write
\[
R[\alpha] := \alpha^{0}\times R_{0} \uplus \alpha^{1}\times R_{1} \uplus \dots
\]
Here the symbol $\uplus$ denotes disjoint union and $\alpha^{i}$ are powers of the set $\alpha$ (note that $\alpha^{0}$ consists of one element, the empty tuple).
\begin{proposition}
\label{prop:mono-poly}
Let $\Gb$ be a prefiltration of length $d$ and $N\subset \N$ any subset.
Let $\gb \from R[N] \to G$, $x\mapsto g_{x}$ be a mapping such that $\gb(N^{i}\times R_{i}) \subset G_{i}$ for every $i\in\N$ and $\prec$ be any linear ordering on $R[N]$.
Then the map
\[
g \from \Fin(N) \to G,
\quad
\alpha \mapsto \prod_{j\in R[\alpha]}^{\prec} g_{j}
\]
is $\Gb$-polynomial on the partial semigroup $\Fin(N)\subset\Fin$ that consists of finite subsets of $N$ (here the symbol $\prec$ on top of $\prod$ indicates the order of factors in the product).
\end{proposition}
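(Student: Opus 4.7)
My plan is to induct on the length $d$ of the prefiltration $\Gb$. The base case $d=-\infty$ is immediate because $G_{0}$ is trivial. For the inductive step I fix an arbitrary $\beta\in\Fin(N)$ disjoint from the argument $\alpha$ and aim to show that the derivative $D_{\beta}g(\alpha) = g(\alpha)\inv g(\alpha\cup\beta)$ is itself representable as a monomial mapping of the type in the proposition, but with respect to the shortened prefiltration $\Gb[+1]$; the induction hypothesis then finishes.

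To this end I split $R[\alpha\cup\beta] = R[\alpha] \uplus R_{\beta}[\alpha]$, where $R_{\beta}[\alpha]$ collects the tuples $((a_{1},\dots,a_{i}),r) \in (\alpha\cup\beta)^{i} \times R_{i}$ having at least one coordinate in $\beta$. Parameterizing such a tuple by its $\beta$-positions $S\subseteq[i]$ (nonempty), the $\beta$-values $(a_{j})_{j\in S}$, and the remaining $\alpha$-coordinates of length $k=i-|S|$, the associated generator $g_{t}$ lies in $G_{i} \subseteq G_{k+1} = (\Gb[+1])_{k}$---exactly the filtration level required of a $k$-coordinate monomial term with respect to the shifted prefiltration. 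Then I reshuffle the product $g(\alpha\cup\beta)$ so that the $R[\alpha]$-factors appear first in their original $\prec$-order, yielding $g(\alpha)$; each transposition across an $R_{\beta}[\alpha]$-factor in $G_{j}$ (where $j\geq 1$) introduces a commutator in $G_{i+j}$ by the prefiltration axiom, still compatible with the shifted target. This produces $g(\alpha\cup\beta) = g(\alpha)\cdot h_{\beta}(\alpha)$, with $h_{\beta}$ built from the $R_{\beta}[\alpha]$-generators together with the resulting iterated commutator corrections.

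The key step is to recognize $h_{\beta}$ as a monomial mapping on $\Fin(N\setminus\beta)$ meeting the hypotheses of the proposition for $\Gb[+1]$. I assemble a new indexing sequence $R'$, where $R'_{k}$ contains one super-index per pattern $(i,S,(a_{j})_{j\in S},r)$ with $|[i]\setminus S|=k$, plus additional super-indices for every iterated commutator shape produced by the reshuffling; a new generator mapping $\gb'$ valued in $G_{k+1}$, with Lemma~\ref{lem:simple-comm-of-prod} used to guarantee that the commutator shapes land in the appropriate filtration level; and any linear order $\prec'$ on $R'[N\setminus\beta]$ compatible with the order in which factors appear in $h_{\beta}$. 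The induction hypothesis for $\Gb[+1]$, a prefiltration of length $d-1$, now gives $D_{\beta}g = h_{\beta} \in \poly[{\Gb[+1]}]$, which proves $g\in\poly[\Gb]$.

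The main obstacle will be the combinatorial bookkeeping in the reshuffling step: one must verify that all commutator corrections, together with the straightforward $R_{\beta}[\alpha]$-factors, can be collected into a single monomial presentation with consistent $R'$, $\gb'$, and $\prec'$, and that the $\alpha$-dependence of each correction corresponds to a well-defined $\alpha^{k}$-coordinate shape. The prefiltration inequality $[G_{i},G_{j}]\subseteq G_{i+j}$ handles the filtration levels automatically, so the real work is in tracking the combinatorial types.
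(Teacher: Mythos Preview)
Your proposal is correct and follows essentially the same strategy as the paper: induction on the length $d$, pulling the $R[\alpha]$-factors to the front of the product over $R[\alpha\cup\beta]$, and recognizing the remaining factor as a monomial mapping on $\Fin(N\setminus\beta)$ for the shifted prefiltration $\Gb[+1]$. The paper makes the reshuffling step completely explicit via a single closed-form identity (proved by induction on the size of an initial segment of $A=R[\alpha]$):
\[
\prod_{j\in B}^{\prec} g_{j}
= \prod_{j\in A}^{\prec} g_{j}
\prod_{j\in B\setminus A}^{\prec} \prod_{k\in A^{\leq d}}^{\succ\text{-lex}}g_{j,k},
\]
where the $g_{j,k}$ are iterated commutators $[[\dots[g_{j},g_{k_{0}}],\dots],g_{k_{i}}]$; this formula encodes all the cascading commutator corrections in one stroke and makes the new indexing data $(S,\prec',\hb)$ immediate. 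Your appeal to Lemma~\ref{lem:simple-comm-of-prod} is unnecessary here---the bare prefiltration axiom $[G_{i},G_{j}]\subset G_{i+j}$ already gives the required filtration levels for iterated commutators.
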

\begin{proof}
We induct on the length of the prefiltration $\Gb$.
If $d=-\infty$, then there is nothing to prove.
Otherwise let $\beta\in\Fin(N)$.
We have to show that $D_{\beta}g$ is $\Gb[+1]$-polynomial.

Let $B \subset R[N]$ be a finite set and $A\subset B$.
By induction on the length of an initial segment of $A$ (that proceeds by pulling the terms $g_{j}$, $j\in A$, out of the double product one by one, leaving commutators behind) we see that
\begin{equation}
\label{eq:der}
\prod_{j\in B}^{\prec} g_{j}
= \prod_{j\in A}^{\prec} g_{j}
\prod_{j\in B\setminus A}^{\prec} \prod_{k\in A^{\leq d}}^{\succ-\mathrm{lexicographic}}g_{j,k},
\end{equation}
where $A^{\leq d}$ is the set of all tuples of elements of $A$ with at most $d$ coordinates in $N$ and
\[
g_{j,\emptyset}=g_{j},
\quad
g_{j,(k_{0},\dots,k_{i})} =
\begin{cases}
[g_{j,(k_{0},\dots,k_{i-1})},g_{k_{i}}] & \text{if } j\prec k_{0} \prec \dots \prec k_{i},\\
1 & \text{otherwise}.
\end{cases}
\]

Let $\alpha\in\Fin(N)$ be disjoint from $\beta$.
Applying \eqref{eq:der} with $A:=R[\alpha]$ and $B:=R[\alpha\cup\beta]$ we obtain
\[
D_{\beta}g(\alpha) = \prod_{j\in R[\alpha\cup\beta]\setminus R[\alpha]}^{\prec} \prod_{k\in R[\alpha]^{\leq d}}^{\succ-\mathrm{lexicographic}}g_{j,k},
\]
where $g_{j,(k_{0},\dots,k_{i})} \in G_{l+l_{0}+\dots+l_{i}}$ provided that $j\in \alpha^{l}\times R_{l}$ and $k_{0}\in \alpha^{l_{0}}\times R_{l_{0}}, \dots, k_{i}\in \alpha^{l_{i}}\times R_{l_{i}}$.

The double product can be rewritten as $\prod_{l\in S[\alpha]}^{\prec'}h_{l}$ for some sequence of finite sets $S$, an ordering $\prec'$ on $S[N']$, where $N'=N\setminus\beta$, and $\hb : S[N']\to G$.
The sequence of sets $S$ is obtained by the requirement
\[
(R[\alpha\cup\beta]\setminus R[\alpha]) \times R[\alpha]^{\leq d} = S[\alpha]
\]
for every $\alpha\subset N'$.
The lexicographic ordering on $(R[N]\setminus R[N']) \times R[N']^{\leq d}$ induces an ordering $\prec'$ on $S[N']$.
Define $h_{z}=g_{j,k}$ if $(j,k)$ corresponds to $z\in S[N']$.

By construction we have $\hb((N')^{i}\times S_{i}) \subset G_{i+1}$ since each element of $R[N]\setminus R[N']$ has at least one coordinate in $N$ but not $N'$.
Thus $D_{\beta}g$ is $\Gb[+1]$-polynomial by the induction hypothesis.
\end{proof}
\begin{corollary}
\label{cor:set-monomials}
Let $G$ be a nilpotent group with lower central series
\[
G=G_{0}=G_{1}\geq\dots\geq G_{s}\geq G_{s+1}=\{1_{G}\},
\]
let $\gb : N^{d}\to G$ be an arbitrary mapping, and let $\prec$ be any linear ordering on $N^{d}$.
Then the map
\[
g\from\Fin(N)\to G,
\quad
\alpha \mapsto \prod_{j\in \alpha^{d}}^{\prec} g_{j}
\]
is polynomial on the partial semigroup $\Fin(N)$ with respect to the filtration
\begin{equation}
\label{eq:scalar-poly-filtration-2}
G_{0} \geq
\underbrace{G_{1} \geq \dots \geq G_{1}}_{d \text{ times}} \geq \dots \geq
\underbrace{G_{s} \geq \dots \geq G_{s}}_{d \text{ times}} \geq G_{s+1}.
\end{equation}
\end{corollary}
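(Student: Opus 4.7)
The plan is to deduce this corollary directly from Proposition~\ref{prop:mono-poly} by choosing the sequence $R=(R_{i})_{i\in\N}$ to concentrate all the mass at index $d$, and by reinterpreting the telescoped filtration \eqref{eq:scalar-poly-filtration-2} as a single prefiltration $\tilde\Gb$ to which the proposition can be applied.

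More precisely, I would first introduce the prefiltration $\tilde\Gb$ defined by $\tilde G_{0}=G$ and $\tilde G_{i}=G_{\lceil i/d\rceil}$ for $i\geq 1$ (interpreting $G_{k}$ as trivial once $k\geq s+1$), so that $\tilde\Gb$ is exactly the sequence \eqref{eq:scalar-poly-filtration-2} of length $sd$. The first task is to verify that $\tilde\Gb$ is indeed a prefiltration, i.e.\ that $[\tilde G_{i},\tilde G_{j}]\subset \tilde G_{i+j}$. Since $\Gb$ (the lower central series) is itself a filtration by Lemma~\ref{lem:lcs-filtration}, one has $[\tilde G_{i},\tilde G_{j}]\subset G_{\lceil i/d\rceil+\lceil j/d\rceil}$ for $i,j\geq 1$, and superadditivity of the ceiling ($\lceil i/d\rceil+\lceil j/d\rceil\geq \lceil (i+j)/d\rceil$) gives the desired inclusion. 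The boundary case $i=0$ reduces to $[G,\tilde G_{j}]\subset \tilde G_{j}$, which is immediate since $G_{k+1}\subset G_{k}$.

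Next, I would apply Proposition~\ref{prop:mono-poly} to the prefiltration $\tilde\Gb$ with the choice $R_{d}=\{*\}$ and $R_{i}=\emptyset$ for $i\neq d$. Under the canonical identification of $\alpha^{d}\times\{*\}$ with $\alpha^{d}$, one has $R[\alpha]=\alpha^{d}$, so the map produced by Proposition~\ref{prop:mono-poly} is exactly $\alpha\mapsto\prod_{j\in\alpha^{d}}^{\prec}g_{j}$. The only hypothesis to check is $\gb(N^{i}\times R_{i})\subset\tilde G_{i}$; this is vacuous for $i\neq d$ because $R_{i}=\emptyset$, and for $i=d$ it asks that $\gb(N^{d})\subset\tilde G_{d}=G_{1}=G$, which is given.

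I do not expect any real obstacle here — the content is entirely bookkeeping: the corollary is the special case of Proposition~\ref{prop:mono-poly} in which the maps $g_{j}$ are not required to lie in any proper subgroup of $G$, and the price of this relaxation is paid by stretching the filtration so that each degree of commutator nesting is allowed $d$ polynomial ``steps'' instead of one. The only point that requires a moment's attention is confirming that $\tilde\Gb$ is a genuine prefiltration via the ceiling inequality, as noted above.
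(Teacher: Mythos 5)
Your argument is exactly the intended deduction; the paper states the corollary without proof precisely because it is the special case of Proposition~\ref{prop:mono-poly} with $R_{d}=\{*\}$ and $R_{i}=\emptyset$ for $i\neq d$, applied to the stretched prefiltration $\tilde\Gb$ given by $\tilde G_{i}=G_{\lceil i/d\rceil}$. Your check that $\tilde\Gb$ is a prefiltration via the ceiling superadditivity $\lceil i/d\rceil+\lceil j/d\rceil\geq\lceil(i+j)/d\rceil$ is the only non-trivial verification, and it is correct.
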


\chapter{Mean convergence}
\label{chap:mean}
\renewcommand{\PolyDomain}{\AG}
\newcommand{\indx}{\alpha}
\newcommand{\iset}{A}
The problem of mean convergence of mean ergodic averages has been recently resolved by Walsh \cite{MR2912715}.
In this chapter we discuss modifications to his arguments that provide convergence in the unifom Ces\`aro sense along F\o{}lner nets in arbitrary amenable groups.
This material appeared in \cite{arxiv:1111.7292}.
Let us start by recalling the relevant definitions.

We denote by $\AG$ a locally compact (not necessarily second countable) amenable group with a left Haar measure $|\cdot|$.
We fix a probability space $X$ and a filtered nilpotent group $\Gb$ of unitary operators on $L^{2}(X)$ that act as isometric algebra homomorphisms on $L^{\infty}(X)$ (thus the group $G$ comes from a group of measure-preserving transformations).
Polynomial maps $\AG\to G$ are defined with respect to the translation maps \eqref{eq:T}.
\begin{definition}
\label{def:folner-net}
A net $(\Fo_{\alpha})_{\alpha\in A}$ of nonempty compact subsets of $\AG$ is called a \emph{Følner net} if
\index{F\o{}lner net}
for every compact set $K \subset \AG$ one has
\[
\lim_{\alpha} \sup_{l\in K} |l\Fo_{\alpha} \Delta \Fo_{\alpha}|/|\Fo_{\alpha}| = 0.
\]
\end{definition}
Note that $(\Fo_{\alpha}b_{\alpha})_{\alpha\in A}$ is a Følner net for any shifts $b_{\alpha}\in\AG$ whenever $(\Fo_{\alpha})_{\alpha}$ is Følner.
It is well-known that every amenable group admits a F\o{}lner net, which can be chosen to be a sequence if the group is $\sigma$-compact \cite[Theorem 4.16]{MR961261}.

We use the expectation notation $\E_{n\in I} f(n) = \frac{1}{|I|} \int_{n\in I} f(n)$ for finite measure subsets $I \subset\AG$, where the integral is taken with respect to the left Haar measure.
The convergence theorem takes the following form.
\begin{theorem}
\label{thm:polynomial-norm-convergence}
Let $g_{1}, \dots, g_{j} \in\poly$ be measurable and $f_{0},\dots,f_{j} \in L^{\infty}(X)$ be arbitrary bounded functions.
Then for every Følner net $(\Fo_{\indx})_{\indx\in A}$ in $\AG$ and any choice of $(a_{\indx})_{\indx\in A} \subset \AG$ the limit
\begin{equation}
\label{eq:polynomial-average}
\lim_{\indx} \ave{m}{a_{\indx}\Fo_{\indx}} f_{0} g_{1}(m) f_{1} \cdot\dots\cdot g_{j}(m) f_{j}
\end{equation}
exists in $L^{2}(X)$ and is independent of the Følner net $(\Fo_{\indx})_{\indx\in A}$ and the shifts $(a_{\indx})_{\indx\in A}$.
\end{theorem}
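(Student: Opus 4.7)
The plan is to adapt Walsh's finitary argument \cite{MR2912715} from $\Z^{d}$-actions to the setting of Følner nets in $\AG$. First, I would recast the whole assertion in a single quantitative form: for every $\veps>0$, there exists a compact set $K\subset\AG$ and a tolerance $\delta>0$ such that for any two nonempty compact $F,F'\subset\AG$ that are $(K,\delta)$-Følner (i.e.\ $\sup_{k\in K}|kF\Delta F|/|F|<\delta$, similarly for $F'$) and any shifts $a,a'\in\AG$, the $L^{2}(X)$ distance between the averages $\ave{m}{aF}f_{0}g_{1}(m)f_{1}\cdot\dots\cdot g_{j}(m)f_{j}$ and $\ave{m}{a'F'}f_{0}g_{1}(m)f_{1}\cdot\dots\cdot g_{j}(m)f_{j}$ is at most $\veps$. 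This Cauchy-type uniformity simultaneously gives the existence of the limit along any Følner net $(\Fo_{\indx})$ and the independence of the limit from $(\Fo_{\indx})$ and $(a_{\indx})$: two Følner nets (with their shifts) can be interleaved into a third Følner net, so if the limits disagreed the Cauchy property along the interleaving would fail.

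The main ingredient is an induction on a Walsh-style complexity measure of the tuple $(g_{1},\dots,g_{j})$, built on the filtration $\poly\geq\poly[{\Gb[+1]}]\geq\dots$ from Corollary~\ref{cor:poly-group}. The inductive step uses a van der Corput lemma for Følner nets: for any bounded $u\from\AG\to L^{2}(X)$ and any compact $F,H\subset\AG$ with $F$ sufficiently invariant under $H\cup H^{-1}$,
\[
\Bigl\|\ave{m}{F}u(m)\Bigr\|_{L^{2}}^{2} \leq \ave{h}{H}\ave{m}{F}\<u(m),u(mh)\> + o_{H,F}(1).
\]
Applying this to $u(m)=f_{0}g_{1}(m)f_{1}\cdot\dots\cdot g_{j}(m)f_{j}$ and using that $G$ consists of isometric $L^{\infty}$-algebra homomorphisms, the inner product rearranges into a multicorrelation average whose polynomial multipliers involve $g_{i}(m)\inv g_{i}(mh)=(D_{T_{1,h}}g_{i})(m)$. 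By Theorem~\ref{thm:poly-group}, after a telescoping rewrite these derivatives lie in strictly deeper terms of the canonical prefiltration on $\poly$, so for an appropriate complexity measure the resulting tuple of polynomial multipliers is strictly simpler than the original one, and the inductive hypothesis applies uniformly in the auxiliary variable $h$. The base case is a tuple of length $-\infty$, in which every $g_{i}$ is identically the identity and the average collapses to the constant $f_{0}\cdot\dots\cdot f_{j}$.

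The principal obstacle is the combinatorial bookkeeping of the complexity parameter and the verification that it strictly decreases under the van der Corput step: this is exactly Walsh's technical innovation, and the reformulation in terms of prefiltrations (rather than Bergelson--Leibman vector degrees) makes it essentially tautological once the filtration calculus of Section~\ref{sec:poly} is in place. The secondary difficulty, specific to the amenable group setting, is to maintain quantitative control uniform in the Følner parameter and the shifts $a_{\indx}$; I would address this by formulating each step of the induction as a statement of the form "given $\veps$, one can choose $H$ depending only on $\veps$ and the complexity, and then any $F$ sufficiently invariant under $H$ works", so that the invariance demands on $F$ are determined in finitely many rounds. Independence of the shifts is then automatic because every estimate above is invariant under left translation of $F$ in $\AG$, while the inner averages over $h\in H$ are insensitive to translation of $H$ thanks to the Følner property.
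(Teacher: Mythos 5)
Your sketch replaces the actual engine of Walsh's argument by a PET/van der Corput induction, and this does not close. The van der Corput inequality bounds $\|\ave{m}{F}u(m)\|_{2}^{2}$ from above by an averaged scalar correlation $\ave{h}{H}\ave{m}{F}\<u(m),u(mh)\>$; it does \emph{not} bound the oscillation $\|\ave{m}{F}u(m)-\ave{m}{F'}u(m)\|_{2}$, which is what you must control to get the Cauchy property you claim at the outset. Even granting the reduction of complexity, the inductive hypothesis (norm convergence of an $L^{2}$-valued multicorrelation average) only tells you that the right-hand side of your van der Corput estimate converges; that controls $\limsup_{F}\|\ave{m}{F}u(m)\|_{2}$, not whether $\ave{m}{F}u(m)$ has a limit. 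Applying the estimate to a difference $\ave{m}{F}u(m)-\lambda$ would require knowing the putative limit $\lambda$ in advance, which is precisely what one is trying to prove, so the induction runs in a circle. Historically this is the known obstruction: PET/van der Corput proves statements of the form ``the averages tend to zero when a function is orthogonal to a suitable factor,'' not unconditional norm convergence, and pushing it further is exactly what the Host--Kra structure theory was built to do.

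The paper circumvents this by the mechanism you omitted: Kreisel's no-counterexample interpretation (metastability), a Hahn--Banach decomposition of $f_{j}$ into a ``structured'' part (small in a seminorm generated by uniformly reducible functions, cf.\ Lemma~\ref{lem:Sigma-ext-seminorm} and Definition~\ref{def:uniformly-N-reducible}), a ``pseudorandom'' part controlled by the Inverse Theorem~\ref{thm:inverse}, and an $L^{2}$-small error; this is the Structure Theorem~\ref{thm:structure}. The reduction operation $\<g_{j}|g_{i}\>_{a,b}$ plays the role your discrete derivative plays, but it is used to show that the structured part has its oscillation controlled by lower-complexity systems (Proposition~\ref{prop:metastability}), while the Inverse Theorem discharges the pseudorandom part. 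Complexity (Theorem~\ref{thm:finite-complexity}) is the induction parameter, but it drives a metastability estimate (Theorem~\ref{thm:metastability}), not a van der Corput estimate. Without the decomposition and the inverse theorem there is no way to control the part of $f_{j}$ that correlates with the low-complexity obstructions, and that is where your proposed induction breaks down.
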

In view of Example~\ref{ex:homo-poly} this result applies for instance if $\AG=G$ is the discrete Heisenberg group and $g_{k}(m)=m^{k}$.

We have to address two additional issues not arising in the discrete setting $\AG=\Z^{r}$.
The first is that the family of sets $\{a \Fo_{\indx}b\}_{a,b\in\AG,\indx\in A}$ need not be directed by inclusion.
However, it is directed by \emph{approximate} (up to a small proportion) inclusion.
This turns out to be sufficient for our purposes and gives uniform convergence over two-sided shifts $a\Fo_\indx b$ as a byproduct.

The second issue is that in general a function from a directed set to itself, unlike a sequence of natural numbers, cannot be majorized by a monotone function.
Thus we have to avoid to pass to monotone functions.

We remark that Theorem~\ref{thm:polynomial-norm-convergence} provides convergence of the averages in \cite[Theorem 1.2]{arXiv:1105.5612} on the joining (and not only of their expectations on the first factor) but fails to produce the invariance.
An analog of Theorem~\ref{thm:polynomial-norm-convergence} cannot hold for solvable groups of exponential growth in view of counterexamples due to Bergelson and Leibman \cite{MR2041260}.

Walsh's argument is based on Kreisel's \emph{no-counterexample interpretation of convergence}.
In order to explain this and some other ideas involved in this technique, we begin with a proof of a (known) quantitative version of the von Neumann mean ergodic theorem for multiplicators on the unit circle $\T$.

\section{A close look at the von Neumann mean ergodic theorem}
\label{sec:vn}
Throughout this section $\mu$ denotes a Borel measure on $\T$ and $Uf(\lambda) = \lambda f(\lambda)$ is a multiplicator on $L^2(\T,\mu)$.
The von Neumann mean ergodic theorem in its simplest form reads as follows.
\begin{multvNtheorem}
\label{thm:vn-mult}
Let $\mu$ and $U$ be as above.
Then the ergodic averages $a_{N} = \E_{n\leq N} U^n f$ converge in $L^2(\mu)$.
\end{multvNtheorem}
\begin{proof}
The averages $a_{N}$ are dominated by $|f|$ and converge pointwise, namely to $f(1)$ at $1$ and to zero elsewhere.
\end{proof}

It is well-known that no uniform bound on the rate of convergence of the ergodic averages can be given even if $U$ is similar to the Koopman operator of a measure-preserving transformation \cite{MR510630}.
However, there does exist a uniform bound on the \emph{rate of metastability} of the ergodic averages.
Let us recall the concept of \emph{metastability}.
\index{metastability}
The sequence $(a_N)$ converges if and only if it is Cauchy, i.e.
\[
\forall\epsilon>0 \, \exists M \, \forall N,N' (M \leq N,N' \implies \|a_N - a_{N'}\|_2 < \epsilon).
\]
The negation of this statement, i.e. ``$(a_N)$ is not Cauchy'' reads
\[
\exists\epsilon>0 \, \forall M \, \exists N,N' \colon M \leq N, N' ,\, \|a_N - a_{N'}\|_2 \geq \epsilon.
\]
Choosing witnesses $N(M)$, $N'(M)$ for each $M$ and defining
\[
F(M) = \max\{N(M), N'(M)\}
\]
we see that this is equivalent to
\[
\exists\epsilon>0 \, \exists F \from\N\to\N \, \forall M \, \exists N,N' \colon M \leq N,N' \leq F(M) ,\, \|a_N - a_{N'}\|_2 \geq \epsilon.
\]
Negating this we obtain that $(a_N)$ is Cauchy if and only if
\[
\forall\epsilon>0 \, \forall F \from\N\to\N \, \exists M \, \forall N,N'
(M \leq N,N' \leq F(M) \implies \|a_N - a_{N'}\|_2 < \epsilon ).
\]
This kind of condition, namely that the oscillation of a function is small on a finite interval is called \emph{metastability}.
A bound on the \emph{rate of metastability} is a bound on $M$ that may depend on $\epsilon$ and $F$ but not the sequence $(a_{N})_{N}$.

The appropriate reformulation of the von Neumann mean ergodic theorem for the operator $U$ in terms of metastability reads as follows.
\begin{multvNtheorem}[finitary version]
\label{thm:vn-fin}
Let $\mu$ and $U$ be as above. Then for every $\epsilon>0$, every function $F\from\N\to\N$ and every $f \in L^{2}(\mu)$ there exists a number $M$ such that for every $M \leq N, N' \leq F(M)$ we have
\begin{equation}
\label{eq:vn}
\Big\| \E_{n \leq N} U^n f - \E_{n \leq N'} U^n f \Big\|_2 < \epsilon.
\end{equation}
\end{multvNtheorem}
Although Theorem~\ref{thm:vn-fin} is equivalent to Theorem~\ref{thm:vn-mult} by the above considerations, we now attempt to prove it as stated.

\begin{proof}[Proof of Theorem~\ref{thm:vn-fin}]
It clearly suffices to consider strictly monotonically increasing functions $F$.
Let us assume $\|f\|_{2}=1$, take an arbitrary $M$ and see what can be said about the averages in \eqref{eq:vn}.

Suppose first that $f$ is supported near $1$, say on the disc $A_{M}$ with radius $\frac{\epsilon}{6 F(M)}$ and center $1$.
Then $U^n f$ is independent of $n$ up to a relative error of $\frac{\epsilon}{6}$ provided that $n \leq F(M)$, hence both averages are nearly equal.

Suppose now that the support of $f$ is bounded away from $1$, say $f$ is supported on the complement $B_{M}$ of the disc with radius $\frac{12}{\epsilon M}$ and center $1$.
Then the exponential sums $\frac1N \sum_{1 \leq n \leq N} \lambda^n$ are bounded by $\frac{\epsilon}{6}$ for all $\lambda$ in the support of $f$ provided that $N \geq M$, hence both averages are small.

However, there is an annulus whose intersection with the unit circle $E_M = \T \setminus ( A_M \cup B_M )$ does not fall in any of the two cases.
The key insight is that the regions $E_{M_i}$ can be made pairwise disjoint if one chooses a sufficiently rapidly growing sequence $(M_i)_i$, for instance it suffices to ensure $\frac{12}{\epsilon M_{i+1}} < \frac{\epsilon}{6 F(M_{i})}$.

Given $f$ with $\|f\|_2 \leq 1$, we can by the pigeonhole principle find an $i$ such that $\|f E_{M_i}\|_{2} < \epsilon/6$ (here we identify sets with their characteristic functions).
Thus we can split
\begin{equation}
\label{eq:decomposition-walsh}
f = \sigma + u + v,
\end{equation}
where $\sigma = f A_{M_i}$ is ``structured'', $u = f B_{M_i}$ is ``pseudorandom'', and $v = f E_{M_i}$ is $L^2$-small.
By the above considerations we obtain \eqref{eq:vn} for all $M_i \leq N, N' \leq F(M_i)$.
\end{proof}

Observe that the sequence $(M_i)_i$ in the foregoing proof does not depend on the measure $\mu$.
Moreover, a finite number of disjoint regions $E_{M_i}$ suffices to ensure that $f E_{M_i}$ is small for some $i$.
This yields the following strengthening of the von Neumann theorem.
\begin{multvNtheorem}[quantitative version]
\label{thm:vn-quan}
For every $\epsilon>0$ and every function $F\from\N\to\N$ there exist natural numbers $M_1, \dots, M_K$ such that for every $\mu$ and every $f \in L^{2}(\mu)$ with $\|f\|_{2}\leq 1$ there exists an $i$ such that for every $M_i \leq N, N' \leq F(M_i)$ we have
\[
\Big\| \E_{n \leq N} U^n f - \E_{n \leq N'} U^n f \Big\|_2 < \epsilon,
\]
where $Uf(\lambda)=\lambda f(\lambda)$ is a multiplicator as above.
\end{multvNtheorem}

The spectral theorem or the Herglotz-Bochner theorem can be used to deduce a similar result for any unitary operator.
The argument of Avigad, Gerhardy, and Towsner \cite[Theorem 2.16]{MR2550151} gives a similar result for arbitrary contractions on Hilbert spaces.
An even more precise result regarding contractions on uniformly convex spaces has been recently obtained by Avigad and Rute \cite{arxiv:1203.4124v1}.

Quantitative statements similar to Theorem~\ref{thm:vn-quan} with uniform bounds that do not depend on the particular measure-preserving system allow us to use a certain induction argument that breaks down if this uniformity is disregarded.
A decomposition of the form \eqref{eq:decomposition-walsh}, albeit a much more elaborate one (Structure Theorem~\ref{thm:structure}), will also play a prominent role.

\section{Complexity}
\label{sec:complexity}
In this section we give a streamlined treatment of the notion of \emph{complexity} due to Walsh \cite[\textsection 4]{MR2912715}.
It serves as the induction parameter in the proof of Theorem~\ref{thm:polynomial-norm-convergence}.

An ordered tuple $\bfg = (g_{0},\dots, g_{j})$ of measurable mappings from $\AG$ to $G$ in which $g_{0} \equiv 1_{G}$ is called a \emph{system} (it is not strictly necessary to include the constant mapping $g_{0}$ in the definition, but it comes in handy in inductive arguments).

The complexity of the trivial system $\bfg = (1_{G})$ is by definition at most zero, in symbols $\complexity\bfg \leq 0$.
A system has finite complexity if it can be reduced to the trivial system in finitely many steps by means of two operations, \emph{reduction} (used in Proposition~\ref{prop:metastability}) and \emph{cheating} (used in Theorem~\ref{thm:metastability}).

For $a,b \in \AG$ we define the \emph{$(a,b)$-reduction} of mappings $g,h \from\AG\to G$ to be the mapping
\index{reduction}
\[
\<g|h\>_{a,b}(n)
= D_{a,b}(g\inv)(n) T_{a,b}h(n)
= g(n)g(anb)\inv h(anb)
\]
and the $(a,b)$-reduction of a system $\bfg = (g_{0},\dots, g_{j})$ to be the system
\[
\bfg_{a,b}^{*}
:=
\bfg' \uplus \<g_{j}|\bfg'\>_{a,b}
=
\left( g_{0},\dots, g_{j-1}, \<g_{j}|g_{0}\>_{a,b},\dots,\<g_{j}|g_{j-1}\>_{a,b} \right),
\]
where we use the shorthand notation $\bfg' = (g_{0},\dots,g_{j-1})$ and $\<g_{j}|(g_{0},\dots,g_{j-1})\> = (\<g_{j}|g_{0}\>,\dots,\<g_{j}|g_{j-1}\>)$, and where the symbol ``$\uplus$'' denotes concatenation.
If the reduction $\bfg_{a,b}^{*}$ has complexity at most $\cplx-1$ for every $a,b \in \AG$, then the system $\bfg$ is defined to have complexity at most $\cplx$.

Furthermore, if $\bfg$ is a system of complexity at most $\cplx$ and the system $\tilde\bfg$ consists of functions of the form $gc$, where $g\in\bfg$ and $c\in G$, then we cheat and set $\complexity\tilde\bfg \leq \cplx$.
\index{cheating}
This definition tells that striking out constants and multiple occurrences of the same mapping in a system as well as rearranging mappings will not change the complexity, and adding new mappings can only increase the complexity, for example
\[
\complexity(1_{G},g_{2},g_{1}c,g_{1},c')
= \complexity(1_{G},g_{1},g_{2})
\leq  \complexity(1_{G},g_{1},g_{2},g_{3}).
\]
Note that cheating is transitive in the sense that if one can go from system $\bfg$ to system $\tilde\bfg$ in finitely many cheating steps, then one can also go from $\bfg$ to $\tilde\bfg$ in one cheating step.

In general a system need not have finite complexity.
We record here a streamlined proof of Walsh's result that that every polynomial system does have finite complexity.
We say that a system $(g_{0},\dots,g_{j})$ is $\Gb$-polynomial for a prefiltration $\Gb$ if $g_{i} \in \poly$ for every $i$.
For brevity we will denote discrete derivatives by
\[
D_{a,b}g(n) := g\inv(n) T_{a,b}g(n),
\text{ where }
T_{a,b}g(n) = g(anb).
\]
Note that for every $\Gb$-polynomial $g$ and $a,b\in\AG$ the translate $T_{a,b}g$ is also $\Gb$-polynomial (since $Tg=gD_{T}g$).
We will omit the indices $a,b$ in statements that hold for all $a,b$.
\begin{theorem}
\label{thm:finite-complexity}
The complexity of every $\Gb$-polynomial system $\tilde\bfg = (g_{0},\dots,g_{j})$ is bounded by a constant $\cplx(\vd,j)$ that only depends on the length $\vd$ of the prefiltration $\Gb$ and the size $j$ of the system.
\end{theorem}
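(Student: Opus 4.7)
I would proceed by a double induction: an outer induction on the length $\vd$ of the prefiltration $\Gb$, with the trivial base case $\vd=-\infty$, and an inner PET-style induction on a well-founded weight attached to the system. The engine is the identity
\[
\<g_j|g_i\>_{a,b}(n) = g_i(n) \cdot D_{a,b}(g_j\inv g_i)(n),
\]
obtained from the definition of $\<\cdot|\cdot\>_{a,b}$ by a direct algebraic manipulation (group $g_j(n)g_j(anb)\inv g_i(anb)$ around $g_i(n)$). By Theorem~\ref{thm:poly-group} the product $g_j\inv g_i$ is $\Gb$-polynomial and hence $D_{a,b}(g_j\inv g_i)$ is $\Gb[+1]$-polynomial, uniformly in $a,b\in\AG$. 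More generally, if $g_j\in\poly[{\Gb[+k^*]}]$ and $g_i\in\poly[{\Gb[+k_i]}]$, then $\<g_j|g_i\>_{a,b}\in\poly[{\Gb[+\min(k_i,k^*+1)]}]$.

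Attach to each $\Gb$-polynomial system $\bfg$ the multiset of values $k_i := \max\{k : g_i \in \poly[{\Gb[+k]}]\}$, ordered by the standard PET well-ordering (lexicographic comparison of sorted tuples, starting from the smallest level). A system of weight zero consists entirely of constantly-trivial maps and has complexity at most $0$ via cheating. For the inductive step, cheating permits arbitrary rearrangement of the system, so I place at the last position a function $g_j$ realizing the minimal level $k^*$ in the system. An $(a,b)$-reduction then replaces $g_j$ by the $j$ functions $\<g_j|g_i\>_{a,b}$, each sitting at level $\min(k_i,k^*+1)$. In the generic case where every other $g_i$ has $k_i>k^*$, all new entries sit at level $\geq k^*+1$, so the count at level $k^*$ drops by one and the PET weight strictly decreases.

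The main obstacle is the case where several $g_i$'s share the minimal level $k^*$: the count at level $k^*$ may then actually grow after one reduction, since each $\<g_j|g_i\>_{a,b}$ with $k_i=k^*$ again sits at level $k^*$. The remedy, as in the Bergelson--Leibman--Walsh PET argument, is to refine the weight by a second-level parameter that discriminates among level-$k^*$ polynomials (for instance, the ``vector degree'' in the sense of \eqref{eq:Gbard}), and to verify via the commutator identities \eqref{eq:a-bc}, \eqref{eq:ab-c}, and the Leibniz rule \eqref{eq:leibniz} that the refined weight still strictly decreases under reduction against a level-$k^*$ representative chosen to be minimal in this finer sense. Once this strict decrease is in place, the induction terminates after a number of reductions bounded by the height of the initial weight in the PET well-ordering, a bound depending only on $\vd$ and $j$, which produces the desired constant $\cplx(\vd,j)$.
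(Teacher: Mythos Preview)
Your framework (outer induction on the length $d$ of $\Gb$, inner reduction argument) matches the paper's, and your identity $\<g_j|g_i\>_{a,b}(n) = g_i(n)\cdot D_{a,b}(g_j\inv g_i)(n)$ is correct and equivalent to the paper's \eqref{eq:red-gjgi} specialised to $h=h'=1_G$. You also correctly locate the only real difficulty: the reduced system $\bfg^*_{a,b}$ has $2j$ entries rather than $j+1$, and when several $g_i$ share the minimal level $k^*$ the multiset of levels does not improve.

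The gap is that your proposed remedy is not a remedy. A ``second-level parameter that discriminates among level-$k^*$ polynomials'' in the style of the PET equivalence relation $\sim$ does not help here: each $\<g_j|g_i\>_{a,b}$ with $k_i=k^*$ satisfies $\<g_j|g_i\>_{a,b}\sim g_i$ (since $g_i\inv\<g_j|g_i\>_{a,b}=D_{a,b}(g_j\inv g_i)$ has level $\geq k^*+1$), so the number of $\sim$-classes at level $k^*$ goes down only when $g_j$ is alone in its class; meanwhile each surviving class roughly doubles in size with every reduction, so no count of classes or of elements per class is well-founded. The allusion to ``vector degree in the sense of \eqref{eq:Gbard}'' is a red herring: $\Gb$ is fixed and there is no finer superadditive grading available to discriminate further. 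As written, the plan defers the entire content of the theorem to an unspecified refinement.

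The paper's device is different in kind. It introduces the auxiliary Proposition~\ref{prop:finite-complexity}: every system arising in the process can be written as $g_0\bfh_0\uplus\dots\uplus g_j\bfh_j$ with the \emph{same} leading $\Gb$-polynomials $g_0,\dots,g_j$ and $\Gb[+1]$-polynomial tails $\bfh_i$. Identities \eqref{eq:red-gjgj}--\eqref{eq:red-gjgi} show that one reduction keeps the leading terms, doubles each $|\bfh_i|$ for $i<j$, and replaces $\bfh_j$ by its own reduction $\bfh_j^*$. Since $\bfh_j$ is $\Gb[+1]$-polynomial, the \emph{outer} hypothesis (Theorem~\ref{thm:finite-complexity} for length $d-1$) assigns it a finite complexity $\cplx_j$, and nested induction on $(j,\cplx_j)$ terminates. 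The ``second-level parameter'' you need is thus not a refinement of the level multiset at all, but the complexity of a $\Gb[+1]$-polynomial subsystem, supplied by the outer induction hypothesis.
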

The proof is by induction on $d$.
For induction purposes we need the formally stronger statement below.
We use the convenient shorthand notation $g(h_{0},\dots,h_{k})=(gh_{0},\dots,gh_{k})$.

\begin{proposition}
\label{prop:finite-complexity}
Let $\tilde\bfg = (g_{0},\dots,g_{j})$ be a $\Gb$-polynomial system.
Let also $\bfh_{0},\dots,\bfh_{j}$ be $\Gb[+1]$-polynomial systems and assume $\complexity\bfh_{j}\leq\cplx_{j}$.
Then the complexity of the system $\bfg = g_{0}\bfh_{0} \uplus\dots\uplus g_{j}\bfh_{j}$ is bounded by a constant $\cplx'=\cplx'(\vd,j,|\bfh_{0}|,\dots,|\bfh_{j-1}|,\cplx_{j})$, where $\vd$ is the length of $\Gb$.
\end{proposition}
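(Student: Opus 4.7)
The proof proceeds by outer induction on $d$. The base $d = -\infty$ is vacuous. Assume both Proposition~\ref{prop:finite-complexity} and Theorem~\ref{thm:finite-complexity} at length $d-1$; in particular, Theorem~\ref{thm:finite-complexity} at length $d-1$ applied to $\Gb[+1]$ gives a size-only bound $\cplx(d-1,|\bfh_i|)$ on $\complexity\bfh_i$ for each $\Gb[+1]$-polynomial system $\bfh_i$. This is crucial because the proposition postulates no bound on $\complexity\bfh_i$ for $i<j$.

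Within length $d$ I perform a nested induction with middle variable $j$ and inner variable $\cplx_j$. The middle base $j = 0$ gives $\bfg = \bfh_0$, hence $\complexity\bfg \leq \cplx_0$. For $j \geq 1$ and $\cplx_j \geq 1$ the inner step runs as follows. Fix $a,b \in \AG$ and apply the $(a,b)$-reduction to $\bfg$; letting $h$ be the last element of $\bfh_j$, direct expansion from the definitions gives
\[
\<g_j h \mid g_j h_j^k\>_{a,b} = g_j \cdot \<h \mid h_j^k\>_{a,b},
\quad
\<g_j h \mid g_i h_i^k\>_{a,b} = g_i \cdot B^{(a,b)}_{i,k} \text{ for } i<j,
\]
where the $B^{(a,b)}_{i,k}$ turn out to be $\Gb[+1]$-polynomial. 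Regrouping by cheating then gives $\bfg_{a,b}^* = g_0\bfh_0^{\mathrm{new}} \uplus \cdots \uplus g_j\bfh_j^{\mathrm{new}}$ with $\bfh_j^{\mathrm{new}} = (\bfh_j)_{a,b}^*$, so $\complexity\bfh_j^{\mathrm{new}} \leq \cplx_j - 1$ by the hypothesis on $\bfh_j$, while $|\bfh_i^{\mathrm{new}}| = 2|\bfh_i|$ for $i < j$. The inner inductive hypothesis at $\cplx_j - 1$ supplies a bound on $\complexity\bfg_{a,b}^*$ uniform in $a,b$, whence $\complexity\bfg$ exceeds it by one.

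In the inner base $\cplx_j = 0$, the hypothesis forces $\bfh_j$ to consist of constants (or to be empty). I cheat $\bfg$ from the system $\bar\bfg$ obtained by replacing $\bfh_j$ with the trivial system $(1_G)$; a single $(a,b)$-reduction of $\bar\bfg$ has no $i=j$ contribution (the last block is a singleton), so the $g_j$-block disappears entirely, leaving a $j$-block system of the same shape. The middle inductive hypothesis at $j-1$, with $\complexity\bar\bfh_{j-1}^{\mathrm{new}}$ bounded via Theorem~\ref{thm:finite-complexity} at length $d-1$, closes this case.

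The main obstacle is the algebraic identification $\<g_j h \mid g_i h_i^k\>_{a,b} = g_i \cdot B^{(a,b)}_{i,k}$ with $B^{(a,b)}_{i,k}$ being $\Gb[+1]$-polynomial. The reduction equals $g_j(n)h(n)h(anb)\inv g_j(anb)\inv g_i(anb)h_i^k(anb)$; setting $\phi := g_i\inv g_j$, the telescoping
\[
g_i(n)\inv g_j(n) \cdot g_j(anb)\inv g_i(anb) = \phi(n)\phi(anb)\inv
\]
collapses the outer $g$-factors to a $\Gb[+1]$-polynomial derivative of $\phi$, after which the residual reorderings repeatedly invoke Theorem~\ref{thm:poly-group}\eqref{poly-group:commutator}, specifically the consequence that conjugation of a $\Gb[+1]$-polynomial by a $\Gb$-polynomial remains $\Gb[+1]$-polynomial.
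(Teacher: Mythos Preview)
Your proof is correct and follows essentially the same route as the paper: outer induction on $d$ using Theorem~\ref{thm:finite-complexity} at length $d-1$, nested induction on $(j,\cplx_j)$, the same identification $\bfh_j^{\mathrm{new}}=(\bfh_j)^*_{a,b}$ for the $g_j$-block, and the same handling of the two base cases. The only cosmetic difference is in the algebraic bookkeeping for $\<g_jh\mid g_ih'\>_{a,b}$: the paper writes this as $g_i\,D(h^{-1}g_j^{-1})[D(h^{-1}g_j^{-1}),g_i]\,Dg_i\,Th'$, whereas you telescope via $\phi=g_i^{-1}g_j$ and then conjugate; both arrive at $g_i\cdot(\Gb[+1]\text{-polynomial})$ via Theorem~\ref{thm:poly-group}. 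One small imprecision: in the step $\cplx_j\geq 1$ the hypothesis $\complexity\bfh_j\leq\cplx_j$ does not directly give $\complexity(\bfh_j)^*_{a,b}\leq\cplx_j-1$, since $\bfh_j$ might carry that complexity bound only via cheating; you should cheat $\bfg$ first (replacing $\bfh_j$ by the system it was cheated from) before performing the reduction, exactly as the paper does with its ``possibly cheating first''.
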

The induction scheme is as follows.
Theorem~\ref{thm:finite-complexity} with length $\vd-1$ is used to prove Proposition~\ref{prop:finite-complexity} with length $\vd$, that in turn immediately implies Theorem~\ref{thm:finite-complexity} with length $\vd$.
The base case, namely Theorem~\ref{thm:finite-complexity} with $\vd=-\infty$, is trivial and $\cplx(-\infty,j)=0$.
\begin{proof}[Proof of Prop.~\ref{prop:finite-complexity} assuming Thm.~\ref{thm:finite-complexity} for length $\vd-1$]
It suffices to obtain a uniform bound on the complexity of $\bfg^{*}$ for every reduction $\bfg^{*}=\bfg^{*}_{a,b}$, possibly cheating first.
Splitting $\bfh_{j} = \bfh_{j}' \uplus (h)$ (where $\bfh_{j}'$ might be empty) we obtain
\begin{equation}
\label{eq:g-star}
\bfg^{*}
=
g_{0}\bfh_{0} \uplus\dots\uplus g_{j-1}\bfh_{j-1} \uplus g_{j}\bfh_{j}'
\uplus
\< g_{j} h | g_{0}\bfh_{0} \uplus\dots\uplus g_{j-1}\bfh_{j-1} \uplus g_{j}\bfh_{j}' \>.
\end{equation}
Note that for every $\Gb[+1]$-polynomial $h'$ we have
\begin{equation}
\label{eq:red-gjgj}
\< g_{j}h | g_{j}h' \> = g_{j}h (Tg_{j} Th)\inv Tg_{j} Th' = g_{j} h (Th)\inv Th' = g_{j} \<h|h'\>
\end{equation}
and
\begin{multline}
\label{eq:red-gjgi}
\< g_{j} h | g_{i}h' \> = D(h\inv g_{j}\inv) Tg_{i} Th' = D(h\inv g_{j}\inv) g_{i} Dg_{i} Th'\\
= g_{i} D(h\inv g_{j}\inv) [D(h\inv g_{j}\inv), g_{i}] Dg_{i} Th' = g_{i} \tilde h,
\end{multline}
where $\tilde h$ is a $\Gb[+1]$-polynomial by Theorem~\ref{thm:poly-group}.
By cheating we can rearrange the terms on the right-hand side of \eqref{eq:g-star}, obtaining
\begin{equation}
\label{eq:g-star-2}
\complexity\bfg^{*}
\leq
\complexity\left(
g_{0}\tilde\bfh_{0} \uplus\dots\uplus g_{j-1}\tilde\bfh_{j-1}
\uplus
g_{j} \left( \bfh_{j}' \uplus \< h | \bfh_{j}' \> \right)
\right)
\end{equation}
for some $\Gb[+1]$-polynomial systems $\tilde\bfh_{0},\dots,\tilde\bfh_{j-1}$ with cardinality $2|\bfh_{0}|,\dots,2|\bfh_{j-1}|$, respectively.

We use nested induction on $j$ and $\cplx_{j}$.
In the base case $j=0$ we have $\bfg=\bfh_{0}$ and we obtain the conclusion with
\[
\cplx'(\vd,0,\cplx_{0})=\cplx_{0}.
\]
Suppose that $j>0$ and the conclusion holds for $j-1$.
If $\cplx_{j}=0$, then by cheating we may assume $\bfh_{j}=(1_{G})$.
Moreover, \eqref{eq:g-star-2} becomes
\[
\complexity\bfg^{*}
\leq
\complexity\left(
g_{0}\tilde\bfh_{0} \uplus\dots\uplus g_{j-1}\tilde\bfh_{j-1}
\right).
\]
The induction hypothesis on $j$ and Theorem~\ref{thm:finite-complexity} applied to $\tilde\bfh_{j-1}$ yield the conclusion with the bound
\begin{multline*}
\cplx'(\vd,j,|\bfh_{0}|,\dots,|\bfh_{j-1}|,0)
=
\cplx'(\vd,j-1,2|\bfh_{0}|,\dots,2|\bfh_{j-2}|,\cplx(\vd-1,2|\bfh_{j-1}|))+1.
\end{multline*}
If $\cplx_{j}>0$, then by cheating we may assume $\bfh_{j}\neq (1_{G})$ and $\complexity\bfh_{j}^{*} \leq \cplx_{j} - 1$, and \eqref{eq:g-star-2} becomes
\[
\complexity\bfg^{*}
\leq
\complexity\left(
g_{0}\tilde\bfh_{0} \uplus\dots\uplus g_{j-1}\tilde\bfh_{j-1}
\uplus g_{j}\bfh_{j}^{*}
\right).
\]
The induction hypothesis on $\cplx_{j}$ now yields the conclusion with the bound
\[
\cplx'(\vd,j,|\bfh_{0}|,\dots,|\bfh_{j-1}|,\cplx_{j})
=
\cplx'(\vd,j,2|\bfh_{0}|,\dots,2|\bfh_{j-1}|,\cplx_{j}-1)+1.
\qedhere
\]
\end{proof}
\begin{proof}[Proof of Thm.~\ref{thm:finite-complexity} assuming Prop.~\ref{prop:finite-complexity} for length $\vd$]
Use Proposition~\ref{prop:finite-complexity} with system $\tilde\bfg$ as in the hypothesis and systems $\bfh_{0},\dots,\bfh_{j}$ being the trivial system $(1_{G})$.
This yields the bound
\[
\cplx(\vd,j) = \cplx'(\vd,j,1,\dots,1,0).
\qedhere
\]
\end{proof}

\section{The structure theorem}
\label{sec:structure}
The idea to prove a structure theorem for elements of a Hilbert space via the Hahn-Banach theorem is due to Gowers \cite[Proposition 3.7]{MR2669681}.
The insight of Walsh \cite[Proposition 2.3]{MR2912715} was to allow the ``structured'' and the ``pseudorandom'' part in the decomposition to take values in varying spaces that satisfy a monotonicity condition.

His assumption that these spaces are described by norms that are equivalent to the original Hilbert space norm can be removed.
In fact the structure theorem continues to hold for spaces described by extended seminorms\footnote{An \emph{extended seminorm} $\|\cdot\|$ on a vector space $H$ is a function with \emph{extended} real values $[0,+\infty]$ that is subadditive, homogeneous (i.e.\ $\|\lambda u\| = |\lambda| \|u\|$ if $\lambda\neq0$) and takes the value $0$ at $0$.}
\index{extended seminorm}
that are easier to construct in practice as we will see in Lemma~\ref{lem:Sigma-ext-seminorm}.

We caution the reader about the assignment of symbols: elements of $\AG$ are denoted in this chapter by $a,b,n,m$, indices (elements of $\iset$) by $\alpha,\beta,N,M$, real numbers by $\epsilon,\delta,C$, integers by $i,j,k,t,K,\cplx$, and bounded functions on $X$ by $f,\sigma$.
Without loss of generality we work with real-valued functions on $X$.

The Hahn-Banach theorem is used in the following form.
\begin{lemma}
\label{lem:sep}
Let $V_{i}$, $i=1,\dots,k$, be convex subsets of a Hilbert space $H$, at least one of which is open, and each of which contains $0$.
Let $V:=c_{1}V_{1}+\dots+c_{k}V_{k}$ with $c_{i}>0$ and take $f\not\in V$.
Then there exists a vector $\phi\in H$ such that $\<f,\phi\> \geq 1$ and $\<v,\phi\> < c_{i}\inv$ for every $v\in V_{i}$ and every $i$.
\end{lemma}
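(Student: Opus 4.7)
The plan is to reduce this to the standard geometric Hahn--Banach separation theorem in Hilbert space, using the Riesz representation theorem to turn the separating functional into an inner product with a vector $\phi$.

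First I would verify that $V$ is itself convex, open, and contains $0$. Each $c_i V_i$ is convex and contains $0$ (since $V_i$ does and $c_i>0$), and the Minkowski sum of finitely many convex sets is convex, so $V$ is convex. If $V_{i_0}$ is open, then $c_{i_0} V_{i_0}$ is open, and $V = c_{i_0}V_{i_0} + \sum_{i\neq i_0} c_i V_i$ is a union of translates of $c_{i_0}V_{i_0}$, hence open. Finally $0\in V$ since $0\in V_i$ for each $i$.

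Next I would apply the Hahn--Banach separation theorem (together with Riesz representation): since $V$ is open, convex, and $f\notin V$, there exists $\phi\in H\setminus\{0\}$ such that
\[
\<v,\phi\> < \<f,\phi\> \quad\text{for every } v\in V.
\]
Taking $v=0\in V$ gives $\<f,\phi\> > 0$, so after rescaling $\phi$ we may assume $\<f,\phi\> = 1$, which in particular gives $\<f,\phi\>\geq 1$ as required.

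Finally, to verify the bound on each $V_i$ separately, I would use that $0\in V_j$ for every $j$: for any $v\in V_i$, the element $c_i v$ can be written as $c_1\cdot 0+\dots + c_i v + \dots + c_k \cdot 0$, which lies in $V$. The separation inequality then yields $c_i\<v,\phi\> = \<c_i v,\phi\> < \<f,\phi\> = 1$, i.e.\ $\<v,\phi\> < c_i\inv$. There is no genuine obstacle here; the only subtle point is extracting the \emph{strict} inequality, which comes from the openness of $V$ built into the separation theorem, and the normalization step, which relies crucially on $0\in V$ to guarantee $\<f,\phi\>>0$ before rescaling.
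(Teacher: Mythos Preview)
Your proof is correct and follows essentially the same approach as the paper: apply Hahn--Banach separation to the open convex set $V$ (which contains $0$ and not $f$), normalize $\phi$, and then recover the bound on each $V_i$ from $c_iV_i\subset V$. The paper's proof is terser (it simply invokes a version of Hahn--Banach giving $\langle f,\phi\rangle\geq 1$ and $\langle v,\phi\rangle<1$ on $V$, then says ``the claim follows''), but you have filled in exactly the details it omits.
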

\begin{proof}
By the assumption the set $V$ is open, convex and does not contain $f$.
By the Hahn-Banach theorem there exists a $\phi\in H$ such that $\<f,\phi\> \geq 1$ and $\<v,\phi\> < 1$ for every $v\in V$.
The claim follows.
\end{proof}
The next result somewhat resembles Tao's structure theorem \cite{MR2274314}, though Tao's result gives additional information (positivity and boundedness of the structured part).
Gowers \cite{MR2669681} described tricks that allow to extract this kind of information from a proof via the Hahn-Banach theorem.
\begin{structuretheorem}
\label{thm:structure}
For every $\delta>0$,
any functions $\omega,\psi\from\iset\to\iset$,
and every $M_{\bullet}\in\iset$ there exists an increasing sequence of indices
\begin{equation}
\label{eq:decomposition-seq}
M_{\bullet} \leq M_1 \leq \dots \leq M_{\lceil 2\delta^{-2} \rceil}
\end{equation}
for which the following holds.
Let $\eta \from\R_+\to\R_+$ be any function and \mbox{$(\|\cdot\|_\indx)_{\indx\in\iset}$} be a net of extended seminorms on a Hilbert space $H$ such that the net of dual extended seminorms $(\|\cdot\|_\indx^*)_{\indx\in\iset}$ decreases monotonically.
Then for every $f\in H$ with $\|f\| \leq 1$ there exists a decomposition
\begin{equation}
\label{eq:decomposition}
f = \sigma + u + v
\end{equation}
and an $1\leq i\leq \lceil 2\delta^{-2} \rceil$ such that
\begin{equation}
\|\sigma\|_\beta < C^{\delta,\eta}_i, \quad
\|u\|_\alpha^* < \eta(C^{\delta,\eta}_i), \quad\text{and}\quad
\|v\| < \delta,
\end{equation}
where the indices $\alpha$ and $\beta$ satisfy $\omega(\alpha) \leq M_i$ and $\psi(M_i) \leq \beta$,
and where the constant $C^{\delta,\eta}_{i}$ belongs to a decreasing sequence that only depends on $\delta$ and $\eta$ and is defined inductively starting with
\begin{equation}
\label{eq:C}
C^{\delta,\eta}_{\lceil 2\delta^{-2} \rceil} = 1
\quad\text{by}\quad
C^{\delta,\eta}_{i-1} = \max \Big\{ C^{\delta,\eta}_{i}, \frac{2}{\eta(C^{\delta,\eta}_{i})} \Big\}.
\end{equation}
\end{structuretheorem}
In the sequel we will only use Theorem~\ref{thm:structure} with the identity function $\omega(\alpha):=\alpha$, in which case we can choose $\alpha=M_i$, and with $\delta$ and $\eta$ as in \eqref{eq:eta}.
\begin{proof}
It suffices to consider functions such that $\omega(N) \geq N$ and $\psi(N) \geq N$ for all $N$ (in typical applications $\psi$ grows rapidly).

The sequence $(M_{i})$ and auxiliary sequences $(\alpha_{i})$, $(\beta_{i})$ are defined inductively starting with $\alpha_{1}:=M_{\bullet}$ by
\[
M_{i} := \omega(\alpha_{i}),
\quad \beta_{i}:=\psi(M_{i}),
\quad \alpha_{i+1}:=\beta_{i},
\]
so that all three sequences increase monotonically.
Let $r$ be chosen later and assume that there is no $i \in \{1,\dots,r\}$ for which a decomposition of the form \eqref{eq:decomposition} with $\alpha=\alpha_{i}$, $\beta=\beta_{i}$ exists.

For every $i \in \{1,\dots,r\}$ we apply Lemma~\ref{lem:sep} with $V_{1},V_{2},V_{3}$ being the open unit balls of $\|\cdot\|_{\beta_{i}}$, $\|\cdot\|_{\alpha_{i}}^{*}$ and $\|\cdot\|$, respectively, and with $c_{1}=C_{i}$, $c_{2}=\eta(C_{i})$, $c_{3}=\delta$.
Note that $V_{3}$ is open in $H$.
We obtain vectors $\phi_{i} \in H$ such that
\[
\<\phi_{i},f\> \geq 1, \quad \|\phi_{i}\|_{\beta_{i}}^{*} \leq (C_{i})\inv, \quad \|\phi_{i}\|_{\alpha_{i}}^{**} \leq \eta(C_{i})\inv, \quad \|\phi_{i}\| \leq \delta\inv.
\]
Take $i<j$, then $\beta_{i}\leq \alpha_{j}$, and by \eqref{eq:C} we have
\begin{multline*}
|\<\phi_{i},\phi_{j}\>|
\leq \|\phi_{i}\|_{\alpha_{j}}^{*} \|\phi_{j}\|_{\alpha_{j}}^{**}
\leq \|\phi_{i}\|_{\beta_{i}}^{*} \|\phi_{j}\|_{\alpha_{j}}^{**}\\
\leq (C_{i})\inv \eta(C_{j})\inv
\leq (C_{j-1})\inv \eta(C_{j})\inv
\leq (2 \eta(C_{j})\inv)\inv \eta(C_{j})\inv
= \frac12,
\end{multline*}
so that
\[
r^{2} \leq \<\phi_{1}+\dots+\phi_{r},f\>^{2}
\leq \|\phi_{1}+\dots+\phi_{r}\|^{2}
\leq r \delta^{-2} + \frac{r^{2}-r}{2},
\]
which is a contradiction if $r \geq 2 \delta^{-2}$.
\end{proof}

\section{Reducible and structured functions}
\label{sec:reducible}
In this section we adapt Walsh's notion of a structured function and a corresponding inverse theorem to our context.
Informally, a function is reducible with respect to a system if its shifts can be approximated by shifts arising from reductions of this system, uniformly over Følner sets that are not too large.
A function is structured if it is a linear combination of reducible functions.

In order to formulate the relevant properties concisely we introduce two pieces of notation.
Given a Følner net $(\Fo_{\alpha})_{\alpha\in A}$, we call sets of the form $a \Fo_\alpha b$, $a,b\in\AG$, $\alpha\in A$, \emph{Følner sets}.
Such sets are usually denoted by the letter $I$.
For a Følner set $I$ we write $\lfloor I \rfloor = \alpha$ if $I = a \Fo_\alpha b$ for some $a,b\in\AG$.

By the Følner property for every $\gamma>0$ there exists a function $\varphi_{\gamma}\from A\to A$ such that
\begin{equation}
\label{eq:varphi}
\sup_{l\in \Fo_{\alpha}}|l \Fo_\beta \Delta \Fo_\beta| / |\Fo_\beta| < \gamma
\text{ for every } \beta \geq \varphi_{\gamma}(\alpha).
\end{equation}
\begin{definition}
\label{def:uniformly-N-reducible}
Let $\bfg=(g_{0},\dots,g_{j})$ be a system, $\gamma>0$ and $N \in \iset$.
A function $\sigma$ bounded by one is called \emph{uniformly $(\bfg, \gamma, N)$-reducible}
\index{uniformly reducible function}
(in symbols $\sigma\in\Sigma_{\bfg, \gamma, N}$)
if for every Følner set $I$ with $\varphi_{\gamma}(\lfloor I \rfloor) \leq N$ there exist
functions $b_0, \dots, b_{j-1}$ bounded by one, an arbitrary finite measure set $J \subset \AG$ and some $a\in\AG$ such that for every $l \in I$
\begin{equation}
\label{ineq:unif-red}
\Big\| g_j(l) \sigma - \E_{m \in J} \prod_{i=0}^{j-1} \<g_j | g_i\>_{a,m}(l) b_i \Big\|_\infty
< \gamma.
\end{equation}
\end{definition}
Walsh's definition of $L$-reducibility with parameter $\epsilon$ corresponds to uniform $(\bfg, \gamma, N)$-reducibility with $N=\varphi_{\gamma}(L)$ and a certain $\gamma=\gamma(\epsilon)$ that will now be defined along with other parameters used in the proof of the main result.

Given $\epsilon>0$ we fix
\begin{equation}
\label{eq:eta}
\delta = \frac{\epsilon}{2^{2}\cdot 3^{2}}
\quad \text{and}
\quad \eta(x) = \frac{\epsilon^{2}}{2^{3}\cdot 3^{3} x}
\end{equation}
and define the decreasing sequence
$C_{1}^{\delta,\eta} \geq \dots \geq C_{\lceil 2\delta^{-2} \rceil}^{\delta,\eta}$
as in \eqref{eq:C}.
It is in turn used to define the function
\begin{equation}
\label{eq:gamma}
\gamma = \gamma^{1}(\epsilon) = \frac{\epsilon}{3\cdot 8 C^{*}},
\quad\text{where}\quad C^{*} = C_{1}^{\delta,\eta},
\end{equation}
and its iterates $\gamma^{\cplx+1}(\epsilon) = \gamma^{\cplx}(\gamma)$.

The ergodic average corresponding to a system $\bfg = (g_{0},\dots,g_{j})$, bounded functions $f_{0},\dots, f_{j}$ and a finite measure set $I \subset \AG$ is denoted by
\[
\Av{I}[f_0, \dots, f_j]
:=
\E_{n\in I} \prod_{i=0}^j g_i(n) f_i.
\]
The inverse theorem below tells that any function that gives rise to a large ergodic average correlates with a reducible function.
\begin{inversetheorem}
\label{thm:inverse}
Let $\epsilon > 0$.
Suppose that $\|u\|_\infty \leq 3C$, the functions $f_0, \dots, f_{j-1}$ are bounded by one, and $\| \Av{I}[f_0, \dots, f_{j-1}, u] \|_2 > \epsilon/6$ for some Følner set $I = a \Fo_N b$.
Then there exists a uniformly $(\bfg, \gamma, N)$-reducible function $\sigma$ such that $\<u,\sigma\> > 2\eta(C)$.
\end{inversetheorem}
\begin{proof}
Set $b_0 := \Av{I}[f_0, \dots, f_{j-1}, u] f_{0} / \|u\|_\infty$, so that $\|b_0\|_\infty \leq 1$, and $b_{1}:=f_{1},\dots,b_{j-1}:=f_{j-1}$.
Recall $g_{0}\equiv 1_{G}$ and note that
\begin{align*}
2\eta(C)
&<
\|u\|_\infty\inv \left\| \Av{I} [f_0, \dots, f_{j-1}, u] \right\|_2^2\\
&=
\< \E_{n\in I} \prod_{i=0}^{j-1} g_i(n) f_i \cdot g_j(n) u, \frac{\Av{I}[f_0, \dots, f_{j-1}, u]}{\|u\|_\infty} \>\\
&=
\E_{n\in I} \< g_j(n) u, \prod_{i=0}^{j-1} g_i(n) b_i \>\\
&=
\< u, \underbrace{\E_{m\in \Fo_{N}} \prod_{i=0}^{j-1} g_j(amb)\inv g_i(amb) b_i}_{=: \sigma} \>.
\end{align*}
We claim that $\sigma$ is uniformly $(\bfg, \gamma, N)$-reducible.

Consider a Følner set $\tilde a \Fo_L \tilde b$ with $\varphi_{\gamma}(L) \leq N$.
We have to show \eqref{ineq:unif-red} for some $J\subset\AG$ and every $l\in \Fo_L$.
By definition \eqref{eq:varphi} of $\varphi_{\gamma}$ we obtain
\[
\Big\| \sigma - \E_{m\in \Fo_N} \prod_{i=0}^{j-1} g_j(almb)\inv g_i(almb) b_i \Big\|_\infty
\leq \frac{|l \Fo_N \Delta \Fo_N|}{|\Fo_N|} < \gamma.
\]
Since $g_j(\tilde al\tilde b)$ is an isometric algebra homomorphism, we see that $g_j(\tilde al\tilde b) \sigma$ is uniformly approximated by
\[
\E_{m\in \Fo_N} \prod_{i=0}^{j-1} g_j(\tilde al\tilde b) g_j(almb)\inv g_i(almb) b_i.
\]
Splitting $almb = a \tilde a\inv \cdot \tilde al\tilde b \cdot \tilde b\inv mb$ we can write this function as
\[
\E_{m\in \Fo_N} \prod_{i=0}^{j-1} \<g_j,g_i\>_{a \tilde a\inv,\tilde b\inv mb}(\tilde al\tilde b) b_i
=
\E_{m\in \tilde b\inv \Fo_N b} \prod_{i=0}^{j-1} \<g_j,g_i\>_{a \tilde a\inv,m}(\tilde al\tilde b) b_i,
\]
which gives \eqref{ineq:unif-red} with $J=\tilde b\inv \Fo_N b$ for the Følner set $I = \tilde a \Fo_L \tilde b$.
\end{proof}
Structure will be measured by extended seminorms associated to sets $\Sigma$ of reducible functions by the following easy lemma.
\begin{lemma}[cf. {\cite[Corollary 3.5]{MR2669681}}]
\label{lem:Sigma-ext-seminorm}
Let $H$ be an inner product space and $\Sigma\subset H$.
Then the formula
\begin{equation}
\|f\|_{\Sigma} := \inf\Big\{ \sum_{t=0}^{k-1}|\lambda_{t}| : f=\sum_{t=0}^{k-1}\lambda_{t}\sigma_{t}, \sigma_{t} \in \Sigma \Big\},
\end{equation}
where empty sums are allowed and the infimum of an empty set is by convention $+\infty$, defines an extended seminorm on $H$ whose dual extended seminorm is given by
\begin{equation}
\|f\|_{\Sigma}^{*} := \sup_{\phi\in H: \|\phi\|_{\Sigma}\leq 1} |\<f,\phi\>| = \sup_{\sigma\in\Sigma}|\<f,\sigma\>|.
\end{equation}
\end{lemma}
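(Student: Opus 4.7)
The plan is to verify the three extended-seminorm axioms directly from the definition, and then establish the dual formula by two separate inequalities.

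First, I would check that $\|\cdot\|_\Sigma$ is an extended seminorm. Taking the empty decomposition of $0$ yields $\|0\|_\Sigma = 0$. For homogeneity with $\lambda\neq 0$, multiplying any decomposition $f = \sum_t \lambda_t \sigma_t$ through by $\lambda$ gives $\lambda f = \sum_t (\lambda\lambda_t)\sigma_t$, and since $\sum |\lambda\lambda_t| = |\lambda|\sum|\lambda_t|$, a pass to the infimum in both directions yields $\|\lambda f\|_\Sigma = |\lambda|\|f\|_\Sigma$ (with the convention $|\lambda|\cdot\infty = \infty$). For subadditivity, concatenate decompositions of $f$ and $g$: given $\epsilon>0$ pick decompositions nearly attaining $\|f\|_\Sigma$ and $\|g\|_\Sigma$, their concatenation decomposes $f+g$ with $\ell^1$-sum of coefficients at most $\|f\|_\Sigma+\|g\|_\Sigma+\epsilon$. (If either term is $+\infty$ the inequality is trivial.)

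Second, for the dual formula, the inequality $\sup_{\sigma\in\Sigma}|\<f,\sigma\>|\leq \|f\|_\Sigma^*$ is immediate: any $\sigma\in\Sigma$ admits the single-term decomposition $\sigma = 1\cdot\sigma$, so $\|\sigma\|_\Sigma\leq 1$ and hence $|\<f,\sigma\>|\leq \|f\|_\Sigma^*$. For the reverse inequality, let $S := \sup_{\sigma\in\Sigma}|\<f,\sigma\>|$ and take any $\phi\in H$ with $\|\phi\|_\Sigma\leq 1$. Given $\epsilon>0$, choose a decomposition $\phi=\sum_t\lambda_t\sigma_t$ with $\sum_t|\lambda_t|\leq 1+\epsilon$ (such a decomposition exists by definition of the infimum). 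Then
\[
|\<f,\phi\>|
\leq \sum_t |\lambda_t|\,|\<f,\sigma_t\>|
\leq (1+\epsilon)\,S,
\]
and letting $\epsilon\to 0$ gives $|\<f,\phi\>|\leq S$. Taking the supremum over such $\phi$ yields $\|f\|_\Sigma^*\leq S$.

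I expect no serious obstacle: the only mild subtleties are bookkeeping with the conventions $\inf\emptyset=+\infty$ and $|\lambda|\cdot\infty=\infty$, and the observation that the case $S=+\infty$ is automatically consistent (infinite $S$ forces infinite $\|f\|_\Sigma^*$ because witnesses $\sigma\in\Sigma$ themselves have $\|\sigma\|_\Sigma\leq 1$), while the case $\|f\|_\Sigma^*=+\infty$ with $S<\infty$ is ruled out by the second inequality above.
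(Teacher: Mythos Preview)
Your proof is correct. The paper does not actually give a proof of this lemma, calling it ``easy'' and citing Gowers; your write-up supplies exactly the routine verification the paper omits.
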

Heuristically, a function with small dual seminorm is pseudorandom since it does not correlate much with structured functions.

\section{Metastability of averages for finite complexity systems}
\label{sec:metastability}
We come to the proof of the norm convergence result.
Instead of Theorem~\ref{thm:polynomial-norm-convergence} we consider a quantitative statement that is strictly stronger in the same way as the quantitative von Neumann Theorem~\ref{thm:vn-quan} is strictly stronger than the finitary von Neumann Theorem~\ref{thm:vn-fin}.
We use the notation
\[
\Av{I,I'}[f_0, \dots, f_j]
:=
\Av{I}[f_0, \dots, f_j] - \Av{I'}[f_0, \dots, f_j].
\]
for the difference between two multiple averages.
We will now quantify the statement from the introduction that F\o{}lner sets are approximately ordered by inclusion.

A finite measure set $K$ is said to be \emph{$\gamma$-approximately included} in a measurable set $I$, in symbols $K \lesssim_{\gamma} I$, if $|K \setminus I|/|K| < \gamma$.
The next lemma states that the family of Følner sets is directed by $\gamma$-approximate inclusion.
\begin{lemma}
\label{lem:ceil}
For every $\gamma>0$ and any compact sets $I$ and $I'$ with positive measure
there exists an index $\lceil I, I' \rceil_{\gamma} \in A$ with the property that
for every $\beta \geq \lceil I, I' \rceil_{\gamma}$ there exists some $b\in\AG$ such that
$I \lesssim_{\gamma} \Fo_\beta b$ and $I' \lesssim_{\gamma} \Fo_\beta b$.
\end{lemma}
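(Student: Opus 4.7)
The plan is a direct averaging argument against the Følner property. I would set $K := I \cup I'$ (a compact subset of $\AG$) and apply the Følner condition with parameter $\gamma/2$ to obtain an index $\beta_0 \in A$ such that $\sup_{y \in K} |y\Fo_\beta \Delta \Fo_\beta|/|\Fo_\beta| < \gamma/2$ whenever $\beta \geq \beta_0$, and declare $\lceil I, I'\rceil_\gamma := \beta_0$.

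The key observation is that, after parametrizing the shift as $b = c\inv$, the quantity defining $\lesssim_\gamma$ unfolds into an integral against left Haar measure on $I$: since $x \in I \setminus \Fo_\beta c\inv$ iff $x \in I$ and $xc \notin \Fo_\beta$, we have
\[
\frac{|I \setminus \Fo_\beta c\inv|}{|I|} = \frac{1}{|I|} \int_I (1 - \chi_{\Fo_\beta}(yc)) \, d|y|.
\]
Averaging this quantity over $c \in \Fo_\beta$ with respect to left Haar and applying Fubini yields
\[
\E_{c \in \Fo_\beta} \frac{|I \setminus \Fo_\beta c\inv|}{|I|} = \frac{1}{|I|} \int_I \frac{|\Fo_\beta \setminus y\inv\Fo_\beta|}{|\Fo_\beta|} \, d|y|.
\]
By left invariance, $|\Fo_\beta \setminus y\inv\Fo_\beta| = |y\Fo_\beta \setminus \Fo_\beta| < (\gamma/2)|\Fo_\beta|$ whenever $y \in K$ and $\beta \geq \beta_0$, so this average is strictly less than $\gamma/2$, and the identical bound holds with $I$ replaced by $I'$.

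To conclude, I would sum the two averages to get a total bound of $\gamma$. Since both summands are non-negative, on a positive-measure set of $c \in \Fo_\beta$ both $|I \setminus \Fo_\beta c\inv|/|I| < \gamma$ and $|I' \setminus \Fo_\beta c\inv|/|I'| < \gamma$ hold simultaneously. Setting $b := c\inv$ for any such $c$ completes the construction.

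The main potential obstacle I would worry about is that $\AG$ is not assumed unimodular, so a naive translation argument comparing $|Ib|$ with $|I|$ would introduce the modular function. However, the identity displayed in the second paragraph rewrites the quantity of interest directly as an integral over $I$ against left Haar measure, and the Fubini step uses only left invariance, so the argument goes through for arbitrary amenable $\AG$ without invoking the modular function at all.
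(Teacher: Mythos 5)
Your proof is correct and uses essentially the same Fubini-plus-pigeonhole argument as the paper. The only organizational difference: the paper applies the Fubini computation once to $K = I \cup I'$ with a Følner parameter $c = \gamma\min\{|I|,|I'|\}/|K|$ and then bounds $|I\setminus\Fo_\beta b| \leq |K\setminus\Fo_\beta b|$, whereas you apply it separately to $I$ and $I'$ with the fixed parameter $\gamma/2$ and combine by summing the averages; your variant avoids the dependence of the Følner parameter on the relative measures of $I$, $I'$, $K$, but the underlying computation is the same.
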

Note that the expectation satisfies $\E_{n\in aIb} f(n) = \E_{n\in I} f(anb)$ for any $a,b\in\AG$.
\begin{proof}
Let $K \subset \AG$ be compact and $c>0$ to be chosen later.
By the Følner property there exists an index $\alpha_{0}\in A$ such that for every $\alpha \geq \alpha_{0}$ we have $|l\Fo_\alpha \cap \Fo_\alpha|/|\Fo_\alpha| > 1-c$ for all $l\in K$.
Integrating over $K$ and using Fubini's theorem we obtain
\begin{align*}
1-c
&< \E_{l\in K} \E_{\tilde b\in \Fo_\alpha} 1_{\Fo_\alpha}(l \tilde b)\\
&= \E_{\tilde b\in \Fo_\alpha} \E_{l\in K} 1_{\Fo_\alpha \tilde b\inv}(l)\\
&= \E_{\tilde b\in \Fo_\alpha} |K \cap \Fo_\alpha \tilde b\inv|/|K|.
\end{align*}
Therefore there exists a $b\in\AG$ (that may depend on $\alpha \geq \alpha_0$) such that $|K \cap \Fo_\alpha b|/|K| > 1-c$, so $|K \setminus \Fo_\alpha b|/|K| < c$.

We apply this with $K = I \cup I'$ and $c = \gamma \frac{\min\{|I|,|I'|\}}{|K|}$.
Let $\lceil I, I' \rceil_{\gamma} := \alpha_{0}$ as above and $\alpha \geq \lceil I, I' \rceil_{\gamma}$.
Then for an appropriate $b\in\AG$ we have
\[
|I \setminus \Fo_\alpha b|/|I| \leq |K \setminus \Fo_\alpha b|/|I| < |K|\beta/|I| \leq \gamma,
\]
and analogously for $I'$.
\end{proof}
With this notation in place, we can formulate the main metastability result.
\begin{theorem}
\label{thm:metastability}
For every complexity $\cplx\in\N$ and every $\epsilon>0$ there exists $K_{\cplx,\epsilon}\in\N$ such that
for every function $F \from \iset \to \iset$ and every $M \in \iset$ there exists a tuple of indices
\begin{equation}
\label{eq:main-thm-seq}
M\leq M^{\cplx,\epsilon,F}_1 , \dots , M^{\cplx,\epsilon,F}_{K_{\cplx,\epsilon}} \in \iset
\end{equation}
of size $K_{\cplx,\epsilon}$
such that for every system $\bfg$ with complexity at most $\cplx$ and every choice of functions $f_0, \dots, f_j \in L^{\infty}(X)$ bounded by one there exists $1 \leq i \leq K_{\cplx,\epsilon}$ such that
for all Følner sets $I,I'$ with $M^{\cplx,\epsilon,F}_i \leq \lfloor I\rfloor,\lfloor I'\rfloor$ and $\lceil I, I'\rceil_{\gamma^{\cplx}(\epsilon)} \leq F(M^{\cplx,\epsilon,F}_i)$ we have
\begin{equation}
\label{eq:main-thm-est}
\| \Av{I,I'}[f_0, \dots, f_j] \|_2 < \epsilon.
\end{equation}
\end{theorem}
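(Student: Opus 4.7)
The plan is to prove Theorem~\ref{thm:metastability} by induction on the complexity $\cplx$. The base case $\cplx=0$ is trivial: any system of complexity zero is obtained by cheating from $\bfg=(1_G)$ and hence consists only of constant mappings, for which $\Av{I}[f_0,\dots,f_k]$ is independent of $I$, so \eqref{eq:main-thm-est} holds identically with $K_{0,\epsilon}=1$ and $M_1^{0,\epsilon,F}=M$.

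For the inductive step, fix $\cplx\geq 1$, assume the theorem for complexity $\cplx-1$, and consider a system $\bfg=(g_0,\dots,g_j)$ of complexity at most $\cplx$. The decisive structural fact is that every $(a,b)$-reduction $\bfg^*_{a,b}$ has complexity at most $\cplx-1$, so the induction hypothesis will apply to averages constructed from reduced systems. I will apply Structure Theorem~\ref{thm:structure} to the last function $f_j$ with respect to the extended seminorms $\|\cdot\|_\alpha := \|\cdot\|_{\Sigma_{\bfg,\gamma^{\cplx}(\epsilon),\alpha}}$ produced by Lemma~\ref{lem:Sigma-ext-seminorm} from the sets of uniformly reducible functions of Definition~\ref{def:uniformly-N-reducible}. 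The required monotonicity of the dual seminorms is immediate: enlarging $\alpha$ imposes the reducibility condition~\eqref{ineq:unif-red} on strictly more Følner sets, so $\Sigma_{\bfg,\gamma^{\cplx}(\epsilon),\alpha}$ shrinks as $\alpha$ grows and $\|\cdot\|_\alpha^*$, being a supremum over that set, decreases monotonically.

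With $\delta$ and $\eta$ fixed as in \eqref{eq:eta}, $\omega(\alpha):=\alpha$, and an appropriate rapidly growing $\psi$, Theorem~\ref{thm:structure} yields a sequence of scales $M_1\leq\dots\leq M_{\lceil 2\delta^{-2}\rceil}$ and, for any given $f_j$ with $\|f_j\|_2\leq 1$, an index $i$ together with a decomposition $f_j=\sigma+u+v$ controlled at scales $\alpha=M_i$ and $\beta=\psi(M_i)$. I then estimate $\|\Av{I,I'}[f_0,\dots,f_{j-1},f_j]\|_2$ via the triangle inequality applied to this decomposition. The $L^2$-small part $v$ contributes at most $2\delta$. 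The pseudorandom part $u$ is handled by the contrapositive of Inverse Theorem~\ref{thm:inverse}: any lower bound $\|\Av{I}[f_0,\dots,f_{j-1},u]\|_2>\epsilon/6$ would produce a function in $\Sigma_{\bfg,\gamma^{\cplx}(\epsilon),\lfloor I\rfloor}\subset\Sigma_{\bfg,\gamma^{\cplx}(\epsilon),M_i}$ correlating with $u$ at inner product exceeding $2\eta(C)$, contradicting $\|u\|_{M_i}^*<\eta(C)$. For the structured part $\sigma=\sum_t\lambda_t\sigma_t$ with $\sigma_t\in\Sigma_{\bfg,\gamma^{\cplx}(\epsilon),\beta}$, the approximation~\eqref{ineq:unif-red} (valid because $\psi$ is chosen so that $\varphi_{\gamma^{\cplx}(\epsilon)}(\lfloor I\rfloor)\leq\beta$ throughout the relevant window) rewrites each $\Av{I}[f_0,\dots,f_{j-1},\sigma_t]$, up to error $\gamma^{\cplx}(\epsilon)$, as an average $\E_{m\in J}\Av[\bfg^*_{a,m}]{I}[f_0,\dots,f_{j-1},b_{0,t},\dots,b_{j-1,t}]$ over reduced systems of complexity at most $\cplx-1$. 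Invoking the induction hypothesis uniformly in $a,m,t$ on these reduced averages then supplies the metastability bound.

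The principal technical difficulty lies in the simultaneous calibration of $\omega$, $\psi$, and the auxiliary function $F'$ passed to the induction hypothesis. On one hand, $\psi$ must dominate $\varphi_{\gamma^{\cplx}(\epsilon)}\circ F$ so that the reducibility approximation remains valid across the whole window $\lceil I,I'\rceil_{\gamma^{\cplx}(\epsilon)}\leq F(M_i)$. On the other hand, the inner induction must be applied with $\epsilon$ replaced by a smaller quantity comparable to $\eta(C^*)/C^*$, and with a function $F'$ engineered so that its metastability scales still fall within the same window at the outer level. The final sequence \eqref{eq:main-thm-seq} is obtained by concatenating the outer sequence from Theorem~\ref{thm:structure} with all inner sequences produced by the induction hypothesis at each outer scale $M_i$, giving $K_{\cplx,\epsilon}=\lceil 2\delta^{-2}\rceil\cdot K_{\cplx-1,\epsilon'}$ for a suitable $\epsilon'<\epsilon$; bookkeeping this nesting, and in particular ensuring that cheating is used to match reduced averages to the exact form demanded by the inductive statement, is where the care is required.
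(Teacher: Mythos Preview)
Your overall architecture matches the paper's: induction on $\cplx$, Structure Theorem~\ref{thm:structure} applied to $f_j$ with seminorms built from $\Sigma_{\bfg,\gamma,N}$, then the triangle inequality over $\sigma+u+v$, with $v$ handled by smallness, $u$ by Inverse Theorem~\ref{thm:inverse}, and $\sigma$ by the induction hypothesis via reduction. Two genuine gaps remain.

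\textbf{First}, the Inverse Theorem requires $\|u\|_\infty\leq 3C$, which the Structure Theorem does not deliver. One only has $\|\sigma\|_\beta<C$, $\|u\|_\alpha^*<\eta(C)$, $\|v\|_2<\delta$. The paper fixes this by a truncation: on $S=\{|v|\leq C_i\}$ one gets $|u|1_S\leq 3C_i$, while $u1_{S^\complement}$ is absorbed into $v$ at the cost of inflating $\|v\|_2$ to $4\delta$ and $\|u\|_\alpha^*$ to $2\eta(C_i)$, using Chebyshev on $|v|$. Without this step your contrapositive of Theorem~\ref{thm:inverse} does not apply.

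\textbf{Second, and more seriously}, the sentence ``Invoking the induction hypothesis uniformly in $a,m,t$'' hides the essential difficulty. After rewriting the structured contribution as in the paper's \eqref{eq:est-prop}, you face a weighted average over pairs $(t,m)$ of norms $\|\Av[\bfg^*_{a^t,m}]{I,I'}[\dots]\|_2$ for \emph{different} reduced systems. The induction hypothesis, applied to each such system, yields an index $i=i(t,m)\in\{1,\dots,K_{\cplx-1,\gamma}\}$ and a corresponding window; these windows need not coincide, and a single pigeonhole in $i$ only controls a fraction $1/K_{\cplx-1,\gamma}$ of the weight, leaving the rest with the trivial bound $1$, which is far too large. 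The paper resolves this with an $r$-fold iterated pigeonhole (Proposition~\ref{prop:metastability}): one builds nested functions $F_r=F,F_{r-1},\dots,F_1$ and indices $\tilde M^{(i_1,\dots,i_s)}$ so that at each stage a further $1/K$ fraction of the surviving pairs is captured, until only a $((K-1)/K)^r<\gamma$ fraction remains uncontrolled. This is why the correct count is $K_{\cplx,\epsilon}=\lceil 2\delta^{-2}\rceil\cdot (K_{\cplx-1,\gamma})^{r}$ rather than your $\lceil 2\delta^{-2}\rceil\cdot K_{\cplx-1,\epsilon'}$. Your final paragraph gestures at calibration difficulties but does not supply this mechanism; as written, the inductive step does not close. (Incidentally, the seminorms should use $\gamma=\gamma^{1}(\epsilon)$, matching the output of the Inverse Theorem; the iterated $\gamma^{\cplx}(\epsilon)$ enters only through the approximate-inclusion parameter $\lceil I,I'\rceil$.)
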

Recall that $\lceil I,I' \rceil_{\gamma^{\cplx}(\epsilon)}$ was defined in Lemma~\ref{lem:ceil}.
Theorem~\ref{thm:metastability} will be proved by induction on the complexity $\cplx$.
As an intermediate step we need the following.
\begin{proposition}
\label{prop:metastability}
For every complexity $\cplx\in\N$ and every $\epsilon>0$ there exists $\tilde K_{\cplx,\epsilon}\in\N$ such that
for every function $F \from \iset \to \iset$ and every $\tilde M \in \iset$ there exists a tuple of indices
\begin{equation}
\label{eq:main-prop-seq}
\tilde M \leq \tilde M^{\cplx,\epsilon,F}_1 , \dots , \tilde M^{\cplx,\epsilon,F}_{\tilde K_{\cplx,\epsilon}} \in \iset
\end{equation}
of size $\tilde K_{\cplx,\epsilon}$
as well as an index $N = N_{\cplx,\epsilon,F}(\tilde M)$ such that the following holds.
For every system $\bfg$ such that every reduction $\bfg^*_{a,b}$ ($a,b\in\AG$) has complexity at most $\cplx$,
every choice of functions $f_0, \dots, f_{j-1} \in L^{\infty}(X)$ bounded by one,
and every finite linear combination $\sum_{t} \lambda_t \sigma_t$ of uniformly $(\bfg, \gamma, N)$-reducible functions $\sigma_{t}$
there exists $1 \leq \tilde i \leq \tilde K_{\cplx,\epsilon}$ such that
for all Følner sets $I,I'$ with $\tilde M^{\cplx,\epsilon,F}_{\tilde i} \leq \lfloor I\rfloor,\lfloor I'\rfloor$ and $\lceil I, I'\rceil_{\gamma^{\cplx+1}(\epsilon)} \leq F(\tilde M^{\cplx,\epsilon,F}_{\tilde i})$ we have
\begin{equation}
\Big\| \Av{I,I'}[f_0, \dots, f_{j-1},\sum_{t} \lambda_t \sigma_t] \Big\|_2 < 8 \gamma \sum_t |\lambda_t|.
\end{equation}
\end{proposition}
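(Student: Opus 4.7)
The strategy is to use the $(\bfg, \gamma, N)$-reducibility of each $\sigma_t$ to approximate the averages $\Av{I}[f_0, \dots, f_{j-1}, \sigma_t]$ by averages over the reduced systems $\bfg^*_{a_t, m}$ (which have complexity at most $\cplx$ by hypothesis), and then to invoke Theorem~\ref{thm:metastability} at complexity $\cplx$ on these reduced systems. I would set $\epsilon' := \gamma = \gamma^1(\epsilon)$ so that $\gamma^\cplx(\epsilon') = \gamma^{\cplx+1}(\epsilon)$ matches the threshold in the hypothesis, and apply Theorem~\ref{thm:metastability} at complexity $\cplx$ with precision $\epsilon'$, a sufficiently dominant auxiliary function $F^*$ derived from $F$, and starting index $\tilde M$. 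The resulting tuple of length $K := K_{\cplx, \epsilon'}$ defines $\tilde M^{\cplx, \epsilon, F}_i := M_i$ and $\tilde K_{\cplx, \epsilon} := K$, with $N$ chosen so that $\varphi_\gamma(F(\tilde M_i)) \leq N$ for every $i$.

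Given Følner sets $I, I'$ meeting the hypotheses for some $\tilde i$, Lemma~\ref{lem:ceil} (applicable since $\gamma^{\cplx+1}(\epsilon) \leq \gamma$) produces $b \in \AG$ so that $\tilde I := \Fo_\beta b$ with $\beta := F(\tilde M_{\tilde i})$ $\gamma$-approximately contains both $I$ and $I'$. Since $\varphi_\gamma(\lfloor \tilde I \rfloor) \leq N$, invoking $(\bfg, \gamma, N)$-reducibility of each $\sigma_t$ on $\tilde I$ provides data $J_t, a_t, b^t_0, \dots, b^t_{j-1}$ such that $\|g_j(l)\sigma_t - \E_{m \in J_t} \prod_i \<g_j|g_i\>_{a_t, m}(l) b^t_i\|_\infty < \gamma$ for every $l \in \tilde I$. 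Inserting this into $\Av{I}[\dots, \sigma_t]$, exchanging $\E_m$ and $\E_n$ by Fubini, and controlling the contribution from $I \setminus \tilde I$ via the unit $L^\infty$-bounds on $\sigma_t$ and the $b^t_i$, I would obtain
\[
\Big\|\Av{I, I'}[f_0, \dots, f_{j-1}, \sigma_t] - \E_{m \in J_t} \Av[{\bfg^*_{a_t, m}}]{I, I'}[f_0, \dots, f_{j-1}, b^t_0, \dots, b^t_{j-1}]\Big\|_2 \leq 6\gamma,
\]
where the factor $6 = 2 \cdot 3$ accounts for the $L^\infty$ approximation error ($\gamma$) plus the boundary contribution ($2\gamma$) on each of $I$ and $I'$.

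For each pair $(t, m)$ the reduced system $\bfg^*_{a_t, m}$ has complexity at most $\cplx$, so Theorem~\ref{thm:metastability} furnishes an index $i_{t, m} \in \{1, \dots, K\}$ such that the norm of the reduced difference is less than $\epsilon' = \gamma$ whenever $I, I'$ satisfy the conditions relative to $M_{i_{t, m}}$ and $F^*$. The main obstacle is the uniformity of the metastability index: a single $\tilde i$ must be committed to before the reducibility data $(a_t, J_t, b^t_i)$—and hence the values $i_{t, m}$—are known. The resolution exploits that $i_{t, m}$ lies in the finite set $\{1, \dots, K\}$: by choosing $F^*$ to dominate $F$ strongly enough that the hypothesis $\lceil I, I' \rceil_{\gamma^{\cplx+1}(\epsilon)} \leq F(\tilde M_{\tilde i})$ forces the reduced condition $\lceil I, I' \rceil_{\gamma^\cplx(\epsilon')} \leq F^*(M_{i_{t, m}})$ uniformly over $i_{t, m} \leq \tilde i$, and then taking $\tilde i$ to dominate all $i_{t, m}$ that actually arise (a finite choice bounded by $K$). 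Combining the $6\gamma$ approximation error with the $\gamma$ metastability bound for the reduced averages yields $\|\Av{I, I'}[\dots, \sigma_t]\|_2 < 8\gamma$ per term; the triangle inequality applied to $\sum_t \lambda_t \sigma_t$ then delivers the claimed bound $8\gamma \sum_t |\lambda_t|$.
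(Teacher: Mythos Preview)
Your setup and the $6\gamma$ approximation are correct, and you have correctly identified the main obstacle: a single index $\tilde i$ must be fixed before the reducibility data --- and hence the indices $i_{t,m}$ coming from Theorem~\ref{thm:metastability} at complexity $\cplx$ --- are known. However, your proposed resolution does not work.

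The plan ``take $\tilde i$ to dominate all $i_{t,m}$'' fails on two counts. First, it is circular: the reducibility data $(a_t,J_t,b^t_i)$ depends on the approximating F{\o}lner set $I_0$, and $\lfloor I_0\rfloor = F(\tilde M_{\tilde i})$, so the $i_{t,m}$'s are not available before $\tilde i$ is committed. Second, even if the $i_{t,m}$'s were known, a single application of Theorem~\ref{thm:metastability} does not give nested conditions: for each pair $(t,m)$ you need both $M_{i_{t,m}}\leq \lfloor I\rfloor$ and $\lceil I,I'\rceil \leq F^*(M_{i_{t,m}})$, and these two constraints pull in opposite directions as the index varies. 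No choice of $F^*$ rescues this, because $F^*$ is used to \emph{define} the $M_i$'s and cannot be tuned after the fact. With $\tilde K_{\cplx,\epsilon}=K_{\cplx,\gamma}$ you simply cannot force the metastability bound for all pairs $(t,m)$ at once.

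The paper's fix is an iterated pigeonhole construction. One chooses $r$ with $(1-1/K)^r<\gamma$, defines $F_r:=F$ and recursively $F_{s-1}(M):=\sup_i F_s(M_i^{\cplx,\gamma,F_s})$, and builds a depth-$r$ tree of indices $\tilde M^{(i_1,\dots,i_s)}$; the output tuple consists of all $K^r$ leaves, so $\tilde K_{\cplx,\epsilon}=(K_{\cplx,\gamma})^r$, not $K_{\cplx,\gamma}$. The iterated definition is precisely what makes the conditions \emph{nested} along every root-to-leaf path. Repeated pigeonhole on the pairs $(t,m)$ (with weights $|\lambda_t|/|J_t|$) then selects a branch along which all but a $\gamma$-fraction of pairs satisfy $\|\Av[\bfg^*_{a_t,m}]{I,I'}[\dots]\|_2<\gamma$; the remaining bad pairs are bounded trivially by $1$, contributing another $\gamma\sum_t|\lambda_t|$. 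That extra term is where $6\gamma+\gamma+\gamma=8\gamma$ comes from --- your accounting implicitly assumes all pairs are good, which is exactly what cannot be arranged.
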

The induction procedure is as follows.
Theorem~\ref{thm:metastability} for complexity $\cplx$ is used to deduce Proposition~\ref{prop:metastability} for complexity $\cplx$, which is in turn used to show Theorem~\ref{thm:metastability} for complexity $\cplx+1$.
The base case (Theorem~\ref{thm:metastability} with $\cplx=0$) is trivial, take $K_{0,\epsilon}=1$ and $M_{1}^{0,\epsilon,F}=M$.

\begin{proof}[Proof of Prop.~\ref{prop:metastability} assuming Thm.~\ref{thm:metastability} for complexity $\cplx$]
The tuple \eqref{eq:main-prop-seq} and the index $N$ will be chosen later.
For the moment assume that $I, I' \lesssim_{\gamma} I_{0}$ for some Følner set $I_{0}$ with $\varphi_{\gamma}(\lfloor I_{0}\rfloor) \leq N$.
Consider the functions $b_{0}^{t},\dots,b_{j-1}^{t}$ bounded by one, the set $J^{t} \subset \AG$ and the element $a^t \in\AG$ from the definition of uniform $(\bfg, \gamma, N)$-reducibility of $\sigma_{t}$ over $I_0$ (Definition~\ref{def:uniformly-N-reducible}).
Write $O(x)$ for an error term bounded by $x$ in $L^\infty(X)$.
By \eqref{ineq:unif-red} we have
\begin{multline*}
\Av{I}[f_0, \dots, f_{j-1}, \sigma_t]
= \frac{1}{|I|} \int_{n\in I} \prod_{i=0}^{j-1} g_i(n) f_i \cdot g_j(n)\sigma_t\\
= \frac{1}{|I|} \int_{n\in I\cap I_0} \prod_{i=0}^{j-1} g_i(n) f_i
\left( \E_{m\in J^t} \prod_{i=0}^{j-1} \<g_j|g_i\>_{a^t,m}(n) b_i^t + O(\gamma) \right)
+ \frac{|I\setminus I_0|}{|I|} O(1).
\end{multline*}
The first error term accounts for the $L^{\infty}$ error in the definition of uniform reducibility
and the second for the fraction of $I$ that is not contained in $I_{0}$.
This can in turn be approximated by
\begin{multline*}
\frac{1}{|I|} \int_{n\in I} \E_{m\in J^t} \prod_{i=0}^{j-1} g_i(n) f_i
\prod_{i=0}^{j-1} \<g_j|g_i\>_{a^t,m}(n) b_i^t
+ \frac{|I\cap I_0|}{|I|} O(\gamma)
+ \frac{|I\setminus I_0|}{|I|} O(2)\\
= \E_{m\in J^{t}} \Av[\bfg^{*}_{a^t,m}]{I}[f_0, \dots, f_{j-1}, b_0^t, \dots, b_{j-1}^t] + O(3\gamma).
\end{multline*}
Using the analogous approximation for $I'$ and summing over $t$ we obtain
\begin{multline}
\label{eq:est-prop}
\| \Av{I,I'}[f_0, \dots, f_{j-1},\sum_{t} \lambda_t \sigma_t] \|_{2}\\
\leq
\sum_{t} |\lambda_{t}| \E_{m\in J^{t}} \| \Av[\bfg^{*}_{a^t,m}]{I,I'}[f_0, \dots, f_{j-1},b_{0}^{t},\dots,b_{j-1}^{t}] \|_{2}
+6 \gamma \sum_{t} |\lambda_{t}|.
\end{multline}
If $G$ is commutative and $\bfg$ consists of affine mappings, then the maps that constitute systems $\bfg^{*}_{a^t,m}$ differ at most by constants, and in this case one can bound the first summand by a norm of a difference of averages associated to certain functions on $X \times \uplus_{t} J^{t}$ similarly to the reduction in \cite[\textsection 5]{MR2408398}.
In general we need (a version of) the more sophisticated argument of Walsh that crucially utilizes the uniformity in the induction hypothesis.
The argument provides a bound on most (with respect to the weights $|\lambda_{t}|/|J^{t}|$) of the norms that occur in the first summand.

Let $r = r(\cplx,\epsilon)$ be chosen later.
We use the operation $M \mapsto M^{\cplx,\gamma,F_{s}}_i$ and the constant $K=K_{\cplx,\gamma}$ from Theorem~\ref{thm:metastability} (with $\gamma$ in place of $\epsilon$) to inductively define functions $F_{r},\dots,F_{1} \from\iset\to\iset$ by
\[
F_{r}=F, \quad F_{s-1}(M) := \sup_{1 \leq i \leq K} F_{s}(M^{\cplx,\gamma,F_{s}}_i).
\]
This depends on a choice of a supremum function for the directed set $\iset$ that can be made independently of all constructions performed here.
Using the same notation define inductively for $1\leq i_1,\dots,i_r \leq K$ the indices
\[
\tilde M^{()} := \tilde M, \quad
\tilde M^{(i_{1},\dots,i_{s-1},i_{s})} :=
(\tilde M^{(i_{1},\dots,i_{s-1})})^{\cplx,\gamma,F_{s}}_{i_{s}}.
\]
The theorem tells that for every $t$, $m$ and $1 \leq i_{1},\dots,i_{s-1} \leq K$
there exists some $1 \leq i_{s} \leq K$ such that
\begin{equation}
\label{eq:est-red}
\| \Av[\bfg^{*}_{a^t,m}]{I,I'}[f_0, \dots, f_{j-1},b_{0}^{t},\dots,b_{j-1}^{t}] \|_{2} <\gamma
\end{equation}
holds provided
\begin{equation}
\label{eq:main-ind-cond}
\tilde M^{(i_{1},\dots,i_{s})} = (\tilde M^{(i_{1},\dots,i_{s-1})})^{\cplx,\gamma,F_{s}}_{i_{s}} \leq \lfloor I\rfloor, \lfloor I'\rfloor
\text{ and }
\lceil I, I'\rceil_{\gamma^{\cplx}(\gamma)} \leq F_{s}(\tilde M^{(i_{1},\dots,i_{s})}).
\end{equation}

Start with $s=1$.
By the pigeonhole principle there exists an $i_1$ such that \eqref{eq:est-red} holds for at least the fraction $1/K$ of the pairs $(t,m)$ with respect to the weights $|\lambda_{t}|/|J^{t}|$ (provided \eqref{eq:main-ind-cond} with $s=1$).

Using the pigeonhole principle repeatedly on the remaining pairs $(t,m)$ with weights $|\lambda_{t}|/|J^{t}|$ we can find a sequence $i_{1},\dots, i_{r}$ such that for all pairs but the fraction $(\frac{K-1}{K})^{r}$ the estimate \eqref{eq:est-red} holds provided that the conditions \eqref{eq:main-ind-cond} are satisfied for all $s$.

By definition we have
$\tilde M \leq \tilde M^{(i_{1})} \leq \tilde M^{(i_{1},i_{2})} \leq \dots \leq \tilde M^{(i_{1},\dots,i_{r})}$
and
\begin{multline*}
F_{1}(\tilde M^{(i_{1})})
\geq F_{2}((\tilde M^{(i_{1})})^{\cplx,\gamma,F_{2}}_{i_{2}})
= F_{2}(\tilde M^{(i_{1},i_{2})})
\geq \dots\\
\geq F_{r}(\tilde M^{(i_{1},\dots,i_{r})})
= F(\tilde M^{(i_{1},\dots,i_{r})})
\end{multline*}
for any choice of $i_{1},\dots,i_{r}$.
Therefore the conditions \eqref{eq:main-ind-cond} become stronger as $s$ increases.
Recall from \eqref{eq:gamma} that $\gamma^{\cplx}(\gamma) = \gamma^{\cplx+1}(\epsilon)$, thus we only need to ensure
\begin{equation}
\label{eq:main-ind-cond2}
\tilde M^{(i_{1},\dots,i_{r})} \leq \lfloor I\rfloor,\lfloor I'\rfloor
\text{ and }
\lceil I, I'\rceil_{\gamma^{\cplx+1}(\epsilon)} \leq F(\tilde M^{(i_{1},\dots,i_{r})}).
\end{equation}
This is given by the hypothesis if we define the tuple \eqref{eq:main-prop-seq} to consist of all numbers $\tilde M^{(i_{1},\dots,i_{r})}$ where $i_{1},\dots,i_{r} \in \{1,\dots,K\}$, so $\tilde K_{\cplx,\epsilon} = (K_{\cplx,\gamma})^{r}$.

We now choose $r$ to be large enough that $(\frac{K-1}{K})^{r} < \gamma$.
Then the sum at the right-hand side of \eqref{eq:est-prop} splits into a main term that can be estimated by $\gamma\sum_t |\lambda_t|$ using \eqref{eq:est-red} and an error term that can also be estimated by $\gamma\sum_t |\lambda_t|$ using the trivial bound
\[
\| \Av[\bfg^{*}_{a^t,m}]{I,I'}[f_0, \dots, f_{j-1},b_{0}^{t},\dots,b_{j-1}^{t}] \|_{2} \leq 1.
\]
Finally, the second condition in \eqref{eq:main-ind-cond2} by definition means that there exists a Følner set $I_0$ such that $\lfloor I_0 \rfloor = F(\tilde M^{(i_{1},\dots,i_{r})})$ and $I,I' \lesssim_{\gamma^{\cplx+1}(\epsilon)} I_0$. In particular we have $I,I' \lesssim_{\gamma} I_0$ since $\gamma^{\cplx+1}(\epsilon) \leq \gamma^{1}(\epsilon) = \gamma$.
Taking
\[
N := \sup_{1\leq i_{1},\dots,i_{r} \leq K} \varphi_{\gamma}(F(\tilde M^{(i_{1},\dots,i_{r})}))
\]
guarantees $\varphi_{\gamma}(\lfloor I_0 \rfloor) \leq N$.
\end{proof}

\begin{proof}[Proof of Thm.~\ref{thm:metastability} assuming Prop.~\ref{prop:metastability} for complexity $\cplx-1$]
Let $\cplx$, $\epsilon$, $F$ and a system $\bfg$ with complexity at most $\cplx$ be given.
By cheating we may assume that every reduction $\bfg^*_{a,b}$ ($a,b\in\AG$) has complexity at most $\cplx-1$.

We apply the Structure Theorem~\ref{thm:structure} with the following data.
The extended seminorms $\|\cdot\|_{N} := \|\cdot\|_{\Sigma_{\bfg, \gamma, N}}$, $N\in\iset$, are given by Lemma~\ref{lem:Sigma-ext-seminorm}; the dual extended seminorms $\|\cdot\|_{N}^{*} = \|\cdot\|_{\Sigma_{\bfg, \gamma, N}}^{*}$ decrease monotonically since $\Sigma_{\bfg, \gamma, N'} \subset \Sigma_{\bfg, \gamma, N}$ whenever $N' \geq N$.
The function $\psi(\tilde M) := N_{\cplx,\epsilon,F}(\tilde M)$ is given by Proposition~\ref{prop:metastability} with $\cplx$, $\epsilon$, $F$ as in the hypothesis of this theorem.
Finally, $\omega(\alpha):=\alpha$ and $M_{\bullet}:=M$.
The structure theorem provides a decomposition
\begin{equation}
\label{eq:dec-fj}
f_{j} = \sum_{t}\lambda_{t}\sigma_{t} + u + v,
\end{equation}
where $\sum_{t}|\lambda_{t}| < C_{i}^{\delta,\eta} =: C_{i} \leq C^{*}$, $\sigma_{t} \in \Sigma_{\bfg, \gamma, B}$, $\|u\|_{M_i}^{*} < \eta(C_{i})$ and $\|v\|_{2} < \delta$.
Here $\psi(M_{i}) \leq B$, and the index $M_{i} \geq M_{\bullet}=M$ comes from the sequence \eqref{eq:decomposition-seq} that depends only on $\psi$, $M$ and $\epsilon$, and whose length $\lceil 2\delta^{-2} \rceil$ depends only on $\epsilon$.
Note that $\psi$ in turn depends only on $\cplx$, $\epsilon$ and $F$.

We will need an $L^{\infty}$ bound on $u$ in order to use the Inverse Theorem~\ref{thm:inverse}.
To this end let $S = \{ |v| \leq C_{i}\} \subset X$, then
\[
|u| 1_{S} \leq 1_{S} + \sum_{t}|\lambda_{t}|1_{S} + |v| 1_{S} \leq 3 C_{i}.
\]
Moreover, the restriction of $u$ to $S^{\complement}$ is bounded by
\[
|u| 1_{S^{\complement}} \leq 1_{S^{\complement}} + \sum_{t}|\lambda_{t}| 1_{S^{\complement}} +  |v| 1_{S^{\complement}} \leq 3 |v| 1_{S^{\complement}},
\]
so it can be absorbed in the error term $v$.
It remains to check that $\|u1_{S}\|_{M_i}^{*}$ is small.
By Chebyshev's inequality we have $C_{i}^{2} \mu(S^{\complement}) \leq \|v\|_{2}^{2}$, so that $\mu(S^{\complement})^{1/2} \leq \delta/C_{i}$.
Let now $\sigma \in \Sigma_{M_i}$ be arbitrary and estimate
\begin{multline*}
|\<u1_{S},\sigma\>|
\leq
|\<u,\sigma\>| + |\<u1_{S^{\complement}},\sigma\>|
\leq
\|u\|_{M_i}^{*} + \|u1_{S^{\complement}}\|_{2}\|\sigma 1_{S^{\complement}}\|_{2}\\
<
\eta(C_{i}) + 3 \|v\|_{2} \mu(S^{\complement})^{1/2}
\leq
\eta(C_{i}) + 3 \delta \cdot \delta/C_{i}
< 2\eta(C_{i}).
\end{multline*}
Thus (replacing $u$ by $u 1_{S}$ and $v$ by $v+u1_{S^{\complement}}$ if necessary) we may assume $\|u\|_{\infty} \leq 3 C_{i}$
at the cost of having only $\|u\|_{M_i}^{*} < 2\eta(C_{i})$ and $\|v\|_{2} \leq 4\delta < \epsilon/6$.

Now we estimate the contributions of the individual summands in \eqref{eq:dec-fj} to \eqref{eq:main-thm-est}.
The bounds
\[
\| \Av{I}[f_0, \dots, f_{j-1}, v] \|_2 \leq \frac\epsilon6
\quad\text{and}\quad
\| \Av{I'}[f_0, \dots, f_{j-1}, v] \|_2 \leq \frac\epsilon6
\]
are immediate.
Proposition~\ref{prop:metastability} for complexity $\cplx-1$ with $\tilde M = M_{i}$ (applicable since the functions $\sigma_{t}$ are uniformly $(\bfg, \gamma, \psi(M_{i}))$-reducible) shows that
\[
\Big\| \Av{I,I'}[f_0, \dots, f_{j-1}, \sum_{t}\lambda_{t}\sigma_{t}] \Big\|_2
< 8 \gamma \sum_t |\lambda_t| < \frac\epsilon3,
\]
provided that the Følner sets $I,I'$ satisfy
\[
\tilde M^{\cplx-1,\epsilon,F}_{\tilde i} \leq \lfloor I\rfloor,\lfloor I'\rfloor
\text{ and }
\lceil I, I'\rceil_{\gamma^{\cplx}(\epsilon)} \leq F(\tilde M^{\cplx-1,\epsilon,F}_{\tilde i})
\]
for some $\tilde M^{\cplx-1,\epsilon,F}_{\tilde i}$ that belongs to the tuple \eqref{eq:main-prop-seq} given by the same proposition.
The former condition implies in particular $M_{i} \leq \tilde M^{\cplx-1,\epsilon,F}_{\tilde i} \leq \lfloor I\rfloor, \lfloor I'\rfloor$,
and in this case the Inverse Theorem~\ref{thm:inverse} shows that
\[
\| \Av{I}[f_0, \dots, f_{j-1}, u] \|_2 \leq \frac\epsilon6
\quad\text{and}\quad
\| \Av{I'}[f_0, \dots, f_{j-1}, u] \|_2 \leq \frac\epsilon6,
\]
since otherwise there exists a uniformly $(\bfg, \gamma, M_i)$-reducible function $\sigma$ such that $\<u,\sigma\> > 2\eta(C_i)$.

We obtain the conclusion of the theorem with the tuple \eqref{eq:main-thm-seq} being the concatenation of the tuples \eqref{eq:main-prop-seq} provided by Proposition~\ref{prop:metastability} with $\tilde M=M_{i} \geq M$ for $1 \leq i \leq \lceil 2\delta^{-2} \rceil$.
In particular, $K_{\epsilon,\cplx} = \lceil 2\delta^{-2} \rceil \tilde K_{\epsilon,\cplx-1}$.
\end{proof}
This completes the induction and thus the proof of Proposition~\ref{prop:metastability} and Theorem~\ref{thm:metastability}.
The latter theorem implies the following convergence result whose proof has been already outlined in the discussion of the von Neumann mean ergodic theorem.
The proof is nevertheless included for completeness.
\begin{corollary}
\label{cor:main}
Let $\bfg = (g_{0},\dots,g_{j})$ be a system with finite complexity and $f_{0},\dots,f_{j} \in L^{\infty}(X)$ be bounded functions.
Then for every Følner net $(\Fo_{\indx})_{\indx\in\iset}$ in $\AG$ the limit
\begin{equation}
\label{eq:average}
\lim_{\lfloor I \rfloor \in\iset} \Av{I}[f_0, \dots, f_j]
\end{equation}
exists in $L^{2}(X)$ and is independent of the Følner sequence $(\Fo_{N})_{N}$.
\end{corollary}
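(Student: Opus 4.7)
The plan is to derive the corollary from Theorem~\ref{thm:metastability} by the standard metastability-to-convergence conversion outlined in Section~\ref{sec:vn}. First I would establish the Cauchy property of the net $(\Av{\Fo_\alpha}[f_0,\dots,f_j])_{\alpha\in\iset}$ in $L^2(X)$, then invoke completeness to obtain convergence, and finally handle independence of the Følner net by a combined-net argument.

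For the Cauchy property, argue by contradiction: suppose there is an $\epsilon>0$ such that for every $\alpha_0\in\iset$ one can pick Følner sets $I_{\alpha_0},I'_{\alpha_0}$ with $\lfloor I_{\alpha_0}\rfloor,\lfloor I'_{\alpha_0}\rfloor\geq\alpha_0$ and $\|\Av{I_{\alpha_0},I'_{\alpha_0}}[f_0,\dots,f_j]\|_2\geq\epsilon$. Fix such witnesses for every $\alpha\in\iset$ and let $\cplx<\infty$ be a complexity bound for $\bfg$ supplied by the hypothesis. Define $F\from\iset\to\iset$ by letting $F(\alpha)$ be any upper bound in the directed set $\iset$ for the three indices $\lfloor I_\alpha\rfloor$, $\lfloor I'_\alpha\rfloor$, and $\lceil I_\alpha,I'_\alpha\rceil_{\gamma^{\cplx}(\epsilon/2)}$; such an upper bound exists because $\iset$ is directed and the last index is furnished by Lemma~\ref{lem:ceil}. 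Now apply Theorem~\ref{thm:metastability} with this $F$, tolerance $\epsilon/2$ and any starting index; this produces finitely many $M_1,\dots,M_K\in\iset$ together with some $1\leq i\leq K$ such that $\|\Av{I,I'}[f_0,\dots,f_j]\|_2<\epsilon/2$ for all Følner sets $I,I'$ satisfying $M_i\leq\lfloor I\rfloor,\lfloor I'\rfloor$ and $\lceil I,I'\rceil_{\gamma^{\cplx}(\epsilon/2)}\leq F(M_i)$. Substituting $I:=I_{M_i}$ and $I':=I'_{M_i}$, both side-conditions hold by the very construction of $F$, yet $\|\Av{I,I'}[f_0,\dots,f_j]\|_2\geq\epsilon$ by the defining property of the witnesses, a contradiction. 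Hence the net is Cauchy, and completeness of $L^2(X)$ supplies the limit.

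For independence of the Følner net, given a second Følner net $(\tilde\Fo_\beta)_{\beta\in\tilde\iset}$, I would form a combined Følner net indexed by the directed preorder on $\iset\times\tilde\iset\times\{1,2\}$ that uses the product order on $\iset\times\tilde\iset$ and is trivial in the third coordinate, defined by $\Fo''_{(\alpha,\beta,1)}:=\Fo_\alpha$ and $\Fo''_{(\alpha,\beta,2)}:=\tilde\Fo_\beta$. This is again a Følner net, so the Cauchy argument yields a limit $L''$ for it; the cofinal subnets obtained by restricting to $\{(\alpha,\beta,1)\}$ and $\{(\alpha,\beta,2)\}$ converge to $\lim_\alpha\Av{\Fo_\alpha}[f_0,\dots,f_j]$ and $\lim_\beta\Av{\tilde\Fo_\beta}[f_0,\dots,f_j]$, respectively, and both must then equal $L''$.

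The main obstacle is converting metastability, which bounds oscillation only on finite $F$-windows, into genuine Cauchy-ness along the directed set $\iset$; this is navigated by the contradiction trick recalled in Section~\ref{sec:vn}: the negation of Cauchy-ness itself supplies the function $F$ that defeats any candidate metastability bound. The additional subtlety absent in the sequence case is the $\gamma$-approximate-inclusion constraint $\lceil I,I'\rceil_{\gamma^{\cplx}(\epsilon/2)}\leq F(M_i)$, which must be built coherently into $F$; Lemma~\ref{lem:ceil} is precisely what makes this possible, by guaranteeing that any pair of Følner sets admits a common $\gamma$-approximate upper bound in $\iset$.
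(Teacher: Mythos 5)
Your Cauchy-property argument matches the paper's: assume failure of convergence, let the negation supply witnesses that define $F$, and let Theorem~\ref{thm:metastability} produce the contradiction. (Having $F(\alpha)$ also dominate $\lfloor I_\alpha\rfloor$ and $\lfloor I'_\alpha\rfloor$ is harmless but unnecessary, since the witnesses chosen for index $\alpha_0 = M_i$ automatically satisfy $\lfloor I_{M_i}\rfloor, \lfloor I'_{M_i}\rfloor \geq M_i$; only the $\lceil\cdot,\cdot\rceil$ bound needs to be built into $F$.)

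For independence of the F\o{}lner net, however, you take a genuinely different route, and an interesting one. The paper explicitly remarks that the ``any two F\o{}lner sequences are subsequences of a common F\o{}lner sequence'' observation is clear for \emph{sequences} (by interleaving), but then carries out a Fubini-type averaging argument on the grounds that the subsequence trick does not obviously extend to \emph{nets}: one averages $\gf(m)$ over $m\in\Fo'_{N'}$, inserts an average over $n\in\Fo_N$ using the F\o{}lner property of the second net, swaps integrals, and uses the already-established convergence over two-sided shifts $\Fo_N m$ of the first net to identify the limit. Your construction — indexing by $\iset\times\tilde\iset\times\{1,2\}$ with the preorder that is the product order on $\iset\times\tilde\iset$ and forgets the last coordinate — shows that the interleaving idea \emph{does} work for nets: the resulting family is a F\o{}lner net (the point is that both indices must grow, so whichever of the two original sets appears, it satisfies the F\o{}lner estimate), and both original nets are cofinal restrictions of it. Your approach is conceptually cleaner and directly settles the doubt the paper raises; its cost is that you must re-run the Cauchy argument on the combined net, which is legitimate since Theorem~\ref{thm:metastability} is formulated for an arbitrary F\o{}lner net but is heavier machinery than the paper's averaging step, which extracts independence essentially for free from the limit already established for the first net together with a single application of the F\o{}lner property of the second.
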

\begin{proof}
We may assume that the functions $f_0,\dots,f_j$ are bounded by one and $\complexity\bfg \leq \cplx$ for some $\cplx<\infty$.
We use the abbreviations
\[
\gf(m) := g_{0}(m) f_{0} \cdot\dots\cdot g_{j}(m) f_{j}, \quad
\gf(I) := \ave{m}{I}\gf(m) = \Av{I}[f_0, \dots, f_j].
\]
Assume that the functions $\gf(I)$ do not converge in $L^{2}(X)$ along $\lfloor I \rfloor\in\iset$.
Then there exists an $\epsilon>0$ such that for every $M\in\iset$ there exist Følner sets $I,I'$ such that $M \leq \lfloor I \rfloor, \lfloor I' \rfloor$ and
\[
\|\gf(I)-\gf(I')\|_{2} = \| \Av{I,I'}[f_0, \dots, f_j] \|_2 > \epsilon.
\]
This contradicts Theorem~\ref{thm:metastability} with $F(M) := \lceil I, I' \rceil_{\gamma^{\cplx}(\epsilon)}$.
Therefore the limit
\[
\gf(\AG) := \lim_{\lfloor I \rfloor \in\iset} \gf(I)
\]
exists.
The uniqueness is clear for limits along Følner sequences since any two such sequences are subsequences of some other Følner sequence.
The advantage of the averaging argument below is that it also works for nets.

Let $(\Fo_{\indx'}')_{\indx'\in\iset'}$ be another Følner sequence in $\AG$.
Let $\epsilon>0$ be given and take $N\in\iset$ so large that
\[
\left\| \gf(I) - \gf(\AG) \right\|_2 < \epsilon
\]
whenever $\lfloor I \rfloor \geq N$.

Let $N'\in\iset'$ be so large that $|n \Fo_{N'}' \Delta \Fo_{N'}'| < \epsilon |\Fo_{N'}'|$ for every $n \in \Fo_{N}$.
Then
\[
\left\| \ave{m}{\Fo_{N'}'} \gf(m) - \ave{m}{n \Fo_{N'}'} \gf(m) \right\|_\infty < \epsilon
\]
for every $n\in \Fo_{N}$. Writing $O(\epsilon)$ for an error term that is bounded by $\epsilon$ in $L^{2}(X)$ we obtain
\begin{align*}
\ave{m}{\Fo_{N'}'} \gf(m)
&=
\ave{n}{\Fo_N} \ave{m}{\Fo_{N'}'} \gf(m)\\
&=
\ave{n}{\Fo_N} \ave{m}{\Fo_{N'}'} \gf(n m) + O(\epsilon)\\
&=
\ave{m}{\Fo_{N'}'} \ave{n}{\Fo_N} \gf(n m) + O(\epsilon)\\
&=
\ave{m}{\Fo_{N'}'} \ave{n}{\Fo_N m} \gf(n) + O(\epsilon)\\
&=
\ave{m}{\Fo_{N'}'} \gf(\AG) + O(2\epsilon)\\
&=
\gf(\AG) + O(2\epsilon).
\end{align*}
Since $\epsilon$ is arbitrary, the averages $\gf(\Fo_{N'}')$ converge to $\gf(\AG)$ for $N'\in\iset'$.
\end{proof}
Theorem~\ref{thm:polynomial-norm-convergence} follows immediately from Theorem~\ref{thm:finite-complexity} applied to the polynomial system $\bfg = (g_{0},\dots,g_{j})$, where $g_{0} \equiv 1_{G}$, and Corollary~\ref{cor:main}.

\section{Right polynomials and commuting group actions}
\label{app:comm-act}
An inspection reveals that every occurrence of $a\in\AG$ and related objects $a',a_{i},a^{t},a_{N},a_{N}'\in\AG$ in this chapter could be replaced by $1_{\AG}$ (in fact we could restrict their values to any subgroup of $\AG$, but we do not use this).
This leads to the notion of right translation and right derivative of a mapping $g\from\AG\to G$ that are defined by
\[
T_{b}g(n)=g(nb)
\quad\text{and}\quad
D_{b}g(n)=g(n)\inv g(nb),
\]
respectively.
Right polynomials, right reduction $\<\cdot,\cdot\>_{b}$ and $\cdot^{*}_{b}$ and right complexity of systems are defined similarly to polynomials, reduction and complexity, respectively, with right derivatives in place of derivatives.
Right Følner sets are sets of the form $\Fo_{N}b$.

With these definitions we obtain an analog of Corollary~\ref{cor:main} with complexity replaced by right complexity and Følner sets replaced by right Følner sets and an analog of Theorem~\ref{thm:finite-complexity} with polynomials replaced by right polynomials.
Together they immediately imply the following analog of Theorem~\ref{thm:polynomial-norm-convergence} for right polynomials.
\begin{theorem}
\label{thm:right-polynomial-norm-convergence}
Let $g_{1}, \dots, g_{j} \from\AG\to G$ be measurable right polynomial mappings and $f_{1},\dots,f_{j} \in L^{\infty}(X)$ be arbitrary bounded functions.
Then for every Følner net $(\Fo_{\indx})_{\indx\in A}$ in $\AG$ the limit
\begin{equation}
\label{eq:right-polynomial-average}
\lim_{\indx\in\iset} \ave{m}{\Fo_{\indx}} g_{1}(m) f_{1} \cdot\dots\cdot g_{j}(m) f_{j}
\end{equation}
exists in $L^{2}(X)$ and is independent of the Følner net $(\Fo_{\indx})_{\indx\in A}$.
\end{theorem}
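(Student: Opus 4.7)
The plan is to mirror the development of Sections~\ref{sec:complexity}--\ref{sec:metastability} with every left translation suppressed, and then deduce the theorem exactly as Theorem~\ref{thm:polynomial-norm-convergence} is deduced from Theorem~\ref{thm:finite-complexity} and Corollary~\ref{cor:main}. Concretely, I would first set up right $\Gb$-polynomials via the right derivative $D_{b}g(n) = g(n)\inv g(nb)$ in the framework of Definition~\ref{def:polynomial}, with the partial maps $\mathcal{T} = \{T_{b}\from n\mapsto nb : b\in\AG\}$, and define the right reduction $\<g|h\>_{b}(n) = g(n)g(nb)\inv h(nb)$ and the right complexity of a system by restricting Walsh's construction to these one-sided reductions. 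The proof of Theorem~\ref{thm:poly-group} depends only on the fact that the discrete derivative lowers the prefiltration by one step, so it carries over verbatim; consequently the proof of Theorem~\ref{thm:finite-complexity} applies word for word and yields that every right $\Gb$-polynomial system has finite right complexity, bounded by the same constants $\cplx(\vd,j)$ as in the two-sided setting.

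Next I would redo the metastability argument. Lemma~\ref{lem:ceil} survives unchanged when the directed family is restricted to right Følner sets $\Fo_{\alpha}b$, since its proof uses only the Følner property together with left invariance of Haar measure and produces a majorant of the form $\Fo_{\beta}b$. The Inverse Theorem~\ref{thm:inverse} involves the element $a$ purely as a spectator in the reductions $\<g_{j}|g_{i}\>_{a,m}$; setting $a=\tilde a=1_{\AG}$ throughout its proof still produces the uniform one-sided reducibility statement, with the defining identity $\sigma=\E_{m\in\Fo_{N}}\prod g_{j}(mb)\inv g_{i}(mb) b_{i}$ remaining well-defined. The Structure Theorem~\ref{thm:structure} and Lemma~\ref{lem:Sigma-ext-seminorm} are purely functional-analytic and require no modification. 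With these pieces in place, Proposition~\ref{prop:metastability} and Theorem~\ref{thm:metastability} go through by the same nested induction on right complexity, giving a right analog of Corollary~\ref{cor:main}: averages $\ave{m}{\Fo_{\indx}}g_{1}(m)f_{1}\cdots g_{j}(m)f_{j}$ converge in $L^{2}(X)$ along every Følner net and the limit is independent of the net. Combining this with the right-polynomial analog of Theorem~\ref{thm:finite-complexity} immediately gives Theorem~\ref{thm:right-polynomial-norm-convergence}.

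The only real obstacle is bookkeeping. One must verify that each occurrence of the free parameter $a\in\AG$ in the Inverse Theorem, in Definition~\ref{def:uniformly-N-reducible}, and in the error estimates of Proposition~\ref{prop:metastability} can be consistently replaced by $1_{\AG}$ without breaking the uniformity that drives the induction — in particular, the approximate-inclusion bounds of Lemma~\ref{lem:ceil} must still control the term $|I\setminus I_{0}|/|I|$ when $I,I_{0}$ are right Følner sets. Since the authors have already arranged the proofs so that left shifts enter only passively (they are fixed in each inductive step and appear only as subscripts in the reductions), this verification is mechanical and no new analytic input is required.
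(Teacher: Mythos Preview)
Your proposal is correct and is exactly the paper's own argument: the paper states that ``an inspection reveals that every occurrence of $a\in\AG$ and related objects $a',a_{i},a^{t},a_{N},a_{N}'\in\AG$ in this chapter could be replaced by $1_{\AG}$'', defines the right versions of polynomials, reduction, complexity, and Følner sets, and then notes that the analogs of Theorem~\ref{thm:finite-complexity} and Corollary~\ref{cor:main} together immediately yield Theorem~\ref{thm:right-polynomial-norm-convergence}. Your walkthrough of which lemmas need the inspection (Lemma~\ref{lem:ceil}, the Inverse Theorem, Definition~\ref{def:uniformly-N-reducible}, Proposition~\ref{prop:metastability}) is precisely the content of that inspection, carried out explicitly.
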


A second application of the analog of Corollary~\ref{cor:main} described above deals with commuting actions of $\AG$ without any further assumptions on the group $G$ generated by the corresponding unitary operators.
Note that a group action $\tau_{i}$ gives rise to an \emph{antihomomorphism} $g_{i}\from\AG\to G$, $g_{i}(m)f = f\circ \tau_{i}(m)$, that is, a mapping such that $g_{i}(nb)=g_{i}(b)g_{i}(n)$ for every $n,b\in\AG$.

\begin{proposition}
\label{prop:comm-antihom}
Let $g_{0}\equiv 1_{G}$ and $g_{1},\dots,g_{j} \from\AG\to G$ be antihomomorphisms that commute pairwise in the sense that $g_{i}(n)g_{k}(b)=g_{k}(b)g_{i}(n)$ for every $n,b\in\AG$ provided $i\neq k$.
Then the system $(g_{0},g_{0}g_{1},\dots,g_{0}\dots g_{j})$ has right complexity at most $j$.
\end{proposition}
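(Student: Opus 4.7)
The plan is to induct on $j$. Write $h_{k} := g_{0}g_{1}\cdots g_{k}$, so the system under consideration is $\bfh := (h_{0},\dots,h_{j})$. The base case $j=0$ is trivial since $h_{0}\equiv 1_{G}$ has right complexity $0$. For the inductive step, I will compute a single right reduction $\bfh^{*}_{b}$ and show by cheating that $\complexity \bfh^{*}_{b} \leq j-1$, so that $\complexity \bfh \leq j$.

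The first key observation is that each $h_{k}$ is itself an antihomomorphism. Indeed, expanding
\[
h_{k}(nb) = g_{1}(b)g_{1}(n)\cdot g_{2}(b)g_{2}(n)\cdots g_{k}(b)g_{k}(n),
\]
the pairwise commutativity hypothesis lets me move every $g_{i}(n)$ past every $g_{l}(b)$ with $l\neq i$, collecting all the $g_{l}(b)$ to the left: $h_{k}(nb)=h_{k}(b)h_{k}(n)$. Using this identity I compute, for $k<j$,
\[
\<h_{j}|h_{k}\>_{b}(n)
= h_{j}(n)\,h_{j}(nb)\inv h_{k}(nb)
= h_{j}(n)\bigl(h_{j}(b)h_{j}(n)\bigr)\inv h_{k}(b)h_{k}(n)
= c_{k}\,h_{k}(n),
\]
where $c_{k} := h_{j}(b)\inv h_{k}(b) = g_{j}(b)\inv\cdots g_{k+1}(b)\inv$ is a constant in $G$ depending on $b$ (but not on $n$).

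The second key observation is that $c_{k}$ commutes with $h_{k}(n)$: the factor $c_{k}$ is a product of $g_{l}(b)$ with $l>k$, each of which commutes with every $g_{i}(n)$ for $i\leq k$ by hypothesis. Hence $\<h_{j}|h_{k}\>_{b}=h_{k}\cdot c_{k}$, a function of the form $g\cdot c$ with $g=h_{k}\in(h_{0},\dots,h_{j-1})$ and $c=c_{k}\in G$. Consequently
\[
\bfh^{*}_{b} = (h_{0},\dots,h_{j-1},\,h_{0}c_{0},\dots,h_{j-1}c_{j-1})
\]
consists entirely of functions of the form $gc$ with $g$ drawn from the smaller system $(h_{0},\dots,h_{j-1})$. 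By the inductive hypothesis this smaller system has right complexity at most $j-1$, and by the cheating rule this bound is inherited by $\bfh^{*}_{b}$. Since this holds for every $b\in\AG$, we conclude $\complexity\bfh\leq j$, completing the induction.

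I do not anticipate a serious obstacle: the entire argument is an algebraic manipulation, and the only subtlety is verifying that the commutativity assumption is strong enough for both steps above — that is, both to make each $h_{k}$ an antihomomorphism and to make each $c_{k}$ central with respect to the corresponding $h_{k}(n)$. Both uses come down to the same fact that $g_{i}(n)$ commutes with $g_{l}(\text{anything})$ as long as $i\neq l$, which is precisely what the hypothesis supplies.
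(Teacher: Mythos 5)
Your proof is correct and takes essentially the same route as the paper's: both compute the single right reduction $\bfh^{*}_{b}$, show each entry $\<h_{j}|h_{k}\>_{b}$ equals $h_{k}$ times a constant, invoke the cheating rule, and close by induction on $j$. The only cosmetic difference is organizational: you isolate the observation that each partial product $h_{k}$ is itself an antihomomorphism and then explicitly verify that the constant $c_{k}$ commutes with $h_{k}(n)$, whereas the paper folds both of these commutation steps silently into the single displayed algebraic identity $(g_{0}(b)\cdots g_{j}(b))^{-1}\,g_{0}(b)g_{0}\cdots g_{i}(b)g_{i}=g_{0}\cdots g_{i}\,g_{i+1}(b)^{-1}\cdots g_{j}(b)^{-1}$. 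Your version is a bit more explicit about where the pairwise-commutativity hypothesis is actually used, which is a minor presentational improvement rather than a different argument.
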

\begin{proof}
Every antihomomorphism $g_{i}\from\AG\to G$ satisfies
\[
D_{b}(g_{i}\inv)(n) = g_{i}(n) g_{i}(nb)\inv = g_{i}(n) (g_{i}(b)g_{i}(n))\inv = g_{i}(b)\inv
\]
and
\[
T_{b}g_{i}(n) = g_{i}(nb) = g_{i}(b)g_{i}(n).
\]
Thus for every $i<j$ we have
\begin{align*}
\< g_{0}\dots g_{j}, g_{0}\dots g_{i} \>_{b}
&=
D_{b} ((g_{0}\dots g_{j})\inv) T_{b}(g_{0}\dots g_{i})\\
&=
(g_{0}(b)\dots g_{j}(b))\inv g_{0}(b)g_{0}\dots g_{i}(b)g_{i}\\
&=
g_{0}\dots g_{i} g_{i+1}(b)\inv \dots g_{j}(b)\inv.
\end{align*}
Since $g_{i+1}(b)\inv \dots g_{j}(b)\inv \in G$ is a constant, we obtain
\[
\complexity (g_{0},g_{0}g_{1},\dots,g_{0}\dots g_{j})^{*}_{b}
=
\complexity (g_{0},g_{0}g_{1},\dots,g_{1}\dots g_{j-1})
\]
by cheating.
We can conclude by induction on $j$.
\end{proof}
Proposition~\ref{prop:comm-antihom} and the analog of Corollary~\ref{cor:main} for right complexity have the following immediate consequence.
\begin{theorem}
\label{thm:commuting-actions-norm-convergence}
Let $\tau_{1}, \dots, \tau_{j}$ be measure-preserving actions of $\AG$ on $X$ that commute pairwise in the sense that
\[
\tau_{i}(m)\tau_{k}(n) = \tau_{k}(n)\tau_{i}(m)
\quad\text{whenever}\quad
m,n\in\AG, \quad i\neq k
\]
and $f_{0},\dots,f_{j} \in L^{\infty}(X)$ be arbitrary bounded functions.
Then for every Følner net $(\Fo_{\indx})_{\indx\in A}$ in $\AG$ the limit
\begin{equation}
\label{eq:commuting-actions-average}
\lim_{\indx\in\iset} \ave{m}{\Fo_{\indx}} f_{0}(x) f_{1}(\tau_{1}(m)x) \cdot\dots\cdot f_{j}(\tau_{1}(m)\dots \tau_{j}(m)x)
\end{equation}
exists in $L^{2}(X)$ and is independent of the Følner net $(\Fo_{\indx})_{\indx\in A}$.
\end{theorem}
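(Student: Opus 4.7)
The plan is to reduce Theorem~\ref{thm:commuting-actions-norm-convergence} to the right-complexity analog of Corollary~\ref{cor:main} by recasting the iterated orbit averages as an average associated with a system built from antihomomorphisms. Concretely, let $G$ be any group of unitary operators on $L^2(X)$ containing all the operators $f\mapsto f\circ\tau_i(m)$; set $g_0\equiv 1_G$ and, for $i=1,\dots,j$, define $g_i\from\AG\to G$ by $g_i(m)f:=f\circ\tau_i(m)$. Since each $\tau_i$ is a left action, a direct check gives
\[
g_i(nb)f = f\circ\tau_i(n)\circ\tau_i(b) = g_i(b)\bigl(g_i(n)f\bigr),
\]
so that $g_i$ is an antihomomorphism in the sense used in Proposition~\ref{prop:comm-antihom}. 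The commutation hypothesis $\tau_i(n)\tau_k(b)=\tau_k(b)\tau_i(n)$ for $i\neq k$ translates directly into $g_i(n)g_k(b)=g_k(b)g_i(n)$, matching the hypothesis of that proposition.

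Next I would rewrite the integrand in \eqref{eq:commuting-actions-average} in the template required by the right-polynomial theorem. Using that the actions commute one checks inductively that
\[
f_i\bigl(\tau_1(m)\cdots\tau_i(m)x\bigr) = \bigl((g_0 g_1\cdots g_i)(m)f_i\bigr)(x),
\]
since the order in which the $\tau_k(m)$ are composed does not matter. Setting $h_i:=g_0g_1\cdots g_i$, the average becomes $\ave{m}{\Fo_\alpha}\prod_{i=0}^{j}h_i(m)f_i$, which is exactly the quantity $\Av{\Fo_\alpha}[f_0,\dots,f_j]$ for the system $\bfh=(h_0,h_1,\dots,h_j)$.

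Now Proposition~\ref{prop:comm-antihom} asserts that $\bfh$ has right complexity at most $j$, hence finite right complexity. The analog of Corollary~\ref{cor:main} for right complexity (stated in Section~\ref{app:comm-act}) then implies that $\Av{\Fo_\alpha}[f_0,\dots,f_j]$ converges in $L^2(X)$ along any F\o{}lner net, and that the limit is independent of the net. This yields convergence of \eqref{eq:commuting-actions-average} and completes the proof.

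There is essentially no hard step: everything reduces to the already-established right-complexity machinery once the correct dictionary is set up. The only place that requires care is the bookkeeping in the second paragraph, namely verifying that the antihomomorphism convention combined with the commutation of the $\tau_i$ really produces the orbit $\tau_1(m)\cdots\tau_j(m)x$ in the intended order; here one must be attentive to the reversal of composition order inherent to antihomomorphisms, which is exactly compensated by the pairwise commutativity of the actions.
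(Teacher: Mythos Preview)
Your argument is correct and follows exactly the paper's route: set up the antihomomorphisms $g_i(m)f=f\circ\tau_i(m)$, invoke Proposition~\ref{prop:comm-antihom} to bound the right complexity of $(g_0,g_0g_1,\dots,g_0\cdots g_j)$ by $j$, and conclude via the right-complexity analog of Corollary~\ref{cor:main}. The only difference is that you have spelled out the bookkeeping (the antihomomorphism check and the order-reversal issue) that the paper leaves implicit.
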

This result generalizes the double ergodic theorem for commuting actions of an amenable group due to Bergelson, McCutcheon, and Zhang \cite[Theorem 4.8]{MR1481813}.

\chapter{Recurrence}
\label{chap:recurrence}
Furstenberg's ergodic theoretic proof \cite{MR0498471} of Szemer\'edi's theorem on arithmetic progressions \cite{MR0369312} has led to various generalizations of the latter.
Recall that Furstenberg's original multiple recurrence theorem provides a syndetic set of return times.
The \emph{IP} recurrence theorem of Furstenberg and Katznelson \cite{MR833409}, among other things, improves this to an IP* set.
The idea to consider the limit behavior of a multicorrelation sequence not along a F\o{}lner sequence but along an IP-ring has proved to be very fruitful and allowed them to obtain the density Hales-Jewett theorem \cite{MR1191743}.

In a different direction, Bergelson and Leibman \cite{MR1325795} have proved a \emph{polynomial} multiple recurrence theorem.
The set of return times in this theorem was shown to be syndetic by Bergelson and McCutcheon \cite{MR1411223}.
That result has been extended from commutative to nilpotent groups of transformations by Leibman \cite{MR1650102}.
Many of the additional difficulties involved in the nilpotent extension were algebraic in nature and have led Leibman to develop a general theory of polynomial mappings into nilpotent groups \cite{MR1910931}.
An important aspect of the proofs of these polynomial recurrence theorems, being present in all later extensions including the present one, is that the induction process involves ``multiparameter'' recurrence even if one is ultimately only interested in the ``one-parameter'' case.

More recently an effort has been undertaken to combine these two directions.
Building on their earlier joint work with Furstenberg \cite{MR1417769}, Bergelson and McCutcheon \cite{MR1692634} have shown the set of return times in the polynomial multiple recurrence theorem is IP*.
Joint extensions of their result and the IP recurrence theorem of Furstenberg and Katznelson have been obtained by Bergelson, H{\aa}land Knutson and McCutcheon for single recurrence \cite{MR2246589} and McCutcheon for multiple recurrence \cite{MR2151599}.
The results of the last two papers also provide multiple recurrence along admissible generalized polynomials (Definition~\ref{def:generalized-poly}), and, more generally, along FVIP-systems (Definition~\ref{def:fvip}).

In \cite{arxiv:1206.0287} we continue this line of investigation.
Our Theorem~\ref{thm:SZ-general} generalizes McCutcheon's IP polynomial multiple recurrence theorem to the nilpotent setting.
Its content is best illustrated by the following generalization of Leibman's nilpotent multiple recurrence theorem (here and throughout this chapter group actions on topological spaces and measure spaces are on the right and on function spaces on the left.).
\begin{theorem}
\label{thm:SZ-intro}
Let $T_{1},\dots,T_{t}$ be invertible measure-preserving transformations on a probability space $(X,\mathcal{A},\mu)$ that generate a nilpotent group.
Then for every $A\in\mathcal{A}$ with $\mu(A)>0$, every $m\in\N$, and any admissible generalized polynomials $p_{i,j}:\Z^{m}\to\Z$, $i=1,\dots,t$, $j=1,\dots,s$, the set
\begin{equation}
\label{eq:SZ-intro}
\Big\{ \vec n\in\Z^{m} : \mu\big( \bigcap_{j=1}^{s} A \big(\prod_{i=1}^{t}T_{i}^{p_{i,j}(\vec n)} \big)\inv \big) > 0 \Big\}
\end{equation}
is FVIP* in $\Z^{m}$, that is, it has nontrivial intersection with every FVIP-system in $\Z^{m}$.
\end{theorem}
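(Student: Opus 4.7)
The plan is to reduce Theorem~\ref{thm:SZ-intro} to the general nilpotent IP polynomial multiple recurrence result, Theorem~\ref{thm:SZ-general}, which is stated in terms of VIP-systems taking values in a nilpotent group of measure-preserving transformations. Since a set $S\subset\Z^{m}$ is FVIP* exactly when it meets every FVIP-system, it suffices to fix an arbitrary FVIP-system $(\vec n_{\alpha})_{\alpha\in\Fin}$ in $\Z^{m}$ and exhibit some $\alpha\in\Fin$ for which
\[
\mu\Big(\bigcap_{j=1}^{s} A\big(\textstyle\prod_{i=1}^{t}T_{i}^{p_{i,j}(\vec n_{\alpha})}\big)\inv\Big)>0.
\]

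Let $G$ be the nilpotent group generated by $T_{1},\dots,T_{t}$ and fix a filtration $\Gb$ on it (for instance, the lower central series, which is a filtration by Lemma~\ref{lem:lcs-filtration}). For each $j=1,\dots,s$ define $g_{j}\from\Fin\to G$ by $g_{j}(\alpha):=\prod_{i=1}^{t}T_{i}^{p_{i,j}(\vec n_{\alpha})}$. The algebraic heart of the proof is to verify that every $g_{j}$ lies in $\VIP[\Gb]$. This is a two-stage verification: first, admissibility of the $p_{i,j}$ is designed to ensure that the scalar sequences $\alpha\mapsto p_{i,j}(\vec n_{\alpha})$ are themselves FVIP-systems in $\Z$, so they are $\Z$-polynomial with respect to the standard filtration; second, substituting such FVIP exponents into the generators $T_{i}$ of the nilpotent $G$ produces a $\Gb$-polynomial, because the monomial construction of Proposition~\ref{prop:mono-poly} and Corollary~\ref{cor:set-monomials} exhibits products of the form $\prod T_{i}^{q_{i}(\alpha)}$ as $\Gb$-polynomial, and the group of $\Gb$-polynomials is closed under pointwise products and inverses by Theorem~\ref{thm:poly-group}. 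Admissibility also enforces $p_{i,j}(\vec 0)=0$, which gives $g_{j}(\emptyset)=1_{G}$ and thus promotes each $g_{j}$ to a VIP-system.

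With the $g_{j}\in\VIP[\Gb]$ in hand, Theorem~\ref{thm:SZ-general} applies to the tuple $(g_{1},\dots,g_{s})$ and the function $1_{A}$ and furnishes a (large) set of $\alpha\in\Fin$ for which
\[
\int_{X}\prod_{j=1}^{s}\bigl(g_{j}(\alpha)1_{A}\bigr)\,d\mu>0,
\]
equivalently $\mu\bigl(\bigcap_{j=1}^{s}A g_{j}(\alpha)\inv\bigr)>0$. Since the FVIP-system $(\vec n_{\alpha})$ was arbitrary, this shows that the set in \eqref{eq:SZ-intro} meets every FVIP-system in $\Z^{m}$, which is the definition of FVIP*.

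The main obstacle, and the reason Chapter~\ref{chap:groups} was developed, is the polynomiality claim for $g_{j}$. Admissibility is precisely the combinatorial condition that prevents pathological pieces of a generalized polynomial (for instance constant or eventually periodic contributions created by the nested floor functions) from obstructing the inductive derivative calculus; once admissibility is assumed, verifying $\sD_{\beta}g_{j}\in\VIP[{\Gb[+1]}]$ is a finite bookkeeping exercise based on the Leibniz rule \eqref{eq:leibniz}, the commutator derivative formula \eqref{eq:derivative-of-commutator}, and induction on the length of $\Gb$. Once this structural input is secured, no further dynamical work beyond Theorem~\ref{thm:SZ-general} is required.
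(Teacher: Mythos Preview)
Your overall plan---fix an FVIP-system $(\vec n_{\alpha})$, package the maps $g_{j}(\alpha)=\prod_{i}T_{i}^{p_{i,j}(\vec n_{\alpha})}$ as VIP-systems in the nilpotent group $G$, and invoke Theorem~\ref{thm:SZ-general}---is exactly the paper's route. However, the algebraic verification has two genuine gaps.

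First, the tools you cite do not do what you claim. Proposition~\ref{prop:mono-poly} and Corollary~\ref{cor:set-monomials} treat \emph{monomial} maps of the form $\alpha\mapsto\prod_{j\in R[\alpha]}g_{j}$, i.e.\ products indexed by tuples drawn from $\alpha$. They say nothing about maps $\alpha\mapsto T^{q(\alpha)}$ for a general FVIP exponent $q$; the FVIP sequences $p_{i,j}(\vec n_{\alpha})$ that arise from admissible generalized polynomials (via Theorem~\ref{thm:gen-poly-FVIP}, which you should be citing) are not of this monomial shape. The correct tool is Lemma~\ref{lem:poly-fvip}, which shows directly that $(T_{i}^{p_{i,j}(\vec n_{\alpha})})_{\alpha}$ is an FVIP system, after passing to a sub-IP-ring. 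Note also that this requires the stretched filtration~\eqref{eq:scalar-poly-filtration-2}, not the plain lower central series: one needs $T_{i}\in G_{d\deg p_{i,j}}$, which fails for the lower central series as soon as $d\deg p_{i,j}>1$.

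Second, and more seriously, you only argue that $g_{j}\in\VIP[\Gb]$, but Theorem~\ref{thm:SZ-general} requires the $g_{j}$ to lie in an \emph{FVIP group} $F\leq\VIP$, i.e.\ a finitely generated VIP group (Definition~\ref{def:fvip}). The ambient group $\VIP[\Gb]$ is typically not finitely generated, and this finite generation hypothesis is essential to the machinery (it is used to guarantee that the group is Noetherian, which drives the construction of primitive extensions via Theorem~\ref{thm:fvip-proj}). Lemma~\ref{lem:poly-fvip} delivers precisely this: it shows that each $(T_{i}^{p_{i,j}(\vec n_{\alpha})})_{\alpha}$ lies in an FVIP group, and then Lemma~\ref{lem:fvip-plus-fvip} lets you assemble a single FVIP group containing all the $g_{j}$.
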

In particular, the set \eqref{eq:SZ-intro} is IP*, so that it is syndetic \cite[Lemma 9.2]{MR603625}.
The class of admissible generalized polynomials contains ordinary integer polynomials that vanish at zero, for further examples see e.g.\ \eqref{eq:generalized-poly-examples}.
By the Furstenberg correspondence principle we obtain the following combinatorial corollary.
\begin{corollary}
\label{cor:SZ-combinatorial}
Let $G$ be a finitely generated nilpotent group, $T_{1},\dots,T_{t}\in G$, and $p_{i,j}:\Z^{m}\to\Z$, $i=1,\dots,t$, $j=1,\dots,s$, be admissible generalized polynomials.
Then for every subset $E\subset G$ with positive upper Banach density the set
\[
\Big\{ \vec n\in\Z^{m} : \exists g\in G : g\prod_{i=1}^{t}T_{i}^{p_{i,j}(\vec n)}\in E, j=1,\dots,s \Big\}
\]
is FVIP* in $\Z^{m}$.
\end{corollary}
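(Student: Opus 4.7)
The plan is to deduce Corollary~\ref{cor:SZ-combinatorial} from Theorem~\ref{thm:SZ-intro} via a Furstenberg correspondence principle for the finitely generated nilpotent (hence amenable) group $G$. First I would construct, in the standard way, a probability space $(X,\mathcal{A},\mu)$ equipped with a measure-preserving right $G$-action $(S_{g})_{g\in G}$ (so $S_{g}S_{h}=S_{gh}$) and a set $A\in\mathcal{A}$ with $\mu(A)=d^{*}(E)>0$ satisfying the correspondence inequality
\[
d^{*}\Bigl(\bigcap_{j=1}^{s}E g_{j}\inv\Bigr)\geq \mu\Bigl(\bigcap_{j=1}^{s}A S_{g_{j}}\inv\Bigr)
\qquad\text{for all }g_{1},\dots,g_{s}\in G.
\]
The classical construction takes $X$ to be the orbit closure of $1_{E}$ in $\{0,1\}^{G}$ under right shifts, $A$ the cylinder $\{\omega:\omega(1_{G})=1\}$, and $\mu$ a weak-$\ast$ cluster point of empirical averages along a F\o{}lner sequence in $G$ realising $d^{*}(E)$.

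Next, set $S_{i}:=S_{T_{i}}$ for $i=1,\dots,t$. Since $g\mapsto S_{g}$ is a group homomorphism, these $S_{i}$ generate a nilpotent subgroup of the group of measure-preserving transformations of $X$, and writing $g_{j}(\vec n):=\prod_{i=1}^{t}T_{i}^{p_{i,j}(\vec n)}$ one has $S_{g_{j}(\vec n)}=\prod_{i=1}^{t}S_{i}^{p_{i,j}(\vec n)}$. Theorem~\ref{thm:SZ-intro} applied to $S_{1},\dots,S_{t}$ and the set $A$ then shows that
\[
R:=\Bigl\{\vec n\in\Z^{m}:\mu\Bigl(\bigcap_{j=1}^{s}A\Bigl(\prod_{i=1}^{t}S_{i}^{p_{i,j}(\vec n)}\Bigr)\inv\Bigr)>0\Bigr\}
\]
is FVIP$^{*}$ in $\Z^{m}$.

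Finally I would combine the two ingredients. For every $\vec n\in R$ the correspondence inequality gives $d^{*}(\bigcap_{j=1}^{s}E g_{j}(\vec n)\inv)>0$; in particular this intersection is non-empty, and any $g$ in it satisfies $g\cdot g_{j}(\vec n)\in E$ for all $j$. Hence $R$ is contained in the set described in Corollary~\ref{cor:SZ-combinatorial}, which is therefore a superset of an FVIP$^{*}$ set and is itself FVIP$^{*}$. The only real subtlety is keeping the chirality conventions straight --- the paper uses right actions on $X$ and the corollary is phrased with right translates $E g\inv$ --- so that these pair correctly on the two sides of the correspondence; once this bookkeeping is done the argument is a direct transcription of the classical Furstenberg correspondence and the real work has already been done in Theorem~\ref{thm:SZ-intro}.
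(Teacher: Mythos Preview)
Your proposal is correct and follows exactly the route the paper indicates: the paper itself does not write out a proof but simply says ``By the Furstenberg correspondence principle we obtain the following combinatorial corollary,'' and your argument is the standard unpacking of that sentence. The only content is indeed the correspondence principle for the amenable group $G$ combined with Theorem~\ref{thm:SZ-intro}, and your bookkeeping with the right action and the inclusion $R\subset\{\vec n:\exists g\ \dots\}$ is correct.
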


\section{Topological multiple recurrence}
\label{sec:hales-jewett}
In this section we refine the nilpotent Hales-Jewett theorem due to Bergelson and Leibman \cite[Theorem 0.19]{MR1972634} using the induction scheme from \cite[Theorem 3.4]{MR1715320}.
This allows us to deduce a \emph{multiparameter} nilpotent Hales-Jewett theorem that will be ultimately applied to polynomial-valued polynomials mappings.

\subsection{PET induction}
First we describe the PET (polynomial exhaustion technique) induction scheme \cite{MR912373}.
\index{PET induction}
For a polynomial $g\in\VIP$ define its \emph{level} $l(g)$ as the greatest integer $l$ such that $g\in\VIP[{\Gb[+l]}]$.
We define an equivalence relation on the set of non-zero $\Gb$-polynomials by $g\sim h$ if and only if $l(g)=l(h)<l(g\inv h)$.
Transitivity and symmetry of $\sim$ follow from Theorem~\ref{thm:poly-group}.

\begin{definition}
A \emph{system} is a finite subset $A\subset\VIP$.
The \emph{weight vector} of a system $A$ is the function
\index{weight vector}
\[
l\mapsto\text{the number of equivalence classes modulo }\sim\text{ of level }l\text{ in }A.
\]
The lexicographic ordering is a well-ordering on the set of weight vectors and the \emph{PET induction} is induction with respect to this ordering.
\end{definition}
\begin{proposition}
\label{prop:PET}
Let $A$ be a system, $h\in A$ be a mapping of maximal level and $B\subset G_{1}$, $M\subset\Fin$ be finite sets.
Then the weight vector of the system
\[
A'' = \{ b h\inv g \sD_{\alpha}g b\inv, \quad g\in A, \alpha\in M, b\in B \} \setminus \{1_{G}\}
\]
precedes the weight vector of $A$.
\end{proposition}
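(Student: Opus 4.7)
The plan is to carry out a case analysis of the element $\bar g_{\alpha,b} := bh\inv g \sD_{\alpha}g b\inv$ based on how $g \in A$ sits relative to $h$, tracking its level and $\sim$-class, and then to count $\sim$-classes in $A''$ at each level. Set $L := l(h)$; by hypothesis $l(g) \leq L$ for every $g \in A$. Since the weight-vector ordering is lexicographic starting from the lowest level, it suffices to establish $w_{l}^{A''} \leq w_{l}^{A}$ for all $l \leq L$ with strict inequality at some such $l$; the contributions of $A''$ at levels above $L$ from the first cases below will not affect the comparison.

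First I would record three consequences of Theorem~\ref{thm:poly-group}: (i) conjugation by $b \in G_{1}$ preserves $\sim$-classes, since $bxb\inv = x[x,b\inv]$ and $l([x,b\inv]) \geq l(x)+1$; (ii) when $l(x) \neq l(y)$ one has $l(xy) = \min(l(x),l(y))$, with $xy \sim x$ whenever $l(x) < l(y)$; and (iii) $l(\sD_{\alpha}g) \geq l(g)+1$. Together these let me replace $\bar g_{\alpha,b}$ by $h\inv g$ modulo $\sim$ whenever $h\inv g$ sits strictly below $\sD_{\alpha}g$ in level.

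The case analysis then runs cleanly. If $g = h$, then $\bar g = b\sD_{\alpha}h b\inv$ lies at level $\geq L+1$. If $g \neq h$ and $g \sim h$, then $l(h\inv g) > L$ by definition of $\sim$, and again $\bar g$ has level $> L$. If $l(g) = L$ and $g \not\sim h$, then $l(h\inv g) = L$ and $\bar g \sim h\inv g$ at level $L$. Finally, if $l(g) < L$, then $l(h\inv g) = l(g)$; combining the identity $g\inv h g = h[h,g]$ with the commutator bound $l([h,g]) \geq L + l(g)$ from Theorem~\ref{thm:poly-group} yields $l(g\inv h g) \geq L > l(g)$, so $h\inv g \sim g$ and consequently $\bar g \sim g$ at level $l(g)$.

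Assembling the count: for $l < L$ only the last case contributes, and each resulting $\bar g$ inherits the $\sim$-class of the underlying $g$, giving $w_{l}^{A''} \leq w_{l}^{A}$. At level $L$ only the third case contributes, and the identity $(h\inv g_{1})\inv(h\inv g_{2}) = g_{1}\inv g_{2}$ shows that $h\inv g_{1} \sim h\inv g_{2}$ iff $g_{1} \sim g_{2}$, so the map $[g]_{\sim} \mapsto [h\inv g]_{\sim}$ is a bijection between the $\sim$-classes of $A$ at level $L$ other than $[h]_{\sim}$ and the $\sim$-classes of $A''$ at level $L$; thus $w_{L}^{A''} = w_{L}^{A} - 1$. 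Hence the weight vectors agree (or $A''$ already wins) at every level $l < L$ and $A''$ strictly loses at $L$, giving the desired lexicographic precedence. The main technical point to verify carefully is the stability of $\sim$-class under the auxiliary factor $\sD_{\alpha}g$ and conjugation by $b$, both of which reduce to the commutator-level bounds in Theorem~\ref{thm:poly-group}.
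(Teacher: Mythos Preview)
Your proof is correct and follows essentially the same approach as the paper's: the paper first shows that the intermediate system $A' = \{h\inv g\,\sD_{\alpha}g\}\setminus\{1_G\}$ has smaller weight vector via the same level-based case split, and then observes separately that conjugation by $b\in G_1$ preserves $\sim$-classes via $bgb\inv = g[g,b\inv]$. You integrate the conjugation step throughout and make the bijection at level $L$ more explicit, but the substance is identical.
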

\begin{proof}
We claim first that the weight vector of the system
\[
A' = \{ h\inv g \sD_{\alpha}g, \quad \alpha\in M, g\in A \} \setminus \{1_{G}\}
\]
precedes the weight vector of $A$.
Indeed, if $l(g)<l(h)$, then $g\sim h\inv g \sD_{\alpha}g$.
If $l(g)=l(\tilde g)=l(h)$ and $g\sim \tilde g \not\sim h$, then $h\inv g \sD_{\alpha}g \sim h\inv \tilde g \sD_{\tilde\alpha}\tilde g$.
Finally, if $g\sim h$, then $l(h\inv g \sD_{\alpha}g)>l(h)$.
Thus the weight vector of $A'$ does not differ from the weight of vector of $A$ before the $l(h)$-th position and is strictly smaller at the $l(h)$-th position, as required.

We now claim that the weight vector of the system
\[
A'' = \{ b gb\inv, \quad g\in A', b\in B \}
\]
coincides with the weight vector of the system $A'$.
Indeed, this follows directly from
\[
bgb\inv = g[g,b\inv] \sim g.
\qedhere
\]
\end{proof}

\subsection{Nilpotent Hales-Jewett theorem}
The following refined version of the nilpotent IP polynomial topological mutiple recurrence theorem due to Bergelson and Leibman \cite[Theorem 0.19]{MR1972634} does not only guarantee the existence of a ``recurrent'' point, but also allows one to choose it from a finite subset $xS$ of any given orbit.
\begin{theorem}[Nilpotent Hales-Jewett]
\label{thm:nil-hj}
\index{theorem!nilpotent Hales-Jewett}
Assume that $G$ acts on the right on a compact metric space $(X,\rho)$ by homeomorphisms.
For every system $A$, every $\epsilon>0$ and every $H\in\Fin$ there exists $N\in\Fin$, $N>H$, and a finite set $S\subset G$ such that for every $x\in X$ there exist a non-empty $\alpha\subset N$ and $s\in S$ such that $\rho(xsg(\alpha),xs)<\epsilon$ for every $g\in A$.
\end{theorem}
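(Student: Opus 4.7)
The plan is to proceed by PET induction on the weight vector of the system $A$. In the base case, where every element of $A$ equals $1_G$, the conclusion is trivial with any $N > H$, with $S = \{1_G\}$, and with $\alpha = N$.

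For the inductive step, I would pick an element $h \in A$ of maximal level and invoke the inductive hypothesis for the derived system $A''$ from Proposition~\ref{prop:PET}, whose weight vector is strictly smaller. The goal is then to convert $A''$-recurrence into $A$-recurrence by means of the symmetric-derivative identity $g(\beta \cup \gamma) = g(\beta)\,\sD_\gamma g(\beta)\, g(\gamma)$, which is just a rearrangement of the definition \eqref{eq:symm-der}. Concretely, I would fix a finite $\epsilon/3$-net $\{x_{1},\dots,x_{K}\} \subset X$ and, using uniform continuity of the action by homeomorphisms, choose $\delta > 0$ so that $\rho(y,z) < \delta$ propagates to $\rho(yc,zc) < \epsilon/3$ uniformly for $c$ in an a priori fixed finite pool of constants (the pool being determined by the orbit pieces that appear in the iteration below).

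Next, I would apply the inductive hypothesis to $A''$ repeatedly to build an increasing chain $\alpha_{1} < \dots < \alpha_{K+1}$ of nonempty subsets of $\N$, each above $H$, together with shifts $s_{1},\dots,s_{K+1}$ drawn from a finite pool, arranged so that the intermediate base points $y_{j} := x\,s_{j}\, h(\alpha_{1}) \cdots h(\alpha_{j-1})$ are simultaneously $\delta$-close to their translates by elements of $A''$ evaluated on $\alpha_{j}$. By pigeonhole, two of these points $y_{i}, y_{j}$ with $i < j$ must lie in a common $\epsilon/3$-ball. Setting $\alpha := \alpha_{i} \cup \dots \cup \alpha_{j-1}$ and expanding each $g(\alpha)$ by iterating the derivative identity, the telescoping product isolates a contribution from $h(\alpha)$, controlled by the pigeonhole step, together with finitely many $\sD$-derivatives of elements of $A$ conjugated by products of the $s_{k}$; these latter factors are precisely the generators of $A''$, hence are $\delta$-close to the identity on the appropriate iterates by the inductive hypothesis. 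The desired finite set $S$ is assembled from the finitely many possible products of the $s_{k}$ that can appear in such an iteration.

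The hard part will be the bookkeeping: one must choose the auxiliary sets $B \subset G_{1}$ and $M \subset \Fin$ fed into Proposition~\ref{prop:PET} \emph{in advance} so that the resulting $A''$ precisely captures every error term produced by the telescoping expansion, including all conjugations by the shifts $s_{k}$ that arise. The resolution is that the entire iteration lives inside a finite skeleton whose size depends only on $\epsilon$, $H$, and the cardinality of the $\epsilon/3$-net of $X$; one therefore pre-populates $B$ with all values $h(\alpha)$ and $s_{k}$ that could ever appear in this skeleton, and $M$ with the (finite) collection of candidate $\alpha_{j}$. Because the PET induction strictly decreases the weight vector at each stage and the lexicographic ordering on weight vectors is a well-ordering, the recursion terminates and produces the required $N$ and $S$.
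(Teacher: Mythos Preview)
Your overall strategy (PET induction, pick $h\in A$ of maximal level, pigeonhole on a net in $X$, telescope via the symmetric-derivative identity) matches the paper's. The gap is exactly in the paragraph you flag as ``the hard part'', and your proposed resolution there does not work.

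You suggest applying the induction hypothesis to a single system $A''$ obtained from Proposition~\ref{prop:PET}, with $M$ ``pre-populated'' by the candidate $\alpha_{j}$ and $B$ by the candidate shifts. But the $\alpha_{j}$ and the shifts $s_{k}$ are \emph{outputs} of the induction hypothesis applied to $A''$, while $A''$ itself depends on $M$ and $B$. This is a genuine circularity: the set $M$ cannot contain the $\alpha_{j}$ before those $\alpha_{j}$ have been produced, and the finite skeleton you invoke fixes only the \emph{number} of stages, not the actual sets and group elements that populate them. There is no a priori finite pool of ``all $\alpha_{j}$ that could ever appear''.

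The paper breaks this circularity by \emph{not} using a single $A''$. Instead it constructs a sequence of systems $A_{0},A_{1},\dots,A_{k}$ iteratively: at stage $i$ one defines $A_{i}$ using only the already-determined data $N_{0},\dots,N_{i-1}$ and $B_{i}$ (built from $S_{0},\dots,S_{i-1}$ and the $N_{j}$ with $j<i$), checks via Proposition~\ref{prop:PET} that $w(A_{i})$ precedes $w(A)$, and \emph{then} applies the induction hypothesis to $A_{i}$ to obtain $N_{i}$ and $S_{i}$. The tolerances $\epsilon_{i}$ are also chosen at each stage so that they propagate through the growing finite set $\tilde B_{i}$. After $k+1$ stages (with $k$ coming from the $\epsilon/2$-net) one runs the pigeonhole and telescoping argument you describe, with the final $S$ a product $S_{k}\cdots S_{0}\,h(\Fin(N_{0}))^{-1}\cdots h(\Fin(N_{k}))^{-1}$.
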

Here we follow Bergelson and Leibman and use ``Hales-Jewett'' as a shorthand for ``IP topological multiple recurrence'', although Theorem~\ref{thm:nil-hj} does not imply the classical Hales-Jewett theorem on monochrome combinatorial lines.
The fact that Theorem~\ref{thm:nil-hj} does indeed generalize \cite[Theorem 0.19]{MR1972634} follows from Corollary~\ref{cor:set-monomials} that substitutes \cite[\textsection 1--2]{MR1972634}.

The reason that Theorem~\ref{thm:nil-hj} does not imply the classical Hales-Jewett theorem is that it does not apply to semigroups.
However, it is stronger than van der Waerden-type topological recurrence results, since it makes no finite generation assumptions.
We refer to \cite[\textsection 5.5]{MR1972634} and \cite[\textsection 3.3]{MR1715320} for a discussion of these issues.
It would be interesting to extend Theorem~\ref{thm:nil-hj} to nilpotent semigroups (note that nilpotency of a group can be characterized purely in terms of semigroup relations).
\begin{proof}
We use PET induction on the weight vector $w(A)$.
If $w(A)$ vanishes identically, then $A$ is the empty system and there is nothing to show.
Assume that the conclusion is known for every system whose weight vector precedes $w(A)$.
Let $h\in A$ be an element of maximal level, without loss of generality we may assume $h\not\equiv 1_{G}$.
Let $k$ be such that every $k$-tuple of elements of $X$ contains a pair of elements at distance $<\epsilon/2$.

We define finite sets $H_{i}\in\Fin$, finite sets $B_{i},\tilde B_{i}\subset G$, systems $A_{i}$ whose weight vector precedes $w(A)$, positive numbers $\epsilon_{i}$, and finite sets $N_{i}\in\Fin$ by induction on $i$ as follows.
Begin with $H_{0}:=H$ and $B_{0}=\tilde B_{0}=\{1_{G}\}$.
The weight vector $w(A_{i})$ of the system
\[
A_{i} := \{b h\inv g \sD_{m}g b\inv,
\quad g\in A,m\subset N_{0}\cup\dots\cup N_{i-1},b\in B_{i}\}
\]
precedes $w(A)$ by Proposition~\ref{prop:PET}.
By uniform continuity we can choose $\epsilon_{i}$ such that
\[
\rho(x,y)< \epsilon_{i} \implies \forall \tilde b\in \tilde B_{i} \quad \rho(x \tilde b,y \tilde b)<\frac{\epsilon}{2k}.
\]
By the induction hypothesis there exists $N_{i}\in\Fin$, $N_{i}>H_{i}$, and a finite set $S_{i}\subset G$ such that
\begin{equation}
\label{eq:phj-ind}
\forall x\in X \quad \exists n_{i}\subset N_{i}, s_{i}\in S_{i} \quad \forall g\in A_{i}
\quad \rho(xs_{i}g(n_{i}),xs_{i}) < \epsilon_{i}.
\end{equation}
Finally, let $H_{i+1}:=H_{i} \cup N_{i}$ and
\[
B_{i+1} := \{ s_{i}b h(\alpha_{i})\inv,
\quad \alpha_{i}\subset N_{i},s_{i}\in S_{i},b\in B_{i}\} \subset G,
\]
\[
\tilde B_{i+1} := \{ bg(m),
\quad g\in A,m \subset N_{0}\cup\dots\cup N_{i},b\in B_{i+1}\} \subset G.
\]
This completes the inductive definition.
Now fix $x\in X$.
We define a sequence of points $y_{i}$ by descending induction on $i$.
Begin with $y_{k}:=x$.
Assume that $y_{i}$ has been chosen and choose $n_{i}\subset N_{i}$ and $s_{i}\in S_{i}$ as in \eqref{eq:phj-ind}, then set $y_{i-1}:=y_{i}s_{i}$.

Finally, let $x_{0}:=y_{0}s_{0}h(n_{0})\inv$ and $x_{i+1}:=x_{i}h(n_{i+1})\inv$.
We claim that for every $g\in A$ and any $0\leq i\leq j\leq k$ we have
\begin{equation}
\label{eq:phj-inner-ind}
\rho(x_{j}g(n_{i+1}\cup\dots\cup n_{j}),x_{i}) < \frac{\epsilon}{2k}(j-i).
\end{equation}
This can be seen by ascending induction on $j$.
Let $i$ be fixed, the claim is trivially true for $j=i$.
Assume that the claim holds for $j-1$ and let $g$ be given.
Consider
\[
b:=s_{j-1}\dots s_{0}h(n_{0})\inv \dots h(n_{j-1})\inv \in B_{j}
\quad \text{and}
\]
\[
\tilde b:=b g(n_{i+1}\cup\dots\cup n_{j-1}) \in \tilde B_{j}.
\]
By choice of $n_{j}$ and $s_{j}$ we have
\[
\rho(y_{j}s_{j}b h(n_{j})\inv g(n_{i+1}\cup\dots\cup n_{j}) g(n_{i+1}\cup\dots\cup n_{j-1})\inv b\inv, y_{j}s_{j} ) < \epsilon_{j}.
\]
By definition of $\epsilon_{j}$ this implies
\[
\rho(y_{j}s_{j}b h(n_{j})\inv g(n_{i+1}\cup\dots\cup n_{j}) g(n_{i+1}\cup\dots\cup n_{j-1})\inv b\inv \tilde b, y_{j}s_{j} \tilde b ) < \frac{\epsilon}{2k}.
\]
Plugging in the definitions we obtain
\[
\rho(x_{j}g(n_{i+1}\cup\dots\cup n_{j}), x_{j-1}g(n_{i+1}\cup\dots\cup n_{j-1}) ) < \frac{\epsilon}{2k}.
\]
The induction hypothesis then yields
\begin{multline*}
\rho(x_{j}g(n_{i+1}\cup\dots\cup n_{j}),x_{i})\\
\leq
\rho(x_{j}g(n_{i+1}\cup\dots\cup n_{j}), x_{j-1}g(n_{i+1}\cup\dots\cup n_{j-1}))
+
\rho(x_{j-1}g(n_{i+1}\cup\dots\cup n_{j-1}),x_{i})\\
<
\frac{\epsilon}{2k}
+
\frac{\epsilon}{2k}((j-1)-i)
=
\frac{\epsilon}{2k}(j-i)
\end{multline*}
as required.

Recall now that by definition of $k$ there exist $0\leq i<j\leq k$ such that $\rho(x_{i},x_{j})<\frac{\epsilon}{2}$.
By \eqref{eq:phj-inner-ind} we have
\[
\rho(x_{j}g(n_{i+1}\cup\dots\cup n_{j}),x_{j})
\leq
\rho(x_{j}g(n_{i+1}\cup\dots\cup n_{j}),x_{i})+\rho(x_{i},x_{j})
<
\epsilon
\]
for every $g\in A$.
But $x_{j}=xs$ for some
\[
s \in S := S_{k}\dots S_{0}h(\Fin(N_{0}))\inv \dots h(\Fin(N_{k}))\inv,
\]
and we obtain the conclusion with $N=N_{0}\cup\dots\cup N_{k}$ and $S$ as above.
\end{proof}
We remark that \cite[Theorem 3.4]{MR1715320} provides a slightly different set $S$ that can be recovered substituting $y_{k}:=xh(N_{k})$ and $y_{i-1}:=y_{i}s_{i}h(N_{i-1})$ in the above proof and making the corresponding adjustments to the choices of $B_{i}$, $b$ and $S$.

\subsection{Multiparameter nilpotent Hales-Jewett theorem}
We will now prove a version of the nilpotent Hales-Jewett theorem in which the polynomial configurations may depend on multiple parameters $\alpha_{1},\dots,\alpha_{m}$.
\begin{theorem}[Multiparameter nilpotent Hales-Jewett]
\label{thm:multi-nil-hj}
\index{theorem!multiparameter nilpotent Hales-Jewett}
Assume that $G$ acts on the right on a compact metric space $(X,\rho)$ by homeomorphisms and let $m\in\N$.
For every finite set $A\subset\PE{\VIP}{m}$, every $\epsilon>0$ and every $H\in\Fin$ there exists a finite set $N\in\Fin$, $N>H$, and a finite set $S\subset G$ such that for every $x\in X$ there exists $s\in S$ and non-empty subsets $\alpha_{1}<\dots<\alpha_{m}\subset N$ such that $\rho(xsg(\alpha_{1},\dots,\alpha_{m}),xs)<\epsilon$ for every $g\in A$.
\end{theorem}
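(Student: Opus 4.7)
The plan is to proceed by induction on $m$, using Theorem~\ref{thm:nil-hj} as the base case $m=1$ (where $\PE{\VIP}{1}=\VIP$). For the inductive step $m\to m+1$, consider a finite $A\subset\PE{\VIP}{m+1}$ and use the defining factorization
\[
g(\alpha_{1},\dots,\alpha_{m+1}) = W_{g}^{\alpha_{1},\dots,\alpha_{m}}(\alpha_{m+1})\,S_{g}(\alpha_{1},\dots,\alpha_{m}),
\]
where $S_{g}\in\PE{\VIP}{m}$ and $W_{g}^{\alpha_{1},\dots,\alpha_{m}}\in\VIP$ for every $\alpha_{1}<\dots<\alpha_{m}$. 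The strategy is to control the $\PE{\VIP}{m}$-part $S_{g}$ with the induction hypothesis and the $\VIP$-part $W_{g}^{\alpha_{1},\dots,\alpha_{m}}$ with Theorem~\ref{thm:nil-hj}.

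First I apply the induction hypothesis to $\{S_{g}:g\in A\}\subset\PE{\VIP}{m}$ with the given $H$ and parameter $\epsilon/2$, obtaining $N_{0}\in\Fin$ with $N_{0}>H$ and a finite set $S_{0}\subset G$ such that for every $y\in X$ some $s_{0}\in S_{0}$ and $\alpha_{1}<\dots<\alpha_{m}\subset N_{0}$ satisfy $\rho(ys_{0}S_{g}(\alpha_{1},\dots,\alpha_{m}),ys_{0})<\epsilon/2$ for all $g\in A$. The finite sets
\[
\mathcal{C}:=\{S_{g}(\alpha_{1},\dots,\alpha_{m}):g\in A,\ \alpha_{1}<\dots<\alpha_{m}\subset N_{0}\}\subset G
\]
and
\[
\mathcal{W}:=\{s_{0}W_{g}^{\alpha_{1},\dots,\alpha_{m}}s_{0}^{-1}:g\in A,\ s_{0}\in S_{0},\ \alpha_{1}<\dots<\alpha_{m}\subset N_{0}\}\subset\VIP
\]
are then determined; $\mathcal{W}$ lies in $\VIP$ because the VIP group is closed under conjugation by constants. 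Pick $\epsilon_{2}>0$ such that $\rho(u,v)<\epsilon_{2}$ implies $\rho(uc,vc)<\epsilon/2$ for every $c\in S_{0}\cdot\mathcal{C}$, using uniform continuity of the action on the compact space $X$.

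Next I apply Theorem~\ref{thm:nil-hj} to the finite system $\mathcal{W}$ with parameter $\epsilon_{2}$ and threshold $H':=\{1,\dots,\max N_{0}\}$, obtaining $N_{1}\in\Fin$ with $\min N_{1}>\max N_{0}$ and a finite $S_{1}\subset G$; I set $N:=N_{0}\cup N_{1}$ and $S:=S_{1}\cdot S_{0}$. Given $x\in X$, I first apply the $\mathcal{W}$-conclusion with $z:=x$ to obtain $s_{1}\in S_{1}$ and $\alpha_{m+1}\subset N_{1}$ with $\rho(xs_{1}W(\alpha_{m+1}),xs_{1})<\epsilon_{2}$ for all $W\in\mathcal{W}$, and then the induction hypothesis with $y:=xs_{1}$ to obtain $s_{0}\in S_{0}$ and $\alpha_{1}<\dots<\alpha_{m}\subset N_{0}$ with $\rho(xs_{1}s_{0}S_{g}(\alpha_{1},\dots,\alpha_{m}),xs_{1}s_{0})<\epsilon/2$. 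Using the identity $s_{0}WS=(s_{0}Ws_{0}^{-1})\,s_{0}S$ and setting $s:=s_{1}s_{0}\in S$, I rewrite
\[
xs\,g(\alpha_{1},\dots,\alpha_{m+1}) = xs_{1}\,(s_{0}W_{g}^{\alpha_{1},\dots,\alpha_{m}}(\alpha_{m+1})s_{0}^{-1})\,s_{0}S_{g}(\alpha_{1},\dots,\alpha_{m}),
\]
and the triangle inequality at the intermediate point $xs_{1}s_{0}S_{g}(\alpha_{1},\dots,\alpha_{m})$ bounds $\rho(xsg(\alpha_{1},\dots,\alpha_{m+1}),xs)$ by $\epsilon/2+\epsilon/2=\epsilon$: the first piece via the defining property of $\epsilon_{2}$ (since $s_{0}S_{g}(\alpha_{1},\dots,\alpha_{m})\in S_{0}\cdot\mathcal{C}$), the second piece directly from the induction hypothesis applied to $y=xs_{1}$.

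The main obstacle is ordering the various shifts in a way that accommodates the noncommutativity of $G$. A naive iteration of Theorem~\ref{thm:nil-hj} would require conjugating the inner VIP system $W_{g}^{\alpha_{1},\dots,\alpha_{m}}$ by the shift $s_{0}\in S_{0}$, but $s_{0}$ depends on the point to which the induction hypothesis is applied, whereas Theorem~\ref{thm:nil-hj} must be invoked uniformly before $x$ is presented. Enlarging the set $\{W_{g}^{\alpha_{1},\dots,\alpha_{m}}\}$ to the still-finite family $\mathcal{W}$ of all its $s_{0}$-conjugates removes this circularity, and then the pairing $s:=s_{1}s_{0}$ together with applying the induction hypothesis to $y=xs_{1}$ is precisely what makes the two approximations align correctly in the triangle inequality.
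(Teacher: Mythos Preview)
Your proof is correct and follows essentially the same approach as the paper: apply the induction hypothesis to the $\PE{\VIP}{m}$-parts $S_g$ to get $N_0,S_0$; use uniform continuity to pick a small parameter; apply Theorem~\ref{thm:nil-hj} to the finite family of $s_0$-conjugates of the $W_g^{\vec\alpha}$ to get $N_1,S_1$; and combine via the triangle inequality with $s=s_1s_0$. The only cosmetic differences are that the paper starts the induction at the trivial case $m=0$ rather than $m=1$, and it phrases the uniform-continuity step as ``$\rho(x,y)<\epsilon'\Rightarrow\rho(xsg_1(\vec\alpha),ysg_1(\vec\alpha))<\epsilon/2$'' rather than in terms of your set $S_0\cdot\mathcal{C}$, but these are equivalent.
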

\begin{proof}
We use induction on $m$.
The base case $m=0$ is trivial.
Assume that the conclusion is known for some $m$, we prove it for $m+1$.

Let $A\subset\PE{\VIP}{m+1}$ and $H$ be given.
For convenience we write $\vec\alpha=(\alpha_{1},\dots,\alpha_{m})$ and $\alpha=\alpha_{m+1}$.
By definition each $g\in A$ can be written in the form
\[
g(\alpha_{1},\dots,\alpha_{m+1}) = g_{2}^{\vec\alpha}(\alpha) g_{1}(\vec\alpha)
\]
with $g_{1}\in\PE{\VIP}{m}$ and $g_{2}^{\vec\alpha} \in \VIP$.

We apply the induction hypothesis with the system $\{g_{1}, g\in A\}$ and $\epsilon/2$, thereby obtaining a finite set $N\in\Fin$, $N>H$, and a finite set $S\subset G$.
We write ``$\vec\alpha\subset N$'' instead of ``$\alpha_{1}<\dots<\alpha_{m}\subset N$''.

By uniform continuity there exists $\epsilon'$ such that
\[
\rho(x,y)<\epsilon' \implies
\forall s\in S, \vec\alpha\subset N,g\in A \quad
\rho(xsg_{1}(\vec\alpha),ysg_{1}(\vec\alpha))<\epsilon/2.
\]

We invoke Theorem~\ref{thm:nil-hj} with the system $\{ s g_{2}^{\vec\alpha} s\inv, s\in S, \vec\alpha\subset N, g\in A\}$ and $\epsilon'$, this gives us a finite set $N'\in\Fin$, $N'>N$, and a finite set $S'\subset G$ with the following property:
for every $x\in X$ there exist $s'\in S'$ and $\alpha\subset N'$ such that
\[
\forall s\in S, \vec\alpha\subset N, g\in A \quad
\rho(xs's g_{2}^{\vec\alpha}(\alpha)s\inv,xs') < \epsilon'.
\]
By choice of $\epsilon'$ this implies
\[
\forall s\in S, \vec\alpha\subset N, g\in A \quad
\rho(xs'sg_{2}^{\vec\alpha}(\alpha) g_{1}(\vec\alpha), xs'sg_{1}(\vec\alpha)) < \epsilon/2.
\]
By choice of $N$ and $S$, considering the point $xs'$, we can find $\vec\alpha\subset N$ and $s\in S$ such that
\[
\forall g\in A \quad \rho(xs'sg_{1}(\vec\alpha),xs's) < \epsilon/2.
\]
Combining the last two inequalities we obtain
\[
\forall g\in A \quad \rho(xs'sg(\vec\alpha,\alpha),xs's) < \epsilon.
\]
This yields the conclusion with finite sets $N\cup N'$ and $S'S$.
\end{proof}

The combinatorial version is derived using the product space construction of Furstenberg and Weiss \cite{MR531271}.
\begin{corollary}
\label{cor:color-multi-nil-hj}
Let $\Gb$ be a filtration on a countable nilpotent group $G$, $m\in\N$, $A\subset\PE{\VIP}{m}$ a finite set, and $l\in\N_{>0}$.
Then there exists $N\in\N$ and finite sets $S,T\subset G$ such that for every $l$-coloring of $T$ there exist $\alpha_{1}<\dots<\alpha_{m}\subset N$ and $s\in S$ such that the set $\{sg(\vec\alpha), g\in A\}$ is monochrome (and in particular contained in $T$).
\end{corollary}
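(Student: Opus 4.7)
The plan is to reduce the statement to its dynamical counterpart (Theorem \ref{thm:multi-nil-hj}) by the classical Furstenberg--Weiss device of turning a coloring into a point of a shift space. Since $G$ is countable, the space $X := \{1,\dots,l\}^G$ equipped with the product topology is compact and metrizable; concretely, enumerate $G$ as $\{h_0,h_1,\dots\}$ with $h_0=1_G$ and take
\[
\rho(\chi,\chi') := \sum_{i \geq 0,\ \chi(h_i)\neq\chi'(h_i)} 2^{-i},
\]
so that $\rho(\chi,\chi')<1$ forces $\chi(1_G)=\chi'(1_G)$. I would let $G$ act on $X$ on the right by homeomorphisms via $(\chi g)(h):=\chi(gh)$; the right-action law $(\chi g_1)g_2 = \chi(g_1 g_2)$ follows directly from associativity in $G$.

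I would then apply Theorem \ref{thm:multi-nil-hj} to this action with the given finite set $A\subset\PE{\VIP}{m}$, $\epsilon:=1$, and any fixed $H\in\Fin$. This yields a set $N_0\in\Fin$ (with $N_0>H$) and a finite set $S\subset G$ such that for every $\chi\in X$ there exist $s\in S$ and $\emptyset\neq\alpha_1<\dots<\alpha_m\subset N_0$ satisfying $\rho(\chi s g(\vec\alpha),\chi s)<1$ for every $g\in A$. By the definitions of the action and of $\rho$, this is precisely the statement that $\chi(s g(\vec\alpha))=\chi(s)$ for all $g\in A$.

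To convert this into a coloring assertion, I would define
\[
T := S \cup \{s g(\vec\alpha) : s\in S,\ g\in A,\ \emptyset\neq\alpha_1<\dots<\alpha_m\subset N_0\},
\]
which is a finite subset of $G$ because $S$, $A$, and the set of admissible tuples $\vec\alpha$ are all finite, and choose an integer $N\in\N$ so that $N_0\subset\{1,\dots,N\}$. Given an arbitrary $l$-coloring $c:T\to\{1,\dots,l\}$, extend $c$ to a function $\chi\in X$ by coloring $G\setminus T$ arbitrarily. Applying the preceding dynamical conclusion to this $\chi$ produces $s\in S$ and $\vec\alpha$ (with each $\alpha_i\subset N_0\subset\{1,\dots,N\}$) such that $s g(\vec\alpha)\in T$ and $c(s g(\vec\alpha))=\chi(s g(\vec\alpha))=\chi(s)=c(s)$ for every $g\in A$, so $\{s g(\vec\alpha):g\in A\}\subset T$ is monochrome.

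I do not foresee any serious obstacle: the content of the corollary is essentially a bookkeeping translation of Theorem \ref{thm:multi-nil-hj}, and the only care required is in choosing the action on $\{1,\dots,l\}^G$ compatibly with the right-action convention of that theorem and in arranging that $\rho$-proximity detects equality of colors at the identity coordinate.
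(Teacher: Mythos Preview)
Your proof is correct and follows essentially the same approach as the paper: both apply Theorem~\ref{thm:multi-nil-hj} to the right shift action on the coloring space $\{1,\dots,l\}^{G}$ with $\epsilon$ small enough that proximity detects the color at the identity, and then read off the finite set $T$ from the data $S$, $A$, and the admissible tuples $\vec\alpha$. Your version just makes a few bookkeeping points more explicit (the concrete metric, the translation from $N_{0}\in\Fin$ to an integer $N$, and the inclusion of $S$ in $T$), none of which differs materially from the paper's argument.
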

\begin{proof}
Let $X:=l^{G}$ be the compact metrizable space of all $l$-colorings of $G$ with the right $G$-action $xg(h)=x(gh)$.
We apply Theorem~\ref{thm:multi-nil-hj} to this space, the system $A$, the set $H=\emptyset$, and an $\epsilon>0$ that is sufficiently small to ensure that $\rho(x,x')<\epsilon$ implies $x(e_{G})=x'(e_{G})$.

This yields certain $N\in\N$ and $S\subset G$ that enjoy the following property:
for every coloring $x\in X$ there exist $\alpha_{1}<\dots<\alpha_{m}\subset N$ and $s\in S$ such that $\{sg(\vec\alpha),g\in A\}$ is monochrome.
Observe that this property only involves a finite subset $T=\cup_{g\in A}Sg(\Fin(N)^{m}_{<})\subset G$.
\end{proof}
In the proof of our measurable recurrence result we will apply this combinatorial result to polynomial-valued polynomial mappings.
We encode all the required information in the next corollary.
\begin{corollary}
\label{cor:pair-color}
Let $m\in\N$, $K\leq F\leq \VIP$ be VIP groups, and $\FE\leq\PE{\VIP}{\omega}$ be a countable subgroup that is closed under substitutions $g\mapsto g[\vec\beta]$ (recall \eqref{eq:substitution}).

Then for any finite subsets $(R_{i})_{i=0}^{t}\subset\PE{K}{m}\cap\FE$ and $(W_{k})_{k=0}^{v-1}\subset\PE{F}{m}\cap\FE$ there exist $N,w\in\N$ and $(L_{i},M_{i})_{i=1}^{w}\subset(\PE{K}{N}\cap\FE)\times(\PE{F}{N}\cap\FE)$ such that for every $l$-coloring of the latter set there exists an index $a$ and sets $\beta_{1}<\dots<\beta_{m}\subset N$ such that the set $(L_{a}R_{i}[\vec\beta],M_{a}W_{k}[\vec\beta]L_{a}\inv)_{i,k}$ is monochrome (and in particular contained in the set $(L_{i},M_{i})_{i=1}^{w}$).
We may assume $L_{1}\equiv 1_{G}$.
\end{corollary}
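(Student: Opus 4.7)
\emph{Plan.} The plan is to reduce to Corollary~\ref{cor:color-multi-nil-hj} applied to the direct product $\hat G := \FE \times \FE$, equipped with the componentwise prefiltration inherited from the canonical prefiltration on $\PE{\VIP}{\omega}$. Since $\PE{\VIP}{\omega}$ is nilpotent (its prefiltration has the same length as $\Gb$), so is its subgroup $\FE$, making $\hat G$ a countable nilpotent group. Right-multiplication configurations produced natively in $\hat G$ differ from the target ones by a factor depending on the starting point, and we absorb this factor using the set-theoretic bijection
\[
\phi\from \hat G \to \hat G,
\qquad
\phi(L,M) := (L,\, M L\inv),
\]
whose inverse is $\phi\inv(L,M) = (L, ML)$.

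For each $0\leq i\leq t$ and $0\leq k\leq v-1$ set
\[
g_{i,k}\from \vec\beta \mapsto \bigl(R_i[\vec\beta],\; (W_k R_i)[\vec\beta]\bigr),
\]
noting that substitution is multiplicative. Proposition~\ref{prop:substitution-poly} shows that each of the two coordinate maps is a polynomial expression in $m$ variables with values in $\PE{K}{\omega}$ (resp.\ $\PE{F}{\omega}$); combining them coordinatewise by a routine induction on the definition of polynomial expression places $g_{i,k}$ in $\PE{\VIP[\hat G]}{m}$, and closure of $\FE$ under substitutions ensures that the values actually land in $\hat G$.

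Apply Corollary~\ref{cor:color-multi-nil-hj} to $\hat G$ with the finite system $A:=\{g_{i,k}\}_{i,k}$ and the given $l$, producing $N\in\N$ and finite sets $S, T \subset \hat G$. After harmlessly enlarging $S$ to contain $1_{\hat G}=(1_G,1_G)$, take $(L_i,M_i)_{i=1}^{w}$ to be any enumeration of $\phi(T)\cup S$ that starts with $(L_1,M_1):=(1_G,1_G)\in S$, so the normalization $L_1\equiv 1_G$ is met. Since $T$ is constructed from the components of $A$ translated by $S$, and these components take values in $\PE{K}{\cdot}$ respectively $\PE{F}{\cdot}$, enlarging $N$ if necessary we obtain $(L_i,M_i)_{i=1}^{w}\subset (\PE{K}{N}\cap\FE)\times (\PE{F}{N}\cap\FE)$.

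Given an $l$-coloring $\chi$ of $(L_i,M_i)_{i=1}^{w}$, the composition $\chi\circ\phi$ is a well-defined $l$-coloring of $T$, and Corollary~\ref{cor:color-multi-nil-hj} furnishes $s=(L_a,M_a)\in S$ and $\beta_1<\dots<\beta_m\subset N$ with $\{s\cdot g_{i,k}(\vec\beta)\}_{i,k}$ monochrome for $\chi\circ\phi$. Direct computation in the direct product gives $s\cdot g_{i,k}(\vec\beta)=(L_a R_i[\vec\beta],\, M_a W_k[\vec\beta] R_i[\vec\beta])$, whose $\phi$-image equals $(L_a R_i[\vec\beta],\, M_a W_k[\vec\beta] L_a\inv)$ and is therefore monochrome for $\chi$, as required. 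The main potential obstacle is the verification that $g_{i,k}\in\PE{\VIP[\hat G]}{m}$; this reduces to applying Proposition~\ref{prop:substitution-poly} separately in each coordinate together with a coordinatewise combination lemma for polynomial expressions valued in a direct product.
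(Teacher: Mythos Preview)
Your argument is correct and follows essentially the same route as the paper's proof: both apply Corollary~\ref{cor:color-multi-nil-hj} to the system $g_{i,k}(\vec\beta)=(R_i[\vec\beta],(W_kR_i)[\vec\beta])$ in the product group, and both handle the twist $M\mapsto ML^{-1}$ by the same bijection---you pre-compose the finite set with $\phi$, while the paper post-composes the coloring via $\tilde\chi(g,h)=\chi(g,hg^{-1})$, which is equivalent. Your version is in fact slightly more careful in listing $(L_i,M_i)$ as $\phi(T)\cup S$ rather than $T\cup S$, which is what is actually needed for the monochrome configuration $(L_aR_i[\vec\beta],M_aW_k[\vec\beta]L_a^{-1})=\phi(s\,g_{i,k}(\vec\beta))$ to land among the listed pairs. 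One cosmetic simplification: rather than taking $\hat G=\FE\times\FE$ and then tracing through the inductive construction of $S,T$ to argue they lie in $(\PE{K}{\omega}\cap\FE)\times(\PE{F}{\omega}\cap\FE)$, you can simply apply Corollary~\ref{cor:color-multi-nil-hj} with $G$ equal to that smaller product group from the outset, since the $g_{i,k}$ already take values there by Proposition~\ref{prop:substitution-poly}.
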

\begin{proof}
By Proposition~\ref{prop:substitution-poly} the maps $\vec\beta \mapsto (R_{i}[\vec\beta],W_{k}[\vec\beta]R_{i}[\vec\beta])$ are polynomial expressions with values in $\PE{K}{\omega}\times\PE{F}{\omega}$.
By the assumption they also take values in $\FE\times\FE$.
Given an $l$-coloring $\chi$ of $(\PE{K}{\omega}\cap\FE)\times(\PE{F}{\omega}\cap\FE)$ we pass to the $l$-coloring $\tilde\chi(g,h)=\chi(g,hg\inv)$.
Corollary~\ref{cor:color-multi-nil-hj} then provides the desired $N$ and $(L_{i},M_{i})_{i=1}^{w}=T\cup S$.
\end{proof}

\section{FVIP groups}
\label{sec:fvip}
For reasons that will become clear shortly, our ergodic multiple recurrence result is restricted to a certain class of VIP systems with the following finite generation property.
\begin{definition}
\label{def:fvip}
An \emph{FVIP group} is a finitely generated VIP group.
\index{FVIP group}
An \emph{FVIP system} is a member of some FVIP group.
\index{FVIP system}
\end{definition}
The main result about FVIP groups is the following nilpotent version of \cite[Theorem 1.8]{MR1417769} and \cite[Theorem 1.9]{MR2246589} that will be used to construct ``primitive extensions'' (we will recall the definitions of a primitive extension and an IP-limit in due time).
\begin{theorem}
\label{thm:fvip-proj}
Let $\Gb$ be a prefiltration of finite length and $F\leq\VIP$ be an FVIP group.
Suppose that $G_{0}$ acts on a Hilbert space $H$ by unitary operators and that for each $(g_{\alpha})_{\alpha}\in F$ the weak limit $P_{g}=\wIPlim_{\alpha\in\Fin} g_{\alpha}$ exists.
Then
\begin{enumerate}
\item each $P_{g}$ is an orthogonal projection and
\item these projections commute pairwise.
\end{enumerate}
\end{theorem}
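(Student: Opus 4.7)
The plan is to proceed by induction on the length $d$ of the prefiltration $\Gb$. The base case is immediate: when $d \le 0$, $F$ is trivial (every $g\in F$ must vanish at $\emptyset$ and take values in $G_{1}$, which is trivial), so $P_g = I$ and the conclusion holds. For the inductive step the key structural observation is that the derived subgroup $F_{1} := F \cap \VIP[{\Gb[+1]}]$ is itself an FVIP group relative to the strictly shorter prefiltration $\Gb[+1]$, and that for every $g \in F$ and every $\beta \in \Fin$ the symmetric derivative $\sD_{\beta}g$ lies in $F_{1}$ by the VIP group axioms. Consequently the inductive hypothesis applies and furnishes a family $\{P_{h}\}_{h \in F_{1}}$ of pairwise commuting orthogonal projections that controls all ``derivative'' limits.

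Because $F$ is finitely generated and the arguments below introduce only countably many auxiliary VIP systems (iterated symmetric derivatives, conjugates, commutators), a diagonalization based on Hindman's theorem lets me pass to a single sub-IP-ring of $\Fin$ along which all required weak IP-limits converge simultaneously; I assume this throughout. To establish part~(1), note that $\|P_{g}\| \leq 1$ since each $g(\alpha)$ is unitary, and a norm-one idempotent on a Hilbert space is automatically an orthogonal projection, so it suffices to prove $P_{g}^{2} = P_{g}$. The engine of the argument is the factorization
\[
g(\alpha \cup \beta) \;=\; g(\alpha)\,(\sD_{\beta} g)(\alpha)\,g(\beta), \qquad \alpha < \beta,
\]
coming directly from definition \eqref{eq:symm-der} of $\sD$. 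Computing $\langle P_{g}^{2} f, h\rangle$ as the iterated weak IP-limit $\wIPlim_{\beta}\wIPlim_{\alpha} \langle g(\alpha) g(\beta) f, h\rangle$, rewriting the product $g(\alpha) g(\beta)$ through the factorization above, and then reindexing $(\alpha,\beta) \mapsto \alpha \cup \beta$ (valid after a further sub-IP-ring extraction) identifies this double limit with $\langle P_{g} f, h\rangle$, with the inductive hypothesis on $\sD_{\beta}g \in F_{1}$ controlling the coupled factor $(\sD_{\beta}g)(\alpha)$.

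For part~(2), given $g, g' \in F$, commutativity of $P_{g}$ and $P_{g'}$ is obtained by comparing the two iterated weak IP-limits of $\langle g(\alpha) g'(\beta) f, h\rangle$ via the commutator identity $g(\alpha) g'(\beta) = g'(\beta) g(\alpha) \cdot [g(\alpha), g'(\beta)]$. By Theorem~\ref{thm:poly-group}\eqref{poly-group:commutator}, the two-variable commutator $(\alpha,\beta) \mapsto [g(\alpha), g'(\beta)]$ lies in a higher-level piece of $\PE{F}{2}$, so its weak IP-limit is controlled by the inductive hypothesis and commutes with $P_{g}$ and $P_{g'}$, which lets the two orders of limit be matched. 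The main obstacle throughout is precisely the non-commutative entanglement: unlike in the abelian IP setting, the product $g(\alpha) g(\beta)$ is not simply $g(\alpha \cup \beta)$ but picks up the coupled factor $(\sD_{\beta}g)(\alpha)$ depending on both indices, and cleanly separating the iterated limits requires the full strength of the VIP group hypothesis --- closure under $\sD$ and under conjugation by constants --- to ensure all auxiliary terms remain inside a finitely generated VIP group where the inductive hypothesis applies.
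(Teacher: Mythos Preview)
Your inductive framework is correct, and the factorization $g(\alpha\cup\beta)=g(\alpha)\,\sD_{\beta}g(\alpha)\,g(\beta)$ is indeed the right object to look at, but the heart of the argument is missing. The phrase ``the inductive hypothesis on $\sD_{\beta}g\in F_{1}$ controlling the coupled factor $(\sD_{\beta}g)(\alpha)$'' does not name a concrete mechanism. What the induction gives you is only that $P_{\sD_{\beta}g}$ is an orthogonal projection; it does \emph{not} tell you that $(\sD_{\beta}g)(\alpha)f$ is close to $f$, and in general it is not. The reindexing you sketch would go through if $P_{\sD_{\beta}g}f=f$ for every $\beta$ (since then the convergence is strong and the coupled factor becomes asymptotically trivial), but for a general $f\in H$ this fails, and then the iterated limit $\wIPlim_{\beta}\wIPlim_{\alpha}\langle g(\alpha)g(\beta)f,h\rangle$ cannot be collapsed to $\langle P_{g}f,h\rangle$ by any reindexing.

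The paper's proof resolves this by a dichotomy. Using the Noetherian structure of $F$ (via Hirsch length, Lemma~\ref{lem:sfi}) one finds a fixed subgroup $W\leq F_{1}$ such that any $l$ derivatives $\rD_{\alpha_{1}}g,\dots,\rD_{\alpha_{l}}g$ generate a finite-index subgroup of $W$. One then splits $H=H_{0}\oplus H_{1}$ according to the joint kernel versus joint range of all $P_{V}$, $V\leq W$ of finite index. On $H_{1}$ the derivatives essentially fix $f$ (after a further reduction, Lemma~\ref{lem:deri-poly}) and your reindexing idea works. On $H_{0}$ one shows instead that $\IPlim_{\alpha}\|P_{\rD_{\alpha}g}f\|=0$ via a pigeonhole lemma on commuting projections (Lemma~\ref{lem:comm-proj}), and then the van der Corput lemma (Lemma~\ref{lem:vdC}) forces $P_{g}f=0$ outright. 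The same splitting, applied to the map $\alpha\mapsto[g_{\alpha},g']$, handles commutativity; your remark that the two-variable commutator ``lies in a higher-level piece of $\PE{F}{2}$'' does not by itself produce anything the one-variable inductive hypothesis can act on. The missing ingredient in your proposal is precisely this $H_{0}/H_{1}$ splitting and the machinery (Lemmas~\ref{lem:sfi}, \ref{lem:comm-proj}, \ref{lem:vdC}) supporting it.
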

The finite generation assumption cannot be omitted in view of a counterexample in \cite{MR1417769}.

\subsection{Partition theorems for IP-rings}
An \emph{IP-ring} is a subset of $\Fin$ that consists of all finite unions of a given strictly increasing chain $\alpha_{0}<\alpha_{1}<\dots$ of elements of $\Fin$ \cite[Definition 1.1]{MR833409}.
\index{IP-ring}
In particular, $\Fin$ is itself an IP-ring (associated to the chain $\{0\}<\{1\}<\dots$).
Polynomials are generally assumed to be defined on $\Fin$ even if we manipulate them only on some sub-IP-ring of $\Fin$.

Since we will be dealing a lot with assertions about sub-IP-rings we find it convenient to introduce a shorthand notation.
If some statement holds for a certain sub-IP-ring $\Fin'\subset\Fin$ then we say that it holds \emph{without loss of generality} (wlog).
In this case we reuse the symbol $\Fin$ to denote the sub-IP-ring on which the statement holds (in particular this IP-ring may change from use to use).
With this convention the basic Ramsey-type theorem about IP-rings reads as follows.
\begin{theorem}[Hindman \cite{MR0349574}]
\label{thm:hindman}
\index{theorem!Hindman's}
Every finite coloring of $\Fin$ is wlog monochrome.
\end{theorem}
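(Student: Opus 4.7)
The plan is to adapt the Galvin--Glazer argument using idempotent ultrafilters on a suitable semigroup built from $\Fin$. The key point is to encode the partial operation of \emph{disjoint} union on $\Fin$ as a total semigroup operation on an appropriate closed subset of the Stone--\v{C}ech compactification $\beta\Fin$.

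First, I would let $\gamma\Fin \subset \beta\Fin$ consist of those ultrafilters $p$ for which $\{\alpha\in\Fin : \min\alpha > n\} \in p$ for every $n\in\N$; this is a nonempty closed, hence compact, subset. For $p,q\in\gamma\Fin$ define
\[
A \in p\star q \iff \{\alpha : \{\beta : \alpha\cup\beta \in A\} \in q\} \in p.
\]
Cofiniteness of $q$ guarantees that, for each $\alpha$, $q$-almost every $\beta$ is disjoint from $\alpha$, so the union here encodes disjoint union. One checks that $(\gamma\Fin,\star)$ is a compact right-topological semigroup: it is closed under $\star$, associativity is a routine three-line calculation from the definition, and $p\mapsto p\star q$ is continuous for each fixed $q$.

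Next, I would invoke Ellis's theorem that every compact right-topological semigroup contains an idempotent, producing $p\in\gamma\Fin$ with $p\star p = p$. Given a finite coloring $c\from\Fin\to\{1,\dots,r\}$, one color class $C$ lies in $p$. Setting $C^* := \{\alpha\in C : \{\beta : \alpha\cup\beta\in C\} \in p\}$, idempotence of $p$ gives $C^*\in p$, and for each $\alpha\in C^*$ one has $C-\alpha:=\{\beta : \alpha\cup\beta\in C\} \in p$, so also $(C-\alpha)\cap C^*\in p$.

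Finally, I would construct the chain $\alpha_0<\alpha_1<\dots$ inductively: having chosen $\alpha_0,\dots,\alpha_{k-1}$ so that $\alpha_F:=\bigcup_{i\in F}\alpha_i$ lies in $C^*$ for every nonempty $F\subset\{0,\dots,k-1\}$, pick any $\alpha_k$ in the intersection of $C^*$, all sets $C^*-\alpha_F$ for nonempty $F$, and $\{\alpha : \min\alpha>\max\alpha_{k-1}\}$; this intersection is in $p$, hence nonempty. The resulting IP-ring is then entirely contained in $C$, i.e.\ monochromatic. The main obstacle is the setup of the ultrafilter semigroup, together with the inductive bookkeeping that keeps every new finite union inside $C$; neither step is deep, but getting the definition of $\star$ right (so that partiality of disjoint union on $\Fin$ does not bite) is where one must be careful.
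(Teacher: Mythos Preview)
The paper does not prove this statement; it is quoted as a classical result of Hindman with a citation and no argument. Your proposal supplies the standard Galvin--Glazer ultrafilter proof, which is correct and is the usual modern route to Hindman's theorem.

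One small point in your write-up: in the inductive construction you pick $\alpha_k \in C^{*} - \alpha_F$ for each nonempty $F$, which requires $\alpha_F \cup \alpha_k \in C^{*}$, but the fact you explicitly verified is only the weaker $(C - \alpha) \cap C^{*} \in p$ for $\alpha \in C^{*}$. To maintain the invariant that every finite union $\alpha_F$ lies in $C^{*}$ (not merely in $C$) you need the stronger statement that $C^{*} - \alpha \in p$ for every $\alpha \in C^{*}$. This does hold: since $C - \alpha \in p = p\star p$ one gets $\{\beta : (C-\alpha)-\beta \in p\}\in p$, and intersecting with $C-\alpha$ shows that the set of $\beta$ with $\alpha\cup\beta\in C$ and $C-(\alpha\cup\beta)\in p$ lies in $p$. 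With that adjustment the argument is complete.
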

This is not the same as the assertion ``wlog every finite coloring of $\Fin$ is monochrome'', since the latter would mean that there exists a sub-IP-ring on which \emph{every} coloring is monochrome.

As a consequence of Hindman's theorem~\ref{thm:hindman}, a map from $\Fin$ to a compact metric space for every $\epsilon>0$ wlog has values in an $\epsilon$-ball.
As the next lemma shows, for polynomial maps into compact metric groups the ball can actually be chosen to be centered at the identity.
In a metric group we denote the distance to the identity by $\dint{\cdot}$.
\begin{lemma}
\label{lem:near-identity}
Let $\Gb$ be a prefiltration in the category of compact metric groups and $P \in \VIP$.
Then for every $\epsilon>0$ we have wlog $\dint{P} < \epsilon$.
\end{lemma}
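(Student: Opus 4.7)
My plan is to prove this by induction on the length $d$ of the prefiltration $\Gb$. The base cases $d = -\infty$ and $d = 0$ are trivial, since then $G_{1}$ is the trivial group and $P \equiv 1_{G}$. In the inductive step, I can work with a bi-invariant metric on $G_{0}$ (every compact group admits one), so that $\dint{\cdot}$ behaves nicely under translation.

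The first main step is to exploit compactness: by Hindman's Theorem~\ref{thm:hindman} applied repeatedly to a sequence of finite $\delta$-net colorings of $G_{1}$, I pass to a sub-IP-ring on which $P(\alpha)$ converges to some $p \in G_{1}$, with $\rho(P(\alpha), p) < \delta$ uniformly on this ring. The goal then reduces to showing that $p$ itself is close to $1_{G}$, since by the triangle inequality $\dint{P(\alpha)} \leq \dint{p} + \delta$ on this ring.

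The key algebraic identity is the one defining the symmetric derivative,
\[
\sD_{\beta}P(\alpha) = P(\alpha)\inv P(\alpha\cup\beta) P(\beta)\inv.
\]
If I fix a single $\beta_{0}$ inside my sub-IP-ring (so $P(\beta_{0}) \approx p$) and then shrink to a further sub-IP-ring $\Fin'$ above $\beta_{0}$ on which $P(\alpha) \approx p$ and $P(\alpha\cup\beta_{0}) \approx p$ (this is automatic since $\alpha \cup \beta_{0}$ remains in the earlier sub-IP-ring), then the identity forces $\sD_{\beta_{0}}P(\alpha) \approx p^{-1}\cdot p \cdot p\inv = p\inv$ within $3\delta$. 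On the other hand, $\sD_{\beta_{0}}P$ is a VIP system for $\Gb[+1]$, which has length $d-1$, so by the inductive hypothesis I can refine $\Fin'$ further to force $\dint{\sD_{\beta_{0}}P(\alpha)} < \delta$. Comparing these two estimates yields $\dint{p} < 4\delta$, and choosing $\delta$ a small fraction of $\epsilon$ gives the claim.

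The main obstacle I anticipate is purely bookkeeping, and it is crucial to resolve it correctly: the sub-IP-ring produced by the inductive hypothesis depends on the chosen $\beta_{0}$, so a naive attempt to invoke the inductive hypothesis for every $\beta$ simultaneously would fail. The trick is that I only need \emph{one} well-chosen $\beta_{0}$ (any element of the Step 1 ring with $P(\beta_{0}) \approx p$ will do), so the dependency is harmless. I also need to verify that $\alpha \cup \beta_{0}$ remains in the Step 1 ring when $\alpha$ lies in the refined ring $\Fin'$, which holds because $\Fin'$ is generated by a subchain of generators of the Step 1 ring, all disjoint from $\beta_{0}$. Modulo these care-requiring but routine points, the induction closes.
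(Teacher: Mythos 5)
Your proof is correct and follows essentially the same route as the paper's: induct on the length, use Hindman's theorem to localize $P$ into a small ball around some $p\in G_{1}$, apply the symmetric-derivative identity at a single fixed $\beta_{0}$ to see that $\sD_{\beta_{0}}P$ hovers near $p^{-1}$, and then invoke the inductive hypothesis (for $\Gb[+1]$) to force $\sD_{\beta_{0}}P$ near the identity, concluding $\dint{p}$ is small. Your use of a bi-invariant metric merely streamlines the paper's appeals to uniform continuity of multiplication and inversion, and your explicit handling of the bookkeeping (one fixed $\beta_{0}$ suffices, with $\Fin'$ generated by the remaining generators above $\beta_{0}$) matches what the paper compresses into the phrase ``for a fixed $\beta$''.
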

\begin{proof}
We use induction on the length of the prefiltration $\Gb$.
If the prefiltration is trivial, then there is nothing to show, so assume that the conclusion is known for $\Gb[+1]$.

Let $\delta,\delta'>0$ be chosen later.
By compactness and Hindman's theorem~\ref{thm:hindman} we may wlog assume that the image $P(\Fin)$ is contained in some ball $B(g,\delta)$ with radius $\delta$ in $G_{1}$.
By uniform continuity of the group operation we have $\sD_{\beta}P(\alpha) \in B(g\inv,\delta')$ for any $\alpha>\beta\in\Fin$ provided that $\delta$ is small enough depending on $\delta'$.
On the other hand, for a fixed $\beta$, by the induction hypothesis we have wlog $\dint{\sD_{\beta}P} < \delta'$, so that $\dint{g\inv}<2\delta'$.
By continuity of inversion this implies $\dint{g}<\epsilon/2$ provided that $\delta'$ is small enough.
This implies $\dint{P}<\epsilon$ provided that $\delta$ is small enough.
\end{proof}

\begin{corollary}[{\cite[Proposition 1.1]{MR2246589}}]
\label{cor:poly-subg}
Let $\Wb$ be a prefiltration, $A\subset \VIP[\Wb]$ be finite, and $V\leq W$ be a finite index subgroup.
Then wlog for every $g\in A$ we have $g(\Fin) \subset V$.
\end{corollary}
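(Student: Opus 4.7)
The plan is to project to a finite quotient of $W$ and invoke Lemma~\ref{lem:near-identity} in the discrete metric. Since $V\leq W$ has finite index, the core of $V$, namely $V':=\bigcap_{w\in W} wVw\inv$, is a normal subgroup of $W$ of finite index contained in $V$ (its index is bounded by $[W:V]!$). It suffices to prove the stronger statement with $V'$ in place of $V$, so we may assume from the outset that $V\nsubg W$ is normal and $Q:=W/V$ is finite.

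Next I would form the quotient prefiltration on $Q$ by setting $Q_{i}:=W_{i}V/V\cong W_{i}/(W_{i}\cap V)$. This is a prefiltration, since
\[
[Q_{i},Q_{j}]=[W_{i}V/V,W_{j}V/V]\subseteq [W_{i},W_{j}]V/V\subseteq W_{i+j}V/V=Q_{i+j}.
\]
Regard $Q$ as a compact metric group under the discrete metric (so $\dint{q}\geq 1$ for every $q\neq 1_{Q}$). For each $g\in A$, the composition $\pi\circ g$, where $\pi\from W\to Q$ is the quotient map, is again a VIP system with respect to $Q_{\bullet}$, because $\pi$ intertwines all derivative operations $\sD_{\alpha}$ and because $\pi(W_{i})=Q_{i}$.

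Now apply Lemma~\ref{lem:near-identity} to $\pi\circ g\in\VIP[Q_{\bullet}]$ with any $\epsilon<1$. The conclusion $\dint{\pi\circ g(\alpha)}<\epsilon$ forces $\pi\circ g(\alpha)=1_{Q}$, i.e.\ $g(\alpha)\in V$, for every nonempty $\alpha$ in the sub-IP-ring produced by the lemma. Applying this reduction in turn to each element of the finite set $A$, passing each time to a sub-IP-ring, yields a single IP-ring on which $g(\Fin)\subset V$ holds simultaneously for all $g\in A$.

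The only technical point to check is that Lemma~\ref{lem:near-identity} applies, as it is stated for prefiltrations of finite length. This is not an obstacle: if $\Wb$ has finite length $d$, then $Q_{d+1}=W_{d+1}V/V=\{1_{Q}\}$, so $Q_{\bullet}$ has length at most $d$; if $\Wb$ is of infinite length, a truncation at any level beyond that of $g$ and its iterated derivatives suffices, since a VIP system involves only finitely many meaningful levels. The finite quotient step is what makes Lemma~\ref{lem:near-identity} give us a sharp conclusion (identity rather than just ``near identity''), and this is the heart of the argument.
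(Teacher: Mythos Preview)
Your proposal is correct and follows essentially the same approach as the paper: pass to the normal core of $V$, take the quotient to reduce to a finite group, and apply Lemma~\ref{lem:near-identity} with the discrete metric. The paper's proof is more terse but identical in substance; your extra care about the quotient prefiltration and the finite-length issue is fine but not strictly needed in the context of this section, where all prefiltrations have finite length.
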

\begin{proof}
Let $g\in A$.
Passing to a subgroup we may assume that $V$ is normal.
Taking the quotient by $V$, we may assume that $W$ is finite and $V=\{1_{W}\}$.
By Lemma~\ref{lem:near-identity} with an arbitrary discrete metric we may wlog assume that $g\equiv 1_{W}$.
\end{proof}

In course of proof of Theorem~\ref{thm:fvip-proj} it will be more convenient to use a convention for the symmetric derivative that differs from \eqref{eq:symm-der}, namely
\[
\rD_{\alpha}g(\beta) = g(\alpha)\inv D_{\alpha}g(\beta).
\]
Clearly a VIP group is also closed under $\rD$.
\begin{lemma}
\label{lem:deri-poly}
Let $F$ be a VIP group, $W\leq F$ be a subgroup and $V\leq W$ be a finite index subgroup.
Suppose that $g\in F$ is such that the symmetric derivative $\rD_{\alpha}g\in W$ for all $\alpha$.
Then wlog for every $\alpha$ the symmetric derivative $\rD_{\alpha}g$ coincides with an element of $V$ on some sub-IP-ring of the form $\{\beta\in\Fin : \beta>\beta_{0}\}$.
\end{lemma}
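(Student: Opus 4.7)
My plan is as follows. Since $V$ has finite index in $W$, I first replace $V$ by its normal core so that wlog $V\nsubg W$ with $Q:=W/V$ a finite group. Applying Hindman's theorem~\ref{thm:hindman} to the finite coloring $\chi:\Fin\to Q$ defined by $\chi(\alpha):=\rD_\alpha g\cdot V$, I may pass to a sub-IP-ring on which $\chi\equiv q$ is constant. The lemma will follow once I show $q=1_Q$: for then $\rD_\alpha g\in V$ for every $\alpha$ in the sub-IP-ring, and we may take $v_\alpha:=\rD_\alpha g$ and $\beta_0:=\emptyset$.

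To establish $q=1_Q$ I would derive a product identity for $\rD_{\alpha\cup\alpha'}g$. For $\alpha<\alpha'$, applying $g(\alpha\cup\beta)=g(\beta)g(\alpha)\rD_\alpha g(\beta)$ twice yields
\[
\rD_{\alpha\cup\alpha'}g = \rD_\alpha g(\alpha')\inv\cdot\Ad_{g(\alpha)\inv}(\rD_{\alpha'}g)\cdot T_{\alpha'}\rD_\alpha g,
\]
where $T_{\alpha'}f(\beta):=f(\alpha'\cup\beta)$. Expanding $T_{\alpha'}\rD_\alpha g=\rD_\alpha g\cdot\sD_{\alpha'}\rD_\alpha g\cdot\rD_\alpha g(\alpha')$ and using that $F$ is a VIP group (so $\sD_{\alpha'}\rD_\alpha g\in F_2$ and conjugation of an element of $F_1$ by a constant differs from the identity by a commutator in $F_2$), this identity reduces to $\rD_{\alpha\cup\alpha'}g\equiv\rD_{\alpha'}g\cdot\rD_\alpha g\pmod{F_2}$. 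Projecting into $Q$ gives $q=q\cdot q$, hence $q=1_Q$, provided $F_2\cap W\leq V$; if not, one iterates the same argument against the chain of finite-index subgroups $V(F_2\cap W),V(F_3\cap W),\dots,V(F_d\cap W)$ of $W$, invoking Hindman at each stage to pass to a further sub-IP-ring.

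The main obstacle is carrying out this iteration cleanly across all levels $F_2,\dots,F_d$ of the prefiltration. At each stage one must refine $V$ to its intersection with the current level and verify that the resulting subgroup still has finite index in $W$ (which uses that each $F_i\cap W$ is a subgroup commensurable with $W\cap F_\bullet$). The ``on a tail'' formulation in the lemma's conclusion arises because the shift $T_{\alpha'}\rD_\alpha g$ of a polynomial in $W$ produces residual constant factors $\rD_\alpha g(\alpha')\in G_1$ that are absorbed into $V$ only after restricting $\beta$ to some $\{\beta>\beta_0(\alpha)\}$; the iterative argument sketched above absorbs these into the filtration and thereby delivers the (stronger) conclusion $\rD_\alpha g\in V$ on the full sub-IP-ring.
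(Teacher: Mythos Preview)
Your Hindman reduction and the product identity
\[
\rD_{\alpha\cup\alpha'}g=(\rD_\alpha g(\alpha'))^{-1}\cdot\Ad_{g(\alpha)^{-1}}(\rD_{\alpha'}g)\cdot T_{\alpha'}\rD_\alpha g
\]
are correct, and one can indeed verify that $(\rD_{\alpha'}g\,\rD_\alpha g)^{-1}\rD_{\alpha\cup\alpha'}g\in F_2\cap W$ (the constant $(\rD_\alpha g(\alpha'))^{-1}\in G_2$ conjugates an element of $F_1$, contributing only $F_3$-terms; the conjugation by $g(\alpha)\in G_1$ contributes a commutator in $F_2$; and $\sD_{\alpha'}(\rD_\alpha g)\in F_2$ by the VIP axiom). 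So the special case $F_2\cap W\le V$ is fine.

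The gap is the iteration. Your scheme targets $V(F_2\cap W),V(F_3\cap W),\dots$ in turn, but the error in the product identity stays in $F_2\cap W$ at every stage: knowing $\rD_\alpha g\in V(F_i\cap W)$ does not upgrade the error to $F_{i+1}\cap W$, so the relation $q=q^2$ modulo $(F_2\cap W)V$ gives nothing new when you project to $W/V(F_{i+1}\cap W)$. The remark about ``$F_i\cap W$ commensurable with $W\cap F_\bullet$'' does not address this. A related issue is that you are aiming at the \emph{stronger} conclusion $\rD_\alpha g\in V$ (i.e.\ $q=1_Q$), whereas the lemma only asserts that $\rD_\alpha g$ agrees with an element of $V$ on a tail $\{\beta>\beta_0\}$; under the paper's convention that polynomials remain defined on all of $\Fin$, the coset $w^{-1}V$ does not change upon further refinement, so these are genuinely different targets.

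The paper's route is different and avoids any iteration over filtration levels. After the same Hindman step one sets $h(\alpha):=w\,\rD_\alpha g$, so $h(\emptyset)=w$ while $h(\alpha)\in V$ for $\alpha\neq\emptyset$. The key point is that for each fixed $\beta$ the map $\alpha\mapsto h(\alpha)(\beta)$ is $\Gb[+1]$-polynomial in $\alpha$ (writing it as $w(\beta)g(\beta)^{-1}\cdot[g(\alpha),g(\beta)^{-1}]^{-1}\cdot D_\beta g(\alpha)$: a constant in $G_1$, a commutator with a constant in $G_1$, and a derivative). Hence for $\alpha_1<\dots<\alpha_d$ one has $D_{\alpha_d}\cdots D_{\alpha_1}h(\emptyset)(\beta)=1_G$ whenever $\beta>\alpha_d$. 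On the other hand, a trivial induction on $d$ tracking cosets shows $D_{\alpha_d}\cdots D_{\alpha_1}h(\emptyset)\in Vw^{(-1)^d}V$. Thus some element of $Vw^{\pm1}V$ equals $1_G$ on $\{\beta>\alpha_d\}$, forcing $w$ to coincide with an element of $V$ there. The degree bound is used once (through the $d$-fold differentiation in $\alpha$) rather than being peeled off one filtration level at a time, and it yields exactly the ``on a tail'' conclusion the lemma states.
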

\begin{proof}
Since $V$ has finite index and by Hindman's theorem~\ref{thm:hindman} we can wlog assume that $\rD_{\alpha}g\in w\inv V$ for some $w\in W$ and all $\alpha$.
Assume that $w\not\in V$.
Let
\[
h(\alpha) := w \rD_{\alpha}g =
\begin{cases}
w, & \alpha=\emptyset\\
v_{\alpha}\in V & \text{otherwise}.
\end{cases}
\]
Let $\alpha_{1}<\dots<\alpha_{d}$ be non-empty, by induction on $d$ we see that $D_{\alpha_{d}}\dots D_{\alpha_{1}}h(\alpha) \in V$ for all $\alpha\neq\emptyset$ and $D_{\alpha_{d}}\dots D_{\alpha_{1}}h(\emptyset) \in V w^{(-1)^{d}}V$.

On the other hand the map $\alpha\mapsto h(\alpha)(\beta)$ is $\Gb[+1]$-polynomial on $\{\alpha : \alpha\cap\beta=\emptyset\}$ for fixed $\beta$.
Therefore $D_{\alpha_{d}}\dots D_{\alpha_{1}}h(\emptyset)$ vanishes at all $\beta > \alpha_{d}$, that is, $w$ coincides with an element of $V$ on $\{\beta : \beta>\alpha_{d}\}$.
\end{proof}
It is possible to see Lemma~\ref{lem:deri-poly} (and Lemma~\ref{lem:notinK} later on) as a special case of Corollary~\ref{cor:poly-subg} by considering the quotient of $\VIP$ by the equivalence relation of equality on IP-rings of the form $\{\alpha:\alpha>\alpha_{0}\}$, but we prefer not to set up additional machinery.

In order to apply the above results we need a tool that provides us with finite index subgroups.
To this end recall the following multiparameter version of Hindman's theorem~\ref{thm:hindman}.
\begin{theorem}[Milliken \cite{MR0373906}, Taylor \cite{MR0424571}]
\label{thm:milliken-taylor}
\index{theorem!Milliken-Taylor}
Every finite coloring of $\Fin_{<}^{k}$ is wlog monochrome.
\end{theorem}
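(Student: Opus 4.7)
The plan is to proceed by induction on $k$. The case $k=1$ is Hindman's Theorem~\ref{thm:hindman} itself. For the inductive step, I assume the result for $k$ and consider a finite coloring $\chi \from \Fin_<^{k+1} \to \{1,\dots,r\}$.

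First, I would construct a sub-IP-ring $\Fin'$, generated by a chain $\alpha_1 < \alpha_2 < \cdots$, with the following property: every tuple in $(\Fin')_<^{k+1}$ can be written in the form $(\beta_1,\dots,\beta_k,\gamma)$ where the $\beta_i$ are finite unions of $\alpha_1,\dots,\alpha_n$ and $\gamma$ is a finite union of $\{\alpha_{n+1},\alpha_{n+2},\dots\}$, and for every such presentation the color $\chi(\beta_1,\dots,\beta_k,\gamma)$ depends only on $(\beta_1,\dots,\beta_k)$. The construction proceeds recursively in $n$: once $\alpha_1,\dots,\alpha_n$ are fixed, only finitely many ordered $k$-tuples can be built from finite unions of them, and each induces an $r$-coloring of the tail IP-ring $\{\gamma \in \Fin : \gamma > \alpha_n\}$. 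Applying Hindman's theorem in sequence, once per such tuple and refining the sub-IP-ring after each application, I obtain a single sub-IP-ring on which all of these finitely many induced colorings are simultaneously constant; I then take $\alpha_{n+1}$ to be its first generator and iterate.

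Second, this construction yields a well-defined induced coloring $\phi \from (\Fin')_<^k \to \{1,\dots,r\}$ by $\phi(\beta_1,\dots,\beta_k) := \chi(\beta_1,\dots,\beta_k,\gamma)$ for any admissible $\gamma$; well-definedness across different choices of $n$ follows since for $m \geq n$ the set of admissible $\gamma$ at stage $m$ is contained in the corresponding set at stage $n$, so the common color is forced to agree. Applying the inductive hypothesis to $\phi$ on the IP-ring $\Fin'$ produces a sub-IP-ring $\Fin'' \subseteq \Fin'$ on which $\phi$ is constant, and on $\Fin''$ the original coloring $\chi$ is then monochrome on all $(k{+}1)$-tuples.

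The main point requiring care is the diagonal bookkeeping in the first phase: at stage $n$ one must simultaneously monochromatize each of the finitely many tail-colorings arising from the $k$-tuples assembled from $\alpha_1,\dots,\alpha_n$. This is handled by finitely many sequential applications of Hindman's theorem, relying on the fact that a sub-IP-ring of an IP-ring is itself an IP-ring, so each application can be made inside the refinement produced by the previous one.
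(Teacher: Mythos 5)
The paper cites the Milliken--Taylor theorem from the original references and does not prove it, so there is no internal argument to compare against. Your proof is correct: it is the standard derivation from Hindman's Theorem~\ref{thm:hindman} by induction on $k$, using a diagonal construction of the kind the paper invokes elsewhere (cf.\ \cite[Lemma 1.4]{MR833409}) to render the last coordinate irrelevant on a suitable sub-IP-ring, followed by an application of the inductive hypothesis to the induced coloring of $k$-tuples. The one point worth making fully explicit is why the induced coloring $\phi$ is well-defined and why every $(k{+}1)$-tuple in the final sub-IP-ring $\Fin''$ is actually covered by the stage-$n$ constancy. If $n$ is minimal with $\beta_{1},\dots,\beta_{k}$ all unions of $\alpha_{1},\dots,\alpha_{n}$, then $\alpha_{n}\subset\beta_{k}$, since $\beta_{k}$ is the largest entry of the ordered tuple; consequently $\gamma>\beta_{k}$ together with $\gamma\in\Fin'$ forces $\gamma$ to be a union of $\alpha_{i}$ with $i>n$, hence $\gamma$ lies in the tail sub-IP-ring on which the stage-$n$ Hindman refinements were performed. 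This, combined with your observation that the admissible sets are nested across stages $m\geq n$, gives both well-definedness of $\phi$ and the identity $\chi(\beta_{1},\dots,\beta_{k},\gamma)=\phi(\beta_{1},\dots,\beta_{k})$ on $\Fin''$. A minor imprecision: at stage $n$ the Hindman applications must be made inside the tail of the previously refined sub-IP-ring, not inside $\{\gamma\in\Fin:\gamma>\alpha_{n}\}$ as literally written, but your closing sentence shows you intend the former.
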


The next lemma is a substitute for \cite[Lemma 1.6]{MR1417769} in the non-commutative case.
This is the place where the concept of Hirsch length is utilized.
\begin{lemma}
\label{lem:sfi}
Let $G$ be a finitely generated nilpotent group and $g \from\Fin\to G$ be any map.
Then wlog there exist a natural number $l>0$ and a subgroup $W\leq G$ such that for any $\alpha_{1}<\dots<\alpha_{l} \in \Fin$ the elements $g_{\alpha_{1}},\dots,g_{\alpha_{l}}$ generate a finite index subgroup of $W$.
\end{lemma}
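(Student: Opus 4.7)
The plan is to apply the Milliken–Taylor theorem to control Hirsch lengths of the generated subgroups, and then use the root operation $\roots{\cdot}$ from Corollary~\ref{cor:finite-ext} to manufacture a single enclosing subgroup $W$.

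For an ordered tuple $\vec\alpha = (\alpha_1,\dots,\alpha_k) \in \Fin^k_<$ write
\[
H(\vec\alpha) := \<g_{\alpha_1},\dots,g_{\alpha_k}\> \leq G.
\]
Since $G$ is Noetherian (finitely generated nilpotent) every subgroup is polycyclic, so Hirsch length is well-defined and monotone under subgroup inclusion, and $h(H(\vec\alpha))$ lies in the finite set $\{0,1,\dots,h(G)\}$. Set $K := 2^{h(G)+1}$ and apply Theorem~\ref{thm:milliken-taylor} to the coloring
\[
(\alpha_1,\dots,\alpha_K) \mapsto \bigl(h(H(\alpha_1,\dots,\alpha_k))\bigr)_{k=1}^{K},
\]
which takes values in the finite set $\{0,\dots,h(G)\}^K$. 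On the resulting sub-IP-ring the coloring is constant, so $h(H(\alpha_1,\dots,\alpha_k)) = h_k$ for fixed values $h_1 \leq h_2 \leq \dots \leq h_K$ (monotonicity because adjoining generators cannot decrease Hirsch length). The pigeonhole principle applied to the length-$(h(G)+2)$ subsequence $h_{2^i}$, $i=0,1,\dots,h(G)+1$, in $\{0,\dots,h(G)\}$ yields some $i\leq h(G)$ with $h_{2^i} = h_{2^{i+1}}$; set $l := 2^i$ and $h^\ast := h_l = h_{2l}$.

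Fix any tuple $\alpha_1^0 < \dots < \alpha_l^0$ in this sub-IP-ring and put
\[
H_0 := H(\alpha_1^0,\dots,\alpha_l^0), \qquad W := \roots{H_0}.
\]
By Corollary~\ref{cor:finite-ext}, $W$ is a subgroup of $G$, and since $G$ is Noetherian, $W$ itself is finitely generated; applying the same corollary to the inclusion $H_0 \leq W$ yields $[W:H_0] < \infty$, whence $h(W) = h^\ast$. Now restrict further to the sub-IP-ring $\{\alpha \in \Fin : \alpha > \alpha_l^0\}$ (a genuine sub-IP-ring since $\alpha_l^0$ is a finite union of elements of the underlying chain). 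For any $\alpha_1 < \dots < \alpha_l$ in it, the concatenation $(\alpha_1^0,\dots,\alpha_l^0,\alpha_1,\dots,\alpha_l)$ is an ordered $2l$-tuple, so the joint group $H(\alpha_1^0,\dots,\alpha_l^0,\alpha_1,\dots,\alpha_l)$ has Hirsch length $h_{2l} = h^\ast$. Both $H_0$ and $H' := H(\alpha_1,\dots,\alpha_l)$ sit inside the joint group with Hirsch length $h^\ast$, so Lemma~\ref{lem:fin-ind-hirsch} gives that each is of finite index in it, making $H_0$ and $H'$ commensurable. Hence every element of $H'$ has a positive power in $H_0$, which places $H' \leq \roots{H_0} = W$; and Lemma~\ref{lem:fin-ind-hirsch}, now applied to $H' \leq W$ with $h(H') = h^\ast = h(W)$, gives $[W:H'] < \infty$, as required.

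The chief difficulty is manufacturing a single $W \leq G$ that simultaneously swallows every $H(\alpha_1,\dots,\alpha_l)$ with finite index as $\alpha$ varies. The root operation supplies the canonical locally-finite-index enclosure of the anchor group $H_0$, while the combinatorial identity $h_l = h_{2l}$ extracted via Milliken–Taylor is precisely what propagates commensurability from $H_0$ to every $H(\alpha_1,\dots,\alpha_l)$ uniformly over the sub-IP-ring.
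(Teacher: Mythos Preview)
Your proof is correct and follows the same broad outline as the paper --- stabilize Hirsch lengths via Milliken--Taylor, fix an anchor $l$-tuple, and then show every later $l$-tuple generates a finite-index subgroup of a fixed $W$ --- but the mechanism for producing $W$ is genuinely different. The paper finds $l$ with $h_{l}=h_{l+1}$, notes that each $\langle V,g_{\alpha}\rangle$ is then a finite-index surgroup of the anchor group $V$, invokes Corollary~\ref{cor:fin-ext} (there are only finitely many such surgroups) together with a further Hindman pass to force all $g_{\alpha}$ into a single such surgroup $W$. You instead arrange $h_{l}=h_{2l}$, take $W=\roots{H_{0}}$ directly via Corollary~\ref{cor:finite-ext}, and use the equality $h_{l}=h_{2l}$ to obtain commensurability of $H_{0}$ with every later $H'$, hence $H'\leq W$. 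Your route trades a smaller value of $l$ (at most $h(G)+1$ in the paper versus $2^{h(G)}$ for you) for a cleaner one-shot argument: a single Milliken--Taylor refinement plus the tail restriction, with no second Ramsey step and no appeal to the finiteness result Corollary~\ref{cor:fin-ext}.
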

\begin{proof}
By the Milliken-Taylor theorem~\ref{thm:milliken-taylor} we may wlog assume that for each $l\leq h(G)+1$ the Hirsch length $h(\<g_{\alpha_{1}},\dots,g_{\alpha_{l}}\>)$ does not depend on $(\alpha_{1},\dots,\alpha_{l})\in\Fin^{l}_{<}$.
Call this value $h_{l}$.
It is an increasing function of $l$ that is bounded by $h(G)$, hence there exists an $l$ such that $h_{l}=h_{l+1}$.
Fix some $(\alpha_{1},\dots,\alpha_{l})\in\Fin^{l}_{<}$ and let $V:=\<g_{\alpha_{1}},\dots,g_{\alpha_{l}}\>$.

Since $h_{l+1}=h_{l}$ and by Lemma~\ref{lem:fin-ind-hirsch}, we see that $\<V,g_{\alpha}\>$ is a finite index extension of $V$ for each $\alpha > \alpha_{l}$.
By Corollary~\ref{cor:fin-ext} and Hindman's Theorem~\ref{thm:hindman} we may wlog assume that each $g_{\alpha}$ lies in one such extension $W$.
By definition of $h_{l}$ this implies that wlog for every $(\alpha_{1},\dots,\alpha_{l})\in\Fin^{l}_{<}$ the Hirsch length of the group $\<g_{\alpha_{1}},\dots,g_{\alpha_{l}}\> \leq W$ is $h(W)$.
Hence each $\<g_{\alpha_{1}},\dots,g_{\alpha_{l}}\> \leq W$ is a finite index subgroup by Lemma~\ref{lem:fin-ind-hirsch}.
\end{proof}

\subsection{IP-limits}
Let $X$ be a topological space, $m\in\N$ and $g:\Fin_{<}^{m}\to X$ be a map.
We call $x\in X$ an \emph{IP-limit} of $g$, in symbols $\IPlim_{\vec\alpha}g_{\vec\alpha}=x$, if for every neighborhood $U$ of $x$ there exists $\alpha_{0}$ such that for all $\vec\alpha\in\Fin_{<}^{m}$, $\vec\alpha>\alpha_{0}$, one has $g_{\vec\alpha}\in U$.
\index{IP-limit}

By the Milliken-Taylor theorem~\ref{thm:milliken-taylor} and a diagonal argument, cf.\ \cite[Lemma 1.4]{MR833409}, we may wlog assume the existence of an IP-limit (even of countably many IP-limits) if $X$ is a compact metric space, see \cite[Theorem 1.5]{MR833409}.

If $X$ is a Hilbert space with the weak topology, then we write $\wIPlim$ instead of $\IPlim$ to stress the topology.

Following a tradition, we write arguments of maps defined on $\Fin$ as subscripts in this section.
We also use the notation and assumptions of Theorem~\ref{thm:fvip-proj}.

The next lemma follows from the equivalence of the weak and the strong topology on the unit sphere of $H$ and is stated for convenience.
\begin{lemma}
\label{lem:norm}
Assume that $f\in\fix P_{g}$, that is, that $\wIPlim_{\alpha}g_{\alpha}f=f$.
Then also $\IPlim_{\alpha}g_{\alpha}f=f$ (in norm).
\end{lemma}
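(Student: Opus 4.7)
The plan is to exploit unitarity: since each $g_\alpha$ acts as a unitary operator on $H$, we have $\|g_\alpha f\| = \|f\|$ for every $\alpha\in\Fin$. This identity, combined with the polarization-style expansion
\[
\|g_\alpha f - f\|^2 = \|g_\alpha f\|^2 - 2\Re\<g_\alpha f, f\> + \|f\|^2 = 2\|f\|^2 - 2\Re\<g_\alpha f, f\>,
\]
reduces the problem to showing $\IPlim_{\alpha} \<g_\alpha f, f\> = \<f,f\> = \|f\|^2$ (as scalars).

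Next I would invoke the hypothesis $\wIPlim_{\alpha} g_\alpha f = f$: by definition of the weak IP-limit, for the bounded linear functional $\<\cdot, f\>$ on $H$ we obtain $\IPlim_{\alpha}\<g_\alpha f, f\> = \<f,f\>$. Plugging this into the displayed identity yields $\IPlim_{\alpha}\|g_\alpha f - f\|^2 = 0$, which is exactly $\IPlim_{\alpha} g_\alpha f = f$ in norm.

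There is no serious obstacle here; the content of the lemma is merely that on the norm sphere of a Hilbert space weak and strong convergence coincide, transported to the IP-setting. The only point worth checking is that the relevant notion of weak IP-limit indeed implies convergence of $\<g_\alpha f, f\>$ as a scalar IP-limit, which is immediate from the definition applied to the weak neighborhoods $\{h\in H: |\<h-f,f\>|<\veps\}$ of $f$.
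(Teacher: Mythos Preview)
Your proof is correct and is exactly the standard argument underlying the paper's one-line justification that weak and strong topologies coincide on the unit sphere of $H$. The paper merely invokes this principle without spelling it out; you have supplied the details.
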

For any subgroup $V\leq F$ we write $P_{V}$ for the orthogonal projection onto the space $\bigcap_{g\in V} \fix P_{g}$.
\begin{lemma}
\label{lem:gp}
Assume that $V=\<g_{1},\dots,g_{s}\>$ is a finitely generated group and that $P_{g_{1}},\dots,P_{g_{s}}$ are commuting projections.
Then $P_{V}=\prod_{i=1}^{s} P_{g_{i}}$.
\end{lemma}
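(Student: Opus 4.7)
The plan is to identify the ranges of the two projections and show they coincide. Since the $P_{g_{i}}$ commute pairwise by Theorem~\ref{thm:fvip-proj}, the product $Q := \prod_{i=1}^{s} P_{g_{i}}$ is self-adjoint and idempotent, hence an orthogonal projection. A standard calculation (using that $\|P_{j}v\|\leq\|v\|$ with equality forcing $P_{j}v=v$, applied to $v=P_{j+1}\cdots P_{s}f$ in turn) shows that the range of $Q$ is exactly $\bigcap_{i=1}^{s}\fix P_{g_{i}}$. So it suffices to prove
\[
\bigcap_{i=1}^{s}\fix P_{g_{i}} = \bigcap_{g\in V}\fix P_{g}.
\]

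The inclusion $\supseteq$ is immediate from $g_{i}\in V$. For the reverse inclusion, I would fix $f$ in the left-hand side and an arbitrary $g\in V$, and write $g = g_{i_{1}}^{\epsilon_{1}}\cdots g_{i_{m}}^{\epsilon_{m}}$ (with $\epsilon_{j}\in\{\pm 1\}$) as a word in the generators; recall that multiplication in the VIP group $F$ is pointwise, so $g_{\alpha} = g_{i_{1},\alpha}^{\epsilon_{1}}\cdots g_{i_{m},\alpha}^{\epsilon_{m}}$ as a unitary operator on $H$. The goal is to show $g_{\alpha}f\to f$ weakly, i.e.\ $f\in\fix P_{g}$.

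The key step is to pass from weak to norm convergence: by Lemma~\ref{lem:norm}, the hypothesis $f\in\fix P_{g_{i}}$ gives $\IPlim_{\alpha} g_{i,\alpha}f = f$ in norm. Since $g_{i,\alpha}$ is unitary, this is preserved under inversion,
\[
\|g_{i,\alpha}^{-1}f - f\| = \|f - g_{i,\alpha}f\|\longrightarrow 0,
\]
and under composition, since for any unitary $h_{\alpha}$ and $k_{\alpha}$,
\[
\|h_{\alpha}k_{\alpha}f - f\| \;\leq\; \|k_{\alpha}f-f\| + \|h_{\alpha}f-f\|.
\]
An easy induction on the word length $m$ therefore yields $\IPlim_{\alpha} g_{\alpha}f = f$ in norm, in particular weakly, so $f\in\fix P_{g}$.

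Since $g\in V$ was arbitrary, $f\in\mathrm{range}\,P_{V}$, proving the missing inclusion and hence $P_{V}=Q$. I do not expect any real obstacle: the only non-formal ingredient is the weak-to-norm upgrade furnished by Lemma~\ref{lem:norm}, after which everything reduces to the unitarity of the $g_{i,\alpha}$ and a finite induction on the word representing $g$.
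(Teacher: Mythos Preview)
Your proof is correct and follows essentially the same approach as the paper: both reduce to showing that the set $\{g\in F : f\in\fix P_{g}\}$ is closed under products and inverses, using Lemma~\ref{lem:norm} to upgrade weak convergence to norm convergence and then exploiting unitarity. One small point: you cite Theorem~\ref{thm:fvip-proj} for the commutation of the $P_{g_{i}}$, but this is already a hypothesis of the lemma (and Lemma~\ref{lem:gp} is in fact used \emph{inside} the proof of Theorem~\ref{thm:fvip-proj}, so that citation would be circular); simply replace it by ``by hypothesis''.
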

\begin{proof}
Clearly we have $P_{V} \leq \prod_{i=1}^{s} P_{g_{i}}$, so we only need to prove that each $f$ that is fixed by $P_{g_{1}},\dots,P_{g_{s}}$ is also fixed by $P_{g}$ for any other $g\in V$.

To this end it suffices to show that if $f$ is fixed by $P_{g}$ and $P_{h}$ for some $g,h\in V$, then it is also fixed by $P_{gh\inv}$.
Lemma~\ref{lem:norm} shows that $\IPlim_{\alpha} g_{\alpha}f=f$ and $\IPlim_{\alpha} h_{\alpha}f=f$.
Since each $h_{\alpha}$ is unitary we obtain $\IPlim_{\alpha} h_{\alpha}\inv f=f$.
Since each $g_{\alpha}$ is isometric, this implies
\[
\wIPlim g_{\alpha}h_{\alpha}\inv f = \IPlim_{\alpha} g_{\alpha}h_{\alpha}\inv f = f
\]
as required.
\end{proof}

The next lemma is the main tool to ensure IP-convergence to zero.
\begin{lemma}[{\cite[Lemma 1.7]{MR1417769}}]
\label{lem:comm-proj}
\index{IP van der Corput lemma}
Let $(P_{\alpha})_{\alpha\in\Fin}$ be a family of commuting orthogonal projections on a Hilbert space $H$ and $f\in H$.
Suppose that, whenever $\alpha_{1}<\dots<\alpha_{l}$, one has $\prod_{i=1}^{l}P_{\alpha_{i}}f=0$.
Then $\IPlim_{\alpha} \|P_{\alpha}f\| = 0$.
\end{lemma}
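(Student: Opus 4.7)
The plan is to proceed by induction on $l$. The base case $l=1$ is immediate: the hypothesis forces $P_\alpha f=0$ for every $\alpha$, so trivially $\IPlim_\alpha\|P_\alpha f\|=0$.

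For the inductive step I would fix $\alpha\in\Fin$ and use commutativity: for any chain $\beta_1<\dots<\beta_{l-1}$ with $\beta_1>\alpha$ we have
\[
P_{\beta_1}\cdots P_{\beta_{l-1}}(P_\alpha f) = P_\alpha(P_{\beta_1}\cdots P_{\beta_{l-1}}f) = 0,
\]
so the inductive hypothesis applied to the vector $P_\alpha f$ and the restricted family $(P_\beta)_{\beta>\alpha}$ yields $\IPlim_\beta\|P_\beta P_\alpha f\|=0$. Since
\[
|\<P_\alpha f,P_\beta f\>|=|\<f,P_\alpha P_\beta f\>|\leq\|f\|\cdot\|P_\alpha P_\beta f\|,
\]
this gives $\IPlim_\beta\<P_\alpha f,P_\beta f\>=0$ for every fixed $\alpha$. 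I would then appeal to the Milliken--Taylor theorem~\ref{thm:milliken-taylor} together with a countable diagonalization applied to the bounded map $(\alpha,\beta)\mapsto\<P_\alpha f,P_\beta f\>$ on $\Fin^2_<$, passing to a sub-IP-ring on which $c:=\IPlim_\alpha\|P_\alpha f\|^2$ exists and $\IPlim_{(\alpha,\beta)\in\Fin^2_<}\<P_\alpha f,P_\beta f\>=0$.

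The endgame is a Bessel-type averaging argument that forces $c=0$. For a chain $\alpha_1<\dots<\alpha_n$ chosen deep enough in this sub-IP-ring that $\|P_{\alpha_i}f\|^2=c+O(1/n^2)$ and $|\<P_{\alpha_i}f,P_{\alpha_j}f\>|=O(1/n^2)$ for $i<j$, I will set $g_n:=\tfrac{1}{n}\sum_{i=1}^n P_{\alpha_i}f$. Then, using $\<P_{\alpha_i}f,f\>=\|P_{\alpha_i}f\|^2$, one has $\<g_n,f\>=c+O(1/n^2)$, and expanding the norm gives
\[
\|g_n\|^2 = \frac{1}{n^2}\Big(\sum_i\|P_{\alpha_i}f\|^2 + 2\operatorname{Re}\sum_{1\leq i<j\leq n}\<P_{\alpha_i}f,P_{\alpha_j}f\>\Big) = \frac{c}{n}+O(1/n^2).
\]
Cauchy--Schwarz then yields $(c+O(1/n^2))^2\leq(c/n+O(1/n^2))\|f\|^2$, and sending $n\to\infty$ forces $c=0$, which is the desired conclusion. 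The main obstacle is the diagonalization step that produces a single sub-IP-ring on which all of the near-orthogonality errors are uniformly small; this is a routine but slightly delicate piece of bookkeeping with Milliken--Taylor applied along a countable exhaustion of target accuracies.
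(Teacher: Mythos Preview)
The paper does not supply a proof; the lemma is quoted from the cited reference. So I evaluate your argument directly.

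Your induction and the near-orthogonality conclusion $\IPlim_\beta\<P_\alpha f,P_\beta f\>=0$ for each fixed $\alpha$ are correct. The gap is in the endgame: you pass to a sub-IP-ring on which $c=\IPlim_\alpha\|P_\alpha f\|^2$ exists, show $c=0$ by the Bessel estimate, and call this ``the desired conclusion''. But that only yields $\IPlim_\alpha\|P_\alpha f\|=0$ on the sub-IP-ring, whereas the lemma (stated without ``wlog'') asserts it on the original $\Fin$, and that does not follow. For a concrete obstruction take $h(\alpha)=|\alpha|\bmod 2$: since the union of two odd-cardinality generators has even cardinality, no sub-IP-ring can have all its tail elements of odd size, so whenever $\IPlim h$ exists on a sub-IP-ring it equals $0$; yet $\IPlim_{\Fin}h$ fails to exist, let alone equal $0$.

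The repair is to skip the Milliken--Taylor step and argue by contradiction on $\Fin$. If the conclusion fails, fix $\epsilon>0$ with $\|P_\alpha f\|>\epsilon$ for cofinally many $\alpha$. Your inductive step holds on all of $\Fin$, so one can build a chain $\alpha_1<\dots<\alpha_n$ recursively with $\|P_{\alpha_i}f\|>\epsilon$ and with the cross terms $|\<P_{\alpha_i}f,P_{\alpha_j}f\>|$ as small as one pleases for $i<j$; your own Cauchy--Schwarz computation then gives $\epsilon^4<\|g_n\|^2\|f\|^2\to 0$ as $n\to\infty$, a contradiction. (An even shorter route, presumably the one in the cited source, bypasses induction: for any chain $\alpha_1<\dots<\alpha_n$ the hypothesis forces, after jointly diagonalising the commuting $P_{\alpha_i}$, that at most $l-1$ of the corresponding indicator sets meet the support of $f$ at any point, whence $\sum_i\|P_{\alpha_i}f\|^2=\<\sum_i P_{\alpha_i}f,f\>\leq(l-1)\|f\|^2$ and the IP-limit statement is immediate.)
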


Finally, we also need a van der Corput-type estimate.
\begin{lemma}[{\cite[Lemma 5.3]{MR833409}}]
\label{lem:vdC}
Let $(x_{\alpha})_{\alpha\in\Fin}$ be a bounded family in a Hilbert space $H$.
Suppose that
\[
\IPlim_{\beta} \IPlim_{\alpha} \<x_{\alpha},x_{\alpha\cup\beta}\>=0.
\]
Then wlog we have
\[
\wIPlim_{\alpha} x_{\alpha} = 0.
\]
\end{lemma}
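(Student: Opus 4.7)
My strategy is a van der Corput averaging along a nested chain of sets. First I would pass to a sub-IP-ring on which the weak IP-limit $x:=\wIPlim_{\alpha}x_{\alpha}$ exists and on which, for every $\beta$ in the ring, the scalar IP-limit $y_{\beta}:=\IPlim_{\alpha}\<x_{\alpha},x_{\alpha\cup\beta}\>$ exists; this uses weak compactness of bounded sets together with a diagonal application of Hindman's Theorem~\ref{thm:hindman} to the countably many scalar-valued maps that arise. The hypothesis then reads $\IPlim_{\beta}y_{\beta}=0$, and it remains to show $x=0$.

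Let $\delta>0$, set $B:=\sup_{\alpha}\|x_{\alpha}\|$, and fix $k$ with $B^{2}/k<\delta$. I would choose a chain $\beta_{1}<\beta_{2}<\dots<\beta_{k}$ in the sub-IP-ring, writing $\gamma_{i}:=\beta_{1}\cup\dots\cup\beta_{i}$ and $\beta'_{i,j}:=\beta_{i+1}\cup\dots\cup\beta_{j}$ for $i<j$, subject to
\begin{itemize}
\item[(a)] $|y_{\beta'_{i,j}}|<\delta$ for every $1\leq i<j\leq k$,
\item[(b)] $|\<x_{\gamma_{i}},x_{\gamma_{i}\cup\beta'_{i,j}}\>-y_{\beta'_{i,j}}|<\delta$ for every $1\leq i<j\leq k$,
\item[(c)] $|\<x_{\gamma_{i}},x\>-\|x\|^{2}|<\delta$ for every $1\leq i\leq k$.
\end{itemize}
Conditions (b)--(c) require $\min\gamma_{i}=\min\beta_{1}$ to exceed finitely many thresholds that themselves depend on $\beta_{2},\dots,\beta_{k}$, so I would pick $\beta_{2},\dots,\beta_{k}$ first---with $\min\beta_{2}$ large enough to force (a) via $\IPlim_{\beta}y_{\beta}=0$---then read off the finite list of thresholds, and finally fit $\beta_{1}$ into the gap below $\beta_{2}$ at a location exceeding all of them.

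Setting $Z:=\frac{1}{k}\sum_{j=1}^{k}x_{\gamma_{j}}$, the identity $\gamma_{j}=\gamma_{i}\cup\beta'_{i,j}$ (a disjoint union when $i<j$) rewrites the off-diagonal inner products in the expansion of $\|Z\|^{2}$ as $\<x_{\gamma_{i}},x_{\gamma_{i}\cup\beta'_{i,j}}\>$, which by (a) and (b) are bounded by $2\delta$ in modulus. The diagonal terms contribute at most $B^{2}/k<\delta$, whence $\|Z\|^{2}<3\delta$. On the other hand (c) yields $|\<Z,x\>|\geq\|x\|^{2}-\delta$, and Cauchy--Schwarz gives $(\|x\|^{2}-\delta)^{2}\leq 3\delta\,\|x\|^{2}$, forcing $\|x\|^{2}=O(\delta)$. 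Since $\delta>0$ is arbitrary, $x=0$.

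The main obstacle is the circular appearance of $\min\beta_{1}$ in (b)--(c): the convergence rates of the inner IP-limits depend on the later $\beta_{j}$'s, while the natural forward selection along the IP-ring would fix $\beta_{1}$ before they were chosen. Choosing the $\beta_{j}$'s in reverse order is the non-routine point; once it is in place, the Hilbert-space computation is standard.
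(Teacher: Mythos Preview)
There is a genuine gap in the selection of $\beta_{1}$. You need $\beta_{1}<\beta_{2}$ (so $\max\beta_{1}<\min\beta_{2}$) and simultaneously $\min\beta_{1}$ larger than the thresholds coming from condition~(b). But those thresholds are the $\alpha_{0}$'s in the definition of $y_{\beta'_{i,j}}=\IPlim_{\alpha}\langle x_{\alpha},x_{\alpha\cup\beta'_{i,j}}\rangle$, and nothing prevents them from exceeding $\min\beta_{2}$: the rate at which $\langle x_{\alpha},x_{\alpha\cup\beta}\rangle$ approaches its IP-limit is not controlled by $\beta$ itself. So after fixing $\beta_{2},\dots,\beta_{k}$ there may simply be no element of the IP-ring below $\beta_{2}$ that clears all thresholds, and the ``fit $\beta_{1}$ into the gap'' step cannot be carried out.

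The underlying reason is that your ascending unions $\gamma_{i}=\beta_{1}\cup\dots\cup\beta_{i}$ put the base $\gamma_{i}$ \emph{below} the shift $\beta'_{i,j}$, whereas in the hypothesis the inner variable $\alpha$ tends to infinity and hence eventually sits \emph{above} the outer variable $\beta$. The fix is to reverse the accumulation: set $\gamma_{j}:=\beta_{j}\cup\dots\cup\beta_{k}$. Then for $i<j$ one has $\gamma_{i}=\gamma_{j}\cup(\beta_{i}\cup\dots\cup\beta_{j-1})$ with $\gamma_{j}>(\beta_{i}\cup\dots\cup\beta_{j-1})$, exactly the configuration the hypothesis addresses. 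Now a straightforward \emph{forward} selection works: choose $\beta_{1}$ far enough out for (a) and (c), and inductively choose each $\beta_{j}>\beta_{j-1}$ with $\min\beta_{j}$ beyond the finitely many thresholds $N(\beta_{i}\cup\dots\cup\beta_{j-1},\delta)$ for $i<j$, all of which are already determined. The Hilbert-space estimate you wrote down then goes through unchanged with this $\gamma_{j}$.
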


\begin{proof}[Proof of Theorem~\ref{thm:fvip-proj}]
We proceed by induction on the length of the prefiltration $\Gb$.
If $\Gb$ is trivial there is nothing to prove.
Assume that the conclusion is known for $\Gb[+1]$.

First, we prove that $P_{g}$ is an orthogonal projection for any $g\in F$ (that we now fix).
Since $P_{g}$ is clearly contractive it suffices to show that it is a projection.

By Lemma~\ref{lem:sfi} we may assume that, for some $l>0$ and any $\alpha_{1}<\dots<\alpha_{l}$, the derivatives $\rD_{\alpha_{1}}g,\dots,\rD_{\alpha_{l}}g$ generate a finite index subgroup of some $W\leq F_{1}$ (recall that $F_{1}=F\cap \VIP[{\Gb[+1]}]$).
We split
\begin{equation}
\label{eq:splitting}
H = \bigcap_{V\leq W}\ker P_{V} \oplus \overline{\lin}\Big(\bigcup_{V\leq W}\im P_{V}\Big) =: H_{0} \cup H_{1},
\end{equation}
where $V$ runs over the finite index subgroups of $W$.
It suffices to show $P_{g}f=P_{g}^{2}f$ for each $f$ in one of these subspaces.

\paragraph{Case 0}
Let $f\in H_{0}$ and $\alpha_{1}<\dots<\alpha_{l}$.
By choice of $W$ we know that
\[
V:=\<\rD_{\alpha_{1}}g,\dots,\rD_{\alpha_{l}}g\> \leq W
\]
is a finite index subgroup.
Since the projections $P_{\rD_{\alpha_{i}}g}$ commute by the inductive hypothesis, their product equals $P_{V}$ (Lemma~\ref{lem:gp}), and we have $P_{V}f=0$ by the assumption.

By Lemma~\ref{lem:comm-proj} this implies $\IPlim_{\alpha} \| P_{\rD_{\alpha}g} f \|=0$.
Therefore
\begin{multline*}
\IPlim_{\alpha} \Big| \IPlim_{\beta} \< (\rD_{\alpha}g)_{\beta}f, g_{\alpha}\inv f \> \Big|
\leq
\IPlim_{\alpha} \| \wIPlim_{\beta} (\rD_{\alpha}g)_{\beta} f \|\\
=
\IPlim_{\alpha} \| P_{\rD_{\alpha}g} f \|
= 0,
\end{multline*}
so that
\[
\IPlim_{\alpha} \IPlim_{\beta} \< g_{\alpha\cup\beta}f, g_{\beta}f \> = 0.
\]
By Lemma~\ref{lem:vdC} this implies $P_{g}f=0$ (initially only wlog, but we have assumed that the limit exists on the original IP-ring).

\paragraph{Case 1}
Let $V\leq W$ and $f=P_{V}f$, by linearity we may assume $\|f\|=1$.
Let $\rho$ be a metric for the weak topology on the unit ball of $H$ with $\rho(x,y)\leq\|x-y\|$.
Let $\epsilon>0$.
By definition of IP-convergence and by uniform continuity of $P_{g}$ there exists $\alpha_{0}$ such that
\[
\forall\alpha>\alpha_{0} \quad
\rho(g_{\alpha}f,P_{g}f) < \epsilon
\text{ and }
\rho(P_{g}g_{\alpha}f,P_{g}^{2}f) < \epsilon.
\]
By Lemma~\ref{lem:deri-poly} we can choose $\alpha>\alpha_{0}$ such that $\rD_{\alpha}g$ coincides with an element of $V$ on some sub-IP-ring, so that in particular $P_{\rD_{\alpha}g}f=f$.
By Lemma~\ref{lem:norm} there exists $\beta_{0}>\alpha$ such that
\[
\forall\beta>\beta_{0}
\quad
\| (\rD_{\alpha}g)_{\beta}f-f \| < \epsilon.
\]
Applying $g_{\beta}g_{\alpha}$ to the difference on the left-hand side we obtain
\[
\| g_{\alpha\cup\beta}f - g_{\beta}g_{\alpha}f \| < \epsilon,
\text{ so that }
\rho(g_{\alpha\cup\beta}f, g_{\beta}g_{\alpha}f) < \epsilon.
\]
Observe that $\alpha\cup\beta > \alpha_{0}$, so that
\[
\rho(P_{g} f, g_{\beta}g_{\alpha}f) < 2\epsilon.
\]
Taking IP-limit along $\beta$ we obtain
\[
\rho(P_{g} f, P_{g}g_{\alpha}f) \leq 2\epsilon.
\]
A further application of the triangle inequality gives
\[
\rho(P_{g} f, P_{g}^{2}f) < 3\epsilon,
\]
and, since $\epsilon>0$ was arbitrary, we obtain $P_{g}f=P_{g}^{2}f$.

\paragraph{Commutativity of projections}
Let us now prove the second conclusion, namely that $P_{g}$ and $P_{g'}$ commute for any $g,g'\in F$.
Observe that the function $\alpha\mapsto g_{\alpha}$ can be seen as a polynomial-valued function in $\VIP[{\poly}]$ whose values are constant polynomials.
Moreover we can consider the constant function in $\poly[{\Fb}]$ whose value is $g'$.
Taking their commutator we see that
\[
\alpha\mapsto [g_{\alpha},g']
\quad \in \VIP[{\poly}],
\]
and, since $F$ is a VIP group, this map in fact lies in $\VIP[{\Fb}]$.
By \eqref{eq:polyn-g1} it takes values in $F_{1}$.
By Lemma~\ref{lem:sfi} we may assume that for any $\alpha_{1}<\dots<\alpha_{l}$ the maps $[g_{\alpha_{1}},g'],\dots,[g_{\alpha_{l}},g']$ generate a finite index subgroup of some $W\leq F_{1}$.
Interchanging $g$ and $g'$ and repeating this argument we may also wlog assume that for any $\alpha_{1}<\dots<\alpha_{l'}$ the maps $[g'_{\alpha_{1}},g],\dots,[g'_{\alpha_{l'}},g]$ generate a finite index subgroup of some $W'\leq F_{1}$.
Consider the splitting
\begin{equation}
\label{eq:splitting2}
H = \Big(\bigcap_{V\leq W}\ker P_{V} \cap \bigcap_{V'\leq W'}\ker P_{V'}\Big)
\oplus \overline{\lin}\Big(\bigcup_{V\leq W}\im P_{V} \cup \bigcup_{V'\leq W'}\im P_{V'}\Big)
=: H_{0} \cup H_{1}.
\end{equation}

\paragraph{Case 0}
Let $f\in H_{0}$.
As above we have $\IPlim_{\alpha} \|P_{[g_{\alpha},g']}f\| = 0$, and in particular
\begin{multline*}
0
= \IPlim_{\alpha} \< \wIPlim_{\beta} [g_{\alpha},g'_{\beta}]f, g_{\alpha}\inv P_{g'}f\>\\
= \IPlim_{\alpha} \IPlim_{\beta} \<g_{\alpha}g'_{\beta}f, g'_{\beta} P_{g'}f\>
= \IPlim_{\alpha} \<g_{\alpha} P_{g'}f, P_{g'}f\>,
\end{multline*}
since $\IPlim_{\beta}g'_{\beta} P_{g'}f = P_{g'}f$ by Lemma~\ref{lem:norm}.
Hence $P_{g}P_{g'}f \perp P_{g'}f$, which implies $P_{g}P_{g'}f=0$ since $P_{g}$ is an orthogonal projection.

Interchanging the roles of $g$ and $g'$, we also obtain $P_{g'}P_{g}f=0$.

\paragraph{Case 1}
Let $V\leq W$ and $f=P_{V}f$.
By Corollary~\ref{cor:poly-subg} we may wlog assume that $[g_{\alpha},g']\in V$ for all $\alpha$.
Let $\alpha$ be arbitrary, by Lemma~\ref{lem:norm} the limit
\[
\IPlim_{\beta} [g_{\alpha},g'_{\beta}] f = f
\]
also exists in norm.
Therefore
\[
g_{\alpha} P_{g'} f
= \wIPlim_{\beta} g_{\alpha} g'_{\beta} f
= \wIPlim_{\beta} g'_{\beta} g_{\alpha} [g_{\alpha},g'_{\beta}] f
= \wIPlim_{\beta} g'_{\beta} g_{\alpha} f
= P_{g'} g_{\alpha} f.
\]
Taking IP-limits on both sides we obtain
\[
P_{g} P_{g'} f = P_{g'} P_{g} f.
\]
The case $V'\leq W'$ and $f=P_{V'}f$ can be handled in the same way.
\end{proof}
If the group $G$ acts by measure-preserving transformations then the Hilbert space projections identified in Theorem~\ref{thm:fvip-proj} are in fact conditional expectations as the following folklore lemma shows.
\begin{lemma}
Let $X$ be a probability space and $(T_{\alpha})_{\alpha}$ be a net of operators on $L^{2}(X)$ induced by measure-preserving transformations.
Assume that $T_{\alpha}\to P$ weakly for some projection $P$.
Then $P$ is a conditional expectation.
\end{lemma}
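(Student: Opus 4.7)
The plan is to verify the two characteristic properties of a conditional expectation: unitality $P\mathbf{1}=\mathbf{1}$, and positivity $f\geq 0\Rightarrow Pf\geq 0$. Combined with self-adjointness (automatic from $P$ being an orthogonal projection), these imply by a classical characterization (Douglas/Moy/And\^o) that $P$ coincides with the conditional expectation onto $L^{2}(X,\mathcal{B})$ for a suitable sub-$\sigma$-algebra $\mathcal{B}\subset\mathcal{A}$, which is what needs to be shown.

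Unitality is immediate: each $T_{\alpha}$ is the Koopman operator of a measure-preserving map, so $T_{\alpha}\mathbf{1}=\mathbf{1}$ for every $\alpha$, and weak convergence gives $P\mathbf{1}=\mathbf{1}$. For positivity, note that whenever $f\geq 0$ one has $T_{\alpha}f=f\circ\tau_{\alpha}\geq 0$ pointwise a.e.; since the positive cone in $L^{2}(X)$ is convex and norm-closed, Mazur's theorem makes it weakly closed, and therefore the weak limit $Pf$ also lies in the cone.

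It remains to identify a positive, self-adjoint, unital orthogonal projection $P$ with a conditional expectation, which is the standard part. The key observation is that positivity together with $P\mathbf{1}=\mathbf{1}$ yields the pointwise bound $|Pf|\leq P|f|\leq\|f\|_{\infty}P\mathbf{1}=\|f\|_{\infty}$, so $P$ restricts to a contractive positive projection on $L^{\infty}(X)$. The main technical step — the one that I expect to be the only nontrivial point — is to show that $\operatorname{range}(P)\cap L^{\infty}$ is closed under products. This follows from self-adjointness: for $f\in\operatorname{range}(P)\cap L^{\infty}$ and $h,g\in L^{2}$ one writes $\langle P(fh)-fPh,\,g\rangle=\langle fh,Pg\rangle-\langle h,\bar fPg\rangle$ and, after verifying that the map $g\mapsto\bar fPg$ has image in $\operatorname{range}(P)$, concludes $P(fh)=fPh$; specializing $h\in\operatorname{range}(P)\cap L^\infty$ gives the algebra closure.

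Once this is established, the sub-$\sigma$-algebra $\mathcal{B}$ generated by $\{A\in\mathcal{A}:\mathbf{1}_{A}\in\operatorname{range}(P)\}$ satisfies $\operatorname{range}(P)\cap L^{\infty}=L^{\infty}(X,\mathcal{B})$ and, by $L^{2}$-density, $\operatorname{range}(P)=L^{2}(X,\mathcal{B})$. Since the orthogonal projection onto $L^{2}(X,\mathcal{B})$ is unique, it must coincide with $P$, and this projection is precisely $\mathbb{E}(\,\cdot\,\mid\mathcal{B})$.
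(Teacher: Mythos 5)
Your route — deduce positivity and unitality of $P$ from the Koopman structure and invoke an abstract characterization — is genuinely different from the paper's, and the first steps ($P\mathbf{1}=\mathbf{1}$, positivity via Mazur's theorem, $L^\infty$-contractivity) are correct. But the argument for the step you rightly flag as the crux, closure of $\im P\cap L^\infty$ under products, is circular. Self-adjointness gives $\langle fPh,g\rangle=\langle Ph,\bar f g\rangle=\langle h,P(\bar f g)\rangle$, not $\langle h,\bar f Pg\rangle$; equating the two is exactly the module property $P(\bar f g)=\bar f Pg$ you are trying to establish. Likewise, "verifying" that $g\mapsto\bar f Pg$ maps $L^2$ into $\im P$ is, after approximation by bounded functions, the very algebra closure in question and cannot be dismissed as a side step.

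The gap can be filled, but only by using positivity beyond the $L^\infty$ bound. For real $f\in\im P\cap L^\infty$, the restriction $P|_{L^\infty}$ is a positive unital map on a commutative $C^*$-algebra, so Kadison's inequality gives $f^2=(Pf)^2\leq P(f^2)$; on the other hand, self-adjointness and unitality give $\int P(f^2)\,d\mu=\langle f^2,P\mathbf{1}\rangle=\int f^2\,d\mu$, so the nonnegative function $P(f^2)-f^2$ has zero integral and vanishes a.e. Thus $f^2\in\im P$, and polarization yields the algebra closure. (Alternatively, since positivity and unitality also make $P$ an $L^1$-contraction, one may quote Douglas's theorem on contractive unital projections of $L^1$.) The paper itself sidesteps positivity entirely: for $f\in\im P$, the weak convergence $T_\alpha f\to Pf=f$ takes place on the fixed sphere $\{\|\cdot\|_2=\|f\|_2\}$, where weak and norm topologies coincide, so $T_\alpha f\to f$ in norm; as the $T_\alpha$ are multiplicative, $T_\alpha(fg)=(T_\alpha f)(T_\alpha g)\to fg$ in $L^2$, and weak lower semicontinuity of the norm then forces $P(fg)=fg$. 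That argument leans directly on the isometric and multiplicative nature of the $T_\alpha$, whereas your approach only extracts positivity and unitality from it.
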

\begin{proof}
Note that $\im P \cap L^{\infty}(X)$ is dense in $\im P$.

Let $f,g\in\im P \cap L^{\infty}(X)$.
Since the weak and the norm topology coincide on the unit sphere of $L^{2}(X)$, we have $\|T_{\alpha}f-f\|_{2} \to 0$ and $\|T_{\alpha}g-g\|_{2} \to 0$.
Therefore
\begin{multline*}
\|P(fg)-fg\|_{2}
\leq
\limsup_{\alpha} \|T_{\alpha}(fg)-fg\|_{2}\\
=
\limsup_{\alpha} \|(T_{\alpha}f-f)T_{\alpha}g+f(T_{\alpha}g-g)\|_{2}\\
\leq
\limsup_{\alpha} \|T_{\alpha}f-f\|_{2} \|T_{\alpha}g\|_{\infty}+\|f\|_{\infty} \|T_{\alpha}g-g\|_{2}
=
0.
\end{multline*}
This shows that $\im P\cap L^{\infty}(X)$ is an algebra, and the assertion follows.
\end{proof}

\subsection{Generalized polynomials and FVIP groups}
In order to obtain some tangible combinatorial applications of our results we will need non-trivial examples of FVIP groups.
The first example somewhat parallels Proposition~\ref{prop:mono-poly}.
\begin{lemma}[\cite{MR2246589}]
\label{lem:fvip-monomial}
Let $(n^{1}_{i})_{i\in\N},\dots,(n^{a}_{i})_{i\in\N} \subset \Z$ be any sequences, $(G,+)$ be a commutative group, $(y_{i})_{i\in\N}\subset G$ be any sequence, and $d\in\N$.
Then the maps of the form
\begin{equation}
\label{eq:fvip-monomial}
v(\alpha) = \sum_{i_{1}<\dots<i_{e}\in\alpha} n^{j_{1}}_{i_{1}}\cdots n^{j_{e-1}}_{i_{e-1}} y_{i_{e}},
\quad e\leq d, 1\leq j_{1},\dots,j_{e-1} \leq a,
\end{equation}
generate an FVIP subgroup $F\leq\VIP$, where the prefiltration $\Gb$ is given by $G_{0}=\dots=G_{d}=G$, $G_{d+1}=\{1_{G}\}$.
\end{lemma}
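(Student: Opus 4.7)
The plan is to verify two claims from which the lemma follows at once: (i) each map $v$ from \eqref{eq:fvip-monomial} lies in $\VIP$, and (ii) there are only finitely many such maps. Because $\VIP$ is itself a VIP group---Corollary~\ref{cor:poly-group} makes it a group under pointwise operations, the remarks after \eqref{eq:symm-der} show that $\sD$ sends $\VIP$ into $\VIP[\Gb[+1]] \subset \VIP$, and conjugation by constants is trivial since $G$ is abelian---the intersection of all VIP subgroups of $\VIP$ containing the generators exists and is itself a VIP group; by (ii) it is generated as a VIP group by a finite set, hence is an FVIP group by definition.

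For (i) I would apply Proposition~\ref{prop:mono-poly} with $N := \N$, with the sequence of finite index sets $R$ given by $R_{e} := \{*\}$ (a singleton) and $R_{k} := \emptyset$ for $k \neq e$, and with the assignment
\[
g_{(i_{1},\dots,i_{e},*)} := \begin{cases} n^{j_{1}}_{i_{1}}\cdots n^{j_{e-1}}_{i_{e-1}} y_{i_{e}}, & \text{if } i_{1}<\dots<i_{e}, \\ 0, & \text{otherwise.} \end{cases}
\]
Since the prefiltration has $G_{e}=G$ for every $e \leq d$, the hypothesis $\gb(\N^{e}\times R_{e})\subset G_{e}$ holds automatically. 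The choice of linear order $\prec$ on $R[\N]$ is immaterial because $G$ is abelian, and by construction
\[
\prod_{j\in R[\alpha]}^{\prec} g_{j} \;=\; \sum_{(i_{1},\dots,i_{e})\in\alpha^{e}} g_{(i_{1},\dots,i_{e},*)} \;=\; v(\alpha).
\]
Proposition~\ref{prop:mono-poly} therefore yields that $v$ is $\Gb$-polynomial; since $v(\emptyset)=0$, it belongs to $\VIP$.

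Claim (ii) is an immediate count: the generators are parameterized by pairs $(e,\vec j)$ with $e \in \{1,\dots,d\}$ and $\vec j \in \{1,\dots,a\}^{e-1}$, yielding exactly $\sum_{e=1}^{d} a^{e-1}$ of them. The only substantive step is therefore the application of Proposition~\ref{prop:mono-poly} in (i), which reduces the VIP-polynomiality of the $v$ to a routine check of support and order-insensitivity in an abelian group; everything else is definition-unpacking, and I do not foresee a serious obstacle.
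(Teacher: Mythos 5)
Your proof of claim (i) — that each generator $v$ lies in $\VIP$ — via Proposition~\ref{prop:mono-poly} with the clever one-point choice $R_{e}=\{*\}$, $R_{k}=\emptyset$ for $k\neq e$, is correct and is a genuine alternative to the paper, which instead obtains polynomiality of $v$ by a direct induction on $d$. Claim (ii) is of course right. But the assembly of (i) and (ii) into the lemma has a genuine gap.

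Your final step is: let $F$ be the intersection of all VIP subgroups of $\VIP$ containing the generators, observe that $F$ is a VIP group and is ``generated as a VIP group by a finite set,'' and conclude that $F$ is FVIP ``by definition.'' That last inference is invalid. Definition~\ref{def:fvip} requires an FVIP group to be \emph{finitely generated as a group}, not merely the VIP-closure of a finite set. Closing under $\sD_{\alpha}$ for all $\alpha\in\Fin$ can in principle force infinitely many new elements into $F$, so $F$ being the smallest VIP group containing the $v$'s says nothing a priori about its group-theoretic finite generation. The substantive content of the lemma is precisely the claim that the \emph{plain} group generated by the $v$'s is already closed under $\sD$ (and hence coincides with $F$, which is therefore finitely generated for free). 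This is exactly the computation the paper carries out: for $\beta<\alpha$,
\[
\sD_{\beta}v(\alpha)
= -v(\alpha)+v(\beta\cup\alpha)-v(\beta)
= \sum_{k=1}^{e-1}\Big(\sum_{i_{1}<\dots<i_{k}\in\beta} n^{j_{1}}_{i_{1}}\cdots n^{j_{k}}_{i_{k}}\Big) v'_{k}(\alpha),
\]
where each $v'_{k}$ is again a generator of the form~\eqref{eq:fvip-monomial} (with parameters $e'=e-k$ and $\vec{j}'=(j_{k+1},\dots,j_{e-1})$) and the parenthesized coefficients are integers; hence $\sD_{\beta}v$ is an integer linear combination of generators and thus lies in the group they generate. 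Without this (or an equivalent) computation, the result is not proved. If you retain your use of Proposition~\ref{prop:mono-poly} for polynomiality of each $v$, you still need to supply the $\sD$-closure calculation to finish.
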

Maps of the form \eqref{eq:fvip-monomial} were originally studied in connection with admissible generalized polynomials (Definition~\ref{def:generalized-poly}).
We will not return to them in the sequel and a proof of the above lemma is included for completeness.
\begin{proof}
The group $F$ is by definition finitely generated and closed under conjugation by constants since $G$ is commutative.
It remains to check that the maps of the form \eqref{eq:fvip-monomial} are polynomial and that the group $F$ is closed under symmetric derivatives.

To this end we use induction on $d$.
The cases $d=0,1$ are clear (in the latter case the maps \eqref{eq:fvip-monomial} are IP-systems), so let $d>1$ and consider a map $v$ as in \eqref{eq:fvip-monomial} with $e=d$.
For $\beta<\alpha$ we have
\begin{multline*}
\sD_{\beta} v(\alpha)
=
-v(\alpha)+v(\beta\cup\alpha)-v(\beta)\\
=
\sum_{k=1}^{d-1} \sum_{i_{1}<\dots<i_{k}\in\beta, i_{k+1}<\dots<i_{d}\in\alpha} n^{j_{1}}_{i_{1}}\cdots n^{j_{e-1}}_{i_{d-1}} y_{i_{d}}\\
=
\sum_{k=1}^{d-1} \sum_{i_{1}<\dots<i_{k}\in\beta} n^{j_{1}}_{i_{1}}\cdots n^{j_{k}}_{i_{k}} \underline{\sum_{i_{k+1}<\dots<i_{d}\in\alpha} n^{j_{k+1}}_{i_{k+1}}\cdots n^{j_{e-1}}_{i_{d-1}} y_{i_{d}}}.
\end{multline*}
The underlined expression is $\Gb[+1]$-polynomial by the induction hypothesis and lies in $F$ by definition.
Since this holds for every $\beta$, the map $v$ is $\Gb$-polynomial.
Since the derivatives are in $F$ for every map $v$, the group $F$ is FVIP.
\end{proof}

The following basic property of FVIP groups will be used repeatedly.
\begin{lemma}
\label{lem:fvip-plus-fvip}
Let $F,F' \leq\VIP$ be FVIP groups.
Then the group $F\vee F'$ is also FVIP.
\end{lemma}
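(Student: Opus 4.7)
The plan is to verify that $E := F \vee F'$ (the subgroup of $\VIP$ generated by $F \cup F'$) satisfies the defining conditions of an FVIP group. Finite generation is immediate, as the union of any finite generating sets of $F$ and $F'$ generates $E$. Closure under conjugation by a constant $c \in G$ follows by writing any $g \in E$ as a word $g = w_1 \cdots w_k$ with $w_i \in F \cup F'$ and noting that $c^{-1} g c = (c^{-1} w_1 c) \cdots (c^{-1} w_k c)$, where each conjugated letter stays in $F$ or $F'$ by hypothesis.

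The substantive step is closure of $E$ under the symmetric derivative $\sD_\beta$. I will prove by induction on $k$ that every element $g = w_1 \cdots w_k$ of $E$ with $w_i \in F \cup F'$ satisfies $\sD_\beta g \in E_1 := E \cap \poly[\Gb[+1]]$. The base case $k = 1$ is immediate since $\sD_\beta w \in F_1 \cup F'_1 \subset E_1$. For the inductive step I split $g = g_1 g_2$ with $g_1 = w_1 \cdots w_{k-1}$ and $g_2 = w_k$ and apply the Leibniz-type identity
\begin{equation*}
\sD_\beta(g_1 g_2) = (\sD_\beta g_1)^{g_2} \cdot [g_2, g_1(\beta)^{-1}] \cdot (\sD_\beta g_2)^{g_1(\beta)^{-1}},
\end{equation*}
obtained by substituting $g_i(\alpha\cup\beta) = g_i(\alpha)\,\sD_\beta g_i(\alpha)\,g_i(\beta)$ twice into
\begin{equation*}
\sD_\beta(g_1 g_2)(\alpha) = g_2(\alpha)^{-1} g_1(\alpha)^{-1} g_1(\alpha\cup\beta) g_2(\alpha\cup\beta) g_2(\beta)^{-1} g_1(\beta)^{-1}
\end{equation*}
and collecting terms (using the paper's convention $a^b = b^{-1} a b$ to absorb the cross terms into the commutator).

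It remains to verify that each of the three factors lies in $E_1$. Since the canonical prefiltration from Corollary~\ref{cor:poly-group} yields $[\poly, \poly[\Gb[+1]]] \leq \poly[\Gb[+1]]$, the subgroup $E_1$ is normal in $E$, so $(\sD_\beta g_1)^{g_2} \in E_1$ by the induction hypothesis applied to $g_1 \in E$. The constant $g_1(\beta)^{-1}$ lies in $G_1$ by \eqref{eq:polyn-g1}, so conjugation by it preserves $E$ (VIP-group closure under conjugation by constants) and $\poly[\Gb[+1]]$ (normality of this group in $\poly$), and therefore preserves $E_1 \ni \sD_\beta g_2$. For the middle factor I rewrite $[g_2, g_1(\beta)^{-1}] = g_2^{-1} \cdot g_2^{g_1(\beta)^{-1}}$, which is visibly in $E$; its membership in $\poly[\Gb[+1]]$ follows from Theorem~\ref{thm:poly-group}\eqref{poly-group:commutator} with $t_0 = 0$ and $t_1 = 1$, applied to the $\Gb$-polynomial $g_2$ and the constant $g_1(\beta)^{-1}$, which is $\Gb[+1]$-polynomial since its value lies in $G_1$.

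The only subtlety is that the constants $g_1(\beta)$ produced by the Leibniz expansion are not themselves elements of $E \leq \VIP$, but they enter only through commutators and through conjugations of elements of $E$, both of which remain in $E$ thanks to the conjugation-by-constants axiom built into the definition of a VIP group; this is precisely what that axiom is designed to handle.
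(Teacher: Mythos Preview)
Your proof is correct and follows the same approach as the paper: the paper records the Leibniz identity $\sD_{m}(gh) = h^{-1}\,\sD_{m}g\,\cdot g(m)\,h\,\sD_{m}h\,g(m)^{-1}$ (displayed as \eqref{eq:sD-Leibniz}) and leaves the induction on word length implicit, whereas you regroup the same identity into the three-factor form $(\sD_\beta g_1)^{g_2}\cdot[g_2,g_1(\beta)^{-1}]\cdot(\sD_\beta g_2)^{g_1(\beta)^{-1}}$ and spell out why each factor lies in $E_1$. Your added detail (normality of $E_1$ in $E$, the rewriting of the commutator as $g_2^{-1}\cdot g_2^{g_1(\beta)^{-1}}$ to see membership in $E$) is exactly what is needed to make the paper's one-line argument rigorous.
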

\begin{proof}
The group $F\vee F'$ is clearly finitely generated and invariant under conjugation by constants.
Closedness under $\sD$ follows from the identity
\begin{equation}
\label{eq:sD-Leibniz}
\sD_{m}(gh) = h\inv \sD_{m}g g(m) h \sD_{m}h g(m)\inv.
\qedhere
\end{equation}
\end{proof}

We will now elaborate on the example that motivated Bergelson, H\aa{}land Knutson and McCutcheon to study FVIP systems in the first place \cite{MR2246589}.
They have shown that ranges of generalized polynomials from a certain class necessarily contain FVIP systems.

We begin by recalling the definition of the appropriate class.
We denote the integer part function by $\floor{\cdot}$, the nearest integer function by $\nint{\cdot} = \floor{\cdot+1/2}$ and the distance to nearest integer by $\dint{a} = |a-\nint{a}|$ (this is consistent with the notation for general metric groups applied to $\R/\Z$).
\begin{definition}
\label{def:generalized-poly}
The set $\mathcal{G}$ of \emph{generalized polynomials} (in $l$ variables) is the smallest $\Z$-algebra of functions $\Z^{l}\to\Z$ that contains $\Z[x_{1},\dots,x_{l}]$ such that for every $p_{1},\dots,p_{t}\in\mathcal{G}$ and $c_{1},\dots,c_{t}\in\R$ the map $\floor{\sum_{i=1}^{t}c_{i}p_{i}}$ is in $\mathcal{G}$.
\index{generalized polynomial}
The notion of degree is extended from $\Z[x_{1},\dots,x_{l}]$ to $\mathcal{G}$ inductively by requiring $\deg p_{0}p_{1}\leq \deg p_{0}+\deg p_{1}$, $\deg (p_{0}+p_{1}) \leq \max(\deg p_{0},\deg p_{1})$, and $\deg \floor{\sum_{i=1}^{t}c_{i}p_{i}} \leq \max_{i}\deg p_{i}$, the degree of each generalized polynomial being the largest number with these properties.

The set of $\mathcal{G}_{a}$ of \emph{admissible} generalized polynomials is the smallest ideal of $\mathcal{G}$ that contains the maps $x_{1},\dots,x_{l}$ and is such that for every $p_{1},\dots,p_{t}\in\mathcal{G}_{a}$, $c_{1},\dots,c_{t}\in\R$, and $0<k<1$ the map $\floor{\sum_{i=1}^{t}c_{i}p_{i}+k}$ is in $\mathcal{G}_{a}$.
\index{generalized polynomial!admissible}
\end{definition}
Some examples of generalized polynomials are
\begin{multline}
\label{eq:generalized-poly-examples}
n^{3}+n,
\quad [\pi n+1/2],
\quad [\pi n+1/2][\pi n],
\quad [\pi n^{2} [e n] + 1/e],\\
\quad [\sqrt{2}n^{3}+1/e][\log 3 \cdot [\sqrt{3} n^{2}+n]^{2} + n],
\quad n^{5}-n+1,
\quad [\pi n],
\end{multline}
of which all but the last two are admissible.

The construction of FVIP systems in the range of an admissible generalized polynomial in \cite{MR2246589} proceeds by induction on the polynomial and utilizes Lemma~\ref{lem:fvip-monomial} at the end.
We give a softer argument that gives a weaker result in the sense that it does not necessarily yield an FVIP system of the form \eqref{eq:fvip-monomial}, but requires less computation.

For a ring $R$ (with not necessarily commutative multiplication, although we will only consider $R=\Z$ and $R=\R$ in the sequel) and $d\in\N$ we denote by $R_{\bullet}^{d}$ the prefiltration (with respect to the additive group structure) given by $R_{0}=\dots=R_{d}=R$ and $R_{d+1}=\{0_{R}\}$.
\begin{lemma}
\label{lem:fvip-ring-product}
Let $F_{i} \leq \VIP[R_{\bullet}^{d_{i}}]$, $i=0,1$, be FVIP groups.
Then the pointwise products of maps from $F_{0}$ and $F_{1}$ generate an FVIP subgroup of $\VIP[R_{\bullet}^{d_{0}+d_{1}}]$.
\end{lemma}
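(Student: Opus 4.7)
The natural strategy is induction on $d := d_0 + d_1$. The base cases $d_0 = 0$ or $d_1 = 0$ are trivial because $\VIP[R_{\bullet}^{0}]$ contains only the zero map.

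For the inductive step, fix finite generating sets $\{u_1, \ldots, u_p\}$ of $F_0$ and $\{v_1, \ldots, v_q\}$ of $F_1$, and let $F$ denote the subgroup of the additive group of maps $\Fin \to R$ generated by $F_0 \cup F_1 \cup \{u_k v_l\}_{k,l}$, where $u_k v_l$ denotes pointwise ring multiplication. By $\Z$-bilinearity any product $f_0 f_1$ expands as a $\Z$-linear combination of the $u_k v_l$, so $F \supseteq F_0 \cdot F_1$; $F$ is visibly finitely generated, and closure under conjugation by constants is automatic in the commutative additive setting.

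The engine of the argument is a Leibniz-type identity. Substituting $A_i = f_i(\alpha)$, $B_i = f_i(\beta)$, $C_i = \sD_{\beta} f_i(\alpha)$ into the algebraic identity $(A_0+B_0+C_0)(A_1+B_1+C_1) - A_0 A_1 - B_0 B_1 = \sum (\text{cross terms})$ yields
\begin{align*}
\sD_{\beta}(f_0 f_1) = {} & f_0 \cdot \sD_{\beta} f_1 + \sD_{\beta} f_0 \cdot f_1 + \sD_{\beta} f_0 \cdot \sD_{\beta} f_1 \\
& + f_0 \cdot f_1(\beta) + f_0(\beta) \cdot f_1 + f_0(\beta) \cdot \sD_{\beta} f_1 + \sD_{\beta} f_0 \cdot f_1(\beta).
\end{align*}
Both the polynomial-degree bound and $\sD$-closure of $F$ fall out by inspecting these seven terms. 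For the polynomial bound: the first three are products at strictly smaller total degree (since $\sD_{\beta}$ drops one from the length of $F_i$'s filtration), which the inductive hypothesis places in $\VIP[R_{\bullet}^{d-1}]$; the remaining four are $R$-scalar multiples of polynomials in $F_0$ or $F_1$, each of degree at most $d-1$. Hence $u_k v_l \in \VIP[R_{\bullet}^{d}]$ and consequently $F \leq \VIP[R_{\bullet}^{d}]$. For $\sD$-closure, each of the seven terms must sit inside $F$: the first three become $\Z$-combinations of the $u_k v_l$ after expanding $\sD_{\beta} u_k \in F_0$ and $\sD_{\beta} v_l \in F_1$ through their finite generating sets, while the remaining four are scalar multiples of elements of $F_0 \cup F_1 \subseteq F$.

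The hard part is exactly this last clause. When $R = \Z$ the scalars $f_i(\beta)$ are integers and $F$ absorbs them directly as a $\Z$-module. For general $R$ one must show that scaling by values of $F_0$ and $F_1$ does not push elements out of $F$; the crucial point is that only finitely many species of scalar coefficients appear when one differentiates a generator $u_k v_l$, namely values $u_k(\beta), v_l(\beta) \in R$, and a nested induction on the filtration lengths, tracked in parallel with the main induction, shows that the resulting scalar-multiplied terms can always be re-expressed through the finite generating set using the closure of each $F_i$ under $\sD$. I expect this scalar-absorption bookkeeping to be the main technical obstacle in a formal write-up; once it is in place, the polynomial-degree bound and finite generation of $F$ follow mechanically from the Leibniz identity displayed above.
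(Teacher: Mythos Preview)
Your approach is exactly the paper's: induction on $d_0+d_1$ via a Leibniz identity for $\sD_\beta(f_0f_1)$. Your seven-term expansion is the same as the paper's displayed identity
\[
\sD_{\beta}(vw) = (v+\sD_{\beta}v+v(\beta))\,\sD_{\beta}w + (\sD_{\beta}v+v(\beta))\,w + (\sD_{\beta}v+v)\,w(\beta),
\]
just ungrouped. Your candidate group $F = F_0 + F_1 + \sum_{k,l}\Z\,u_kv_l$ is the right object, and for $R=\Z$ your argument is complete: the scalar terms $u_k(\beta)v_l$ and $v_l(\beta)u_k$ lie in $\Z v_l\subset F_1$ and $\Z u_k\subset F_0$, while the cross terms expand through the generators of $F_0,F_1$ into $\Z$-combinations of the $u_iv_j$. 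That is also all the paper writes down.

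The gap is your last paragraph. The scalar-absorption problem for general $R$ is not bookkeeping; it is a genuine obstruction, and the ``nested induction'' you sketch cannot succeed. Take $R=\R$, $d_0=d_1=1$, and $u(\alpha)=\sum_{i\in\alpha}a_i$, $v(\alpha)=\sum_{i\in\alpha}b_i$ with the $a_i,b_i$ algebraically independent over $\Q$. Then $F_0=\Z u$ and $F_1=\Z v$ are FVIP, but $\sD_{\{i\}}(uv)=b_iu+a_iv$, and evaluating at $\{1\}$ yields the reals $a_1b_i+b_1a_i$, $i\in\N$, which are $\Q$-linearly independent. No finitely generated abelian group of maps can contain all $\sD_{\{i\}}(uv)$, so $uv$ lies in \emph{no} FVIP group. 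Thus the statement is false for general $R$; the paper's one-line proof is equally silent on this point. Fortunately the paper's applications (Theorem~\ref{thm:gen-poly-FVIP}) only ever take products of $\Z$-valued FVIP systems, so the $R=\Z$ case --- which you handle correctly --- is all that is actually needed.
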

\begin{proof}
This follows by induction on $d_{0}+d_{1}$ using the identity
\[
\sD_{\beta} vw = (v+\sD_{\beta}v+v(\beta)) \sD_{\beta} w + (\sD_{\beta} v+v(\beta))w + (\sD_{\beta}v+v)w(\beta)
\]
for the symmetric derivative of a pointwise product.
\end{proof}

Applying Lemma~\ref{lem:near-identity} to $\R/\Z$ we obtain the following.
\begin{corollary}
\label{cor:near-integer}
Let $P$ be an FVIP system in $\R$.
Then for every $\epsilon>0$ wlog $\dint{P}<\epsilon$.
\end{corollary}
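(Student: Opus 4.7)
The plan is to reduce to Lemma~\ref{lem:near-identity} by pushing the FVIP system $P$ forward under the quotient homomorphism $\pi\from\R\to\R/\Z$, which lands us in a compact metric group.

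First I would unpack the hypothesis: as an FVIP system, $P$ lies in $\VIP[\Gb]$ for some filtration $\Gb$ of $\R$ of finite length $d$. Consider on $\R/\Z$ the standard prefiltration $H_{\bullet}$ of length $d$ given by $H_{0}=\dots=H_{d}=\R/\Z$ and $H_{d+1}=\{0\}$; this is trivially a prefiltration (all commutators vanish) and its constituents are compact metric groups, so Lemma~\ref{lem:near-identity} is applicable to it.

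Next I would verify that $\pi\circ P\in\VIP[\Hb]$. Since $\pi$ is a homomorphism into an abelian group and $P(\emptyset)=0$, we have $(\pi\circ P)(\emptyset)=0$, so $\pi\circ P$ vanishes at $\emptyset$. The key observation is that $D_{\alpha}(\pi\circ P)=\pi\circ D_{\alpha}P$ (since $\pi$ is a homomorphism into an abelian group, commuting discrete derivatives with $\pi$ is immediate); then a short induction on $d$ using that $\pi$ sends $G_{i}$ into $H_{i}$ shows that $\pi\circ P$ is $\Hb$-polynomial.

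Finally, apply Lemma~\ref{lem:near-identity} to $\pi\circ P\in\VIP[\Hb]$ with the natural metric on $\R/\Z$, for which the distance from $\pi(a)$ to the identity is exactly $|a-\nint{a}|=\dint{a}$. This yields wlog $\dint{\pi\circ P}<\epsilon$, which is the same as $\dint{P}<\epsilon$. There is no real obstacle here; this is essentially a one-line consequence of the lemma once one observes that the only structure of $\R$ relevant to the conclusion is what survives the projection to the circle.
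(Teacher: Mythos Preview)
Your proposal is correct and takes essentially the same approach as the paper: the paper's entire proof is the sentence ``Applying Lemma~\ref{lem:near-identity} to $\R/\Z$ we obtain the following,'' and you have simply unpacked what this means.
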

This allows us to show that we can obtain $\Z$-valued FVIP systems from $\R$-valued FVIP-systems by rounding.
\begin{lemma}
\label{lem:nearest-integer-FVIP}
Let $P \in \VIP[\R_{\bullet}^{d}]$ be an FVIP system.
Then wlog $\nint{P} \in \VIP[\Z_{\bullet}^{d}]$ and $\nint{P}$ is an FVIP system.
\end{lemma}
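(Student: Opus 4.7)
The plan is to induct on the length $d$ of the prefiltration, with trivial base case $d\leq 0$ (the only VIP system is identically zero). For the inductive step, fix an FVIP group $F\leq\VIP[\R_{\bullet}^{d}]$ containing $P$; since $\R$ is abelian, $F$ is a finitely generated abelian group, whence its subgroup $F_{1} = F\cap\VIP[\R_{\bullet}^{d-1}]$ (which contains every symmetric derivative $\sD_{\beta}P$) is itself finitely generated and thus an FVIP group of prefiltration-length $d-1$. Applying the inductive hypothesis to each element of a finite generating set $Q_{1},\dots,Q_{m}$ of $F_{1}$ and combining via the standard diagonal argument over IP-rings (cf.\ \cite[Lemma 1.4]{MR833409}), we may pass to a sub-IP-ring on which there is a finitely generated VIP group $G_{1}\leq\VIP[\Z_{\bullet}^{d-1}]$ with $\nint{Q_{j}}\in G_{1}$ for each $j$.

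By a further diagonal application of Corollary~\ref{cor:near-integer} to the countable family $\{P,Q_{1},\dots,Q_{m}\}$, we may additionally assume that on the sub-IP-ring $\dint{P(\alpha)}<1/6$ for every $\alpha$, and that $\dint{Q_{j}(\alpha)}$ can be made smaller than any fixed $\delta>0$ once $\alpha$ lies in an appropriate tail (the tail depending on $\delta$). The core computation is then: writing $e(\gamma):=P(\gamma)-\nint{P(\gamma)}$ and $\tilde e(\alpha,\beta):=\sD_{\beta}P(\alpha)-\nint{\sD_{\beta}P(\alpha)}$, each of absolute value below $1/6$, the VIP identity $P(\alpha\cup\beta) = P(\alpha) + P(\beta) + \sD_{\beta}P(\alpha)$ rearranges as
\[
P(\alpha\cup\beta) = \nint{P(\alpha)} + \nint{P(\beta)} + \nint{\sD_{\beta}P(\alpha)} + \bigl(e(\alpha)+e(\beta)+\tilde e(\alpha,\beta)\bigr),
\]
with total error strictly within $(-1/2,1/2)$. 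Rounding both sides gives $\sD_{\beta}\nint{P}(\alpha) = \nint{\sD_{\beta}P(\alpha)}$. Expanding $\sD_{\beta}P = \sum_{j} n_{j,\beta}Q_{j}$ in $F_{1}$ and using, for $\alpha$ sufficiently deep in the tail, the bound $\dint{Q_{j}(\alpha)} < 1/(2\sum_{j}|n_{j,\beta}|)$, we obtain $\nint{\sD_{\beta}P(\alpha)} = \sum_{j} n_{j,\beta}\nint{Q_{j}(\alpha)}$, which lies in $G_{1}\leq\VIP[\Z_{\bullet}^{d-1}]$. Hence $\nint{P}\in\VIP[\Z_{\bullet}^{d}]$, and the finitely generated abelian group $G:=\langle\nint{P},G_{1}\rangle$ is closed under symmetric derivatives, so it is an FVIP group containing $\nint{P}$.

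The main obstacle is the bookkeeping of the successive sub-IP-ring reductions — in particular, arranging the sub-IP-ring so that the available smallness bounds on $\dint{Q_{j}(\alpha)}$ shrink quickly enough along the ring to dominate the growth in $\beta$ of the integer coefficients $n_{j,\beta}$ appearing in the expansion of $\sD_{\beta}P$ in the generators of $F_{1}$. This is a standard application of the diagonal Milliken--Taylor procedure, but it is the sole technical step requiring care.
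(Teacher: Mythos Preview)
Your proof is correct and follows essentially the same route as the paper's: induction on $d$, apply the inductive hypothesis to a finite generating set of the derivative group, use Corollary~\ref{cor:near-integer} to make $\dint{P}$ and $\dint{Q_{j}}$ small, then compute that $\sD_{\beta}\nint{P}$ agrees with an integer combination of the $\nint{Q_{j}}$, and close with a diagonal over $\max\beta$. The only cosmetic differences are your choice of $1/6$ versus the paper's $1/12$ and your routing through the intermediate identity $\sD_{\beta}\nint{P}(\alpha)=\nint{\sD_{\beta}P(\alpha)}$; the paper combines the two steps into one inequality, which is why it wants the smaller constant.

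One small expository point: the bound $|\tilde e(\alpha,\beta)|<1/6$ is not available a priori from $\dint{P}<1/6$ alone (you only get $|\tilde e|<1/2$ that way); it comes from the $Q_{j}$-smallness and hence only for $\alpha$ in the $\beta$-dependent tail. Your third paragraph acknowledges this, but the phrasing ``each of absolute value below $1/6$'' in the core computation reads as if it were unconditional. In fact the identity $\sD_{\beta}\nint{P}(\alpha)=\nint{\sD_{\beta}P(\alpha)}$ already follows from $\dint{P}<1/6$ for \emph{all} disjoint $\alpha,\beta$, since $\sD_{\beta}P(\alpha)=\sD_{\beta}\nint{P}(\alpha)+\bigl(e(\alpha\cup\beta)-e(\alpha)-e(\beta)\bigr)$ with error of modulus below $1/2$; only the second identity $\nint{\sD_{\beta}P(\alpha)}=\sum_{j}n_{j,\beta}\nint{Q_{j}(\alpha)}$ genuinely needs the tail restriction.
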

\begin{proof}
We use induction on $d$.
For $d=0$ there is nothing to show, so assume that $d>0$.
By the assumption every symmetric derivative of $(P_{\alpha})$ lies in an FVIP group of polynomials of degree $<d$ that is generated by $q_{1},\dots,q_{a}$, say.
By the induction hypothesis we know that wlog each $\nint{q_{i}}$ is again an FVIP system and by Lemma~\ref{lem:fvip-plus-fvip} they lie in some FVIP group $F$.
By Corollary~\ref{cor:near-integer} we may assume wlog that $\dint{P} < 1/12$.
Let now $\beta$ be given, by the hypothesis we have
\[
\sD_{\beta} P(\alpha) = \sum_{i}c_{i}q_{i}(\alpha)
\quad \text{for } \alpha>\beta
\]
with some $c_{i}\in \Z$.
By Corollary~\ref{cor:near-integer} we may wlog assume that $|c_{i}|\cdot \dint{q_{i}}(\alpha) < 1/(4\cdot 2^{i})$ for all $\alpha>\beta$.
This implies
\[
|\sD_{\beta} \nint{P}(\alpha) - \sum_{i}c_{i}\nint{q_{i}}(\alpha)| < 1/2
\quad \text{for } \alpha>\beta,
\]
so that
\[
\sD_{\beta} \nint{P}(\alpha) = \sum_{i}c_{i}\nint{q_{i}}(\alpha)
\quad \text{for } \alpha>\beta,
\]
since both sides are integer-values functions.
In fact we can do this for all $\beta$ with fixed $\max\beta$ simultaneously.
By a diagonal argument, cf.\ \cite[Lemma 1.4]{MR833409}, we may then assume that for every $\beta$ we have
\[
\sD_{\beta} \nint{P}(\alpha) = \sum_{i}c_{i}\nint{q_{i}}(\alpha)
\quad \text{for } \alpha>\beta
\]
with some $c_{i}\in\Z$.
Hence $F \vee \<\nint{P}\> \leq \VIP[\Z_{\bullet}^{d}]$ is an FVIP group.
\end{proof}
Recall that an \emph{IP-system} in $\Z^{l}$ is a family $(n_{\alpha})_{\alpha\in\Fin} \subset \Z^{l}$ such that $n_{\alpha\cup\beta}=n_{\alpha}+n_{\beta}$ whenever $\alpha,\beta\in\Fin$ are disjoint.
\begin{theorem}[{\cite[Theorem 2.8]{MR2246589}}]
\label{thm:gen-poly-FVIP}
For every generalized polynomial $p:\Z^{l}\to\Z$ and every FVIP system $(n_{\alpha})_{\alpha}$ in $\Z^{l}$ of degree at most $d$ there exists $n\in\Z$ such that the IP-sequence $(p(n_{\alpha})-n)_{\alpha\in\Fin}$ is wlog FVIP of degree at most $d\deg p$.
If $p$ is admissible, then we may assume $n=0$.
\end{theorem}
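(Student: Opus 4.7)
The plan is to induct on the construction of the generalized polynomial $p$ as in Definition~\ref{def:generalized-poly}, maintaining in parallel both the asserted degree bound $d\deg p$ and, in the admissible case, the vanishing of the integer shift $n$. In the base case $p\in\Z[x_1,\dots,x_l]$, each coordinate of the $\Z^l$-valued FVIP system $(n_\alpha)$ is itself a $\Z$-valued FVIP system of degree at most $d$; iterating Lemma~\ref{lem:fvip-ring-product} and combining via Lemma~\ref{lem:fvip-plus-fvip} shows that every monomial of degree $e$ in these coordinates yields an FVIP system of degree at most $de$, and summing gives that $p(n_\alpha)-p(0)$ is FVIP of degree at most $d\deg p$. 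Admissibility at this level means $p$ lies in the ideal $(x_1,\dots,x_l)$, so $p(0)=0$ and $n=0$ works.

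The inductive steps for sums and products of generalized polynomials are immediate: writing $p_i(n_\alpha)=m_i+q_i(\alpha)$ with $q_i$ the FVIP system supplied by the induction hypothesis, expansion together with Lemmas~\ref{lem:fvip-plus-fvip} and~\ref{lem:fvip-ring-product} produces an FVIP system after absorbing integer constants into a single shift, and admissibility is preserved since admissible generalized polynomials form an ideal. The substantial case is the floor construction $p=\floor{\sum_i c_i p_i}$. The induction gives $p_i(n_\alpha)=m_i+q_i(\alpha)$ with each $q_i$ a $\Z$-valued FVIP system, so $P(\alpha):=\sum_i c_i q_i(\alpha)$ is an $\R$-valued FVIP system vanishing at $\emptyset$, of degree at most $d\max_i\deg p_i$. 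Write $a:=\sum_i c_i m_i=N+r$ with $N\in\Z$ and $r\in[0,1)$. Corollary~\ref{cor:near-integer} lets us wlog make $\dint{P(\alpha)}$ smaller than any preassigned $\epsilon>0$, and Hindman's Theorem~\ref{thm:hindman} lets us wlog fix the sign of the deviation $P(\alpha)-\nint{P(\alpha)}$ uniformly in $\alpha$. Choosing $\epsilon<\min(r,1-r)$ when $r>0$, or recording a uniform $\pm 1$ correction when $r=0$, yields $\floor{a+P(\alpha)}=N+\nint{P(\alpha)}+c$ for an absolute $c\in\{-1,0\}$. Lemma~\ref{lem:nearest-integer-FVIP} then supplies wlog the $\Z$-valued FVIP system $\nint{P}$ of the same degree, and $n:=N+c$ closes the step.

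For the admissible floor construction $\floor{\sum_i c_i p_i+k}$ with $0<k<1$, the induction gives each $p_i(n_\alpha)$ already FVIP with $p_i(n_\emptyset)=0$, so $a=0$ and $P(\alpha):=\sum_i c_i p_i(n_\alpha)$ is $\R$-valued FVIP vanishing at $\emptyset$. Lemma~\ref{lem:nearest-integer-FVIP} supplies $\nint{P}$ as a $\Z$-valued FVIP system with $\nint{P(\emptyset)}=0$, and Corollary~\ref{cor:near-integer} lets us further arrange $|P(\alpha)-\nint{P(\alpha)}|<\min(k,1-k)/2$, which forces $\floor{P(\alpha)+k}=\nint{P(\alpha)}$ for every $\alpha$ (including $\alpha=\emptyset$), so $n=0$ suffices. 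The main obstacle is the coordination of these wlog-passages in the floor steps: Hindman's Theorem, Corollary~\ref{cor:near-integer}, and Lemma~\ref{lem:nearest-integer-FVIP} each provide a sub-IP-ring on which a particular property holds, and they must be combined into a single sub-IP-ring on which all relevant polynomial, smallness, and sign properties hold simultaneously---a diagonal extraction in the style of \cite[Lemma 1.4]{MR833409}.
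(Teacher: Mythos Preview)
Your proposal is correct and follows essentially the same approach as the paper: induct on the construction of $p$, use Lemmas~\ref{lem:fvip-plus-fvip} and~\ref{lem:fvip-ring-product} for the algebraic closure properties, and handle the floor step via Corollary~\ref{cor:near-integer}, Hindman's theorem, and Lemma~\ref{lem:nearest-integer-FVIP}. Your treatment of the floor case is in fact slightly more explicit than the paper's---the paper compresses your steps (writing $a=N+r$, separating $r=0$ from $r>0$, and fixing the sign of $P-\nint{P}$) into the single line ``by Hindman's theorem we may wlog assume $\floor{P(n_\alpha)}=\nint{P(n_\alpha)-C}+n$ for some integer $n$ with $|n-C|<2$'', which tacitly uses Corollary~\ref{cor:near-integer} to ensure the coloring is finite.
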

For inadmissible polynomials it may not be possible to obtain the above result with $n=0$.
Indeed, consider the example $p(n)=[\pi n]$.
Since $\pi n$ is equidistributed modulo $2$, we can find a sequence $(n_{k})$ such that $\pi n_{k} \in (-2^{-k},0) \mod 2$ for each $k$.
Consider the IP system $n_{\alpha} = \sum_{k\in\alpha} n_{k}$.
Then $\pi n_{\alpha} \in (-1,0) \mod 2$ for each $\alpha\in\Fin$, so that $p(n_{\alpha})$ is odd.
Lemma~\ref{lem:near-identity} applied to $\Z/2\Z$ now shows that no sub-IP-sequence of $(p(n_{\alpha}))_{\alpha\in\Fin}$ can be a VIP system.
\begin{proof}
We begin with the first part and use induction on $p$.
The class of maps that satisfy the conclusion is closed under $\Z$-linear combinations by Lemma~\ref{lem:fvip-plus-fvip} and under multiplication by Lemma~\ref{lem:fvip-ring-product}.
This class clearly contains the polynomials $1,x_{1},\dots,x_{l}$.
Thus it remains to show that, whenever $p_{1},\dots,p_{t}\in\mathcal{G}$ satisfy the conclusion and $c_{1},\dots,c_{t}\in\R$, the map $\floor{P}$ with $P=\sum_{i=1}^{t}c_{i}p_{i}$ also satisfies the conclusion.

By the assumption we have wlog that $(P(n_{\alpha})-C)_{\alpha}$ is an $\R$-valued FVIP system for some $C\in\R$.
By Hindman's theorem~\ref{thm:hindman} we may wlog assume that $\floor{P(n_{\alpha})}=\nint{P(n_{\alpha})-C}+n$ for some integer $n$ with $|n-C|<2$ and all $\alpha\in\Fin$.
The conclusion follows from Lemma~\ref{lem:nearest-integer-FVIP}.

Now we consider admissible generalized polynomials $p$ and use induction on $p$ again.
The conclusion clearly holds for $x_{1},\dots,x_{l}$, passes to linear combinations and passes to products with arbitrary generalized polynomials by Lemma~\ref{lem:fvip-ring-product} and the first part of the statement.
Assume now that $p_{1},\dots,p_{t}\in\mathcal{G}_{a}$ satisfy the conclusion and $c_{1},\dots,c_{t}\in\R$, $0<k<1$.
Then $(P(n_{\alpha}))_{\alpha}$ with $P:=\sum_{i=1}^{t}c_{i}p_{i}$ is wlog an $\R$-valued FVIP system, and by Corollary~\ref{cor:near-integer} we have wlog $\dint{P} < \min(k,1-k)$.
This implies $\floor{P(n_{\alpha})+k}=\nint{P(n_{\alpha})}$ and this is wlog an FVIP system by Lemma~\ref{lem:nearest-integer-FVIP}.
\end{proof}
As an aside, consider the set of real-valued generalized polynomials $\mathcal{RG}$ \cite[Definition 3.1]{MR2747062} that is defined similarly to $\mathcal{G}$, except that it is required to be an $\R$-algebra.
Following the proof of Theorem~\ref{thm:gen-poly-FVIP} we see that for every $p\in\mathcal{RG}$ and every FVIP system $(n_{\alpha})_{\alpha}\subset\Z^{l}$ wlog there exists a constant $C\in\R$ such that $(p(n_{\alpha})-C)_{\alpha}$ is an FVIP system.
Clearly, if $p$ is of the form $\floor{q}$ then $C\in\Z$ and if $p\in\R[x_{1},\dots,x_{l}]$ with zero constant term then $C=0$.
This, together with Corollary~\ref{cor:near-integer}, implies (an FVIP* version of) \cite[Theorem D]{MR2318563}.

Our main example (that also leads to Theorem~\ref{thm:SZ-intro}) are maps induced by admissible generalized polynomial sequences in finitely generated nilpotent groups.
\begin{lemma}
\label{lem:poly-fvip}
Let $G$ be a finitely generated nilpotent group with a filtration $\Gb$.
Let $p:\Z^{l}\to\Z$ be an admissible generalized polynomial, $(n_{\alpha})_{\alpha} \subset \Z^{l}$ be an FVIP system of degree at most $d$ and $g\in G_{d\deg p}$.
Then wlog $(g^{p(n_{\alpha})})_{\alpha}$ is an element of $\VIP$ and an FVIP system.
\end{lemma}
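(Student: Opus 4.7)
First, I would apply Theorem~\ref{thm:gen-poly-FVIP} to $(n_{\alpha})_{\alpha}$ and $p$: after passing to a sub-IP-ring, this provides an FVIP group $F_{0}\leq\VIP[\Z_{\bullet}^{D}]$, where $D:=d\deg p$, containing $(p(n_{\alpha}))_{\alpha}$, generated say by $m^{1},\dots,m^{k}$. Since $\Z$ is abelian, every element of $F_{0}$ is a $\Z$-linear combination of the $m^{i}$.

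Next I would show that $(g^{p(n_{\alpha})})_{\alpha}\in\VIP[\Gb]$ via the following more general claim: for every $E\leq D$, every $\Z_{\bullet}^{E}$-polynomial map $m$, and every $g'\in G_{D}$, the map $\alpha\mapsto g'^{m_{\alpha}}$ is $\Gb[+(D-E)]$-polynomial. The proof is by induction on $E$. The base case $E=0$ is trivial, since $m\equiv 0$ as a VIP map and $g'^{0}=\id$. In the inductive step, the abelianness of $\<g'\>$ gives
\[
D_{\beta}(g'^{m_{\alpha}})=g'^{(D_{\beta}m)_{\alpha}},
\]
and $D_{\beta}m$ is $\Z_{\bullet}^{E-1}$-polynomial, so the induction hypothesis furnishes the required $\Gb[+(D-E+1)]$-polynomial derivative. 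Specializing to $E=D$ yields the claim; the vanishing at $\emptyset$ uses $n_{\emptyset}=0$ together with $p(0)=0$, which holds by admissibility.

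The main remaining step, and the hard part, is to exhibit an FVIP group $F\leq\VIP[\Gb]$ containing $(g^{p(n_{\alpha})})_{\alpha}$. The natural candidate is $F$ generated (as a VIP group) by $\{g^{m^{i}}:i=1,\dots,k\}$. Closure under $\sD$ is immediate from $\sD_{\beta}(g^{m_{\alpha}})=g^{(\sD_{\beta}m)_{\alpha}}$ together with closedness of $F_{0}$ under $\sD$. The obstacle is closure under conjugation by constants: for $c\in G$ one has
\[
cg^{m_{\alpha}}c\inv=(cgc\inv)^{m_{\alpha}}=(g\,[g,c\inv])^{m_{\alpha}},
\]
and $[g,c\inv]\in [G_{D},G_{1}]\subseteq G_{D+1}$, so the conjugate leaves $\<g\>$. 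I would handle this by induction on the length $s$ of $\Gb$, equivalently on the residual depth $s-D$. The base case $s=D$ is immediate since $G_{D+1}=\{\id\}$ forces $g$ to be central in $G$, so $(cgc\inv)^{m_{\alpha}}=g^{m_{\alpha}}$ and $F$ is trivially closed under conjugation.

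For the inductive step one expands $(gh)^{n}$ with $h\in G_{D+1}$ using the commutator identities \eqref{eq:a-bc}, \eqref{eq:ab-c}, and Lemma~\ref{lem:simple-comm-of-prod}: up to a product of iterated simple commutators of $g$ and $h$ raised to integer-polynomial-in-$n$ exponents, $(gh)^{n}$ equals $g^{n}h^{n}$, and each of the finitely many commutator correction terms lies in $G_{2D+1}$ or a yet deeper level of the filtration. Substituting $n=m_{\alpha}$, the polynomial exponents remain $\Z$-valued FVIP systems by Lemma~\ref{lem:fvip-ring-product}, while the corrections are controlled by the induction hypothesis applied with $g$ replaced by the deeper simple commutator, the shifted filtration $\Gb[+D+1]$ having strictly smaller residual depth. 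Finally, Lemma~\ref{lem:fvip-plus-fvip} combines all these FVIP contributions with $\<g^{m^{i}}\>$ into the desired FVIP group $F\ni(g^{p(n_{\alpha})})_{\alpha}$.
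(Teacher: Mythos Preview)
Your approach is correct in outline but takes a substantially more laborious route than the paper, and two steps in your write-up are imprecise.

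The paper's argument is much shorter. After the same first reduction via Theorem~\ref{thm:gen-poly-FVIP}, it simply notes that $m\mapsto g^{m}$ is a homomorphism of prefiltered groups $\Z_{\bullet}^{D}\to\Gb$ (this is what your degree induction verifies by hand), so the $\Z$-valued FVIP group pushes forward to a finitely generated $F_{0}\leq\VIP[\Gb]$ closed under~$\sD$. For the conjugation closure the paper bypasses any expansion of $(gh)^{n}$: one just takes the group generated by the conjugates $bab^{-1}$ with $a$ ranging over a finite generating set of $F_{0}$ and $b$ over a finite generating set of $G$. Implicit here is that $\langle F_{0},G_{\mathrm{const}}\rangle$ is finitely generated nilpotent, hence Noetherian, so the closure of $F_{0}$ under conjugation by constants is automatically finitely generated. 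Since $\sD_{m}(cfc^{-1})=c(\sD_{m}f)c^{-1}$, each conjugate $cF_{0}c^{-1}$ is still $\sD$-closed, and then \eqref{eq:sD-Leibniz} extends $\sD$-closure to products. No Hall--Petresco formula, no induction on filtration depth.

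Two points in your write-up need repair. First, ``the shifted filtration $\Gb[+(D+1)]$'' is not the right bookkeeping: after shifting, the correction $[g,h]^{\binom{m}{2}}$ has base at level $D$ in the new filtration but exponent of FVIP degree $2D$, so the hypotheses no longer match. The induction should keep $\Gb$ fixed and run on $s-D'$, where $D'$ is the level of the current base element; each correction term has base in some $G_{D''}$ with $D''>D'$ and exponent of FVIP degree $\leq D''$, which is what makes the induction close. Second, your final sentence applies Lemma~\ref{lem:fvip-plus-fvip} to join $\langle g^{m^{i}}\rangle$ with the correction FVIP groups, but that lemma requires \emph{both} inputs to already be FVIP, and $\langle g^{m^{i}}\rangle$ is exactly the group whose conjugation-closure is in question. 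You must instead argue directly that the join is stable under conjugation by each $b\in B$ (this is what your expansion of $(gh_{b})^{m}$ gives), hence by all of $G$, and only then invoke \eqref{eq:sD-Leibniz} for $\sD$-closure.
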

\begin{proof}
By Theorem~\ref{thm:gen-poly-FVIP} we can wlog assume that $(p(n_{\alpha}))_{\alpha}$ is a $\Z$-valued FVIP system of degree $\leq d\deg p$.
Using the (family of) homomorphism(s) $\Z_{\bullet}^{d\deg p}\to\Gb$, $1\mapsto g$, we see that $(g^{p(n_{\alpha})})_{\alpha}$ is contained in a finitely generated subgroup $F_{0}\leq\VIP$ that is closed under $\sD$.

Let $A\subset F_{0}$ and $B\subset G$ be finite generating sets.
Then the group generated by $bab\inv$, $a\in A$, $b\in B$, is FVIP in view of the identity \eqref{eq:sD-Leibniz}.
\end{proof}

\section{Measurable multiple recurrence}
\label{sec:primitive}
Following the general scheme of Furstenberg's proof, we will obtain our multiple recurrence theorem by (in general transfinite) induction on a suitable chain of factors of the given measure-preserving system.
For the whole section we fix a nilpotent group $G$ with a filtration $\Gb$ and an FVIP group $F\leq\VIP$.

In the induction step we pass from a factor to a ``primitive extension'' that enjoys a dichotomy: each element of $F$ acts on it either relatively compactly or relatively mixingly.
Since the reasoning largely parallels the commutative case here, we are able to refer to the article of Bergelson and McCutcheon \cite{MR1692634} for many proofs.
The parts of the argument that do require substantial changes are given in full detail.

Whenever we talk about measure spaces $(X,\mathcal{A},\mu)$, $(Y,\mathcal{B},\nu)$, or $(Z,\mathcal{C},\gamma)$ we suppose that they are regular and that $G$ acts on them on the right by measure-preserving transformations.
This induces a left action on the corresponding $L^{2}$ spaces.
Recall that to every factor map $(Z,\mathcal{C},\gamma)\to (Y,\mathcal{B},\nu)$ there is associated an essentially unique measure disintegration
\[
\gamma = \int_{y\in Y} \gamma_{y} \dif\nu(y),
\]
see \cite[\textsection 5.4]{MR603625}.
We write $\|\cdot\|_{y}$ for the norm on $L^{2}(Z,\gamma_{y})$.
Recall also that the fiber product $Z\times_{Y}Z$ is the space $Z\times Z$ with the measure $\int_{y\in Y} \gamma_{y} \otimes \gamma_{y} \dif\nu(y)$.

\subsection{Compact extensions}
We begin with the appropriate notion of relative compactness.
Heuristically, an extension is relatively compact if it is generated by the image of a relatively Hilbert-Schmidt operator.
\begin{definition}[{\cite[Definition 3.4]{MR1692634}}]
Let $Z\to Y$ be a factor.
A \emph{$Z|Y$-kernel} is a function $H \in L^{\infty}(Z \times_{Y} Z)$ such that
\index{kernel@($Z\mid Y$-)kernel}
\[
\int H(z_{1},z_{2}) \dif\gamma_{z_{2}}(z_{1}) = 0
\]
for a.e. $z_{2}\in Z$.
If $H$ is a $Z|Y$-kernel and $\phi\in L^{2}(Z)$ then
\[
H*\phi(z_{1}) := \int H(z_{1},z_{2}) \phi(z_{2}) \dif\gamma_{z_{1}}(z_{2}).
\]
\end{definition}
The map $\phi\mapsto H*\phi$ is a Hilbert-Schmidt operator on almost every fiber over $Y$ with uniformly bounded Hilbert-Schmidt norm.
These operators are self-adjoint provided that $H(z_{1},z_{2})=\overline{H(z_{2},z_{1})}$ a.e.

\begin{definition}[{\cite[Definition 3.6]{MR1692634}}]
Suppose that $X \to Z \to Y$ is a chain of factors, $K\leq F$ is a subgroup and $H$ is a non-trivial self-adjoint $X|Y$-kernel that is $K$-invariant in the sense that
\[
\IPlim_{\alpha} g(\alpha)H = H
\]
for every $g\in K$.
The extension $Z\to Y$ is called \emph{$K$-compact} if it is generated by functions of the form $H*\phi$, $\phi\in L^{2}(X)$.
\index{extension!compact@($K$-)compact}
\end{definition}

\begin{lemma}[{\cite[Remark 3.7(i)]{MR1692634}}]
Let $X \to Z \to Y$ be a chain of factors in which $Z\to Y$ is a $K$-compact extension generated by a $X|Y$-kernel $H$.
Then $H$ is in fact a $Z|Y$-kernel and $Z$ is generated by functions of the form $H*\phi$, $\phi\in L^{2}(Z)$.
\end{lemma}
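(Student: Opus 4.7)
The plan is to exploit self-adjointness of the convolution operator $T_H\colon L^2(X)\to L^2(X)$, $T_H\phi := H*\phi$. By hypothesis $T_H$ takes values in $L^2(Z)$ (since $Z$ is generated by its image), and self-adjointness of $H$ as a function on $X\times_Y X$ gives self-adjointness of $T_H$ as an operator on $L^2(X)$. Consequently, for any $\phi\in L^2(X)$ orthogonal to $L^2(Z)$ and any $\psi\in L^2(X)$,
\[
\langle T_H\phi,\psi\rangle = \langle \phi, T_H\psi\rangle = 0,
\]
because $T_H\psi\in L^2(Z)$. Thus $T_H\phi = 0$ whenever $E[\phi\mid Z]=0$, i.e.\ $T_H\phi = T_H E[\phi\mid Z]$ for every $\phi\in L^2(X)$. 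This already gives the second conclusion: every generator $H*\phi$ with $\phi\in L^2(X)$ equals $H*E[\phi\mid Z]$ with $E[\phi\mid Z]\in L^2(Z)$.

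Next I would translate the operator-level vanishing statement into pointwise information about $H$. Using the disintegration $\mu=\int\mu_y\dif\nu(y)$ and Fubini, for a.e.\ $x_1\in X$ the slice $H(x_1,\cdot)$ lies in $L^2(\mu_{\pi_Y(x_1)})$ and is orthogonal to every element of that space whose conditional expectation onto $Z$ vanishes. Hence $H(x_1,\cdot)$ is measurable with respect to the $\sigma$-algebra pulled back from $Z$, so there is a measurable $h$ with
\[
H(x_1,x_2) = h(x_1,\pi_Z(x_2))
\]
almost everywhere. Applying self-adjointness $H(x_1,x_2)=\overline{H(x_2,x_1)}=\overline{h(x_2,\pi_Z(x_1))}$, the right-hand side is independent of $x_1$ on fibres of $\pi_Z$, so $H$ also descends in its first coordinate. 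Therefore $H(x_1,x_2)=\tilde H(\pi_Z(x_1),\pi_Z(x_2))$ for some essentially bounded $\tilde H$ on $Z\times_Y Z$.

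It remains to verify the averaging condition for $\tilde H$. Using that $H$ is $\pi_Z\times\pi_Z$-measurable and the chain of disintegrations $\mu_y = \int \mu_z\dif\gamma_y(z)$, the $X\mid Y$-kernel condition gives, for a.e.\ $x_2$,
\[
0 = \int H(x_1,x_2)\dif\mu_{x_2}(x_1) = \int \tilde H(z_1,\pi_Z(x_2))\dif\gamma_{\pi_Y(x_2)}(z_1),
\]
so $\int \tilde H(z_1,z_2)\dif\gamma_{z_2}(z_1)=0$ for $\gamma$-a.e.\ $z_2$, showing $\tilde H$ is a $Z\mid Y$-kernel. The only slightly delicate step is the measurable descent of $H$ from $X\times_Y X$ to $Z\times_Y Z$ in the previous paragraph, where one has to be careful to apply Fubini on the fibres and to handle null sets; once that is done, everything else is a bookkeeping exercise.
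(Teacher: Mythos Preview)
Your proof is correct, but it proceeds in the opposite order to the paper's. The paper first shows that $H$ is $Z\times_Y X$-measurable by a direct Fubini computation: since $H*\phi$ is $Z$-measurable one may write $H*\phi(x)=\int H*\phi(x_1)\,\dif\mu_{\pi_Z(x)}(x_1)$, and unwinding the inner integral yields $\E(H\mid Z\times_Y X)*\phi$; as this holds for all $\phi$, one gets $H=\E(H\mid Z\times_Y X)$ immediately. Self-adjointness of $H$ (used once, at the kernel level) then gives $Z\times_Y Z$-measurability, and the identity $H*\phi=H*\E(\phi\mid Z)$ drops out at the end as a corollary. Your route instead leads with self-adjointness of the operator $T_H$ to obtain $H*\phi=H*\E(\phi\mid Z)$ first, and only afterwards descends this operator identity to a pointwise statement about slices. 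That descent is exactly the place where, as you acknowledge, one has to manage null sets (a separability argument is needed to pass from ``$T_H\phi=0$ a.e.\ in $x_1$, for each fixed $\phi\perp L^2(Z)$'' to ``for a.e.\ $x_1$, the slice $H(x_1,\cdot)\perp$ all such $\phi$''). The paper's computation avoids this step entirely, so is marginally cleaner; your version has the advantage of making the operator-theoretic role of self-adjointness explicit from the outset.
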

\begin{proof}
Call the projection maps $\pi:X\to Z$, $\theta:X\to Y$.
Let $\phi\in L^{2}(X)$.
Since $H*\phi$ is $Z$-measurable we have
\begin{align*}
H*\phi(x)
&= \int H*\phi(x_{1}) \dif\mu_{\pi(x)}(x_{1})\\
&= \int \int H(x_{1},x_{2})\phi(w_{2}) \dif\mu_{\theta(x_{1})}(x_{2}) \dif\mu_{\pi(x)}(x_{1})\\
&= \int \int H(x_{1},x_{2})\phi(x_{2}) \dif\mu_{\theta(x)}(x_{2}) \dif\mu_{\pi(x)}(x_{1})\\
&= \int \int H(x_{1},x_{2}) \dif\mu_{\pi(x)}(x_{1}) \phi(x_{2}) \dif\mu_{\theta(x)}(x_{2})\\
&= \E(H|Z\times_{Y} X)*\phi(w).
\end{align*}
Since this holds for all $\phi$ we obtain $H=\E(H|Z\times_{Y} X)$.
Since $H$ is self-adjoint this implies that $H$ is $Z\times_{Y}Z$-measurable.
This in turn implies that $H*\phi = H*\E(\phi|Z)$ for all $\phi\in L^{2}(X)$.
\end{proof}
In view of this lemma the reference to the ambient space $X$ is not necessary in the definition of a $K$-compact extension.
Just like in the commutative case, compactness is preserved upon taking fiber products (this is only used in the part of the proof of Theorem~\ref{thm:wm} that we do not write out).
\begin{lemma}[{\cite[Remark 3.7(ii)]{MR1692634}}]
Let $Z\to Y$ be a $K$-compact extension.
Then $Z\times_{Y}Z \to Y$ is also a $K$-compact extension.
\end{lemma}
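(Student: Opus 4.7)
The plan is to take the tensor product kernel
\[
\tilde H((z_1,z_1'),(z_2,z_2')) := H(z_1,z_2)\,H(z_1',z_2')
\]
on the fiber product $(Z\times_{Y}Z)\times_{Y}(Z\times_{Y}Z)$, where $H$ is a generating $K$-invariant self-adjoint $Z|Y$-kernel for the original extension, and to check that $\tilde H$ witnesses $K$-compactness of $Z\times_{Y}Z \to Y$.

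First I would observe that the fibers of $Z\times_{Y}Z$ over $y\in Y$ carry the measure $\gamma_{y}\otimes\gamma_{y}$, so those of $(Z\times_{Y}Z)\times_{Y}(Z\times_{Y}Z)$ over $(z_2,z_2')$ do as well. In particular $\tilde H\in L^{\infty}$, and Fubini combined with $\int H(\cdot,z_2)\dif\gamma_{y}=0$ yields $\int\tilde H((z_1,z_1'),(z_2,z_2'))\dif\gamma_{y}(z_1)\dif\gamma_{y}(z_1')=0$ a.e., so $\tilde H$ is a $(Z\times_{Y}Z)|Y$-kernel. Self-adjointness and non-triviality of $\tilde H$ are inherited directly from those of $H$.

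Next, since $g(\alpha)\in G$ acts diagonally on the fiber product, a direct substitution gives $g(\alpha)\tilde H=(g(\alpha)H)\otimes_{Y}(g(\alpha)H)$. The hypothesis $\wIPlim_{\alpha}g(\alpha)H=H$ is upgraded by Lemma~\ref{lem:norm} to convergence in $L^{2}$-norm, and combined with the uniform bound $\|g(\alpha)H\|_{\infty}=\|H\|_{\infty}$ the triangle inequality yields
\[
\|g(\alpha)\tilde H-\tilde H\|_{2}\leq 2\|H\|_{\infty}\,\|g(\alpha)H-H\|_{2},
\]
so $\IPlim_{\alpha}g(\alpha)\tilde H=\tilde H$, which is exactly the required $K$-invariance of $\tilde H$.

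Finally, for $\phi_1,\phi_2\in L^{2}(Z)$ and $\psi:=\phi_1\otimes\phi_2\in L^{2}(Z\times_{Y}Z)$, a Fubini computation gives the identity $\tilde H*\psi(z_1,z_1')=(H*\phi_1)(z_1)\cdot(H*\phi_2)(z_1')$. Since the $\sigma$-algebra of $Z$ over $Y$ is generated by the functions $H*\phi$, the $\sigma$-algebra of $Z\times_{Y}Z$ over $Y$ is generated by pairwise products of such functions pulled back from the two factors, and by the identity above these products are precisely the functions $\tilde H*\psi$. I do not anticipate a substantial obstacle: the only delicate point is the weak-to-strong upgrade in the $K$-invariance step, which is handled by Lemma~\ref{lem:norm}, and the remainder is a direct verification completely parallel to the commutative case.
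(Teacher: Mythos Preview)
The paper does not give its own proof of this lemma; it is simply cited from \cite[Remark 3.7(ii)]{MR1692634}. So there is nothing to compare against directly. Nevertheless, your argument has a genuine gap.

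The problem is in the last step: the tensor product kernel $\tilde H=H\otimes_Y H$ does \emph{not} in general generate the $\sigma$-algebra of $Z\times_Y Z$ over $Y$. Take $Y$ to be a point, $Z=\{0,1\}$ with uniform measure, and $H(z_1,z_2)=(-1)^{z_1+z_2}$. This is a self-adjoint kernel with vanishing fiber integrals, and $H*\phi$ runs over all multiples of $(-1)^{z}$, which generates $Z$. But $\tilde H*(\phi_1\otimes\phi_2)=(H*\phi_1)(z_1)(H*\phi_2)(z_1')$ is always a multiple of $(-1)^{z_1+z_1'}$, and the range of $\tilde H$ on general $\psi$ is no larger since $T_{\tilde H}=T_H\otimes T_H$. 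The $\sigma$-algebra generated by $(-1)^{z_1+z_1'}$ on $\{0,1\}^2$ distinguishes only the diagonal from the anti-diagonal; it does not recover the first coordinate alone. Thus $\tilde H$ fails to generate $Z\times_Y Z$.

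The fix is to use the \emph{sum} kernel
\[
\tilde H\big((z_1,z_1'),(z_2,z_2')\big):=H(z_1,z_2)+H(z_1',z_2')
\]
instead. It is still a bounded self-adjoint $(Z\times_Y Z)|Y$-kernel, and $K$-invariance follows from $\|g(\alpha)\tilde H-\tilde H\|_2\le 2\|g(\alpha)H-H\|_2$. For $\psi=\phi\otimes 1$ one computes $\tilde H*\psi(z_1,z_1')=(H*\phi)(z_1)$ (the second summand vanishes because $\int H(z_1',z_2')\dif\gamma_y(z_2')=0$ by self-adjointness and the kernel condition), and symmetrically one obtains $(H*\phi)(z_1')$. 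These pullbacks from each factor generate $Z\times_Y Z$ over $Y$.

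A minor side remark: the $K$-invariance hypothesis in the paper is already stated with $\IPlim$ (norm convergence), not $\wIPlim$, so your appeal to Lemma~\ref{lem:norm} is unnecessary.
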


\subsection{Mixing and primitive extensions}
Now we define what we mean by relative mixing and the dichotomy between relative compactness and relative mixing.
\begin{definition}[{\cite[Definition 3.5]{MR1692634}}]
Let $Z\to Y$ be an extension.
A map $g\in F$ is called \emph{mixing on $Z$ relatively to $Y$} if for every $H\in L^{2}(Z\times_{Y} Z)$ with $\E(H|Y)=0$ one has $\wIPlim_{\alpha}g(\alpha) H = 0$.
An extension $Z\to Y$ is called \emph{$K$-primitive} if it is $K$-compact and each $g\in F\setminus K$ is mixing on $Z$ relative to $Y$.
\index{extension!primitive@($K$-)primitive}
\end{definition}

The above notion of mixing might be more appropriately called ``mild mixing'', but we choose a shorter name since there will be no danger of confusion.

The next lemma is used in the suppressed part of the proof of Theorem~\ref{thm:wm}.
\begin{lemma}[{\cite[Proposition 3.8]{MR1692634}}]
\label{lem:fiber-prod-prim}
Let $Z\to Y$ be a $K$-primitive extension.
Then $Z\times_{Y}Z \to Y$ is also a $K$-primitive extension.
\end{lemma}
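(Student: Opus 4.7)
The $K$-compactness of $Z\times_Y Z\to Y$ is supplied by the preceding lemma, so the task reduces to showing that every $g\in F\setminus K$ is mixing on $W:=Z\times_Y Z$ relatively to $Y$. I would mimic the commutative fiber-product preservation argument cited in the lemma's statement, making the bookkeeping adjustments needed for the noncommutative action.

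Fix $g\in F\setminus K$ and $H\in L^2(W\times_Y W)$ with $E(H|Y)=0$. Via $W\times_Y W\cong (Z\times_Y Z)\times_Y(Z\times_Y Z)$ and density of $L^\infty(W)\otimes_Y L^\infty(W)$ in $L^2(W\times_Y W)$, reduce to tensors $H=H_1\otimes_Y H_2$ with $H_i\in L^\infty(W)$. Writing $c_i:=E(H_i|Y)$ and $H_i^\circ:=H_i-c_i$, the decomposition
\[
H=H_1^\circ\otimes_Y H_2^\circ+H_1^\circ\otimes_Y c_2+c_1\otimes_Y H_2^\circ+c_1 c_2
\]
has last summand equal to $E(H|Y)=0$, while each of the other three summands contains a factor lying in $L^2_0(W):=\{h\in L^2(W):E(h|Y)=0\}$, to which the mixing hypothesis on $Z\to Y$ applies.

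For each such summand, testing $g(\alpha)$ applied to it against an arbitrary $K_1\otimes_Y K_2$, $K_i\in L^\infty(W)$, yields an integral of the form $\int_Y a_\alpha(y)\,b_\alpha(y)\,\dif\nu(y)$, where $a_\alpha$ is the fiberwise inner product involving the $L^2_0(W)$-factor and $b_\alpha$ the complementary one. The hypothesis supplies $a_\alpha\to 0$ weakly in $L^2(Y)$: pairing against $c\in L^2(Y)$ returns $\langle g(\alpha)H_1^\circ,K_1\cdot c\rangle_W$, which IP-tends to $0$ by the mixing assumption. The family $(b_\alpha)$ is uniformly bounded in $L^\infty(Y)$, since $g(\alpha)$ carries $\mu_y$ onto $\mu_{yg(\alpha)\inv}$ isometrically, so that $\|g(\alpha)H_2\|_{L^2(\mu_y)}\leq\|H_2\|_\infty$ and similarly for $K_2$.

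The main obstacle is converting this ``weak times uniformly bounded'' information into vanishing of the integral, since weak $L^2(Y)$ convergence paired against an $\alpha$-dependent $L^\infty(Y)$-bounded family need not vanish in general. In the IP setting this is circumvented by passing, for each fixed test tensor, wlog via the Milliken--Taylor theorem and separability, to a sub-IP-ring along which $b_\alpha$ converges pointwise $\nu$-a.e.; uniform boundedness and dominated convergence then upgrade this to strong $L^2(Y)$ convergence, and a weak-strong pairing forces the integral to zero. Summing the three summands yields $\wIPlim_\alpha g(\alpha)H=0$, completing the proof.
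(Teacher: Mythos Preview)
There is a genuine gap at the ``main obstacle'' step. Hindman/Milliken--Taylor produces IP-convergence in a \emph{compact metric} space; the only such topology available on a norm-bounded set of $L^\infty(Y)$ functions is the weak-$*$ topology, and weak-$*$ convergence is not pointwise a.e.\ convergence (the $b_\alpha$ are only defined $\nu$-a.e., so a diagonal argument over a countable dense set of points is meaningless). Thus you cannot upgrade $b_\alpha$ to strong $L^2(Y)$ convergence in the way you describe, and the weak-times-bounded pairing remains exactly the counterexample situation you yourself flagged.

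The fix is not to strengthen convergence of $b_\alpha$ but to obtain \emph{strong} $L^2(Y)$ convergence of $a_\alpha$, and for that you must decompose one level deeper. Write $W\times_Y W\cong Z^{\times_Y 4}$ and reduce to $H=f_1\otimes f_2\otimes f_3\otimes f_4$ with, say, $\E(f_1|Y)=0$ (telescoping handles the general mean-zero case). Testing against $\phi_1\otimes\cdots\otimes\phi_4$ gives $\int_Y \prod_i a_\alpha^{(i)}\,d\nu$ with $a_\alpha^{(i)}=\E(g(\alpha)f_i\cdot\bar\phi_i\mid Y)$. The point is that
\[
\|a_\alpha^{(1)}\|_{L^2(Y)}^2=\langle g(\alpha)(f_1\otimes\bar f_1),\,\phi_1\otimes\bar\phi_1\rangle_{Z\times_Y Z},
\]
and since $\E(f_1\otimes\bar f_1\mid Y)=|\E(f_1|Y)|^2=0$, the mixing hypothesis on $Z\to Y$ applies \emph{directly} to give $\|a_\alpha^{(1)}\|_{L^2(Y)}\to 0$. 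The remaining factors $a_\alpha^{(2)}a_\alpha^{(3)}a_\alpha^{(4)}$ are uniformly bounded in $L^\infty(Y)$, and Cauchy--Schwarz finishes. At your $W$-level decomposition this trick is unavailable because computing $\|a_\alpha\|_{L^2(Y)}^2$ already lands you in $W\times_Y W$, which is the space you are trying to control.
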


Like in the commutative setting \cite[Lemma 2.8]{MR2151599} the compact part of a primitive extension is wlog closed under taking derivatives, but there is also a new aspect, namely that it is also closed under conjugation by constants.
\begin{lemma}
\label{lem:K-conj-inv}
Let $Z\to Y$ be a $K$-primitive extension.
Then $K$ is closed under conjugation by constant functions.
Moreover wlog $K$ is an FVIP group.
\end{lemma}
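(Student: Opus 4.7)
The plan is to first identify $K$ with the set of non-mixing elements of $F$, then deduce conjugation invariance, and finally upgrade to FVIP by passing to a sub-IP-ring and applying a Hirsch-length argument.

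\textbf{Identifying $K$.} By $K$-primitivity, every $g \in F \setminus K$ is mixing. Conversely, if some $g \in K$ were mixing, the $K$-invariance $\wIPlim_\alpha g(\alpha) H = H$ of the nonzero self-adjoint kernel $H$ generating $Z \to Y$ would force $H = 0$, since $H$ has vanishing conditional expectation over $Y$ (using Fubini and the $Z|Y$-kernel property to verify $\E(H|Y) = 0$). So $K$ equals the non-mixing subgroup of $F$.

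\textbf{Conjugation invariance.} Any constant $c \in G_0$ acts as a unitary on $L^2(Z \times_Y Z)$ that commutes with $\E(\,\cdot\,|Y)$, since the $G$-action descends to $Y$. Hence for $g \in F$ mixing and any $H \in L^2(Z \times_Y Z)$ with $\E(H|Y) = 0$, one has $\E(c^{-1} H|Y) = 0$, so
\[
\wIPlim_\alpha (cgc^{-1})(\alpha) H = c\, \wIPlim_\alpha g(\alpha)(c^{-1} H) = 0.
\]
Thus $cgc^{-1}$ is mixing, so $F \setminus K$ — and hence $K$ — is closed under conjugation by constants.

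\textbf{FVIP.} Fix a finite VIP-generating set $\{f_1,\dots,f_n\}$ of $F$, and let $\tilde F$ be the countable subgroup of $F$ generated by all iterated $\sD$-derivatives and conjugates of the $f_i$ by a fixed countable dense subset of $G_0$. By the Milliken-Taylor theorem~\ref{thm:milliken-taylor} and a diagonal argument, I may pass to a sub-IP-ring on which $P_g = \wIPlim_\alpha g(\alpha)$ exists for every $g \in \tilde F$, and with respect to which the characterization $K = \{g : P_g|_{L^2_0(Z\times_Y Z)} \neq 0\}$ continues to hold. Closure of $K$ under $\sD$ then follows: for $g \in K$ and any $\alpha$, use the identity $g(\alpha \cup \beta) = g(\alpha) \sD_\alpha g(\beta) g(\beta)$, the equality $\wIPlim_\beta g(\alpha \cup \beta) = P_g$, the mutual commutativity of the projections $P_g$ and $P_{\sD_\alpha g}$ from Theorem~\ref{thm:fvip-proj}, and Hindman's theorem~\ref{thm:hindman} on the 2-coloring of $\beta$ according to whether $\sD_\alpha g$ is or is not mixing, to conclude that $P_{\sD_\alpha g}$ is nonzero on the range of $P_g$, hence $\sD_\alpha g \in K$. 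Finally, for finite generation, apply Lemma~\ref{lem:sfi} and Lemma~\ref{lem:fin-ind-hirsch} to the ascending chain of finitely-generated VIP-subgroups of $K$ obtained by iteratively closing an increasing finite subset of $K \cap \{f_i\}$ under $\sD$ and conjugation by finitely many constants; the Hirsch length in the ambient finitely-generated nilpotent polynomial group (Corollary~\ref{cor:poly-group}) forces stabilization after finitely many steps.

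\textbf{Main obstacle.} The delicate point is turning the qualitative commutativity of IP-projections from Theorem~\ref{thm:fvip-proj} into the quantitative non-vanishing of $P_{\sD_\alpha g}$ on $L^2_0$; this is where one must combine the Hindman-type pigeonhole with the exact form of the symmetric derivative identity, and is the analog (in the nilpotent setting) of the corresponding step in \cite{MR2151599}.
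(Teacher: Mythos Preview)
Your identification of $K$ with the non-mixing elements and the conjugation-invariance argument are correct and essentially match the paper. The FVIP part, however, has two genuine problems.

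\textbf{Finite generation.} Your Hirsch-length argument does not close: an ascending chain of subgroups in a finitely generated nilpotent group can have constant Hirsch length without stabilizing (think of finite-index extensions). The paper instead uses the one-line observation that $F$, being finitely generated nilpotent (Corollary~\ref{cor:poly-group} gives the prefiltration), is Noetherian, so every subgroup --- in particular $K$ --- is automatically finitely generated.

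\textbf{Closure under $\sD$.} Your argument is muddled in several places. The identity you quote is wrong (the correct form is $g(\alpha\cup\beta)=g(\beta)\,\sD_{\alpha}g(\beta)\,g(\alpha)$). The ``Hindman coloring of $\beta$ according to whether $\sD_{\alpha}g$ is mixing'' is vacuous, since for fixed $\alpha$ the element $\sD_{\alpha}g\in F$ does not depend on $\beta$. Most importantly, even granting commutativity of $P_{g}$ and $P_{\sD_{\alpha}g}$ from Theorem~\ref{thm:fvip-proj}, nothing you wrote forces $P_{\sD_{\alpha}g}$ to be nonzero on $\fix P_{g}$: for a single fixed $\alpha$, the weak convergence $\sD_{\alpha}g(\beta)\,(g(\alpha)H)\to 0$ does not survive left-multiplication by the varying unitary $g(\beta)$, so you cannot compare the two sides.

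The paper's remedy is to apply Hindman to the coloring of $\alpha$ (not $\beta$) by whether $\sD_{\alpha}g\in K$, for each of the finitely many generators $g$ of $K$. After passing to a sub-IP-ring one has either all $\sD_{\alpha}g\in K$ (done) or all $\sD_{\alpha}g\notin K$, hence all mixing. In the latter case, for the generating kernel $H$,
\[
0=\wIPlim_{\alpha,\beta}\sD_{\beta}g(\alpha)H
=\IPlim_{\alpha,\beta} g(\alpha)^{-1}g(\alpha\cup\beta)g(\beta)^{-1}H=H,
\]
the last equality because $g\in K$ fixes $H$ in norm (Lemma~\ref{lem:norm}). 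This contradiction handles the generators; closure then propagates to all of $K$ via the Leibniz identity \eqref{eq:sD-Leibniz} together with the already-established conjugation invariance. No appeal to the commutativity in Theorem~\ref{thm:fvip-proj} is needed here.
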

\begin{proof}
Let $g\in F\setminus K$, $h\in G$ and $H\in L^{2}(Z\times_{Y}Z)$ be such that $\E(H|Y)=0$.
Then
\[
\wIPlim_{\alpha} (h\inv gh)(\alpha) H
=
h\inv \wIPlim_{\alpha} g(\alpha) (hH)
=
0,
\]
so that $F\setminus K$ is closed under conjugation by constant functions, so that $K$ is also closed under conjugation by constant functions.

Since $F$ is Noetherian, the subgroup $K$ is finitely generated as a semigroup.
Fix a finite set of generators for $K$.
By Hindman's Theorem~\ref{thm:hindman} we may wlog assume that for every generator $g$ we have either $\sD_{\alpha}g\in K$ for all $\alpha\in\Fin$ or $\sD_{\alpha}g \not\in K$ for all $\alpha\in\Fin$.
In the latter case we obtain
\[
0 = \wIPlim_{\alpha,\beta} \sD_{\beta}g(\alpha) H
= \IPlim_{\alpha,\beta} g(\alpha)\inv g(\alpha\cup\beta) g(\beta)\inv H
= H,
\]
a contradiction.
Thus we may assume that all derivatives of the generators lie in $K$.
This extends to the whole group $K$ by \eqref{eq:sD-Leibniz} and invariance of $K$ under conjugation by constants.
\end{proof}

\subsection{Existence of primitive extensions}
Since our proof proceeds by induction over primitive extensions we need to know that such extensions exist.
First, we need a tool to locate non-trivial kernels.
\begin{lemma}[{\cite[Lemma 3.12]{MR1692634}}]
Let $X\to Y$ be an extension.
Suppose that $0\neq H \in L^{2}(X\times_{Y}X)$ satisfies $\E(H|Y)=0$ and that there exists $g\in F$ such that $\IPlim_{\alpha}g(\alpha)H=H$.

Then there exists a non-trivial self-adjoint non-negative definite $X|Y$-kernel $H'$ such that $\IPlim_{\alpha}g(\alpha)H'=H'$.
\end{lemma}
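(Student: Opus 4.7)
My plan is to realize $H$ as a fiberwise Hilbert--Schmidt self-adjoint operator $T_H$ and obtain $H'$ via a spectral truncation of $T_H^2$. First I would reduce to the self-adjoint case: the involution $H(z_1,z_2)\mapsto H^*(z_1,z_2):=\overline{H(z_2,z_1)}$ of $L^2(X\times_Y X)$ commutes with the $G$-action (since $G$ preserves the disintegration $\mu=\int\mu_y\dif\nu$) and with $\E(\cdot\mid Y)$, so both $H+H^*$ and $i(H-H^*)$ inherit the hypotheses of the lemma; at least one is nonzero and can replace $H$. The corresponding operator $T_H\phi(z_1)=\int H(z_1,z_2)\phi(z_2)\dif\mu_y(z_2)$ is then fiberwise Hilbert--Schmidt, self-adjoint, and annihilates $L^2(Y)\subset L^2(X)$, with spectral decomposition $T_H=\sum_n\lambda_n\phi_n\otimes\overline{\phi_n}$ on a.e.\ fiber, $\lambda_n\to 0$.

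I would then take $H'$ to be the kernel of $T':=T_H^2\cdot\chi_{[\varepsilon,\infty)}(T_H^2)$, for $\varepsilon>0$ chosen so that $T'\neq 0$ on a positive-measure set of fibers. Fiberwise $T'$ is finite-rank, self-adjoint, and non-negative, and it factors through $L^2(Y)^\perp$, so $H'$ is self-adjoint, non-negative definite, and has vanishing conditional expectation $\int H'(z_1,z_2)\dif\mu_{\theta(z_1)}(z_2)=0$. The invariance $\IPlim_\alpha g(\alpha)H'=H'$ would follow from the bounded Borel functional calculus: by Lemma~\ref{lem:norm}, $g(\alpha)H\to H$ in $L^2$, whence $g(\alpha)T_Hg(\alpha)\inv\to T_H$ in Hilbert--Schmidt norm, and continuity of $t\mapsto t\chi_{[\varepsilon,\infty)}(t)$ on the bounded spectrum of $T_H^2$ then passes the invariance to $T'$.

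The main technical obstacle is that the eigenfunctions $\phi_n$ of a general Hilbert--Schmidt operator are only in $L^2$, so the finite-rank representation $H'=\sum_{\lambda_n^2\geq\varepsilon}\lambda_n^2\phi_n\otimes\overline{\phi_n}$ is not automatically in $L^\infty(X\times_Y X)$. To handle this, I would precede the construction by a regularization step, replacing $H$ with $\tilde H:=H\cdot\mathbf{1}_{E\times_Y F}-\E(H\cdot\mathbf{1}_{E\times_Y F}\mid Y)$ for $E:=\{z:\|H(z,\cdot)\|_2\leq M\}$ and $F:=\{z:\|H(\cdot,z)\|_2\leq M\}$. For large $M$ this $\tilde H$ is nonzero, has zero conditional expectation, converges to $H$ in $L^2$, and satisfies $\|\tilde H(z_1,\cdot)\|_2\leq M$ uniformly; a further weak-$*$ IP-averaging then restores $g$-invariance, using that $z\mapsto\|H(z,\cdot)\|_2$ transforms equivariantly under the $g$-action up to the same IP-limit correction as $H$ itself. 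With this bounded $\tilde H$ in place of $H$, the eigenfunction identity $\phi_n=\lambda_n\inv T_{\tilde H}\phi_n$ and Cauchy--Schwarz give the fiberwise bound $\|\phi_n\|_\infty\leq M/|\lambda_n|$, so the finite sum defining $H'$ is uniformly bounded and genuinely lies in $L^\infty(X\times_Y X)$.
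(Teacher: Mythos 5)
Your overall strategy---realize $H$ as a fiberwise Hilbert--Schmidt operator, symmetrize, truncate to get bounded slices, restore $g$-invariance via an IP-limit, and extract a non-negative definite kernel---is in the right spirit, but there is a gap at the claim that $T_H$ ``annihilates $L^2(Y)\subset L^2(X)$''. For that you need the one-variable vanishing $\int H(z_1,z_2)\,\dif\mu_y(z_2)=0$ for a.e.\ $z_1$; the hypothesis $\E(H\mid Y)=0$ is conditional expectation onto $Y$ viewed as a factor of $X\times_Y X$, which only gives the two-variable vanishing $\int\!\int H\,\dif\mu_y\,\dif\mu_y=0$. These are genuinely different: take $H(z_1,z_2)=\phi(z_1)+\ol{\phi}(z_2)$ with $\E(\phi\mid Y)=0$, $\phi\neq 0$, and $\wIPlim_\alpha g(\alpha)\phi=\phi$. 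Then $H$ is self-adjoint and satisfies the hypotheses of the lemma, yet $T_H$ does not annihilate constants; its nonzero eigenfunctions are linear combinations of $\phi$ and $\mathbf{1}$, so your spectral truncation $H'=\sum_{\lambda_n^2\ge\varepsilon}\lambda_n^2\,\phi_n\otimes\ol{\phi_n}$ will generically carry a $\mathbf{1}\otimes\ol{\mathbf{1}}$ contribution and fail the defining condition $\int H'(z_1,z_2)\,\dif\gamma_{z_2}(z_1)=0$ of an $X\mid Y$-kernel. (In this example the correct nontrivial kernel is essentially $\phi\otimes\ol{\phi}$, which your scheme does not isolate.) Your truncation $\tilde H=H\cdot\mathbf{1}_{E\times_Y F}-\E(H\cdot\mathbf{1}_{E\times_Y F}\mid Y)$ likewise restores only the two-variable vanishing, so the problem persists after the IP-limit. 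A correct argument must enforce the stronger, one-variable vanishing on the object you feed into the convolution or spectral calculus, and must separately handle the part of $H$ that lives in the ``one coordinate is $Y$-measurable'' subspace.

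Separately, the spectral cutoff $T'=T_H^2\chi_{[\varepsilon,\infty)}(T_H^2)$ is an unnecessary detour that creates the extra obligation you then have to discharge with eigenfunction bounds. Once a slice-bounded, invariant $\tilde H'$ is in hand, the \emph{full} kernel $K(z_1,z_2)=\int\tilde H'(z_1,z_3)\,\ol{\tilde H'(z_2,z_3)}\,\dif\mu_y(z_3)$ of $T_{\tilde H'}^2$ is already in $L^\infty$ with $|K|\le M^2$ by Cauchy--Schwarz, self-adjoint, non-negative definite, nontrivial, and $g$-invariant (norm convergence $g(\alpha)\tilde H'\to\tilde H'$ passes to the fiberwise convolution). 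So the eigenfunction estimate $\|\phi_n\|_\infty\le M/|\lambda_n|$, the choice of $\varepsilon$, and the finite-rank count can all be dispensed with; this also makes the assertion ``genuinely lies in $L^\infty$'' immediate rather than something that needs a counting argument.
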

Second, we have to make sure that we cannot accidentally trivialize them.
\begin{lemma}[{\cite[Lemma 3.14]{MR1692634}}]
Let $Z\to Y$ be a $K$-compact extension.
Suppose that for some $g\in K$ and self-adjoint non-negative definite $Z|Y$-kernel $H$ we have
\[
\IPlim_{\alpha} \int (g(\alpha)H)(f' \otimes \bar f') \dif\tilde\gamma = 0
\]
for all $f'\in L^{\infty}(Z)$.
Then $H=0$.
\end{lemma}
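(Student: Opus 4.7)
The plan is to proceed by an operator-theoretic reformulation. After a measure-preserving change of variables (the diagonal $g(\alpha)$-action on $Z\times_Y Z$ preserves $\tilde\gamma$), the hypothesis becomes
\[
\IPlim_{\alpha} \langle T_{g(\alpha) H} f', f'\rangle = 0 \quad\text{for every } f' \in L^{\infty}(Z),
\]
where for $K \in L^{\infty}(Z\times_{Y} Z)$ the operator $T_K\phi := K*\phi$ is the bounded, fiber-wise Hilbert--Schmidt operator on $L^{2}(Z)$. Since $\|g(\alpha) H\|_{\infty} = \|H\|_{\infty}$, the family $(g(\alpha) H)_\alpha$ is norm-bounded in $L^{2}(Z\times_{Y} Z)$, and after passing to a sub-IP-ring (invoking Theorem~\ref{thm:fvip-proj} applied to the unitary $F$-action on $L^{2}(Z\times_{Y} Z)$, together with a diagonal argument to ensure the relevant weak IP-limits exist) I may assume $H_{\infty} := \wIPlim_{\alpha} g(\alpha) H = P_g H$, where $P_g$ is the associated orthogonal projection.

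Each $T_{g(\alpha) H}$ is unitarily conjugate to $T_H$ and hence non-negative definite with operator norm at most $\|H\|_{\infty}$, so the weak operator limit $T_{H_{\infty}}$ (obtained by testing against $L^{2}(Z)$-functions) is also non-negative. The hypothesis gives $\langle T_{H_{\infty}} f', f'\rangle = 0$ for all $f' \in L^{\infty}(Z)$, and by density this extends to all of $L^{2}(Z)$; hence $T_{H_{\infty}}=0$ and therefore $P_g H = H_{\infty} = 0$ in $L^{2}(Z\times_{Y} Z)$. The task reduces to showing that $P_g H = 0$ implies $H = 0$.

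This last implication is where the $K$-compactness is used. The kernel $H$ is orthogonal to $L^{2}(Y) \subset L^{2}(Z\times_{Y} Z)$ by the very definition of a $Z|Y$-kernel. The fiber product $Z\times_{Y} Z \to Y$ is itself $K$-compact (a generating $K$-invariant kernel for $Z\times_{Y} Z$ can be built from the one for $Z\to Y$), so the orthogonal complement of $L^{2}(Y)$ in $L^{2}(Z\times_{Y} Z)$ decomposes as an orthogonal sum of fiber-wise finite-dimensional eigenspaces of a $K$-invariant operator. On each such piece $g(\alpha)$ acts by fiber-wise unitaries, so weak IP-convergence coincides with norm convergence, whence $\|P_g f\| = \lim \|g(\alpha) f\| = \|f\|$; since $P_g$ is an orthogonal projection, it must act as the identity on this subspace. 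Consequently $P_g H = H$, and combining with $P_g H = 0$ yields $H = 0$.

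The main obstacle is the argument of the final paragraph: one must verify carefully that $Z\times_{Y} Z \to Y$ inherits a $K$-compact structure with a suitable $K$-invariant generating kernel, and justify the ``compact implies rigid'' claim for a single $g \in K$, taking into account the generally non-trivial action of $g$ on the base $Y$ and the fiber-wise finite-dimensional eigendecomposition of the generator.
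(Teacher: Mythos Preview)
The paper does not give its own proof of this lemma; it is quoted verbatim from Bergelson--McCutcheon with only the citation. So there is no ``paper's proof'' to compare with, and I can only evaluate your argument on its own merits.

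Your reduction to $P_g H = 0$ is correct and clean. The identification $T_{g(\alpha)H} = g(\alpha)\,T_H\,g(\alpha)^{-1}$ (unitary conjugation on $L^2(Z)$) is right, non-negativity passes to the weak operator limit, and the quadratic-form hypothesis then forces $T_{P_gH}=0$, hence $P_gH=0$ as an $L^2$-kernel. All of this is fine.

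The genuine gap is exactly where you place it: the step ``$P_g H=H$ because the extension is $K$-compact''. Your justification---that the complement of $L^2(Y)$ splits into fibre-wise finite-dimensional eigenspaces on which $g(\alpha)$ acts by fibre-wise unitaries, so weak and norm IP-limits coincide---does not go through. The action of $g(\alpha)$ is not fibre-preserving: it permutes the fibres over $Y$ according to the action on the base, so each eigen-``bundle'' $E_\lambda$ is infinite-dimensional globally even if its fibres are finite-dimensional, and unitarity alone gives no rigidity there. More concretely, if $\tilde H_0$ denotes the $K$-invariant generator of $Z\times_Y Z\to Y$, one checks $P_g(\tilde H_0 * \psi)=\tilde H_0 * (P_g\psi)$, which need not equal $\tilde H_0*\psi$. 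So $K$-compactness (almost periodicity of $L^2$-functions in the sense of Definition~3.1) does \emph{not} upgrade to rigidity $P_g=\mathrm{id}$; only the generating kernel $\tilde H_0$ itself is rigid. In particular, $P_gH=0$ and $H\neq 0$ are entirely compatible without further input.

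The actual proof in Bergelson--McCutcheon does not try to establish $P_gH=H$. Instead it works directly with the quadratic form: one rewrites the hypothesis as $\IPlim_\alpha \langle T_H\,(g(\alpha)^{-1}f'),\,g(\alpha)^{-1}f'\rangle = 0$ and then exploits almost periodicity of carefully chosen test functions $f'$ (eigenfunctions of the generating kernel $H_0$) together with the rigidity of $H_0$ under $g\in K$ to keep the quadratic form bounded away from zero when $H\neq 0$. That line of argument uses the specific relationship between $H$ (as a $Z|Y$-kernel on a $K$-compact extension) and the eigenspaces of $T_{H_0}$, not an abstract ``$P_g=\mathrm{id}$'' statement.
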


The next theorem that provides existence of primitive extensions can be proved in the same way as in the commutative case \cite[Theorem 3.15]{MR1692634}.
The only change is that Theorem~\ref{thm:fvip-proj} is used instead of \cite[Theorem 2.17]{MR1692634} (note that $F$ is Noetherian, since it is a finitely generated nilpotent group).
\begin{theorem}
\label{thm:primitive-extension}
Let $X\to Y$ be a proper factor.
Then there exists a subgroup $K\leq F$ and a factor $X\to Z\to Y$ such that the extension $Z\to Y$ is proper and wlog $K$-primitive.
\end{theorem}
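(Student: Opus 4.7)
The plan is to adapt the commutative argument of \cite[Theorem 3.15]{MR1692634}, using Theorem~\ref{thm:fvip-proj} in place of its commutative analog and exploiting Noetherianness of the finitely generated nilpotent group $F$. Set $\mathcal{H} := L^{2}(X\times_{Y}X)\ominus L^{2}(Y)$; the properness of $X\to Y$ gives $\mathcal{H}\neq 0$, and $\mathcal{H}$ is $G$-invariant. First I would pass wlog to a sub-IP-ring along which the weak IP-limits $P_{g} := \wIPlim_{\alpha}g(\alpha)$ exist on $\mathcal{H}$ for all $g\in F$ (Milliken--Taylor plus a diagonal argument over a countable generating set of $F$); Theorem~\ref{thm:fvip-proj} then guarantees that each $P_{g}$ is an orthogonal projection and that the family $\{P_{g}\}_{g\in F}$ commutes pairwise.

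For each subgroup $K\leq F$ put $\mathcal{H}_{K} := \bigcap_{g\in K}\im P_{g}$. Since $\mathcal{H}_{\{1_{F}\}}=\mathcal{H}\neq 0$ and $F$ is Noetherian, I can choose $K\leq F$ maximal among subgroups with $\mathcal{H}_{K}\neq 0$. The key observation is that $P_{g}|_{\mathcal{H}_{K}}=0$ for every $g\in F\setminus K$: for $H\in\mathcal{H}_{K}$ the image $P_{g}H$ lies in $\im P_{g}$ and, by commutativity of the $P_{h}$'s, also in $\mathcal{H}_{K}$, so it belongs to $\mathcal{H}_{\<K,g\>}$, which is zero by maximality.

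Now pick $0\neq H\in\mathcal{H}_{K}$ and apply \cite[Lemma 3.12]{MR1692634} iteratively along a generating set of $K$ (the spectral construction commutes with the pairwise commuting $P_{g}$'s, so $K$-invariance is preserved throughout) to obtain a non-trivial self-adjoint non-negative definite $X|Y$-kernel $H'$ satisfying $\IPlim_{\alpha}g(\alpha)H'=H'$ for every $g\in K$. Let $Z$ be the factor of $X$ over $Y$ generated by the functions $H'*\phi$, $\phi\in L^{2}(X)$; then $Z\to Y$ is $K$-compact by construction, and \cite[Lemma 3.14]{MR1692634} rules out $Z=Y$, so the extension is proper.

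It remains to verify $K$-primitivity, i.e. that each $g\in F\setminus K$ is mixing on $Z$ relative to $Y$. Since $L^{2}(Z)$ is generated over $L^{2}(Y)$ by $K$-equivariant images of $H'$, the subspace $L^{2}(Z\times_{Y}Z)\ominus L^{2}(Y)$ sits inside $\mathcal{H}_{K}$, and the step above gives $\wIPlim_{\alpha}g(\alpha)H''=P_{g}H''=0$ for every $H''$ in that subspace. The main obstacle I expect is precisely this last embedding: justifying carefully that the relative Hilbert space of $Z$ over $Y$ lands in the joint fixed space $\mathcal{H}_{K}$ rather than merely in the sum of individual $\im P_{g}$'s requires tracking the $K$-invariance of $H'$ through the fiber product, and is the technical point at which the commutative argument of \cite{MR1692634} ports over once the projection theorem~\ref{thm:fvip-proj} is in hand.
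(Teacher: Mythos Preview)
Your proposal is correct and follows exactly the approach the paper indicates: the paper does not give a self-contained proof but simply states that the argument of \cite[Theorem~3.15]{MR1692634} carries over verbatim once one substitutes Theorem~\ref{thm:fvip-proj} for the commutative projection theorem and uses that $F$, being a finitely generated nilpotent group, is Noetherian. Your sketch fills in precisely those details, including the maximality argument for $K$ and the appeal to the kernel lemmas; the one point you flag as a potential obstacle (that the relevant relative $L^{2}$-space of $Z\times_{Y}Z$ over $Y$ sits inside $\mathcal{H}_{K}$) is indeed the technical heart of the Bergelson--McCutcheon argument and ports over unchanged.
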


\subsection{Almost periodic functions}
For the rest of Section~\ref{sec:primitive} we fix a good group $\FE\leq\PE{\VIP}{\omega}$.
We will describe what we mean by ``good'' in Definition~\ref{def:good}, for the moment it suffices to say that $\FE$ is countable.
\begin{definition}[{\cite[Definition 3.1]{MR1692634}}]
Suppose that $(Z,\mathcal{C},\gamma)\to (Y,\mathcal{B},\nu)$ is a factor and $K\leq F$ a subgroup.
A function $f\in L^{2}(Z)$ is called \emph{$K$-almost periodic} if for every $\epsilon>0$ there exist $g_{1},\dots,g_{l}\in L^{2}(Z)$ and $D\in\mathcal{B}$ with $\nu(D)<\epsilon$ such that for every $\delta>0$ and $R\in\PE{K}{\omega}\cap\FE$ there exists $\alpha_{0}$ such that for every $\alpha_{0}<\vec\alpha\in \Fin^{\omega}_{<}$ there exists a set $E=E(\vec\alpha)\in\mathcal{B}$ with $\nu(E)<\delta$ such that for all $y\in Y\setminus(D\cup E)$ there exists $1\leq j\leq l$ such that
\[
\| R(\alpha)f - g_{j} \|_{y} < \epsilon.
\]
\index{almost periodic function}
The set of $K$-almost periodic functions is denoted by $\AP(Z,Y,K)$.
\end{definition}
The next lemma says that a characteristic function that can be approximated by almost periodic functions can be replaced by an almost periodic function right away.
\begin{lemma}[{\cite[Theorem 3.3]{MR1692634}}]
\label{lem:indicator-ap}
Let $A\in\mathcal{C}$ be such that $1_{A} \in\overline{\AP(Z,Y,K)}$ and $\delta>0$.
Then there exists a set $A'\subset A$ such that $\gamma(A\setminus A')<\delta$ and $1_{A'}\in \AP(Z,Y,K)$.
\end{lemma}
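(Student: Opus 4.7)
The plan is to combine two closure properties of $\AP(Z,Y,K)$ with a Lusin--Egorov type argument. First I would observe that $\AP(Z,Y,K)$ is closed under composition with Lipschitz functions (for bounded $f$) and under uniform convergence. Since each $R(\alpha)$ is induced by a measure-preserving transformation, it acts as an isometric algebra homomorphism on $L^\infty(Z)$, so $\phi\circ R(\alpha)f=R(\alpha)(\phi\circ f)$ pointwise, and the Lipschitz estimate
\[
\|\phi(R(\alpha)f)-\phi(g_j)\|_y\le\mathrm{Lip}(\phi)\,\|R(\alpha)f-g_j\|_y
\]
transfers AP-witnesses $g_j$ for $f$ to witnesses $\phi\circ g_j$ for $\phi\circ f$. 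A direct fiberwise triangle inequality in the definition of AP gives closure under $L^\infty$ convergence.

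Next, by hypothesis I pick $f_n\in\AP(Z,Y,K)$ with $\|f_n-1_A\|_{L^2(Z)}\to 0$. Replacing $f_n$ by $\max(0,\min(f_n,1))$ preserves both AP membership (by the Lipschitz closure) and convergence to $1_A$. Passing to a subsequence I may arrange $f_n\to 1_A$ $\gamma$-a.e. Applying Egorov's theorem to the fiber integrals via the disintegration $\gamma=\int\gamma_y\,\dif\nu$, I extract a set $D\subset Y$ with $\nu(D)<\delta/2$ and a single index $n$ such that $|f_n-1_A|<1/4$ on the preimage $\pi^{-1}(Y\setminus D)$, off a $\gamma$-null set. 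Let $\phi:\R\to[0,1]$ be the piecewise-linear ramp equal to $0$ on $(-\infty,1/4]$, equal to $1$ on $[3/4,\infty)$, and linear in between. Then $g:=\phi(f_n)\in\AP(Z,Y,K)$ and $g=1_A$ a.e.\ on $\pi^{-1}(Y\setminus D)$. Define
\[
A':=A\cap\{g=1\}.
\]
Clearly $A'\subset A$, and $A\setminus A'$ is contained, up to a null set, in $\pi^{-1}(D)$, so $\gamma(A\setminus A')\le\nu(D)<\delta/2<\delta$.

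The main obstacle is verifying that $1_{A'}$ is itself AP rather than merely in the $L^2$-closure of $\AP$. For this I would use that $1_{A'}$ agrees with $g$ a.e.\ on $\pi^{-1}(Y\setminus D)$: given $\veps>\delta/2$, the AP-witnesses $g_1,\dots,g_l$ for $g$ at tolerance $\veps$ (with some associated exceptional fiber set $D_g$ of measure $<\veps$) serve equally well for $1_{A'}$, provided the exceptional fiber set is enlarged to absorb $D$ together with its $Y$-translate under the action corresponding to $R(\alpha)$. This translate has $\nu$-measure $<\delta/2$ since the $G$-action on $Y$ is measure-preserving, and can be swept into the $\vec\alpha$-dependent exceptional set $E(\vec\alpha)$ of the definition of AP (at the cost of shrinking the tolerance $\delta$ for $E$). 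The delicate point of the whole argument is precisely this bookkeeping: the Lipschitz-closure step yields $g\in\AP$ strictly, and the allowance in the definition of AP for a fixed bad fiber set of measure less than $\veps$ together with a free $\vec\alpha$-dependent set $E(\vec\alpha)$ is exactly what is needed to promote the fiberwise a.e.\ equality $1_{A'}=g$ on good fibers to genuine membership $1_{A'}\in\AP(Z,Y,K)$.
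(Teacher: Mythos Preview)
The paper does not supply its own proof of this lemma; it is quoted from Bergelson--McCutcheon. So the question is whether your argument stands on its own. It does not: there is a genuine gap, and it appears in two related places.

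First, the Egorov step is claiming too much. Applying Egorov to the fiber integrals $y\mapsto\|f_n-1_A\|_y$ (or $y\mapsto\gamma_y(\{|f_n-1_A|\ge 1/4\})$) gives you a set $D\subset Y$ with $\nu(D)<\delta/2$ on whose complement these \emph{fiber norms} are uniformly small; it does not give you $|f_n-1_A|<1/4$ pointwise $\gamma$-a.e.\ on $\pi^{-1}(Y\setminus D)$. The bad set $\{|f_n-1_A|\ge 1/4\}$ can meet every fiber in a set of small but positive $\gamma_y$-measure, so it need not be contained (mod null) in the preimage of any small $D\subset Y$. Consequently your identity $g=1_A$ a.e.\ on $\pi^{-1}(Y\setminus D)$ is not available, and $A'=A\cap\{g=1\}$ need not agree with $g$ fiberwise over $Y\setminus D$.

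Second, even granting your claim, the verification of $1_{A'}\in\AP(Z,Y,K)$ breaks down at small scales. You restrict to $\veps>\delta/2$ so that the fixed bad set can absorb $D$; but the definition of $\AP$ requires the condition for \emph{every} $\veps>0$. Likewise, the translate of $D$ under $R(\vec\alpha)$ has $\nu$-measure close to $\delta/2$, and the $\vec\alpha$-dependent exceptional set $E(\vec\alpha)$ must satisfy $\nu(E(\vec\alpha))<\delta'$ for an arbitrary $\delta'>0$ handed to you after $g_1,\dots,g_l$ and $D$ are fixed. For $\delta'<\delta/2$ you cannot sweep the translate into $E(\vec\alpha)$, so your bookkeeping does not close.

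The common cause is that a single approximation $g=\phi(f_n)$ chosen at accuracy $\delta$ cannot witness almost periodicity of $1_{A'}$ at tolerances $\veps\ll\delta$: the residual set where $1_{A'}\ne g$ has $\gamma$-measure of order $\delta$, and Markov's inequality on fibers only controls $\nu\{y:\gamma_y(\,\cdot\,)>\veps^2\}$ by $O(\delta/\veps^2)$. The correct argument (as in Bergelson--McCutcheon, following Furstenberg--Katznelson) builds $A'$ from an entire sequence $(f_m)$ with $\|f_m-1_A\|_2$ decaying rapidly, and for each tolerance $\veps$ produces a fresh comparison function in $\AP$ (a finite min/product of Lipschitz truncations of several $f_m$) whose fiberwise discrepancy from $1_{A'}$ is controlled at scale $\veps$ outside a base set of $\nu$-measure $<\veps$. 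The point is that the set $A'$ must be compatible with approximation at all scales simultaneously; no single $f_n$ suffices.
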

In the following lemma we have to restrict ourselves to $\PE{K}{\omega}\cap\FE$ since $\PE{K}{\omega}$ need not be countable.
\begin{lemma}[{\cite[Proposition 3.9]{MR1692634}}]
Let $X\to Y$ be an extension, $K\leq F$ a subgroup and $H$ a $X|Y$-kernel that is $K$-invariant.
Then wlog for all $R\in\PE{K}{\omega}\cap\FE$ and $\epsilon>0$ there exists $\alpha_{0}$ such that for all $\alpha_{0}<\vec\alpha$ we have
\[
\| R(\vec\alpha)H - H \| < \epsilon.
\]
\end{lemma}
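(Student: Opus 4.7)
The plan is to first reduce the statement to a single polynomial expression and a single $\epsilon$: since $\FE$ is countable, one can enumerate the pairs $(R_n,1/k)$ and apply a standard diagonal/nested sub-IP-ring procedure (using Hindman's Theorem~\ref{thm:hindman} to combine countably many ``wlog'' into one) at the end. Thus it suffices to show, for a fixed $R\in\PE{K}{m}\cap\FE$, that wlog $\IPlim_{\vec\alpha}R(\vec\alpha)H=H$ in norm.

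I would induct on $m$. For $m=0$ the expression is constant $1_G$ and nothing is to show. For the inductive step write $R(\vec\alpha,\alpha_{m+1})=W^{\vec\alpha}(\alpha_{m+1})S(\vec\alpha)$ with $S\in\PE{K}{m}$ and $W^{\vec\alpha}\in K$. Since $g(\alpha)$ is induced by a measure-preserving transformation, it acts isometrically on $L^2$, so
\[
\|R(\vec\alpha,\alpha_{m+1})H-H\|
\leq
\|S(\vec\alpha)H-H\|
+\|W^{\vec\alpha}(\alpha_{m+1})H-H\|.
\]
The first term tends to zero in the IP-limit by the inductive hypothesis (after wlog passing to a sub-IP-ring). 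The main work is to estimate the second term uniformly in $\vec\alpha$.

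For the $W$-term I would exploit that each $W^{\vec\alpha}$ lies in $K$, so by $K$-invariance of $H$ together with Lemma~\ref{lem:norm} we have $\IPlim_{\alpha_{m+1}}W^{\vec\alpha}(\alpha_{m+1})H=H$ in norm for every individually fixed $\vec\alpha$. Uniformity is obtained by a diagonal construction of the sub-IP-ring: having chosen $\gamma_1<\dots<\gamma_n$, only finitely many substituted polynomials $W^{\cup_{i\in\beta_1}\gamma_i,\dots,\cup_{i\in\beta_m}\gamma_i}$ (indexed by $\vec\beta\in\Fin^m_<$ with $\vec\beta\subset\{1,\dots,n\}$) arise; each of these lies in $K\cap\FE$ by closure of $\FE$ under substitutions (Proposition~\ref{prop:substitution-poly}), hence by norm IP-convergence one can pick $\gamma_{n+1}$ large enough so that the corresponding operators move $H$ by less than $1/n$ on all unions built from $\gamma_{n+1},\gamma_{n+2},\dots$. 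Taking the resulting diagonal IP-ring yields the desired uniform estimate.

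The hard part will be making the diagonal construction compatible with the inductive hypothesis (which itself requires passing to a sub-IP-ring for the $S$-term) and verifying that the substituted $W$ lies in $\FE\cap K$, so that countably many uniformity requirements can be merged. This is a standard ``IP-bookkeeping'' argument analogous to \cite[Proposition~3.9]{MR1692634} in the commutative case, but it has to be performed through the polynomial-expression layer provided by Proposition~\ref{prop:substitution-poly}; no further analytic input beyond Lemma~\ref{lem:norm} and the $K$-invariance of $H$ is needed.
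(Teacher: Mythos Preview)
The paper does not supply its own proof of this lemma; it is one of the results explicitly deferred to \cite[Proposition 3.9]{MR1692634}, in line with the remark at the start of \S\ref{sec:primitive} that ``the reasoning largely parallels the commutative case''. So there is no paper proof to compare against, but your outline is precisely the standard argument: reduce to a single $R$ by countability of $\FE$ and a diagonal, induct on $m$ via the splitting $R(\vec\alpha,\alpha_{m+1})=W^{\vec\alpha}(\alpha_{m+1})S(\vec\alpha)$, handle $S$ by the inductive hypothesis, and handle $W$ by a diagonal choice of the sub-IP-ring using that each $W^{\vec\alpha}\in K$ together with $K$-invariance of $H$ and Lemma~\ref{lem:norm}.

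One small correction: you do not need, and should not claim, that $W^{\vec\alpha}\in\FE$. Proposition~\ref{prop:substitution-poly} concerns the substitution $g\mapsto g[\vec\beta]$ at the level of polynomial expressions and does not yield $W^{\vec\alpha}\in\FE$; neither does the Substitution axiom in Definition~\ref{def:good}. What you actually use at step $n$ of the diagonal is only that each of the finitely many $W^{\vec\alpha}$ (with $\vec\alpha$ built from $\gamma_{1},\dots,\gamma_{n}$) lies in $K$, which is immediate from the definition of $\PE{K}{m+1}$. That suffices for $\IPlim_{\alpha}W^{\vec\alpha}(\alpha)H=H$ in norm, and hence for choosing $\gamma_{n+1}$ past the finitely many thresholds. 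With that adjustment your sketch is complete.
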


With help of the above lemma we can show that in fact wlog every characteristic function can be approximated by almost periodic functions.
In view of Lemma~\ref{lem:indicator-ap} this allows us to reduce the question of multiple recurrence in a primitive extension to multiple recurrence for (relatively) almost periodic functions.
\begin{lemma}[{\cite[Theorem 3.11]{MR1692634}}]
\label{lem:ap-dense}
Let $Z\to Y$ be a $K$-compact extension.
Then wlog $\AP(Z,Y,K)$ is dense in $L^{2}(Z)$.
\end{lemma}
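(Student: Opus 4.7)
The plan is to adapt the Bergelson--McCutcheon argument \cite[Theorem 3.11]{MR1692634} to the present nilpotent setting; most of the necessary machinery is already in place in the preceding lemmas. The overall strategy is to show that every generator of the extension $Z\to Y$ is wlog a $K$-almost periodic function; since $\overline{\AP(Z,Y,K)}$ is closed under the natural operations (multiplication by bounded $Y$-measurable functions, sums, truncation), and since the $K$-compactness hypothesis asserts that $Z$ is generated by functions of the form $H*\phi$ with $\phi\in L^{\infty}(Z)$ and $H$ a $K$-invariant self-adjoint $Z|Y$-kernel, it suffices to establish that every such $H*\phi$ lies wlog in $\overline{\AP(Z,Y,K)}$.

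Fix such $H$ and $\phi$ and let $\epsilon>0$. The operator $\psi\mapsto H*\psi$ is Hilbert--Schmidt on $L^{2}(Z,\gamma_{y})$ for a.e.\ $y\in Y$ with uniformly bounded Hilbert--Schmidt norm, so its image on each fiber is relatively compact. A measurable selection/pigeonhole argument then produces a finite set $g_{1},\dots,g_{l}\in L^{2}(Z)$ and a set $D\in\mathcal{B}$ with $\nu(D)<\epsilon$ such that for every $y\in Y\setminus D$ and every bounded $\psi$ with $\|\psi\|_{\infty}\leq\|\phi\|_{\infty}$ there is some $j$ with $\|H*\psi-g_{j}\|_{y}<\epsilon/3$. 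Note that these $g_{j}$'s and $D$ depend only on $H$, $\|\phi\|_{\infty}$ and $\epsilon$, not on the choice of shift.

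The second step uses the immediately preceding lemma: wlog we may assume that for every $R\in\PE{K}{\omega}\cap\FE$ and every $\epsilon'>0$ there exists $\alpha_{0}$ with $\|R(\vec\alpha)H - H\|<\epsilon'$ for all $\vec\alpha>\alpha_{0}$. Since $\FE$ is countable, a standard diagonal argument (cf.\ \cite[Lemma 1.4]{MR833409}) lets us pass to a single sub-IP-ring on which this holds simultaneously for all $R\in\PE{K}{\omega}\cap\FE$. Given $\delta>0$ and $R$, for $\vec\alpha$ sufficiently large we have
\[
\|R(\vec\alpha)(H*\phi) - H*(R(\vec\alpha)\phi)\|_{2}
= \|(R(\vec\alpha)H - H)*(R(\vec\alpha)\phi)\|_{2}
\leq \|R(\vec\alpha)H - H\| \cdot \|\phi\|_{\infty},
\]
which can be made arbitrarily small. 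By Chebyshev's inequality applied to the conditional expectation onto $Y$, on the complement of a set $E(\vec\alpha)\in\mathcal{B}$ with $\nu(E(\vec\alpha))<\delta$ the fiber norm $\|R(\vec\alpha)(H*\phi) - H*(R(\vec\alpha)\phi)\|_{y}$ is less than $\epsilon/3$. Combining this with the fiber approximation from step two applied to $\psi=R(\vec\alpha)\phi$, we conclude that on $Y\setminus(D\cup E(\vec\alpha))$ the function $R(\vec\alpha)(H*\phi)$ lies within $\epsilon$ of some $g_{j}$ in the $\|\cdot\|_{y}$ norm, which is precisely the condition of $K$-almost periodicity.

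The main obstacle is the uniform fiberwise approximation: one needs the finite set $\{g_{j}\}$ and the exceptional set $D$ to work simultaneously for every $R(\vec\alpha)\phi$, which requires a careful measurable selection for the Hilbert--Schmidt image, but the bound $\|R(\vec\alpha)\phi\|_{\infty}\leq\|\phi\|_{\infty}$ is precisely what makes this uniformity possible. The countability of $\FE$ is what allows the diagonal passage to a single sub-IP-ring on which the wlog invariance of $H$ holds across all of $\PE{K}{\omega}\cap\FE$; this is the reason we restricted to countable $\FE$ in the first place.
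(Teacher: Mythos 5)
The paper gives no proof of this lemma; it simply refers to \cite[Theorem 3.11]{MR1692634}, so there is no in-house argument to compare against. Your plan is the natural reconstruction of the cited Bergelson--McCutcheon argument and its overall structure is sound: reduce to showing $H*\phi\in\overline{\AP(Z,Y,K)}$ because such functions generate the extension, exploit the identity $R(\vec\alpha)(H*\phi)=(R(\vec\alpha)H)*(R(\vec\alpha)\phi)$ together with the preceding kernel-invariance lemma to control $\|R(\vec\alpha)H-H\|$, and transfer the resulting global $L^{2}$ smallness to fiberwise smallness outside a set $E(\vec\alpha)$ of measure $<\delta$ via Chebyshev. Those steps are correct, and your observation that $\|R(\vec\alpha)\phi\|_{\infty}\leq\|\phi\|_{\infty}$ is exactly what allows one finite net to work for all shifts.

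The one genuine gap is the step you label ``a measurable selection/pigeonhole argument''. Producing $g_{1},\dots,g_{l}$ and $D$ uniformly over the disintegration is the technical heart of the cited proof, and you only gesture at it. The mechanism is a measurable fiberwise spectral decomposition of the self-adjoint Hilbert--Schmidt kernel: write $H|_{y}=\sum_{n}\lambda_{n}(y)\,e_{n}(\cdot,y)\otimes\ol{e_{n}(\cdot,y)}$ with $\lambda_{n},e_{n}$ chosen measurably in $y$ and $\sum_{n}|\lambda_{n}(y)|^{2}$ essentially bounded. Off a set $D$ of measure $<\epsilon$ the tail $\sum_{n>N}|\lambda_{n}(y)|^{2}$ is uniformly small for a fixed $N$; since $|\langle\psi,e_{n}\rangle_{y}|\leq\|\psi\|_{\infty}$, one truncates to $n\leq N$ and discretizes the coefficients over a finite grid, obtaining the $g_{j}$'s as the finitely many functions $\sum_{n\leq N}c_{n}^{(j)}e_{n}$. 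As written, your assertion that the $g_{j}$'s and $D$ ``depend only on $H$, $\|\phi\|_{\infty}$ and $\epsilon$'' is a claim rather than a proof; with the spectral argument spelled out, the plan goes through as in \cite[Theorem 3.11]{MR1692634}.
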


\subsection{Multiple mixing}
Under sufficiently strong relative mixing assumptions the limit behavior of a multicorrelation sequence $\prod_{i}S_{i}(\alpha)f_{i}$ on a primitive extension only depends on the expectations of the functions on the base space.
The appropriate conditions on the set $\{S_{i}\}_{i}$ are as follows.
\begin{definition}
\label{def:K-mixing}
Let $K\leq F$ be a subgroup.
A subset $A \subset F$ is called \emph{$K$-mixing} if $1_{G}\in A$ and $g\inv h\in F\setminus K$ whenever $g\neq h\in A$.
\index{mixing set!of polynomials}
\end{definition}
The requirement $1_{G}\in A$ is not essential, but it is convenient in inductive arguments.
In order to apply PET induction we will need the next lemma.

We say that a subgroup $K\leq F$ is \emph{invariant under equality of tails} if whenever $S\in K$ and $T\in F$ are such that there exists $\beta\in\Fin$ with $S_{\alpha}=T_{\alpha}$ for all $\alpha>\beta$ we have $T\in K$.
Every group $K\leq F$ that is the compact part of some primitive extension has this property.
\begin{lemma}
\label{lem:notinK}
Let $K\leq F$ be a subgroup that is invariant under equality of tails.
Let $S,T\in F$ be such that $S\inv T \not\in K$.
Then wlog
\[
(S \sD_{\beta}S)\inv (T\sD_{\beta}T) \not\in K
\quad\text{and}\quad
S\inv (T\sD_{\beta} T) \not\in K
\]
for all $\beta\in\Fin$.
\end{lemma}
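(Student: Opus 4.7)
Setting $U := S\inv T \in F\setminus K$, pointwise computation in $F$ yields
\[
S\inv(T\sD_{\beta}T) = U\cdot\sD_{\beta}T
\quad\text{and}\quad
(S\sD_{\beta}S)\inv(T\sD_{\beta}T) = \sD_{\beta}S\inv\cdot U\cdot\sD_{\beta}T,
\]
where $\sD_{\beta}S,\sD_{\beta}T\in F_{1}=F\cap\VIP[{\Gb[+1]}]$. The plan is to apply Hindman's Theorem~\ref{thm:hindman} to each of the two-colorings
\[
\beta\mapsto\mathbf{1}[U\sD_{\beta}T\in K],\qquad \beta\mapsto\mathbf{1}[\sD_{\beta}S\inv\, U\sD_{\beta}T\in K]
\]
of $\Fin$, pass to a common sub-IP-ring on which both are monochromatic, and observe that if in both cases the constant color is ``not in $K$'' then both conclusions hold for every $\beta$ simultaneously.

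The task is therefore to rule out the color ``in $K$''. Consider the first coloring and suppose for contradiction that $U\sD_{\beta}T\in K$ for every $\beta$ in the sub-IP-ring. Then each $\sD_{\beta}T$ lies in the single left coset $U\inv K\cap F_{1}$. A further application of Hindman to $\beta\mapsto\mathbf{1}[\sD_{\beta}T\in K]$ produces two subcases: either wlog $\sD_{\beta}T\in K$, in which case $U=(U\sD_{\beta}T)\cdot\sD_{\beta}T\inv\in K$, a direct contradiction with $U\notin K$; or wlog every $\sD_{\beta}T$ lies in the nontrivial coset $(U\inv K\cap F_{1})\setminus K$.

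To exclude this remaining subcase I would invoke the FVIP hypothesis. Since $F$ is a finitely generated nilpotent group and therefore Noetherian, $F_{1}$ is again FVIP; Lemma~\ref{lem:sfi} then lets us pass to a further sub-IP-ring on which some fixed number of the $\sD_{\beta}T$ generate a finite-index subgroup of a fixed $W\leq F_{1}$. Inside $W$, a coset analogue of Lemma~\ref{lem:deri-poly} (whose required finite-index hypothesis is now available) forces each $\sD_{\beta}T$ to coincide on a tail in the variable $\alpha$ with an element of $K\cap W$, whereupon invariance of $K$ under equality of tails gives $\sD_{\beta}T\in K$, contradicting the subcase. The first expression $\sD_{\beta}S\inv\, U\sD_{\beta}T$ is handled by the same argument, the extra factor $\sD_{\beta}S\inv\in F_{1}$ only modifying the relevant coset; intersecting the two sub-IP-rings yields both conclusions.

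The main obstacle is the coset analogue of Lemma~\ref{lem:deri-poly}, which in its present form addresses derivatives relative to a finite-index subgroup $V\leq W$ and must be adapted to propagate membership from a left coset of $K\cap W$ into $K\cap W$. The key point making this possible is that left cosets of $K$ themselves are invariant under equality of tails: if $S\in aK$ and $T\in F$ agree on a tail, then $a\inv S\in K$ and $a\inv S,a\inv T$ agree on the same tail, so $a\inv T\in K$ and $T\in aK$. Combined with the Noetherian and Hirsch-length machinery developed in Section~\ref{sec:fvip}, this should allow the algebra inside the proof of Lemma~\ref{lem:deri-poly} to run through with $U\inv K\cap W$ in place of $K$.
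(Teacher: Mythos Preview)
You correctly reduce, via Hindman, to deriving a contradiction from the situation where $h(\beta) := S^{-1}(T\sD_\beta T) = U\sD_\beta T$ (respectively $h(\beta) := (S\sD_\beta S)^{-1}(T\sD_\beta T) = (\sD_\beta S)^{-1}U\sD_\beta T$) lies in $K$ for every nonempty $\beta$; note that in both cases $h(\emptyset)=U=S^{-1}T\notin K$. From this point on, however, you misidentify the difficulty. Your ``main obstacle''---adapting Lemma~\ref{lem:deri-poly} without its finite-index hypothesis---is no obstacle at all: in the proof of that lemma the finite-index assumption is used \emph{only} in the first sentence, to apply Hindman and land in a single coset $w^{-1}V$. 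Here you already have the single-coset situation, with $K$ playing the role of $V$ and $h(\emptyset)=U$ playing the role of $w$. The remainder of that proof uses only that $V$ is a subgroup and that the map $\beta\mapsto h(\beta)(\alpha)$ is polynomial for each fixed $\alpha$; both hold verbatim here. Taking enough discrete derivatives in the outer variable one obtains an element of $K\,U^{\pm 1}K$ which, by polynomiality, vanishes on a tail, so $U$ agrees on a tail with an element of $K$, and tail-invariance gives $U\in K$---the desired contradiction. This is exactly what the paper does when it says ``proceed as in the proof of Lemma~\ref{lem:deri-poly}''.

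Your detour through Lemma~\ref{lem:sfi} and the Hirsch-length machinery is both unnecessary and, as you describe it, does not close the gap: Lemma~\ref{lem:sfi} tells you that finitely many $\sD_\beta T$ generate a finite-index subgroup of some $W\leq F_1$, but it says nothing about the index of $K\cap W$ in $W$, which is what a genuine finite-index input to (a coset version of) Lemma~\ref{lem:deri-poly} would require. The additional Hindman step splitting into $\sD_\beta T\in K$ versus $\sD_\beta T\notin K$ is likewise redundant once you observe that $h(\emptyset)=U$ already supplies the forbidden value.
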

\begin{proof}
If the first conclusion fails then by Hindman's theorem~\ref{thm:hindman} wlog
\[
h(\alpha) := (S \sD_{\alpha}S)\inv (T\sD_{\alpha}T) \in K \text{ for all }\alpha\in\Fin
\]
and $h(\emptyset)\not\in K$.
Proceed as in the proof of Lemma~\ref{lem:deri-poly}.
Analogously for the second conclusion.
\end{proof}

The next lemma is a manifestation of the principle that compact orbits can be thought of as being constant.
\begin{lemma}[{\cite[Proposition 4.2]{MR1692634}}]
\label{lem:wm-comp}
Let $Z\to Y$ be a $K$-primitive extension, $R^{\beta}\in K$ for each $\beta\in\Fin$ and $W\in F\setminus K$.
Let also $f,f'\in L^{\infty}(Z)$ be such that either $\E(f|Y)=0$ or $\E(f'|Y)=0$.
Then wlog
\[
\IPlim_{\beta,\alpha} \|\E(R^{\beta}(\alpha)f W(\beta)f'|Y) \| = 0.
\]
\end{lemma}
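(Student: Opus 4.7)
The plan is to reformulate the squared norm as an integral on the fiber product $Z\times_{Y}Z$, take the inner IP-limit in $\alpha$ using $K$-compactness (via Theorem~\ref{thm:fvip-proj}), and then tackle the outer IP-limit in $\beta$ by combining the relative mixing of $W\in F\setminus K$ with van der Corput and PET induction.

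Without loss of generality I assume $\E(f'|Y)=0$, since the other case is symmetric. Put $U:=f\otimes\bar f$ and $V:=f'\otimes\bar f'$ on $Z\times_{Y}Z$, so that $\E(V|Y)=|\E(f'|Y)|^{2}=0$. Using the identity $\|\E(h|Y)\|^{2}=\int_{Z\times_{Y}Z}h(z_{1})\overline{h(z_{2})}\,d\tilde\gamma$ together with the fact that the $G$-action on $L^{\infty}$ is by algebra homomorphisms that commute with complex conjugation, I obtain
\[
\|\E(R^{\beta}(\alpha)f\cdot W(\beta)f'|Y)\|^{2}=\int_{Z\times_{Y}Z} R^{\beta}(\alpha)^{\otimes 2}U\cdot W(\beta)^{\otimes 2}V\,d\tilde\gamma.
\]
By Lemma~\ref{lem:K-conj-inv} I may assume $K$ is an FVIP group; applying Theorem~\ref{thm:fvip-proj} to the diagonal $K$-action on $L^{2}(Z\times_{Y}Z)$ then gives wlog $\wIPlim_{\alpha}R^{\beta}(\alpha)^{\otimes 2}=P_{R^{\beta}}^{\otimes 2}$, a self-adjoint projection. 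Since $L^{2}(Y)\subset L^{2}(Z\times_{Y}Z)$ is $G$-invariant, $P_{R^{\beta}}^{\otimes 2}$ commutes with conditional expectation onto $L^{2}(Y)$, so decomposing $P_{R^{\beta}}^{\otimes 2}U=\E(P_{R^{\beta}}^{\otimes 2}U|Y)+U_{\beta}^{o}$ with $\E(U_{\beta}^{o}|Y)=0$ and noting that the $Y$-measurable part integrates to zero against $W(\beta)^{\otimes 2}V$ (because $\E(W(\beta)^{\otimes 2}V|Y)=W(\beta)\E(V|Y)=0$), I arrive at
\[
\IPlim_{\alpha}\|\E(R^{\beta}(\alpha)f\cdot W(\beta)f'|Y)\|^{2}=\int U_{\beta}^{o}\cdot W(\beta)^{\otimes 2}V\,d\tilde\gamma.
\]

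It remains to show that the IP-limit in $\beta$ of the right-hand side vanishes. By Lemma~\ref{lem:fiber-prod-prim} the extension $Z\times_{Y}Z\to Y$ is itself $K$-primitive, so $W\in F\setminus K$ acts mixingly there. Were $U_{\beta}^{o}$ constant in $\beta$, the pairing would already tend to zero by mixing of $W$ (tested against a fixed $L^{2}$-function with zero conditional expectation). To accommodate the $\beta$-dependence I apply the van der Corput Lemma~\ref{lem:vdC} to the sequence $y_{\beta}:=W(\beta)^{-1,\otimes 2}U_{\beta}^{o}$, which has zero conditional expectation over $Y$. The correlation $\langle y_{\beta},y_{\beta\cup\gamma}\rangle$ involves the operator $W(\beta)W(\beta\cup\gamma)^{-1}$, which factors through $\sD_{\gamma}W$ of strictly smaller PET weight, thereby producing a reduced system on which I can iterate. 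Lemma~\ref{lem:notinK} ensures wlog that the newly appearing factors stay in $F\setminus K$, so the compact/mixing dichotomy is preserved through the induction.

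The main obstacle is organizing the PET induction uniformly in the auxiliary parameter $\beta$, since $R^{\beta}$ varies polynomially inside $K$ while $W$ lives in $F\setminus K$, and the two interact by nilpotent commutators whenever derivatives are taken. The invariance of $K$ under conjugation by constants (Lemma~\ref{lem:K-conj-inv}) is indispensable for manipulating identities like $R^{\beta}(\alpha)^{-1}W(\beta)=W(\beta)\cdot(W(\beta)^{-1}R^{\beta}(\alpha)^{-1}W(\beta))$, where the conjugate of $(R^{\beta})^{-1}$ by $W(\beta)$ must remain in $K$; and Lemma~\ref{lem:notinK} is what keeps the mixing element outside $K$ after the differentiations required by the van der Corput reduction, so that PET induction on the weight vector of the combined system eventually terminates.
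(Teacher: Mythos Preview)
The paper does not give its own proof of this lemma; it simply cites \cite[Proposition~4.2]{MR1692634}.  So there is nothing to compare your argument against except correctness, and your final step has a genuine gap.

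Your reduction is fine: rewriting the squared norm on $Z\times_{Y}Z$, splitting off the $Y$-measurable part, and using that $P_{R^{\beta}}^{\otimes 2}$ commutes with $\E(\cdot\mid Y)$ to arrive at $\int U_{\beta}^{o}\cdot W(\beta)^{\otimes 2}V$ with $\E(U_{\beta}^{o}\mid Y)=\E(V\mid Y)=0$ is all correct.  The problem is what you do next.  You apply van der Corput to $y_{\beta}=W(\beta)^{-1,\otimes 2}U_{\beta}^{o}$ and then want to run PET induction on the resulting correlations.  But the hypothesis of the lemma says only that $R^{\beta}\in K$ for each $\beta$; there is \emph{no} assumed polynomial or VIP structure on the map $\beta\mapsto R^{\beta}$ (your phrase ``$R^{\beta}$ varies polynomially inside $K$'' is not part of the hypothesis).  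Consequently $U_{\beta}^{o}=P_{R^{\beta}}^{\otimes 2}U-\E(P_{R^{\beta}}^{\otimes 2}U\mid Y)$ is an arbitrary family, and the van der Corput correlation
\[
\langle y_{\beta},y_{\beta\cup\gamma}\rangle
=\big\langle (W(\beta\cup\gamma)W(\beta)^{-1})^{\otimes 2}U_{\beta}^{o},\,U_{\beta\cup\gamma}^{o}\big\rangle
\]
involves $U_{\beta}^{o}$ and $U_{\beta\cup\gamma}^{o}$, which are completely unrelated.  There is no ``system'' of polynomials here on which PET induction can bite, and Lemma~\ref{lem:notinK} (which concerns derivatives of a \emph{pair} $S,T$ with $S^{-1}T\notin K$) does not address this at all.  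In short, differentiating $W$ does lower its PET level, but the uncontrolled $\beta$-dependence coming from $R^{\beta}$ does not go away, so the induction never closes.

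The argument in \cite{MR1692634} does not proceed via van der Corput on the $\beta$-variable.  It exploits the $K$-compactness of $Z\times_{Y}Z\to Y$ (Lemma~\ref{lem:fiber-prod-prim}) more directly: one approximates (in $L^{2}$) the zero-conditional-expectation function by finite sums built from $K$-invariant kernels $H$, for which the action of any $R^{\beta}(\alpha)^{\otimes 2}$ is essentially trivial in the IP-limit (since $\IPlim_{\alpha}R^{\beta}(\alpha)H=H$), reducing the pairing to a fixed test function against $W(\beta)^{\otimes 2}\bar V$; relative mixing of $W$ on the fiber product then finishes.  The point is that compactness supplies a \emph{$\beta$-independent} family of approximants, which is exactly what your argument lacks.
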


We come to the central result on multiple mixing.
\begin{theorem}[cf.\ {\cite[Theorem 4.10]{MR1692634}}]
\label{thm:wm}
Let $K\leq F$ be a subgroup.
For every $K$-mixing set $\{S_{0}\equiv 1_{G},S_{1},\dots,S_{t}\}\subset F$ the following statements hold.
\begin{enumerate}
\item For every $K$-primitive extension $Z\to Y$ and any $f_{0},\dots,f_{t}\in L^{\infty}(Z)$ we have wlog
\[
\wIPlim_{\alpha} \prod_{i=1}^{t} S_{i}(\alpha)f_{i} - \prod_{i=1}^{t} S_{i}(\alpha)\E(f_{i}|Y) = 0.
\]
\item
For every $K$-primitive extension $Z\to Y$ and any $f_{0},\dots,f_{t}\in L^{\infty}(Z)$ we have wlog
\[
\IPlim_{\alpha} \Big\| \E \big(\prod_{i=0}^{t} S_{i}(\alpha)f_{i} \big| Y \big) - \prod_{i=0}^{t} S_{i}(\alpha)\E(f_{i}|Y) \Big\| = 0.
\]
\item
For every $K$-primitive extension $Z\to Y$, any $U_{i,j}\in K$, and any $f_{i,j}\in L^{\infty}(Z)$ we have wlog
\[
\IPlim_{\alpha} \Big\| \E \big( \prod_{i=0}^{t} S_{i}(\alpha) \big( \prod_{j=0}^{s}U_{i,j}(\alpha)f_{i,j} \big) \big| Y \big)- \prod_{i=0}^{t} S_{i}(\alpha) \E \big( \prod_{j=0}^{s}U_{i,j}(\alpha)f_{i,j} \big| Y \big) \Big\| = 0.
\]
\end{enumerate}
\end{theorem}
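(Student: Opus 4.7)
The plan is to prove all three conclusions simultaneously by PET induction on the weight vector of the system of non-trivial quotients $\{S_i\inv S_j : 0\le i<j\le t\}\subset F\setminus K$, using the $K$-mixing hypothesis in an essential way. In the base case, which $K$-mixing forces to be $t=0$, all three statements are trivial. Every ``wlog'' below means after countably many passages to a sub-IP-ring of $\Fin$ via Hindman's theorem, Theorem~\ref{thm:fvip-proj}, and Lemma~\ref{lem:notinK}.

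For Part~(i) I first telescope
\[
\prod_{i=1}^{t}S_i(\alpha)f_i-\prod_{i=1}^{t}S_i(\alpha)\E(f_i|Y)=\sum_{k=1}^{t}\prod_{i<k}S_i(\alpha)\E(f_i|Y)\cdot S_k(\alpha)(f_k-\E(f_k|Y))\cdot\prod_{i>k}S_i(\alpha)f_i,
\]
so it suffices to show $\wIPlim_\alpha\prod_{i=0}^{t}S_i(\alpha)h_i=0$ whenever some $h_{k_0}$ satisfies $\E(h_{k_0}|Y)=0$. I pick a pivot index $j_0$ so that $S_{j_0}$ has maximal level in the weight vector, apply $S_{j_0}(\alpha)\inv$ to the whole product (a unitary algebra homomorphism, preserving norms and weak limits), and invoke the IP van der Corput estimate (Lemma~\ref{lem:vdC}) on $x_\alpha:=\prod_i(S_{j_0}\inv S_i)(\alpha)h_i$. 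Expanding $\langle x_\alpha,x_{\alpha\cup\beta}\rangle$ via $g(\alpha\cup\beta)=g(\alpha)\sD_\beta g(\alpha)g(\beta)$ together with the algebra homomorphism property rewrites it as an integral of a new multicorrelation in $\alpha$, in which the acting polynomials are the $T_i'(\alpha):=(S_{j_0}\inv S_i)(\alpha)$ together with $U_i^\beta(\alpha):=T_i'(\alpha\cup\beta)=T_i'(\alpha)\sD_\beta T_i'(\alpha)\,T_i'(\beta)$, parameterized by $\beta$. Proposition~\ref{prop:PET} then guarantees that the combined system $\{T_i',U_i^\beta\}_i$ has weight vector strictly smaller than $\{S_i\}_i$, and Lemma~\ref{lem:notinK} (together with Lemma~\ref{lem:K-conj-inv}, which ensures $K$ is FVIP and closed under conjugation by constants) keeps the $K$-mixing property wlog. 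The induction hypothesis then forces the inner IP-limit to vanish, and Lemma~\ref{lem:vdC} gives $\wIPlim x_\alpha=0$.

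Part~(ii) I derive from Part~(i) by the fiber product identity
\[
\|\E(F_\alpha-G_\alpha|Y)\|^{2}=\int_{Z\times_{Y}Z}(F_\alpha-G_\alpha)(z_1)(F_\alpha-G_\alpha)(z_2)\,\dif\tilde\gamma,
\]
where $F_\alpha=\prod_iS_i(\alpha)f_i$ and $G_\alpha=\prod_iS_i(\alpha)\E(f_i|Y)$. Since $Z\times_YZ\to Y$ is still $K$-primitive (Lemma~\ref{lem:fiber-prod-prim}), Part~(i) applied on $Z\times_YZ$ controls the weak IP-limit of each of the $2^{t}$ binomial terms in the expansion of the integrand; pairing against the constant $1$ converts weak IP-convergence into a numerical limit, and since the four choices $F_i\in\{f_i\otimes f_i,\,f_i\otimes\E(f_i|Y),\,\E(f_i|Y)\otimes f_i,\,\E(f_i|Y)\otimes\E(f_i|Y)\}$ all satisfy $\E(F_i|Y)=\E(f_i|Y)^{2}$ on $Z\times_YZ$, the ``structured'' limits cancel identically with signs $+,-,-,+$, yielding $\|\E(F_\alpha-G_\alpha|Y)\|\to 0$ in the IP sense.

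For Part~(iii) I telescope in the inner blocks $\tilde f_i(\alpha):=\prod_jU_{i,j}(\alpha)f_{i,j}$ and use density of $K$-almost periodic functions (Lemma~\ref{lem:ap-dense}) to reduce to the case where each $f_{i,j}$ is $K$-almost periodic; then each orbit $\alpha\mapsto\tilde f_i(\alpha)$ is wlog well-approximated on a set of arbitrarily large $\nu$-measure by one of finitely many fixed functions $\hat f_i^{(k)}$, and on each such piece Part~(ii) applies directly to the non-$\alpha$-dependent data. The residual cross-terms that couple a compact polynomial $U_{i,j}\in K$ with a non-compact quotient $S_i\inv S_{i'}\in F\setminus K$ are killed by Lemma~\ref{lem:wm-comp}. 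The main obstacle throughout is the PET step in Part~(i): one must verify that the polynomial cloud produced by expanding $S_i(\alpha\cup\beta)$ assembles into a system whose weight strictly decreases and which remains $K$-mixing wlog. This is precisely the combined role of Lemma~\ref{lem:notinK}, which upgrades the non-$K$-membership of $S\inv T$ to that of $(S\sD_\beta S)\inv(T\sD_\beta T)$, and Lemma~\ref{lem:K-conj-inv}, which confines all auxiliary derivatives produced along the way to a controlled FVIP overgroup.
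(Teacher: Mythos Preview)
Your PET induction on Part~(i) has a substantive gap. After van der Corput, the new system you assemble, $\{T_i',U_i^\beta\}_i$, is \emph{not} in general $K$-mixing: for each fixed $i$ the quotient $(T_i')^{-1}U_i^\beta$ equals (up to a constant factor $T_i'(\beta)$) the symmetric derivative $\sD_\beta T_i'$, and nothing prevents this from lying in $K$. Lemma~\ref{lem:notinK} only controls the \emph{cross} quotients between distinct indices $i\neq j$; it says nothing about these diagonal ones. So your appeal to ``Lemma~\ref{lem:notinK}\dots keeps the $K$-mixing property wlog'' is unjustified, and the induction hypothesis for Part~(i) cannot be invoked on the new system.

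This is precisely why the paper's induction uses Part~(3) at lower weight to prove Part~(1), not Part~(1) itself. After van der Corput the paper applies Hindman to split the indices into those with $S_i^{-1}T_{i,\beta}\in K$ (say $i\le w$) and those with $S_i^{-1}T_{i,\beta}\notin K$ ($i>w$). Only the latter contribute new members to the mixing set; for $i\le w$ the two occurrences are merged into a single block $S_i(\alpha)\big(f_i\cdot\sD_\beta S_i(\alpha)(S_i(\beta)\bar f_i)\big)$ with the inner factor governed by $K$. The resulting expression has exactly the shape of Part~(3), which is then applied at the lower weight obtained after multiplying through by $S_j^{-1}$. Finally, when the index $i_0$ with $\E(f_{i_0}|Y)=0$ lands in the range $i\le w$, the product of conditional expectations does not vanish on the nose, and one still needs Lemma~\ref{lem:wm-comp} to show that the conditional expectation of that merged block tends to zero in the double IP-limit.

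A smaller slip: applying the $\alpha$-dependent unitary $S_{j_0}(\alpha)^{-1}$ to the product \emph{before} van der Corput does not preserve weak IP-limits (weak convergence is only stable under a fixed unitary, not a varying one). The correct order is the paper's: apply van der Corput to the original product, obtain a scalar integral over $Z$, and only then use measure invariance to insert $S_j(\alpha)^{-1}$ inside the integral.

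Your reduction of (ii) to (i) via the fiber product (Lemma~\ref{lem:fiber-prod-prim}) is the standard route and matches the commutative-case argument the paper defers to. Your sketch for (iii) is more heuristic; in any event the heart of the induction loop is the implication (3)$\Rightarrow$(1), and that is where the repair above is needed.
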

We point out that the main induction loop is on the mixing set.
It is essential that, given $K\leq F$, all statements are proved simultaneously for all $K$-compact extensions since the step from weak convergence to strong convergence involves a fiber product via Lemma~\ref{lem:fiber-prod-prim}.
\begin{proof}
The proof is by PET-induction on the mixing set.
We only prove that the last statement for mixing sets with lower weight vector implies the first, the proofs of other implications are the same as in the commutative case.

By the telescope identity it suffices to consider the case $\E(f_{i_{0}}|Y)=0$ for some $i_{0}$.
By the van der Corput Lemma~\ref{lem:vdC} it suffices to show that wlog
\[
\IPlim_{\beta,\alpha} \int_{Z} \prod_{i=1}^{t} S_{i}(\alpha)f_{i} \prod_{i=1}^{t} S_{i}(\alpha\cup\beta)\bar f_{i} = 0.
\]
This limit can be written as
\[
\IPlim_{\beta,\alpha} \int_{Z} \prod_{i=1}^{t} S_{i}(\alpha)f_{i} \prod_{i=1}^{t} \underbrace{S_{i}(\alpha) \sD_{\beta}S_{i}(\alpha)}_{=:T_{i,\beta}(\alpha)} (S_{i}(\beta)\bar f_{i}).
\]
By Lemma~\ref{lem:notinK} we may wlog assume that $T_{i,\beta}\inv T_{j,\beta}$ and $S_{i}\inv T_{j,\beta}$ are mixing for all $\beta\in\Fin$ provided that $i\neq j$.
Re-indexing if necessary and using Hindman's theorem~\ref{thm:hindman} we may wlog assume $S_{i}\inv T_{i,\beta} \in K$ for all $\beta\in\Fin$ and $i\leq w$ and $S_{i}\inv T_{i,\beta} \not\in K$ for all $\beta\in\Fin$ and $i>w$ for some $w=0,\dots,t$.
Thus
\begin{equation}
\label{eq:larger-mixing-set}
S_{0},S_{1},\dots,S_{t},T_{w+1,\beta},\dots,T_{t,\beta}
\end{equation}
is a $K$-mixing set for every $\beta\neq\emptyset$.
Moreover it has the same weight vector as $\{S_{1},\dots,S_{t}\}$ since $T_{i,\beta} \sim S_{i}$.
Assume that $S_{j}$, $j\neq 0$, has the maximal level in \eqref{eq:larger-mixing-set}.
We have to show
\begin{multline*}
\IPlim_{\beta,\alpha} \int_{Z} \prod_{i=1}^{w} S_{j}\inv(\alpha)S_{i}(\alpha)(f_{i} \sD_{\beta}S_{i}(\alpha) (S_{i}(\beta)\bar f_{i}))\\
\cdot \prod_{i=w+1}^{t} S_{j}\inv(\alpha)S_{i}(\alpha)f_{i} S_{j}\inv(\alpha)T_{i,\beta}(\alpha) (S_{i}(\beta)\bar f_{i}) = 0.
\end{multline*}
For each fixed $\beta\in\Fin$ the limit along $\alpha$ comes from the $K$-mixing set
\[
S_{j}\inv S_{1},\dots,S_{j}\inv S_{t},S_{j}\inv T_{w+1,\beta},\dots,S_{j}\inv T_{t,\beta}
\]
that has lower weight vector.
Hence we can apply the induction hypothesis, thereby obtaining that the limit equals
\begin{multline*}
\IPlim_{\beta,\alpha} \int_{Z} \prod_{i=1}^{w} S_{j}\inv(\alpha)S_{i}(\alpha)\E(f_{i} \sD_{\beta}S_{i}(\alpha) (S_{i}(\beta)\bar f_{i}) |Y)\\
\cdot\prod_{i=w+1}^{t} S_{j}\inv(\alpha)S_{i}(\alpha)\E(f_{i}|Y) S_{j}\inv(\alpha)T_{i,\beta}(\alpha) \E(S_{i}(\beta)\bar f_{i}|Y)
\end{multline*}
This clearly vanishes if $i_{0}>w$, otherwise use Lemma~\ref{lem:wm-comp}.
\end{proof}

\subsection{Multiparameter multiple mixing}
In fact we need some information about relative polynomial mixing in several variables.
First we need to say what we understand under a mixing system of polynomial expressions.
Recall that by definition each $S\in\PE{F}{m}$ can be written in the form
\begin{equation}
\label{eq:PE-dec}
S(\alpha_{1},\dots,\alpha_{m}) = W^{(\alpha_{1},\dots,\alpha_{m-1})}(\alpha_{m}) \dots W^{\alpha_{1}}(\alpha_{2}) W(\alpha_{1}).
\end{equation}
\begin{definition}
\label{def:K-mixing-PE}
Let $K\leq F$ be a subgroup and $m\in\N$.
A set $\{S_{i}\}_{i=0}^{t} \subset\PE{F}{m}$ is called \emph{$K$-mixing} if $S_{0}\equiv 1_{G}$, the polynomial expressions $\{S_{i}\}$ are pairwise distinct, and for all $r$ and $i\neq j$ we have either $\forall\vec\alpha\in\Fin^{r}_{<}$ $W_{i}^{\vec\alpha}=W_{j}^{\vec\alpha}$ or $\forall\vec\alpha\in\Fin^{r}_{<}$ $(W_{i}^{\vec\alpha})\inv W_{j}^{\vec\alpha} \not\in K$.
\index{mixing set!of polynomial expressions}
\end{definition}
For $m=1$ this coincides with Definition~\ref{def:K-mixing}.
However, in general, this definition requires more than $\{W_{i}^{\vec\alpha}\}_{i}$ being (up to multiplicity) a $K$-mixing set in the sense of Definition~\ref{def:K-mixing} for every $\vec\alpha$.

\begin{theorem}[cf.\ {\cite[Theorem 4.12]{MR1692634}}]
\label{thm:mwm}
Let $Z\to Y$ be a $K$-primitive extension.
Then for every $m\geq 1$, every $K$-mixing set $\{S_{0},\dots,S_{t}\}\subset\PE{F}{m}$ and any $f_{0},\dots,f_{t}\in L^{\infty}(Z)$ we have wlog
\[
\IPlim_{\alpha_{1},\dots,\alpha_{m}} \Big\| \E \big( \prod_{i=0}^{t} S_{i}(\alpha_{1},\dots,\alpha_{m})f_{i} \big| Y \big) - \prod_{i=0}^{t} S_{i}(\alpha_{1},\dots,\alpha_{m})\E(f_{i}|Y) \Big\| = 0.
\]
\end{theorem}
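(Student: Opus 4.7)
The plan is to induct on $m$. The base case $m=1$ reduces directly to Theorem~\ref{thm:wm}(2) applied to the set $\{S_0,\dots,S_t\}\subset F=\PE{F}{1}$, since for $m=1$ Definition~\ref{def:K-mixing-PE} coincides with Definition~\ref{def:K-mixing} (the requirement $1_G\in A$ there being inessential, as noted earlier).

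For the inductive step $m\Rightarrow m+1$ I would write each $S_i\in\PE{F}{m+1}$ in its canonical form
\[
S_i(\vec\alpha,\alpha_{m+1})=W_i^{\vec\alpha}(\alpha_{m+1})\,\tilde S_i(\vec\alpha),\qquad W_i^{\vec\alpha}\in F,\ \tilde S_i\in\PE{F}{m},
\]
and partition $\{0,\dots,t\}$ under the relation $i\sim j:\iff W_i^{\vec\alpha}=W_j^{\vec\alpha}$ for all $\vec\alpha$, which is well defined by the $r=m$ case of the $K$-mixing hypothesis. Writing $W^{[k]}$ for the common value of $W_i^{\vec\alpha}$ on the class $[k]$ and $I'$ for a set of class representatives, the remaining mixing conditions (levels $r\leq m-1$) transfer verbatim to the truncated families $\{\tilde S_i\}_{i\in[k]}\subset\PE{F}{m}$, which are pairwise distinct since the $S_i$ are. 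Hence each $\{\tilde S_i\}_{i\in[k]}$ is a $K$-mixing subset of $\PE{F}{m}$, while $\{W^{[k]}\}_{k\in I'}$ is a $K$-mixing subset of $F$.

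Using that each $g\in G$ acts as an $L^\infty$-algebra homomorphism and that pointwise products commute, the target can be rewritten as
\[
\prod_{i=0}^{t}S_i(\vec\alpha,\alpha_{m+1})f_i=\prod_{k\in I'}W^{[k]}(\alpha_{m+1})\,F^k_{\vec\alpha},\qquad F^k_{\vec\alpha}:=\prod_{i\in[k]}\tilde S_i(\vec\alpha)f_i,
\]
and analogously with $\E(f_i|Y)$ in place of $f_i$, producing $G^k_{\vec\alpha}:=\prod_{i\in[k]}\tilde S_i(\vec\alpha)\E(f_i|Y)$. A standard telescoping identity splits the difference under the norm into (i) a ``$W$-difference'' $\E(\prod_k W^{[k]}(\alpha_{m+1})F^k_{\vec\alpha}|Y)-\prod_k W^{[k]}(\alpha_{m+1})\E(F^k_{\vec\alpha}|Y)$, to be controlled by Theorem~\ref{thm:wm}(2) applied to $\{W^{[k]}\}_{k\in I'}$ with inputs $F^k_{\vec\alpha}$, and (ii) a telescope whose $l$-th summand carries a factor $W^{[l]}(\alpha_{m+1})(\E(F^l_{\vec\alpha}|Y)-G^l_{\vec\alpha})$ sandwiched between $L^\infty$-bounded terms. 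Because each $W^{[k]}(\alpha_{m+1})$ is an $L^2$-isometry, (ii) decays to $0$ in $L^2$ uniformly in $\alpha_{m+1}$ as $\vec\alpha\to\infty$ by the inductive hypothesis applied to each $\{\tilde S_i\}_{i\in[k]}$; intersecting the finitely many sub-IP-rings supplied by the induction (one per class) keeps us inside a sub-IP-ring.

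The principal obstacle I foresee lies in (i): the ``wlog'' sub-IP-ring produced by Theorem~\ref{thm:wm}(2) depends a priori on the inputs $F^k_{\vec\alpha}$, which vary with $\vec\alpha$. I plan to address this by noting that the PET induction inside Theorem~\ref{thm:wm} and the auxiliary passes via Hindman's theorem~\ref{thm:hindman}, the Milliken--Taylor theorem~\ref{thm:milliken-taylor} and Corollary~\ref{cor:poly-subg} depend only on the polynomial data $\{W^{[k]}\}$, whereas the sole function-dependent step---the van der Corput reduction of Lemma~\ref{lem:vdC}---can be handled by a diagonal extraction over a countable dense subfamily of $L^\infty(Z)$ together with the $L^2$-continuity of the bilinear form appearing in the van der Corput estimate. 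Once a single sub-IP-ring that controls (i) uniformly in $\vec\alpha$ is secured, the triangle inequality combined with (ii) closes the induction.
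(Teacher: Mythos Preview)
Your inductive framework, the partition of $\{0,\dots,t\}$ into classes $[k]$ according to $W_i^{\vec\alpha}$, and the splitting into the ``$W$-part'' (i) and the telescoped ``$\vec\alpha$-part'' (ii) all match the paper's proof, and your treatment of (ii) via the inductive hypothesis is correct. You are also right that the only obstacle lies in (i): the sub-IP-ring produced by Theorem~\ref{thm:wm} for the family $\{W^{[k]}\}$ depends on the inputs $F^k_{\vec\alpha}$ and hence on~$\vec\alpha$.

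The gap is that your proposed fix does not close. A diagonal extraction (whether over a countable dense subfamily of $L^\infty(Z)$ or, more directly, over the countable set $\Fin^m_<$ of parameters~$\vec\alpha$) yields only that $\IPlim_\alpha\|(i)(\vec\alpha,\alpha)\|=0$ for each fixed $\vec\alpha$ on one common sub-IP-ring. That is not ``uniformity in $\vec\alpha$'': the threshold for $\alpha$ still depends on $\vec\alpha$, and $\IPlim_{\alpha_1,\dots,\alpha_{m+1}}$ requires a single $\alpha_0$ that works for all $(\vec\alpha,\alpha)>\alpha_0$. The $L^2$-continuity of the bilinear form in Lemma~\ref{lem:vdC} lets you pass from a dense family to all functions, but it does not upgrade pointwise-in-$\vec\alpha$ convergence to convergence that is uniform in~$\vec\alpha$.

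The paper sidesteps the uniformity issue entirely. It first telescopes to reduce to the case $\E(f_{i_0}\mid Y)=0$ for some $i_0$, so the target is simply $\IPlim_{\vec\alpha,\alpha}\|\E(\cdot\mid Y)\|=0$. Then, by Milliken--Taylor (Theorem~\ref{thm:milliken-taylor}) plus a diagonal argument, one may assume this IP-limit \emph{exists}; hence it suffices, for every $\delta>0$, to produce a \emph{single} pair $\vec\alpha<\alpha$ with norm below $\delta$. The inductive hypothesis applied to the $K$-mixing class $A_{j_0}$ containing $i_0$ provides a $\vec\alpha$ with $\|\E(\prod_{S\in A_{j_0}}S(\vec\alpha)f_{S,j_0}\mid Y)\|<\delta$, whence $\|\prod_j V_j^{\vec\alpha}(\alpha)\E(\prod_{S\in A_j}S(\vec\alpha)f_{S,j}\mid Y)\|<\delta$ for all $\alpha$; then Theorem~\ref{thm:wm} is invoked \emph{once}, for this fixed $\vec\alpha$, to find~$\alpha$. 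No uniformity over $\vec\alpha$ is ever needed. Your argument becomes correct if you replace the dense-family idea by this Milliken--Taylor reduction to a single witness.
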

\begin{proof}
We use induction on $m$.
The case $m=0$ is trivial since the product then consists only of one term.
Assume that the conclusion holds for $m$ and consider a $K$-mixing set of polynomial expressions in $m+1$ variables.
For brevity we write $\vec\alpha=(\alpha_{1},\dots,\alpha_{m})$ and $\alpha=\alpha_{m+1}$.
We may assume that $\|f_{i}\|_{\infty}\leq 1$ for all $i$ and $\E(f_{i_{0}}|Y)=0$ for some $i_{0}$.

By Definition~\ref{def:K-mixing-PE} and with notation from \eqref{eq:PE-dec}, for every $\vec\alpha$ there exists a $K$-mixing set $\{V_{j}^{\vec\alpha}\} \subset F$ such that $W_{i}^{\vec\alpha}=V_{j_{i}}^{\vec\alpha}$, where the assignment $i\to j_{i}$ does not depend on $\vec\alpha$.
Let also
\[
A_{j} = \{ S(\cdot)=S_{i}(\cdot,\emptyset) : j_{i}=j\}.
\]
In view of the Milliken-Taylor theorem~\ref{thm:milliken-taylor} and by a diagonal argument, cf.\ \cite[Lemma 1.4]{MR833409}, it suffices to show that for every $\delta>0$ there exist $\vec\alpha<\alpha$ such that
\[
\Big\| \E\big( \prod_{j}V_{j}^{\vec\alpha}(\alpha) (\prod_{S\in A_{j}}S(\vec\alpha)f_{S,j}) \big| Y \big) \Big\| \leq \delta
\]
provided that $\E(f_{S_{0},j_{0}}|Y)=0$ for some $j_{0},S_{0}$.
By the induction hypothesis there exists $\vec\alpha$ such that
\[
\Big\| \E\big( \prod_{S\in A_{j_{0}}}S(\vec\alpha)f_{S,j_{0}} \big| Y \big) \Big\|
< \delta,
\]
since $A_{j_{0}}$ is a $K$-mixing set.
This implies
\[
\Big\| \prod_{j}V_{j}^{\vec\alpha}(\alpha) \E\big(\prod_{S\in A_{j}}S(\vec\alpha)f_{S,j} \big| Y \big) \Big\| < \delta
\]
for all $\alpha>\vec\alpha$.
Since $\{V_{j}^{\vec\alpha}\}_{j}$ is a $K$-mixing set, Theorem~\ref{thm:wm} implies
\[
\IPlim_{\alpha} \Big\| \E\big( \prod_{j}V_{j}^{\vec\alpha}(\alpha) \prod_{S\in A_{j}}S(\vec\alpha)f_{S,j} \big| Y \big) \Big\| \leq \delta.
\qedhere
\]
\end{proof}

\subsection{Lifting multiple recurrence to a primitive extension}
We are nearing our main result, a multiple recurrence theorem for polynomial expressions.
In order to guarantee the existence of the limits that we will encounter during its proof we have to restrict ourselves to a certain good subgroup of the group of polynomial expressions.
It will be shown later that this restriction can be removed, cf. Corollary~\ref{cor:finite-good}.
\begin{definition}
\label{def:good}
We call a group $\FE\leq\PE{\VIP}{\omega}$ \emph{good} if it has the following properties.
\index{good group}
\begin{enumerate}
\item (Cardinality) $\FE$ is countable.
\item (Substitution) If $m\in\N$, $g\in \PE{\VIP}{m}\cap\FE$, and $\vec\beta\in\Fin^{m}_{<}$, then $g[\vec\beta]\in\FE$.
\item (Decomposition) If $K\leq F$ is a subgroup invariant under conjugation by constants and $\{S_{i}\}_{i=0}^{t} \subset\PE{F}{m}\cap\FE$ is a finite set with $S_{0}\equiv 1_{G}$,
then we have finite sets $\{T_{k}\}_{k=0}^{v-1}\subset \PE{F}{m}\cap\FE$ and $\{R_{i}\}_{i=0}^{t}\subset \PE{K}{m}\cap\FE$ with $R_{0}=T_{0}\equiv 1_{G}$ such that $S_{i}=T_{k_{i}}R_{i}$ and for every sub-IP-ring $\Fin'\subset\Fin$ the set $\{T_{k}\}$ is wlog $K$-mixing.
\end{enumerate}
\end{definition}
The property of being good is hereditary in the sense that a group that is good with respect to some IP-ring is also good with respect to any sub-IP-ring.

Let $(X,\mathcal{A},\mu)$ be a regular measure space with a right action of $G$ by measure-preserving transformations.
Let also $\FE\leq\PE{F}{\omega}$ be a good group.
By Hindman's theorem~\ref{thm:hindman} we may wlog assume that
\[
\wIPlim_{\alpha} g(\alpha)f
\]
exists for every $g\in F$ and $f\in L^{2}(X)$.
By the Milliken-Taylor theorem~\ref{thm:milliken-taylor} we may wlog assume that the limit
\[
a(A,m,\{S_{i}\}_{i}):= \IPlim_{\vec\alpha \in \Fin^{m}_{<}} \mu\left( \cap_{i=0}^{t} A S_{i}(\vec\alpha)\inv \right)
\]
exists for every $m\in\N$, every $A\in\mathcal{A}$, and every finite set $\{S_{0},\dots,S_{t}\}\subset \PE{F}{m}\cap\FE$.

The central result of this chapter is that this limit is in fact positive provided $\mu(A)>0$.
Since it will be proved by induction on a tower of factors, we formulate it in terms of factors.
\begin{definition}
\index{SZ property}
A factor $(X,\mathcal{A},\mu)\to (Y,\mathcal{B},\nu)$ is said to have the \emph{SZ (Szemer\'{e}di) property} if for every $B\in\mathcal{B}$ with $\nu(B)>0$ and every set of polynomial expressions $\{S_{0}\equiv 1_{G},S_{1},\dots,S_{t}\} \subset\PE{F}{m}\cap\FE$ one has
\[
a(B,m,\{S_{i}\}_{i})  > 0.
\]
\end{definition}
The result then reads as follows.
\begin{theorem}
\label{thm:SZ}
The identity factor $X\to X$ has the SZ property.
\end{theorem}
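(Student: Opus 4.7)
The plan is to follow the classical Furstenberg scheme: proceed by transfinite induction (equivalently, Zorn's lemma) through a tower of factors of $X$, showing that the \emph{SZ property} is preserved both under inverse limits of factors and under primitive extensions. Combined with Theorem~\ref{thm:primitive-extension}, which produces a proper primitive intermediate extension above any proper factor, this forces the maximal factor with the SZ property to be $X$ itself. The trivial factor trivially has the SZ property since all $S_{i}(\vec\alpha)$ act trivially on $Y=\{\mathrm{pt}\}$, providing the base case.

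The core of the argument is lifting the SZ property through a $K$-primitive extension $Z\to Y$. Fix $A\in\mathcal{C}$ with $\gamma(A)>0$ and a finite set $\{S_{0},\dots,S_{t}\}\subset\PE{F}{m}\cap\FE$ with $S_{0}\equiv 1_{G}$. Since $K$ is wlog an FVIP group closed under conjugation by constants (Lemma~\ref{lem:K-conj-inv}), the goodness of $\FE$ yields a decomposition $S_{i}=T_{k_{i}}R_{i}$ with $R_{i}\in\PE{K}{m}\cap\FE$ and $\{T_{k}\}_{k=0}^{v-1}\subset\PE{F}{m}\cap\FE$ wlog $K$-mixing. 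By Lemmas~\ref{lem:ap-dense} and~\ref{lem:indicator-ap}, I may replace $1_{A}$ by a $K$-almost periodic indicator $1_{A'}$ with $A'\subset A$ and $\gamma(A\setminus A')$ arbitrarily small; this is permissible since a positive lower bound on the limit for $A'$ transfers to $A$. Applying the definition of $K$-almost periodicity to the family of compositions $R_{i}[\vec\beta]$ supplies, for any $\epsilon>0$, finitely many $L^{2}$-prototypes $g_{1},\dots,g_{l}$ and a small exceptional set $D\subset Y$ such that on the complement each $R_{i}[\vec\beta](\alpha)1_{A'}$ falls, fiberwise, within $\epsilon$ of some $g_{j}$.

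The combinatorial input that exploits this finiteness of types is Corollary~\ref{cor:pair-color}, applied to the pair of systems $(R_{i},T_{k})$: for a suitable $l$-coloring based on which prototype each $R_{i}[\vec\beta]$ realizes on each fiber, I obtain indices $\beta_{1}<\dots<\beta_{m}$ and an auxiliary element $L_{a}$ for which the configuration $(L_{a}R_{i}[\vec\beta],\,M_{a}T_{k}[\vec\beta]L_{a}\inv)$ is monochrome. This means that, up to an $L^{2}$-error that can be made arbitrarily small on most fibers, the functions $R_{i}(\vec\beta)1_{A'}$ agree with a single prototype $g$ on $Y\setminus(D\cup E)$, so that $\prod_{i}S_{i}(\vec\beta)1_{A'}$ is essentially $\prod_{k}T_{k}(\vec\beta)\bigl(\prod_{i:k_{i}=k}R_{i}(\vec\beta)1_{A'}\bigr)$ and its conditional expectation on $Y$ can be compared, via Theorem~\ref{thm:mwm} applied to the $K$-mixing set $\{T_{k}\}$, to $\prod_{k}T_{k}(\vec\beta)\E\bigl(\prod_{i:k_{i}=k}R_{i}(\vec\beta)1_{A'}\bigm|Y\bigr)$. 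Integrating against $1_{A'}$ and using the pointwise proximity of the $R_{i}$-images to the prototype produces a lower bound for $\int_{Y}\prod_{k}T_{k}(\vec\beta)B_{k}\,d\nu$ with $B_{k}\in\mathcal{B}$ of positive measure, at which point the SZ property on the base factor $Y$ (the inductive hypothesis) furnishes a positive lower bound for $a(A,m,\{S_{i}\})$.

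For the limit step I would observe that if $(Y_{\lambda})$ is a directed family of sub-factors each having SZ and $Y_{\infty}$ is their join, then for $B\in\mathcal{B}_{\infty}$ of positive measure one approximates $1_{B}$ in $L^{2}$ by conditional expectations onto some $Y_{\lambda}$ and uses continuity of the multicorrelation in each $f_{i}$ in $L^{2}$ to transfer the positive lower bound from $Y_{\lambda}$ to $Y_{\infty}$; the multicorrelations even have IP-limits by the standing wlog assumption. Zorn's lemma applied to the family of sub-factors with SZ, ordered by factorization, then yields a maximal SZ factor, which by Theorem~\ref{thm:primitive-extension} must coincide with $X$. The main obstacle I expect is the bookkeeping in the primitive-extension step: getting the $\epsilon$'s, the exceptional sets from almost periodicity, the error terms from Theorem~\ref{thm:mwm}, and the parameters $\vec\beta$ furnished by Corollary~\ref{cor:pair-color} to chain coherently while preserving a uniform positive lower bound derived from SZ on $Y$.
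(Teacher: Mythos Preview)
Your overall architecture is exactly the paper's: existence of a maximal SZ factor (Lemma~\ref{lem:maximal-SZ-factor}), then Theorem~\ref{thm:primitive-extension} gives a proper $K$-primitive intermediate extension, and one derives a contradiction by lifting SZ. The ingredients you list for the lift---the decomposition $S_i=T_{k_i}R_i$ from goodness, density of $\AP$, Lemma~\ref{lem:indicator-ap}, Corollary~\ref{cor:pair-color}, and Theorem~\ref{thm:mwm}---are also the paper's. However, the order in which you chain them has a genuine gap.

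You propose to run the coloring, pin down a single $\vec\beta$ (and $L_a$), push through almost periodicity and mixing, and only then invoke SZ on $Y$ for the small system $\{T_k\}$ to bound $\int_Y \prod_k T_k(\vec\beta)B_k\,\dif\nu$. This does not work for two reasons. First, the $l$-coloring you feed into Corollary~\ref{cor:pair-color} is fiberwise: the color of $(L_i,M_i)$ records which prototype $g_j$ the function $L_i(\vec\alpha)1_{A'}$ is close to \emph{at the fiber over $yM_i(\vec\alpha)$}, so the resulting $(a,\vec\beta)$ depend on $y$ (and on an auxiliary $\vec\alpha\in\Fin^N_<$ that must already be fixed). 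There is no single $\vec\beta$ until you pigeonhole over a positive-measure set of $y$'s---and you have not yet produced such a set. Second, the SZ hypothesis on $Y$ yields $\IPlim_{\vec\alpha}\nu(\cap B'S(\vec\alpha)^{-1})>0$ for a \emph{single} set $B'$, not positivity of $\int\prod_k T_k(\vec\beta)B_k$ with distinct $B_k$ at a \emph{prescribed} $\vec\beta$. The paper resolves both issues by reversing the order: one applies SZ on $Y$ \emph{first}, to the much larger system $\{1_G\}\cup\{M_iW_k[\vec\beta]:1\le i\le w,\,k<v,\,\vec\beta\in\Fin(N)^m_<\}$ in $N$ variables (this is $a_1$), obtaining a specific $\vec\alpha\in\Fin^N_<$ and a set $C_0\subset Y$ of measure $>a_1$ on which all the translates $W_k[\vec\beta](\vec\alpha)M_i(\vec\alpha)$ return to $B'$. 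Only then, for each $y\in C_0$, does one color and extract $(a(y),\vec\beta(y))$, after which pigeonhole over the finitely many possibilities produces a subset $C'$ of measure $\ge a_1/2wQ$ with constant $(a,\vec\beta)$; the final parameter is $\vec\gamma_j=\cup_{i\in\beta_j}\alpha_i$. The lower bound $\prod_k W_k(\vec\gamma)\E(1_{A'}|Y)\ge c^v$ on $B''=C'M_a(\vec\alpha)$ then comes for free from the construction of $C_0$, not from a second application of SZ. Your sketch has the right pieces but will not close unless you apply SZ on $Y$ to this enlarged $N$-variable system \emph{before} the coloring.
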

This generalizes \cite[Theorem 1.3]{MR1692634}.
Note that our lower bounds depend on the polynomial expressions involved and not only on their number.
We cannot obtain more uniform results in spirit of \cite[Definition 5.1]{MR1692634} due to the lack of control on the number $w$ provided by Corollary~\ref{cor:pair-color}.

It is relatively easy to show that the class of factors that satisfy the SZ property is closed under inverse limits, so there is a maximal such factor.
\begin{lemma}[{\cite[Proposition 5.2]{MR1692634}}]
\label{lem:maximal-SZ-factor}
For every separable regular measure-preserving system $X$ there exists a maximal factor that has the SZ property.
\end{lemma}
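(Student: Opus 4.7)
The plan is to apply Zorn's Lemma to the collection $\mathcal{F}$ of factors of $X$ that have the SZ property, partially ordered by refinement. The collection is nonempty, since the trivial one-point factor lies in $\mathcal{F}$: there the only positive-measure set is the whole space, and $a(Y,m,\{S_i\}) = 1 > 0$ is immediate. The substantive step is to verify that $\mathcal{F}$ is closed under directed unions, so that every chain admits an upper bound and Zorn applies.

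Let $(Y_\lambda)_\lambda$ be a chain in $\mathcal{F}$, with associated sub-$\sigma$-algebras $\mathcal{B}_\lambda \subseteq \mathcal{A}$, and let $Y_\infty$ denote the inverse-limit factor corresponding to $\mathcal{B}_\infty := \sigma(\bigcup_\lambda \mathcal{B}_\lambda)$. Separability of $X$ ensures that $\mathcal{B}_\infty$ is countably generated modulo null sets, so I may replace the chain by a countable cofinal subchain $\mathcal{B}_1 \subseteq \mathcal{B}_2 \subseteq \cdots$ with the same inverse limit and reduce to verifying that $Y_\infty$ has the SZ property.

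Fix $B \in \mathcal{B}_\infty$ with $\nu_\infty(B) > 0$ together with $m \in \N$ and $\{S_0 \equiv 1_G, S_1, \dots, S_t\} \subseteq \PE{F}{m} \cap \FE$. After passing to a sub-IP-ring (by the Milliken--Taylor theorem~\ref{thm:milliken-taylor}) I may assume that $a(B,m,\{S_i\})$ exists and that $a(B',m,\{S_i\})$ exists simultaneously for a countable family of relevant approximating sets $B'$. The martingale convergence theorem provides $\mathcal{B}_n$-measurable sets $B_n$ with $\nu_\infty(B \triangle B_n) \to 0$, and the elementary $L^1$-Lipschitz estimate
\[
\bigl|a(B,m,\{S_i\}) - a(B_n,m,\{S_i\})\bigr| \leq (t+1)\,\nu_\infty(B \triangle B_n),
\]
which follows from $G$-invariance of $\mu$ applied to each of the $t+1$ factors in $\bigcap_i B S_i(\vec\alpha)\inv$, reduces the task to extracting a positive lower bound for $a(B_n,m,\{S_i\})$ that survives the passage to the limit.

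The main obstacle is that the SZ property for $Y_n$ delivers $a(B_n,m,\{S_i\}) > 0$ without any a priori uniformity in $n$, so that the naive $L^1$ approximation only yields $a(B,m,\{S_i\}) \geq 0$. The remedy is to exploit monotonicity of $a$ in the first argument: if $B' \subseteq B$ mod null sets with $B' \in \mathcal{B}_n$ and $\nu_\infty(B') > 0$, then the pointwise inclusion $\bigcap_i B' S_i(\vec\alpha)\inv \subseteq \bigcap_i B S_i(\vec\alpha)\inv$ gives $a(B,m,\{S_i\}) \geq a(B',m,\{S_i\}) > 0$ directly from SZ for $Y_n$ applied to $B'$, without any approximation loss. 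Existence of such a $B'$ is produced by the conditional-expectation construction $B' = \{x : E(1_B \mid \mathcal{B}_n)(x) = 1\}$ combined with Egorov's theorem, which ensures that for $n$ sufficiently large the set $B'$ has positive measure; this is precisely the content of~\cite[Proposition~5.2]{MR1692634}, whose argument transfers verbatim to the nilpotent polynomial-expression setting since it only involves the measure-algebra structure and the IP-limit formalism that we have already set up.
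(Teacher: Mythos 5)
Your overall strategy agrees with the paper's: Zorn's lemma, reduction to a countable chain by separability, and the $(t+1)$-Lipschitz estimate for $a(\cdot,m,\{S_i\})$ on the measure algebra are all in order, and you correctly diagnose the central obstacle, namely that the non-uniform SZ lower bound for the $B_n$ threatens to degenerate under the naive $L^1$-approximation. However, the remedy you propose is flawed. The set $B' := \{x : \E(1_B\mid\mathcal{B}_n)(x)=1\}$ need not have positive measure for \emph{any} $n$; in fact it is typically a null set. Egorov's theorem gives uniform convergence $\E(1_B\mid\mathcal{B}_n)\to 1_B$ on a set of large measure, so that on $B$ (up to a small exceptional set) $\E(1_B\mid\mathcal{B}_n) > 1-\eta$ for large $n$, but this never forces the conditional expectation to equal $1$ on a positive-measure set. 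A concrete counterexample: on $[0,1]^2$ with Lebesgue measure, let $\mathcal{B}_n$ be generated by the first $n$ binary digits of the first coordinate and take $B=\{(x,y):y<1/2\}$; then $\E(1_B\mid\mathcal{B}_n)\equiv 1/2$ for every $n$, so $B'=\emptyset$. More generally, whenever $B$ meets every atom of $\mathcal{B}_n$ in a proper-measure subset, the set you define has measure zero, and this is the generic situation for $B\in\mathcal{B}_\infty\setminus\bigcup_n\mathcal{B}_n$. If you relax the construction to $B'=\{\E(1_B\mid\mathcal{B}_n)>1-\eta\}$, which \emph{does} have positive measure for large $n$, then $B'$ is no longer a subset of $B$ mod null, so the monotonicity argument breaks: you only get $a(B,\cdot)\geq a(B',\cdot)-(t+1)\nu(B'\setminus B)$, and bounding $\nu(B'\setminus B)$ by $\eta\nu(B')$ returns you to the circularity you were trying to avoid, since $\eta$ must be chosen in terms of $a(B',\cdot)$, which in turn depends on $B'$ and hence on $\eta$.

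The actual argument in \cite[Proposition 5.2]{MR1692634}, which the paper invokes, is based on the \emph{uniform} formulation of the SZ property: in their Definition 5.1 the lower bound $b>0$ is quantified before the set, i.e.\ for every $\epsilon>0$, $m$, and $\{S_i\}$ there exists $b>0$ such that $a(B,m,\{S_i\})\geq b$ for \emph{all} $B$ of measure $>\epsilon$. With this uniformity in hand, your original $L^1$-approximation--Lipschitz argument (the one you dismissed) closes cleanly: choose $n$ and a $\mathcal{B}_n$-approximant $B'$ to $B$ with $\nu(B\triangle B')<\min\{\epsilon,\,b/(2(t+1))\}$, apply the uniform bound to $B'$, and the Lipschitz estimate yields $a(B,m,\{S_i\})\geq b/2>0$. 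The fix to your proposal is therefore not the subset trick but rather to carry the uniform lower bound through the chain, which is the intended content of the cited proposition.
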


Hence it remains to show that the SZ property passes to primitive extensions.
\begin{proof}[Proof of Theorem~\ref{thm:SZ}]
By Lemma~\ref{lem:maximal-SZ-factor} there exists a maximal factor $X\to Y$ with the SZ property.
Assume that $X\neq Y$, then by Theorem~\ref{thm:primitive-extension} wlog there exists a subgroup $K\leq F$ and a factor $X\to Z$ such that $(Z,\mathcal{C},\lambda)\to (Y,\mathcal{B},\nu)$ is a proper $K$-primitive extension.
We will show that $Z$ also has the SZ property, thereby contradicting maximality of $Y$.

Let $A\in\mathcal{C}$ with $\lambda(A)>0$ and $\{S_{i}\}_{i=0}^{t} \subset\PE{F}{m}\cap\FE$ be a finite set with $S_{0}\equiv 1_{G}$.
We have to show
\begin{equation}
\label{eq:SZ-desired}
\IPlim_{\vec\alpha \in \Fin^{m}_{<}} \mu\left( \cap_{i=0}^{t} A S_{i}(\vec\alpha)\inv \right) > 0.
\end{equation}
By Lemma~\ref{lem:K-conj-inv} we may wlog assume that $K$ is an FVIP group and by Lemma~\ref{lem:ap-dense} that $\AP$ is dense in $L^{2}(Z)$.
Note that $\FE$ is still good with respect to the new IP-ring implied in the ``wlog'' notation.
Thus wlog we have a $K$-mixing set $\{W_{k}\}_{k=0}^{v-1}$ and polynomial expressions $R_{i}\in\PE{K}{m}\cap\FE$ with $R_{0}\equiv 1_{G}$ such that $S_{i}=W_{k_{i}}R_{i}$.

By Lemma~\ref{lem:indicator-ap} we may replace $A$ by a subset that has at least one half of its measure such that $1_{A}\in \AP$.
There exist $c=c(\lambda(A))>0$ and a set $B\in\mathcal{B}$ such that $\nu(B)>c$ and $\lambda_{y}(A)>c$ for every $y\in B$.
Pick $0 < \epsilon < \min(c/2,c^{v}/(4(t+1)))$.

By Corollary~\ref{cor:pair-color} there exist $N,w\in\N$ and
\[
\{L_{i},M_{i}\}_{i=1}^{w}\subset(\PE{K}{N}\cap\FE)\times(\PE{F}{N}\cap\FE)
\]
such that for every $l$-coloring of $\{L_{i},M_{i}\}$ there exists a number $a$ and sets $\beta_{1}<\dots<\beta_{m}\subset N$ such that the set $\{L_{a}R_{i}[\vec\beta],M_{a}W_{k}[\vec\beta]L_{a}\inv\}_{0\leq i\leq t,0\leq k< v}$ is monochrome (and in particular contained in the set $\{L_{i},M_{i}\}$).

Since $f=1_{A}\in \AP$ there exist functions $g_{1},\dots,g_{l}\in L^{2}(Z)$ and a set $D\in\mathcal{B}$ such that $\nu(D)<\epsilon$ and for every $\delta>0$ and $T\in\PE{K}{N}\cap\FE$ there exists $\alpha_{0}$ such that for every $\alpha_{0}<\vec\alpha\in\Fin^{N}_{<}$ there exists a set $E=E(\vec\alpha)\in\mathcal{B}$ with $\nu(E)<\delta$ such that for every $y\in (D\cup E)^{\complement}$ there exists $j$ such that $\|T(\vec\alpha)f-g_{j}\|_{y}<\epsilon$.
Let $B'=B\cap D^{\complement}$, so that $\nu(B')>c/2$.

Let $Q=|\Fin(N)^{m}_{<}|$ be the number of possible choices of $\vec\beta\in\Fin(N)^{m}_{<}$ and
\[
a_{1}:=a(B',N,\{1_{G}\}\cup \{M_{i}W_{k}[\vec\beta]\}_{1\leq i\leq w, k<v, \vec\beta\in\Fin^{m}_{<}(N)})>0.
\]

Using this with $\delta=a_{1}/2w^{2}$ and $T=L_{1},\dots,L_{w}$ we obtain wlog for every $\vec\alpha\in\Fin^{N}_{<}$ a set $E=E(\vec\alpha)\in\mathcal{B}$ with $\nu(E)<a_{1}/2w$ such that for every $y\in (D\cup E)^{\complement}$ and every $i=1,\dots,w$ there exists $j=j(y,i)$ such that
\begin{equation}
\label{eq:ap}
\| L_{i}(\vec\alpha)f-g_{j} \|_{y} < \epsilon
\text{ for every } 1\leq i\leq w.
\end{equation}
By Theorem~\ref{thm:mwm} we may also wlog assume that for every $\vec\alpha\in\Fin^{N}_{<}$ we have
\begin{equation}
\label{eq:assumption-mixing}
\big\| \E(\prod_{k<v}W_{k}(\vec\alpha)f|Y) - \prod_{k<v}W_{k}(\vec\alpha)\E(f|Y) \big\| < c^{v} (a_{1}/2wQ)^{1/2}/4.
\end{equation}
Recall that we have to show \eqref{eq:SZ-desired}.
To this end it suffices to find $a(A,m,\{S_{i}\}_{0\leq i\leq t})$ such that for an arbitrary sub-IP-ring there exists $\vec\gamma\in\Fin^{m}_{<}$ with
\[
\mu\left( \cap_{i=0}^{t} A S_{i}(\vec\gamma)\inv \right)
> a(A,m,\{S_{i}\}_{0\leq i\leq t}) > 0,
\]
so fix a sub-IP-ring $\Fin$.
By definition of $a_{1}$ there exists a tuple $\vec\alpha\in\Fin^{N}_{<}$ (that will remain fixed) such that
\[
\nu\left( C_{0} \right) > a_{1}, \text{ where } C_{0}:=\cap_{i=1,\dots,w,k<v,\vec\beta} B' W_{k}[\vec\beta](\vec\alpha)\inv M_{i}(\vec\alpha)\inv.
\]
Let
\[
C := C_{0} \setminus \cup_{i=1}^{w} E(\vec\alpha) M_{i}(\vec\alpha)\inv,
\]
so that $\nu(C)>a_{1}/2$.
For every $y\in C$ consider an $l$-coloring of $\{L_{i},M_{i}\}_{i}$ given by $i\in [1,w] \mapsto j(yM_{i}(\vec\alpha),i)$ determined by \eqref{eq:ap}.
By the assumptions on $\{L_{i},M_{i}\}$ there exist $j(y)$, $a\in [1,w]$ and $\beta_{1}<\dots<\beta_{m}\subset N$ such that
\[
\| L_{a}(\vec\alpha)R_{i}[\vec\beta](\vec\alpha)f-g_{j(y)} \|_{yM_{a}(\vec\alpha)W_{k}(\vec\beta)L_{a}(\vec\alpha)\inv} < \epsilon
\text{ for every } 0\leq i\leq t,0\leq k<v.
\]
This can also be written as
\begin{multline*}
\| W_{k}[\vec\beta](\vec\alpha)R_{i}[\vec\beta](\vec\alpha)f - W_{k}[\vec\beta](\vec\alpha)L_{a}(\vec\alpha)\inv g_{j(y)} \|_{yM_{a}(\vec\alpha)} < \epsilon\\
\text{ for every } 0\leq i\leq t,0\leq k<v.
\end{multline*}
Since this holds for every $i,k$ and we have $R_{0}\equiv 1_{G}$, this implies
\[
\| (W_{k}R_{i})[\vec\beta](\vec\alpha)f-W_{k}[\vec\beta](\vec\alpha)f \|_{y M_{a}(\vec\alpha)} < 2\epsilon.
\]
Passing to a subset $C'\subset C$ with measure at least $a_{1}/2wQ$, we may assume that $a$ and $\vec\beta$ do not depend on $y$.
Thus we obtain a set $B'':=C' M_{a}(\vec\alpha)$ of measure at least $a_{1}/2wQ$ and a tuple $(\gamma_{j}=\cup_{i\in\beta_{j}}\alpha_{i})_{j=1}^{m}$ such that
\[
\| W_{k}R_{i}(\vec\gamma)f-W_{k}(\vec\gamma)f \|_{y} < 2\epsilon
\]
for every $y\in B''$, $i$ and $k$.
Recall that $f$ is $\{0,1\}$-valued, so that
\[
\big\| \prod_{i=0}^{t}S_{i}(\vec\gamma)f - \prod_{k<v}W_{k}(\vec\gamma)f \big\|_{y}\\
=
\big\| \prod_{i=0}^{t}W_{k_{i}}R_{i}(\vec\gamma)f - \prod_{i=0}^{t}W_{k_{i}}(\vec\gamma)f \big\|_{y}\\
< 2(t+1)\epsilon
\]
for all $y\in B''$.
Moreover, since $B'' \subset \cap_{j} B' W_{j}(\vec\gamma)\inv$, one has
\[
\big| \prod_{k<v} W_{k}(\vec\gamma) \E(f|Y)(y) \big| \geq c^{v}
\]
for every $y\in B''$.
Therefore and by \eqref{eq:assumption-mixing} we obtain
\begin{align*}
\big\| \prod_{i=0}^{t}S_{i}(\vec\gamma)f \big\|
&\geq \big\| \prod_{i=0}^{t}S_{i}(\vec\gamma)f \big\|_{L^{2}(B'')}\\
&> \big\| \prod_{k<v}W_{k}(\vec\gamma)f \big\|_{L^{2}(B'')} - 2(t+1)\epsilon\nu(B'')^{1/2}\\
&\geq \big\| \E(\prod_{k<v}W_{k}(\vec\gamma)f |Y) \big\|_{L^{2}(B'')} - 2(t+1)\epsilon\nu(B'')^{1/2}\\
&\geq
\big\| \prod_{k<v}W_{k}(\vec\gamma)\E(f|Y) \big\|_{L^{2}(B'')}\\
&\qquad-
\big\| \E(\prod_{k<v}W_{k}(\vec\gamma)f|Y) - \prod_{k<v}W_{k}(\vec\gamma)\E(f|Y) \big\|\\
&\qquad - 2(t+1)\epsilon\nu(B'')^{1/2}\\
&> c^{v} (a_{1}/2wQ)^{1/2}/4 =: a(A,m,\{S_{i}\}_{i})^{1/2}.
\qedhere
\end{align*}
\end{proof}

\subsection{Good groups of polynomial expressions}
As we have already mentioned, good groups are just technical vehicles.
The point is that we can perform all operations that we are interested in within a countable set of polynomial expressions, so that we can wlog assume the existence of all IP-limits that we encounter.

The only non-trivial property of good groups is the decomposition property.
However, the following lemma essentially shows that it is always satisfied.
\begin{proposition}
\label{prop:decomposition-mixing-compact}
Let $K\leq F$ be a subgroup that is invariant under conjugation by constants, $m\in\N$ and $\{S_{i}\}_{i=0}^{t} \subset\PE{F}{m}$ be any finite set with $S_{0}\equiv 1_{G}$.
Then there exists a set $\{T_{k}\}_{k=0}^{v}\subset\PE{F}{m}$ that is wlog $K$-mixing and decompositions $S_{i}=T_{k_{i}}R_{i}$ such that $R_{i}\in\PE{K}{m}$.
\end{proposition}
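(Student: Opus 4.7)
The plan is to induct on $m$. The base case $m=0$ is trivial since $\PE{F}{0}=\{1_G\}$; set $v=0$ and $T_0=R_0=1_G$. For the inductive step $m\to m+1$, write each $S_i$ in the canonical form
\[
S_i(\alpha_1,\dots,\alpha_{m+1})=W_i^{\vec\alpha}(\alpha_{m+1})\,S_i'(\vec\alpha),\qquad\vec\alpha:=(\alpha_1,\dots,\alpha_m),
\]
with $W_i^{\vec\alpha}\in F$ and $S_i'\in\PE{F}{m}$. Apply the induction hypothesis to $\{S_i'\}_{i=0}^t$ to obtain, wlog, a $K$-mixing family $\{T_k'\}_{k=0}^{v'}\subset\PE{F}{m}$ with $k_0'=0$, together with decompositions $S_i'=T'_{k_i'}R_i'$ where $R_i'\in\PE{K}{m}$.

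Next, apply the Milliken--Taylor theorem~\ref{thm:milliken-taylor} to the finite coloring of $\Fin^m_<$ that records, for each pair $(i,j)$, whether $(W_i^{\vec\alpha})^{-1}W_j^{\vec\alpha}\in K$. Passing to a sub-IP-ring, this information becomes independent of $\vec\alpha$ for every pair, so the relation
\[
i\approx j\;:\iff\;(W_i^{\vec\alpha})^{-1}W_j^{\vec\alpha}\in K\quad(\forall\vec\alpha)
\]
is a well-defined equivalence relation on $\{0,\dots,t\}$ (transitivity uses that $K$ is a subgroup). Choose a representative $i_{[i]}$ in each $\approx$-class with $i_{[0]}=0$, introduce the index $k_i:=(k_i',[i])$, and define
\[
T_{k_i}(\vec\alpha,\alpha):=W_{i_{[i]}}^{\vec\alpha}(\alpha)\,T'_{k_i'}(\vec\alpha)\in\PE{F}{m+1}.
\]
Expanding $R_i:=T_{k_i}^{-1}S_i$ yields
\[
R_i(\vec\alpha,\alpha)=T'_{k_i'}(\vec\alpha)^{-1}\bigl[W_{i_{[i]}}^{\vec\alpha}(\alpha)^{-1}W_i^{\vec\alpha}(\alpha)\bigr]T'_{k_i'}(\vec\alpha)\,R_i'(\vec\alpha).
\]
The bracket, viewed as a function of $\alpha$, lies in $K$ by construction of $\approx$; conjugating by the constant $T'_{k_i'}(\vec\alpha)\in G$ preserves $K$ by the hypothesis on $K$, and combined with $R_i'\in\PE{K}{m}$ this gives $R_i\in\PE{K}{m+1}$.

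It remains to check that $\{T_{k_i}\}$ (after enumerating the distinct pairs $(k_i',[i])$ as $0,1,\dots,v$) is $K$-mixing. Distinctness and $T_0\equiv 1_G$ are immediate from $k_0'=0$ and $i_{[0]}=0$. For the dichotomy at an arbitrary $k_i\neq k_j$: at the top level $r=m$ the relevant $W$'s are $W_{i_{[i]}}^{\vec\alpha}$ and $W_{i_{[j]}}^{\vec\alpha}$, which either coincide (if $[i]=[j]$) or have quotient outside $K$ for every $\vec\alpha$ (if $[i]\neq[j]$, by monochromatization together with the definition of $\approx$); at each lower level $r<m$ the $W$'s are inherited from $T'_{k_i'}$ and $T'_{k_j'}$, and the required dichotomy is either immediate (if $k_i'=k_j'$) or supplied by the $K$-mixing of $\{T_k'\}$ from the induction hypothesis.

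The delicate step, and the main obstacle worth emphasizing, is the choice of double indexing $k_i=(k_i',[i])$. Indexing only by $k_i'$ would allow two distinct $T_k$'s to share top-level $W$'s whose quotient is a nontrivial element of $K$ --- a situation that is neither ``$W$'s equal'' nor ``quotient not in $K$'', hence excluded by Definition~\ref{def:K-mixing-PE}; conversely, indexing only by $[i]$ would break the identification $R_i\in\PE{K}{m+1}$ because the $T'_{k_i'}$-discrepancy between elements of the same $\approx$-class could no longer be absorbed into the $R$-side. Pairing both invariants guarantees that any disagreement between $T_{k_i}$ and $T_{k_j}$ is witnessed at some level by either literal equality or by a quotient outside $K$, exactly matching the mixing condition.
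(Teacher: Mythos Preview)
Your proof is correct and follows essentially the same approach as the paper: induction on $m$, applying the hypothesis to the $S_i'$, using Milliken--Taylor to stabilize the $K$-relation on the top-level $W$'s, and absorbing the difference within each equivalence class into $R_i$ via conjugation by constants. Your explicit double indexing $k_i=(k_i',[i])$ is a cleaner formalization of what the paper does when it says ``doing this for all pairs $i<j$'' and collects ``all products $W_i^{\vec\alpha}\tilde T_{k_j}(\vec\alpha)$ that occur above'', and your discussion of why both components of the index are needed is a welcome clarification of a point the paper leaves implicit.
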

\begin{proof}
We argue by induction on $m$.
The claim is trivial for $m=0$.
Assume that it holds for $m$, we show its validity for $m+1$.
For brevity we write $\vec\alpha=(\alpha_{1},\dots,\alpha_{m})$, $\alpha=\alpha_{m+1}$.

Consider the maps $\tilde S_{i}(\vec\alpha) = S_{i}(\vec\alpha,\emptyset)$.
By the induction hypothesis there exists a set $\{\tilde T_{k}\}_{k=0}^{\tilde v}\subset\PE{F}{m}$ that is wlog $K$-mixing and decompositions $\tilde S_{i}=\tilde T_{k_{i}} \tilde R_{i}$ such that $\tilde R_{i}\in\PE{K}{m}$.
Then $S_{i}(\vec\alpha,\alpha)=W_{i}^{\vec\alpha}(\alpha) \tilde T_{k_{i}}(\vec\alpha) \tilde R_{i}(\vec\alpha)$.

Let $i<j$.
By the Milliken-Taylor Theorem~\ref{thm:milliken-taylor} we may wlog assume that either $(W_{i}^{\vec\alpha})\inv W_{j}^{\vec\alpha} \not\in K$ for all $\vec\alpha\in\Fin^{m}_{<}$ (in which case we do nothing) or $(W_{i}^{\vec\alpha})\inv W_{j}^{\vec\alpha} \in K$ for all $\vec\alpha\in\Fin^{m}_{<}$.
In the latter case we have $W_{j}^{\vec\alpha} = W_{i}^{\vec\alpha} R^{\vec\alpha}$ with some $R^{\vec\alpha} \in K$ and we can write
\[
S_{j}(\vec\alpha,\alpha) = W_{i}^{\vec\alpha}(\alpha) \tilde T_{k_{j}}(\vec\alpha)
\underbrace{(\tilde T_{k_{j}}(\vec\alpha)\inv R^{\vec\alpha} \tilde T_{k_{j}}(\vec\alpha))}_{\in K}(\alpha) \tilde R_{j}(\vec\alpha),
\quad
\vec\alpha\in\Fin^{m}_{<}.
\]
Doing this for all pairs $i<j$ we obtain the requested decomposition with the set $\{T_{k}\}$ consisting of all products $W_{i}^{\vec\alpha}\tilde T_{k_{j}}(\vec\alpha)$ that occur above.
\end{proof}

\begin{corollary}
\label{cor:finite-good}
Every finite subset of $\PE{F}{\omega}$ is wlog contained in a good subgroup of $\PE{\VIP}{\omega}$.
\end{corollary}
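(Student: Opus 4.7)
The plan is to build $\FE$ as an ascending countable chain $\FE_{0} \subset \FE_{1} \subset \dots$ of countable subgroups of $\PE{\VIP}{\omega}$, each obtained from its predecessor by closing under (a) the pointwise group operations, (b) the substitution maps $g \mapsto g[\vec\beta]$ of \eqref{eq:substitution}, and (c) the decompositions furnished by Proposition~\ref{prop:decomposition-mixing-compact} applied to every admissible triple $(K,m,\{S_{i}\})$. The crucial enabling observation is that only countably many such triples ever arise: since $G$ is finitely generated nilpotent, it is countable and Noetherian by the discussion following Corollary~\ref{cor:finite-ext}, so $F$ is countable and every subgroup of $F$ is finitely generated, whence the collection of subgroups $K \leq F$ invariant under conjugation by constants is itself countable.

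More concretely, let $\FE_{0}$ be the subgroup generated from the given finite set together with $1_{G}$ by closing under (a) and (b); countability of $G$ keeps this countable. At stage $n+1$, enumerate all triples $(K, m, \{S_{i}\}_{i=0}^{t})$ with $K$ a conjugation-invariant subgroup of $F$ and $\{S_{i}\} \subset \PE{F}{m} \cap \FE_{n}$ a finite subset containing $1_{G}$. For each such triple, Proposition~\ref{prop:decomposition-mixing-compact} supplies, after passing to some sub-IP-ring $\Fin^{(n,\text{triple})} \subset \Fin$, a decomposition $S_{i} = T_{k_{i}} R_{i}$ with $T_{k} \in \PE{F}{m}$, $R_{i} \in \PE{K}{m}$, $R_{0}=T_{0}\equiv 1_{G}$, and $\{T_{k}\}$ $K$-mixing on $\Fin^{(n,\text{triple})}$. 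Let $\FE_{n+1}$ be the closure of $\FE_{n}$ together with all these $T_{k}, R_{i}$ under (a) and (b), and set $\FE := \bigcup_{n} \FE_{n}$. Countably many triples per stage and countably many stages make $\FE$ countable, and the substitution axiom holds by construction.

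Finally, to meet the decomposition axiom of Definition~\ref{def:good}, we must arrange that the various sub-IP-rings $\Fin^{(n,\text{triple})}$ can be replaced by a single common sub-IP-ring that serves as the overall ``wlog'' witness of the corollary. Since only countably many decomposition requests were processed, a standard diagonal argument (cf.\ \cite[Lemma 1.4]{MR833409}) produces a sub-IP-ring $\Fin^{*}$ eventually contained in each $\Fin^{(n,\text{triple})}$, on which every $\{T_{k}\}$ appearing in the construction is $K$-mixing. Because $K$-mixing (Definition~\ref{def:K-mixing-PE}) is a universally quantified condition on tuples $\vec\alpha \in \Fin^{r}_{<}$, the property automatically persists on any further sub-IP-ring $\Fin' \subset \Fin^{*}$, so the ``for every sub-IP-ring $\Fin'\subset\Fin^{*}$'' clause of the decomposition axiom is satisfied. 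The main (mild) obstacle is the bookkeeping needed to verify that enumerating triples across the stages truly exhausts every finite subset of $\PE{F}{m} \cap \FE$; this is handled by the usual lazy enumeration, since any finite subset of $\FE$ already lies in some $\FE_{n}$.
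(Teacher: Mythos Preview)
Your proposal is correct and follows the same approach as the paper: exploit that $F$ has only countably many subgroups, enumerate the countably many triples $(K,m,\{S_i\})$, apply Proposition~\ref{prop:decomposition-mixing-compact} to each, and diagonalize over the resulting sub-IP-rings via \cite[Lemma~1.4]{MR833409}. Your explicit inductive construction of $\FE$ as a union of $\FE_n$'s simply spells out what the paper leaves implicit.

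One small correction: you write ``since $G$ is finitely generated nilpotent'', but the ambient group $G$ is \emph{not} assumed finitely generated in this section. What you actually need, and what the paper invokes, is that $F$ itself is finitely generated (by the definition of an FVIP group) and nilpotent (as a subgroup of $\VIP$), hence countable and Noetherian; this is what yields countably many conjugation-invariant subgroups $K\leq F$. Your appeal to ``countability of $G$'' when bounding $|\FE_0|$ is likewise unnecessary: closure under the countable family of substitutions $g\mapsto g[\vec\beta]$ and under group operations keeps a countable generating set countable regardless of $|G|$. Once you replace the references to $G$ by references to $F$, the argument goes through exactly as you wrote it.
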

\begin{proof}
Since $F$ is a countable Noetherian group, it has at most countably many subgroups.
Moreover, each $\Fin^{m}_{<}$ is countable, and there are only countably many finite tuples in any countable set.
Hence we can use Proposition~\ref{prop:decomposition-mixing-compact} to obtain a countable descending chain of sub-IP-rings such that the decomposition property holds for each tuple for one of these sub-IP-rings.
The required sub-IP-ring is then obtained by a diagonal procedure, cf.\ \cite[Lemma 1.4]{MR833409}.
\end{proof}
Thus the good group is not really relevant for our multiple recurrence theorem, which we can now formulate as follows.
\begin{theorem}
\label{thm:SZ-general}
Let $G$ be a nilpotent group and $F\leq\VIP$ an FVIP group.
Consider a right measure-preserving action of $G$ on an arbitrary (not necessarily regular) probability space $(X,\mathcal{A},\mu)$.
Let $S_{0},\dots,S_{t}\in\PE{F}{m}$ be arbitrary polynomial expressions and $A\in\mathcal{A}$ with $\mu(A)>0$.
Then there exists a sub-IP-ring $\Fin' \subset \Fin$ such that
\[
\IPlim_{\vec\alpha \in (\Fin')^{m}_{<}} \mu\left( \cap_{i=0}^{t} A S_{i}(\vec\alpha)\inv \right) > 0.
\]
\end{theorem}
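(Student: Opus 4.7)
The plan is to reduce Theorem~\ref{thm:SZ-general} to Theorem~\ref{thm:SZ} via three preparatory moves. First, by right-invariance of $\mu$,
\[
\mu\Big(\bigcap_{i=0}^t A S_i(\vec\alpha)\inv\Big) = \mu\Big(\bigcap_{i=0}^t A S_i(\vec\alpha)\inv S_0(\vec\alpha)\Big) = \mu\Big(\bigcap_{i=0}^t A \tilde S_i(\vec\alpha)\inv\Big),
\]
where $\tilde S_i := S_0\inv S_i$ lies in $\PE{F}{m}$ because the latter is a group under pointwise operations. Since $\tilde S_0\equiv 1_G$, I will henceforth assume $S_0\equiv 1_G$. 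Second, Corollary~\ref{cor:finite-good} provides, after passage to a sub-IP-ring, a good subgroup $\FE\leq\PE{\VIP}{\omega}$ containing the finite system $\{S_i\}_{i=0}^t$.

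The third and main move is to replace $(X,\mathcal{A},\mu)$ by a regular model. Since $\FE$ and a fixed finite generating set of $F$ are each countable, their combined collection of values in $G$ generates a countable subgroup $G_1\leq G$. The $G_1$-invariant, countably generated sub-$\sigma$-algebra $\mathcal{A}_1\subset\mathcal{A}$ spanned by the $G_1$-orbit of $A$ admits, by the Mackey realization of separable measure algebras, a standard Borel probability space $(Y,\mathcal{B},\nu)$ with a measure-preserving $G_1$-action, a $G_1$-equivariant measure-preserving factor map $\pi\from (X,\mathcal{A}_1,\mu)\to(Y,\mathcal{B},\nu)$, and a set $B\in\mathcal{B}$ with $\pi\inv(B)=A$ up to null sets. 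Equipping $G_1$ with the induced prefiltration $G_i':=G_i\cap G_1$, the group $F$ sits inside $\VIP[G'_\bullet]$ as an FVIP group and $\FE$ sits inside $\PE{\VIP[G'_\bullet]}{\omega}$ as a good group. Applying Theorem~\ref{thm:SZ} on $(Y,\mathcal{B},\nu)$ then gives, after one further passage to a sub-IP-ring, the inequality $\IPlim_{\vec\alpha}\nu\big(\bigcap_i B S_i(\vec\alpha)\inv\big)>0$, and equivariance of $\pi$ transfers this identity of measures to the original space.

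The main obstacle is the measure-theoretic reduction in the third step: the Mackey realization must produce genuine point transformations, which crucially uses the countability of $G_1$ so that a single null set can be discarded outside of which every $g\in G_1$ acts as a measurable bijection. The preservation of the FVIP and good-group properties under restriction to $G_1$ is essentially tautological, since these depend only on the polynomial data (finite generation, closure under $\sD$, substitution closure, and the decomposition provided by Proposition~\ref{prop:decomposition-mixing-compact}) and not on the ambient group. With these routine verifications in place, the statement follows directly from Theorem~\ref{thm:SZ}.
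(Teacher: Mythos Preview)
Your proof is correct and follows essentially the same approach as the paper: normalize to $S_0\equiv 1_G$, invoke Corollary~\ref{cor:finite-good} to land in a good group, pass to a countable subgroup of $G$ and a separable invariant sub-$\sigma$-algebra, then realize the system as a regular space and apply Theorem~\ref{thm:SZ}. You supply somewhat more detail than the paper (the explicit right-invariance computation, the Mackey realization, and the remark that the FVIP and good-group properties survive restriction to $G_1$), but the argument is the same.
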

\begin{proof}
We can assume $S_{0}\equiv 1_{G}$.
By Corollary~\ref{cor:finite-good} we may assume that $S_{0},\dots,S_{t}\in\FE$ for some good subgroup $\FE\leq\PE{\VIP}{m}$.
Then we can replace $G$ by a countable group that is generated by the union of ranges of elements of $\FE$.
Next, we can replace $\mathcal{A}$ by a separable $G$-invariant $\sigma$-algebra generated by $A$.
Finally, we can assume that $X$ is regular and apply Theorem~\ref{thm:SZ}.
\end{proof}
Theorem~\ref{thm:SZ-intro} follows from Theorem~\ref{thm:SZ-general} and Lemma~\ref{lem:poly-fvip} with the filtration \eqref{eq:scalar-poly-filtration-2}, $d$ being the maximal degree of the generalized polynomials $p_{i,j}$.

Observe that in Theorem~\ref{thm:gen-poly-FVIP} for (not necessarily admissible) generalized polynomials we can choose $n$ from a finite set that only depends on the generalized polynomial.
In view of this fact we have the following variant of Corollary~\ref{cor:SZ-combinatorial} for generalized polynomials.
\begin{corollary}
\label{cor:SZ-combinatorial-non-admissible}
Let $G$ be a finitely generated nilpotent group, $T_{1},\dots,T_{t}\in G$ and $p_{i,j}:\Z^{m}\to\Z$, $i=1,\dots,t$, $j=1,\dots,s$, be generalized polynomials.
Then there exist finite sets $S_{j}$, $j=1,\dots,s$, such that for every subset $E\subset G$ with positive upper Banach density the set
\[
\Big\{ \vec n\in\Z^{m} : \exists g\in G, \exists s_{j}\in S_{j} : gs_{j}\prod_{i=1}^{t}T_{i}^{p_{i,j}(\vec n)}\in E, j=1,\dots,s \Big\}
\]
is FVIP* in $\Z^{m}$.
\end{corollary}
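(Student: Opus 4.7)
The plan is to combine three ingredients: the finite-set refinement of Theorem~\ref{thm:gen-poly-FVIP} asserted in the paragraph preceding the statement, the Furstenberg correspondence principle, and Theorem~\ref{thm:SZ-general}. By that refinement, for each $p_{i,j}$ there is a finite set $N_{i,j}\subset\Z$ depending only on $p_{i,j}$ such that for every FVIP system $(\vec n_\alpha)_\alpha\subset\Z^m$ we may wlog choose $n_{i,j}\in N_{i,j}$ for which $(p_{i,j}(\vec n_\alpha)-n_{i,j})_\alpha$ is an FVIP system in $\Z$. I would define
\[
S_j:=\bigl\{(T_1^{k_1}\cdots T_t^{k_t})^{-1}:k_i\in N_{i,j},\ i=1,\dots,t\bigr\}\subset G,
\]
which is finite and depends only on the given data.

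Next, given any FVIP system $(\vec n_\alpha)$ in $\Z^m$ of some degree $d$, I would apply the argument of Lemma~\ref{lem:poly-fvip} (using the first, rather than the admissible, part of Theorem~\ref{thm:gen-poly-FVIP}) with the filtration \eqref{eq:scalar-poly-filtration-2} whose parameter is $d\cdot\max_{i,j}\deg p_{i,j}$. This shows that each family $(T_i^{p_{i,j}(\vec n_\alpha)-n_{i,j}})_\alpha$ is wlog an FVIP system in $G$. Writing $c_{i,j}:=T_i^{n_{i,j}}$ and pulling these constants to the left using the identity $cg=(cgc^{-1})c$, I would rewrite
\[
\prod_{i=1}^t T_i^{p_{i,j}(\vec n_\alpha)}=s_j^*\,\tilde h_j(\alpha),\qquad s_j^*:=c_{1,j}\cdots c_{t,j}\in S_j^{-1},
\]
where each $\tilde h_j(\alpha)$ is a product of conjugates of the above FVIP systems by constant elements of $G$. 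The maps $\tilde h_j$ are themselves FVIP systems by the closure of VIP groups under conjugation by constants and by Lemma~\ref{lem:fvip-plus-fvip}.

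Finally, I would invoke Furstenberg's correspondence principle in the right-action formulation to produce a regular probability measure-preserving $G$-space $(X,\mathcal{A},\mu)$ and $A\in\mathcal{A}$ with $\mu(A)>0$ satisfying $\bar d(\bigcap_j Eg_j^{-1})\geq\mu(\bigcap_j Ag_j^{-1})$ for every choice of $g_j\in G$. Applying Theorem~\ref{thm:SZ-general} to the polynomial expressions $1_G,\tilde h_1,\dots,\tilde h_s\in\PE{F}{1}$, where $F$ is any FVIP group containing the $\tilde h_j$'s, yields on a sub-IP-ring some $\alpha$ with $\mu(A\cap\bigcap_{j=1}^s A\tilde h_j(\alpha)^{-1})>0$, and hence a $g\in G$ with $g\tilde h_j(\alpha)\in E$ for all $j$. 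Setting $s_j:=(s_j^*)^{-1}\in S_j$ then gives
\[
g s_j\prod_{i=1}^t T_i^{p_{i,j}(\vec n_\alpha)}=g s_j s_j^*\,\tilde h_j(\alpha)=g\,\tilde h_j(\alpha)\in E,
\]
so $\vec n_\alpha$ belongs to the set in the statement, establishing the FVIP* property.

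The main obstacle I expect is the finite-set refinement of Theorem~\ref{thm:gen-poly-FVIP} itself: verifying that the shift $n$ at each floor step $p=\floor{P}$ lies in a finite set depending only on $p$ requires tracking through the inductive construction and observing that $n$ is constrained by $|n-C|<2$ with $C$ in turn inductively drawn from a finite set of constants determined by $P$, and that the sum, product, and linear-combination steps of the construction all preserve finiteness of the set of possible shifts. Once this refinement is in hand, the remainder of the argument is routine bookkeeping on top of the already established results.
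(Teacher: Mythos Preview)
Your proposal is correct and follows precisely the route the paper indicates: the paper does not spell out a proof but states immediately before the corollary that in Theorem~\ref{thm:gen-poly-FVIP} the shift $n$ can be chosen from a finite set depending only on the generalized polynomial, and that the corollary then follows as a variant of Corollary~\ref{cor:SZ-combinatorial}. Your argument unwinds exactly this, adapting Lemma~\ref{lem:poly-fvip} to the non-admissible case via the constant shifts $c_{i,j}=T_i^{n_{i,j}}$, pulling them to the left to produce genuine FVIP systems $\tilde h_j$, and then invoking Furstenberg correspondence together with Theorem~\ref{thm:SZ-general}; the inductive check that the finite-set property propagates through sums, products, and floor steps is the only thing left implicit in the paper, and your sketch of it is accurate.
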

Since every member set of an idempotent ultrafilter contains an IP set this implies a multidimensional version of \cite[Theorem 1.23]{MR2747062} that holds for every idempotent ultrafilter, see \cite[Remark 3.42]{MR2747062}.


\chapter{Higher order Fourier analysis}
\label{chap:fourier}
\renewcommand*{\PolyDomain}{\Z}
Through the work of Host and Kra \cite{MR2150389} and Ziegler \cite{MR2257397} on characterisitc factors for multiple term ergodic averages, \emph{nilmanifolds} became a central object of study in this area.
More recently, it became apparent that there are some advantages to studying \emph{polynomial}, rather than linear, structures on nilmanifolds, be it in form of dynamical parallelepipeds \cite{MR2600993}, cube spaces \cite{2010arXiv1009.3825A}, or polynomial sequences \cite{MR2877065}.
In this chapter we take the latter viewpoint, but put emphasis on qualitative ($N\to\infty$) rather than quanitative ($N$ large but fixed) phenomena.
A large part of this chapter is dedicated to the Green--Tao quantitative proof of Leibman's equidistribution results for polynomials on nilmanifolds, some parts of which are reused in our uniform Wiener-Wintner theorem for nilsequences (this is joint work with the author's advisor T.~Eisner \cite{arxiv:1208.3977}).

\section{Nilmanifolds and nilsequences}
Let us introduce the basic objects, and also fix the notation that will be used for them throughout this chapter.
By $G$ we denote a ($k$-step) nilpotent Lie group with a discrete cocompact subgroup $\Gamma\leq G$.
The compact manifold $G/\Gamma$ is called a \emph{($k$-step) nilmanifold}.
\index{nilmanifold}
It admits a unique left $G$-invariant Borel probability measure, called the \emph{Haar measure}, and integrals over $G/\Gamma$ are taken with respect to this measure unless stated otherwise.
Using the universal covering, we may and will assume that the connected component of the identity $G^{o}$ is simply connected.
We will also assume that $\Gamma$ is finitely generated.
We denote a filtration on $G$ by $\Gb$ and assume that every group in the filtration is a Lie subgroup of $G$.
The dimensions of these groups are denoted by $d:=\dim G$ and $d_{i}:=\dim G_{i}$.
More in general, ``Lie group'' stands for a nilpotent Lie group whose connected component of the identity is simply connected, and we only consider (pre-)filtrations in the category of Lie groups (nilpotent, with simply connected identity component).

The standard example that the reader should keep in mind is the Heisenberg group with the (lower central series) filtration
\[
\begin{pmatrix}
1 & \R & \R\\ 0 & 1 & \R\\ 0 & 0 & 1
\end{pmatrix}
=
\begin{pmatrix}
1 & \R & \R\\ 0 & 1 & \R\\ 0 & 0 & 1
\end{pmatrix}
\geq
\begin{pmatrix}
1 & 0 & \R\\ 0 & 1 & 0\\ 0 & 0 & 1
\end{pmatrix}
\geq
\begin{pmatrix}
1 & 0 & 0\\ 0 & 1 & 0\\ 0 & 0 & 1
\end{pmatrix}
\]
and the discrete Heisenberg group
$\left(\begin{smallmatrix}
1 & \Z & \Z\\ 0 & 1 & \Z\\ 0 & 0 & 1
\end{smallmatrix}\right)$
as a cocompact lattice.
The polynomial sequences with respect to this filtration are precisely the sequences of the form
$\left(\begin{smallmatrix}
1 & p_{1}(n) & p_{2}(n)\\ 0 & 1 & p_{3}(n)\\ 0 & 0 & 1
\end{smallmatrix}\right)$,
where $p_{1}$ and $p_{3}$ are linear real polynomials and $p_{2}$ is a quadratic real polynomial.

\subsection{Rationality and Mal'cev bases}
\begin{definition}[Rational subgroup]
\label{def:rational-filtration}
\index{rational!subgroup}
A subgroup $H\leq G$ is called \emph{$\Gamma$-rational} if $\Gamma\cap H \leq H$ is a cocompact subgroup.
\index{rational!filtration}
A filtration $\Gb$ on $G$ is called \emph{$\Gamma$-rational} if it consists of $\Gamma$-rational subgroups.
\end{definition}
Let $\lag$ be the Lie algebra of a connected Lie group $G$.
Then $\exp:\lag\to G$ is a diffeomorphism; call its inverse $\log : G\to\lag$.
Let $X_{1},\dots,X_{d}$ be a basis for $\lag$.
\index{coordinates!of the first kind (exponential)}
An element $g\in G$ is said to have \emph{coordinates of the first kind} (or \emph{exponential coordinates}) $(t_{1},\dots,t_{d})$ if
\[
g = \exp(t_{1}X_{1}+\dots+t_{d}X_{d})
\]
\index{coordinates!of the second kind (Mal'cev)}
and \emph{coordinates of the second kind} $(u_{1},\dots,u_{d})$ if
\[
g = \exp(u_{1}X_{1})\dots \exp(u_{d}X_{d}).
\]
\begin{definition}[Mal'cev basis]
\label{def:malcev-basis}
\index{Mal'cev basis}
Assume that $G$ is connected.
An ordered basis $\{X_{1},\dots,X_{d}\}$ for the Lie algebra $\lag$ of $G$ is called a \emph{Mal'cev basis for $G/\Gamma$} if the following conditions are satisfied.
\begin{enumerate}
\item For each $i=1,\dots,d$ the subspace spanned by $X_{i},\dots,X_{d}$ is a Lie algebra ideal of $\lag$.
\item For each $g\in G$ there exist unique coordinates of the second kind $t_{1},\dots,t_{d_{1}}\in\R$, called \emph{Mal'cev coordinates} of $g$, such that $g=\exp(t_{1}X_{1})\dots\exp(t_{d}X_{d})$.
\item The lattice $\Gamma$ consists precisely of the elements with integer Mal'cev coordinates.
\end{enumerate}
Let $\Gb$ be a filtration of length $l$ on $G$ that consists of connected, simply connected Lie groups.
The Mal'cev basis $\{X_{1},\dots,X_{d}\}$ is said to be \emph{adapted to $\Gb$} if the following additional condition is satisfied.
\begin{enumerate}[resume]
\item For each $i=1,\dots,l$ the Lie algebra of $G_{i}$ coincides with $\<X_{d-d_{i}+1},\dots,X_{d}\>$.
\end{enumerate}
For not necessarily connected $G$ and $G_{i}$ a Mal'cev basis for $G/\Gamma$ (adapted to $\Gb$) is a Mal'cev basis for $G^{o}/\Gamma^{o}$ (adapted to $\Gb^{o}$).
\end{definition}
By a result of Mal'cev \cite{MR0028842} there always exists a Mal'cev basis adapted to the lower central series.
Using this fact we can explain the name ``rational subgroup'' as follows.
\begin{lemma}
\label{lem:rational-subgroup}
Assume that $G$ is connected and let $H\leq G$ be a connected Lie subgroup with Lie algebra $\mathfrak{h}$.
Then the following statements are equivalent.
\begin{enumerate}
\item\label{lem:rational-subgroup:cocompact} The subgroup $H$ is $\Gamma$-rational.
\item\label{lem:rational-subgroup:lattice} $\log\Gamma \cap \mathfrak{h}$ is a lattice in the Lie algebra $\mathfrak{h}$.
\item\label{lem:rational-subgroup:rational} The Lie algebra $\mathfrak{h}$ is spanned by rational combinations of vectors in a Mal'cev basis for $G/\Gamma$.
\end{enumerate}
\end{lemma}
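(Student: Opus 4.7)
The plan is to fix a Mal'cev basis $X_{1},\dots,X_{d}$ of $G/\Gamma$, available by Definition~\ref{def:malcev-basis}, and to exploit throughout the following algebraic fact: since $G$ is nilpotent, the Baker--Campbell--Hausdorff formula and its inverse (relating exponential coordinates to coordinates of the second kind) are polynomial maps on $\lag$ with \emph{rational} coefficients. Combined with condition~3 of Definition~\ref{def:malcev-basis}, which identifies $\Gamma$ with the set of elements having integer Mal'cev coordinates, this yields $\log\Gamma\subset\lag_{\Q}$, where $\lag_{\Q}$ denotes the $\Q$-span of $X_{1},\dots,X_{d}$; and moreover $\exp(\lag_{\Q})$ equals the set of elements of $G$ with rational Mal'cev coordinates, a $\Q$-form of $\Gamma$ commensurable with $\Gamma$ after clearing denominators.

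To prove $(\ref{lem:rational-subgroup:cocompact})\Rightarrow(\ref{lem:rational-subgroup:rational})$, I would invoke the Mal'cev existence theorem \cite{MR0028842}, applied to the simply connected nilpotent group $H$ and its cocompact lattice $H\cap\Gamma$: this produces a Mal'cev basis $Y_{1},\dots,Y_{k}$ of $\mathfrak{h}$ with $\exp(Y_{j})\in H\cap\Gamma\subset\Gamma$. The preparatory fact then gives $Y_{j}=\log\exp(Y_{j})\in\log\Gamma\subset\lag_{\Q}$, so $\mathfrak{h}$ admits a basis of rational combinations of the $X_{i}$. For $(\ref{lem:rational-subgroup:rational})\Rightarrow(\ref{lem:rational-subgroup:lattice})$, given a $\Q$-rational basis $Y_{1},\dots,Y_{k}$ of $\mathfrak{h}$, I note that $n\mapsto\exp(nY_{j})$ has Mal'cev coordinates which are polynomials in $n$ with rational coefficients (once more by nilpotence of BCH), so clearing denominators yields integers $m_{j}>0$ with $\exp(m_{j}nY_{j})\in\Gamma$ for every $n\in\Z$; hence $m_{j}Y_{j}\in\log\Gamma\cap\mathfrak{h}$, producing a full $\R$-basis of $\mathfrak{h}$ inside $\log\Gamma\cap\mathfrak{h}$. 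Finally $(\ref{lem:rational-subgroup:lattice})\Rightarrow(\ref{lem:rational-subgroup:cocompact})$ follows by exponentiating: the elements $\exp(v)$ with $v$ ranging over a basis of $\mathfrak{h}$ contained in $\log\Gamma\cap\mathfrak{h}$ lie in $H\cap\Gamma$, their logarithms $\R$-span $\mathfrak{h}$, and the standard converse to Mal'cev's theorem (a discrete subgroup of a simply connected nilpotent Lie group is cocompact iff its logarithms $\R$-span the Lie algebra) ensures that $H\cap\Gamma$ is cocompact in $H$.

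The main obstacle is essentially bookkeeping: the precise interpretation of ``lattice'' in (\ref{lem:rational-subgroup:lattice}), since $\log\Gamma\cap\mathfrak{h}$ is in general not closed under addition---it carries the pulled-back group operation of $\Gamma$ through BCH rather than the vector-space addition on $\mathfrak{h}$. The natural reading, which the plan makes precise, is that $\log\Gamma\cap\mathfrak{h}$ is discrete and contains an additive $\Z$-basis of an honest lattice of full rank in $\mathfrak{h}$; equivalently, it is a relatively dense discrete subset of $\mathfrak{h}$. Once this convention is adopted, all three implications reduce to rationality of the BCH formula together with the existence and converse of Mal'cev's basis theorem.
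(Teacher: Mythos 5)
Your proof is correct, and it relies on the same essential ingredients as the paper's — the Mal'cev existence theorem applied to the pair $(H, H\cap\Gamma)$, and the fact that the coordinate changes arising from the Baker--Campbell--Hausdorff formula on a nilpotent Lie algebra are polynomial with rational coefficients (given that the structure constants in a Mal'cev basis are rational, a standard consequence of Mal'cev's rigidity theorem). The difference is in the order of the cycle: you prove $(1)\Rightarrow(3)\Rightarrow(2)\Rightarrow(1)$, whereas the paper's proof runs $(1)\Rightarrow(2)\Rightarrow(3)\Rightarrow(1)$. The substantive divergence is in the step returning to $(1)$: the paper sketches a direct construction of a relatively compact fundamental domain for $\exp|_{\mathfrak{h}}\bmod\Gamma$ from an integer-coordinate basis of $\mathfrak{h}$, while you split this into two parts — clearing denominators via the polynomiality of Mal'cev coordinates to land in $\log\Gamma\cap\mathfrak{h}$, and then appealing to the converse Mal'cev criterion (a discrete, finitely generated subgroup whose logarithms span the Lie algebra is cocompact). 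Both routes are sound; yours makes the denominator-clearing explicit where the paper leaves it to the reader, and trades a hands-on fundamental-domain argument for a citation of the converse theorem. You also correctly flag, and address, the imprecision in the phrase ``$\log\Gamma\cap\mathfrak{h}$ is a lattice'' — this set is not an additive subgroup of $\mathfrak{h}$ — which the paper's proof glosses over.
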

\begin{proof}
Assume that \eqref{lem:rational-subgroup:cocompact} holds.
Mal'cev's result then implies existence of a Mal'cev basis for $H/(H\cap\Gamma)$, which in particular implies \eqref{lem:rational-subgroup:lattice}.
It is clear that \eqref{lem:rational-subgroup:lattice} implies \eqref{lem:rational-subgroup:rational}.
Finally, one may assume that the rational linear combintations in \eqref{lem:rational-subgroup:rational} are in fact integer linear combinations, and using properties of a Mal'cev basis this can be used to find a relatively compact fundamental domain for $\exp|_{\mathfrak{h}} \mod\Gamma$, proving \eqref{lem:rational-subgroup:cocompact}.
\end{proof}


\subsection{Commensurable lattices}
\begin{lemma}
\label{lem:comm-lattice}
Let $G/\Gamma$ be a nilmanifold and $\tilde\Gamma\leq G$ be a group that is commensurable with $\Gamma$.
Then the following assertions hold.
\begin{enumerate}
\item\label{lem:comm-lattice:lattice} $\tilde\Gamma$ is also a discrete cocompact subgroup.
\item\label{lem:comm-lattice:rational} Every $\Gamma$-rational subgroup $G'\leq G$ is also $\tilde\Gamma$-rational.
\end{enumerate}
\end{lemma}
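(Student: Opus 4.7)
The plan is to work throughout with the intersection $H := \Gamma\cap\tilde\Gamma$, which by the commensurability hypothesis is a finite-index subgroup of both $\Gamma$ and $\tilde\Gamma$. Since $H$ has finite index in $\Gamma$, the projection $G/H \to G/\Gamma$ is a finite covering of compact manifolds, so $G/H$ is itself a compact nilmanifold; in particular $H$ is discrete in $G$ (as a subgroup of the discrete $\Gamma$) and cocompact.

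For \eqref{lem:comm-lattice:lattice}, cocompactness of $\tilde\Gamma$ is immediate: $G/\tilde\Gamma$ is a (finite) quotient of the compact space $G/H$, since $H\leq\tilde\Gamma$. For discreteness, I would project via $\pi\from G\to G/H$. The image $\pi(\tilde\Gamma)$ is finite because $[\tilde\Gamma:H]<\infty$, and $G/H$ is Hausdorff, so I can pick a neighborhood $U$ of $\pi(e)$ disjoint from the other finitely many cosets of $H$ represented in $\tilde\Gamma$. Then $\pi\inv(U)\cap\tilde\Gamma = H$, and since $H$ is discrete in $G$, I can shrink to a neighborhood $V\subseteq\pi\inv(U)$ of $e$ with $V\cap H = \{e\}$; this $V$ witnesses discreteness of $\tilde\Gamma$ at $e$, and translation handles the other points.

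For \eqref{lem:comm-lattice:rational}, let $G'\leq G$ be $\Gamma$-rational, so $\Gamma\cap G'$ is cocompact in $G'$. Consider $H' := H\cap G' = \Gamma\cap\tilde\Gamma\cap G'$. The index $[\Gamma\cap G' : H']$ is at most $[\Gamma:H]<\infty$ (restrict the quotient map $\Gamma\to\Gamma/H$ to $\Gamma\cap G'$), so $H'$ is a finite-index subgroup of the cocompact subgroup $\Gamma\cap G'$ of $G'$, hence itself cocompact in $G'$. Since $H'\subseteq\tilde\Gamma\cap G'$, the compact space $G'/H'$ surjects onto $G'/(\tilde\Gamma\cap G')$, giving cocompactness of $\tilde\Gamma\cap G'$ in $G'$, which is \eqref{lem:comm-lattice:rational}.

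I do not anticipate any substantial obstacle: the only mildly delicate point is the discreteness argument in \eqref{lem:comm-lattice:lattice}, where one has to be careful not to conclude discreteness of a finite union of discrete cosets by a purely set-theoretic argument (cosets of a discrete subgroup are discrete individually but might a priori accumulate). Passing through the Hausdorff quotient $G/H$ circumvents this cleanly, and the rest is formal manipulation of finite indices and cocompact subgroups.
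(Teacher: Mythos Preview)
Your proof is correct and follows essentially the same approach as the paper: both arguments reduce to the observation that a finite-index subgroup of a lattice is a lattice, working through $H=\Gamma\cap\tilde\Gamma$ (the paper instead treats the two cases $\tilde\Gamma\leq\Gamma$ and $\Gamma\leq\tilde\Gamma$ separately and combines them, which amounts to the same thing). Your discreteness argument via the Hausdorff quotient $G/H$ is more careful than the paper's, which simply asserts that discreteness of $\tilde\Gamma$ is ``clear''; likewise your proof of \eqref{lem:comm-lattice:rational} spells out what the paper compresses into the one-line observation that $\Gamma\cap G'$ and $\tilde\Gamma\cap G'$ are commensurable.
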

\begin{proof}
To see \eqref{lem:comm-lattice:lattice} note that if $\tilde\Gamma\leq\Gamma$, then the natural map $G/\tilde\Gamma \to G/\Gamma$ is a covering map with finitely many sheets, and it follows that $G/\tilde\Gamma$ is compact.
If $\Gamma\leq\tilde\Gamma$, then $G/\tilde\Gamma$ is a quotient space of $G/\Gamma$, so it is clearly compact.
From this it follows that $\tilde\Gamma$ is cocompact in general.
Also, it is clear that $\tilde\Gamma$ is discrete if and only if $\Gamma$ is discrete.

The assertion \eqref{lem:comm-lattice:rational} follows since the groups $\Gamma\cap G'$ and $\tilde\Gamma\cap G'$ are commensurable whenever $\Gamma$ and $\tilde\Gamma$ are commensurable.
\end{proof}
An important class of examples of commensuarble lattices arises when one needs to replace a nilmanifold by a connected one.
\begin{lemma}
\label{lem:finer-lattice}
Let $G/\Gamma$ be a nilmanifold and $\Gb$ a $\Gamma$-rational filtration.
Then there exists a lattice $\Gamma\leq\tilde\Gamma\leq G$ such that $\Gamma$ has finite index in $\tilde\Gamma$ and $G_{i}/\tilde\Gamma_{i}$ is connected for every $i$.
\end{lemma}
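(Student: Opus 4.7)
The plan is to enlarge $\Gamma$ by adjoining finitely many elements, one per missing connected component in each $G_i$, chosen so that each adjoined element has a power in $\Gamma$; the finite-index conclusion will then follow from Corollary~\ref{cor:finite-ext}. For each $i$, the group $G_i/G_i^o$ is discrete (as $G_i^o$ is open in $G_i$) and the image $\pi_i(G_i\cap\Gamma)$ under the projection $\pi_i\colon G_i\to G_i/G_i^o$ has finite index in it, since $G_i/G_i^o(G_i\cap\Gamma)$ is simultaneously discrete and a continuous image of the compact space $G_i/(G_i\cap\Gamma)$, hence finite. As a first step I would pick finitely many coset representatives $g_{i,1},\dots,g_{i,k_i}\in G_i$ for $\pi_i(G_i\cap\Gamma)$ in $G_i/G_i^o$.

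The main technical point is to modify each $g_{i,j}$ by left multiplication by an element of $G_i^o$ so that a fixed power lands in $\Gamma$. Since $G_i/G_i^o(G_i\cap\Gamma)$ is finite, there is $N\geq 1$ with $g_{i,j}^N=h\gamma$ for some $h\in G_i^o$ and $\gamma\in G_i\cap\Gamma$. Using normality of $G_i^o$ in $G_i$,
\[
(h_0 g_{i,j})^N = h_0\cdot({}^{g_{i,j}}h_0)\cdots({}^{g_{i,j}^{N-1}}h_0)\cdot g_{i,j}^N = \phi(h_0)\cdot h\gamma,
\]
so it suffices to solve $\phi(h_0)=h^{-1}$ in $G_i^o$, where $\phi\colon G_i^o\to G_i^o$ is the ``product of conjugates'' map appearing on the right; this yields $(h_0 g_{i,j})^N=\gamma\in\Gamma$. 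In exponential coordinates on the simply connected nilpotent group $G_i^o$, $\phi$ is polynomial, with derivative at the identity equal to $I+T+\dots+T^{N-1}$ where $T=\Ad(g_{i,j})|_{\lag_i^o}$. Since $G$ is nilpotent, $T$ is unipotent, so this operator equals $NI+(\text{nilpotent})$ and is invertible. A short induction along the descending central series of $G_i^o$ (whose center is stable under $\Ad(g_{i,j})$ because $G_i^o$ is normal in $G_i$) upgrades invertibility of the derivative to bijectivity of $\phi$, yielding the required $h_0$.

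Once each representative is modified (and re-denoted $g_{i,j}$), all $g_{i,j}^N\in\Gamma$, so setting $\tilde\Gamma:=\<\Gamma,\{g_{i,j}\}_{i,j}\>$ gives $\tilde\Gamma\subseteq\roots\Gamma$, and $\tilde\Gamma$ is finitely generated because $\Gamma$ is. Corollary~\ref{cor:finite-ext} then yields $[\tilde\Gamma:\Gamma]<\infty$, and Lemma~\ref{lem:comm-lattice}\eqref{lem:comm-lattice:lattice} ensures that $\tilde\Gamma$ is still a discrete cocompact subgroup of $G$.

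Finally, for each $i$ the image of $G_i\cap\tilde\Gamma$ in $G_i/G_i^o$ contains the finite-index subgroup $\pi_i(G_i\cap\Gamma)$ together with all coset representatives $\pi_i(g_{i,j})$, so it coincides with $G_i/G_i^o$. Equivalently, $G_i=G_i^o(G_i\cap\tilde\Gamma)$, which realizes $G_i/(G_i\cap\tilde\Gamma)$ as a continuous image of the connected set $G_i^o$ and thereby as connected.
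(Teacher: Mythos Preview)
Your proof is correct but takes a genuinely different route from the paper's. The paper argues by induction on the length $d$ of the filtration: at the bottom it uses rationality to split $G_{d}=G_{d}^{o}\oplus A$ with $A$ essentially in $\Gamma$, applies the inductive hypothesis to $G/G_{d}$, and then lifts the resulting generators, adjusting each by an element of the \emph{central abelian} group $G_{d}^{o}$ (where arbitrary roots exist) so that a fixed power lies in $\Gamma$; connectedness of $G_{i}/\tilde\Gamma_{i}$ is deduced from connectedness modulo $G_{d}$ together with path-connectedness of $G_{d}^{o}$. You instead handle all $G_{i}$ at once, with no outer induction on $d$. The cost is that your adjustment step must solve $\phi(h_{0})=h^{-1}$ in the nonabelian group $G_{i}^{o}$, which you obtain from unipotence of $\Ad(g_{i,j})$ on $\lag_{i}^{o}$ and an inner induction on the nilpotency class of $G_{i}^{o}$ (factoring out the last term of its lower central series, on which $\phi$ is the linear map $I+T+\dots+T^{N-1}$); this is more work than the paper's abelian root-taking in $G_{d}^{o}$, but it purchases a one-shot construction of $\tilde\Gamma$ with no recursion along the filtration. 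Both arguments finish the same way, invoking Corollary~\ref{cor:finite-ext} for the finite-index claim.
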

\begin{proof}
We use induction on the length of the filtration.
If $\Gb$ is trivial, then there is nothing to show, so suppose that the conclusion holds for filtrations of length $d-1$ and consider a $\Gamma$-rational filtration $\Gb$ of length $d$.

By the rationality assumption we can write $G_{d}=G_{d}^{o}\oplus A$ in such a way that $\Gamma\cap A\leq A$ is a finite index subgroup.
Since $A$ is central in $G$, this implies that $\Gamma$ has finite index in $\Gamma A$.
Replacing $\Gamma$ by $\Gamma A$ if necessary, we may assume that $\Gamma G_{d} = \Gamma G_{d}^{o}$.

By the inductive assumption $\Gamma G_{d}/G_{d}$ is a finite index subgroup of a lattice $\tilde\Gamma_{/d}$ such that $(G_{i}/G_{d})/\tilde\Gamma_{/d}$ is connected for every $i$.
Let $\{\tilde\gamma_{j}\} \subset G/G_{d}$ be a finite set that together with $\Gamma G_{d}/G_{d}$ generates $\tilde\Gamma_{/d}$.
We can write $\tilde\gamma_{j}=g_{j}G_{d}$, and we have $g_{j}^{r}\in \Gamma G_{d}$ for some $r$ and all $j$.
Now recall that $\Gamma G_{d}=\Gamma G_{d}^{o}$ and that in the connected commutative Lie group $G_{d}^{o}$ arbitrary roots exist.
Hence, multiplying $g_{j}$ by an element of $G_{d}^{o}$ if necessary, we may assume that $g_{j}^{r}\in\Gamma$.

By Corollary~\ref{cor:finite-ext} $\Gamma$ has finite index in the group generated by $\Gamma$ and the elements $g_{j}$.
It remains to show that $G_{i}/\tilde\Gamma_{i}$ is connected for every $i$.
Recall that by the inductive assumption the quotient $G_{i}/\tilde\Gamma_{i} G_{d} = G_{i}/\tilde\Gamma_{i} G_{d}^{o}$ is connected, hence path connected.
Since the quotient of $G_{i}/\tilde\Gamma_{i}$ by continuous action of the path connected group $G_{d}^{o}$ is path connected, $G_{i}/\tilde\Gamma_{i}$ is connected.
\end{proof}
Since $\roots\Gamma \leq \comm_{G}(\Gamma)$, Lemma~\ref{lem:comm-lattice} has the following consequence.
\begin{corollary}
\label{cor:rational-subgroup-conjugate}
Let $G/\Gamma$ be a nilmanifold and $G'\leq G$ a $\Gamma$-rational subgroup.
Then for every $\gamma\in\roots{\Gamma}$ the subgroup $\gamma\inv G' \gamma$ is $\Gamma$-rational.
\end{corollary}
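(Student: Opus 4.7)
The plan is to derive this directly from Lemma~\ref{lem:comm-lattice}\eqref{lem:comm-lattice:rational} by exploiting the fact that elements of $\roots\Gamma$ lie in the commensurator of $\Gamma$, which was recorded in Corollary~\ref{cor:finite-ext}. The only work is to move the conjugation from one side of the rationality condition to the other.

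More precisely, I would fix $\gamma\in\roots\Gamma$ and set $\tilde\Gamma := \gamma\Gamma\gamma\inv$, which is a subgroup of $G$. Since $\gamma\in\roots\Gamma \leq \comm_G(\Gamma)$ by Corollary~\ref{cor:finite-ext}, the lattice $\tilde\Gamma$ is commensurable with $\Gamma$. Lemma~\ref{lem:comm-lattice}\eqref{lem:comm-lattice:rational} then applies and gives that $G'$, being $\Gamma$-rational, is also $\tilde\Gamma$-rational, i.e.\ $G'\cap\tilde\Gamma$ is a cocompact subgroup of $G'$.

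Finally, conjugation by $\gamma\inv$ is a homeomorphism of $G$ sending $G'$ onto $\gamma\inv G'\gamma$ and $\tilde\Gamma$ onto $\Gamma$. Hence it maps the cocompact inclusion $G'\cap\tilde\Gamma \leq G'$ to the cocompact inclusion $\gamma\inv G'\gamma \cap \Gamma \leq \gamma\inv G'\gamma$, which is exactly the statement that $\gamma\inv G'\gamma$ is $\Gamma$-rational. There is no real obstacle here; the content of the corollary is just that the standing results on the commensurator transport rationality across conjugation.
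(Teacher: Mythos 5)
Your proof is correct and follows exactly the route the paper takes: it invokes $\roots\Gamma\leq\comm_G(\Gamma)$ (Corollary~\ref{cor:finite-ext}) to conclude that $\tilde\Gamma=\gamma\Gamma\gamma\inv$ is commensurable with $\Gamma$, applies Lemma~\ref{lem:comm-lattice}\eqref{lem:comm-lattice:rational} to get $\tilde\Gamma$-rationality of $G'$, and then conjugates by $\gamma\inv$. The paper states this as a one-line remark; you have merely expanded the same argument.
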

It is also useful to know what the conjugation map looks like in coordinates.
\begin{lemma}
\label{lem:conjugation}
Let $G/\Gamma$ be a nilmanifold with a Mal'cev basis adapted to $\Gb$ and $\gamma\in\roots{\Gamma}$.
Then the conjugation map $g\mapsto \gamma\inv g\gamma$ is linear, unipotent, and upper triangular with rational coefficients in coordinates of the first kind on $G^{o}$, and it is polynomial with rational coefficients in coordiantes of the second kind.

If in addition $G^{o}$ is commutative and $\gamma\in\Gamma$, then the conjugation map is linear and unipotent with integer coefficients in coordinates of the first and the second kind.
\end{lemma}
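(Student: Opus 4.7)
The plan is to work via the logarithm on $G^{o}$ and translate conjugation into $\mathrm{Ad}$ on the Lie algebra, and then transfer rationality between the two coordinate systems using the Baker-Campbell-Hausdorff formula. First I would observe that $g\mapsto\gamma\inv g\gamma$ corresponds under $\log$ to $\mathrm{Ad}(\gamma\inv)=\exp(-\mathrm{ad}(\log\gamma))$, which by nilpotency of $G$ is a finite-degree polynomial in a nilpotent endomorphism and in particular a \emph{linear} endomorphism of $\lag$. For the upper triangularity, take the adapted Mal'cev basis $X_{1},\dots,X_{d}$: whenever $X_{i}\in\mathrm{Lie}(G_{k}^{o})\setminus\mathrm{Lie}(G_{k+1}^{o})$, the filtration identity $[\lag,\mathrm{Lie}(G_{k}^{o})]\subseteq\mathrm{Lie}(G_{k+1}^{o})$ combined with Definition~\ref{def:malcev-basis}(4) places $\mathrm{Lie}(G_{k+1}^{o})$ inside $\<X_{i+1},\dots,X_{d}\>$. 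Hence $\mathrm{ad}(\log\gamma)$ is strictly upper triangular in this basis, and $\mathrm{Ad}(\gamma\inv)$ is unipotent upper triangular, establishing the structural part of the first assertion in coordinates of the first kind.

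Next, for rationality I would invoke the Baker-Campbell-Hausdorff formula, truncated at the nilpotency class, which supplies a polynomial change of variable with rational coefficients in both directions between coordinates of the first and of the second kind (the denominators are bounded by factorials depending only on the class, independently of $\gamma$). For $\gamma\in\Gamma$ the coordinates of the second kind are integers by the definition of a Mal'cev basis, so $\log\gamma$ has rational coordinates in the first kind; for $\gamma\in\roots{\Gamma}$ one picks $r\in\N_{>0}$ with $\gamma^{r}\in\Gamma$ and uses $\log\gamma=r\inv\log(\gamma^{r})$, which is again rational. The matrix of $\mathrm{ad}(\log\gamma)$ is therefore rational, and so is the polynomial expression $\mathrm{Ad}(\gamma\inv)$, which completes the first-kind claim. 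Composing this rational linear map on both sides with the rational polynomial BCH coordinate changes then yields the polynomial-with-rational-coefficients description in coordinates of the second kind.

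For the final assertion, assume $G^{o}$ is abelian and $\gamma\in\Gamma$. Then $\exp\from\lag^{o}\to G^{o}$ is a group isomorphism, so coordinates of the first and of the second kind agree on $G^{o}$ and linearity and unipotence are already established. Since $G^{o}$ is characteristic in $G$, the sublattice $\Gamma^{o}:=\Gamma\cap G^{o}$ is normal in $\Gamma$ and is therefore preserved by $g\mapsto\gamma\inv g\gamma$; consequently $\mathrm{Ad}(\gamma\inv)$ preserves $\log\Gamma^{o}\subseteq\lag^{o}$, which in the restricted Mal'cev basis $X_{d-\dim G^{o}+1},\dots,X_{d}$ is exactly $\Z^{\dim G^{o}}$, forcing integer entries. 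The main obstacle I anticipate is the bookkeeping in the middle paragraph: namely verifying that the BCH conversion between the two coordinate systems really produces only rational coefficients with denominators uniformly bounded in $\gamma$, and that it interacts correctly with the adapted Mal'cev basis; this, however, is classical for nilpotent Lie groups of bounded class and can be extracted from standard references on Mal'cev bases.
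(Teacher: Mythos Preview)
Your argument has a real gap: you repeatedly use $\log\gamma$, in particular the identity $\mathrm{Ad}(\gamma\inv)=\exp(-\mathrm{ad}(\log\gamma))$ and the formula $\log\gamma=r\inv\log(\gamma^{r})$. But in this paper $G$ is \emph{not} assumed connected, the Mal'cev basis is only a basis for $G^{o}$, and $\gamma\in\roots{\Gamma}$ need not lie in $G^{o}$; hence $\log\gamma$ is not defined in general. This is not a hypothetical issue: the lemma is applied (e.g.\ in Lemma~\ref{lem:affine-unipotent}) precisely to elements $\gamma\in\Gamma$ produced by writing $a=g\gamma$ with $g\in G^{o}$, and such $\gamma$ typically sit in the nontrivial components of $G$. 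Your upper-triangularity argument via ``$\mathrm{ad}(\log\gamma)$ strictly upper triangular'' has the same defect; that part can be repaired without $\log\gamma$ by noting that $[G_{1},G_{k}]\subset G_{k+1}$ forces $\mathrm{Ad}(\gamma)$ to act trivially on each quotient $\lag_{k}/\lag_{k+1}$, but your rationality argument cannot be repaired so cheaply.

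The paper's proof circumvents this by never writing $\log\gamma$. It uses only that $\mathrm{Ad}(\gamma)$ is a linear map on $\lag$ (always defined as the differential of conjugation) and that conjugation by $\gamma^{r}\in\Gamma$ preserves $\Gamma^{o}$, hence preserves $\Z^{d}$ in second-kind coordinates. If $\mathrm{Ad}(\gamma)$ had an irrational entry, then so would the unipotent upper triangular matrix $\mathrm{Ad}(\gamma)^{r}=\mathrm{Ad}(\gamma^{r})$ (unipotent upper triangular matrices over $\Q$ are closed under taking $r$-th roots, so the contrapositive holds); pushing through the rational coordinate change to second-kind coordinates would then send some integer point to an irrational point, a contradiction. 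If you want to salvage your direct approach, the correct replacement is to observe that $\mathrm{Ad}(\gamma^{-r})X_{i}=\log(\gamma^{-r}\exp(X_{i})\gamma^{r})\in\log(\Gamma^{o})\subset\Q^{d}$ (the last inclusion by BCH), whence $\mathrm{Ad}(\gamma^{-r})$ is rational, and then take the unique unipotent $r$-th root via the matrix logarithm of $\mathrm{Ad}(\gamma^{-r})$ rather than $\log\gamma$ itself.
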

\begin{proof}
We have $\gamma^{r}\in\Gamma$ for some $r\in\N_{>0}$.
The conjugation map is conjugated to $\Ad(\gamma)$ by the exponential map, so it is linear in coordinates of the first kind.
It is unipotent and upper triangular in coordinates of the first kind since $G$ is nilpotent.

Suppose that $\Ad(\gamma)$ has an irrational coefficient in coordinates of the first kind.
Then $\Ad(\gamma)^{r} = \Ad(\gamma^{r})$ also has an irrational coefficient.
In view of the upper triangular form of the coordinate change maps between coordinates of the first and the second kind \cite[(A.2)]{MR2877065}, this implies that the $r$-th power of the conjugation map, written in coordinates of the second kind, maps some point of $\Z^{\dim G}$ to a point with an irrational coordinate.
This is a contradiction, since conjugation by an element of $\Gamma$ preserves $\Gamma$.

By \cite[Lemma A.2]{MR2877065} this implies that the conjugation map is polynomial with rational coefficients in coordinates of the second kind.

If $G^{o}$ is commutative, then coordinates of the first and the second kind coincide, so in the case $\gamma\in\Gamma$ the above argument shows that the conjugation map has integer coefficients in coordinates of both kinds.
\end{proof}

For completeness we also specialize this result to nilmanifolds whose structure group's connected component of the identity is commutative.
This is most useful in conjunction with Corollary~\ref{thm:leibman-equidistribution-criterion}.
\begin{lemma}[{\cite[Proposition 3.1]{MR2191231}}]
\label{lem:affine-unipotent}
Let $X=G/\Gamma$ be a connected nilmanifold and suppose that $G^{o}$ is commutative.
Then there is a homomorphism $X\cong\T^{d}$ such that for every $a\in G$ the map $x\mapsto ax$ is conjugated to a unipotent affine transformation on $\T^{d}$, that is, there exists a nilpotent integer matrix $N$ and a constant $b\in\T^{d}$ such that, with the above identification, $ax = x+Nx+b$.
\end{lemma}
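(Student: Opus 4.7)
The plan is to reduce everything to the connected commutative Lie group $G^{o}$ and apply Lemma~\ref{lem:conjugation} to control the conjugation action by elements of $\Gamma$.

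First I would identify $X$ with a torus. Since $X=G/\Gamma$ is connected and $G^{o}$ is an open (in particular clopen) subgroup of $G$, the image of $G^{o}$ in $X$ is an open connected subset whose complement is a union of cosets, hence open as well; by connectedness this forces $G=G^{o}\Gamma$, so the natural map $G^{o}/(G^{o}\cap\Gamma)\to G/\Gamma$ is a homeomorphism of nilmanifolds. Now $G^{o}$ is connected, simply connected and commutative, so $\exp\colon\lag^{o}\to G^{o}$ is a Lie group isomorphism onto $\R^{d}$ (writing the group law additively); under this isomorphism $G^{o}\cap\Gamma$ is a cocompact discrete subgroup of $\R^{d}$, i.e.\ a full-rank lattice, and choosing a basis for this lattice provides the sought isomorphism $X\cong\R^{d}/\Z^{d}=\T^{d}$.

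Next I would express left multiplication by an arbitrary $a\in G$ in these coordinates. Using $G=G^{o}\Gamma$, write $a=a_{0}\gamma$ with $a_{0}\in G^{o}$ and $\gamma\in\Gamma$. For $x\in G^{o}$ we then have
\[
ax\Gamma = a_{0}\gamma x\gamma^{-1}\cdot\gamma\Gamma = a_{0}\,(\gamma x\gamma^{-1})\,\Gamma,
\]
and since $G^{o}$ is normal in $G$, $\gamma x\gamma^{-1}\in G^{o}$. Thus, in the additive coordinates on $G^{o}\cong\R^{d}$, left multiplication by $a$ descends to the affine map $x\mapsto \mathrm{Ad}(\gamma)x+a_{0}$ on $\T^{d}$.

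Finally I would invoke Lemma~\ref{lem:conjugation}: because $G^{o}$ is commutative and $\gamma\in\Gamma$, the conjugation map $\mathrm{Ad}(\gamma)$ is \emph{unipotent and integer-valued} in the chosen basis of $G^{o}\cap\Gamma$. Setting $N:=\mathrm{Ad}(\gamma)-I$ gives a nilpotent integer matrix, and setting $b:=a_{0}\bmod\Z^{d}\in\T^{d}$ yields the required representation $ax=x+Nx+b$. The one delicate point — and what I would regard as the main obstacle — is the integrality of $N$: this uses both $\gamma\in\Gamma$ (as opposed to $\roots{\Gamma}$) and the fact that in a commutative $G^{o}$ exponential coordinates coincide with Mal'cev coordinates, so that no rationality has to be absorbed into the basis change. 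The reduction $G=G^{o}\Gamma$ furnished by connectedness of $X$ is exactly what makes both conditions simultaneously available.
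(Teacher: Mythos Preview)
Your proof is correct and follows essentially the same route as the paper: use connectedness of $X$ to get $G=G^{o}\Gamma$, identify $G^{o}/(G^{o}\cap\Gamma)\cong\T^{d}$ via Mal'cev/exponential coordinates, write $a=a_{0}\gamma$ and compute $ax\Gamma=a_{0}(\gamma x\gamma^{-1})\Gamma$, then apply Lemma~\ref{lem:conjugation} for the unipotent integer matrix. Your justification of $G=G^{o}\Gamma$ is in fact more explicit than the paper's.
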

\begin{proof}
Let $d=\dim G$.
Since $G^{o}$ is commutative, we have $G^{o}\cong\R^{d}$, the Lie group isomorphism being given by coordinates of the first or second kind (which coincide).
With this identification we have $(\Gamma\cap G^{o})\cong\Z^{d}$.

Since $X$ is connected, every element $a\in G$ can be written as $a=g\gamma$ with $g\in G^{o}$, $\gamma\in\Gamma$.
In particular, $X\cong G^{o}/\Gamma$.
For every $h\in G^{o}$ we have
\[
ah\Gamma = g(\gamma h \gamma\inv) \Gamma.
\]
The conjugation map by $\gamma$ is unipotent with integer coefficients in coordinates of the second kind by Lemma~\ref{lem:conjugation}, and multiplication by $g$ is a translation in coordinates of the second kind.
\end{proof}

\subsection{Cube construction}
We outline a special case of the cube construction of Green, Tao, and Ziegler \cite[Definition B.2]{MR2950773} using notation of Green and Tao \cite[Proposition 7.2]{MR2877065}.
We will only have to perform it on filtrations, but even in this case the result is in general only a prefiltration.
\begin{definition}[Cube filtration]
\index{cube filtration $\Gb^{\square}$}
Given a prefiltration $\Gb$ we define the prefiltration $\Gb^{\square}$ by
\[
G^{\square}_{i} := G_{i} \times_{G_{i+1}} G_{i} = \<G_{i}^{\triangle},G_{i+1}\times G_{i+1}\> = \{(g_{0},g_{1})\in G_{i}\times G_{i} : g_{0}\inv g_{1}\in G_{i+1}\},
\]
where $G^{\triangle}=\{(g_{0},g_{1})\in G^{2} : g_{0}=g_{1}\}$ is the diagonal group corresponding to $G$.
By an abuse of notation we refer to the filtration obtained from $\Gb^{\square}$ by replacing $G_{0}^{\square}$ with $G_{1}^{\square}$ as the ``filtration $\Gb^{\square}$''.
\end{definition}
To see that this indeed defines a prefiltration let $x\in G_{i}$, $y\in G_{i+1}$, $u\in G_{j}$, $v\in G_{j+1}$, so that $(x,xy)\in G_{i}^{\square}$ and $(u,uv)\in G_{j}^{\square}$.
Then $[(x,xy),(u,uv)]=([x,u],[xy,uv]) \in G_{i+j}^{\square}$ by \eqref{eq:ab-cd} (or see \cite[Proposition 7.2]{MR2877065}).

For induction purposes it is important to know that $\Gb^{\square}$ is rational provided that $\Gb$ is.
This follows from the next lemma.
\begin{lemma}[Rationality of the cube filtration]
\label{lem:rational-cube}
Let $\Gb$ be a $\Gamma$-rational filtration.
Then the filtration
\[
G_{0}^{2} = G_{1}^{2} \geq G_{1}^{\square} \geq G_{2}^{2} \geq G_{2}^{\square} \geq
\dots \geq G_{l}^{2} \geq G_{l}^{\square} \geq G_{l+1}^{2} = \{1_{G\times G}\}.
\]
is $\Gamma^{2}$-rational.
In particular, $\Gamma^{\square}=\Gamma^{2} \cap G_{1}^{\square}$ is a cocompact lattice in $G_{1}^{\square}$ and the filtration $\Gb^{\square}$ is $\Gamma^{\square}$-rational.
\end{lemma}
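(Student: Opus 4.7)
The plan is to establish the chain of $\Gamma^2$-rationality assertions, from which the statement about $\Gamma^\square$ follows almost immediately. The routine part is the rationality of each $G_i^2$: since $\Gamma\cap G_i$ is cocompact in $G_i$ by $\Gamma$-rationality of $\Gb$, the product $(\Gamma\cap G_i)^2 = \Gamma^2\cap G_i^2$ is cocompact in $G_i\times G_i$. So the actual work lies in showing that $\Gamma^2\cap G_i^\square$ is cocompact in $G_i^\square$ for each $i$.

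For this I would exploit the standard ``skew coordinates'' diffeomorphism
\[
\phi\from G_i^\square \to G_i\times G_{i+1},\quad (g_0,g_1)\mapsto (g_0,g_0\inv g_1),
\]
with smooth inverse $(h,k)\mapsto(h,hk)$. This map is not a group homomorphism, but it is a diffeomorphism, and for the purpose of cocompactness of a subset that is all we need. I would compute directly that $\phi(\Gamma^2\cap G_i^\square)=(\Gamma\cap G_i)\times(\Gamma\cap G_{i+1})$: indeed, if $(\gamma_0,\gamma_1)\in\Gamma^2\cap G_i^\square$ then $\gamma_0\in\Gamma\cap G_i$ and $\gamma_0\inv\gamma_1\in G_{i+1}\cap\Gamma=\Gamma\cap G_{i+1}$; conversely, for $(\gamma,\delta)\in(\Gamma\cap G_i)\times(\Gamma\cap G_{i+1})$ the preimage $(\gamma,\gamma\delta)$ lies in $\Gamma^2\cap G_i^\square$. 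Since $\Gamma\cap G_i$ is cocompact in $G_i$ and $\Gamma\cap G_{i+1}$ is cocompact in $G_{i+1}$ by the $\Gamma$-rationality of $\Gb$, the product $(\Gamma\cap G_i)\times(\Gamma\cap G_{i+1})$ is cocompact in $G_i\times G_{i+1}$, and transporting back via $\phi$ yields cocompactness of $\Gamma^2\cap G_i^\square$ in $G_i^\square$.

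From here the remaining assertions are formal. The subgroup $\Gamma^\square=\Gamma^2\cap G_1^\square$ is discrete as a subgroup of the discrete group $\Gamma^2$, and cocompact in $G_1^\square$ by the above, hence it is a cocompact lattice. For rationality of $\Gb^\square$ with respect to $\Gamma^\square$ observe that, since $G_i^\square\leq G_1^\square$ for $i\geq 1$, one has
\[
\Gamma^\square\cap G_i^\square \;=\; \Gamma^2\cap G_1^\square\cap G_i^\square \;=\; \Gamma^2\cap G_i^\square,
\]
and this has already been shown to be cocompact in $G_i^\square$.

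The only step requiring care is the diffeomorphism argument, since $\phi$ is not a homomorphism; I would check explicitly that it is smooth with smooth inverse and that cocompactness of a subgroup is preserved under transport by any homeomorphism (it is the subset, not the group structure, whose quotient must be compact). No further structural input beyond the defining condition $g_0\inv g_1\in G_{i+1}$ and the $\Gamma$-rationality of $G_i$ and $G_{i+1}$ is needed, which is why the short proof goes through uniformly in $i$.
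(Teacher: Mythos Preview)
Your overall route via the skew-coordinate map $\phi:(g_0,g_1)\mapsto(g_0,g_0^{-1}g_1)$ is more elementary than the paper's proof, which passes through adapted Mal'cev bases (quoting a lemma of Green--Tao), a finer lattice $\tilde\Gamma\geq\Gamma$ with connected quotients $G_i/\tilde\Gamma_i$ (Lemma~\ref{lem:finer-lattice}), and a finite-index argument (Corollary~\ref{cor:finite-ext}). The parts of your argument concerning $G_i^2$ and the deduction of $\Gamma^\square$-rationality of $\Gb^\square$ from $\Gamma^2$-rationality are fine.

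There is, however, a genuine gap in the key step. The assertion that ``cocompactness of a subgroup is preserved under transport by any homeomorphism'' is not a valid principle: the quotient $G/H$ is the space of \emph{cosets}, and the coset partition depends on the group law, which $\phi$ does not respect. Concretely, the right $(\Gamma^2\cap G_i^\square)$-cosets in $G_i^\square$ are \emph{not} carried by $\phi$ to the right $(\Gamma_i\times\Gamma_{i+1})$-cosets in $G_i\times G_{i+1}$ (compute $\phi((g_0,g_1)(\gamma_0,\gamma_1))$ and note the appearance of $\gamma_0^{-1}(g_0^{-1}g_1)\gamma_0$). The repair is short: exhibit a compact set of coset representatives directly. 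With compact $K_1\subset G_i$, $K_2\subset G_{i+1}$ satisfying $K_1\Gamma_i=G_i$ and $K_2\Gamma_{i+1}=G_{i+1}$, write $g_0=k_0\gamma_0$ ($k_0\in K_1$, $\gamma_0\in\Gamma_i$), then use normality of $G_{i+1}$ to write $\gamma_0(g_0^{-1}g_1)\gamma_0^{-1}=k_1\delta$ ($k_1\in K_2$, $\delta\in\Gamma_{i+1}$); one checks $(g_0,g_1)=(k_0,k_0k_1)(\gamma_0,\delta\gamma_0)$ with $(\gamma_0,\delta\gamma_0)\in\Gamma^2\cap G_i^\square$. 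Thus $\phi^{-1}(K_1\times K_2)$ \emph{is} a compact fundamental set---but via this two-step reduction (equivalently, via the short exact sequence $1\to G_{i+1}\to G_i^\square\to G_i\to 1$, which restricts to $1\to\Gamma_{i+1}\to\Gamma^2\cap G_i^\square\to\Gamma_i\to 1$, together with the fact that cocompactness passes through extensions), not by any abstract transport of the product coset structure.
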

\begin{proof}
Observe first that $(\Gb^{\square})^{o} = (\Gb^{o})^{\square}$, since both these prefiltrations consist of closed connected subgroups of $G^{2}$ whose Lie algebras coincide.
The existence of the required Mal'cev basis follows from a result of Green and Tao \cite[Lemma 7.4]{MR2877065}.
Clearly, $\Gamma_{i}^{2}$ is cocompact in $G_{i}^{2}$ for every $i=1,\dots,l$.

It remains to show that $\Gamma_{i}^{\square} = \Gamma^{2}\cap G_{i}^{\square}$ is cocompact in $G_{i}^{\square}$ for every $i=1,\dots,l$.
The existence of an adapted Mal'cev basis implies that $\Gamma^{2} \cap (G_{i}^{\square})^{o}$ is cocompact in $(G_{i}^{\square})^{o}$.
Let $\tilde\Gamma \geq \Gamma$ be the finite index surgroup provided by Lemma~\ref{lem:finer-lattice}.
Writing
\[
\tilde\Gamma^{\square}_{i} = \< (\tilde\Gamma_{i})^{\triangle}, (\tilde\Gamma_{i+1})^{2} \>
\]
we see that $\tilde\Gamma^{\square}_{i}$ is a finitely generated subgroup of $\roots{\Gamma^{\square}_{i}}$, so it is a finite index surgroup of $\Gamma^{\square}_{i}$ by Corollary~\ref{cor:finite-ext}.
On the other hand,
\[
G_{i}^{\square}
= \< (G_{i}^{o}\tilde\Gamma_{i})^{\Delta}, (G_{i+1}^{o}\tilde\Gamma_{i+1})^{2}\>
= (G_{i}^{\square})^{o} \tilde\Gamma_{i}^{\square},
\]
so that $G_{i}^{\square}/\tilde\Gamma_{i}^{\square}$ is connected.
By the above it is compact, and in view of Lemma~\ref{lem:comm-lattice} this implies that $G_{i}^{\square}/\Gamma^{\square}$ is compact.
\end{proof}

\begin{lemma}
\label{lem:poly-cube}
Let $g\in\poly$.
Then for every $k\in\Z$ the map
\[
\cuben{g}{k}(n) := (g(n+k),g(n))
\]
is $\Gb^{\square}$-polynomial.
\end{lemma}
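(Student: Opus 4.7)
The plan is to proceed by induction on the length $d$ of the prefiltration $\Gb$, exploiting the fact that the cube construction commutes with the shift operation $\Gb \mapsto \Gb[+1]$. Specifically, by definition we have
\[
(\Gb^{\square})_{i+1} = G_{i+1} \times_{G_{i+2}} G_{i+1} = (\Gb[+1])^{\square}_{i},
\]
so $(\Gb^{\square})[+1] = (\Gb[+1])^{\square}$, which will allow me to invoke the inductive hypothesis on $\Gb[+1]$ after passing to the discrete derivative.

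The base case $d=-\infty$ is trivial since then $g \equiv 1_G$ and $\cuben{g}{k} \equiv (1_G, 1_G)$. For the inductive step, I would first verify that $\cuben{g}{k}$ actually takes values in $G^{\square}_{0} = G_{0} \times_{G_{1}} G_{0}$: since $D_{k}g$ is $\Gb[+1]$-polynomial, it takes values in $G_{1}$, and therefore $g(n+k)^{-1}g(n) = (D_{k}g(n))^{-1} \in G_{1}$, confirming that $(g(n+k),g(n)) \in G^{\square}_{0}$. Then I would compute the discrete derivative in the $\Z$-variable: for any $b\in\Z$,
\[
D_{b}\cuben{g}{k}(n)
= \bigl(g(n+k)^{-1}g(n+k+b),\, g(n)^{-1}g(n+b)\bigr)
= \bigl(D_{b}g(n+k),\, D_{b}g(n)\bigr)
= \cuben{D_{b}g}{k}(n).
\]
So the derivative of the cube map is again a cube map, now applied to the derivative $D_{b}g$.

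Since $D_{b}g \in \poly[{\Gb[+1]}]$ and the prefiltration $\Gb[+1]$ has length strictly smaller than $d$, the inductive hypothesis applied to $D_{b}g$ with the prefiltration $\Gb[+1]$ yields that $\cuben{D_{b}g}{k}$ is $(\Gb[+1])^{\square}$-polynomial. In view of the identity $(\Gb[+1])^{\square} = (\Gb^{\square})[+1]$ recorded above, this means $D_{b}\cuben{g}{k}$ is $(\Gb^{\square})[+1]$-polynomial, which closes the induction and establishes that $\cuben{g}{k}$ is $\Gb^{\square}$-polynomial.

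The only non-routine point is the bookkeeping that identifies $(\Gb[+1])^{\square}$ with $(\Gb^{\square})[+1]$ and the verification that $\cuben{g}{k}$ lands in $G^{\square}_{0}$, both of which are essentially formal once the definition of the fiber product $G_{i}\times_{G_{i+1}}G_{i}$ is unpacked; the algebra in the derivative calculation is automatic because the two coordinates evolve independently under shifting in $n$.
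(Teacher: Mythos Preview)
Your proof is correct and follows essentially the same approach as the paper: induction on the length of $\Gb$, verification that $\cuben{g}{k}$ takes values in $G^{\square}_{0}$ via $g(n)^{-1}g(n+k)=D_{k}g(n)\in G_{1}$, and the derivative identity $D_{b}\cuben{g}{k}=\cuben{D_{b}g}{k}$. Your explicit remark that $(\Gb[+1])^{\square}=(\Gb^{\square})[+1]$ makes transparent a step the paper leaves implicit.
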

\begin{proof}
We use induction on the length $l$ of the prefiltration $\Gb$.
Indeed, for $l=-\infty$ there is nothing to show.
If $l\geq 0$, then $\cuben{g}{k}$ takes values in $G^{\square}_{0}$ since $g(n)\inv g(n+k)=D_{k}g(n)\in G_{1}$ by definition of a polynomial.
Moreover $D_{k'}(\cuben{g}{k})=\cuben{(D_{k'}g)}{k}(n)$, so that $D_{k'}(\cuben{g}{k})$ is $\Gb[+1]^{\square}$-polynomial by the induction hypothesis.
\end{proof}

\subsection{Vertical characters}\label{sec:vertical}
Let $G/\Gamma$ be a nilmanifold and $\Gb$ a $\Gamma$-rational filtration of length $l$.
Then $G/\Gamma$ is a smooth principal bundle with the compact commutative Lie structure group $G_{l}/\Gamma_{l}$.
The fibers of this bundle are called ``vertical'' tori (as opposed to the ``horizontal'' torus $G/\Gamma G_{2}$) and everything related to Fourier analysis on them is called ``vertical''.
\begin{definition}[Vertical character]
\index{vertical character}
Let $G/\Gamma$ be a nilmanifold and $\Gb$ a $\Gamma$-rational filtration on $G$.
A measurable function $F$ on $G/\Gamma$ is called a \emph{vertical character} if there exists a character $\chi\in \widehat{G_{l}/\Gamma_{l}}$ such that
for every $g_{l}\in G_{l}$ and a.e.\ $y\in G/\Gamma$
we have $F(g_{l}y)=\chi(g_{l}\Gamma_{l})F(y)$.
\end{definition}

\begin{definition}[Vertical Fourier series]
\label{def:vertical-Fourier-series}
\index{vertical Fourier series}
Let $G/\Gamma$ be a nilmanifold and $\Gb$ be a $\Gamma$-rational filtration on $G$.
For every $F\in L^{2}(G/\Gamma)$ and $\chi\in\widehat{G_{l}/\Gamma_{l}}$ let
\begin{equation}
\label{eq:vert-char-repr}
F_{\chi}(y):=\int_{G_{l}/\Gamma_{l}}F(g_{l}y) \ol{\chi}(g_{l}) \dif g_{l}.
\end{equation}
\end{definition}
With this definition $F_{\chi}$ is
defined almost everywhere
and is a vertical character as witnessed by the character $\chi$.
The usual Fourier inversion formula implies that $F=\sum_{\chi\in\widehat{G_{l}/\Gamma_{l}}}F_{\chi}$ in $L^{2}(G/\Gamma)$.
We further need the following variant of Bessel's inequality.
\begin{lemma}[Bessel-type inequality for vertical Fourier series]\label{lem:bessel}
\index{vertical Fourier series!Bessel-type inequality}
Let $p\in[2,\infty)$ and $F\in L^{p}(G/\Gamma)$.
Then
\[
\sum_\chi \|F_\chi\|_{L^p(G/\Gamma)}^p\leq \|F\|_{L^p(G/\Gamma)}^p.
\]
\end{lemma}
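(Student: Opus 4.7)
The plan is to reduce this to a pointwise (in $y$) statement about Fourier coefficients on the compact abelian group $G_l/\Gamma_l$, and then integrate. For each $y \in G/\Gamma$ write $y = g\Gamma$ and define $h_y(g_l) := F(g_l y)$. Since $G_l$ is central in $G_0$ (by $[G_l, G_0] \subset G_{l+1} = \{1\}$) and $\Gamma_l = \Gamma \cap G_l$, for any $\gamma_l \in \Gamma_l$ we have $\gamma_l g \Gamma = g \gamma_l \Gamma = g\Gamma$, so $h_y$ descends to a well-defined function on $G_l/\Gamma_l$. By Fubini and the $G_l$-invariance of the Haar measure on $G/\Gamma$,
\[
\int_{G/\Gamma} \int_{G_l/\Gamma_l} |F(g_l y)|^p \dif g_l \dif y = \int_{G_l/\Gamma_l} \|F\|_{L^p(G/\Gamma)}^p \dif g_l = \|F\|_{L^p(G/\Gamma)}^p,
\]
so $h_y \in L^p(G_l/\Gamma_l)$ for almost every $y$. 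By \eqref{eq:vert-char-repr} $F_\chi(y)$ is precisely the $\chi$-th Fourier coefficient $\widehat{h_y}(\chi)$ of $h_y$.

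The main ingredient is then the following pointwise inequality valid for any $h \in L^p$ of a compact abelian group of total mass one, any $p \in [2,\infty)$:
\[
\sum_{\chi} |\hat h(\chi)|^p \leq \|h\|_p^p.
\]
To prove it, combine the trivial bound $|\hat h(\chi)| \leq \|h\|_1$ with Parseval's identity $\sum_\chi |\hat h(\chi)|^2 = \|h\|_2^2$:
\[
\sum_\chi |\hat h(\chi)|^p = \sum_\chi |\hat h(\chi)|^{p-2} \cdot |\hat h(\chi)|^2 \leq \|h\|_1^{p-2} \|h\|_2^2 \leq \|h\|_p^{p-2} \|h\|_p^2 = \|h\|_p^p,
\]
where in the last step we used the nesting of $L^q$ norms on a probability space. (This is the only place where the hypothesis $p \geq 2$ is used.)

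Applying this with $h = h_y$ gives
\[
\sum_\chi |F_\chi(y)|^p \leq \int_{G_l/\Gamma_l} |F(g_l y)|^p \dif g_l
\]
for almost every $y$, and integrating over $y \in G/\Gamma$ and swapping the sum and integral on the left via Tonelli yields
\[
\sum_\chi \|F_\chi\|_{L^p(G/\Gamma)}^p \leq \int_{G/\Gamma} \int_{G_l/\Gamma_l} |F(g_l y)|^p \dif g_l \dif y = \|F\|_{L^p(G/\Gamma)}^p,
\]
the last equality being the $G_l$-invariance computation above. No step should pose a genuine obstacle; the only thing to double-check is the measurability setup (that $y \mapsto \|h_y\|_p$ is measurable and finite a.e.), which is immediate from Fubini.
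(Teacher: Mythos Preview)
Your proof is correct and follows essentially the same approach as the paper: both reduce to fiberwise Fourier analysis on the compact abelian group $G_l/\Gamma_l$, combining Plancherel with a uniform bound on individual Fourier coefficients and then the nesting of $L^q$ norms. The only cosmetic differences are that you isolate the fiberwise inequality $\sum_\chi |\hat h(\chi)|^p \leq \|h\|_p^p$ before integrating (the paper interleaves the two), and you bound $|\hat h(\chi)|$ by $\|h\|_1$ whereas the paper uses $\|h\|_2$ via Cauchy--Schwarz; either suffices since both are $\leq \|h\|_p$.
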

Note that the analogue for $p=\infty$ follows immediately from \eqref{eq:vert-char-repr}.
\begin{proof}
Since vertical characters have constant absolute value on $G_l/\Gamma_{l}$-fibers, we have by \eqref{eq:vert-char-repr} and the Cauchy-Schwarz inequality
\begin{align*}
\|F_\chi\|_{L^p(G/\Gamma)}^p
&=
\int_{G/\Gamma} \int_{G_l/\Gamma_{l}} |F_\chi(h h_l)|^2 \dif h_l \cdot |F_\chi(h)|^{p-2}\, \dif h\\
&\leq \int_{G/\Gamma} \int_{G_l/\Gamma_{l}}   |F_\chi(hh_l)|^2 \dif h_l  \Big(\int_{G_l/\Gamma_{l}} |F(hh_l)|^2 \dif h_l\Big)^{p/2 -1} \dif h
\end{align*}
for every $\chi$.
By the Plancherel identity and H\"older's inequality this implies
\begin{align*}
\sum_{\chi} \|F_{\chi}\|_{L^p(G/\Gamma)}^p
&\leq \int_{G/\Gamma} \Big( \int_{G_l/\Gamma_{l}} |F(hh_l)|^2 \dif h_l\Big)^{p/2} \dif h \\
&\leq \int_{G/\Gamma} \int_{G_l/\Gamma_{l}} |F(hh_l)|^p \dif h_l \dif h
= \|F\|_{L^p(G/\Gamma)}^p,
\end{align*}
finishing the proof.
\end{proof}
The correct analog of the Plancherel identity for vertical Fourier series reads
\[
\sum_\chi \|F_\chi\|_{U^{l}(G/\Gamma)}^{2^{l}}=\|F\|_{U^{l}(G/\Gamma)}^{2^{l}},
\]
where $U^{l}$ stands for appropriate Gowers-Host-Kra seminorms, see \textcite[Lemma 10.2]{MR2944094} for the case $l=3$.

\begin{definition}[Sobolev space]
\index{Sobolev space}
Let $G/\Gamma$ be a nilmanifold with a $\Gamma$-rational filtration, so in particular we have a Mal'cev basis $\{X_{1},\dots,X_{d}\}$ for the Lie algebra of $G$.
We identify the vectors $X_{i}$ with their extensions to right invariant vector fields on $G/\Gamma$.
The Sobolev space $W^{j,p}(G/\Gamma)$, $j\in\N$, $1\leq p<\infty$, is defined by the norm
\[
\| F \|_{W^{j,p}(G/\Gamma)}^{p}
=
\sum_{a=0}^{j} \sum_{b_{1},\dots,b_{a}=1}^{d} \| X_{b_{1}}\dots X_{b_{a}} F \|_{L^{p}(G/\Gamma)}^{p}.
\]
\end{definition}
We will write $A\lesssim_{D} B$ if $A$ and $B$ satisfy the inequality $A\leq CB$ with some constant $C$ that depends on some auxiliary constant(s) $D$ and some geometric data.

\begin{lemma}[Control on Sobolev norms in a vertical Fourier series]
\label{lem:estimate-vertical-fourier-series}
\index{vertical Fourier series!Bessel-type inequality for Sobolev norms}
Let $j\in\N$ and $p\in[2,\infty)$.
For every smooth function $F$ on $G/\Gamma$ we have
\[
\sum_\chi \|F_\chi\|_{W^{j,p}(G/\Gamma)}\lesssim_{j,p} \|F\|_{W^{j+d_{l},p}(G/\Gamma)}.
\]
\end{lemma}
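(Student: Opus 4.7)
The plan is to exploit the fact that the vertical derivatives $X_{d-d_l+1},\dots,X_d$ act as scalar multiplication on each vertical Fourier coefficient, producing polynomial growth in the dual frequency $\xi\in\Z^{d_l}$ that, together with H\"older and Lemma~\ref{lem:bessel}, upgrades an $\ell^p$ bound into an $\ell^1$ bound. First I would fix a Mal'cev basis adapted to $\Gb$, so that $X_{d-d_l+1},\dots,X_d$ span the Lie algebra of the central subgroup $G_l$, and Mal'cev coordinates identify $G_l/\Gamma_l\cong\T^{d_l}$. Every character $\chi\in\widehat{G_l/\Gamma_l}$ corresponds to some $\xi=\xi(\chi)\in\Z^{d_l}$, and the key identities are
\[
(X^{\vec c}F)_\chi=X^{\vec c}F_\chi\quad\text{and}\quad X^{\vec b}F_\chi=(2\pi i)^{|\vec b|}\xi^{\vec b}F_\chi,
\]
valid for any multiindex $\vec c$ on $\{1,\dots,d\}$ and any purely vertical multiindex $\vec b$ on $\{d-d_l+1,\dots,d\}$. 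The first follows from right-invariance of the $X_b$ (commuting them past left-translation by $G_l$ in \eqref{eq:vert-char-repr}); the second uses centrality of $G_l$ to turn right-multiplication by $\exp(tX_{d-d_l+k})$ into left-multiplication, on which $F_\chi$ transforms by $\chi$. Summing $|\vec b|\le d_l$ yields the pointwise decay
\[
(1+|\xi|)^{d_l}\|F_\chi\|_{L^p}\lesssim_{d_l}\sum_{|\vec b|\le d_l}\|(X^{\vec b}F)_\chi\|_{L^p}.
\]

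Next I would turn this decay into summability via H\"older's inequality with conjugate exponent $q=p/(p-1)\in(1,2]$: for any $G\in L^p(G/\Gamma)$,
\[
\sum_\chi (1+|\xi(\chi)|)^{-d_l}\|G_\chi\|_{L^p}\le\Big(\sum_{\xi\in\Z^{d_l}}(1+|\xi|)^{-d_lq}\Big)^{1/q}\Big(\sum_\chi\|G_\chi\|_{L^p}^p\Big)^{1/p}\lesssim_{p,d_l}\|G\|_{L^p},
\]
where the geometric factor is finite precisely because $d_lq>d_l$ (which is where the hypothesis $p<\infty$ enters) and the Bessel-type bound of Lemma~\ref{lem:bessel} controls the last factor.

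To finish I would combine the two ingredients: applying the pointwise decay with $F$ replaced by $X^{\vec c}F$ for each multiindex $|\vec c|\le j$ and feeding the outcome into the H\"older estimate with $G=X^{\vec c+\vec b}F$. Using the elementary embedding $\ell^p\hookrightarrow\ell^1$ on the finite index set of multiindices of order $\le j$ one obtains $\|F_\chi\|_{W^{j,p}}\le\sum_{|\vec c|\le j}\|(X^{\vec c}F)_\chi\|_{L^p}$, so summing over $\chi$ and all multiindices gives
\[
\sum_\chi\|F_\chi\|_{W^{j,p}}\lesssim_{j,p}\sum_{|\vec a|\le j+d_l}\|X^{\vec a}F\|_{L^p}\lesssim\|F\|_{W^{j+d_l,p}},
\]
as required. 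The only genuine point of care is the sharp choice of $d_l$ extra derivatives: the H\"older step requires strictly more effective decay than the lattice count $|\Z^{d_l}|$, which is exactly what $d_l$ vertical derivatives provide and which simultaneously explains both the degeneration as $p\to\infty$ and the fact that fewer derivatives would not suffice by this method.
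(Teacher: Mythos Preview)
Your proof is correct and follows essentially the same approach as the paper: both exploit that vertical derivatives act as multiplication by the dual frequency on $F_\chi$, then combine H\"older's inequality with the Bessel-type Lemma~\ref{lem:bessel} to trade $d_l$ extra derivatives for $\ell^1$-summability over $\chi$. The only cosmetic difference is that the paper uses the product weight $|m_1\cdots m_{d_l}|^{-1}$ and the single mixed derivative $\partial_1\cdots\partial_{d_l}$ (treating separately the $\chi$ with some $m_i=0$), whereas your choice of $(1+|\xi|)^{-d_l}$ handles all frequencies uniformly at the expense of summing over all vertical multi-indices of order $\le d_l$.
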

\begin{proof}
The compact abelian Lie group $G_{l}/\Gamma_{l}$ is isomorphic to a product of a torus and a finite group.
In order to keep notation simple we will consider the case $G_{l}/\Gamma_{l} \cong \T^{d_{l}}$, the conclusion for disconnected $G_{l}/\Gamma_{l}$ follows easily from the connected case.
We rescale the last $d_{l}$ elements of the Mal'cev basis in such a way that they correspond to the unit tangential vectors at the origin of the torus $\T^{d_{l}}$.
The characters on $G_{l}/\Gamma_{l}$ are then given by $\chi_{\m}(z_{1},\dots,z_{m})=z_{1}^{m_{1}}\cdot\dots\cdot z_{d_{l}}^{m_{d_{l}}}$ with $\m=(m_{1},\dots,m_{d_{l}})\in\Z^{d_{l}}$.
Observe that by (\ref{eq:vert-char-repr}) and the commutativity of $G_l$ we have $(\partial_{i} F)_\m=\partial_{i}(F_\m)=m_iF_\m$ for every $i$ and $\m$, where $\partial_{i}$ denotes the derivative along the $i$-th coordinate in $\T^{d_{l}}$.
Therefore, by H\"older's inequality and Lemma~\ref{lem:bessel}
\begin{multline*}
\Big(\sum_{m_1, \ldots, m_{d_l}\neq 0}\|F_{\chi_{\m}} \|_{L^{p}}\Big)^{p}
= \Big(\sum_{m_1, \ldots, m_{d_l}\neq 0} \frac{1}{|m_1\cdots m_{d_l}|}\|m_1\cdots m_{d_l} F_{\chi_{\m}} \|_{L^{p}}\Big)^{p}\\
\leq \Big(\sum_{m_1, \ldots, m_{d_l}\neq 0} \Big|\frac{1}{m_1\cdots m_{d_l}}\Big|^{\frac{p}{p-1}} \Big)^{p-1} \sum_\m \|m_1\cdots m_{d_l}  F_{\chi_{\m}} \|_{L^{p}}^{p} \\
\lesssim \sum_{\m} \|\partial_{1}\ldots \partial_{d_l} F_{\chi_{\m}}\|_{L^{p}}^{p}
\leq \|\partial_{1}\ldots \partial_{d_l} F\|_{L^{p}}^{p}
\leq \|F\|_{W^{d_l,p}}^{p}.
\end{multline*}
By the centrality of $G_{l}$ the operations of taking derivatives along elements of the Mal'cev basis and taking the $\chi$-th vertical character \eqref{eq:vert-char-repr} commute, so we have
\[
\sum_{m_1, \ldots, m_{d_l}\neq 0}\|F_{\chi_{\m}}\|_{W^{j,p}}
\lesssim \|F\|_{W^{j+d_l,p}}
\]
for every $j\in \N$.
The same argument works if some of the indices $(m_1,\ldots,m_{d_l})$ vanish, in which case a smaller number of derivatives is added to $j$, and thus altogether
\[
\sum_\m\|F_{\chi_{\m}}\|_{W^{j,p}}
\lesssim \|F\|_{W^{j+d_l,p}}.
\qedhere
\]
\end{proof}

We will need an estimate on the $L^{\infty}$ norm of a vertical character in terms of a Sobolev norm with minimal smoothness requirements.
To this end we would like to use a Sobolev embedding theorem on $G/\Gamma G_{l}$, since this manifold has lower dimension than $G/\Gamma$.
Morally, a vertical character is a function on the base space $G/\Gamma G_{l}$ that is extended to the principal $G_{l}/\Gamma_{l}$-bundle $G/\Gamma$ in a multiplicative fashion.
However, in general this bundle lacks a global cross-section, so we are forced to work locally.
\begin{lemma}[Sobolev embedding]\label{lem:sobolev-embedding}
\index{theorem!Sobolev embedding}
Let $G/\Gamma$ be a nilmanifold and $\Gb$ a $\Gamma$-rational filtration of length $l$ on $G$.
Then for every $1\leq p\leq\infty$ and every vertical character $F\in W^{d-d_{l},p}(G/\Gamma)$ we have
\[
\|F\|_{\infty} \lesssim_{p} \|F\|_{W^{d-d_{l},p}},
\]
where the implied constant does not depend on $F$.
\end{lemma}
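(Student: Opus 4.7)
The strategy is to exploit the rigidity forced on $F$ by the vertical character property: the dependence of $F$ on the central group $G_{l}$ is prescribed (up to a scalar of modulus $1$) by $\chi$, so effectively $F$ encodes a function on the $(d-d_{l})$-dimensional base $G/\Gamma G_{l}$, and on a manifold of that dimension the classical Sobolev embedding theorem yields an $L^{\infty}$-bound from $W^{d-d_{l},p}$. Only local trivializations of the fiber bundle $\pi\colon G/\Gamma\to G/\Gamma G_{l}$ enter, so nothing is lost by working patchwise.

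First I would invoke Lemma~\ref{lem:rational-subgroup} to conclude that $G_{l}\Gamma/\Gamma$ is a closed submanifold of $G/\Gamma$, so that $\pi$ is a smooth principal bundle with compact abelian structure group $G_{l}/\Gamma_{l}$. Since the base is compact, I choose a finite open cover $\{V_{\alpha}\}$ of $G/\Gamma G_{l}$ over which the bundle trivializes via smooth local sections $s_{\alpha}\colon V_{\alpha}\to\pi\inv(V_{\alpha})$, giving diffeomorphisms $(v,z)\mapsto z\cdot s_{\alpha}(v)$ from $V_{\alpha}\times G_{l}/\Gamma_{l}$ onto $\pi\inv(V_{\alpha})$. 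The vertical character relation then reads $F(z\cdot s_{\alpha}(v))=\chi(z)f_{\alpha}(v)$, where $f_{\alpha}:=F\circ s_{\alpha}$; since $|\chi|\equiv 1$, one has the identity $\|F\|_{L^{\infty}(\pi\inv(V_{\alpha}))}=\|f_{\alpha}\|_{L^{\infty}(V_{\alpha})}$.

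Next, on each chart $V_{\alpha}$ (of dimension $d-d_{l}$) the classical Sobolev embedding gives $\|f_{\alpha}\|_{L^{\infty}(V_{\alpha})}\lesssim_{p}\|f_{\alpha}\|_{W^{d-d_{l},p}(V_{\alpha})}$, where the Sobolev norm can be computed in any smooth frame because the trivializations $s_{\alpha}$ and the Mal'cev coordinates are smooth data of bounded geometric complexity. Taking the maximum over the finite cover, it remains to bound $\|f_{\alpha}\|_{W^{d-d_{l},p}(V_{\alpha})}$ by $\|F\|_{W^{d-d_{l},p}(G/\Gamma)}$. To this end I would use that, because $G_{l}$ is central, the right-invariant vector fields $X_{d-d_{l}+1},\dots,X_{d}$ of the adapted Mal'cev basis are tangent to the $G_{l}/\Gamma_{l}$-fibers and act on $F$ by multiplication by a bounded scalar coming from differentiating $\chi$; whereas $X_{1},\dots,X_{d-d_{l}}$ project to a smooth frame on each $V_{\alpha}$. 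Applying Fubini in the trivialization one obtains $\|F\|_{W^{d-d_{l},p}(\pi\inv(V_{\alpha}))}^{p}\gtrsim\|f_{\alpha}\|_{W^{d-d_{l},p}(V_{\alpha})}^{p}$ with an implied constant depending only on the (finite) atlas and the Mal'cev basis.

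The main obstacle is purely bookkeeping: verifying in the last step that right-invariant differentiation along $X_{1},\dots,X_{d-d_{l}}$ on $G/\Gamma$ really does produce, after transport through the local trivialization, a frame on $V_{\alpha}$ whose associated Sobolev norm is comparable to the one used in the standard Euclidean Sobolev embedding. This is a direct consequence of smoothness of the sections $s_{\alpha}$ and the non-degeneracy of the projection $T(G/\Gamma)/\ker d\pi\to T(G/\Gamma G_{l})$, and does not involve any further structural input beyond properties of Mal'cev coordinates already recorded in Definition~\ref{def:malcev-basis}. The case $p=\infty$ is trivial and the endpoint $p=1$ is handled by the iterated fundamental theorem of calculus on $V_{\alpha}$.
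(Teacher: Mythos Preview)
Your proposal is correct and follows essentially the same route as the paper: both reduce to the classical Sobolev embedding on a $(d-d_{l})$-dimensional cross-section by locally trivializing the principal $G_{l}/\Gamma_{l}$-bundle, using that $|F|$ is fiberwise constant, and running a Fubini argument to control the cross-sectional Sobolev norm by the ambient one. The only cosmetic difference is that the paper builds the local trivializations by hand upstairs in $G$ (choosing a submanifold $W$ through $e_{G}$ transverse to $G_{l}$ and using the product map $V\times W\to G$ near the identity, then translating by homogeneity), whereas you invoke the bundle formalism with local sections $s_{\alpha}$ over the base $G/\Gamma G_{l}$; these are equivalent constructions and lead to the same estimate.
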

\begin{proof}
The case $p=\infty$ is clear, so we may assume $p<\infty$.

Since $\Gamma$ is discrete there exists a neighborhood $U \subset G$ of the identity such that the quotient map $U\to G/\Gamma$ is a diffeomorphism onto its image.
Let $M\subset G$ be a $(d-d_{l})$-dimensional submanifold that intersects $G_{l}$ in $e_{G}$ transversely.
By joint continuity of multiplication in $G$ we may find neighborhoods of identity $V\subset G_{l}$ and $W\subset M$ such that $VW\subset U$.
By transversality the differential of the map $\psi:V\times W \to G$, $(v,w)\mapsto vw$ is invertible at $(e_{G},e_{G})$, so by the inverse function theorem and shrinking $V,W$ if necessary we may assume that $\psi$ is a diffeomorphism onto its image.
We may also assume that $V,W$ are connected, simply connected and have smooth boundaries.
Recalling that the quotient map $U\to G/\Gamma$ is a diffeomorphism, we obtain a chart $\Psi:V\times W\to G/\Gamma$ for a neighborhood of $e_{G}\Gamma$ that has the additional property that $\Psi(g_{l}v,w)=g_{l}\Psi(v,w)$ whenever $v,g_{l}v\in V$.
Shrinking $V$ and $W$ further if necessary we may assume that the differential of $\Psi$ and its inverse are uniformly bounded.
By homogeneity we obtain similar charts for some neighborhoods of all points of $G/\Gamma$.
By compactness $G/\Gamma$ can be covered by finitely many such charts, so it suffices to estimate $\|F\|_{L^{\infty}(\im\Psi)}$ in terms of $\|F\|_{W^{d-d_{l},p}(\im\Psi)}$.

By definition of Sobolev norms we have
\[
\int_{v\in V} \|F\circ\Psi\|_{W^{d-d_{l},p}(\{v\}\times W)}^{p} \dif v \lesssim \|F\circ\Psi\|_{W^{d-d_{l},p}(V\times W)}^{p} \lesssim \|F\|_{W^{d-d_{l},p}(\im\Psi)}^{p}.
\]
Since $F$ is a vertical character and by multiplicativity of $\Psi$ in the first argument, the integrand on the left-hand side is constant, so that
\[
\|F\circ\Psi\|_{W^{d-d_{l},p}(\{v\}\times W)} \lesssim \|F\|_{W^{d-d_{l},p}(\im\Psi)} \text{ for all } v\in V,
\]
the bound being independent of $v$.
Now, $W$ is a $d-d_{l}$ dimensional manifold, so the usual Sobolev embedding theorem \cite[Theorem 4.12 Part I Case A]{MR2424078} applies and we obtain
\[
\|F\circ\Psi\|_{L^{\infty}(\{v\}\times W)} \lesssim \|F\circ\Psi\|_{W^{d-d_{l},p}(\{v\}\times W)} \lesssim \|F\|_{W^{d-d_{l},p}(\im\Psi)}^{p}.
\]
By the above discussion this implies the desired estimate.
\end{proof}

\section{Leibman's orbit closure theorem}
\subsection{Nilsequences}
With the advent of Host-Kra-Ziegler structure theory, \emph{nilsequences} came to be seen as the basic structure block of measure-preserving dynamical systems.
\begin{definition}
\label{def:nilsequence}
\index{nilsequence}
Let $G/\Gamma$ be a nilmanifold.
Let further $\Gb$ be a $\Gamma$-rational filtration of length $l$ on $G$.
Then for every polynomial $g\in\poly$ and $F\in C(G/\Gamma)$ we call the sequence $(F(g(n)\Gamma))_{n}$ a \emph{basic $l$-step nilsequence}.
An \emph{$l$-step nilsequence} is a uniform limit of basic $l$-step nilsequences (which are allowed to come from different nilmanifolds and filtrations).
\end{definition}
Note that the groups in the filtration $\Gb$ are not assumed to be connected.
In fact, by the remark following \cite[Theorem 3]{MR2445824}, not every nilsequence arises from nilmanifolds associated to connected Lie groups.
Nilsequences appear naturally in connection with norm convergence of multiple ergodic averages \cite{MR2150389}.
The $1$-step nilsequences are exactly the almost periodic sequences.
For examples and a complete description of $2$-step nilsequences see \textcite{MR2445824}.
For a characterization of nilsequences of arbitrary step in terms of their local properties see \cite[Theorem 1.1]{MR2600993}.

Although it is possible to express basic nilsequences as basic nilsequences of the same step associated to ``linear'' sequences of the form $(g^{n})_{n}$ (this is essentially due to Leibman \cite{MR2122919}, see e.g.\ \textcite[Proposition 2.1]{MR2465660} or \textcite[Proposition C.2]{MR2950773} in the setting of connected Lie groups),
 ``polynomial'' nilsequences, in addition to being formally more general, seem to be better suited for inductive purposes.
 This has been observed recently and utilized in connection with additive number theory, see e.g.~\textcite{MR2950773} and \textcite{MR2680398}.
 
Clearly, one can replace $\poly$ by $\polyn$ in Definition~\ref{def:nilsequence}.
Indeed, if $g\in\poly$ and $F\in C(G/\Gamma)$, then
\[
F(g(n)) = F_{g(0)}(g(0)\inv g(n)),
\]
where $F_{a}(x) := F(ax)$ is another continuous function on $G/\Gamma$.
Now the argument is a polynomial sequence that vanishes at zero.

In this construction we have $\|F_{a}\|_{\infty} \leq \|F_{a}\|_{\infty}$.
Unfortunately, one cannot in general estimate the norm of $F_{a}$ in a function space (such as Sobolev space or the space of Lipschitz functions) by the norm of $F$ in the same space.
A remedy consists in restricting $a$ to a relatively compact subset of $G$.
\begin{lemma}[Fundamental domain]
\label{lem:fundamental-domain}
\index{fundamental domain}
Let $\Gamma\leq G$ be a cocompact lattice.
Then there exists a relatively compact set $K\subset G$ and a map $G\to K$, $g\mapsto \{g\}$ such that $g\Gamma = \{g\}\Gamma$ and $\{\{g\}\}=\{g\}$ for each $g\in G$.
\end{lemma}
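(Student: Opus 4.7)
The plan is to construct a Borel fundamental domain using Mal'cev coordinates and reduce the construction to the connected component. First I would observe that $G/G^{o}\Gamma$ is both discrete (quotient of the discrete $G/G^{o}$) and compact (quotient of the compact $G/\Gamma$), hence finite; choose coset representatives $g_{1},\dots,g_{k}\in G$ for $G/G^{o}\Gamma$. A standard argument shows that $\Gamma^{o}:=\Gamma\cap G^{o}$ is then a cocompact lattice in $G^{o}$, so Mal'cev's theorem supplies a Mal'cev basis $\{X_{1},\dots,X_{d_{o}}\}$ for $G^{o}/\Gamma^{o}$ (as recalled in Definition~\ref{def:malcev-basis}). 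Each $h\in G^{o}$ admits unique coordinates of the second kind $(u_{1},\dots,u_{d_{o}})\in\R^{d_{o}}$, and $\Gamma^{o}$ corresponds exactly to $\Z^{d_{o}}$.

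Next I would take the ``unit cube''
\[
K_{o}:=\{\exp(u_{1}X_{1})\cdots\exp(u_{d_{o}}X_{d_{o}}):u_{1},\dots,u_{d_{o}}\in[0,1)\}
\]
as a Borel fundamental domain for $G^{o}/\Gamma^{o}$; it is relatively compact since it sits inside the image of $[0,1]^{d_{o}}$ under a continuous map. The obvious idempotent selection map $h\mapsto\{h\}_{o}\in K_{o}$, sending $h\in G^{o}$ to the unique representative of $h\Gamma^{o}$ lying in $K_{o}$, satisfies $\{h\}_{o}\Gamma^{o}=h\Gamma^{o}$ and $\{\{h\}_{o}\}_{o}=\{h\}_{o}$ by construction.

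Finally I would assemble the global fundamental domain as $K:=\bigsqcup_{i=1}^{k}g_{i}K_{o}$, which is still relatively compact. For $g\in G$ there is a unique $i$ with $g\in g_{i}G^{o}\Gamma$; writing $g=g_{i}h\gamma$ with $h\in G^{o}$ and $\gamma\in\Gamma$, any other such decomposition has $h$ replaced by $h\gamma'$ with $\gamma'\in\Gamma^{o}$, so $\{h\}_{o}$ depends only on $g$. Setting $\{g\}:=g_{i}\{h\}_{o}$, one checks $\{g\}\Gamma=g_{i}\{h\}_{o}\Gamma=g_{i}h\Gamma=g\Gamma$ and, for $g\in K$ (i.e.\ $g=g_{i}h$ with $h\in K_{o}$ and $\gamma=1$), that $\{g\}=g_{i}\{h\}_{o}=g_{i}h=g$, giving $\{\{g\}\}=\{g\}$.

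There is no real obstacle here beyond bookkeeping; the only point requiring care is the finiteness of $G/G^{o}\Gamma$ and the well-definedness of the decomposition $g=g_{i}h\gamma$ modulo $\Gamma^{o}$, both of which follow routinely from cocompactness of $\Gamma$ and the definition of $\Gamma^{o}$.
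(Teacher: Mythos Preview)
Your argument is correct, but it takes a different and more constructive route than the paper. The paper dispatches the lemma in one sentence using only soft topology: since $\Gamma$ is discrete the quotient map $G\to G/\Gamma$ is a local homeomorphism, $G$ is locally compact, and $G/\Gamma$ is compact, so one covers $G/\Gamma$ by finitely many relatively compact evenly covered open sets, chooses a local section over each, and defines $\{g\}$ by picking the first section whose domain contains $g\Gamma$. This works for any locally compact group with a discrete cocompact subgroup and uses none of the nilpotent structure.

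Your construction instead exploits the Mal'cev basis to produce the explicit ``unit cube'' fundamental domain for $G^{o}/\Gamma^{o}$ and then patches over the finitely many cosets of $G^{o}\Gamma$. The paper actually remarks, immediately after its one-line proof, that in the connected case one \emph{can} take $K=[0,1)^{d}$ in Mal'cev coordinates (citing \cite[Lemma A.14]{MR2877065}), ``but we do not need this information.'' So you have supplied precisely the explicit construction the paper chose to omit. Your approach buys a concrete, Borel-measurable fundamental domain with nice algebraic properties; the paper's approach buys generality and brevity, which is all that is needed downstream (the only use of $\{\cdot\}$ later is that its range is relatively compact).
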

This follows readily from local homeomorphy of $G$ and $G/\Gamma$, from local compactness of $G$ and from compactness of $G/\Gamma$.
For example, for $G=\R$ and $\Gamma=\Z$ the fundamental domain $K$ can be taken to be the interval $[0,1)$ with the usual fractional part map $\{\cdot\}$.
In case of a general connected Lie group the fundamental domain can be taken to be $[0,1)^{d}$ in Mal'cev coordinates \cite[Lemma A.14]{MR2877065}, but we do not need this information.
For each nilmanifold that we consider we fix some map $\{\cdot\}$ as above.

Using the fractional part map we can rewrite a nilsequence associated to $g\in\poly$ and $F\in C(G/\Gamma)$ as
\[
F(g(n)) = F_{\{g(0)\}}(\{g(0)\}\inv g(n) g(0)\inv \{g(0)\}).
\]
This is made possible by the fact that $g(0)\inv \{g(0)\} \in\Gamma$.
Note that $F_{\{g(0)\}}$ belongs to a compact subset of $C(G/\Gamma)$ that does not depend on $g$.

Henceforth we will mostly consider nilsequences associated to polynomial sequences that vanish at zero, keeping at mind that the general case can be treated by the above trick.

 A key tool for many inductive proofs is the following modification of a construction due to Green and Tao, see e.g.\ \cite[Lemma 1.6.13]{MR2931680} and
\cite[\textsection 7]{MR2877065}, which shows that discrete derivatives of nilsequences associated to vertical characters are nilsequences of lower step.
Let $\Gb$ be a filtration of length $l$, $g\in\polyn$, and $F\in C(G/\Gamma)$ be a vertical character.
Then
\[
F(g(n+k)) \ol{F(g(n))}
= \cube{F}{g(k)}(\cuben{g}{k}(0)\inv \cuben{g}{k}),
\]
where $\cube{F}{a}(x,y) := F(ax)\ol{F(y)}$.
To see that this is a nilsequence of step $l-1$, note that the sequence $\cuben{g}{k}$ is $\Gb^{\square}$-polynomial by Lemma~\ref{lem:poly-cube}.
Moreover, the function $\cube{F}{a}$ is $G_{l}^{\square}$-invariant since $F$ is a vertical character, so we may factor by $G_{l}^{\square}$, thereby reducing the length of the filtration.

This construction suffers from the deficiency outlined above, namely that there is in general no control on $\cube{F}{a}$ in terms of $F$.
This can be resolved in the same way as before, considering the $G_{1}^{\square}$-valued $\Gb^{\square}$-polynomial sequence
\begin{equation}
\label{eq:g-tilde}
\cube{g}{k} := (\{g(k)\}\inv g(n+k) g(k)\inv \{g(k)\}, g(n)).
\end{equation}
Then we obtain
\[
F(g(n+k)) \ol{F(g(n))}
= \cube{F}{\{g(k)\}}(\cube{g}{k}(n)).
\]
We will sometimes abuse the notation and write $\cube{F}{k}$ instead of $\cube{F}{\{g(k)\}}$.
\begin{lemma}[Control on Sobolev norms in the cube construction]
\label{lem:sobolev-norm-of-tensor-product}
With the above notation we have
\begin{equation}
\label{eq:estimate-discrete-derivative}
\|\cube{F}{k}\|_{W^{j,p}(\tilde{G}/\tilde{\Gamma})}\lesssim_{j,p} \|F\|^{2}_{W^{j,2p}(G/\Gamma)} \text{ for any } j\in\N, p\in [1,\infty),
\end{equation}
where the implied constant does not depend on $k$ and $F$.
\end{lemma}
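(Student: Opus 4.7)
The plan is to expand a generic derivative of $\cube{F}{k}$ by the Leibniz rule into a finite sum of products of Sobolev derivatives of $F$ on $G/\Gamma$, and then to estimate each product via Cauchy--Schwarz. Writing $a:=\{g(k)\}$ so that $\cube{F}{k}(x,y)=F(ax)\ol{F(y)}$, Lemma~\ref{lem:fundamental-domain} ensures that $a$ ranges in a fixed relatively compact set $K\subset G$ independent of $k$. I would fix a Mal'cev basis for $\tilde G/\tilde\Gamma = G^{\square}_{1}/\Gamma^{\square}_{1}$ adapted to the cube filtration $\Gb^{\square}$, whose existence underlies Lemma~\ref{lem:rational-cube}: starting from the Mal'cev basis $X_{1},\dots,X_{d}$ of $G/\Gamma$ adapted to $\Gb$, the horizontal vectors $\tilde X_{i}:=(X_{i},X_{i})$ for $i=1,\dots,d$ together with the vertical vectors $\tilde Y_{j}:=(0,X_{j})$ for $X_{j}\in\mathfrak{g}_{2}$ form such a basis, so that the associated right-invariant vector fields on $\tilde G/\tilde\Gamma$ split accordingly.

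Differentiating $\cube{F}{k}$ along these fields gives
\begin{equation*}
\tilde X_{i}\cube{F}{k}(x,y)=X_{i}[F\circ L_{a}](x)\,\ol{F(y)}+F(ax)\,\ol{(X_{i}F)(y)},\qquad \tilde Y_{j}\cube{F}{k}(x,y)=F(ax)\,\ol{(X_{j}F)(y)},
\end{equation*}
and right-invariance yields the key identity $X_{i}[F\circ L_{a}](x)=((\Ad_{a}X_{i})F)(ax)=\sum_{k}c_{ik}(a)(X_{k}F)(ax)$, where by Lemma~\ref{lem:conjugation} the coefficients $c_{ik}$ are polynomial in the Mal'cev coordinates of $a$ and hence uniformly bounded on $K$. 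Iterating these rules together with the Leibniz identity, every derivative of $\cube{F}{k}$ of order at most $j$ becomes a finite linear combination, with coefficients bounded uniformly in $k$, of functions of the form $(X^{\alpha}F)(ax)\,\ol{(X^{\beta}F)(y)}$ with $|\alpha|+|\beta|\leq j$.

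For each such function I would apply the Cauchy--Schwarz inequality
\begin{equation*}
\int_{\tilde G/\tilde\Gamma}|(X^{\alpha}F)(ax)|^{p}|(X^{\beta}F)(y)|^{p}\,\dif\tilde v\leq\Big(\int|(X^{\alpha}F)(ax)|^{2p}\,\dif\tilde v\Big)^{1/2}\Big(\int|(X^{\beta}F)(y)|^{2p}\,\dif\tilde v\Big)^{1/2},
\end{equation*}
observing that each of the two coordinate projections $\tilde G\to G$ is a continuous surjective homomorphism (with kernel $\{e\}\times G_{2}$ and $G_{2}\times\{e\}$, respectively), so each pushes the normalized Haar measure of $\tilde G/\tilde\Gamma$ forward to the normalized Haar measure of $G/\Gamma$ by uniqueness of invariant probability measures. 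Combined with $L_{a}$-invariance of the latter, the two factors equal $\|X^{\alpha}F\|_{L^{2p}(G/\Gamma)}^{p}$ and $\|X^{\beta}F\|_{L^{2p}(G/\Gamma)}^{p}$, each controlled by $\|F\|_{W^{j,2p}(G/\Gamma)}^{p}$. Summing the finitely many Leibniz contributions and taking a $p$-th root yields \eqref{eq:estimate-discrete-derivative}. The main obstacle will be purely combinatorial: tracking that the polynomial prefactors $c_{ik}(a)$ that accumulate under iteration remain uniformly bounded, so that the implied constant depends only on $j$, $p$, and the underlying Lie-algebraic data, not on $k$ or $F$.
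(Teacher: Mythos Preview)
Your proposal is correct and follows essentially the same route as the paper's proof: Cauchy--Schwarz on the tensor product, the observation that the Haar measure on $G_{1}^{\square}/\Gamma^{\square}$ projects to the Haar measure on $G/\Gamma$ in each coordinate (the paper phrases this as the cube measure being a self-joining), and uniform control over $a=\{g(k)\}\in K$. The only difference is cosmetic: the paper absorbs the left translation into the function by writing $\cube{F}{k}=F_{a}\otimes\ol{F}$ and then appeals in one line to ``smoothness of the group operation'' to get $\|F_{a}\|_{W^{j,2p}}\lesssim\|F\|_{W^{j,2p}}$ uniformly for $a\in K$, whereas you unpack this step explicitly via the adjoint action and the bounded coefficients $c_{ik}(a)$. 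Your version is what the paper's phrase ``similar calculations for the derivatives'' actually means.
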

\begin{proof}
For the Mal'cev basis on $\tilde G/\tilde\Gamma$ that is induced by the Mal'cev basis on $G_{1}^{\square}/\Gamma^{\square}$ we have
\[
\|\cube{F}{k}\|_{W^{j,p}(\tilde G/\tilde\Gamma)} = \|F_{\{g(k)\}}\otimes\ol{F}\|_{W^{j,p}(G_{1}^{\square}/\Gamma^{\square})},
\]
so it suffices to estimate the latter quantity.

To this end observe that the Haar measure on $G_{1}^{\square}/\Gamma^{\square}$ is a self-joining of the Haar measure on $G/\Gamma$ under the canonical projections to the coordinates.
Therefore and by the Cauchy-Schwarz inequality we have
\begin{align*}
\|F_{0}\otimes F_{1}\|_{L^p(G_{1}^{\square}/\Gamma^{\square})}^{2p}
&= \Big(\int_{G_{1}^{\square}/\Gamma^{\square}}|F_{0}(y_{0}) F_{1}(y_{1})|^{p} \dif\mu_{G_{1}^{\square}/\Gamma^{\square}}(y_{0},y_{1})\Big)^2\\
&\leq \int_{G_{1}^{\square}/\Gamma^{\square}} |F_{0}(y_{0})|^{2p} \dif\mu_{G_{1}^{\square}/\Gamma^{\square}}(y_{0},y_{1})\\
&\qquad \cdot \int_{G_{1}^{\square}/\Gamma^{\square}} |F_{1}(y_{1})|^{2p} \dif\mu_{G_{1}^{\square}/\Gamma^{\square}}(y_{0},y_{1})\\
&= \int_{G/\Gamma} |F_{0}|^{2p} \dif\mu_{G/\Gamma}  \int_{G/\Gamma} |F_{1}|^{2p} \dif\mu_{G/\Gamma}\\
&=\|F_{0}\|_{L^{2p}(G/\Gamma)}^{2p} \|F_{1}\|_{L^{2p}(G/\Gamma)}^{2p}
\end{align*}
for any smooth functions $F_{0},F_{1}$ on $G/\Gamma$.
Now recall that $\{g(k)\}\in K$ for some fixed compact set $K\subset G_{1}$, so that by smoothness of the group operation $\|F_{\{g(k)\}}\|_{L^{2p}(G/\Gamma)} \lesssim \|F\|_{L^{2p}(G/\Gamma)}$.
Similar calculations for the derivatives lead to the bound
\[
\|F_{\{g(k)\}}\otimes\ol{F}\|_{W^{j,p}(G_{1}^{\square}/\Gamma^{\square})} \lesssim_{j,p} \|F\|_{W^{j,2p}(G/\Gamma)}^{2}.
\qedhere
\]
\end{proof}

\subsection{Reduction of polynomials to connected Lie groups}
In the context of nilsequences it will sometimes be useful to replace $\Gb$-polynomial sequences by $\Gb^{o}$-polynomial sequences.
As remarked earlier, this is not possible in general.
Here we show that this becomes possible upon passing to an appropriate subsequence.

Given a prefiltration $\Gb$ we define a prefiltration $\Gb^{o}$ by $G_{i}^{o}=G_{i}^{o}$.
\begin{lemma}
\label{lem:split-connected-lattice}
Let $X=G/\Gamma$ be a nilmanifold and $\Gb$ a $\Gamma$-rational filtration.
Assume that $G_{i}/\Gamma_{i}$ is connected for each $i$.
Then every $\Gb$-polynomial sequence $g(n)$ can be written in the form
\[
g(n) = g^{o}(n) \gamma(n),
\]
where $g^{o}$ is a $\Gb^{o}$-polynomial sequence, and $\gamma$ is a $\Gamma_{\bullet}$-polynomial sequence.
\end{lemma}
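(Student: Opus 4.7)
The proof proceeds by induction on the length $l$ of $\Gb$; the case $l=-\infty$ is vacuous as then $g\equiv e$. For the inductive step, write $\pi:G\to\bar G:=G/G_{l}$ for the projection, set $\bar\Gamma:=\pi(\Gamma)=\Gamma/\Gamma_{l}$ and $\bar G_{i}:=\pi(G_{i})$. The filtration $(\bar G_{i})_{i}$ on $\bar G$ has length $l-1$; by Lemma~\ref{lem:comm-lattice} it is $\bar\Gamma$-rational, and each $\bar G_{i}/(\bar\Gamma\cap\bar G_{i})$ is a continuous image of the connected space $G_{i}/\Gamma_{i}$, hence connected. The induction hypothesis applied to $\bar g:=\pi\circ g$ yields $\bar g=\bar g^{o}\bar\gamma$ with $\bar g^{o}$ polynomial with respect to $(\bar G_{i}^{o})_{i}$ and $\bar\gamma$ polynomial with respect to $(\bar\Gamma\cap\bar G_{i})_{i}$.

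The next step is to lift these factors: to choose $g_{1}^{o}\in\poly[\Gb^{o}]$ and $\gamma_{1}\in\poly[\Gamma_{\bullet}]$ with $\pi\circ g_{1}^{o}=\bar g^{o}$ and $\pi\circ\gamma_{1}=\bar\gamma$. Since $G_{l}$ is central in $G$ (as $[G,G_{l}]\subseteq G_{l+1}=\{e\}$), both relevant quotients $G^{o}\to\bar G^{o}$ and $\Gamma\to\bar\Gamma$ are central extensions, so polynomial lifts can be produced by expanding $\bar g^{o}$ and $\bar\gamma$ along adapted Mal'cev (respectively Hall--Mal'cev) bases, lifting each basis element through the central extension, and reassembling via Theorem~\ref{thm:poly-group}.

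With the lifts in hand, set $h:=(g_{1}^{o}\gamma_{1})\inv g$. Then $\pi\circ h\equiv e$, so $h$ takes values in $G_{l}$, and $h\in\poly$ by Theorem~\ref{thm:poly-group}. Because $G_{l}$ is abelian and sits at the top of $\Gb$, $h$ is an ordinary polynomial sequence of degree at most $l$ in $G_{l}$, admitting a binomial expansion $h(n)=\prod_{j=0}^{l}b_{j}^{\binom{n}{j}}$ with $b_{j}\in G_{l}$. Connectedness of $G_{l}/\Gamma_{l}$ gives $G_{l}=G_{l}^{o}\Gamma_{l}$, so one can write $b_{j}=b_{j}^{o}\delta_{j}$ with $b_{j}^{o}\in G_{l}^{o}$ and $\delta_{j}\in\Gamma_{l}$; since $G_{l}$ is abelian, $h^{o}(n):=\prod_{j}(b_{j}^{o})^{\binom{n}{j}}$ and $\delta(n):=\prod_{j}\delta_{j}^{\binom{n}{j}}$ satisfy $h=h^{o}\delta$ with $h^{o}\in\poly[\Gb^{o}]$ and $\delta\in\poly[\Gamma_{\bullet}]$. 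Since $G_{l}^{o}$ is central in $G$, the factor $h^{o}$ commutes with $\gamma_{1}$, which yields the claimed decomposition $g=(g_{1}^{o}h^{o})(\gamma_{1}\delta)$ by another application of Theorem~\ref{thm:poly-group}.

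The main obstacle is the lifting step: constructing $g_{1}^{o}\in\poly[\Gb^{o}]$ and $\gamma_{1}\in\poly[\Gamma_{\bullet}]$ whose polynomial structure is preserved with respect to the refined filtrations. This is conceptually straightforward given that $G_{l}$ is central inside $G$ and $\Gamma_{l}$ is central inside $\Gamma$, but it requires some bookkeeping to track filtration depths through an adapted Mal'cev-basis argument.
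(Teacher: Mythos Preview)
Your proof is correct and follows the same inductive scheme as the paper: induct on the filtration length, pass to $G/G_{l}$, apply the induction hypothesis, lift the two factors back to $G$, and then handle the residual $G_{l}$-valued polynomial $h$. The paper likewise glosses over the lifting step, saying only that ``$(G_{i}/G_{d})^{o}$ is covered by $G_{i}^{o}$'' and that one can ``clearly lift $\gamma_{/d}$''; your Mal'cev-basis sketch is at the same level of detail.

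The one genuine difference is in splitting $h$. The paper invokes a direct-sum decomposition $G_{l}=G_{l}^{o}\oplus A$ with $A\leq\Gamma$ and projects $h$ onto the two summands. You instead expand $h(n)=\prod_{j}b_{j}^{\binom{n}{j}}$ using that $G_{l}$ is abelian and $h$ has degree $\leq l$, then split each coefficient via $G_{l}=G_{l}^{o}\Gamma_{l}$. Your route is slightly more elementary and does not require the complement $A$ to exist inside $\Gamma$; it only uses the surjectivity $G_{l}^{o}\Gamma_{l}=G_{l}$ coming from connectedness of $G_{l}/\Gamma_{l}$, together with centrality of $G_{l}$ to recombine the factors.
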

\begin{proof}
We use induction on the length of the filtration $\Gb$.
If $\Gb$ has length $0$, then $g\equiv\id$, so we can take $g^{o}=\gamma\equiv\id$.
Suppose therefore that the conclusion is known for filtrations of length $d-1$ and consider a filtration $\Gb$ of length $d$.

By the induction hypothesis applied to the filtration $\Gb[/d]$ we can write
\[
g(n)G_{d} = g_{/d}^{o}(n) \gamma_{/d}(n),
\]
where $g_{/d}^{o}$ is a $(\Gb[/d])^{o}$-polynomial sequence and $\gamma_{/d}$ is a $\Gamma_{\bullet}/G_{d}$-polynomial sequence.
Since $(G_{i}/G_{d})^{o}$ is covered by $G_{i}^{o}$ for every $i$, we can lift $g_{/d}^{o}$ to a $\Gb^{o}$-polynomial sequence $g^{o}$ (here ``lift'' means that $g^{o}_{/d}=g^{o}G_{d}$).
Also, we can clearly lift $\gamma_{/d}$ to a $\Gamma_{\bullet}$-polynomial sequence $\gamma$.

It follows that $h=g (g^{o}\gamma)\inv$ is a $\Gb$-polynomial sequence with values in $G_{d}$.
By the rationality and connectedness assumption we can write $G_{d} = G_{d}^{o} \oplus A$ with $A \leq \Gamma$.
Splitting $h=h^{o}h^{\Gamma}$ accordingly and replacing $g^{o}$ and $\gamma$ by $g^{o}h^{o}$ and $\gamma h^{\Gamma}$, respectively, we obtain the claim.
\end{proof}

\begin{lemma}
\label{lem:periodic}
Let $G$ be a nilpotent group with a filtration $\Gb$ and let $\Gamma\leq G$ be a finite index subgroup.
Then for every $\Gb$-polynomial sequence $g(n)$ the sequence $g(n)\Gamma$ is periodic.
\end{lemma}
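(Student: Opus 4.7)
The plan is to reduce to the case where $\Gamma$ is normal and $G$ is finite, then induct on the length of the filtration.

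First, I would replace $\Gamma$ by its normal core $\Gamma_{0}=\bigcap_{g\in G}g\Gamma g\inv$, which is still a finite index subgroup of $G$ because $\Gamma$ has only finitely many conjugates. Since $g(n)\Gamma_{0}$ being periodic is stronger than $g(n)\Gamma$ being periodic, we may assume $\Gamma\nsubg G$. Then $\bar G:=G/\Gamma$ is a finite group, and the projection $\bar g(n):=g(n)\Gamma$ is a $\bar{\Gb}$-polynomial sequence into $\bar G$ with respect to the induced filtration $\bar G_{i}:=G_{i}\Gamma/\Gamma$. Thus it suffices to prove the following claim: \emph{every $\bar\Gb$-polynomial sequence $\bar g\from\Z\to\bar G$ into a finite group $\bar G$ is periodic.}

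I would prove the claim by induction on the length $d$ of the filtration $\bar\Gb$. For $d=-\infty$ the sequence is constant. For the inductive step, apply the induction hypothesis to $D_{1}\bar g$, which is $\bar\Gb[+1]$-polynomial (hence of strictly smaller length by Corollary~\ref{cor:poly-group}), to obtain a period $p\in\N_{>0}$ with $D_{1}\bar g(n+p)=D_{1}\bar g(n)$ for all $n$. Unpacking the definition $D_{1}\bar g(n)=\bar g(n)\inv\bar g(n+1)$ and rearranging gives
\[
\bar g(n+p+1)\bar g(n+1)\inv=\bar g(n+p)\bar g(n)\inv,
\]
so the sequence $a(n):=\bar g(n+p)\bar g(n)\inv$ is constant, say $a(n)\equiv a\in\bar G$. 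Hence $\bar g(n+p)=a\bar g(n)$, and by iteration $\bar g(n+kp)=a^{k}\bar g(n)$. Since $\bar G$ is finite, $a$ has finite order $m$, and we conclude that $\bar g$ is periodic with period $mp$.

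No step presents a serious obstacle: the reduction to a finite quotient is standard, and the inductive argument exploits the filtration's descent through the discrete derivative together with finiteness of $\bar G$ to absorb the leftover constant $a$. The only subtlety worth double-checking is the compatibility of the induced filtration $\bar\Gb$ on $\bar G$ with the polynomial property, but this is immediate from the fact that quotient maps send $\Gb$-polynomials to $\bar\Gb$-polynomials, as noted after Theorem~\ref{thm:poly-group}.
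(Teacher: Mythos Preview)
Your proof is correct and follows essentially the same approach as the paper's: both reduce to a finite quotient via a normal finite index subgroup, then induct on the filtration length using periodicity of $D_{1}g$. The paper's proof is terser, simply asserting ``the conclusion follows'' from periodicity of $D_{1}g$, whereas you spell out the argument with the constant $a(n)$ and its finite order; your expanded version is a faithful elaboration of the same idea.
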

\begin{proof}
Replacing $\Gamma$ by a finite index subgroup that is normal in $G$ and working modulo $\Gamma$, we may assume that $G$ is finite and $\Gamma$ is trivial.

We use induction on length $d$ of $\Gb$.
If $d=0$, then the conclusion holds trivially.
If $d>0$, then by the induction hypotesis the discrete derivative $D_{1}g$ is periodic, and the conclusion follows.
\end{proof}

\begin{corollary}
\label{cor:splitting}
Let $G/\Gamma$ be a nilmanifold with a $\Gamma$-rational filtration $\Gb$.
Then there exists a lattice $\tilde\Gamma$ such that $\Gamma$ is a finite index subgroup of $\tilde\Gamma$ and
every $\Gb$-polynomial sequence $g$ can be written in the form $g=g^{o}\gamma$, where $g^{o}$ is $\Gb^{o}$-polynomial and $\gamma$ is $\tilde\Gamma_{\bullet}$-polynomial.
In particular, $\gamma\Gamma$ is periodic.
\end{corollary}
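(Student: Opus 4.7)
The plan is to directly chain the two preceding lemmas together, with Lemma~\ref{lem:periodic} producing the final ``in particular'' assertion. First I would invoke Lemma~\ref{lem:finer-lattice} on the $\Gamma$-rational filtration $\Gb$ to produce a finite index surgroup $\tilde\Gamma\geq\Gamma$ with the property that each quotient $G_i/\tilde\Gamma_i$ is connected; this is the lattice that will appear in the conclusion. By Lemma~\ref{lem:comm-lattice}\eqref{lem:comm-lattice:rational} the filtration $\Gb$ is automatically $\tilde\Gamma$-rational, so the nilmanifold $G/\tilde\Gamma$ equipped with $\Gb$ satisfies exactly the hypotheses of Lemma~\ref{lem:split-connected-lattice}.

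Given any $\Gb$-polynomial sequence $g$, applying Lemma~\ref{lem:split-connected-lattice} to $G/\tilde\Gamma$ then yields a decomposition $g=g^{o}\gamma$ in which $g^{o}$ is $\Gb^{o}$-polynomial and $\gamma$ is $\tilde\Gamma_{\bullet}$-polynomial, which is the desired splitting.

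For the periodicity claim I would view $\gamma$ as a polynomial sequence taking values in the nilpotent group $\tilde\Gamma$ with filtration $\tilde\Gamma_{\bullet}$, and note that by Lemma~\ref{lem:finer-lattice} the subgroup $\Gamma$ has finite index in $\tilde\Gamma$. Lemma~\ref{lem:periodic} then applies and shows that the coset sequence $\gamma(n)\Gamma$ is periodic in $\tilde\Gamma/\Gamma$, which completes the proof. There is no real obstacle here: the content of the corollary is entirely contained in the two preceding lemmas, and the only bookkeeping required is checking that $\tilde\Gamma$-rationality of $\Gb$ is inherited from $\Gamma$-rationality, which is handled by Lemma~\ref{lem:comm-lattice}.
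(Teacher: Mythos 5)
Your proof is correct and follows exactly the paper's own argument: invoke Lemma~\ref{lem:finer-lattice} to get $\tilde\Gamma$, apply Lemma~\ref{lem:split-connected-lattice} to $G/\tilde\Gamma$ to obtain the splitting, and conclude periodicity from Lemma~\ref{lem:periodic}. The only addition is your explicit check via Lemma~\ref{lem:comm-lattice} that $\Gb$ is $\tilde\Gamma$-rational, which the paper leaves implicit; this is a harmless (and welcome) bit of bookkeeping, not a different approach.
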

\begin{proof}
Consider $\tilde\Gamma$ given by Lemma~\ref{lem:finer-lattice}.
The required splitting is provided by Lemma~\ref{lem:split-connected-lattice}.
The claimed periodicity follows from Lemma~\ref{lem:periodic}.
\end{proof}

\subsection{Equidistribution criterion}
Recall that a sequence $(x_{n})$ in a regular measure space $(X,\mu)$ is called
\begin{enumerate}
\item \emph{equidistributed on $X$} if for every $f\in C(X)$ we have
\[
\lim_{N\to\infty} \aveN f(x_{n}) = \int f \dif\mu,
\]
\index{sequence!equidistributed}
\item \emph{well-distributed on $X$} if for every F\o{}lner sequence $(\Fo_{N})$ in $\Z$ and every $f\in C(X)$ we have
\index{sequence!well-distributed}
\[
\lim_{N\to\infty} \aveFN f(x_{n}) = \int f \dif\mu,
\]
\item \emph{totally equidistributed on $X$} if its restriction to every arithmetic progression $a\Z+b$, $0\leq b< a$, in $\Z$ is equidistributed on $X$, and
\index{sequence!totally equidistributed}
\item \emph{totally well-distributed on $X$} if its restriction to every arithmetic progression in $\Z$ is well-distributed on $X$.
\index{sequence!totally well-distributed}
\end{enumerate}
Leibman's equidistribution criterion tells that the only obstruction to total well-distribution of $\Gb$-polynomial sequences on a connected nilmanifold $G/\Gamma$ are \emph{horizontal characters}, that is, continuous homomorphisms $\eta:G\to\R$ such that $\eta(\Gamma)\subset\Z$ (see Theorem~\ref{thm:equid-crit-connected} for the precise formulation).
\index{horizontal character}
We will give a qualitative version of the proof that is due to Green and Tao \cite{MR2877065}.
For reader's convenience we will keep the notation as close to \cite{MR2877065} as possible.

The proof proceeds by induction on the length of the filtration.
In each step one performs the cube construction and factors out the diagonal central subgroup.
The induction hypothesis gives some information about horizontal characters on $G^{\square}$.
The next lemma describes how such horizontal characters induce horizontal characters on $G$.
\begin{lemma}
\label{lem:char-cube}
Let $G/\Gamma$ be a nilmanifold with a $\Gamma$-rational filtration $\Gb$.
Let $\eta:G^{\square} \to \R$ be a horizontal character.
Then the map
\[
\eta_{1}:G\to\R,
\quad g\mapsto \eta(g,g)
\]
is a horizontal character on $G/\Gamma$, the map
\[
\eta_{2}:G_{2}\to\R,
\quad g\mapsto\eta(g,\id)
\]
is a horizontal character on $G_{2}/\Gamma_{2}$ such that $\eta_{2}([G,G_{2}]) = \{0\}$ and the map $(x,y) \mapsto \eta_{2}([x,y])$ is a bihomomorphism (that is, a group homomorphism in each variable when the other variable is fixed).

In particular, the map $g\mapsto \eta_{2}([g,\gamma])$ is a horizontal character on $G/\Gamma$ for every $\gamma\in\Gamma$.
\end{lemma}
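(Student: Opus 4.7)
The plan is to verify each of the three assertions directly, exploiting the fact that a homomorphism into the abelian group $\R$ must vanish on commutators, together with the explicit description of $G^{\square}$.

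First I would check that $\eta_1$ and $\eta_2$ are horizontal characters on the appropriate nilmanifolds. The diagonal map $\Delta\from G\to G^{\square}$, $g\mapsto(g,g)$, is a continuous group homomorphism whose image sits inside $G_1^{\square}$ (since $g\inv g = \id \in G_2$ trivially), so $\eta_1 = \eta\circ\Delta$ is a continuous homomorphism $G\to\R$; and for $\gamma\in\Gamma$ we have $(\gamma,\gamma)\in\Gamma^{2}\cap G_1^{\square}=\Gamma^{\square}$, so $\eta_1(\gamma)\in\Z$. Similarly, the map $\iota\from G_2\to G^{\square}$, $g\mapsto(g,\id)$, is well-defined into $G_1^{\square}$ precisely because $g\inv\cdot\id\in G_2$ when $g\in G_2$; thus $\eta_2=\eta\circ\iota$ is a continuous homomorphism, and for $g\in\Gamma_2=\Gamma\cap G_2$ we have $\iota(g)\in\Gamma^{\square}$, so $\eta_2(g)\in\Z$.

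The heart of the matter is the explicit commutator computation in $G^{\square}$: for $x\in G$ and $y\in G_2$,
\[
[(x,x),(y,\id)] = ([x,y],\,[x,\id]) = ([x,y],\,\id).
\]
Since $\eta\from G^{\square}\to\R$ is a homomorphism into an abelian group it kills all commutators, whence
\[
\eta_2([x,y]) = \eta([x,y],\id) = \eta\bigl([(x,x),(y,\id)]\bigr) = 0,
\]
giving $\eta_2|_{[G,G_2]}\equiv 0$. From here the bihomomorphism property drops out of the Hall identities \eqref{eq:ab-c} and \eqref{eq:a-bc}: for instance,
\[
[x_1 x_2, y] = [x_1,y]\,[[x_1,y],x_2]\,[x_2,y],
\]
and the middle factor lies in $[G_2,G]\leq G_3\leq[G,G_2]$, hence is killed by $\eta_2$; so $\eta_2([x_1x_2,y])=\eta_2([x_1,y])+\eta_2([x_2,y])$, and the second-variable case is symmetric.

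Part 3 is then immediate: for $\gamma\in\Gamma$ fixed, $g\mapsto\eta_2([g,\gamma])$ is continuous by continuity of $\eta_2$ and of the commutator, is a group homomorphism in $g$ by the bihomomorphism property, and maps $\Gamma$ into $\Z$ because for $g\in\Gamma$ the commutator $[g,\gamma]\in[\Gamma,\Gamma]\leq\Gamma_2$ and $\eta_2(\Gamma_2)\subset\Z$. I do not anticipate any genuine obstacle: the argument is really just the commutator identity above together with bookkeeping of filtration degrees to ensure $\eta_2$ is always applied inside its domain $G_2$.
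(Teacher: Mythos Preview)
Your argument is correct and follows essentially the same route as the paper: both verify that $\eta_1,\eta_2$ are horizontal characters, both establish $\eta_2([G,G_2])=0$ via the commutator identity $[(x,x),(y,\id)]=([x,y],\id)$ in $G^{\square}$ (the paper writes this out additively), and both derive the bihomomorphism property from this vanishing together with a Hall-type commutator identity (the paper cites \eqref{eq:ab-cd}, you use the simpler \eqref{eq:ab-c} and \eqref{eq:a-bc}, which is cleaner). One small slip: your chain ``$[G_2,G]\leq G_3\leq[G,G_2]$'' is not true for a general filtration, but you do not need $G_3$ at all---the middle factor $[[x_1,y],x_2]$ lies in $[G_2,G]=[G,G_2]$ directly, which is what $\eta_2$ annihilates.
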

\begin{proof}
It is clear that $\eta_{1}$ and $\eta_{2}$ are horizontal characters.
For any $g\in G, h\in G_{2}$ we have
\begin{multline*}
\eta_{2}([g,h])
=\eta((g\inv h\inv gh,\id))
=\eta((g,g)\inv (h,\id)\inv (g,g) (h,\id))\\
=-\eta(g,g) - \eta(h,\id) + \eta(g,g) + \eta(h,\id)
= 0,
\end{multline*}
hence $\eta_{2}([G,G_{2}]) = \{0\}$ (note that the restriction $h\in G_{2}$ in the above calculation is necessary because otherwise $(h,\id) \not\in G^{\square}$).
This, together with the commutator identity \eqref{eq:ab-cd}, also shows that  the map $(x,y) \mapsto \eta_{2}([x,y])$ is a bihomomorphism.
\end{proof}


The main step in the proof of the equidistribution criterion is the following trichotomy that allows one to transfer information from the cube spaces to the original nilmanifold.
We have nothing to add to the proof in \cite[\textsection 7]{MR2877065}.
\newcommand{\uBd}{upper Banach density}
\begin{proposition}
\label{prop:equid-trichotomy}
Let $G/\Gamma$ be a nilmanifold and $\Gb$ a $\Gamma$-rational filtration on $G$ consisting of connected groups.
Let $g\in\polyn$ be such that $g(1)=\{g(1)\}$ and suppose that there is a set of $h$ of \uBd\ at least $\delta$ and a non-trivial horizontal character $\eta:G^{\square} \to \R$ such that $\eta(\cube{g}{h}(\Z))\subset\Z$.
Then at least one of the following statements holds.
\begin{enumerate}
\item\label{equid-trichotomy:eta1} The map $\eta_{1}$ is a non-trivial horizontal character and $\eta_{1}(g(\Z))\subset\Z$.
\item\label{equid-trichotomy:eta2} There exists $i=1,\dots,d$ such that the map $\tilde\eta_{i}:G\to\R$, $g\mapsto \eta_{2}([g,\exp X_{i}])$, is a non-trivial horizontal character and $q\tilde\eta_{i}(g(\Z))\subset\Z$ for some natural number $q$ that is bounded in terms of $\delta$.
\item\label{equid-trichotomy:q} The map $\eta_{2}:G_{2}\to\R$ is a non-trivial horizontal character and $q\eta_{2}(g_{2}(\Z))\subset\Z$ for some natural number $q$ that is bounded in terms of $\delta$, where $g_{2}$ is defined by $g(n)=g(1)^{n}g_{2}(n)$.
\end{enumerate}
\end{proposition}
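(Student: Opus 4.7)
The plan is to turn the hypothesis $\eta(\cube{g}{h}(\Z)) \subset \Z$, valid for $h$ in a density-$\delta$ set, into one of the three claimed integrality statements for $g$ itself by expanding everything as a polynomial in $n$ (for each fixed $h$) and tracking the coefficients via Lemma~\ref{lem:char-cube}. Writing $g(n) = g(1)^{n} g_{2}(n)$ with $g_{2}$ taking values in $G_{2}$ and setting $\gamma_{h} = g(h)^{-1}\{g(h)\} \in \Gamma$, a direct computation gives
\[
\cube{g}{h}(n) = \big(\gamma_{h}^{-1} g(h)^{-1} g(n+h) \gamma_{h},\ g(n)\big),
\]
and the map $n \mapsto \eta(\cube{g}{h}(n))$ is a real polynomial of degree bounded by the length of $\Gb$, integer-valued on $\Z$ by hypothesis, hence with integer coefficients in the binomial basis $\binom{n}{k}$.

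Next I would compute these coefficients. Using the decomposition $\eta = (\eta_{1},\eta_{2})$ of Lemma~\ref{lem:char-cube}, the linear coefficient in $n$ splits, after pulling the $\gamma_{h}$-conjugation through $g(1)$ modulo $G_{2}$, as $\eta_{1}(g(1))$ plus a correction involving $\eta_{2}([g(1),\gamma_{h}])$ and similar bihomomorphism terms in $g_{2}$; the higher-degree coefficients involve only $\eta_{2}$ applied to the higher-order Taylor coefficients of $g$ and to commutators with $\gamma_{h}$. Since $\gamma_{h} \in \Gamma$ and $\eta$ is a horizontal character on $G^{\square}$, any term in which $\gamma_{h}$ appears on its own lies in $\Z$ automatically; the non-trivial $h$-dependence passes through the bihomomorphism $(x,y) \mapsto \eta_{2}([x,y])$.

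The trichotomy is then a case analysis on which piece is non-trivial. If $\eta_{1} \neq 0$, integrality of the linear coefficient for any single admissible $h$ gives $\eta_{1}(g(1)) \in \Z$, and translating this through the polynomial $g$ yields \eqref{equid-trichotomy:eta1}. If $\eta_{1} \equiv 0$ but some $\tilde\eta_{i}(g) = \eta_{2}([g,\exp X_{i}])$ is a non-trivial horizontal character on $G/\Gamma$, I expand $\gamma_{h}$ in the Mal'cev basis $\{X_{1},\dots,X_{d}\}$ and pigeonhole over the density-$\delta$ set of $h$'s; this picks out one index $i$ together with an integer $q$ bounded in terms of $\delta$ and $d$ such that $q\tilde\eta_{i}(g(\Z)) \subset \Z$, giving \eqref{equid-trichotomy:eta2}. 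Otherwise $\eta_{1} \equiv 0$ and all commutator contributions vanish identically, so the entire polynomial depends only on $g_{2}$, and the quadratic (or appropriate higher) binomial coefficient provides $q\eta_{2}(g_{2}(\Z)) \subset \Z$ as in \eqref{equid-trichotomy:q}, the denominator $q$ again coming from a pigeonhole on $h$.

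The main obstacle is this last pigeonhole step: because $\eta$ is only asserted to witness integrality along $\cube{g}{h}(\Z)$ for $h$ in a sparse set rather than for every $h \in \Z$, the $h$-dependent correction $\eta_{2}([g(1),\gamma_{h}])$ cannot simply be canceled modulo $\Z$. The bihomomorphism property from Lemma~\ref{lem:char-cube} together with finite generation of $\Gamma$ reduces the $h$-dependence to finitely many scalar bihomomorphism values, and the density hypothesis converts this into a single candidate $\tilde\eta_{i}$ with a quantitatively controlled $q$; keeping $q$ bounded purely in terms of $\delta$ (rather than dependent on $h$) is the delicate point and is where one follows Green--Tao's argument line by line.
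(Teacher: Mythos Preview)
The paper supplies no proof here; it simply defers to Green--Tao \cite[\S 7]{MR2877065}. Your outline is in the spirit of that argument---expand $\eta(\cube{g}{h}(n))$ through the decomposition $\eta\leadsto(\eta_{1},\eta_{2})$ of Lemma~\ref{lem:char-cube}, note that the resulting real polynomial in $n$ has integer binomial coefficients, and read off integrality constraints---but the case analysis you set up is not correct.

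You claim that if $\eta_{1}\neq 0$ then alternative \eqref{equid-trichotomy:eta1} holds because ``integrality of the linear coefficient for any single admissible $h$ gives $\eta_{1}(g(1))\in\Z$''. This does not follow: the value $\eta(\cube{g}{h}(1))$ equals $\eta_{1}(g(1))$ \emph{plus} an $\eta_{2}$-term of the form $\eta_{2}\big(\gamma_{h}^{-1}g(h)^{-1}g(1+h)\gamma_{h}\cdot g(1)^{-1}\big)$, and there is no reason for this second term to lie in $\Z$ for a single $h$ (conjugation by $\gamma_{h}\in\Gamma$ does not preserve $\eta_{2}$ modulo $\Z$, since $\gamma_{h}\notin G_{2}$ in general). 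So non-triviality of $\eta_{1}$ alone does not force \eqref{equid-trichotomy:eta1}; the three alternatives are \emph{not} governed by which of $\eta_{1}$, $\tilde\eta_{i}$, $\eta_{2}$ happen to be non-zero.

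What Green--Tao actually do is look at the \emph{top} coefficients of the polynomial $n\mapsto\eta(\cube{g}{h}(n))$ (those of degree $d$ and $d-1$, where $d$ is the filtration length), which can be computed explicitly in terms of the Taylor coefficients $a_{j}\in G_{j}$ of $g$ and the bihomomorphism $\eta_{2}([\cdot,\cdot])$. The degree-$(d-1)$ coefficient turns out to be a polynomial in $h$ (essentially $\eta_{2}(a_{d-1})+h\,\eta_{2}(a_{d})+\sum_{i}\tilde\eta_{i}(\text{coord}_{i}\{g(h)\})$ up to signs), and the hypothesis says this takes values in $\Z$ for a set of $h$ of density $\geq\delta$. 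A quantitative Weyl-type lemma (Lemma~4.5 in \cite{MR2877065}) then forces the linear-in-$h$ coefficient to be rational with bounded denominator $q$; this is where the trichotomy actually splits, according to whether that coefficient is zero or not, and whether $\eta_{2}$ itself vanishes. Your sketch gestures at this in the final paragraph but the body of the argument does not set it up correctly.
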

The next result shows what happens if a polynomial sequence fails to be well-distributed.
This is a qualitative version of the main result from \cite{MR2877065}, but we note that not all quantitativity has been removed.
In fact, it is essential for inductive purposes to have some uniformity over all polynomials.
\begin{theorem}
\label{thm:equid-crit-ind}
Let $G/\Gamma$ be a nilmanifold associated to a connected group $G$ and $\Gb$ a $\Gamma$-rational filtration on $G$.
Let a F\o{}lner sequence $(\Fo_{N})$ in $\Z$, a function $F\in C(G/\Gamma)$ with $\int F=0$, $\delta>0$, $s\in\N$, and $0\leq r<s$ be given.
Then there exists a finite set of non-trivial horizontal characters such that for every $g\in\polyn$ with $\limsup_{N} |\aveFN F(g(sn+r)\Gamma)|>\delta$ there exists a horizontal character $\eta$ on this list such that $\eta(g(\Z))\subset\Z$.
\end{theorem}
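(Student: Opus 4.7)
The plan is to argue by induction on the length $l$ of the filtration $\Gb$. The base case $l=0$ is vacuous since $G$ is then trivial and $\int F=0$ forces $F=0$; alternatively one can start from $l=1$, handled by Weyl's equidistribution theorem on the torus $G/\Gamma$ after truncating the Fourier series of $F$. For the inductive step I would combine a vertical Fourier decomposition with a van der Corput–type dualization that interacts cleanly with the cube construction, and then feed the output of the inductive hypothesis on the cube nilmanifold into Proposition~\ref{prop:equid-trichotomy}.

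First, after a small uniform smooth approximation I may assume $F$ is smooth, and decompose $F=\sum_\chi F_\chi$ into vertical characters as in Definition~\ref{def:vertical-Fourier-series}. Combining the Bessel-type bound of Lemma~\ref{lem:estimate-vertical-fourier-series} with the Sobolev embedding of Lemma~\ref{lem:sobolev-embedding}, all but finitely many $F_\chi$ have $\|F_\chi\|_\infty < \delta/2$, so the deviation hypothesis forces some single vertical character $F_\chi$, drawn from a finite set depending only on $F$ and $\delta$, to have F\o{}lner deviation at least some $\delta'>0$. When $\chi$ is trivial, $F_\chi$ descends to the nilmanifold $G/(G_l\Gamma)$ with the filtration $\Gb[/l]$ of length $l-1$; the induction hypothesis applies directly and its output lifts to a finite list of non-trivial horizontal characters on $G$ vanishing on $G_l$.

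The heart of the argument treats a non-trivial vertical character $F_\chi$. A van der Corput inequality applied to the F\o{}lner sequence produces a set of $h$ of positive lower density on which
\[
\limsup_N \Big| \aveFN F_\chi(g(s(n+h)+r))\,\overline{F_\chi(g(sn+r))} \Big| > \delta''.
\]
Using the $\chi$-equivariance of $F_\chi$, the integrand takes the form $\widetilde F_h(\cube{g}{sh}(sn+r))$ for a continuous function $\widetilde F_h$ on $G^\square/\Gamma^\square$ that is invariant under the diagonal subgroup $G_l^\square=G_l^\triangle$; the cube polynomial $\cube{g}{sh}$ is $\Gb^\square$-polynomial by Lemma~\ref{lem:poly-cube}. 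Quotienting by $G_l^\square$ puts us on $G^\square/(G_l^\square\Gamma^\square)$, a nilmanifold with a $\Gamma^\square$-rational filtration of length $l-1$ by Lemma~\ref{lem:rational-cube}. The induction hypothesis gives a finite list of non-trivial horizontal characters on $G^\square$ vanishing on $G_l^\square$, and pigeonhole over $h$ isolates a single such $\eta$ with $\eta(\cube{g}{sh}(\Z))\subset\Z$ for $h$ in a set of positive upper Banach density. Proposition~\ref{prop:equid-trichotomy} then converts $\eta$ into a non-trivial horizontal character on $G$ — either $\eta_1$, a derived character $\tilde\eta_i$ built from $\eta_2\circ[\cdot,\exp X_i]$, or $\eta_2$ applied to $g_2$ in the decomposition $g(n)=g(1)^n g_2(n)$ — scaled by a bounded integer $q$, that annihilates the corresponding sequence.

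The main obstacle is maintaining uniformity in $g$: the final list must be finite and depend only on $F,\delta,s,r$. This is arranged precisely because the induction hypothesis on the cube space delivers a finite list, pigeonhole extracts a single element per polynomial, and Proposition~\ref{prop:equid-trichotomy} produces characters from a bounded candidate set (with the bound on $q$ depending only on $\delta$). A secondary technicality is that the characters produced a priori annihilate $g(s\Z+r)$ rather than $g(\Z)$; since $\eta\circ g$ is a real polynomial of bounded degree $D$, multiplication of $\eta$ by $s^D$ upgrades this to $\eta(g(\Z))\subset\Z$, enlarging the final list by only a bounded factor and preserving finiteness. The total list is then the union, over the finitely many significant vertical characters $\chi$, of the characters produced along each of the branches above.
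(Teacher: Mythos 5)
Your overall template matches the paper's argument — smooth approximation, vertical Fourier decomposition, cube construction with van der Corput, then Proposition~\ref{prop:equid-trichotomy} — but there is a genuine gap in how you close the inductive loop, and a secondary omission.

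The gap is in your treatment of the third alternative of Proposition~\ref{prop:equid-trichotomy}. You write that this branch produces a horizontal character ``$\eta_2$ applied to $g_2$ \ldots scaled by a bounded integer $q$, that annihilates the corresponding sequence.'' But $\eta_2$ is a horizontal character on $G_2$, not on $G$, and there is no canonical way to extend it to a horizontal character of $G/\Gamma$; the conclusion $q\eta_2(g_2(\Z))\subset\Z$ is therefore not of the form required by the theorem. This branch cannot terminate the argument directly. The paper handles it by a \emph{secondary} induction on $\dim G_2$: the condition $q\eta_2(g_2(\Z))\subset\Z$ shows that $g$ is in fact polynomial with respect to the smaller filtration $\Gb'$ defined by $G_1'=G_1$ and $G_i'=G_i\cap\eta_2^{-1}(\frac1q\Z)$ for $i\geq 2$ (that this is a filtration uses the bihomomorphism assertion in Lemma~\ref{lem:char-cube}), and $\dim G_2'<\dim G_2$. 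One then re-runs the whole argument for this smaller filtration, and the induction on $(\text{length},\dim G_2)$ terminates. Because you only induct on the length $l$, you have no way to absorb this third alternative, and the proof as written does not close.

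A secondary omission: although the statement assumes $G$ is connected, the filtration groups $G_i$, $i\geq 2$, need not be connected, and the cube construction, van der Corput step, and Proposition~\ref{prop:equid-trichotomy} are all carried out in the paper only after reducing to a filtration consisting of connected groups. This reduction goes through Corollary~\ref{cor:splitting}: write $g=g^o\gamma$ with $g^o$ a $\Gb^o$-polynomial and $\gamma\Gamma$ periodic with period independent of $g$, then pigeonhole over residue classes. Your argument silently assumes the filtration is already connected; without the reduction you cannot invoke Proposition~\ref{prop:equid-trichotomy}, whose hypotheses include that $\Gb$ consists of connected groups. The remaining points of your sketch — isolating a single vertical character, the cube construction, the reduction from $(s,r)$ to $(1,0)$ by multiplying the character by a fixed power of $s$ — are in line with the paper.
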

\begin{proof}
We use induction on the length $l$ of the filtration and the dimension $d_{2}$ of the group $G_{2}$.

First we reduce to the case that $\Gb$ consists of connected groups.
To this end we split $g=g^{o}\gamma$ as in Corollary~\ref{cor:splitting}, where $g^{o}$ is $\Gb^{o}$-polynomial and $\gamma$ is $\tilde\Gamma_{\bullet}$-polynomial for some finite index surgroup $\tilde\Gamma \geq \Gamma$ that does not depend on $g$.
In particular, $\gamma\Gamma$ is periodic, and the period $s'$ does not depend on $g$.
By the pigeonhole principle there exists $0\leq r'<s'$ such that
\[
\limsup_{N} | \ave{n}{(\Fo_{N}-r')/s'} F(g(s(s'n+r')+r)) | > \delta,
\]
and we can apply the connected case of the theorem.
Thus we may assume that the filtration $\Gb$ consists of connected groups.

Next we show that we may assume $s=1$, $r=0$.
In general our assumption can be rewritten as
\[
\limsup_{N} | \aveFN F_{\{g(r)\}}( \{g(r)\}\inv g(sn+r) g(r)\inv \{g(r)\}) | > \delta.
\]
Since $\{g(r)\}$ lies in a fixed compact set, the set of functions $F_{\{g(r)\}}$ that may appear above is compact, so it can be covered by finitely many balls of radius $\delta/2$, the covering being independent of $g$.
Hence there is a finite set of continuous functions on $G/\Gamma$ such that we have
\[
\limsup_{N} | \aveFN \tilde F( \{g(r)\}\inv g(sn+r) g(r)\inv \{g(r)\}) | > \delta/2
\]
for one of the functions $\tilde F$ in this set.
We can now apply the $s=1$, $r=0$ case of the theorem to the polynomial sequence $(\{g(r)\}\inv g(sn+r) g(r)\inv \{g(r)\})_{n}$ and the function $\tilde F$.
This provides us with a finite set of horizontal characters, for one of which we have
\[
\eta(\{g(r)\}\inv g(s\Z+r) g(r)\inv \{g(r)\})\subset\Z.
\]
This immediately implies $\eta(g(s\Z+r) g(r)\inv)\subset\Z$.
Now the sequence $(\eta(g(n)g(r)\inv))_{n}$ is a polynomial of degree at most $l$ that takes integer values on the arithmetic progression $s\Z+r$.
Hence, multiplying $\eta$ by a natural number that does not depend on $g$ if necessary, we may assume $\eta(g(\Z)g(r)\inv) \subset\Z$.
In view of $g(0)=\id$ this implies $\eta(g(\Z))\subset\Z$ as required.
Hence we may assume that $s=1$, $r=0$.

It remains to prove the conclusion under the additional assumptions that $\Gb$ consists of connected groups, $s=1$, and $r=0$.
Replacing $g$ by the sequence
\[
g(n) (\{g(1)\}\inv g(1))^{-n}
\]
we may also assume that $g(1) = \{g(1)\}$.
By uniform approximation we may assume that $F$ is smooth (this can be achieved for example using a smooth partition of identity and working locally).

If $l=1$, then $G/\Gamma$ is a torus.
Smoothness implies that the Fourier series $F = \sum_{\chi} F_{\chi}$ converges absolutely, so we may truncate it to a finite number of summands.
Given a polynomial $g$ as in the hypothesis, by the pigeonhole principle we see that
\[
\limsup_{N} |\aveFN F_{\chi}(g(n)\Gamma) | > 0
\]
for one of the (finitely many) Fourier components $F_{\chi}$.
We may assume $|F_{\chi}| \equiv 1$.
Then we have $F_{\chi}(g(n)\Gamma) = F_{\chi}(g(1)\Gamma)^{n}$, and the Kronecker equidistribution criterion implies that $(F_{\chi}\circ g\Gamma)\equiv 1$.
The character $F_{\chi}$ lifts to a horizontal character on $G$, and we obtain the claim.

Suppose now that $l\geq 2$.
Analogously to the commutative case, smoothness implies that the vertical Fourier series $F=\sum_{\chi}F_{\chi}$ (Definition~\ref{def:vertical-Fourier-series}) converges absolutely, so, decreasing $\delta$ if necessary, we can assume that $F$ has a vertical frequency $\chi$.
If this frequency vanishes, then we can factor out $G_{l}$ and use induction on the length of filtration.

Assume now that the vertical frequency $\chi$ is non-trivial.
By the van der Corput lemma (Lemma~\ref{VdC}) the set of $h$ such that
\[
\limsup_{N} | \aveFN F(g(h+n))\ol{F(g(n))} | > \delta
\]
has positive \uBd.
Recall that the above can be written as
\[
\limsup_{N} | \aveFN \cube{F}{h}(\cube{g}{h}(n)) | > \delta.
\]
Since the fractional part function $\{\cdot\}$ has relatively compact range, the set of functions $\cube{F}{h}$ is relatively compact.
Choosing a sufficienttly fine finite covering of this set, pigeonholing and decreasing $\delta$ if necessary we obtain \emph{one} function $\cube{F}{a}$ such that
\[
\limsup_{N} | \aveFN \cube{F}{a}(\cube{g}{h}(n)) | > \delta
\]
for a set of $h$ of positive \uBd.
Note that $\cube{F}{a}$ has a non-trivial vertical frequency with respect to $G_{l}^{2}$ and is $G_{l}^{\Delta}$-invariant.
Hence, factoring out $G_{l}^{\Delta}$, we see that $\cube{g}{h}$ is polynomial with respect to the filtration $\Gb^{\square}/G_{l}^{\Delta}$ that has length $l-1$ and $\cube{F}{a}$ has zero integral.

By the induction hypothesis we obtain a finite list of horizontal characters $\eta : G^{\square}/G_{l}^{\Delta} \to \R$ such that for each $h$ in out positive \uBd\ set there exists a character on this list with $\eta(\cube{g}{h}(\Z))\subset\Z$.
By the pigeonhole principle we may assume that the character $\eta$ does not depend on $h$.

We are now in position to apply Proposition~\ref{prop:equid-trichotomy}.
If the first or the second alternative from that proposition holds, then we are done, since the horizontal characters provided by that alternatives only depend on $\eta$.
It remains to consider the case that the last alternative from that proposition holds.
In this case $g$ is $\Gb'$-polynomial, where the filtration $\Gb'$ is defined by $G_{1}'=G_{1}$, $G_{i}' = G_{i} \cap \eta_{2}\inv(\frac1q \Z)$ for $i\geq 2$ (the fact that this is a filtration follows from Lemma~\ref{lem:char-cube}).
Note that $\dim G_{2}' < \dim G_{2}$ since $\eta_{2}$ is a non-trivial horizontal character on $G_{2}$.
\end{proof}
Now we bootstrap the last result to total well-distribution.
\begin{theorem}[Leibman's equidistribution criterion, connected case]
\label{thm:equid-crit-connected}
\index{equidistribution criterion}
Let $G/\Gamma$ be a nilmanifold associated to a connected group $G$ and $\Gb$ a $\Gamma$-rational filtration on $G$.
Then for every $g\in\polyn$ exactly one of the following alternatives holds.
\begin{enumerate}
\item For every subgroup $\tilde\Gamma\leq G$ that is commensurable with $\Gamma$ the sequence $g(n)\tilde\Gamma$ is totally well-distributed on $G/\tilde\Gamma$ or
\item there exists a non-trivial horizontal character $\eta : G \to \R$ such that $\eta(g(\Z))\subset\Z$.
\end{enumerate}
\end{theorem}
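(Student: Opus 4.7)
The plan is to deduce the criterion from the preceding Theorem~\ref{thm:equid-crit-ind} by a routine bootstrap, handling the passage between the commensurable lattices $\Gamma$ and $\tilde\Gamma$ by exploiting that a horizontal character for one is, up to a rational multiple, a horizontal character for the other. Mutual exclusivity of the two alternatives is handled by the standard exponential-character argument, and the implication ``not totally well-distributed $\Rightarrow$ horizontal character'' follows by specialising Theorem~\ref{thm:equid-crit-ind} to $G/\tilde\Gamma$ (which is again a connected nilmanifold with $\tilde\Gamma$-rational filtration by Lemma~\ref{lem:comm-lattice}) and then rescaling the character to land in $\widehat{G/\Gamma}$.

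\textbf{Mutual exclusivity.} Suppose $\eta\from G\to\R$ is a non-trivial horizontal character with $\eta(g(\Z))\subset\Z$, and let $\tilde\Gamma$ be any subgroup of $G$ commensurable with $\Gamma$. Since $\tilde\Gamma$ is finitely generated (being a cocompact lattice in $G$) and $\Gamma\cap\tilde\Gamma$ has finite index in $\tilde\Gamma$, there exists $q\in\N_{>0}$ with $q\eta(\tilde\Gamma)\subset\Z$. Then $q\eta$ is a non-trivial horizontal character on $G/\tilde\Gamma$, the function $F(x):=e^{2\pi i q\eta(x)}$ descends continuously to $G/\tilde\Gamma$, it has vanishing integral (being a non-trivial character of the horizontal torus $G/\tilde\Gamma G_{2}$), and $F(g(n)\tilde\Gamma)\equiv 1$. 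Thus the averages $\aveN F(g(n)\tilde\Gamma)=1\neq 0=\int F$, ruling out even ordinary equidistribution on $G/\tilde\Gamma$.

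\textbf{Converse direction.} Assume $(g(n)\tilde\Gamma)_{n}$ fails to be totally well-distributed on $G/\tilde\Gamma$ for some commensurable $\tilde\Gamma$. Unwinding the definition, there exist an arithmetic progression $s\Z+r$, a F\o{}lner sequence $(\Fo_{N})$ in $\Z$, a function $F\in C(G/\tilde\Gamma)$, and $\delta>0$ with
\[
\limsup_{N}\bigl|\aveFN (F-\textstyle\int F)(g(sn+r)\tilde\Gamma)\bigr|>\delta.
\]
By Lemma~\ref{lem:comm-lattice}\eqref{lem:comm-lattice:rational}, $\Gb$ is $\tilde\Gamma$-rational, so Theorem~\ref{thm:equid-crit-ind} applied on $G/\tilde\Gamma$ supplies a non-trivial horizontal character $\tilde\eta\from G\to\R$ with $\tilde\eta(\tilde\Gamma)\subset\Z$ and $\tilde\eta(g(\Z))\subset\Z$. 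Since $\Gamma$ is finitely generated and $\Gamma\cap\tilde\Gamma$ has finite index in $\Gamma$, there is $q\in\N_{>0}$ with $q\tilde\eta(\Gamma)\subset\Z$; the character $q\tilde\eta$ is still non-trivial and still satisfies $q\tilde\eta(g(\Z))\subset\Z$, hence witnesses the second alternative of the theorem.

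\textbf{Main obstacle.} All the real work was carried out in Theorem~\ref{thm:equid-crit-ind}; in the bootstrap above the only genuine point to watch is that non-triviality and integrality of the horizontal character are preserved when one multiplies by the denominator arising from commensurability, which is why one has to invoke finite generation of $\Gamma$ (and $\tilde\Gamma$) rather than argue pointwise.
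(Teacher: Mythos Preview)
Your proof is correct and follows essentially the same route as the paper: deduce the result from Theorem~\ref{thm:equid-crit-ind} applied on $G/\tilde\Gamma$ (using Lemma~\ref{lem:comm-lattice} to transfer $\Gamma$-rationality to $\tilde\Gamma$), then rescale the resulting horizontal character by the index of $\Gamma\cap\tilde\Gamma$ in $\Gamma$ to make it integral on $\Gamma$. The paper is terser---it dismisses mutual exclusivity as ``clear'' where you spell out the exponential-character argument, and it does not explicitly cite Lemma~\ref{lem:comm-lattice}---but the substance is the same.
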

\begin{proof}
It is clear that the two statements are mutually exclusive, so it suffices to show that at least one of them holds.
Suppose that the first statement fails, that is, there exists a subgroup $\tilde\Gamma\leq G$ that is commensurable with $G$, an arithmetic progression $s\Z+r$, a F\o{}lner sequence $(\Fo_{N})$, and a function $F\in C(G/\tilde\Gamma)$ such that
\[
\aveFN F(g(sn+r) \tilde\Gamma) \not\to \int_{G/\tilde\Gamma} F
\quad\text{as } N\to\infty.
\]
Without loss of generality we may assume $\int F=0$.
By Theorem~\ref{thm:equid-crit-ind} we obtain a non-trivial horizontal character $\tilde\eta$ on $G/\tilde\Gamma$ such that $\tilde\eta(g(\Z)) \subset\Z$.
Note that $\tilde\eta$ takes integer values on the finite index subgroup $\tilde\Gamma\cap\Gamma \leq \Gamma$, from which it follows that $\eta(\Gamma)\subset \frac1R \Z$ for some $R$.
Hence $\eta = R\tilde\eta$ is a non-trivial horizontal character on $G/\Gamma$ such that $\eta(g(\Z)) \subset\Z$.
\end{proof}

\subsection{Leibman's orbit closure theorem}
In order to describe the orbit closure of a polynomial in a nilmanifold we need one more decomposition result for polynomials.
\begin{lemma}
\label{lem:splitting-equid}
Let $G/\Gamma$ be a nilmanifold with a $\Gamma$-rational filtration $\Gb$ and $H\leq G$ a rational subgroup.
Then for every $h\in\polyn[\Hb]$ there exists a closed connected rational subgroup $\tilde H\leq H$ such that $h$ can be written in the form $h=h^{o}\gamma$, where $\gamma\in\polyn[\roots\Gamma_{\bullet}]$, $h^{o} \in\polyn[\tHb^{o}]$, and for every subgroup $\tilde\Gamma\leq G$ that is commensurable with $\Gamma$ the sequence $h^{o} \tilde\Gamma$ is totally well-distributed on $\tilde H/\tilde\Gamma$.
\end{lemma}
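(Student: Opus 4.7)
The plan is to induct on $\dim H$, using Corollary~\ref{cor:splitting} to peel off a lattice-valued factor and Theorem~\ref{thm:equid-crit-connected} either to terminate or to force passage to a strictly smaller rational subgroup.

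The base case $\dim H=0$ is trivial: $H$ is then discrete and $\Gamma\cap H$ has finite index in $H$, so $H\leq\roots\Gamma$; the choice $\tilde H:=\{e_{G}\}$, $h^{o}\equiv e_{G}$, $\gamma:=h$ works, since the $\Hb$-polynomial $h$ is automatically $\roots\Gamma_{\bullet}$-polynomial (its derivatives at level $i$ lie in $H\cap G_{i}\leq\roots\Gamma\cap G_{i}$).

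For the inductive step, equip $H$ with the induced filtration $\Hb$, $H_{i}:=H\cap G_{i}$, which is $(\Gamma\cap H)$-rational by a routine check from the rationality of $\Gb$ and of $H$. Apply Corollary~\ref{cor:splitting} to $h$ in $H/(\Gamma\cap H)$, obtaining $h=h_{1}\gamma_{1}$ with $h_{1}\in\polyn[\Hb^{o}]$ and $\gamma_{1}$ polynomial with respect to some finite-index surgroup $\tilde\Gamma_{H}\geq\Gamma\cap H$; since $\tilde\Gamma_{H}\leq\roots{\Gamma\cap H}\leq\roots\Gamma$, the factor $\gamma_{1}$ is $\roots\Gamma_{\bullet}$-polynomial. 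Next apply Theorem~\ref{thm:equid-crit-connected} to $h_{1}$ in the connected nilmanifold $H^{o}/(\Gamma\cap H^{o})$. In the first alternative, $h_{1}\tilde\Gamma'$ is totally well-distributed on $H^{o}/\tilde\Gamma'$ for every $\tilde\Gamma'$ commensurable with $\Gamma\cap H^{o}$; by Lemma~\ref{lem:comm-lattice} the intersection $\tilde\Gamma\cap H^{o}$ is such a $\tilde\Gamma'$ for every $\tilde\Gamma\leq G$ commensurable with $\Gamma$, so setting $\tilde H:=H^{o}$, $h^{o}:=h_{1}$, $\gamma:=\gamma_{1}$ concludes this case.

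In the second alternative, the criterion produces a non-trivial horizontal character $\eta:H^{o}\to\R$ with $\eta(\Gamma\cap H^{o})\subset\Z$ and $\eta(h_{1}(\Z))\subset\Z$. Put $H':=H^{o}\cap\eta^{-1}(\Z)$; its identity component $\ker\eta$ is a rational codimension-one Lie subgroup of $H^{o}$, and $\Gamma\cap H'=\Gamma\cap H^{o}$ is cocompact in $H'$ because $H'/(\Gamma\cap H^{o})$ is the preimage of $0\in\R/\Z$ under the map $H^{o}/(\Gamma\cap H^{o})\to\R/\Z$ induced by $\eta$. Thus $H'\leq G$ is $\Gamma$-rational with $\dim H'=\dim H^{o}-1<\dim H$. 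Since $\eta$ vanishes on commutators, $H^{o}_{i}\subset\ker\eta\subset H'$ for $i\geq 2$, so the induced filtration on $H'$ has $H'_{1}=H'$ and $H'_{i}=H^{o}_{i}$ for $i\geq 2$, and $h_{1}$ is polynomial with respect to this filtration. Applying the induction hypothesis to $h_{1}$ on $H'$ yields $\tilde H\leq H'$ closed connected rational together with a decomposition $h_{1}=h^{o}\gamma_{2}$ having the required properties; setting $\gamma:=\gamma_{2}\gamma_{1}$ gives $h=h^{o}\gamma$, and $\gamma$ is $\roots\Gamma_{\bullet}$-polynomial by Theorem~\ref{thm:poly-group}.

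The main obstacle is the rationality bookkeeping: verifying at each step that $\Hb$ is $(\Gamma\cap H)$-rational, that $H'\leq G$ is $\Gamma$-rational, and that the filtration assigned to $H'$ in the second alternative is genuinely the filtration induced from $\Gb$, so that the hypotheses of Corollary~\ref{cor:splitting} and Theorem~\ref{thm:equid-crit-connected} are met at every recursive call. Termination is ensured by the strict drop $\dim H'<\dim H$ in the second alternative.
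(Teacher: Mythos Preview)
Your argument is the paper's: induct on $\dim H$, apply Corollary~\ref{cor:splitting}, then Theorem~\ref{thm:equid-crit-connected}, and in the second alternative recurse on $H'=\eta^{-1}(\Z)$. There is one omission and one genuine gap.

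The omission: Corollary~\ref{cor:splitting} does not give $h_{1}(0)=\gamma_{1}(0)=\id$, but Theorem~\ref{thm:equid-crit-connected} requires its input in $\polyn$. The paper normalises explicitly, replacing $h_{1}$ by $h_{1}h_{1}(0)^{-1}$ and $\gamma_{1}$ by $h_{1}(0)\gamma_{1}$; since $h(0)=\id$ one has $h_{1}(0)=\gamma_{1}(0)^{-1}\in H^{o}\cap\tilde\Gamma_{H}\subset H^{o}\cap\roots\Gamma$, so both adjusted factors stay in the correct polynomial groups.

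The gap: your claim that $H^{o}_{i}\subset\ker\eta$ for $i\geq 2$ is false. A horizontal character kills $[H^{o},H^{o}]$, but the filtration group $H^{o}_{i}$ need not lie in the commutator subgroup --- already $G=\R$ with filtration $\R=\R=\R\geq\{0\}$ and $\eta=\mathrm{id}$ is a counterexample. Fortunately what you actually need, that $h_{1}$ is polynomial for the induced filtration $H'_{i}:=H'\cap G_{i}$, holds for a simpler reason: the $i$-fold discrete derivatives of $h_{1}$ lie in $(H_{i})^{o}\subset G_{i}$ because $h_{1}$ is $\Hb^{o}$-polynomial, and they also lie in $H'$ because each such derivative is a word in the values $h_{1}(m)^{\pm 1}\in H'=\eta^{-1}(\Z)$ and $H'$ is a group. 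Hence they lie in $H'\cap G_{i}=H'_{i}$, and the induction hypothesis applies. With these two fixes the proof goes through.
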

It clearly suffices to obtain the conclusion for $H=G$, the other cases are only needed for the induction process.
\begin{proof}
We use induction on the dimension of $H$.
If $\dim H=0$, then $H\leq\roots\Gamma$, and we can set $h^{o}\equiv\id$, $\gamma=h$.
Suppose now that the conclusion is known for rational subgroups of dimension $<\dim H$.

Consider the splitting $h=h^{o}\gamma$ provided by Corollary~\ref{cor:splitting} applied to the nilmanifold $H/(H\cap\Gamma)$.
Replacing $h^{o}$ by $h^{o}h^{o}(0)\inv$ and $\gamma$ by $h^{o}(0) \gamma$ we may assume $h^{o}(0)=\gamma(0)=\id$.
Suppose that the conclusion of the lemma does not hold with $\tilde H = H^{o}$.
Then Theorem~\ref{thm:equid-crit-connected} shows that $h^{o}$ takes values in a proper rational subgroup $\tilde H\leq H^{o}$, namely the inverse image of $\Z$ under the horizontal character figuring in the second alternative in that theorem.
In this case we can conclude by the induction hypothesis.
\end{proof}
Thus we have split an arbitrary polynomial into a ``totally equidistributed'' and a ``rational'' part.
Further analysis of the rational part now yields Leibman's orbit closure theorem.
\begin{theorem}[Leibman's orbit closure theorem, cf.\ {\cite[Theorem B]{MR2122919}}]
\label{thm:Leibman-orbit-closure}
\index{theorem!Leibman orbit closure}
Let $G/\Gamma$ be a nilmanifold with a $\Gamma$-rational filtration $\Gb$.
Then for every $\Gb$-polynomial sequence $g$ there exists a closed connected $\Gamma$-rational subgroup $H$ such that $\Z$ can be partitioned into progressions on each of which $g(n)\Gamma$ is totally well-distributed on $g(0)Hc/\Gamma$ for some $c\in\roots\Gamma$.
\end{theorem}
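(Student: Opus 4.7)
The plan is to combine Lemma~\ref{lem:splitting-equid} with the periodicity statement of Lemma~\ref{lem:periodic}. First, writing $g(n)=g(0)\cdot g_{1}(n)$ with $g_{1}\in\polyn$ (which is polynomial by Theorem~\ref{thm:poly-group}), I would apply Lemma~\ref{lem:splitting-equid} to $g_{1}$ with $H=G$. This yields a closed connected $\Gamma$-rational subgroup $\tilde H\leq G$ and a factorization $g_{1}=g^{o}\gamma$ in which $g^{o}\in\polyn[\tHb^{o}]$ is totally well-distributed on $\tilde H/\tilde\Gamma'$ for every subgroup $\tilde\Gamma'\leq G$ commensurable with $\Gamma$, and $\gamma\in\polyn[\roots\Gamma_{\bullet}]$.

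The next step is to show that $\gamma(n)\Gamma$ is periodic. Since $\gamma$ is polynomial with respect to a filtration of finite length, it is determined by finitely many iterated discrete derivatives, so $\gamma(\Z)\cdot\Gamma$ is contained in a finitely generated surgroup $G'\leq\roots\Gamma$ of $\Gamma$. By Corollary~\ref{cor:finite-ext}, $\Gamma$ has finite index in $G'$, so Lemma~\ref{lem:periodic} applied inside $G'$ with the inherited filtration shows that $\gamma(n)\Gamma$ is periodic; let $s$ denote its period. Partitioning $\Z=\bigsqcup_{r=0}^{s-1}(s\Z+r)$, on each progression we have $\gamma(sn+r)\Gamma=\gamma(r)\Gamma$, and setting $c:=\gamma(r)\in\roots\Gamma$ we obtain
\[
g(sn+r)\Gamma=g(0)\,g^{o}(sn+r)\,c\,\Gamma.
\]

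To conclude that this is totally well-distributed on $g(0)\tilde H c/\Gamma$, observe that $c\in\roots\Gamma\leq\comm_{G}(\Gamma)$ (Corollary~\ref{cor:finite-ext}), so $c\Gamma c\inv$ is commensurable with $\Gamma$, and Lemma~\ref{lem:splitting-equid} gives total well-distribution of $g^{o}\cdot(c\Gamma c\inv)$ on $\tilde H/(\tilde H\cap c\Gamma c\inv)$. Restriction to the subprogression $s\Z+r$ inherits total well-distribution, because arithmetic subprogressions of an arithmetic progression are themselves arithmetic progressions. Finally, the map $\tilde H\to G/\Gamma$, $h\mapsto g(0)hc\Gamma$, factors through the quotient by $\tilde H\cap c\Gamma c\inv$ as a continuous injection onto the closed subnilmanifold $g(0)\tilde H c/\Gamma$, and pushes the Haar measure of $\tilde H/(\tilde H\cap c\Gamma c\inv)$ forward to the Haar measure on its image. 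Pushing the well-distribution of $g^{o}(sn+r)$ forward along this map yields the desired total well-distribution of $g(sn+r)\Gamma$ on $g(0)\tilde H c/\Gamma$, with $H=\tilde H$.

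The main obstacle is the periodicity step: one must realize that the ``rational'' factor $\gamma$, although a priori only living in the possibly enormous surgroup $\roots\Gamma$, is actually controlled by finitely many parameters (its derivatives), hence genuinely lies inside a finitely generated subgroup in which $\Gamma$ is of finite index. Once this is extracted, Lemma~\ref{lem:periodic} and Corollary~\ref{cor:finite-ext} deliver periodicity cleanly, and what remains is routine Haar-measure bookkeeping under push-forward.
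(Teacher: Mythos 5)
Your proof is correct and follows essentially the same route as the paper's: split $g$ (after stripping off $g(0)$) via Lemma~\ref{lem:splitting-equid} into an equidistributed part $g^{o}$ and a rational part $\gamma$, use finite generation plus Corollary~\ref{cor:finite-ext} and Lemma~\ref{lem:periodic} to make $\gamma\Gamma$ periodic, and then transfer the well-distribution of $g^{o}$ on $\tilde H/(\tilde H\cap c\Gamma c\inv)$ to $g(0)\tilde Hc/\Gamma$ by conjugation and left translation. The paper compresses the last step into the phrase ``by conjugation and translation invariance'' and normalizes $g(0)=\id$ at the outset, but the content is identical to your more explicit pushforward argument.
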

In order to obtain the precise statement of \cite[Theorem B]{MR2122919} one could consider $g(0)Hg(0)\inv$ instead of $H$.
Note that this subgroup is in general not $\Gamma$-rational.
\begin{proof}
We may assume $g(0)=\id$.
Consider the group $H$ and the splitting $g=h^{o}\gamma$ provided by Lemma~\ref{lem:splitting-equid}.
Since $\gamma$ takes values in a finitely generated subgroup of $\roots\Gamma$ and by Lemma~\ref{lem:periodic}, we can split $\Z$ into arithmetic progressions $Z_{i}$ such that $\gamma(n)\Gamma = c_{i}\Gamma$ for some $c_{i}\in\roots\Gamma$ and all $n\in Z_{i}$.

By Lemma~\ref{lem:splitting-equid} the sequence $h^{o} c_{i}\Gamma c_{i}\inv|_{Z_{i}}$ is totally well-distributed on $H /(c_{i}\Gamma c_{i}\inv)$.
By conjugation and translation invariance this implies that the sequence
\[
g\Gamma|_{Z_{i}}
= h^{o}c_{i}\Gamma|_{Z_{i}}
= c_{i} c_{i}\inv (h^{o}c_{i}\Gamma c_{i}\inv) c_{i}|_{Z_{i}}
\]
is totally well-distributed on $H c_{i}/\Gamma \subset G/\Gamma$.
\end{proof}
One immediate consequence is the pointwise ergodic theorem for polynomials in nilmanifolds.
\begin{corollary}[{\cite[Theorem A]{MR2122919}}]
\label{cor:Leibman-pointwise}
Let $G/\Gamma$ be a nilmanifold, $g$ a polynomial sequence, and $f\in C(G/\Gamma)$. Then the limit $\operatorname{UC-lim}_{n} f(g(n)\Gamma)$ exists.
\end{corollary}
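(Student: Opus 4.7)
The plan is to derive the corollary as a direct consequence of Leibman's orbit closure theorem (Theorem~\ref{thm:Leibman-orbit-closure}), exploiting the fact that total well-distribution on a subnilmanifold is precisely the statement that Cesàro averages along every F\o{}lner sequence converge to the integral against the Haar measure, uniformly.

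First, I would apply Theorem~\ref{thm:Leibman-orbit-closure} to the polynomial sequence $g$ and obtain a closed connected $\Gamma$-rational subgroup $H\leq G$, together with a partition of $\Z$ into progressions. Inspecting the proof of that theorem, these progressions arise from the periodicity of $\gamma\Gamma$ in the splitting $g=h^{o}\gamma$ of Lemma~\ref{lem:splitting-equid}, so the partition is in fact a partition of $\Z$ into the \emph{finitely many} residue classes $s\Z+r$, $0\leq r<s$, of some period $s$. On each such progression the sequence $n\mapsto g(n)\Gamma$ is totally well-distributed on the translated subnilmanifold $g(0)Hc_{r}/\Gamma$ for some $c_{r}\in\roots\Gamma$ depending on $r$.

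Next, given an arbitrary F\o{}lner sequence $(\Fo_{N})$ in $\Z$, I would split
\[
\aveFN f(g(n)\Gamma) = \sum_{r=0}^{s-1} \frac{|\Fo_{N}\cap(s\Z+r)|}{|\Fo_{N}|} \cdot \frac{1}{|\Fo_{N}\cap(s\Z+r)|} \sum_{n\in\Fo_{N}\cap(s\Z+r)} f(g(n)\Gamma).
\]
By the F\o{}lner property the weights $|\Fo_{N}\cap(s\Z+r)|/|\Fo_{N}|$ converge to $1/s$, and each inner average, being a Cesàro average along the F\o{}lner sequence $((\Fo_{N}-r)/s)$ of a sequence that is well-distributed on $g(0)Hc_{r}/\Gamma$, converges to $\int_{g(0)Hc_{r}/\Gamma}f$. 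Combining these, the overall Cesàro average converges to $\frac{1}{s}\sum_{r=0}^{s-1}\int_{g(0)Hc_{r}/\Gamma}f$, a limit which is manifestly independent of the F\o{}lner sequence $(\Fo_{N})$; this is exactly the assertion that the uniform Cesàro limit exists.

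Essentially no obstacle remains: all the hard work has already been done in establishing Theorem~\ref{thm:Leibman-orbit-closure}, and the only thing to check in this reduction is that ``total well-distribution on each residue class modulo $s$'' plus ``F\o{}lner averaging of the residue classes'' combine to give a uniform Cesàro limit on $\Z$. The only minor care needed is to make sure one uses the full strength of \emph{well}-distribution (convergence along every F\o{}lner sequence) rather than mere equidistribution, which is why Lemma~\ref{lem:splitting-equid} and the theorem were phrased in terms of the former.
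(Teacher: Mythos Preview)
Your proposal is correct and follows exactly the route the paper intends: the corollary is stated immediately after Theorem~\ref{thm:Leibman-orbit-closure} as an ``immediate consequence'' with no written proof, and your argument is precisely the natural way to fill in the details---split a F\o{}lner average over the finitely many residue classes from the periodic decomposition, use total well-distribution on each piece, and observe the limit is independent of the F\o{}lner sequence.
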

The other consequence is a well-distribution criterion.
It shows that for many purposes it suffices to consider nilmanifolds whose structure groups' connected components of identity are commutative.
\begin{corollary}[{\cite[Theorem C]{MR2122919}}]
\label{thm:leibman-equidistribution-criterion}
Let $G/\Gamma$ be a connected nilmanifold with a $\Gamma$-rational filtration $\Gb$.
Then for a $\Gb$-polynomial sequence $g(n)$ the following statements are equivalent.
\begin{enumerate}
\item\label{equid-crit:equid} $g(n)\Gamma$ is totally well-distributed on $G/\Gamma$,
\item\label{equid-crit:dense-horiz-torus} $g(n)\Gamma [G^{o},G^{o}]$ is dense in $G/\Gamma [G^{o},G^{o}]$.
\end{enumerate}
\end{corollary}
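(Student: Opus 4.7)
The direction \eqref{equid-crit:equid} $\Rightarrow$ \eqref{equid-crit:dense-horiz-torus} will be routine: the natural projection $\pi \from G/\Gamma \to T := G/\Gamma[G^{o},G^{o}]$ is a continuous surjection of compact metric spaces mapping Haar measure to Haar measure, so total well-distribution of $g(n)\Gamma$ descends to total well-distribution, and in particular density, of $\pi(g(n)\Gamma)$ in $T$.

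The plan for the converse is to argue the contrapositive, using Leibman's orbit closure theorem (Theorem~\ref{thm:Leibman-orbit-closure}) as the main input. Assume \eqref{equid-crit:equid} fails and feed $g$ into that theorem to obtain a closed connected $\Gamma$-rational subgroup $H \leq G$, a partition of $\Z$ into finitely many arithmetic progressions $Z_{1},\dots,Z_{k}$, and elements $c_{1},\dots,c_{k} \in \roots{\Gamma}$ such that $g(n)\Gamma$ is totally well-distributed on $g(0)Hc_{i}/\Gamma$ for $n \in Z_{i}$. Since $H$ is connected it lies in $G^{o}$, and the connectedness of $G/\Gamma$ gives $G = G^{o}\Gamma$ with $G^{o}$ normal in $G$. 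If $H$ were equal to $G^{o}$, then $g(0)Hc_{i}/\Gamma = g(0)c_{i}G^{o}\Gamma/\Gamma = G/\Gamma$ for every $i$ and \eqref{equid-crit:equid} would hold; so the failure of \eqref{equid-crit:equid} forces $H$ to be a proper subgroup of $G^{o}$.

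Next I would transfer this rigidity to the horizontal torus. Let $\bar H$ be the image of $H$ in $T$; since $H/(H\cap\Gamma)$ is compact, $\bar H$ is a closed subgroup of the connected compact abelian group $T$, and each $\pi(g(0)Hc_{i}/\Gamma) = g(0)c_{i}\bar H$ is a closed coset of $\bar H$. Hypothesis \eqref{equid-crit:dense-horiz-torus} forces the finite union $\bigcup_{i} g(0)c_{i}\bar H$ to be dense in $T$, and being closed it must equal $T$; a finite union of cosets exhausting $T$ forces $\bar H$ to have finite index in $T$, and because $\bar H$ is closed and connected this means $\bar H = T$, i.e.\ $H \cdot \Gamma[G^{o},G^{o}] = G$. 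Intersecting with $G^{o}$ and using $G = G^{o}\Gamma$ this becomes $H \cdot (\Gamma\cap G^{o}) \cdot [G^{o},G^{o}] = G^{o}$.

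The main and final step is to promote this equality to $H = G^{o}$ and thereby obtain the desired contradiction. I would project everything to the simply connected commutative Lie group $G^{o}/[G^{o},G^{o}] \cong \R^{a}$; here $H$ maps to a closed linear subspace $\bar H_{1}$ and $\Gamma\cap G^{o}$ maps to a discrete subgroup $\bar\Lambda$ (discreteness uses that $[G^{o},G^{o}]$ is $\Gamma$-rational, which reduces to Lemma~\ref{lem:rational-subgroup} and the simple connectedness of $G^{o}$), and the previous equality becomes $\bar H_{1} + \bar\Lambda = \R^{a}$. This set equality forces the countable group $\bar\Lambda$ to surject onto the connected quotient $\R^{a}/\bar H_{1}$, which is only possible when $\bar H_{1} = \R^{a}$, i.e.\ $H[G^{o},G^{o}] = G^{o}$. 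Lemma~\ref{lem:abelianization-generates} applied to the nilpotent group $G^{o}$ then yields $H = G^{o}$, contradicting properness. The technical obstacle will be the book-keeping around the identification of $T$ with $G^{o}/(\Gamma\cap G^{o})[G^{o},G^{o}]$ and verifying $\Gamma$-rationality of the various subgroups that appear after these reductions.
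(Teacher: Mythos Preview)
Your proof is correct and follows essentially the same route as the paper: both invoke Theorem~\ref{thm:Leibman-orbit-closure} to obtain the connected rational subgroup $H$, argue that density in the horizontal torus forces $H=G^{o}$, and conclude that each arithmetic piece is totally well-distributed on all of $G/\Gamma$. The paper compresses the step ``proper $H$ contradicts density'' into a single dimension remark, whereas you spell it out via Lemma~\ref{lem:abelianization-generates} and the countability-versus-connectedness argument in $G^{o}/[G^{o},G^{o}]\cong\R^{a}$; this is exactly the content hidden in the paper's terse sentence, so the two arguments are the same in substance.
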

The connectedness requirement cannot be removed as the example $G=\Z/2\Z$, $g=(\dots,1,0,0,0,1,0,0,0,1,\dots)$, shows.
This sequence is polynomial of degree $3$ and its image is all of $G$, but it is not equidistributed.
\begin{proof}
Both statements are invariant under multiplication by constants on the left, so we may assume $g(0)=\id$.

(\ref{equid-crit:equid}) clearly implies (\ref{equid-crit:dense-horiz-torus}).
For the converse consider the subgroup $H\leq G$ provided by Theorem~\ref{thm:Leibman-orbit-closure}.
If $H\leq G^{o}$ is a proper subgroup, then it has lower dimension than $G^{o}$, so that $g(n)\Gamma$ takes values in a finite union of submanifolds of $G/\Gamma$ of strictly lower dimension, contradicting density.
Hence $\Z$ splits into finitely many arithmetic progressions, and the restriction of $g(n)\Gamma$ to each of these progressions is totally well-distributed on $G/\Gamma$.
This implies the claim.
\end{proof}

\subsection{Nilsystems}
\index{nilsystem}
A \emph{($k$-step) nilsystem} is a measure-preserving system of the form $(X,T)$, where $X=G/\Gamma$ is a ($k$-step) nilmanifold and $Tg\Gamma = ag\Gamma$ for some $a\in G$ and all $g\Gamma\in G/\Gamma$.
\index{pro-nilsystem}
A \emph{$k$-step pro-nilsystem} is an inverse limit of $k$-step nilsystems in the category of measure-preserving systems (equivalently, in the category of topological dynamical systems with an invariant Borel probability measure \cite[Theorem A.1]{MR2600993}).
\index{nilfactor}
\index{pro-nilfactor}
A \emph{(pro-)nilfactor} of a measure-preserving dynamical system is a factor that is also a (pro-)nilsystem.

We will now state and prove an important characterization of ergodic nilsystems.
For other proofs see \cite{MR0167569}, \cite{MR0267558}, or \cite[\textsection 2.17--2.20]{MR2122919}.
\begin{lemma}
\label{lem:nilsystem-ergodic}
\index{nilsystem!ergodic}
Let $X=(G/\Gamma,a)$ be a nilsystem.
Then the following statements are equivalent.
\begin{enumerate}
\item\label{lem:nilsystem-ergodic:top-trans} $X$ is topologically transitive as a topological dynamical system.
\item\label{lem:nilsystem-ergodic:erg} $X$ is ergodic with respect to the Haar measure.
\item\label{lem:nilsystem-ergodic:uniq-erg} $X$ is uniquely ergodic.
\end{enumerate}
\end{lemma}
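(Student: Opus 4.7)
The implications (3)$\Rightarrow$(2) and (2)$\Rightarrow$(1) are standard. For (3)$\Rightarrow$(2), the Haar measure $\mu$ on $G/\Gamma$ is $a$-invariant, so by unique ergodicity it is the only $a$-invariant Borel probability; in particular every ergodic component equals $\mu$, making $\mu$ ergodic. For (2)$\Rightarrow$(1), the Haar measure has full support by left $G$-invariance and the transitivity of the $G$-action on $G/\Gamma$; ergodicity combined with Birkhoff's theorem applied to a countable dense subset of $C(G/\Gamma)$ then yields a set of full measure of points whose orbits equidistribute with respect to $\mu$ and are therefore dense, so $(G/\Gamma,a)$ is topologically transitive.

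For the substantive implication (1)$\Rightarrow$(3) I first handle the case where $G$ is connected. The lower central series $\Gb$ of $G$ is then a $\Gamma$-rational filtration (a classical consequence of Mal'cev's theorem), and for every $h\in G$ the sequence $n\mapsto a^{n}h$ is $\Gb$-polynomial by Example~\ref{ex:homo-poly} together with Theorem~\ref{thm:poly-group}. A dense orbit $\{a^{n}g_{0}\Gamma\}$ produced by topological transitivity projects onto a dense orbit of the rotation by $\tilde a := a\Gamma[G,G]$ on the horizontal torus $T := G/\Gamma[G,G]$; translating this set shows that $\{n\tilde a : n\in\Z\}$ is itself dense in $T$. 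For any starting point $h\Gamma$, the horizontal projection of its orbit, namely the translate $h\Gamma[G,G]+n\tilde a$, is then also dense in $T$, and Corollary~\ref{thm:leibman-equidistribution-criterion} upgrades this horizontal density to total well-distribution of $(a^{n}h\Gamma)_{n}$ on $G/\Gamma$. Hence $\frac{1}{N}\sum_{n=0}^{N-1}f(a^{n}h\Gamma)\to\int f\,d\mu$ for every $h$ and every $f\in C(G/\Gamma)$; since the limit is independent of $h$, comparison with Cesàro limits at Birkhoff-generic points of any ergodic invariant measure forces that measure to equal $\mu$, yielding unique ergodicity.

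The general case reduces to the connected one. Let $k := |G/G^{o}\Gamma|$, which is finite since $G^{o}\Gamma$ is clopen. Topological transitivity forces $a$ to permute the $k$ connected components of $G/\Gamma$ cyclically, so $a^{k}$ preserves the identity component $C_{0} := G^{o}\Gamma/\Gamma \cong G^{o}/(G^{o}\cap\Gamma)$, and $(C_{0},a^{k})$ is still topologically transitive. Writing $a^{k}=b\gamma$ with $b\in G^{o}$ and $\gamma\in\Gamma$, the induced action on $C_{0}$ reads $g(G^{o}\cap\Gamma)\mapsto b(\gamma g\gamma\inv)(G^{o}\cap\Gamma)$, i.e., left translation by $b$ composed with the automorphism $\mathrm{conj}_{\gamma}$, which preserves $G^{o}\cap\Gamma$ and $\Gamma$-rational structure by Corollary~\ref{cor:rational-subgroup-conjugate}. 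Absorbing this automorphism into an enlarged structure group (for instance via the semidirect product $G^{o}\rtimes_{\mathrm{conj}_{\gamma}}\Z/(k\Z)$, in which $G^{o}\cap\Gamma$ extends to a cocompact lattice) realizes $(C_{0},a^{k})$ as a genuine nilsystem on a connected nilmanifold, to which the connected case applies. Unique ergodicity of $(G/\Gamma,a)$ then follows from the bijection $\nu\leftrightarrow k\nu|_{C_{0}}$ between $a$-invariant probabilities on $G/\Gamma$ and $a^{k}$-invariant probabilities on $C_{0}$. The main obstacle is this reduction step—specifically, verifying that the twisted action on $C_{0}$ embeds as a genuine nilsystem action on a connected nilmanifold carrying a $\Gamma$-rational filtration; once this is established, the connected case is a clean application of Leibman's equidistribution criterion.
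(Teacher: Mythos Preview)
Your treatment of the easy implications and of the connected-$G$ case is correct and essentially the same as the paper's. The gap is exactly the one you flag: the passage from general $G$ to connected $G$. Your semidirect-product construction is not justified---for instance, $\gamma^{k}$ need not act trivially on $G^{o}$, so the action does not obviously factor through $\Z/k\Z$, and even with $\Z$ in place of $\Z/k\Z$ one must still verify that the resulting group is a nilpotent Lie group with a cocompact lattice and a rational filtration. None of this is impossible, but it is real work that you have not done.

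The paper sidesteps the reduction entirely by combining two observations. First, nilsystems are distal, so orbit closures partition $G/\Gamma$; topological transitivity then forces \emph{every} orbit to be dense, not just one. Second, Theorem~\ref{thm:Leibman-orbit-closure} (the orbit closure theorem) applies to any $\Gamma$-rational filtration with no connectedness assumption on $G$: for each point it partitions $\Z$ into finitely many arithmetic progressions on each of which the orbit is totally well-distributed on a connected sub-nilmanifold. Since the orbit is dense and $a$ permutes the connected components of $G/\Gamma$ cyclically, these sub-nilmanifolds must exhaust the components with equal weight, giving well-distribution with respect to Haar. This is shorter than your route because it uses the orbit closure theorem itself rather than its corollary (Corollary~\ref{thm:leibman-equidistribution-criterion}), which is where the connectedness hypothesis enters.

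If you prefer to salvage your outline, note that Corollary~\ref{thm:leibman-equidistribution-criterion} only requires the \emph{nilmanifold} to be connected, not the structure group. After passing to $a^{k}$ on $C_{0}=G^{o}\Gamma/\Gamma$, you already have a connected nilmanifold with structure group $G^{o}\Gamma$ and $a^{k}\in G^{o}\Gamma$, so no semidirect product is needed; you would still have to check horizontal density on $G^{o}\Gamma/\Gamma[G^{o},G^{o}]$, which requires slightly more care than in the case where the group itself is connected.
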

\begin{proof}
(\ref{lem:nilsystem-ergodic:uniq-erg}) clearly implies (\ref{lem:nilsystem-ergodic:erg}) since the Haar measure is invariant.
(\ref{lem:nilsystem-ergodic:erg}) implies (\ref{lem:nilsystem-ergodic:top-trans}) since the Haar measure has full support.

Suppose now that (\ref{lem:nilsystem-ergodic:top-trans}) holds.
It is easy to see that $X$ is distal, cf.\ \cite[Theorem 2.14]{MR2122919}.
Hence any two orbit closures in $X$ either coincide or are disjoint.
By topological transitivity at least one orbit is dense in $X$, so that every orbit is dense in $X$.
Fix an orbit $(a^{n}x)$.
It follows from Theorem~\ref{thm:Leibman-orbit-closure} that $\Z$ splits into a finite union of arithmetic progressions in such a way that the restriction of the orbit to each of these progressions is totally well-distributed on a connected component of $X$.
By topological transitivity we know that $a$ permutes the connected components cyclically.
It follows that $(a^{n}x)$ is well-distributed with respect to the Haar measure on $G/\Gamma$.
\end{proof}

It is a classical fact that the Kronecker factor of an ergodic nilsystem $(G/\Gamma,T)$ is the canonical map $G/\Gamma \to G/\Gamma G_{2}$, where $G_{2}=[G,G]$.
The nilmanifold $G/\Gamma G_{2}$ is a compact homogeneous space of the abelian Lie group $G/G_{2}$, hence a disjoint union of finitely many tori.
The fibers of the projection $G/\Gamma \to G/\Gamma G_{2}$ are isomorphic to the homogeneous space $G_{2}/\Gamma_{2}$, where $\Gamma_{2}=\Gamma\cap G_{2}$.
By a result of Mal'cev $\Gamma_{2}$ is a cocompact subgroup of $G_{2}$ \cite{MR0028842}, so each such fiber is also a nilmanifold.

\section{Background from ergodic theory}
In this section we will state several results about the pointwise ergodic theorem, measure disintegration and Host-Kra-Ziegler factors.
Not all of them are needed in the proof of our Wiener-Wintner theorem, but they will come in handy in the next chapter when we will be dealing with the return times theorem.
\subsection{F\o{}lner sequences}
\begin{definition}
\label{def:folner}
Let $\AG$ be a locally compact second countable group with left Haar measure $|\cdot|$.
A sequence of sets $\Fo_{n} \subset \AG$ is called
\begin{enumerate}
\item\index{F\o{}lner sequence!weak}
a \emph{(weak) F\o{}lner sequence} if for every compact set $K\subset \AG$ one has \[
|\Fo_{n} \Delta K\Fo_{n}| / |\Fo_{n}| \to 0 \text{ as } n\to\infty,
\]
\item\index{F\o{}lner sequence!strong}
a \emph{strong F\o{}lner sequence} if for every compact set $K\subset \AG$ one has \[
|\partial_{K}(\Fo_{n})|/|\Fo_{n}| \to 0 \text{ as } n\to\infty,
\]
where $\partial_{K}(\Fo)= K\inv \Fo \cap K\inv \Fo^{\complement}$ is the \emph{$K$-boundary} of $\Fo$, and
\item\index{F\o{}lner sequence!tempered}
\emph{($C$-)tempered} if there exists a constant $C$ such that
\[
|\cup_{i<j} \Fo_{i}\inv \Fo_{j}| < C |\Fo_{j}|
\quad\text{for every }j.
\]
\end{enumerate}
\end{definition}
Note that any of the above conditions implies that $(\Fo_{n})$ is a Følner net in the sense of Definition~\ref{def:folner-net}.

Every strong F\o{}lner sequence is also a weak F\o{}lner sequence.
In countable groups the converse is also true, but already in $\R$ this is no longer the case: let for example $(\Fo_{n})$ be a sequence of nowhere dense sets $\Fo_{n}\subset [0,n]$ of Lebesgue measure $n-1/n$, say.
This is a weak but not a strong F\o{}lner sequence (in fact, $\partial_{[-1,0]} \Fo_{n}$ is basically $[0,n+1]$).
However, a weak F\o{}lner sequence can be used to construct a strong F\o{}lner sequence.
\begin{lemma}
\label{lem:strong-Folner-exist}
Assume that a locally compact second countable group $\AG$ admits a weak F\o{}lner sequence.
Then $\AG$ also admits a strong F\o{}lner sequence.
\end{lemma}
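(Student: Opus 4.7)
The plan is to construct strong F\o{}lner sets as super-level sets of a smoothing convolution of the weak F\o{}lner sets. By second countability of $\AG$ one may fix an exhausting sequence $(K_{m})$ of compact subsets of $\AG$, and a standard diagonal argument then reduces the claim to the following local statement: for every compact $K\subset\AG$ and every $\epsilon>0$, a sufficiently advanced member $\Fo$ of the given weak F\o{}lner sequence gives rise to a set $\Fo'$ with $|\partial_{K}\Fo'|/|\Fo'|<\epsilon$.

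First, I would fix a compact symmetric neighborhood $V$ of the identity and define the smoothed function $f(g):=|V|\inv\int 1_{V}(h)1_{\Fo}(h\inv g)\,\dif h$, a continuous $[0,1]$-valued function satisfying $\int f = |\Fo|$; a direct Fubini computation then yields $\|kf-f\|_{1}\le|\Fo\Delta k\Fo|$ for every $k\in\AG$ (up to a factor from the modular function which is harmless on compact sets). For $t\in(0,1)$ set $A(t):=\{g : f(g)>t\}$. The layer-cake identity gives $\int_{0}^{1}|A(t)|\,\dif t=|\Fo|$, and $g\in\partial_{K}A(t)$ precisely when $\inf_{k\in K}f(kg)\le t<\sup_{k\in K}f(kg)$, so integrating this indicator over $t$ yields
\[
\int_{0}^{1} |\partial_{K} A(t)|\,\dif t \le \int_{\AG} \operatorname{osc}_{K} f(g)\,\dif g,
\]
where $\operatorname{osc}_{K} f(g) := \sup_{k\in K}f(kg)-\inf_{k\in K}f(kg)$.

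It remains to make the right-hand side small relative to $|\Fo|$. Smoothing by $1_{V}$ endows $f$ with a modulus of continuity depending only on $V$ (not on $\Fo$), so $K$ may be covered by finitely many translates of a symmetric neighborhood $W$ of the identity chosen small enough that $|f(kg)-f(g)|\le\delta$ uniformly in $g$ for $k\in W$; this reduces $\operatorname{osc}_{K} f(g)$ to $\max_{k_{1},k_{2}\in K'}|f(k_{1}g)-f(k_{2}g)|+2\delta$ for some fixed finite set $K'\subset K$. Integrating and invoking the $L^{1}$ near-invariance bound gives
\[
\int \operatorname{osc}_{K} f \;\lesssim\; |K'|^{2}\sup_{k\in K'\inv K'}|\Fo\Delta k\Fo|+\delta|V\Fo|,
\]
which for $\Fo$ sufficiently advanced in the weak F\o{}lner sequence (using the property both directly and to bound $|V\Fo|\lesssim|\Fo|$) and $\delta$ chosen small enough is at most $\epsilon^{2}|\Fo|$. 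A pigeonhole over $t\in(0,1)$, combined with $\int_{0}^{1} |A(t)|\,\dif t = |\Fo|$, then produces a value of $t$ for which simultaneously $|A(t)|\gtrsim|\Fo|$ and $|\partial_{K} A(t)|\le\epsilon|A(t)|$; setting $\Fo':=A(t)$ completes the construction.

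The principal obstacle is the sup-versus-integral step in bounding $\int\operatorname{osc}_{K} f$: the naive inequality $\int\sup\le\sum\int$ is unavailable for uncountable $K$, and is rescued only by the fact that the fixed smoothing kernel $1_{V}$ equips $f$ with a modulus of continuity independent of $\Fo$, allowing the supremum over $K$ to be replaced by a finite maximum up to an arbitrarily small additive error. A secondary (but routine) subtlety is that in the non-unimodular case the manipulation of left Haar measure in the Fubini step introduces a modular-function factor, which is harmless because that factor is continuous and equal to $1$ at the identity, hence close to $1$ on the compact neighborhood $V$.
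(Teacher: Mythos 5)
Your proof is correct in its essentials, but it takes a genuinely different and considerably heavier route than the paper's. The paper simply \emph{thickens} a weak F\o{}lner set: for compact $K$ with $1\in K$, pick $\Fo_n$ with $|K\inv K\Fo_n\,\Delta\,\Fo_n|<\epsilon|\Fo_n|$ and set $\Fo:=K\Fo_n$; then $\partial_K\Fo = K\inv K\Fo_n\cap K\inv(K\Fo_n)^{\complement}\subset K\inv K\Fo_n\setminus\Fo_n$, which has measure $<\epsilon|\Fo_n|\le\epsilon|\Fo|$, and $\Fo$ is compact for free. Your smoothing-and-coarea construction produces the strong F\o{}lner sets as super-level sets of a mollified indicator, which is a flexible and classical device, but here it buys nothing extra and costs a layer-cake argument, a modulus-of-continuity reduction, and a pigeonhole. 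Two small repairs are also needed in the non-abelian case. The Fubini computation does not give $\|kf-f\|_1\le|\Fo\,\Delta\,k\Fo|$ literally: pushing the left translation past the fixed kernel $1_V$ produces a conjugate, and what one actually obtains is $\|kf-f\|_1\le\sup_{h\in V}|(h\inv k h)\Fo\,\Delta\,\Fo|$. This is harmless, since $V$ is a fixed compact neighbourhood and $k$ ranges over the fixed finite set $K'\inv K'$, so the relevant translation parameters still lie in a fixed compact set, but the bound should be stated in this conjugated form. Likewise, for the modulus-of-continuity reduction you should cover $K$ by \emph{left} translates $Wk_i$ (not $k_iW$), so that for $k=wk_i$ one has $f(kg)-f(k_ig)=f(wg')-f(g')$ with $g'=k_ig$, $w\in W$, and the uniform bound $|f(wg')-f(g')|\le|V|\inv|w\inv V\,\Delta\,V|$ applies directly; with $k_iW$-translates one would again need to control conjugates $k_iwk_i\inv$, which works for fixed compact $K$ but requires an extra remark.
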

\begin{proof}
We follow the argument in \cite[Lemma 2.6]{2012arXiv1205.3649P}.
Let $(V_{j})$ be a countable basis for the topology of $\AG$ that consists of relatively compact sets.
Let $K\subset\AG$ be a compact set, then it is covered by a finite union of $V_{j}$'s.
Hence we obtain a countable ascending chain of compact subsets of $\AG$ such that every compact subset is contained in one of the sets in this collection, namely the collection of $K_{N}:=\cup_{j=1}^{N}\ol{V_{j}}$.

Let $N$ be arbitrary and set $K:=K_{N}$, $\epsilon:=\frac1N$.
It suffices to find a compact set $\Fo$ with $|\partial_{K}(\Fo)|/|\Fo|<\epsilon$.
Let $(\Fo_{n})$ be a weak F\o{}lner sequence, then there exists $n$ such that $|K\inv K \Fo_{n} \Delta \Fo_{n}| < \epsilon |\Fo_{n}|$.
Set $\Fo=K\Fo_{n}$, then
\[
\partial_{K}\Fo
= K\inv K\Fo_{n} \cap K\inv (K\Fo_{n})^{\complement}
\subset K\inv K\Fo_{n}\cap \Fo_{n}^{\complement},
\]
and this has measure less than $\epsilon |\Fo_{n}| \leq \epsilon |\Fo|$.
\end{proof}
Since every weak (hence also every strong) F\o{}lner sequence has a tempered subsequence \cite[Proposition 1.4]{MR1865397}, this implies that every lcsc amenable group admits a tempered strong F\o{}lner sequence.

\subsection{Lindenstrauss covering lemma}
Given a collection of intervals, the classical Vitali covering lemma allows one to select a disjoint subcollection that covers a fixed fraction of the union of the full collection.
The appropriate substitute in the setting of tempered F\o{}lner sequences is the Lindenstrauss random covering lemma.
It allows one to select a \emph{random} subcollection that is \emph{expected} to cover a fixed fraction of the union and to be \emph{almost} disjoint.
The almost disjointness means that the expectation of the counting function of the subcollection is uniformly bounded by a constant.
As such, the Vitali lemma is stronger whenever it applies, and the reader who is only interested in the standard F\o{}lner sequence in $\Z$ can skip this subsection.

We use two features of Lindenstrauss' proof of the random covering lemma that we emphasize in its formulation below.
The first feature is that the second moment (and in fact all moments) of the counting function is also uniformly bounded (this follows from the bound for the moments of a Poisson distribution).
The second feature is that the random covering depends measurably on the data.
We choose to include the explicit construction of the covering in the statement of the lemma instead of formalizing this measurability statement.
To free up symbols for subsequent use we replace the auxiliary parameter $\delta$ in Lindenstrauss' statement of the lemma by $C\inv$ and expand the definition of $\gamma$.

For completeness we recall that a \emph{Poisson point process} with intensity $\alpha$ on a measure space $(X,\mu)$ is a counting (i.e.\ atomic, with at most countably many atoms and masses of atoms in $\N$) measure-valued map $\Upsilon:\Omega \to M(X)$ such that for every finite measure set $A\subset X$ the random variable $\omega\mapsto \Upsilon(\omega)(A)$ is Poisson with mean $\alpha\mu(A)$ and for any disjoint sets $A_{i}$ the random variables $\omega\mapsto \Upsilon(\omega)|_{A_{i}}$ are jointly independent (here and later $\Upsilon|_{A}$ is the measure $\Upsilon|_{A}(B)=\Upsilon(A\cap B)$).
It is well-known that on every $\sigma$-finite measure space there exists a Poisson process.
\begin{lemma}[{\cite[Lemma 2.1]{MR1865397}}]
\label{lem:lindenstrauss-covering}
\index{Lindenstrauss covering lemma}
Let $\AG$ be a lcsc group with left Haar measure $|\cdot|$.
Let $(\Fo_{N})_{N=L}^{R}$ be a $C$-tempered sequence.
Let $\Upsilon_{N}:\Omega_{N}\to M(\AG)$ be independent Poisson point processes with intensity $\alpha_{N}=\delta/|\Fo_{N}|$ w.r.t.\ the right Haar measure $\rho$ on $\AG$ and let $\Omega:=\prod_{N}\Omega_{N}$.

Let $A_{N|R+1}\subset \AG$, $N=L,\dots,R$, be sets of finite measure.
Define (dependent!) counting measure-valued random variables $\Sigma_{N} : \Omega \to M(\AG)$ in descending order for $N=R,\dots,L$ by
\begin{enumerate}
\item $\Sigma_{N} := \Upsilon_{N}|_{A_{N|N+1}}$,
\item $A_{i|N} := A_{i|N+1}\setminus \Fo_{i}\inv \Fo_{N}\Sigma_{N} = \{ a\in A_{i|N+1} : \Fo_{i}a\cap \Fo_{N}\Sigma_{N}=\emptyset\}$ for $i<N$.
\end{enumerate}
Then for the counting function
\[
\Lambda = \sum_{N}\Lambda_{N},
\quad
\Lambda_{N}(g)(\omega) = \sum_{a\in\Sigma_{N}(\omega)}1_{\Fo_{N}a}(g)
\]
the following holds.
\begin{enumerate}
\item $\Lambda$ is a measurable, a.s.\ finite function on $\Omega\times \AG$,
\item $\E(\Lambda(g)|\Lambda(g)\geq 1) \leq 1+C\inv$ for every $g\in \AG$,
\item $\E(\Lambda^{2}(g)|\Lambda(g)\geq 1) \leq (1+C\inv)^{2}$ for every $g\in \AG$,
\item $\E(\int\Lambda) \geq (2C)\inv |\cup_{N=L}^{R}A_{N}|$.
\end{enumerate}
\end{lemma}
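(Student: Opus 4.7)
The plan is to verify the four claims in sequence, each building on the previous. First, the construction is well-defined by backwards recursion on $N$: starting from $A_{N|R+1} = A_N$, at each step $N$ (from $R$ down to $L$) the set $A_{N|N+1}$ is measurable with respect to the Poisson realizations at scales larger than $N$, so $\Sigma_N = \Upsilon_N|_{A_{N|N+1}}$ is a.s.\ a finite counting measure since $|A_{N|N+1}|_\rho \leq |A_N|_\rho < \infty$. Measurability of $\Lambda$ is then automatic from the measurability of each $\Upsilon_N$. For a.s.\ finiteness, I would compute $\E\Lambda(g) \leq \sum_N \alpha_N |\Fo_N^{-1}g|_\rho = (R-L+1)\delta < \infty$ using right-invariance of $\rho$ and $\alpha_N|\Fo_N| = \delta$, so $\Lambda(g) < \infty$ a.s.\ for $\rho$-a.e.\ $g$.

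Next I would establish the crucial scale-disjointness property: if some $a \in \Sigma_N$ satisfies $g \in \Fo_N a$, then no $b \in \Sigma_k$ with $k < N$ satisfies $g \in \Fo_k b$. Indeed, such a $b$ lies in $A_{k|k+1}$, which by the monotonicity of the construction is contained in $A_{k|N} = A_{k|N+1} \setminus \Fo_k\inv \Fo_N \Sigma_N$; hence $b \notin \Fo_k\inv \Fo_N a$, so $\Fo_k b \cap \Fo_N a = \emptyset$, contradicting $g \in \Fo_k b \cap \Fo_N a$. Consequently $\Lambda(g) = \Lambda_{N^\ast}(g)$ whenever $\Lambda(g) \geq 1$, where $N^\ast(g) := \max\{N : \Lambda_N(g) \geq 1\}$.

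The conditional moment bounds now follow from standard Poisson computations. Conditioning on the $\sigma$-algebra $\mathcal{H}_N$ generated by $\{\Upsilon_k : k > N\}$, the set $A_{N|N+1}$ becomes deterministic, and $\Lambda_N(g) = \Upsilon_N(\Fo_N\inv g \cap A_{N|N+1})$ is Poisson with mean $\mu_N \leq \alpha_N |\Fo_N| = \delta = C^{-1}$. A Poisson variable $X$ of mean $\mu \in (0,1]$ satisfies $\E(X \mid X \geq 1) = \mu/(1-e^{-\mu}) \leq 1 + \mu$ and $\E(X^2 \mid X \geq 1) = (\mu + \mu^2)/(1-e^{-\mu}) \leq (1+\mu)^2$, both elementary after expanding $1-e^{-\mu}$. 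Decomposing the event $\{\Lambda(g) \geq 1\}$ over the disjoint events $\{N^\ast = N\}$ supplied by the previous paragraph and applying these inequalities conditionally on $\mathcal{H}_N$ yields (ii) and (iii).

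Finally, for the lower bound on $\E\int\Lambda$, right-invariance of $\rho$ yields $\E\int \Lambda \dif\rho = \sum_N \alpha_N |\Fo_N| \cdot \E|A_{N|N+1}|_\rho = \delta \sum_N \E|A_{N|N+1}|$, so it suffices to bound $\sum_N \E|A_{N|N+1}|$ below by a constant multiple of $|\bigcup_N A_N|$. Writing $A_N \setminus A_{N|N+1} = A_N \cap \bigcup_{k>N} \Fo_N\inv \Fo_k \Sigma_k$, applying Markov's inequality together with the Campbell intensity formula for Poisson processes, and invoking the temperedness estimate $|\bigcup_{N<k}\Fo_N\inv\Fo_k| \leq C|\Fo_k|$ to control the multiplicity $\sum_{N<k}|A_N \cap \Fo_N\inv\Fo_k y|$ appearing inside the resulting integral, I expect to obtain $\sum_N (|A_N| - \E|A_{N|N+1}|) \leq C\delta \sum_k \E|A_{k|k+1}|$; rearranging with $\delta = C^{-1}$ and using $\sum_N |A_N| \geq |\bigcup_N A_N|$ then gives (iv). The main obstacle will be this final multiplicity bookkeeping: temperedness controls the measure of the union $\bigcup_{N<k}\Fo_N\inv\Fo_k$ rather than the sum $\sum_{N<k}|\Fo_N\inv\Fo_k|$, so the integral estimate has to be arranged to pay the temperedness bound once per point $y \in A_{k|k+1}$ rather than once per pair $(N,k)$.
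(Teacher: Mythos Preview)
The paper does not supply its own proof of this lemma; it is quoted verbatim from Lindenstrauss \cite[Lemma 2.1]{MR1865397} with only notational changes, so there is no ``paper's proof'' to compare against. Your sketch of (i)--(iii) is the standard Lindenstrauss argument and is correct: the scale-disjointness giving $\Lambda(g)=\Lambda_{N^{*}(g)}(g)$, together with the conditional Poisson moment bounds $\E(X\mid X\geq 1)\leq 1+\mu$ and $\E(X^{2}\mid X\geq 1)\leq(1+\mu)^{2}$, is exactly how it goes.

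For (iv), however, the inequality you aim for,
\[
\sum_{N}(|A_{N}|-\E|A_{N|N+1}|)\leq C\delta\sum_{k}\E|A_{k|k+1}|,
\]
does \emph{not} follow from temperedness in the way you outline. Unravelling the left side produces $\sum_{k}\sum_{y\in\Sigma_{k}}\sum_{N<k}|A_{N}\cap\Fo_{N}^{-1}\Fo_{k}y|$, and temperedness only controls $|\bigcup_{N<k}\Fo_{N}^{-1}\Fo_{k}|\leq C|\Fo_{k}|$, not the sum $\sum_{N<k}|\Fo_{N}^{-1}\Fo_{k}|$; since the sets $A_{N}$ may overlap arbitrarily, the multiplicity cannot be collapsed. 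You correctly diagnose this obstacle in your last sentence but do not resolve it.

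The fix is to replace the sum by a union from the start. Bound instead
\[
\Big|\bigcup_{N}A_{N}\Big|-\E\Big|\bigcup_{N}A_{N|N+1}\Big|
\leq \E\Big|\bigcup_{k}\bigcup_{N<k}\Fo_{N}^{-1}\Fo_{k}\Sigma_{k}\Big|
\leq \sum_{k}\E(\#\Sigma_{k})\cdot\Big|\bigcup_{N<k}\Fo_{N}^{-1}\Fo_{k}\Big|
\leq C\delta\sum_{k}\E|A_{k|k+1}|,
\]
using temperedness exactly once per atom of $\Sigma_{k}$. Combining this with the trivial $\E|\bigcup_{N}A_{N|N+1}|\leq\sum_{N}\E|A_{N|N+1}|$ and $\delta=C^{-1}$ yields $|\bigcup_{N}A_{N}|\leq 2\sum_{N}\E|A_{N|N+1}|$, which is (iv).
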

Recall that the \emph{maximal function} is defined by
\index{maximal function}
\[
Mf(x) := \sup_{N} \Big| \aveFn f(T^{n}x) \Big|
\text{ for } f\in L^{1}(X).
\]
The \emph{Lindenstrauss maximal inequality} \cite[Theorem 3.2]{MR1865397} asserts that for every $f\in L^{1}(X)$ and every $\lambda>0$ we have
\index{Lindenstrauss maximal inequality}
\begin{equation}
\label{eq:maximal-inequality}
\mu\{Mf>\lambda\} \lesssim \lambda\inv \|f\|_{1},
\end{equation}
where the implied constant depends only on the constant in the temperedness condition.
This implies the following pointwise ergodic theorem.
\begin{theorem}[{\cite[Theorem 1.2]{MR1865397}}]
\label{thm:Lindenstrauss-pointwise}
\index{theorem!pointwise ergodic}
Let $\AG$ be a locally compact second countable amenable group with a tempered F\o{}lner sequence $(\Fo_{N})$.
Suppose that $\AG$ measurably acts on a probability space $(X,\mu)$ by measure-preserving transformations.
Then for every $f\in L^{1}(X,\mu)$ there exists a full measure subset $X'\subset X$ such that for every $x\in X'$ the limit
\[
\lim_{N\to\infty} \frac1{|\Fo_{N}|} \int_{g\in \Fo_{N}} f(gx)
\]
exists.
If the action is ergodic, then the limit equals $\int_{X} f \dif\mu$ a.e.
\end{theorem}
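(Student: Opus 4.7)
The plan is to combine the Lindenstrauss maximal inequality \eqref{eq:maximal-inequality} with pointwise convergence on a dense subclass of $L^{1}(X)$, in the style of the Banach principle. Write $A_{N}f(x) := \frac{1}{|\Fo_{N}|}\int_{\Fo_{N}}f(gx)\dif g$ and let $\mathcal{C}\subset L^{1}(X)$ be the set of $f$ for which $\lim_{N}A_{N}f(x)$ exists $\mu$-a.e. The first step is to show that $\mathcal{C}$ is norm-closed in $L^{1}$: given a sequence $f_{k}\in\mathcal{C}$ with $f_{k}\to f$ in $L^{1}$, the oscillation function $\Omega f(x) := \limsup_{N}A_{N}f(x)-\liminf_{N}A_{N}f(x)$ satisfies $\Omega f = \Omega(f-f_{k})\leq 2M(f-f_{k})$, and \eqref{eq:maximal-inequality} gives
\[
\mu\{\Omega f > \lambda\} \lesssim \lambda\inv \|f-f_{k}\|_{1} \to 0
\]
as $k\to\infty$ for every $\lambda>0$. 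Hence $\Omega f=0$ a.e., i.e.\ $f\in\mathcal{C}$.

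Next, I reduce the problem to finding a dense subset of $L^{1}(X)$ contained in $\mathcal{C}$. Since $L^{\infty}(X)\subset L^{2}(X)\cap L^{1}(X)$ is $L^{1}$-dense, it suffices to exhibit an $L^{2}$-dense subset of $L^{2}\cap L^{\infty}$ on which pointwise convergence holds. Here I invoke the mean ergodic theorem for amenable groups applied to the unitary representation of $\AG$ on $L^{2}(X)$: the averages $A_{N}$ converge in the strong operator topology to the orthogonal projection $P$ onto the subspace of invariants $\mathcal{I}$. This gives the orthogonal decomposition $L^{2}(X)=\mathcal{I}\oplus\overline{\mathcal{B}}$, where $\mathcal{B}$ is the linear span of coboundaries $h-h\circ T_{g}$ with $h\in L^{\infty}$ and $g\in\AG$. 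Truncation then produces an $L^{2}$-dense subset of $L^{2}\cap L^{\infty}$ consisting of sums of a bounded invariant function and a bounded coboundary.

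Pointwise convergence on each of these pieces is direct. If $f\in\mathcal{I}$ then $A_{N}f(x)=f(x)$ for a.e.\ $x$ and every $N$, so convergence is trivial (and the limit is $f$). If $f=h-h\circ T_{g_{0}}$ with $h\in L^{\infty}$ and $g_{0}\in\AG$, then a change of variables yields
\[
|A_{N}f(x)| \leq \|h\|_{\infty}\,\frac{|\Fo_{N}\triangle \Fo_{N}g_{0}\inv|}{|\Fo_{N}|} \longrightarrow 0
\]
by the F\o{}lner property, uniformly in $x$. Combining the two computations with closedness of $\mathcal{C}$ yields $\mathcal{C}=L^{1}(X)$. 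In the ergodic case $\mathcal{I}=\mathbb{C}\cdot 1$, so $Pf=\int_{X}f\dif\mu$ a.e., identifying the limit.

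The genuinely hard part of the theorem has already been absorbed into the statement of the maximal inequality \eqref{eq:maximal-inequality}, which in turn rests on Lemma~\ref{lem:lindenstrauss-covering}; the remainder is a textbook Banach-principle argument. The one point requiring a little care is the density of bounded sums of an invariant and a bounded coboundary in $L^{1}$: this comes from the mean ergodic decomposition in $L^{2}$ followed by truncation, but the truncation must be done at the level of the coboundary representative $h$ rather than the coboundary itself in order to keep $\|h\|_{\infty}$ finite and make the F\o{}lner estimate above go through.
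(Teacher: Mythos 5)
Your proof is correct and is essentially the argument Lindenstrauss gives in the cited reference \cite{MR1865397}, which the paper quotes without re-proving: establish pointwise convergence on a dense subclass (invariant functions plus $L^{\infty}$-coboundaries, obtained from the mean ergodic theorem and truncation), then upgrade to all of $L^{1}$ via the weak $(1,1)$ maximal inequality \eqref{eq:maximal-inequality}, which is where the temperedness assumption and the covering Lemma~\ref{lem:lindenstrauss-covering} do the real work. One small slip in the coboundary estimate: with a left Haar measure, a left F\o{}lner sequence, and the left action $g\mapsto f(gx)$, the change of variables $g'=g_{0}g$ transports $\Fo_{N}$ to $g_{0}\Fo_{N}$, so the bound should read $\|h\|_{\infty}\,|g_{0}\Fo_{N}\Delta\Fo_{N}|/|\Fo_{N}|$ rather than $\|h\|_{\infty}\,|\Fo_{N}\Delta\Fo_{N}g_{0}^{-1}|/|\Fo_{N}|$; it is the left-translate symmetric difference that Definition~\ref{def:folner-net} controls, and the two are not interchangeable without a two-sided F\o{}lner hypothesis. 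Finally, identifying the limit as $\int f\,\dif\mu$ for all $f\in L^{1}$ (not just the dense class) in the ergodic case deserves a sentence: the pointwise-limit operator agrees with the mean ergodic projection on the dense class, and both extend continuously to $L^{1}$ --- the former by the maximal inequality, the latter as the conditional expectation onto the invariant $\sigma$-algebra --- so they agree on all of $L^{1}$, and under ergodicity that conditional expectation is $\int f\,\dif\mu$.
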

The temperedness assumption cannot be dropped even for sequences of intervals with growing length in $\Z$, see \textcite{MR553340} and \textcite{MR1182661}.

\subsection{Fully generic points}
Let $(X,\AG)$ be an ergodic measure-preserving system and $f\in L^{1}(X)$.
Recall that a point $x\in X$ is called \emph{generic} for $f$ if
\index{generic point}
\[
\lim_{n\to\infty} \E_{g\in \Fo_{n}} f(gx) = \int_{X}f.
\]
In the context of countable group actions fully generic points for $f\in L^{\infty}(X)$ are usually defined as points that are generic for every function in the closed $\AG$-invariant algebra spanned by $f$.
For uncountable groups this is not a good definition, since this algebra need not be separable.
The natural substitute for shifts of a function $f\in L^{\infty}(X)$ is provided by convolutions
\[
c*f(x) = \int_{\AG} c(g\inv) f(gx) \dif g,
\quad
c\in L^{1}(\AG).
\]
Since $L^{1}(\AG)$ is separable and convolution is continuous as an operator $L^{1}(\AG)\times L^{\infty}(X) \to L^{\infty}(X)$, the closed convolution-invariant algebra generated by $f$ is separable.

We call a point $x\in X$ \emph{fully generic} for $f$ if it is generic for every function in this algebra.
\index{fully generic point}
In view of the Lindenstrauss pointwise ergodic theorem (Theorem~\ref{thm:Lindenstrauss-pointwise}), if $(\Fo_{n})$ is tempered, then for every $f\in L^{1}(X)$ a.e.\ $x\in X$ is generic.
Consequently, for every $f\in L^{\infty}(X)$ a.e.\ $x\in X$ is fully generic.

\subsection{Ergodic decomposition}
A measure-preserving system $(X,\mu,T)$ is called \emph{regular} if $X$ is a compact metric space, $\mu$ is a Borel probability measure and $T$ is continuous.
Every measure-preserving system is measurably isomorphic to a regular measure-preserving system upon restriction to a separable $T$-invariant sub-$\sigma$-algebra \cite[\textsection 5.2]{MR603625}.

\index{ergodic decomposition}
The \emph{ergodic decomposition} of the measure on a regular measure-preserving system $(X,\mu,T)$ is a measurable map $x\mapsto\mu_{x}$ from $X$ to the space of $T$-invariant ergodic Borel probability measures on $X$, unique up to equality $\mu$-a.e., such that $\mu$-a.e.\ $x\in X$ is generic for every $f\in C(X)$ w.r.t.\ $\mu_{x}$ and $\mu=\int\mu_{x}\dif\mu(x)$ \cite[\textsection 5.4]{MR603625}.
Moreover, for every $f\in L^{1}(\mu)$, for $\mu$-a.e.\ $x\in X$  we have that $f\in L^{1}(\mu_{x})$ and $x$ is generic for $f$ w.r.t.\ $\mu_{x}$.

In connection with the multiple term return times theorem we find it illuminating to think of the ergodic decomposition in a particular way (that will be generalized in \textsection\ref{sec:RTT-cube}).
Let $(X,\mu,T)$ be a regular measure-preserving system.
By the Lindenstrauss pointwise ergodic theorem (Theorem~\ref{thm:Lindenstrauss-pointwise}) a.e.\ $x\in X$ is generic for some $T$-invariant Borel probability measure $\m_{x}$ on $X$, i.e.\ $\aveFN f(T^{n}x) \to \int f \dif \m_{x}$ for every $f\in C(X)$.
It follows easily that the function $x\mapsto\m_{x}$ is measurable and
\begin{equation}
\label{eq:m-disint}
\mu = \int\m_{x} \dif\mu(x)
\end{equation}
In particular, for $\mu$-a.e.\ $x$ the measure $\m_{y}$ is defined for $\m_{x}$-a.e.\ $y$.
To see that $\m_{x}$ is ergodic for $\mu$-a.e.\ $x$ it suffices to verify that
\begin{equation}
\label{eq:my-mx}
\int \int \Big| \int f \dif\m_{y} - \int f \dif\m_{x} \Big|^{2} \dif\m_{x}(y) \dif\mu(x)
= 0 \text{ for every } f\in C(X),
\end{equation}
since this says precisely that the ergodic averages of $f$ converge pointwise $\m_{x}$-a.e.\ to an $\m_{x}$-essentially constant function for $\mu$-a.e.\ $x$, and the latter full measure set can be chosen independently from $f$ since $C(X)$ is separable.
By definition of $\m_{x},\m_{y}$, the dominated convergence theorem and \eqref{eq:m-disint} we can rewrite the integral in \eqref{eq:my-mx} as
\begin{multline*}
2 \lim_{N} \int (\aveFN T^{n}f)^{2}(x) \dif\mu(x)\\
- 2 \lim_{N} \int (\aveFN T^{n}f)(x) \int (\aveFN T^{n}f)(y) \dif\m_{x}(y) \dif\mu(x)\\
=
2 \lim_{N} \int (\aveFN T^{n}f)^{2}(x) \dif\mu(x)\\
- 2 \lim_{N} \lim_{M} \int (\aveFN T^{n}f)(x) (\aveFMn T^{m}f)(x) \dif\mu(x),
\end{multline*}
and this vanishes by the Lindenstrauss pointwise ergodic theorem (Theorem~\ref{thm:Lindenstrauss-pointwise}) and the dominated convergence theorem.
\subsection{Host-Kra cube spaces}
We recall the basic definitions and main results surrounding the uniformity seminorms.
Let $(X,\mu,T)$ be a regular, not necesserily ergodic, measure-preserving system.
\index{cube measure}
The \emph{cube measures} $\mu^{[l]}$ on $X^{[l]}:=X^{2^{l}}$ are defined inductively starting with $\mu^{[0]}:=\mu$.
In the inductive step, given $\mu^{[l]}$, fix an ergodic decomposition
\[
\mu^{[l]}=\int_{X^{[l]}} \m_{x} \dif\mu^{[l]}(x)
\]
as in \eqref{eq:m-disint}.
The space on which $\m_{x}$ is defined can be inferred from the subscript $x$.
Define
\begin{equation}
\label{eq:mul+1}
\mu^{[l+1]}:=\int_{X^{[l]}} \delta_{x}\otimes\m_{x} \dif\mu^{[l]}(x).
\end{equation}
Using \eqref{eq:m-disint} and \eqref{eq:my-mx}, we can write the above integral as
\begin{multline}
\label{eq:mul+1-conventional}
\mu^{[l+1]}
=
\int\int \delta_{y}\otimes\m_{y} \dif\m_{x}(y)\dif\mu^{[l]}(x)\\
=
\int\int \delta_{y}\otimes\m_{x} \dif\m_{x}(y)\dif\mu^{[l]}(x)
=
\int \m_{x}\otimes\m_{x} \dif\mu^{[l]}(x),
\end{multline}
which is the usual definition of the cube measures.
\begin{definition}[Gowers-Host-Kra seminorms {\cite[\textsection 3.5]{MR2150389}}]
\index{Gowers-Host-Kra seminorms}
\index{uniformity seminorms}
The \emph{Gowers-Host-Kra seminorms}, or \emph{uniformity seminorms}, are defined by
\begin{equation}
\label{eq:uniformity-seminorm-integral}
\| f \|_{U^{l+1}(X,\mu,T)}^{2^{l+1}} := \int \otimes_{\epsilon\in \{0,1\}^{l+1}} f \dif\mu^{[l+1]} = \int \E\big( \otimes_{\epsilon\in \{0,1\}^{l}} f | \mathcal{I}^{[l]} \big)^{2} \dif\mu^{[l]},
\end{equation}
where $\mathcal{I}^{[l]}$ is the $T^{[l]}$-invariant sub-$\sigma$-algebra on $X^{[l]}$.
\end{definition}
We will write $U^{l}$ or $U^{l}(X)$ instead of $U^{l}(X,\mu,T)$ if no confusion is possible.
In a special case these seminorms have been introduced by Bergelson \cite{MR1774423}.

If $\mu = \int \mu_{x} \dif\mu(x)$ is the ergodic decomposition, then
\[
\|f\|_{U^{l}(X,\mu)}^{2^{l}}=\int \|f\|_{U^{l}(X,\mu_{x})}^{2^{l}} \dif\mu(x)
\text{ for all } f\in L^{\infty}(\mu).
\]
It follows from the mean ergodic theorem that the uniformity seminorms can be recursively computed by the following folmulas.
\[
\|f\|_{U^0(X,\mu)}=\int_X f\dif\mu,\quad
\|f\|_{U^{l+1}(X,\mu)}^{2^{l+1}}=\lim_{N\to\infty}\frac{1}{N}\sum_{n=1}^N\|T^n f \bar{f}\|_{U^{l}(X,\mu)}^{2^{l}}.
\]
For $f_{\epsilon} \in L^{\infty}(X)$, $\epsilon\in\{0,1\}^{l}$, we will abbreviate $f^{[l]}:=\otimes_{\epsilon\in\{0,1\}^{l}}f_{\epsilon}$.
It follows by induction on $l\in\N$ that
\begin{equation}
\label{eq:est-U-by-L}
\|\cdot\|_{U^{l+1}(X)}\leq \|\cdot\|_{L^{2^{l}}(X)},
\end{equation}
see \cite{MR2944094} for subtler analysis.
The uniformity seminorms satisfy the \emph{Cauchy-Schwarz-Gowers inequality} \cite[Lemma 3.9.(1)]{MR2150389}
\index{Cauchy-Schwarz-Gowers inequality}
\begin{equation}
\label{eq:CSG}
\Big| \int f^{[l]} \dif\mu^{[l]} \Big| \leq \prod_{\epsilon\in\{0,1\}^{l}} \|f_{\epsilon}\|_{U^{l}}.
\end{equation}

For every $l$ the uniformity seminorm $U^{l+1}$ determines a factor $\HKZ_l(X)$ of $(X,\mu,T)$, called the Host-Kra factor of order $l$, that is characterized by the relation
\[
\|f\|_{U^{l+1}(X)}=0 \iff \E(f|\HKZ_l(X))=0
\]
that holds for all $f\in L^\infty(X)$.
The structure of the factors $\HKZ_{l}$ is captured by the following result of Host and Kra.
\begin{theorem}[\cite{MR2150389}]
\label{thm:HK-structure}
\index{theorem!Host-Kra structure}
Suppose that $(X,\mu,T)$ is ergodic.
Then $\HKZ_{l}(X)$ is measurably isomorphic to a pro-nilsystem of step $l$.
\end{theorem}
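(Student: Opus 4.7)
The plan is to proceed by induction on $l$, following the original strategy of Host and Kra. The base case $l=0$ is trivial, since $\HKZ_0(X)$ is the trivial one-point factor (corresponding to integration). The case $l=1$ recovers the classical Kronecker factor: one shows that $\|f\|_{U^2}$ vanishes precisely when $f$ is orthogonal to all eigenfunctions of $T$, hence $\HKZ_1(X)$ is generated by eigenfunctions and, by ergodicity, is measurably isomorphic to a rotation on a compact abelian group, i.e.\ a $1$-step nilsystem (or an inverse limit thereof, depending on separability).

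For the inductive step, assume the conclusion is known for $l-1$, so that $\HKZ_{l-1}(X)$ is an inverse limit of $(l-1)$-step ergodic nilsystems. The first major reduction is to show that $\HKZ_l(X)$ is an \emph{isometric} (in fact abelian group) extension of $\HKZ_{l-1}(X)$. Concretely, one analyzes the side measure $\mu^{[l]}$ and uses the recursive formula \eqref{eq:mul+1} together with the Cauchy--Schwarz--Gowers inequality \eqref{eq:CSG} to see that, on $\HKZ_l(X)$, the conditional measure over $\HKZ_{l-1}(X)$ admits an additional symmetry group; this produces a compact abelian fiber group $U$ and presents $\HKZ_l(X)$ as an extension of $\HKZ_{l-1}(X)$ by $U$ via a measurable cocycle $\rho:\HKZ_{l-1}(X)\to U$.

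The key structural step is then to show that $\rho$ is a cocycle \emph{of type $l-1$} in the Host--Kra sense: there exists a measurable $F$ on the $(l-1)$-st cube space $\HKZ_{l-1}(X)^{[l-1]}$ such that the $(l-1)$-st cubic derivative of $\rho$ equals $\Delta F$ as a coboundary. This rigidity is extracted from the fact that $f$ is $U^{l+1}$-measurable on $\HKZ_l(X)$, combined again with the Cauchy--Schwarz--Gowers inequality and the definition \eqref{eq:uniformity-seminorm-integral} applied on the cube space. Once $\rho$ is known to be of type $l-1$, one uses Mackey theory to reduce to the case where $\rho$ takes values in a closed subgroup and the cocycle identity, on the pro-nilsystem base $\HKZ_{l-1}(X)$ provided by the inductive hypothesis, forces the total extension to carry a compatible nilpotent Lie group structure: in a finite-dimensional factor one realizes $U$ as a central subgroup of a nilpotent Lie group $G$ of step $l$ with a cocompact lattice $\Gamma$, and the extension as translation on $G/\Gamma$. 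Taking inverse limits over increasing finite-dimensional sub-$\sigma$-algebras of $\HKZ_l(X)$ yields the pro-nilsystem structure.

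The main obstacle, and the real technical heart of the argument, is the analysis of type $l-1$ cocycles over $(l-1)$-step nilsystem bases: one has to show that such cocycles can (after a cohomologous modification) be realized inside a nilpotent Lie group, so that the extension is again a nilsystem. This requires delicate computations with the cube structure on $\HKZ_{l-1}(X)^{[l-1]}$ (for which the pro-nilsystem description from the induction hypothesis is essential), a limiting/approximation procedure to descend from the pro-nilsystem to finite-dimensional nilfactors, and an application of Mal'cev's theorem (used in the excerpt to produce adapted bases) to ensure that the resulting Lie group admits a cocompact lattice so that the extension is genuinely of the form $G/\Gamma$ rather than merely a homogeneous space.
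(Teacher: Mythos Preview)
The paper does not prove this theorem; it is stated with a citation to Host and Kra \cite{MR2150389} (and the parallel work of Ziegler is mentioned in the following sentence), and then used as a black box. So there is no ``paper's own proof'' to compare your proposal against.

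For what it is worth, your outline follows the genuine Host--Kra strategy: induction on $l$, showing that $\HKZ_{l}$ is an abelian group extension of $\HKZ_{l-1}$, identifying the cocycle as being of bounded type via the cube measures and the Cauchy--Schwarz--Gowers inequality, and then promoting this to a nilsystem structure over the inductively given pro-nilsystem base. That is the correct architecture. Be aware, though, that your sketch compresses the most difficult part of the argument --- the classification of finite-type cocycles over nilsystems and the construction of the Lie group structure --- into a single sentence; in the actual paper of Host and Kra this occupies several sections and is substantially more intricate than your description suggests. If you ever need to write this out in full, that is where the real work lies.
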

We should like to mention that these factors have been also independently constructed by Ziegler \cite{MR2257397}.

In the non-ergodic case one could use this result on every ergodic component, but it is not clear in which sense the resulting pro-nilsystems vary measurably with the ergodic component (some work on this problem has been done by Austin \cite{MR2725895}).
At any rate, the following decomposition result of Chu, Frantzikinakis, and Host suffices for our purposes.
\begin{theorem}[{\cite[Proposition 3.1]{MR2795725}}]
\label{thm:CFH-decomposition}
Suppose that $f\in L^{\infty}(\HKZ_{l})$ for some $l$.
Then for every $\epsilon>0$ there exists a function $f_{s}\in L^{\infty}(\HKZ_{l}(X),\mu)$ such that $\|f_{s}\|_{\infty} \leq \|f\|_{\infty}$ and the following statements hold.
\begin{enumerate}
\item $\|f-f_{s}\|_{1} < \epsilon$ and
\item for every $x\in X$ the sequence $(f_{s}(T^{n}x))_{n}$ is an $l$-step nilsequence.
\end{enumerate}
\end{theorem}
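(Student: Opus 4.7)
\medskip

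\noindent\textbf{Proof plan.} The strategy is to use Theorem~\ref{thm:HK-structure} to realize an $L^{1}$-approximation of $f$ as the pullback of a function on a finite-step nilsystem, and then to replace that function by a continuous one so that the orbit becomes a basic nilsequence at every single point. First I would treat the ergodic case in detail; the non-ergodic case then follows by integrating over the ergodic decomposition, with the only delicate point being measurability of the choices in the ergodic parameter.

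Assume first that $(X,\mu,T)$ is ergodic. By Theorem~\ref{thm:HK-structure} the factor $\HKZ_{l}(X)$ is measurably isomorphic to an inverse limit of $l$-step nilsystems $(G_{k}/\Gamma_{k},a_{k})$ with factor maps $\pi_{k}\from X\to G_{k}/\Gamma_{k}$. By the martingale convergence theorem the conditional expectations $\E(f\mid\pi_{k}\inv\mathcal{B}(G_{k}/\Gamma_{k}))$ converge to $f$ in $L^{1}$ as $k\to\infty$, so for $k$ large enough we can write this expectation as $g\circ\pi_{k}$ with $g\in L^{\infty}(G_{k}/\Gamma_{k})$, $\|g\|_{\infty}\leq\|f\|_{\infty}$, and $\|f-g\circ\pi_{k}\|_{1}<\epsilon/2$. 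Density of $C(G_{k}/\Gamma_{k})$ in $L^{1}$ (together with a truncation that does not increase the sup-norm) yields $F\in C(G_{k}/\Gamma_{k})$ with $\|F\|_{\infty}\leq\|g\|_{\infty}$ and $\|g-F\|_{L^{1}(G_{k}/\Gamma_{k})}<\epsilon/2$, so that $\|f-F\circ\pi_{k}\|_{1}<\epsilon$.

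To upgrade this from an almost-everywhere to a pointwise statement, I would replace the a.e.\ defined map $\pi_{k}$ by a genuinely $T$-equivariant measurable map $\tilde\pi_{k}\from X\to G_{k}/\Gamma_{k}$ agreeing with $\pi_{k}$ on a $T$-invariant conull set. This is routine: the set $X_{0}=\{x : \pi_{k}(T^{n}x)=a_{k}^{n}\pi_{k}(x)\text{ for all }n\in\Z\}$ is $T$-invariant and conull, so one defines $\tilde\pi_{k}$ to be $\pi_{k}$ on $X_{0}$ and, on each orbit in $X\setminus X_{0}$, one picks a base point, sends it to an arbitrary element of $G_{k}/\Gamma_{k}$, and propagates by the rotation. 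Setting $f_{s}:=F\circ\tilde\pi_{k}$ gives $f_{s}\in L^{\infty}(\HKZ_{l}(X))$, $\|f_{s}\|_{\infty}\leq\|f\|_{\infty}$, $\|f-f_{s}\|_{1}<\epsilon$, and for every $x\in X$ the sequence
\[
f_{s}(T^{n}x) \;=\; F(a_{k}^{n}\tilde\pi_{k}(x)), \qquad n\in\Z,
\]
is a basic $l$-step nilsequence.

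For general (not necessarily ergodic) $(X,\mu,T)$, the plan is to apply the above construction fiberwise over the ergodic decomposition $\mu=\int\m_{x}\dif\mu(x)$. The main obstacle — and the place where the argument becomes technical — is that the nilsystems $(G_{k,x}/\Gamma_{k,x},a_{k,x})$ produced on each ergodic component, as well as the factor maps and the choice of continuous function $F_{x}$, depend on the component and must be chosen in a measurable way so that $f_{s}$ is a well-defined measurable function on $X$. I would handle this by using a measurable version of the Host--Kra structure theorem along the ergodic decomposition (as developed e.g.\ by Austin \cite{MR2725895}), which provides a measurable field of nilsystems over the ergodic components; one then picks a common level $k$ and a common continuous function $F$ on the resulting measurable bundle of nilmanifolds using Lusin-type arguments and a finite covering of the space of nilmanifolds of bounded complexity, so that $\|f-f_{s}\|_{1}=\int\|f-f_{s}\|_{L^{1}(\m_{x})}\dif\mu(x)<\epsilon$ and at every point the orbit of $f_{s}$ is a basic, hence $l$-step, nilsequence. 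Everything else (the $L^{\infty}$ bound, the $L^{1}$ bound, and measurability with respect to $\HKZ_{l}$) is inherited from the ergodic case by integration.
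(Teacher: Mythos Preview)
The paper does not prove this statement; it quotes it from Chu--Frantzikinakis--Host \cite[Proposition 3.1]{MR2795725}. In fact the paragraph immediately preceding the theorem explains \emph{why} it is being cited rather than derived: ``In the non-ergodic case one could use this result on every ergodic component, but it is not clear in which sense the resulting pro-nilsystems vary measurably with the ergodic component (some work on this problem has been done by Austin \cite{MR2725895}). At any rate, the following decomposition result of Chu, Frantzikinakis, and Host suffices for our purposes.''

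Your ergodic case is correct and is the standard argument. (The orbit-representative construction on $X\setminus X_0$ is unnecessarily delicate; simply sending all of $X\setminus X_0$ to a fixed point of $G_k/\Gamma_k$ already makes $\tilde\pi_k$ measurable and $f_s(T^n x)$ constant---hence trivially a nilsequence---for those $x$.)

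Your non-ergodic case, however, is precisely the gap the citation is meant to bridge. You propose to use ``a measurable version of the Host--Kra structure theorem along the ergodic decomposition (as developed e.g.\ by Austin \cite{MR2725895})'', but that is exactly the point the paper flags as unclear in the sentence introducing this theorem. The sketch that follows---Lusin-type arguments, a ``finite covering of the space of nilmanifolds of bounded complexity'', a common continuous $F$ on a measurable bundle---is not a proof: there is no evident compact moduli space of $l$-step nilmanifolds to cover, and measurably selecting the continuous function across varying nilmanifolds is the whole difficulty. The actual Chu--Frantzikinakis--Host argument does not go through a measurable structure theorem; if you want a self-contained proof you should consult \cite{MR2795725} for their construction rather than attempt to apply Theorem~\ref{thm:HK-structure} fiberwise.
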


Since the uniformity seminorms are bounded by the supremum norm and invariant under $T$ and complex conjugation, they can also be calculated using smoothed averages
\begin{equation}
\label{eq:uniformity-seminorms-smoothed}
\|f\|_{U^{l+1}(X)}^{2^{l+1}}=\lim_{K\to\infty}\frac{1}{K^{2}}\sum_{k=-K}^K (K-|k|)\|T^k f \bar{f}\|_{U^{l}(X)}^{2^{l}}.
\end{equation}
This will allow us to use the following quantitative version of the classical van der Corput estimate (the proof is included for completeness).
Here $o_{K}(1)$ stands for a quantity that goes to zero for each fixed $K$ as $N\to\infty$.
\begin{lemma}[Van der Corput]\label{VdC}
\index{van der Corput lemma}
Let $(\Fo_{N})_{N}$ be a F\o{}lner sequence in $\Z$ and $(u_n)_{n\in\Z}$ be a sequence in a Hilbert space with norm bounded by $C$.
Then for every $K>0$ we have
\[
\Big\| \aveFn u_{n} \Big\|^{2} \leq \Big| \frac{2}{K^{2}} \sum_{k=-K}^{K}(K-|k|) \aveFn \langle u_{n},u_{n+k} \rangle \Big| + C^{2} o_{K}(1).
\]
\end{lemma}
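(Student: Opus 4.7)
The plan is to prove this via the standard smoothing-plus-Cauchy-Schwarz maneuver, using the F\o{}lner property twice to pass between shifted averages.

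First I would replace the ``pointwise'' average $\aveFn u_{n}$ by a doubly-averaged version $\aveFn \frac{1}{K}\sum_{k=0}^{K-1} u_{n+k}$. The two differ by $\frac{1}{K}\sum_{k=0}^{K-1}\big(\aveFn u_{n} - \aveFn u_{n+k}\big)$, and each summand has norm at most $2C |\Fo_{N}\triangle(\Fo_{N}-k)|/|\Fo_{N}|$, which by the F\o{}lner property tends to zero as $N\to\infty$ for each fixed $k \in \{0,\dots,K-1\}$. Averaging over $k$ gives a total error that is $o_{K}(1)$ in the sense of the statement. Using the identity $\|a\|^{2}-\|b\|^{2} = \langle a-b,a+b\rangle$ together with the uniform bound $C$, the squared norms also differ by $o_{K}(1)C$.

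Next I would apply the Cauchy-Schwarz inequality for the probability average $\aveFn$:
\[
\Big\|\aveFn \frac{1}{K}\sum_{k=0}^{K-1} u_{n+k}\Big\|^{2}
\;\leq\; \aveFn \Big\|\frac{1}{K}\sum_{k=0}^{K-1} u_{n+k}\Big\|^{2}
= \frac{1}{K^{2}}\sum_{k,k'=0}^{K-1}\aveFn \langle u_{n+k},u_{n+k'}\rangle.
\]
For each fixed pair $(k,k')$ another invocation of the F\o{}lner property (applied to the bounded scalar sequence $n\mapsto \langle u_{n},u_{n+k'-k}\rangle$) shows that
\[
\aveFn \langle u_{n+k},u_{n+k'}\rangle = \aveFn \langle u_{n},u_{n+k'-k}\rangle + o_{K}(1)C^{2}.
\]
Reindexing by $j:=k'-k$, the diagonal count yields $\sum_{j=-(K-1)}^{K-1}(K-|j|)\aveFn\langle u_{n},u_{n+j}\rangle$, and extending the sum to $|j|\leq K$ changes nothing since the $j=\pm K$ weights vanish.

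Combining the three steps and taking absolute values on the right gives the stated bound (with an overall $o_{K}(1)C^{2}$ error absorbing all the F\o{}lner errors generated above). The only mildly delicate point is bookkeeping the factor of $2$ in front of the kernel: it arises naturally by symmetrizing $\langle u_{n},u_{n+j}\rangle + \langle u_{n+j},u_{n}\rangle = 2\operatorname{Re}\langle u_{n},u_{n+j}\rangle$ when collapsing the double sum, or alternatively by allowing a factor of $2$ worth of slack in the Cauchy--Schwarz step, which is harmless since we only need an inequality. I do not expect any serious obstacle; this is a bookkeeping argument whose only substantive inputs are Cauchy-Schwarz and the defining property of F\o{}lner sequences.
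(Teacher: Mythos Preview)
Your proposal is correct and follows essentially the same route as the paper: replace the average by its $K$-smoothed version using the F\o{}lner property, apply Cauchy--Schwarz to the outer average, expand the square, and shift again via F\o{}lner to produce the Fej\'er kernel $\sum_{j}(K-|j|)\,\aveFn\langle u_n,u_{n+j}\rangle$.

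One remark on the factor $2$, about which you are slightly confused. Neither of your proposed explanations is the actual source. In the paper the $2$ comes from the crude estimate $\|a+b\|^{2}\leq 2\|a\|^{2}+2\|b\|^{2}$ applied with $a=\aveFn\frac{1}{K}\sum_{k}u_{n+k}$ and $b$ the $o_{K}(1)$ error from the first F\o{}lner step. Your polarization identity $\|a\|^{2}-\|b\|^{2}=\langle a-b,a+b\rangle$ is in fact sharper and yields the inequality \emph{without} the factor $2$; since $|S|\leq 2|S|$, this of course implies the stated bound. So there is nothing to fix---your argument simply proves a marginally stronger inequality than the one recorded in the lemma.
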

\begin{proof}
Let $K>0$ be given.
By the definition of a F\o{}lner sequence we have
\[
\aveFn u_{n}
=
\aveFn \frac{1}{K} \sum_{k=1}^K u_{k+n} + C o_{K}(1).
\]
By Hölder's inequality
\begin{multline*}
\Big\| \aveFn \frac{1}{K} \sum_{k=1}^K u_{k+n} \Big\|^{2}
\leq
\aveFn \Big\|\frac{1}{K} \sum_{k=1}^K u_{k+n}\Big\|^{2}\\
=
\frac{1}{K^{2}} \sum_{k=-K}^{K}(K-|k|) \aveFn \langle u_{n},u_{n+k} \rangle + C^{2} o_{K}(1),
\end{multline*}
and the claim follows using the
estimate $(a+b)^{2}\leq 2a^{2} + 2b^{2}$.
\end{proof}

\section{Wiener-Wintner theorem for nilsequences}
\index{Wiener-Wintner theorem}
The classical Wiener-Wintner theorem \cite{MR0004098} says that for the standard F\o{}lner sequence $\Fo_{N} = [1,N]$ on the amenable group $\Z$, every invertible ergodic measure-preserving transformation $T:X\to X$, and every $f\in L^1(X,\mu)$ there exists a subset $X'\subset X$ with full measure such that the weighted averages
\begin{equation}\label{ave-ww}
\aveFN f(T^nx) \lambda^n
\end{equation}
converge as $N\to\infty$ for every $x\in X'$ and every $\lambda$ in the unit circle $\T$.

A result of Lesigne \cites{MR1074316,MR1257033} shows that the weights $(\lambda^n)$ above can be replaced by polynomial sequences of the form $(\lambda_1^{p_1(n)}\cdots \lambda_k^{p_k(n)})$, $\lambda_j\in \T$, $p_j\in \Z[X]$ (or, equivalently, $(e^{2\pi i p(n)})$, $p\in\R[X]$).
More recently, Host and Kra \cite[Theorem 2.22]{MR2544760} showed that this can be enlarged to the class of nilsequences.

In a different direction, Bourgain's uniform Wiener-Wintner theorem \cite{MR1037434} asserts convergence of the averages \eqref{ave-ww} to zero for $f$ orthogonal to the Kronecker factor \emph{uniformly in $\lambda$}, cf.\ \textcite{MR1995517}.
A joint extension of this result and Lesigne's polynomial Wiener-Wintner theorem has been obtained by Frantzikinakis \cite{MR2246591}.
In the same spirit, we prove a uniform version of the Wiener-Wintner theorem for nilsequences.
Our result applies to arbitrary tempered F\o{}lner sequences $(\Fo_{N})$ in $\Z$.
\begin{theorem}[Uniform Wiener-Wintner for nilsequences]
\label{thm:uniform-convergence-to-zero}
\index{Wiener-Wintner theorem!for nilsequences}
Assume that $(X,\mu,T)$ is ergodic and let $f\in L^{1}(X)$ be such that $\E(f|\HKZ_{l}(X))=0$.
Let further $G/\Gamma$ be a nilmanifold with a $\Gamma$-rational filtration $\Gb$ on $G$ of length $l$.
Then for a.e. $x\in X$ we have
\begin{equation}
\label{eq:ave-uniform}
\lim_{N\to\infty} \sup_{g\in\poly, F\in W^{k,2^{l}}(G/\Gamma)}
\|F\|_{W^{k,2^{l}}(G/\Gamma)}\inv
\Big| \aveFn f(T^{n}x)F(g(n)\Gamma) \Big| = 0,
\end{equation}
where $k = \sum_{r=1}^{l}(d_{r}-d_{r+1})\binom{l}{r-1}$ with $d_{i}=\dim G_{i}$.

If in addition $(X,T)$ is a uniquely ergodic topological dynamical system and $f\in C(X)\cap \HKZ_l(X)^\bot$, then we have
\begin{equation}
\label{eq:ave-uniform-in-X}
\lim_{N\to\infty} \sup_{g\in\poly, F\in W^{k,2^{l}}(G/\Gamma), x\in X}
\|F\|_{W^{k,2^{l}}(G/\Gamma)}\inv
\Big| \aveFn f(T^{n}x) F(g(n)\Gamma) \Big| = 0.
\end{equation}
\end{theorem}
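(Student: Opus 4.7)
The strategy is to induct on the length $l$ of the filtration $\Gb$, proving a quantitative sharpening of the a.e.\ statement: there is a constant $C_{l}$ depending only on $\Gb$ such that for every $f \in L^{\infty}(X)$ and a.e.\ $x \in X$,
\[
\limsup_{N\to\infty} \sup_{g,F} \|F\|_{W^{k,2^{l}}(G/\Gamma)}^{-1} \Big|\aveFn f(T^{n}x) F(g(n)\Gamma)\Big| \leq C_{l}\|f\|_{U^{l+1}(X)}.
\]
Since $\E(f|\HKZ_{l}(X))=0$ is equivalent to $\|f\|_{U^{l+1}}=0$ (and since $\HKZ_{j}\subset\HKZ_{l}$ for $j\leq l$, the hypothesis descends to lower steps), this gives \eqref{eq:ave-uniform}. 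The base case $l=1$ collapses to classical Fourier analysis on the toral nilmanifold $G/\Gamma$ (where $G$ is abelian) combined with Bourgain's uniform Wiener--Wintner theorem for the characters that appear; the $\ell^{1}$-summability of Fourier coefficients is supplied by Lemma~\ref{lem:estimate-vertical-fourier-series} in its degenerate ($l=1$) form.

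For the inductive step, decompose $F=\sum_{\chi}F_{\chi}$ into vertical characters with respect to $G_{l}/\Gamma_{l}$; Lemma~\ref{lem:estimate-vertical-fourier-series} controls $\sum_{\chi}\|F_{\chi}\|_{W^{k,2^{l}}}$ by a norm of $F$ with $d_{l}$ more derivatives. The trivial character factors through the step-$(l-1)$ nilmanifold $G/\Gamma G_{l}$ and is handled directly by the inductive hypothesis. For a non-trivial vertical character, apply Lemma~\ref{VdC} to $u_{n}=f(T^{n}x)F(g(n)\Gamma)$; the inner averages then take the form
\[
\aveFn h_{k}(T^{n}x) \cube{F}{k}\bigl(\cube{g}{k}(n)\tilde\Gamma\bigr),\qquad h_{k}(y):=f(y)\,\overline{f(T^{k}y)},
\]
and by Lemma~\ref{lem:poly-cube} together with the $G_{l}^{\Delta}$-invariance of $\cube{F}{k}$, the polynomial-nilsequence factor is a step-$(l-1)$ nilsequence on the reduced cube nilmanifold $G_{1}^{\square}/(\Gamma^{\square}G_{l}^{\Delta})$, to which the inductive hypothesis applies with the Sobolev control $\|\cube{F}{k}\|_{W^{k_{l-1},2^{l-1}}}\lesssim\|F\|_{W^{k_{l-1},2^{l}}}^{2}$ provided by Lemma~\ref{lem:sobolev-norm-of-tensor-product}.

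Combining these ingredients and substituting back into Lemma~\ref{VdC} yields
\[
\limsup_{N\to\infty}\sup_{g,F}\|F\|_{W^{k,2^{l}}}^{-2}\Big|\aveFn u_{n}\Big|^{2}\lesssim \frac{1}{K^{2}}\sum_{k=-K}^{K}(K-|k|)\,C_{l-1}\|h_{k}\|_{U^{l}} + o_{K}(1).
\]
Hölder's inequality against the probability measure $K^{-2}(K-|k|)$ on $\{-K,\dots,K\}$ gives the upper bound by the $2^{l}$-th power of $K^{-2}\sum(K-|k|)\|h_{k}\|_{U^{l}}^{2^{l}}$, and letting $K\to\infty$ via \eqref{eq:uniformity-seminorms-smoothed} produces $\|f\|_{U^{l+1}}^{2}$, closing the induction. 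The exceptional null set is taken to be the countable union over $k\in\Z$ of the null sets furnished by the inductive hypothesis. For the uniform-in-$x$ statement~\eqref{eq:ave-uniform-in-X}, the same induction runs unchanged, using at the base case the uniform Wiener--Wintner for continuous functions on uniquely ergodic systems and noting that $h_{k}\in C(X)$ whenever $f\in C(X)$. The principal technical obstacle is the bookkeeping of the Sobolev index: one must verify that the recursion $k_{l}=2k_{l-1}+d_{l}+\text{(shift from }\Gb\text{ to }\tilde{\Gb}\text{)}$ (where the cube-filtration dimensions $\tilde d_{i}=d_{i}+d_{i+1}-d_{l}$ feed in through the inductive application) telescopes to the explicit formula $k=\sum_{r=1}^{l}(d_{r}-d_{r+1})\binom{l}{r-1}$ and that the Sobolev exponent doubles at each step from $2^{l-1}$ to $2^{l}$ as mandated by Lemma~\ref{lem:sobolev-norm-of-tensor-product}.
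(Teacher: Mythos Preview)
Your strategy is exactly the paper's: prove the quantitative estimate (the paper's Theorem~\ref{thm:WW-unif}) by induction on $l$ via vertical Fourier decomposition, van der Corput, and the cube construction, then deduce~\eqref{eq:ave-uniform}. Two points need correction.

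First, the theorem is stated for $f\in L^{1}(X)$, and your argument only treats $f\in L^{\infty}(X)$. The paper handles this by truncating $f$ to bounded $f_{j}\to f$ in $L^{1}$ with $\E(f_{j}\mid\HKZ_{l})=0$, invoking the ordinary Sobolev embedding $\|F\|_{\infty}\lesssim\|F\|_{W^{k,2^{l}}}$ to bound the difference of averages by $\aveFn|f-f_{j}|(T^{n}x)$, and using genericity of $x$ for $|f-f_{j}|$. You should add this step.

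Second, your Sobolev-index recursion is misstated: there is no factor $2k_{l-1}$. The inductive hypothesis is applied not on $G/\Gamma$ but on the reduced cube nilmanifold with filtration $\tilde\Gb$, whose dimension differences satisfy $\tilde d_{r}-\tilde d_{r+1}=(d_{r}-d_{r+1})+(d_{r+1}-d_{r+2})$; plugging this into the step-$(l-1)$ formula gives $\tilde k=\sum_{r=1}^{l}(d_{r}-d_{r+1})\binom{l}{r-1}-d_{l}=k-d_{l}$, and the missing $d_{l}$ derivatives are then supplied by Lemma~\ref{lem:estimate-vertical-fourier-series} when summing the vertical Fourier series. (You also silently need Lemma~\ref{lem:sobolev-embedding} to absorb the $\|F\|_{\infty}^{2}o_{K}(1)$ error from Lemma~\ref{VdC} into $\|F\|_{W^{\tilde k,2^{l}}}^{2}$, since at that stage $F$ is a vertical character and only $\tilde k=k-d_{l}$ derivatives are available.) The Sobolev exponent doubling from $2^{l-1}$ to $2^{l}$ via Lemma~\ref{lem:sobolev-norm-of-tensor-product} is correct as you state it.
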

In view of a counterexample in Section~\ref{sec:counterexample} the Sobolev norm cannot be replaced by the $L^{\infty}$ norm.
On the other hand, we have not investigated whether the above order $k$ is optimal and believe that it is not.

The conclusion \eqref{eq:ave-uniform} differs from the uniform polynomial Wiener-Wintner theorem of Frantzikinakis \cite{MR2246591} in several aspects.
First, our class of weights is considerably more general, comprising all nilsequences rather than polynomial phases (a polynomial phase $f(p(n)\Z)$, $f\in C(\R/\Z)$, $p\in\R[X]$ is also a nilsequence of step $\deg p$ with the filtration $\R = \dots = \R \geq \{0\}$ of length $\deg p$ and cocompact lattice $\Z$).
Also, our result does not require total ergodicity, an assumption that cannot be omitted in the result of Frantzikinakis.
The price for these improvements is that we have to assume the function to be orthogonal to the Host-Kra factor and not only to the Abramov factor of order $l$ (i.e.\ the factor generated by the generalized eigenfunctions of order $\leq l$).

The conclusion \eqref{eq:ave-uniform-in-X} generalizes a result of Assani \cite[Theorem 2.10]{MR1995517}, which corresponds essentially to the case $l=1$.
Note that without the orthogonality assumption on the function, everywhere convergence can fail even for averages \eqref{ave-ww} for some $\lambda\in \T$.
For more information on this phenomenon we refer to \textcite{MR1271545}, \textcite{MR1995517}, and \textcite{MR2480747}.

\subsection{The uniformity seminorm estimate}\label{sec:estimate}
The general strategy of estimation of averages in \eqref{eq:ave-uniform} is to induct on the filtration length $l$.
In the induction step we decompose $F$ into a vertical Fourier series and use the quantitative van der Corput estimate.
The resulting terms involve nilsequences of lower step that fall under the induction hypothesis.

For inductive purposes it will be convenient to work with the following version of Theorem~\ref{thm:uniform-convergence-to-zero}.
\begin{theorem}[Uniformity seminorms control averages uniformly]\label{thm:WW-unif}
Assume that $(X,\mu,T)$ is ergodic.
Then for every $f\in L^\infty(X)$ and every point $x$ that is fully generic for $f$ with respect to $(\Fo_N)$ the following holds.
For every $l\in\N$ and $\epsilon>0$ there exists $N_{0}$ such that for every nilmanifold $G/\Gamma$ with a $\Gamma$-rational filtration $\Gb$ on $G$ of length $l$, every smooth function $F$ on $G/\Gamma$, and every $g\in\poly$ we have
\begin{equation}\label{eq:control-by-uniformity-norms}
\forall N\geq N_{0}
\quad
\Big| \aveFn f(T^nx) F(g(n)\Gamma) \Big|
\lesssim \|F\|_{W^{k,2^l}(G/\Gamma)} (\|f\|_{U^{l+1}(X)} + \epsilon),
\end{equation}
where $k=\sum_{r=1}^{l}(d_{r}-d_{r+1})\binom{l}{r-1}$ and the implied constant depends only on the nilmanifold $G/\Gamma$, filtration $\Gb$ and the Mal'cev basis that is implicit in the definition of $\Gamma$-rationality.

If in addition $(X,T)$ is uniquely ergodic and $f\in C(X)$, then the conclusion holds for every $x\in X$, and $N_{0}$ can be chosen independently of $x$.
\end{theorem}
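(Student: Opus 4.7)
I would prove Theorem~\ref{thm:WW-unif} by induction on $l$, the length of $\Gb$. The base case $l=0$ forces $G$ to be trivial, so $g\equiv \id$ and $F$ is constant; the bound then reduces to $|\aveFn f(T^nx)|\cdot |F|\to |F|\cdot|\int f\,d\mu|=|F|\cdot\|f\|_{U^1}$, which holds by genericity of $x$.

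For the inductive step $l-1\Rightarrow l$, I decompose $F=\sum_\chi F_\chi$ into its vertical Fourier series (Definition~\ref{def:vertical-Fourier-series}) and treat each vertical character separately. The zero-frequency component $F_0$ is $G_l$-invariant and hence descends to a function on the quotient nilmanifold $G/\Gamma G_l$ equipped with the filtration $\Gb[/l]$ of length $l-1$, so the corresponding average is controlled by $\|F_0\|_{W^{k_{l-1},2^{l-1}}}(\|f\|_{U^l}+\epsilon)$ via the inductive hypothesis, using the monotonicity $\|f\|_{U^l}\leq \|f\|_{U^{l+1}}$.

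For a nontrivial character $F_\chi$, I apply the quantitative van der Corput estimate (Lemma~\ref{VdC}) with truncation $K$ to the sequence $a_n:=f(T^nx)F_\chi(g(n)\Gamma)$, obtaining
\[
\Big|\aveFn a_n\Big|^2
\lesssim
\Big|\frac{2}{K^2}\sum_{|k|\leq K}(K-|k|)\aveFn (f\cdot\overline{T^k f})(T^nx)\cdot\overline{\cube{F_\chi}{k}(\cube{g}{k}(n))}\Big|
+\|F_\chi\|_\infty^2 o_K(1).
\]
By Lemma~\ref{lem:poly-cube} the sequence $\cube{g}{k}$ from \eqref{eq:g-tilde} is polynomial with respect to the cube prefiltration $\Gb^\square$, and the function $\cube{F_\chi}{k}$ descends to a smooth function on a nilmanifold equipped with a rational filtration of length $l-1$, as recorded in the discussion after Lemma~\ref{lem:sobolev-norm-of-tensor-product}. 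Since $f\cdot\overline{T^k f}$ lies in the closed convolution-invariant algebra generated by $f$, the point $x$ is also fully generic for this product, so the inductive hypothesis applies to each inner average. Combining this bound with the Sobolev estimate of Lemma~\ref{lem:sobolev-norm-of-tensor-product}, applying Jensen's inequality with the convex function $t\mapsto t^{2^l}$ to the Ces\`aro-weighted $k$-average, and invoking the smoothed recursion formula \eqref{eq:uniformity-seminorms-smoothed} shows that $\frac{1}{K^2}\sum_{|k|\leq K}(K-|k|)\|f\cdot\overline{T^kf}\|_{U^l}$ is dominated by a quantity converging to $\|f\|_{U^{l+1}}^2$ as $K\to\infty$, yielding the desired estimate for each nonzero $\chi$.

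Finally I sum the individual character estimates via the Bessel-type inequality of Lemma~\ref{lem:estimate-vertical-fourier-series}, which costs $d_l$ additional Sobolev derivatives. The uniform version in the uniquely ergodic case follows because every convergence used (full genericity, the pointwise ergodic theorem applied to continuous functions) is uniform in $x\in X$ in that setting. The main technical obstacle is the precise tracking of the Sobolev exponents through the induction: at each step Lemma~\ref{lem:sobolev-norm-of-tensor-product} doubles the Lebesgue exponent (from $2^{l-1}$ to $2^l$) while squaring the Sobolev seminorm, and Lemma~\ref{lem:estimate-vertical-fourier-series} adds $d_l$ derivatives, and one has to verify via the Pascal identity $\binom{l}{r-1}=\binom{l-1}{r-1}+\binom{l-1}{r-2}$ that these combine to produce the claimed formula $k=\sum_{r=1}^l(d_r-d_{r+1})\binom{l}{r-1}$.
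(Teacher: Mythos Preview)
Your approach matches the paper's: induction on $l$, vertical Fourier decomposition, van der Corput combined with the cube construction for each vertical character, Jensen (the paper says Cauchy--Schwarz, same thing here) to pass to the $2^{l}$-th powers, the smoothed recursion \eqref{eq:uniformity-seminorms-smoothed}, and resummation via Lemma~\ref{lem:estimate-vertical-fourier-series}. The paper does not split off the trivial frequency (it runs every vertical character through the cube construction), but your shortcut there is harmless.

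There is one genuine omission. After van der Corput, the residual term is $\|F_\chi\|_\infty^{2}\,o_{K}(1)$, and to absorb it into the target bound before sending $K\to\infty$ you need $\|F_\chi\|_\infty\lesssim\|F_\chi\|_{W^{\tilde k,2^{l}}}$. The ordinary Sobolev embedding on $G/\Gamma$ costs $d$ derivatives, and already for $l=1$ one has $\tilde k=0<d$, so this would wreck the derivative count you so carefully tracked via Pascal's identity. The paper closes this gap with Lemma~\ref{lem:sobolev-embedding}, a refined embedding valid for \emph{vertical characters} that needs only $d-d_{l}$ derivatives; since $\tilde k=k-d_{l}\geq d-d_{l}$, this is exactly enough. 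Your argument is incomplete without invoking this lemma.
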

Note that the full measure set in this theorem is explicitely identified as the set of fully generic points for $f$.

Example~\ref{ex:assani} below shows that there is in general no constant $C$ such that the estimate
\begin{equation}\label{eq:assani}
\limsup_{N\to\infty}\Big|\aveN f(T^nx)F(S^ny)\Big|\leq C \|F\|_\infty \|f\|_{U^2(X)}
\end{equation}
holds for every $1$-step basic nilsequence $F(S^ny)$, even without uniformity.
Thus one cannot expect to replace the Sobolev norm by $\|F\|_\infty$ in Theorem~\ref{thm:WW-unif}.

\begin{remark}\label{remark-hk-ww}
Quantifying the proof of Host and Kra \cite[Proposition 5.6]{MR2150389} using standard Fourier analysis on $\T^{d(2^l-1)}$, one obtains the non-uniform upper bound
\[
\limsup_{N} \Big| \aveFn f(T^{n}x) F(g(n)\Gamma) \Big| \lesssim \|F\|_{W^{d(2^l-1),2}(G/\Gamma)} \|f\|_{U^{l+1}(X)}
\]
for ``linear'' sequences $g(n)=h^{n}h'$, where the implied constant depends on geometric data like the choice of a decomposition of identity on the pointed cube space $(G/\Gamma)^{[k]}_*=(G/\Gamma)^{2^l-1}$.
Note also that Host and Kra worked with intervals with growing length instead of tempered F\o{}lner sequences in $\Z$.
\end{remark}

\begin{proof}[Proof of Theorem~\ref{thm:WW-unif}]
We argue by induction on $l$.
In the case $l=0$ the group $G$ is trivial, so $\|F\|_{\infty} = \|F\|_{W^{0,1}(G/\Gamma)}$ and the claim follows by the definition of generic points.
We now assume that the claim holds for $l-1$ and show that it holds for $l$.
Write $a_n:=F(g(n)\Gamma)$.

Assume first that $F$ is a vertical character and recall the notation from Section~\ref{sec:vertical}.
Let $\delta>0$ be chosen later.
For the dimensions $(\tilde d_{i})$ of the groups in the filtration $\tilde\Gb$
we have the relations
$\tilde d_{i}-\tilde d_{i+1}=(d_{i}-d_{i+1})+(d_{i+1}-d_{i+2})$, $i=1,\dots,l-1$.
By the induction hypothesis applied to $\tilde{G}/\tilde{\Gamma}$ with the induced $\tilde\Gamma$-rational filtration and Lemma~\ref{lem:sobolev-norm-of-tensor-product} we have
\begin{align*}
\Big| \aveFn (T^kf \bar{f})(T^nx) a_{n+k}{\ol{a_n}}\Big|
&\lesssim \|\tilde{F}_{k}\|_{W^{\tilde k,2^{l-1}}} (\|T^kf \bar{f}\|_{U^{l}(X)} + \delta)\\
&\lesssim \|F\|_{W^{\tilde k,2^{l}}}^2 (\|T^kf \bar{f}\|_{U^{l}(X)} + \delta)
\end{align*}
with $\tilde k = \sum_{r=1}^{l-1}(\tilde d_{r}-\tilde d_{r+1}) \binom{l-1}{r-1} = \sum_{r=1}^{l}(d_{r}-d_{r+1}) \binom{l}{r-1} - d_{l}$
for any integer $k$ provided that $N$ is large enough depending on $l$, $k$, $\delta$ and $x$.
Let $K$ be chosen later.
The van der Corput Lemma~\ref{VdC} implies
\begin{align*}
\Big| \aveFn f(T^nx) a_n\Big|^2
&\leq
\frac2{K^{2}} \sum_{k=-K}^{K}(K-|k|)
\Big| \aveFn (T^kf \bar{f})(T^nx) a_{n+k}{\ol{a_n}}\Big|\\
&\qquad +\|F\|_{\infty}^{2}\|f\|_{\infty}^{2}o_{K}(1)\\
&\lesssim
\frac1{K^{2}} \sum_{k=-K}^{K}(K-|k|) \|F\|_{W^{\tilde k,2^{l}}}^2 (\|T^kf \bar{f}\|_{U^{l}(X)} + \delta)\\
&\qquad +\|F\|_{\infty}^{2} o_{K}(1)
\end{align*}
provided that $N$ is large enough depending on $l$, $K$, $\delta$ and $x$.
By Lemma~\ref{lem:sobolev-embedding} this is dominated by
\[
\|F\|_{W^{\tilde k,2^{l}}}^2 \left(\frac1{K^{2}} \sum_{k=-K}^{K}(K-|k|) \|T^kf \bar{f}\|_{U^{l}(X)} + \delta + o_{K}(1)\right).
\]
By the Cauchy-Schwarz inequality this is dominated by
\[
\|F\|_{W^{\tilde k,2^{l}}}^2 \Big( \Big(\frac1{K^{2}} \sum_{k=-K}^{K}(K-|k|) \|T^kf \bar{f}\|_{U^{l}(X)}^{2^{l}}\Big)^{1/2^{l}} + \delta +o_{K}(1) \Big)=:I.
\]
By \eqref{eq:uniformity-seminorms-smoothed} for sufficiently large $K = K(f,\delta)$ the above average over $k$ approximates $\|f\|_{U^{l+1}(X)}^{2}$ to within $\delta$, so we have
\begin{align*}
I &\lesssim
\|F\|_{W^{\tilde k,2^{l}}}^2 (\|f\|_{U^{l+1}(X)}^{2} + 2\delta +o_{K}(1)).
\end{align*}
Taking $\delta=\delta(\epsilon)$ sufficiently small and $N\geq N_{0}(l,f,\epsilon,x)$ sufficiently large we obtain
\[
\Big| \aveFn f(T^nx) a_n\Big|
\lesssim
\|F\|_{W^{\tilde k,2^{l}}} (\|f\|_{U^{l+1}(X)} + \epsilon).
\]
Note that $N_{0}$ does not depend on $F$.

Let now $(a_n)=(F(g(n)\Gamma))$ be an arbitrary $l$-step basic nilsequence on $G/\Gamma$.
Let $F=\sum_\chi F_\chi$ be the vertical Fourier series.
By the above investigation of the vertical character case, since the vertical Fourier series of $F$ converges absolutely and by Lemma~\ref{lem:estimate-vertical-fourier-series} we get
\begin{align*}
\Big| \aveFn f(T^n x) F(g(n)\Gamma) \Big|
&\lesssim
\sum_\chi \|F_\chi\|_{W^{\tilde k,2^l}} (\|f\|_{U^{l+1}(X)} + \epsilon)\\
&\lesssim
\|F\|_{W^{\tilde k+d_{l},2^{l}}} (\|f\|_{U^{l+1}(X)} + \epsilon)
\end{align*}
for $N\geq N_{0}$ as required.

Under the additional assumptions that $(X,T)$ is uniquely ergodic and $f\in C(X)$ we obtain the additional conclusion that the estimate is uniform in $x\in X$ for $l=0$ from uniform convergence of ergodic averages $\aveFn T^{n}f$, see e.g.\ \cite[Theorem 6.19]{MR648108}.
For general $l$ it suffices to observe that in the above proof the dependence of $N_{0}$ on $x$ comes in only through the inductive hypothesis.
Also, there is no need for temperedness of $(\Fo_{N})$ in this case.
\end{proof}

\begin{proof}[Proof of Theorem~\ref{thm:uniform-convergence-to-zero}]
Let $f\in L^{1}(X)$ with $\E(f|\HKZ_{l}(X))=0$ be given.
By truncation we can approximate it by a sequence of bounded functions $(f_{j})\subset L^{\infty}(X)$ such that $f_{j}\to f$ in $L^{1}$.
Replacing each $f_{j}$ by $f_{j}-\E(f_{j}|\HKZ_{l}(X))$ we may assume that $\E(f_{j}|\HKZ_{l}(X))=0$ for every $j$.

By Theorem~\ref{thm:WW-unif} we have
\[
\lim_{N\to\infty} \sup_{g\in\poly, F\in W^{k,2^{l}}(G/\Gamma)}
\|F\|_{W^{k,2^{l}}(G/\Gamma)}\inv
\Big| \aveFn f_{j}(T^{n}x)F(g(n)\Gamma) \Big| = 0
\]
for $x$ in a set of full measure and every $j$.
By the Sobolev embedding theorem \cite[Theorem 4.12 Part I Case A]{MR2424078} we have $\|F\|_{\infty} \lesssim \|F\|_{W^{k,2^{l}}(G/\Gamma)}$ for $F\in W^{k,2^{l}}(G/\Gamma)$.
This shows that
\begin{multline*}
\sup_{g\in\poly, F\in W^{k,2^{l}}(G/\Gamma)}
\|F\|_{W^{k,2^{l}}(G/\Gamma)}\inv
\Big| \aveFn f(T^{n}x)F(g(n)\Gamma) \Big|\\
\lesssim
\aveFn |f-f_{j}|(T^{n}x)\\
+ \sup_{g\in\poly, F\in W^{k,2^{l}}(G/\Gamma)} \|F\|_{W^{k,2^{l}}(G/\Gamma)}\inv \Big| \aveFn f_{j}(T^{n}x)F(g(n)\Gamma) \Big|.
\end{multline*}
Fixing a $j$, restricting to the set of points that are generic for $|f-f_{j}|$ with respect to~$\{\Fo_N\}$ and letting $N\to\infty$ we can estimate the limit by $\|f-f_{j}\|_{1}$ pointwise on a set of full measure.
Hence the limit vanishes a.e.

Under the additional assumptions that $(X,T)$ is uniquely ergodic and $f$ is continuous the uniform convergence \eqref{eq:ave-uniform-in-X} follows directly from Theorem~\ref{thm:WW-unif}.
\end{proof}

\subsection{A counterexample}\label{sec:counterexample}
The following example shows that there is no constant $C$ such that the estimate (\ref{eq:assani})
holds for every $1$-step basic nilsequence $F(S^ny)$. Thus one cannot replace the Sobolev norm by $\|F\|_\infty$ in Theorem \ref{thm:WW-unif} even without uniformity in $F$ and $g$.
\begin{example}[I.~Assani]\label{ex:assani}
We begin as in \textcite{MR2901351} and consider an irrational rotation system $(\T,\mu,T)$ on the unit circle, $f\in C(\T)$, $x\in \T$ and define $S:=T$, $y:=x$ and $F:=\bar{f}$.
We have
\[
\limsup_{N\to\infty}\Big|\aveN f(T^nx)\bar{f}(T^nx) \Big|
= \sum_{k=-\infty}^\infty |\hat{f}(k)|^2=\|f\|_2^2.
\]
By $\|f\|_{U^2(\T)}^4=\sum_{k=-\infty}^\infty |\hat{f}(k)|^4$, the inequality \eqref{eq:assani} takes the form
\begin{equation}\label{eq:assani-2}
\|f\|_2^2
\leq C\|f\|_\infty \Big(\sum_{k=-\infty}^\infty |\hat{f}(k)|^4 \Big)^{1/4}.
\end{equation}
Let now $\{a_n\}_{n=1}^\infty\subset \R$ and consider random polynomials
\[
P_N(t,\omega) :=\sum_{n=1}^N r_n(\omega) a_n \cos(nt),
\]
where $r_n$ are the Rademacher functions taking the values $1$ and $-1$ with equal probability. By \textcite[pp.\ 67--69]{MR833073}, there is an absolute constant $D$ such that for every $N$
\[
\mathbb{P}\Big\{\omega:\, \|P_N(\cdot,\omega)\|_\infty \geq D\Big(\sum_{n=1}^N a_n^2 \log N \Big)^{1/2}\Big\}\leq \frac{1}{N^2}.
\]
Therefore for every $N\in\N$ there is $\omega$ (or a choice of signs $+$ or $-$) so that
\[
\|P_N(\cdot,\omega)\|_\infty\leq D\Big(\sum_{n=1}^N a_n^2 \log N \Big)^{1/2}.
\]

Assume now that inequality \eqref{eq:assani-2} holds for some constant $C$ and every $f\in C(\T)$. Then by the above for $f=P_N(\cdot,\omega)$ we have
\[
\sum_{n=1}^N a_n^2
\leq CD(\log N)^{1/2}
\Big(\sum_{n=1}^N a_n^2\Big)^{1/2} \Big(\sum_{n=1}^N a_n^4\Big)^{1/4}
\]
and hence
\[
\sum_{n=1}^N a_n^2 \leq (CD)^2 \|(a_n)\|_{l^4} \log N.
\]
Taking $a_n=\sqrt{\log n/n}$
implies $\sum_{n=1}^N \log n/n\leq \tilde{C} \log N$ for some $\tilde{C}$ and all $N$, a contradiction.
\end{example}
We also refer to \textcite{MR2753294} and \textcite{MR2901351} for related issues.

\subsection{Wiener-Wintner theorem for generalized nilsequences}\label{sec:WW-gen-nilseq}
Let $\Gb$ be a $\Gamma$-rational filtration on $G$ and $g\in\poly$ be a polynomial sequence.
By Leibman' orbit closure theorem (Theorem~\ref{thm:Leibman-orbit-closure}), the sequence $g(n)\Gamma$ is contained and equidistributed in a finite union $\tilde Y$ of sub-nilmanifolds of $G/\Gamma$.
\index{nilsequence!generalized}
For a Riemann integrable function $F:\tilde Y \to \C$ we call the bounded sequence $(F(g(n)\Gamma))_{n}$ a \emph{basic generalized $l$-step nilsequence} (one obtains the same notion upon replacing the polynomial $g(n)$ by a ``linear'' polynomial $(g^{n})_{n}$).
A \emph{generalized $l$-step nilsequence} is a uniform limit of basic generalized $l$-step nilsequences.

A concrete example of a generalized nilsequence is $(e^{i[n\alpha]n\beta})$ for $\alpha, \beta\in \R$ or, more generally, bounded sequences of the form $(p(n))$ and $(e^{ip(n)})$ for a generalized polynomial $p$, i.e., a function obtained from conventional polynomials using addition, multiplication, and taking the integer part, see \textcite{MR2318563}.

We also obtain an extension of the Wiener-Wintner theorem for nilsequences due to Host and Kra \cite[Corollary 2.23]{MR2544760} to non-ergodic systems.
\begin{theorem}[Wiener-Wintner for generalized nilsequences]
\label{thm:WW-gen-nilseq}
\index{Wiener-Wintner theorem!for generalized nilsequences}
For every $f\in L^1(X,\mu)$ there exists a set $X'\subset X$ of full measure such that for every $x\in X'$ the averages
\begin{equation}\label{ave}
\aveFn a_n f(T^n x)
\end{equation}
converge for every generalized nilsequence $(a_n)$.

If in addition $(X,T)$ is a uniquely ergodic topological dynamical system, $f\in C(X)$ and the projection $\pi:X\to\HKZ_l(X)$ is continuous for some $l$ then the averages \eqref{ave} converge for \emph{every} $x\in X$ and every $l$-step generalized nilsequence $(a_n)$.
\end{theorem}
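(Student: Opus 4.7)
My plan is to reduce to an $L^\infty$ function on an ergodic component (via a truncation argument combined with the ergodic decomposition) and then, for each $l\in\N$ separately, split $f=\E(f|\HKZ_l)+f^\perp_l$, handle the orthogonal summand by Theorem~\ref{thm:uniform-convergence-to-zero}, and handle the projection onto $\HKZ_l$ via Theorem~\ref{thm:CFH-decomposition} combined with Corollary~\ref{cor:Leibman-pointwise}. A single exceptional null set will suffice for all basic generalized nilsequences simultaneously, since $l$ ranges over the countable set $\N$ and, crucially, the full-measure set produced by Theorem~\ref{thm:uniform-convergence-to-zero} (the set of fully generic points for $f^\perp_l$) does not depend on the particular nilmanifold. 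A generalized nilsequence is by definition a uniform limit of basic generalized nilsequences, so by a three-$\epsilon$ reduction it suffices to treat the basic case.

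For the $f^\perp_l$ summand, fix a basic generalized $l$-step nilsequence $a_n=F(g(n)\Gamma)$ with $F$ Riemann integrable on the orbit closure $\tilde Y$ of $g(n)\Gamma$, which by Theorem~\ref{thm:Leibman-orbit-closure} is a finite union of sub-nilmanifolds on each of which the polynomial orbit is totally well-distributed. Sandwich $F$ between smooth functions $F_\pm$ on $G/\Gamma$ with $\int_{\tilde Y}(F_+-F_-)<\epsilon$. For the smooth part Theorem~\ref{thm:uniform-convergence-to-zero} gives $\aveFn f^\perp_l(T^nx)F_\pm(g(n)\Gamma)\to 0$ uniformly in $g$ at every fully generic point for $f^\perp_l$, and the sandwiching error is bounded above by $\|f\|_\infty\aveFn(F_+-F_-)(g(n)\Gamma)$, which converges to $\|f\|_\infty\int_{\tilde Y}(F_+-F_-)<\|f\|_\infty\epsilon$ by Corollary~\ref{cor:Leibman-pointwise}. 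Letting $\epsilon\to 0$ along a countable sequence gives vanishing of the contribution of $f^\perp_l$ on a full-measure set.

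For the $\E(f|\HKZ_l)$ summand, Theorem~\ref{thm:CFH-decomposition} provides, for each $\epsilon>0$, some $f_s\in L^\infty(\HKZ_l)$ with $\|\E(f|\HKZ_l)-f_s\|_1<\epsilon$ such that $(f_s(T^nx))_n$ is an $l$-step nilsequence for every $x$. The $L^1$ error contributes at most $\|a\|_\infty\aveFn|\E(f|\HKZ_l)-f_s|(T^nx)$, which in the limit is at most $\|a\|_\infty\|\E(f|\HKZ_l)-f_s\|_1<\|a\|_\infty\epsilon$ by the pointwise ergodic theorem. The remaining average $\aveFn a_nf_s(T^nx)$ involves the pointwise product of a generalized $l$-step nilsequence and an $l$-step nilsequence, which is again a basic generalized nilsequence on the product nilmanifold equipped with the product filtration, so its Ces\`aro averages converge by Corollary~\ref{cor:Leibman-pointwise} after another sandwiching argument as in the previous paragraph. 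In the uniquely ergodic case with continuous $\pi$, bounded Riemann integrable functions on $\HKZ_l(X)$ pull back to uniformly approximable sequences of continuous functions on $X$, Theorem~\ref{thm:uniform-convergence-to-zero} yields uniform-in-$x$ vanishing on the orthogonal part, and Leibman's pointwise theorem on the (uniquely ergodic) product nilsystem yields uniform-in-$x$ convergence on the projection part, so the everywhere conclusion follows.

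The main obstacle will be ensuring that a single full-measure set $X'$ works for \emph{all} generalized nilsequences at once; this is handled by taking a countable intersection over $l\in\N$ and over countably many $\epsilon\to 0$ in the approximation $f_s$, using that Theorem~\ref{thm:uniform-convergence-to-zero} yields a null set depending only on $f$ and $l$ and not on the particular nilmanifold, polynomial, or Sobolev-bounded function $F$. A secondary bookkeeping issue is that Theorem~\ref{thm:uniform-convergence-to-zero} assumes ergodicity; this is circumvented by applying it componentwise in the ergodic decomposition, since both the Host-Kra factor and the decomposition of Theorem~\ref{thm:CFH-decomposition} behave well under disintegration.
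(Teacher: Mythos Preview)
Your approach is essentially the paper's, organized slightly differently. The paper does not split $f=\E(f|\HKZ_l)+f^\perp_l$ explicitly; instead it takes the CFH approximants $f_j$ of $\tilde f=\E(f|\HKZ_l)$ and applies the uniformity-seminorm estimate (Theorem~\ref{thm:WW-unif}) directly to $f-f_j$, passing to a subsequence so that $\|f-f_j\|_{U^l(X,\mu_x)}\to 0$ for a.e.\ ergodic component $\mu_x$. You instead handle $f^\perp_l$ by Theorem~\ref{thm:uniform-convergence-to-zero} and the remaining error $\E(f|\HKZ_l)-f_s$ by the pointwise ergodic theorem. Both routes use the same ingredients (CFH, Leibman's pointwise theorem, the sandwiching for Riemann integrable $F$, the maximal inequality for the passage to $L^1$), and both rely on the fact that the null set in Theorem~\ref{thm:WW-unif} is the set of non--fully-generic points, hence independent of the nilmanifold and polynomial; you are right to stress this, though note that it is visible only from the proof of Theorem~\ref{thm:uniform-convergence-to-zero}, not from its statement.

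There is one genuine gap in your treatment of the uniquely ergodic case. The uniform-in-$x$ conclusion of Theorem~\ref{thm:uniform-convergence-to-zero} requires $f^\perp_l\in C(X)$, but $\E(f|\HKZ_l)$ is only defined as an $L^\infty$ function and need not be continuous even when $\pi$ is, so $f^\perp_l=f-\E(f|\HKZ_l)$ need not be continuous. The paper avoids this by never forming $f^\perp_l$: under continuity of $\pi$, the Host--Kra--Maass description of minimal pro-nilsystems allows one to choose the CFH approximants $f_j$ to be continuous on $X$, so that $f-f_j\in C(X)$ and Theorem~\ref{thm:WW-unif} applies everywhere. Your phrase ``Riemann integrable functions on $\HKZ_l(X)$ pull back to uniformly approximable sequences of continuous functions'' gestures at this but does not supply it; you should replace the two-step splitting by $f=(f-f_j)+f_j$ with $f_j$ continuous in this part of the argument.
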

See \textcite[remarks following Theorem 3.5]{2012arXiv1203.3778H} for examples of systems for which the additional hypothesis is satisfied.

A consequence of this result concerning norm convergence of weighted polynomial multiple ergodic averages due to Chu \cite{MR2465660}, cf.\  \textcite{MR2544760} for the linear case, is discussed in Section~\ref{sec:multiple}.

In view of Theorem \ref{thm:WW-unif} the Wiener-Wintner theorem for generalized nilsequences (Theorem~\ref{thm:WW-gen-nilseq}) follows by a limiting argument from the decomposition theorem theorem for functions on non-ergodic measure preserving systems.
\begin{proof}[Proof of Theorem~\ref{thm:WW-gen-nilseq}]
Restricting to the separable $T$-invariant $\sigma$-algebra generated by $f$ 
we may assume that $(X,\mu,T)$ is regular.
Let $\mu = \int \mu_{x} \dif\mu(x)$ be the ergodic decomposition.

Consider first a function $0\leq f\leq 1$ and let $\tilde f:=\E(f|\HKZ_{l}(X))$.
By Theorem~\ref{thm:CFH-decomposition} we obtain a sequence of functions $(f_{j}) \subset L^{\infty}(X)$ such that the following holds.
\begin{enumerate}
\item We have $\|f_{j}\|_{L^{\infty}(X,\mu)} \leq 1$ and $\|\tilde f-f_{j}\|_{L^{1}(X,\mu)} \to 0$ as $j\to\infty$.
\item For every $j$ and $\mu$-a.e.\ $x\in X$ the sequence $(f_{j}(T^{n}x))_{n}$ is an $l$-step nilsequence.
\end{enumerate}
Using the first condition we can pass to a subsequence such that $\|\tilde f-f_{j}\|_{L^{2^{l-1}}(X,\mu_{x})} \to 0$ for a.e.\ $x\in X$.
Thus we obtain a full measure subset $X'\subset X$ such that the following holds for every $x\in X'$:
\begin{enumerate}
\item for every $j$ the sequence $(f_{j}(T^{n}x))_{n}$ is an $l$-step nilsequence,
\item for every $j$ the point $x$ is fully generic for $f-f_{j}$ with respect to an ergodic measure $\mu_{x}$ and
\item $\|f-f_{j}\|_{U^{l}(X,\mu_{x})} \to 0$ as $j\to\infty$ (this follows from the basic inequality \eqref{eq:est-U-by-L}).
\end{enumerate}
Let $x\in X'$ and $(a_{n})$ be a basic $l$-step nilsequence of the form $a_{n}=F(g(n)\Gamma)$ with smooth $F$.
Since the product of two nilsequences is again a nilsequence, by Corollary~\ref{cor:Leibman-pointwise} the limit
\[
\lim_{N\to\infty} \aveFn f_j(T^n x) F(g(n)\Gamma)
\]
exists for every $j\in\N$.
By Theorem~\ref{thm:WW-unif} we have
\[
\limsup_{N\to\infty} \Big| \aveFn (f-f_{j})(T^{n}x)F(g(n)\Gamma) \Big| \lesssim \|f-f_{j}\|_{U^{l}(X,\mu_{x})}
\]
for every $j$, where the constant does not depend on $j$, and this implies the existence of the limit \eqref{ave}.

Let now $x\in X'$ and $(a_{n})$ be a basic generalized nilsequence of the form $a_n=F(g(n)\Gamma)$ with a real valued Riemann integrable function $F$.
Let $\veps>0$.
Since $F$ is Riemann integrable on $\tilde Y=\ol{\{g(n)\Gamma : n\in\Z\}}$ (which is a finite union of sub-nilmanifolds with the weighted Haar measure $\nu$) and by the Tietze extension theorem, there exist continuous functions $F_\veps$ and $H_\veps$ on $G/\Gamma$ with $F_\veps\leq F\leq H_\veps$ such that $\int (H_\veps - F_\veps) \dif\nu < \veps$.
By mollification we may assume that $H_\veps$ and $F_\veps$ are smooth.
By the above the limits $\lim_{N} \aveFn f(T^{n}x) H_{\veps}(g(n)\Gamma)$ and $\lim_{N} \aveFn f(T^{n}x) F_{\veps}(g(n)\Gamma)$ exist.
By continuity of $F_\veps$ and $H_\veps$ we have for every $x\in X'$
\begin{multline*}
\left(\limsup_{N\to\infty}- \liminf_{N\to\infty}\right)\aveFn f(T^nx) F(g(n)\Gamma)\\
\leq \lim_{N\to\infty} \aveFn f(T^nx) (H_\veps - F_\veps)(g(n)\Gamma)\\
\leq \lim_{N\to\infty} \aveFn (H_\veps - F_\veps)(g(n)\Gamma)
=  \int_{\tilde Y} (H_\veps - F_\veps) \dif\nu <  \veps,
\end{multline*}
and since $\veps>0$ was arbitrary this proves the existence of the limit \eqref{ave}.

A limiting argument allows one to replace the basic generalized nilsequence by a generalized nilsequence.
By linearity we obtain the conclusion for $f\in L^{\infty}(X)$.
The general case $f\in L^{1}(X)$ follows from the maximal inequality \eqref{eq:maximal-inequality}.

Under the additional assumptions of unique ergodicity of $(X,T)$ and continuity of the projection $\pi:X\to\HKZ_{l}(X)$ we find that the functions $f_{j}$ can be chosen to be continuous on $X$ by \cite[Theorem A]{MR2600993} and every point is fully generic for $f-f_{j}$, allowing us to replace the set of full measure $X'$ in the above argument by $X$.
\end{proof}

\subsection{$L^{2}$ convergence of weighted multiple averages}\label{sec:multiple}
The Wiener-Wintner theorem (Theorem~\ref{thm:WW-gen-nilseq} for linear nilsequences) has been used by Host and Kra \cite[Theorem 2.25]{MR2544760} to show that the values of a bounded measurable function along almost every orbit of an ergodic transformation are good weights for $L^{2}$ convergence of linear multiple ergodic averages.
A polynomial extension of this result was proved by Chu \cite[Theorem 1.1]{MR2465660}.
Since our Theorem~\ref{thm:WW-gen-nilseq} is stated for ``polynomial'' nilsequences, we can slightly shorten the proof of her result, that we formulate for $L^{1}$ functions and tempered F\o{}lner sequences.

\begin{corollary}[Convergence of weighted multiple ergodic averages]
\label{cor:weighted-multiple-convergence}
Let $(\Fo_N)$ be as above and let $\phi \in L^1(X)$.
Then there is a set $X'\subset X$ of full measure such that for every $x\in X'$ the sequence $\phi(T^n x)$ is a \emph{good weight for polynomial multiple ergodic averages along $(\Fo_N)$}, i.e., for every measure-preserving system $(Y,\nu, S)$, integer polynomials $p_{1},\dots,p_{k}$ and functions $f_1,\ldots,f_k\in L^\infty(Y,\nu)$ the averages
\begin{equation}
\label{eq:weighted-multiple}
\aveFn \phi(T^nx) S^{p_{1}(n)} f_1\cdots S^{p_{k}(n)} f_k
\end{equation}
converge in $L^2(Y,\nu)$ as $N\to \infty$.
\end{corollary}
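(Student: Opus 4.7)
The plan is to follow the strategy used by Host--Kra and Chu, combining the two Wiener--Wintner type results proved above with the Host--Kra--Leibman theory of characteristic factors for polynomial multiple ergodic averages. First I would reduce to bounded $\phi$ by a standard density argument: the Lindenstrauss maximal inequality \eqref{eq:maximal-inequality} shows that the set of $\phi \in L^{1}(X)$ for which the weighted averages converge in $L^{2}(Y)$ for a.e.\ $x$ is closed in $L^{1}$-norm, so it suffices to treat $\phi \in L^{\infty}(X)$. Restricting to a separable $T$-invariant $\sigma$-algebra generated by $\phi$ and working on ergodic components, I may further assume that $(X,\mu,T)$ is regular and ergodic, and then work throughout on the set $X'$ of points that are fully generic for $\phi$, which has full measure by Theorem~\ref{thm:Lindenstrauss-pointwise}.

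Fix the polynomials $p_{1},\dots,p_{k}$. By the Host--Kra--Leibman theory there is some $l=l(p_{1},\dots,p_{k})$ such that $\HKZ_{l}(Y)$ is characteristic for the unweighted multiple averages. I would first verify that $\HKZ_{l}$ remains characteristic after weighting by $\phi(T^{n}x)$ for every $x\in X'$. Granting this, the characteristic part is handled via Theorem~\ref{thm:HK-structure} and Theorem~\ref{thm:CFH-decomposition}: after approximation in $L^{2}(Y,\nu)$ I may assume each $f_{i}$ is such that $(f_{i}(S^{n}y))_{n}$ is a nilsequence for a.e.\ $y$, so the product $S^{p_{1}(n)}f_{1}\cdots S^{p_{k}(n)}f_{k}$ evaluated at $y$ is of the form $F_{y}(h(n)\Delta)$ for a polynomial sequence $h$ on an auxiliary nilmanifold, with $F_{y}$ depending measurably on $y$ and of uniformly bounded complexity. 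For each fixed $y$ this is a generalized nilsequence, so $\aveFn \phi(T^{n}x)F_{y}(h(n)\Delta)$ converges by Theorem~\ref{thm:WW-gen-nilseq}, and a Fubini plus dominated convergence argument promotes this to $L^{2}(Y,\nu)$ convergence on a set of $x$ of full measure depending only on $\phi$.

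For the non-characteristic part I need to show that if $\E(f_{i_{0}}\mid \HKZ_{l}(Y))=0$ for some $i_{0}$, then
\[
\aveFn \phi(T^{n}x)\,S^{p_{1}(n)}f_{1}\cdots S^{p_{k}(n)}f_{k}\longrightarrow 0 \quad\text{in }L^{2}(Y,\nu)
\]
for $x\in X'$. I would prove this by PET induction on the vector-degree of $(p_{1},\dots,p_{k})$: a van der Corput estimate (Lemma~\ref{VdC}) applied in $L^{2}(Y,\nu)$ reduces the squared norm of the weighted average to a double average of the form
\[
\ave{h}{\Fo_{H}}\aveFn \phi(T^{n}x)\overline{\phi(T^{n+h}x)}\prod_{i}S^{p_{i}(n+h)}\bar f_{i}\cdot S^{p_{i}(n)}f_{i},
\]
in which the family of polynomials has strictly smaller PET weight vector. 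After enough iterations the bottom of the induction is reached, where one must bound a single expression $\aveFn c(n,x)F(g(n)\Gamma)$, with $c(n,x)$ a constant-order expression in shifts of $\phi$ and $F(g(n)\Gamma)$ a nilsequence coming from the Host--Kra structure of the relevant characteristic factor. At this point Theorem~\ref{thm:uniform-convergence-to-zero} delivers the required vanishing uniformly over the class of nilsequences appearing at this step, provided the appropriate uniformity seminorm of $f_{i_{0}}$ vanishes, which follows from $\E(f_{i_{0}}\mid\HKZ_{l}(Y))=0$.

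The hard part will be organising the PET induction so that it closes using only the uniform Wiener--Wintner estimate of Theorem~\ref{thm:uniform-convergence-to-zero}, and handling the countability issues so that the exceptional null set $X\setminus X'$ depends only on $\phi$ and not on $(Y,\nu,S)$, $(f_{i})$ or $(p_{i})$. Concretely, one fixes a countable dense set of tuples $(p_{1},\dots,p_{k})$ in each degree, uses separability of the characteristic factor to reduce the $(f_{i})$ to a countable dense family, and then exploits the uniformity over $F$ and $g$ in Theorems~\ref{thm:uniform-convergence-to-zero} and~\ref{thm:WW-gen-nilseq}, together with the Lindenstrauss maximal inequality, to extend convergence to arbitrary $(Y,\nu,S)$ and arbitrary bounded $f_{i}$ on a single full-measure set $X'\subset X$ determined by $\phi$ alone.
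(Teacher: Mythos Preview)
Your treatment of the characteristic part and the reduction to $\phi\in L^{\infty}$ are fine and match the paper. The gap is in the non-characteristic part. At the bottom of your PET induction you propose to invoke Theorem~\ref{thm:uniform-convergence-to-zero} on an expression $\aveFn c(n,x)\,F(g(n)\Gamma)$, but that theorem requires the \emph{dynamical} factor $c(n,x)=f(T^{n}x)$ to satisfy $\E(f\mid\HKZ_{l}(X))=0$, whereas your hypothesis is $\E(f_{i_{0}}\mid\HKZ_{l}(Y))=0$: the orthogonality lives on the $Y$ side, and after your iterated van~der~Corput steps it has been absorbed into the nilsequence $F(g(n)\Gamma)$, not into the weight $c(n,x)$. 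There is no mechanism by which Theorem~\ref{thm:uniform-convergence-to-zero} converts a $U^{l+1}(Y)$-vanishing condition on $f_{i_{0}}$ into vanishing of these scalar averages for the given~$x$. Trying to repair this by keeping $f_{i_{0}}$ as a live $Y$-term at the base would leave you with $\aveFn a_{n}\,S^{q(n)}g\to 0$ in $L^{2}(Y)$ for a bounded weight $a_{n}$ and $g\perp\HKZ_{?}(Y)$, which is precisely the statement you are trying to prove.

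The paper avoids this circularity by a much shorter route (Lemma~\ref{lem:weighted-multiple-zero}): it proves that for \emph{every} bounded scalar sequence $(a_{n})$, not just $\phi(T^{n}x)$, one has $\bigl\|\aveFn a_{n}\,S^{p_{1}(n)}f_{1}\cdots S^{p_{r}(n)}f_{r}\bigr\|_{L^{2}(Y)}\to 0$ whenever $\|f_{1}\|_{U^{k}(Y)}=0$ for suitable~$k$. The proof applies a van~der~Corput estimate on $\Z^{3}$ once, bounds $|a_{n+v}\overline{a_{n+w}}|\le 1$ to discard the weights, squares via Cauchy--Schwarz to land on an \emph{unweighted} polynomial multiple average on $Y\times Y$, and then cites Leibman's result on characteristic factors. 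No PET induction is carried out; it is hidden inside the black-box citation. Two further dividends of this approach: first, because the lemma holds for all bounded weights, no exceptional null set in $X$ is incurred in the non-characteristic step, so the countability gymnastics you outline in the last paragraph are unnecessary there; second, the lemma already delivers the ergodic case of $Y$, and the paper simply passes to ergodic components of~$Y$ at the outset.
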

In order to reduce to an appropriate nilfactor we need the following variant of \cite[Theorem 2.2]{MR2465660}.
Recall that two polynomials are called \emph{essentially distinct} if their difference is not constant.
\begin{lemma}
\label{lem:weighted-multiple-zero}
Let $(\Fo_{N})_{N}$ be an arbitrary F\o{}lner sequence in $\Z$.
For every $r,d\in\N$ there exists $k\in\N$
such that for every ergodic system $(X,\mu,T)$, any functions $f_{1},\dots,f_{r} \in L^{\infty}(X)$ with $\|f_{1}\|_{U^{k}(X)}=0$,
any non-constant pairwise essentially distinct integer polynomials $p_{1},\dots,p_{r}$ of degree at most $d$ and any bounded sequence of complex numbers $(a_{n})_{n}$ we have
\[
\limsup_{N\to\infty} \Big\| \aveFn a_{n} T^{p_{1}(n)}f_{1} \cdots T^{p_{r}(n)}f_{r} \Big\|_{L^{2}(X)} = 0.
\]
\end{lemma}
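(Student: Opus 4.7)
The plan is to establish, by PET induction on the ``weight vector'' of the polynomial family $(p_1,\ldots,p_r)$ in the sense of Bergelson \cite{MR912373} (as reviewed in Section~\ref{sec:hales-jewett}), the following quantitative strengthening: there exist $k=k(r,d)\in\N$ and $c=c(r,d)>0$ such that for every ergodic system $(X,\mu,T)$, every family of non-constant pairwise essentially distinct integer polynomials $p_1,\ldots,p_r$ of degree at most $d$, every bounded sequence $(a_n)\subset\C$ with $|a_n|\leq 1$, and every $f_1,\ldots,f_r\in L^\infty(X)$ with $\|f_i\|_\infty\leq 1$,
\[
\limsup_{N\to\infty}\Big\|\aveFn a_n T^{p_1(n)}f_1\cdots T^{p_r(n)}f_r\Big\|_{L^2(X)} \lesssim \|f_1\|_{U^k(X)}^c.
\]
Since the conclusion of the lemma under the hypothesis $\|f_1\|_{U^k(X)}=0$ is then immediate, the task reduces to proving this seminorm estimate uniformly in the weights $(a_n)$.

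The inductive step proceeds by applying the van der Corput Lemma~\ref{VdC} to the $L^2$-valued sequence $u_n=a_n T^{p_1(n)}f_1\cdots T^{p_r(n)}f_r$ and bounding the bracket $\langle u_n,u_{n+k}\rangle$. Expanding this inner product, translating by $T^{-p_1(n)}$ using $T$-invariance of $\mu$, and setting $b_n^{(k)}=a_n\bar a_{n+k}$ (again bounded by $1$), the estimation of $\|\aveFn u_n\|^2$ reduces, up to an $o_K(1)$ error, to controlling expressions of the form
\[
\frac{2}{K^2}\sum_{k=-K}^{K}(K-|k|)\Big|\aveFn b_n^{(k)}\!\!\int f_1 \cdot T^{p_1(n+k)-p_1(n)}\bar f_1 \prod_{i\geq 2}T^{p_i(n)-p_1(n)}f_i \cdot T^{p_i(n+k)-p_1(n)}\bar f_i\,d\mu\Big|.
\]
For each fixed nonzero $k$, the resulting polynomial family of $2r-1$ polynomials in $n$ has strictly smaller PET weight vector than the original family, the crucial reduction being that $p_1$ has been replaced by the discrete derivative $p_1(n+k)-p_1(n)$ of strictly lower degree.

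Applying the inductive hypothesis to each fixed $k$ yields a bound in terms of a Gowers-Host-Kra seminorm of $f_1$ of bounded order; averaging over $k$ and $K$ via the defining formula \eqref{eq:uniformity-seminorms-smoothed} of $\|\cdot\|_{U^{l+1}}$ then raises the seminorm order by one. After finitely many iterations, the number bounded in terms of $(r,d)$ alone, the PET weight vector is exhausted and the average is controlled by $\|f_1\|_{U^k(X)}^c$ for appropriate $k,c$.

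The main obstacle is to arrange the PET induction so that $f_1$ remains the distinguished function whose Gowers-Host-Kra seminorm appears in the final bound. This is handled as in Bergelson's original scheme: at each van der Corput step one differentiates away a polynomial of minimal level other than $p_1$, so that the polynomial attached to $f_1$ is preserved until the last stage, when it interacts with $f_1$ to produce a Gowers-Host-Kra configuration. The uniformity in $(a_n)$ requires no additional work, since the new weights $a_n\bar a_{n+k}$ produced at each step remain uniformly bounded by $1$, and only this boundedness is used in the estimates.
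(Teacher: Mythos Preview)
Your approach differs substantially from the paper's and contains genuine gaps.

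The paper does \emph{not} run PET induction at all. It applies a single van der Corput step with parameters in $\Z^{3}$ \cite[Lemma~4]{MR2151605}; the weights enter the resulting expression only through $a_{n+v}\overline{a_{n+w}}$, which is bounded by $1$ and thrown away immediately. What remains is an \emph{unweighted} multiple polynomial average on the product system $(X\times X,(\mu\times\mu)_{s})$, which is handled by citing Leibman's characteristic-factor theorem \cite[Theorem~3]{MR2151605}. The resulting control by $\|f_{1}\otimes\bar f_{1}\|_{U^{k-1}}$ on ergodic components is converted to $\|f_{1}\|_{U^{k}(X)}$ via a known identity \cite[Lemma~3.1]{MR2150389}.

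Your proposal has two concrete problems. First, the displayed van der Corput step shifts by $T^{-p_{1}(n)}$, and you claim this reduces the PET weight vector because ``$p_{1}$ has been replaced by its discrete derivative''. This is false unless $p_{1}$ has \emph{minimal} degree in the family. Take $p_{1}(n)=n^{2}$, $p_{2}(n)=n$: after your shift the new family is $\{2kn+k^{2},\,n-n^{2},\,n+k-n^{2}\}$, still one degree-$2$ class and one degree-$1$ class, so nothing decreases. Your last paragraph then contradicts the display by asserting one should shift by ``a polynomial of minimal level other than $p_{1}$''; you have not reconciled these two descriptions, and neither is worked out.

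Second, even granting a correct PET reduction, your induction does not close. Your inductive hypothesis bounds the reduced average by $\|f_{1}\|_{U^{k'}}^{c'}$, a quantity \emph{independent of the shift $k$}. Formula~\eqref{eq:uniformity-seminorms-smoothed} only ``raises the seminorm order by one'' when the quantity being averaged in $k$ is $\|T^{k}f_{1}\bar f_{1}\|_{U^{l}}^{2^{l}}$; averaging a $k$-independent bound in $k$ does nothing. The actual tracking of $f_{1}$ through PET (as done internally in \cite{MR2151605}) requires a more elaborate inductive statement than the one you have written down.
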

\begin{proof}
We may assume that $(a_{n})$ is bounded by $1$.
By a variant of the van der Corput lemma \cite[Lemma 4]{MR2151605} there exists a F\o{}lner sequence $(\Theta_{M})$ in $\Z^{3}$ such that the square of the left-hand side is bounded by
\begin{multline*}
\limsup_{M} \frac{1}{|\Theta_{M}|} \Big| \sum_{(n,v,w)\in\Theta_{M}} a_{n+v}\ol{a_{n+w}} \int_{X} \prod_{i=1}^{r} T^{p_{i}(n+v)}f_{i} T^{p_{i}(n+w)}\ol{f_{i}} \Big|\\
\leq
\limsup_{M} \frac{1}{|\Theta_{M}|} \sum_{(n,v,w)\in\Theta_{M}} \Big| \int_{X} \prod_{i=1}^{r} T^{p_{i}(n+v)}f_{i} T^{p_{i}(n+w)}\ol{f_{i}} \Big|.
\end{multline*}
By the Cauchy-Schwarz inequality the square of this expression is bounded by
\begin{multline*}
\limsup_{M} \frac{1}{|\Theta_{M}|} \sum_{(n,v,w)\in\Theta_{M}} \Big| \int_{X} \prod_{i=1}^{r} T^{p_{i}(n+v)}f_{i} T^{p_{i}(n+w)}\ol{f_{i}} \Big|^{2}\\
=
\limsup_{M} \frac{1}{|\Theta_{M}|} \sum_{(n,v,w)\in\Theta_{M}} \int_{X\times X} \prod_{i=1}^{r} (T\times T)^{p_{i}(n+v)}(f_{i}\otimes \ol{f_{i}}) (T\times T)^{p_{i}(n+w)}(\ol{f_{i}}\otimes f_{i}).
\end{multline*}
Let $\mu\times\mu = \int_{s\in Z} (\mu\times\mu)_{s} \dif s$ be the ergodic
decomposition of $\mu\times\mu$.
By Fatou's lemma the above expression is bounded by
\begin{multline*}
\int_{s\in Z}\limsup_{M} \frac{1}{|\Theta_{M}|} \sum_{(n,v,w)\in\Theta_{M}}\\
\int_{X\times X} \prod_{i=1}^{r} (T\times T)^{p_{i}(n+v)}(f_{i}\otimes \ol{f_{i}}) (T\times T)^{p_{i}(n+w)}(\ol{f_{i}}\otimes f_{i}) \dif(\mu\times\mu)_{s}\, \dif s\\
\leq
\int_{s\in Z}\limsup_{M} \Big\| \frac{1}{|\Theta_{M}|} \sum_{(n,v,w)\in\Theta_{M}}\\
\prod_{i=1}^{r} (T\times T)^{p_{i}(n+v)}(f_{i}\otimes \ol{f_{i}}) (T\times T)^{p_{i}(n+w)}(\ol{f_{i}}\otimes f_{i}) \Big\|_{L^{1}(X\times X,(\mu\times\mu)_{s})}\, \dif s.
\end{multline*}
Convergence to zero of the integrand follows from \textcite[Theorem 3]{MR2151605} provided that $\|f_{1} \otimes \ol{f_{1}}\|_{U^{k-1}(X\times X,(\mu\times\mu)_{s})}=0$ for some sufficiently large $k$.
It follows from \textcite[Lemma 3.1]{MR2150389} and the original definition of the uniformity seminorms in \cite[\textsection 3.5]{MR2150389} that
\[
\|f_{1}\|_{U^{k}(X)}^{2^{k}} = \int_{s\in Z} \|f_{1} \otimes \ol{f_{1}}\|_{U^{k-1}(X\times X,(\mu\times\mu)_{s})}^{2^{k-1}}\, \dif s.
\]
Thus the hypothesis ensures convergence to zero of the integrand in the previous display for a.e.\ $s$ provided that $k$ is large enough.
\end{proof}
\begin{proof}[Proof of Corollary~\ref{cor:weighted-multiple-convergence}]
By ergodic decomposition it suffices to consider ergodic systems $(Y,\nu,S)$.

Assume first that $\phi\in L^{\infty}(X)$.
By Lemma~\ref{lem:weighted-multiple-zero} we may assume that each $f_{i}$ is measurable with respect to some Host-Kra factor $\HKZ_{l}(Y)$.

By density we may further assume that each $f_{i}$ is a continuous function on a nilsystem factor of $Y$.
In this case the sequence $S^{p_{i}(n)}f_{i}(y)$ is a basic nilsequence of step at most $l \deg p_{i}$ for each $y\in Y$, and the product $\prod_{i} S^{p_{i}(n)}f_{i}(y)$ is also a basic nilsequence of step at most $l \max_{i}\deg p_{i}$.
Therefore the averages \eqref{eq:weighted-multiple} converge pointwise on $Y$ for a.e.\ $x\in X$ by Theorem~\ref{thm:WW-gen-nilseq}, and by the Dominated Convergence Theorem they converge in $L^{2}(Y)$.

We can finally pass to $\phi\in L^{1}(X)$ using the maximal inequality \eqref{eq:maximal-inequality}.
\end{proof}

\chapter{Return times theorems}
\label{chap:RTT}
\index{universally good weight}
We call a sequence $(a_{n})$ a \emph{universally good weight} (for pointwise convergence of ergodic averages along a tempered F\o{}lner sequence $(\Fo_{N})$ in $\Z$) if, for every measure-preserving system $(Y,S)$ and every $g\in L^{\infty}(Y)$, the averages
\[
\aveFN a_{n} g(S^{n}y)
\]
converge as $N\to\infty$ for almost every (a.e.)\ $y\in Y$.
In the last chapter we have seen that nilsequences are universally good weights.

It turns out that universally good weights are fairly ubiquitous.
\index{Return times theorem}
In fact, Bourgain's return times theorem \cite{MR1557098} asserts that, given any ergodic measure-preserving system $(X,T)$, for every $f\in L^{\infty}(X)$ and a.e.\ $x\in X$ the sequence of weights $a_{n}=f(T^{n}x)$ is universally good along the standard F\o{}lner sequence $\Fo_{N}=[1,N]$.
The name ``return times theorem'' comes from the case of a characteristic function $f=1_{A}$, $A\subset X$.
Then the theorem can be equivalently formulated by saying that, for a.e.\ $x\in X$, the pointwise ergodic theorem on any system $Y$ holds along the sequence of return times of $x$ to $A$.

A particularly illustrative case is that of a shift system on $X=\Omega^{\Z}$ with a bounded function $f$ that depends only on the zeroth coordinate.
In this case the return times theorem asserts that if the weights $(a_{n})$ are chosen according to independent random variables with the same distribution as $f$, then the resulting sequence is almost surely a universally good weight.

We will consider two generalizations of the return times theorem: to arbitrary amenable groups and to multiple term averages.

\section{Return times theorem for amenable groups}
Bourgain's return times theorem has been extended to discrete countable amenable groups for which an analog of the Vitali covering lemma holds by Ornstein and Weiss \cite[\textsection 3]{MR1195256}.
We extend this result to general, not necessarily discrete, locally compact second countable amenable groups.
It has been observed by Lindenstrauss \cite{MR1865397} that this is possible in the discrete case.
In the non-discrete case we have to restrict ourselves to the class of \emph{strong} F\o{}lner sequences (see Definition~\ref{def:folner}).
This is not a serious restriction in the sense that every lcsc amenable group admits such a sequence by Lemma~\ref{lem:strong-Folner-exist}.

A secondary goal of this section is to formulate and prove the Bourgain-Furstenberg-Katznelson-Ornstein (BFKO) orthogonality criterion \cite{MR1557098} at an appropriate level of generality.
This criterion provides a sufficient condition for the values of a function along an orbit of an ergodic measure-preserving transformation to be good weights for convergence to zero in the pointwise ergodic theorem.

Its original formulation is slightly artificial, since it assumes something about the whole measure-preserving system but concludes something that only involves a single orbit.
A more conceptual approach is to find a condition that identifies good weights and to prove that it is satisfied along almost all orbits of a measure-preserving system in a separate step.
For $\Z$-actions this seems to have been first explicitly mentioned by \textcite[\textsection 4]{MR1286798}.
In order to state the appropriate condition for general lcsc amenable groups we need some notation.

Throughout this section, $\AG$ denotes a lcsc amenable group with left Haar measure $|\cdot|$ and $(\Fo_{N})$ a F\o{}lner sequence in $\AG$.
The \emph{lower density} of a subset $S\subset \AG$ is defined by $\ld(S) := \liminf_{N} |S\cap \Fo_{N}| / |\Fo_{N}|$ and the \emph{upper density} is defined accordingly as $\ud(S) := \limsup_{N} |S\cap \Fo_{N}| / |\Fo_{N}|$.
\index{density!lower}
\index{density!upper}
All functions on $\AG$ that we consider are real-valued and bounded by $1$.
We denote averages by $\ave{g}{\Fo_{n}}:=\frac{1}{|\Fo_{n}|} \int_{g\in \Fo_{n}}$.
For $c\in L^{\infty}(\AG)$ we let
\[
S_{\delta,L,R} :=
\{a : \forall L\leq n\leq R\,
| \ave{g}{\Fo_{n}}c(g)c(ga) | < \delta\}.
\]
Our orthogonality  condition on the map $c$ is then the following.
\begin{equation}
\tag{$\perp$}
\label{eq:cond}
\forall\delta>0\,
\exists N_{\delta}\in\N\,
\forall N_{\delta} \leq L \leq R\quad
\ld(S_{\delta,L,R})
> 1-\delta.
\end{equation}
Very roughly speaking, this tells that there is little correlation between $c$ and its translates.
The condition \eqref{eq:cond} is an analytic counterpart of being orthogonal to the Kronecker factor, as the next result shows (see \textsection\ref{subsec:cond} for the proof).
\begin{lemma}
\label{lem:ae-perp}
Let $(X,\mu,\AG)$ be an ergodic measure-preserving system and $f\in L^{\infty}(X)$ be orthogonal to the Kronecker factor.
Then for a.e.\ $x\in X$ the map $g\mapsto f(gx)$ satisfies \eqref{eq:cond}.
\end{lemma}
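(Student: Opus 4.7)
The plan is to reduce condition~\eqref{eq:cond} to a second-moment bound via Chebyshev, evaluate the resulting moment in the diagonal self-joining $X\times X$, and kill the limit using the fact that $P_\Delta(f\otimes f)=0$ when $f\perp\HKZ_1(X)$, where $P_\Delta$ is the orthogonal projection onto the $L^2(X\times X)$-vectors fixed by the diagonal $\AG$-action. Write $c(g):=f(gx)$ and $A_n(x,a):=\ave{g}{\Fo_n} c(g)c(ga)$. Chebyshev's inequality gives
\[
\ud\{a : \exists\, n\in[L,R],\ |A_n(x,a)|\geq\delta\}
\;\leq\; \delta^{-2}\limsup_N \ave{a}{\Fo_N} \sup_{n\in[L,R]} |A_n(x,a)|^2,
\]
so \eqref{eq:cond} follows once this quantity is shown to be $<\delta^3$ for all $L,R\geq N_\delta$ and $\mu$-a.e.\ $x$.

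For the key second-moment computation I would expand $|A_n|^2$, integrate over $X$ using invariance of $\mu$ under $x\mapsto g^{-1}x$, and then over $a\in\Fo_N$ via the unimodular substitution $b=gag^{-1}$. Passing to a two-sided F\o{}lner subsequence of $(\Fo_N)$ allows replacement of $g\Fo_Ng^{-1}$ by $\Fo_N$ up to $o_N(1)$ errors uniformly for $g\in\Fo_n$; the mean ergodic theorem for the Koopman representation of $\AG$ on $L^2(X)$ together with ergodicity then identifies the inner average $\ave{b}{\Fo_N}\<G_k, U_{b^{-1}}G_k\>$ as converging to $\phi(k)^2$, where $G_k(y):=f(y)f(ky)$ and $\phi(k):=\<f, U_{k^{-1}}f\>$. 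Altogether
\[
\limsup_N \int_X \ave{a}{\Fo_N} |A_n(x,a)|^2 \,d\mu(x)
\;=\; \ave{g,h}{\Fo_n^2} \phi(hg^{-1})^{2}.
\]
The right-hand side equals $\bigl\|\ave{g}{\Fo_n^{-1}}(U_g\otimes U_g)(f\otimes f)\bigr\|_{L^2(X\times X)}^{2}$, because $\phi(hg^{-1}) = \<U_{g^{-1}}f, U_{h^{-1}}f\>$ and the substitution $g,h\mapsto g^{-1},h^{-1}$. The mean ergodic theorem for the diagonal $\AG$-action on $L^2(X\times X)$ then sends this to $\|P_\Delta(f\otimes f)\|^{2}$, and since $L^2(X\times X)^{\Delta}$ is spanned by tensors $\phi_1\otimes\overline{\phi_2}$ of Kronecker eigenfunctions sharing the same character, $f\perp\HKZ_1(X)$ forces $\<f\otimes f,\phi_1\otimes\overline{\phi_2}\>=\<f,\phi_1\>\overline{\<f,\phi_2\>}=0$ for every such tensor, giving $P_\Delta(f\otimes f)=0$ and the vanishing of the spectral limit.

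Transferring this $L^1(\mu)$ decay to an a.e.\ statement in $x$ for each fixed $n$ is routine, by Chebyshev in $x$ and Fatou in $N$. The main obstacle I expect, and the step I would spend the most effort on, is upgrading the control from a fixed $n$ to a supremum over $n\in[L,R]$, since a naive union bound would degrade with $R-L$. My preferred approach is a maximal-function argument: recognise $A_n(x,a)=\ave{g}{\Fo_n}(f\otimes f)\bigl(g\cdot(x,ax)\bigr)$ as a diagonal ergodic average on $X\times X$, and dominate $a\mapsto\sup_{n\geq L}|A_n(x,a)|^2$ via the Lindenstrauss maximal inequality~\eqref{eq:maximal-inequality}; if the $\mu\otimes\mu$-atypicality of $(x,ax)$ resists a direct application, a fallback is a Borel-Cantelli argument along a sparse subsequence in $n$, combined with continuity of $A_n$ in $n$ at F\o{}lner scales, to interpolate between sampled values. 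Non-commutativity of $\AG$ is a minor issue, entering only through the need for two-sided F\o{}lnerness in the substitution $b=gag^{-1}$.
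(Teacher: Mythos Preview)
Your spectral/second-moment computation is correct as far as it goes, and $P_\Delta(f\otimes f)=0$ is indeed the heart of the matter, but both transfer steps have real gaps. The claim that ``Chebyshev in $x$ and Fatou in $N$'' promotes $\lim_N\int_X\ave{a}{\Fo_N}|A_n(x,a)|^2\,d\mu\to 0$ to an a.e.\ statement in $x$ does not work: Fatou for $\limsup$ gives $\int\limsup_N\geq\limsup_N\int$, the wrong direction, and Chebyshev in $x$ only produces a large-measure set depending on $n$, which cannot then be intersected over all $n$. The maximal-inequality route to $\sup_{n\in[L,R]}$ fails for the reason you yourself flag: the Lindenstrauss inequality bounds the $\mu\otimes\mu$-measure of the superlevel set of $M(f\otimes f)$, but for fixed $x$ the points $(x,ax)$ form a $\mu\otimes\mu$-null set, and you have no device for converting that bound into a density statement in $a$. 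The Borel--Cantelli fallback would need regularity of $n\mapsto\Fo_n$ that tempered sequences do not provide, and the two-sided F\o{}lner hypothesis is an extra assumption not made in the paper.

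The paper's argument sidesteps all of this through one structural observation you are missing: $A_n(x,a)$ depends on $a$ only through $\xi=ax$, namely $A_n(x,a)=F_{n,x}(ax)$ where $F_{n,x}(\xi)=\ave{g}{\Fo_n}f(gx)f(g\xi)$. The Lindenstrauss pointwise theorem applied to $f\otimes f$ on $X\times X$ (whose diagonal ergodic averages go to $0$ in $L^2$ exactly because $f\perp\HKZ_1$) gives, after Fubini, that for a.e.\ $x$ one has $F_{n,x}(\xi)\to 0$ for $\mu$-a.e.\ $\xi$. Egorov then yields $\Xi$ with $\mu(\Xi)>1-\delta$ on which $|F_{n,x}|<\delta/2$ for all $n\geq N_\delta$, so the supremum over $[L,R]$ comes for free. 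The decisive point is that a continuous approximation $h$ to the indicator of $\bigcap_{n=L}^R\{|F_{n,x}|<\delta\}$ lies in the closed convolution-invariant algebra generated by $f$ (each $F_{n,x}$, as a function of $\xi$, is a convolution of $f$ by a compactly supported kernel). Hence \emph{full genericity of $x$} converts $\int h>1-\delta$ directly into $\ld\{a:h(ax)>0\}>1-\delta$, which is $\ld(S_{\delta,L,R})>1-\delta$. The paper isolates this step as Lemma~\ref{lem:to-zero-in-density}; your missing ingredient is precisely this use of genericity to pass from $\mu$-measure in $\xi$ to density in $a$.
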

The main result of this section is that the orthogonality condition is sufficient for the map to be a universally good weight for convergence to zero.
\begin{theorem}
\label{thm:good-weight-ptw-conv-to-zero}
Assume that $(\Fo_{N})$ is a tempered strong F\o{}lner sequence and $c\in L^{\infty}(\AG)$ satisfies the condition \eqref{eq:cond}.
Then for every ergodic measure-preserving system $(X,\AG)$ and $f\in L^{\infty}(X)$ we have
\[
\lim_{N\to\infty} \ave{g}{\Fo_{N}} c(g) f(gx) = 0
\quad \text{for a.e.\ } x\in X.
\]
\end{theorem}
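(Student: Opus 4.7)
My plan is to adapt the Bourgain--Furstenberg--Katznelson--Ornstein (BFKO) orthogonality argument to general lcsc amenable groups, with the Lindenstrauss random covering lemma playing the role of the Vitali lemma in the original $\Z$-action proof. Since
\[
\Big| \ave{g}{\Fo_N} c(g) f(gx) \Big| \leq \|c\|_\infty \cdot \ave{g}{\Fo_N} |f|(gx),
\]
the associated maximal operator is dominated by the Hardy--Littlewood maximal operator, which by the Lindenstrauss maximal inequality \eqref{eq:maximal-inequality} is of weak type $(1,1)$ (this is where the temperedness of $(\Fo_N)$ enters). Consequently the set of $f \in L^\infty(X)$ for which the conclusion holds is closed in $L^2(X)$, so it suffices to verify pointwise convergence on a dense subspace.

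Decompose $L^2(X) = \mathcal{K} \oplus \mathcal{K}^\perp$, where $\mathcal{K}$ is the Kronecker factor of $(X,\AG)$. For an eigenfunction $f$ with eigencharacter $\chi$ one has $\ave{g}{\Fo_N} c(g) f(gx) = f(x) \cdot \ave{g}{\Fo_N} c(g) \chi(g)$, and it suffices to show the scalar $\ave{g}{\Fo_N} c(g) \chi(g) \to 0$. Squaring and changing variables $h = ga$ via the F\o{}lner property gives
\[
\Big| \ave{g}{\Fo_N} c(g)\chi(g) \Big|^{2}
= \ave{a}{\Fo_N} \ol{\chi(a)} \cdot \ave{g}{\Fo_N} c(g) c(ga) + o(1).
\]
Given $\delta > 0$, splitting the $a$-average over $S_{\delta,N,N}$ and its complement and invoking \eqref{eq:cond} (which gives $\ld(S_{\delta,N,N}) > 1 - \delta$ for $N \geq N_\delta$) bounds the right-hand side by $\lesssim \delta$. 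Since $\delta$ is arbitrary, $\ave{g}{\Fo_N} c(g)\chi(g) \to 0$; density of eigenfunctions in $\mathcal{K}$ combined with the maximal reduction then handles all of $\mathcal{K} \cap L^\infty$.

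For $f \in \mathcal{K}^\perp \cap L^\infty$ I would apply a continuous-parameter version of the van der Corput estimate (Lemma~\ref{VdC}) to the $L^\infty(X)$-valued family $u_g := c(g) f(g\,\cdot\,)$, obtaining
\[
|A_N(x)|^{2} \leq \Big| \ave{a}{\Fo_M} \ave{g}{\Fo_N} c(g)c(ga) f(gx) f(gax) \Big| + o_M(1),
\]
where $A_N(x) := \ave{g}{\Fo_N} c(g) f(gx)$. Integrating in $x$ and using the measure-preserving property together with the fact that $\ave{a}{\Fo_M} |\int f \cdot f(a\,\cdot\,)\,d\mu|^{2} \to 0$ for $f \perp \mathcal{K}$ only yields mean convergence $\int |A_N|^{2}\,d\mu \to 0$. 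To upgrade to pointwise convergence one bootstraps: for a set of $a$ of density close to $1$ the shifted weight $w_a(g) := c(g) c(ga)$ still satisfies a variant of \eqref{eq:cond}, and the inner average becomes a weighted ergodic average on the product system $X \times X$ with $f \otimes \ol{f}$ orthogonal to the corresponding Kronecker factor. Applying the eigenfunction analysis on the product system yields pointwise control at almost every $(x, a)$, and the Lindenstrauss random covering lemma then converts this into an a.e.-in-$x$ statement uniform in $a \in \Fo_M$, completing the proof after letting $M \to \infty$.

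\emph{Main obstacle.} The decisive difficulty is the upgrade from $L^{2}$ to pointwise convergence for $f \in \mathcal{K}^\perp$; direct integration in $x$ only gives $\int |A_N|^{2} \to 0$. The pointwise statement demands the second van der Corput reduction above combined with handling the resulting almost-everywhere statements uniformly in the shift parameter, which is where the Lindenstrauss covering lemma is indispensable. The hypothesis that $(\Fo_N)$ is a tempered \emph{strong} F\o{}lner sequence enters precisely at this step, the strongness being necessary to invoke the covering lemma in the non-discrete group setting.
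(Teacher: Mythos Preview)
Your plan has a genuine gap in the treatment of $f \in \mathcal{K}^\perp$, precisely at the point you flag as the main obstacle. The proposed bootstrap---reduce via van der Corput to the weight $w_a(g) = c(g)c(ga)$ acting on the product system $X \times X$ and then ``apply the eigenfunction analysis on the product system''---does not close. First, $f \otimes \ol f$ lies in the \emph{anti}-Kronecker part of $X\times X$, so the eigenfunction analysis does not apply to it; you are back to the same problem one level down, with no decreasing parameter. Second, the claim that $w_a$ satisfies a variant of \eqref{eq:cond} for most $a$ is unjustified: \eqref{eq:cond} controls second-order self-correlations $\ave{g}{\Fo_n} c(g)c(ga)$, whereas the analogous condition for $w_a$ requires fourth-order information $\ave{g}{\Fo_n} c(g)c(ga)c(gb)c(gab)$, which \eqref{eq:cond} does not provide. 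The eigenfunction step also has problems: the change of variables $h = ga$ with $g$ ranging over all of $\Fo_N$ incurs an error $\ave{g}{\Fo_N}|g^{-1}\Fo_N \triangle \Fo_N|/|\Fo_N|$ that the (left) F\o{}lner property does not bound; invoking $S_{\delta,N,N}$ at the same scale $N$ is circular since \eqref{eq:cond} only gives lower density of $S_{\delta,L,R}$ for \emph{fixed} $L,R$; and for non-abelian $\AG$ the Kronecker factor is spanned by finite-dimensional invariant subspaces rather than one-dimensional eigenfunctions (cf.\ the discussion preceding Corollary~\ref{cor:wiener-wintner}).

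The paper does not decompose $f$ at all. It argues by contradiction and transference: if convergence to zero fails on a set $\Xi\subset X$ with $\mu(\Xi)>\epsilon$, one fixes a fully generic base point $x_0$ and reduces to a statement about the single function $g\mapsto f(gx_0)$ on $\AG$. The engine is the purely group-theoretic Lemma~\ref{lem:orth}: using \eqref{eq:cond} one chooses $K$ well-separated scale windows $[L_j,R_j]$ and, via the Lindenstrauss random covering lemma, builds random sums $c^{(1)},\dots,c^{(K)}$ of shifted initial segments of $c$ that each correlate with $f$ at level $\gtrsim\epsilon$ but are pairwise nearly orthogonal. A Bessel-type Hilbert-space inequality then forces the average density of the bad sets
\[
A_{(j)} = \Big\{a : \max_{N\in[L_j,R_j]} \big|\ave{g}{\Fo_N} c(g) f(ga)\big| \geq \epsilon\Big\}
\]
to be $O(\epsilon^{-1}K^{-1/2})$, contradicting $\mu(\Xi)>\epsilon$ once $K$ is large. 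The strong F\o{}lner hypothesis enters only through the boundaries $\partial_{\Fo_{(j)}}\Fo_N$ in Lemma~\ref{lem:orth}; the Kronecker factor of $(X,\AG)$ plays no role in this step.
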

This, together with a Wiener-Wintner type result, leads to the following return times theorem.
\begin{theorem}
\label{thm:rtt-amenable}
\index{Return times theorem!for amenable groups}
Let $\AG$ be a lcsc group with a tempered strong F\o{}lner sequence $(\Fo_{n})$.
Then for every ergodic measure-preserving system $(X,\AG)$ and every $f\in L^{\infty}(X)$ there exists a full measure set $\tilde X\subset X$ such that for every $x\in\tilde X$ the map $g\mapsto f(gx)$ is a good weight for the pointwise ergodic theorem along $(\Fo_{n})$.
\end{theorem}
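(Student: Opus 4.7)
The plan is to apply the standard BFKO dichotomy: decompose $f = f_{\mathcal K} + f^{\perp}$, where $f_{\mathcal K} = \E(f \mid \mathcal K)$ is the conditional expectation onto the Kronecker factor $\mathcal K$ of $(X,\AG)$ and $f^{\perp} = f - f_{\mathcal K}$, and handle the two summands by different mechanisms on a common full-measure set $\tilde X$ (allowed to depend on $f$). Up to replacing $X$ by the separable $\AG$-invariant sub-$\sigma$-algebra generated by $f$, we may assume $X$ is regular and separable, so that $\mathcal K$ is also separable.

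For the orthogonal part, Lemma~\ref{lem:ae-perp} produces a full measure set $X_{0}\subset X$ such that for every $x\in X_{0}$ the bounded map $c_{x}(g):=f^{\perp}(gx)$ satisfies the orthogonality condition~\eqref{eq:cond}. Theorem~\ref{thm:good-weight-ptw-conv-to-zero} then asserts that for each such $x$, every ergodic target system $(Y,S)$ and every $h\in L^{\infty}(Y)$, the averages $\ave{g}{\Fo_{n}} c_{x}(g)\, h(gy)$ converge to zero for a.e.~$y\in Y$. A standard ergodic decomposition plus Fubini argument extends this from ergodic to arbitrary measure-preserving targets.

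For the Kronecker part I would prove a Wiener-Wintner type statement: a full measure set $X_{1}\subset X$ such that for every $x\in X_{1}$ the map $g\mapsto f_{\mathcal K}(gx)$ is a universally good weight for the pointwise ergodic theorem along $(\Fo_{n})$. After separability reductions, $\mathcal K$ identifies with rotation on a compact metric monothetic group $Z$ with dense continuous image of $\AG$, so $f_{\mathcal K}$ pulls back to an $L^{2}(Z)$-function approximable by finite linear combinations of characters $\chi\in\widehat Z$. Along an orbit one has $\chi(gx) = \chi(x)\,\psi(g)$ with $\psi := \chi|_{\AG}$ a continuous character of $\AG$, so for each fixed target $(Y,S,h)$ the weighted averages $\ave{g}{\Fo_{n}} \psi(g)\, h(gy)$ converge for a.e.~$y$ by applying the Lindenstrauss pointwise ergodic theorem (Theorem~\ref{thm:Lindenstrauss-pointwise}) to the skew product $Y\times\T$ with the transformation $(y,z)\mapsto (gy,\psi(g)z)$. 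Fixing a countable dense family of trigonometric polynomials on $Z$ and intersecting the corresponding full-measure sets in $X$, one then passes to general $f_{\mathcal K}\in L^{\infty}(\mathcal K)$ by a Banach principle based on the Lindenstrauss maximal inequality~\eqref{eq:maximal-inequality}.

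Setting $\tilde X := X_{0}\cap X_{1}$ and summing the two contributions yields the theorem. The main technical obstacle is the Wiener-Wintner step: one must arrange the full-measure set $X_{1}$ so that it works uniformly over \emph{all} target systems simultaneously. The key observation is that the approximating characters $\psi_k$ of $\AG$ form a countable family depending only on $f_{\mathcal K}$, each $\psi_k$ is handled universally by Theorem~\ref{thm:Lindenstrauss-pointwise} on an arbitrary target, and the maximal inequality allows the approximation error to be controlled uniformly in $(Y,S,h)$; so intersecting only countably many $\AG$-invariant full-measure subsets of $X$ suffices.
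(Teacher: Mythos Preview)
Your treatment of $f^\perp$ matches the paper exactly (Lemma~\ref{lem:ae-perp} plus Theorem~\ref{thm:good-weight-ptw-conv-to-zero}), and the overall splitting $f=f_{\mathcal K}+f^\perp$ together with the maximal-inequality reduction is the same. There is, however, a genuine gap in your handling of the Kronecker part: the group $\AG$ is not assumed abelian, so the Kronecker factor $Z$ need not be a \emph{monothetic} (or even abelian) compact group, and one-dimensional characters $\chi\in\widehat Z$ need not span $L^{2}(Z)$. What is true in general (and what the paper uses) is that the Kronecker factor is spanned by the finite-dimensional $\AG$-invariant subspaces of $L^2(X)$; a function $f$ in such a subspace satisfies $f(gx)=\phi(\chi(g)u_0)$ for a finite-dimensional unitary representation $\chi\colon\AG\to U(d)$ and some $\phi\in C(U(d))$. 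Your skew-product idea survives once you replace $Y\times\T$ by $Y\times U(d)$ and argue with matrix coefficients, after which your approximation-plus-maximal-inequality step goes through.

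Even with that fix, the route differs from the paper's. Rather than applying Lindenstrauss on a skew product of the target for each fixed representation, the paper first establishes the stand-alone Wiener--Wintner type result Corollary~\ref{cor:wiener-wintner}: for any system $(Y,\AG)$ and $h\in L^\infty(Y)$ there is a single full-measure set $\tilde Y$ on which $\ave{g}{\Fo_N} h(gy)\,\phi(\chi(g)\omega)$ converges for \emph{every} compact group $\Omega$, homomorphism $\chi\colon\AG\to\Omega$, function $\phi\in C(\Omega)$ and point $\omega\in\Omega$. The return times theorem then follows by applying this to the target with $\Omega=U(d)$ and $\omega=u_0$. This packaging handles the nonabelian case automatically and yields an intermediate statement of independent interest; your approach (once repaired) is more direct but produces only a full-measure set in $Y$ that depends on the countable family of representations coming from the particular source system $X$, which is still enough for the theorem.
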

The material in this section first appeared in \cite{arxiv:1301.1884}.

\subsection{The orthogonality condition}
\label{subsec:cond}
Now we verify that the BFKO condition implies \eqref{eq:cond}.
\begin{lemma}
\label{lem:to-zero-in-density}
Let $\AG$ be a lcsc group with a tempered F\o{}lner sequence $(\Fo_{n})$.
Let $(X,\AG)$ be an ergodic measure-preserving system and $f\in L^{\infty}(X)$ be bounded by $1$.
Let $x\in X$ be a fully generic point for $f$ such that
\begin{equation}
\label{eq:BFKO-cond}
\lim_{n}\ave{g}{\Fo_{n}} f(gx)f(g\xi) = 0
\quad\text{for a.e. }\xi\in X.
\end{equation}
Then the map $g\mapsto f(gx)$ satisfies \eqref{eq:cond}.
\end{lemma}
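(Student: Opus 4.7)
The key object is
\[
F_n(\xi) := \ave{g}{\Fo_n} f(gx) f(g\xi) \quad (\xi \in X),
\]
and the hypothesis says precisely that $F_n\to 0$ pointwise $\mu$-a.e., while the quantity whose size we need to control for \eqref{eq:cond} is $F_n(ax)$, uniformly in $n\in[L,R]$. The plan is in three steps: (i) identify $F_n$ and certain derived functions as members of the closed convolution-invariant algebra $\mathcal{A}\subset L^\infty(X)$ generated by $f$; (ii) apply Egoroff's theorem to make the relevant functions uniformly small on a subset of $X$ of large measure; (iii) invoke the full genericity of $x$ to convert this measure estimate on $X$ into a lower density estimate for $S_{\delta,L,R}$ on $\AG$.

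For (i), one writes $F_n = c_n * f$ where $c_n(h) := |\Fo_n|\inv 1_{\Fo_n}(h\inv) f(h\inv x)$ is bounded and compactly supported, hence in $L^1(\AG)$, so $F_n\in\mathcal{A}$. Because $\mathcal{A}$ is a closed real subalgebra of $L^\infty(X)$, polynomial approximation of $\sqrt{\cdot}$ on a bounded interval shows it is closed under $|\cdot|$, and therefore under finite maxima and, by uniform polynomial approximation, under composition with continuous functions. In particular, setting $H_{L,R} := \max_{L\leq n\leq R}|F_n|$, both $H_{L,R}$ and $\phi\circ H_{L,R}$ lie in $\mathcal{A}$ for any $\phi\in C([0,1])$.

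For (ii), fix $\delta>0$. Since $F_n\to 0$ $\mu$-a.e.\ on the probability space $X$, Egoroff's theorem produces a measurable $B\subseteq X$ with $\mu(B) > 1-\delta/2$ and an integer $N_\delta$ such that $|F_n(\xi)|<\delta/2$ for every $\xi\in B$ and every $n\geq N_\delta$. Then for any $R\geq L\geq N_\delta$ we have $\{H_{L,R}\geq\delta\}\subseteq B^c$, and choosing a continuous $\phi:[0,1]\to[0,1]$ that vanishes on $[0,\delta/2]$ and equals $1$ on $[\delta,1]$ yields
\[
1_{\{H_{L,R}\geq\delta\}} \leq \phi\circ H_{L,R} \leq 1_{B^c}, \qquad \int_X \phi(H_{L,R})\dif\mu \leq \delta/2.
\]

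For (iii), full genericity of $x$ applied to $\phi\circ H_{L,R}\in\mathcal{A}$ gives
\[
\lim_{N\to\infty} \ave{a}{\Fo_N} \phi(H_{L,R})(ax) = \int_X \phi(H_{L,R})\dif\mu \leq \delta/2,
\]
and since $a\in S_{\delta,L,R}^{c}$ forces $H_{L,R}(ax)\geq\delta$ and hence $\phi(H_{L,R})(ax)=1$, this gives $\ud(S_{\delta,L,R}^{c})\leq\delta/2$, i.e.\ $\ld(S_{\delta,L,R}) > 1-\delta$, verifying \eqref{eq:cond}. The main technical point is the stability of $\mathcal{A}$ under the continuous functional calculus; granted that, the rest of the proof is a direct marriage of Egoroff's theorem with the definition of full genericity, and temperedness of $(\Fo_n)$ enters only through the existence of fully generic points supplied by Theorem~\ref{thm:Lindenstrauss-pointwise}.
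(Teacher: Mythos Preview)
Your proof is correct and follows essentially the same approach as the paper: both apply Egorov to the hypothesis \eqref{eq:BFKO-cond}, build a continuous bump function of the averages $F_n$ (you via $\phi\circ\max_{L\le n\le R}|F_n|$, the paper via a multivariable $\eta(|F_L|,\dots,|F_R|)$), note by Stone--Weierstrass/polynomial approximation that this lies in the closed convolution-invariant algebra generated by $f$, and then invoke full genericity of $x$ to convert the measure bound into a density bound. The only difference is that you track the indicator of the bad set while the paper tracks the indicator of the good set, which is purely cosmetic.
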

\begin{proof}
Let $\delta>0$ be arbitrary.
By Egorov's theorem there exists an $N_{\delta}\in\N$ and a set $\Xi\subset X$ of measure $>1-\delta$ such that for every $n\geq N_{\delta}$ and $\xi\in\Xi$ the average in \eqref{eq:BFKO-cond} is bounded by $\delta/2$.

Let $N_{\delta} \leq L \leq R$ be arbitrary and choose a continuous function $\eta : \R^{[L,R]} \to [0,1]$ that is $1$ when all its arguments are less than $\delta/2$ and $0$ when one of its arguments is greater than $\delta$ (here and later $[L,R]=\{L,L+1,\dots,R\}$).
Then by the Stone-Weierstrass theorem the function
\[
h(\xi):=\eta(|\ave{g}{\Fo_{L}}f(gx)f(g\xi)|,\dots,|\ave{g}{\Fo_{R}}f(gx)f(g\xi)|)
\]
lies in the closed convolution-invariant subalgebra of $L^{\infty}(X)$ spanned by $f$.

By the assumption $x$ is generic for $h$.
Since $h|_{\Xi} \equiv 1$, we have $\int_{X} h > 1-\delta$.
Hence the set of $a$ such that $h(ax)>0$ has lower density $>1-\delta$.

For every such $a$ we have
\[
| \ave{g}{\Fo_{n}} f(gax)f(gx) | < \delta,
\quad L\leq n\leq R.
\qedhere
\]
\end{proof}
\begin{proof}[Proof of Lemma~\ref{lem:ae-perp}]
Let $f\in L^{\infty}(X)$ be orthogonal to the Kronecker factor.
By \cite[Theorem 1]{MR0174705} this implies that the ergodic averages of $f\otimes f$ converge to $0$ in $L^{2}(X\times X)$.
By the Lindenstrauss pointwise ergodic theorem (Theorem~\ref{thm:Lindenstrauss-pointwise}) this implies \eqref{eq:BFKO-cond} for a.e.\ $x\in X$.
Since a.e.\ $x\in X$ is also fully generic for $f$, the conclusion follows from Lemma~\ref{lem:to-zero-in-density}.
\end{proof}

\subsection{Self-orthogonality implies orthogonality}
In our view, the BFKO orthogonality criterion is a statement about bounded measurable functions on $\AG$.
We encapsulate it in the following lemma.
\begin{lemma}
\label{lem:orth}
Let $(\Fo_{N})$ be a $C$-tempered strong F\o{}lner sequence.

Let $\epsilon>0$, $K\in\N$ and $\delta>0$ be sufficiently small depending on $\epsilon,K$.
Let $c\in L^{\infty}(\AG)$ be bounded by $1$ and $[L_{1},R_{1}],\dots,[L_{K},R_{K}]$ be a sequence of increasing intervals of natural numbers such that the following holds for any $j<k$ and any $N\in [L_{k},R_{k}]$.
\begin{enumerate}
\item $|\partial_{\Fo_{(j)}}\Fo_{N}|<\delta |\Fo_{N}|$, where $\Fo_{(j)}=\cup_{N=L_{j}}^{R_{j}} \Fo_{N}$
\item $S_{\delta,L_{j},R_{j}}$ has density at least $1-\delta$ in $\Fo_{N}$.
\end{enumerate}
Let $f\in L^{\infty}(\AG)$ be bounded by $1$ and consider the sets
\[
A_{N}:=
\{a : |\ave{g}{\Fo_{N}}c(g)f(ga)| \geq \epsilon\},
\quad
A_{(j)} := \cup_{N=L_{j}}^{R_{j}}A_{N}.
\]
Then for every compact set $I\subset \AG$ with $|I\cap \Fo_{(j)}\inv I^{\complement}|<\delta |I|$ for every $j$ we have
\[
\frac1K \sum_{j=1}^{K} d_{I}(A_{(j)}) < \frac{5C}{\epsilon \sqrt{K}}.
\]
\end{lemma}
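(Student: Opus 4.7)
The plan is to adapt the Bourgain-Furstenberg-Katznelson-Ornstein $L^2$ argument to the non-discrete amenable setting, using the Lindenstrauss random covering lemma (Lemma~\ref{lem:lindenstrauss-covering}) in place of a Vitali lemma. For each $a \in A_N$ first fix a sign $\sigma(N,a) \in \{\pm 1\}$ such that $\sigma(N,a) \int_{\Fo_N} c(g) f(ga) \dif g \geq \epsilon |\Fo_N|$, and then, for each scale $j$, apply Lemma~\ref{lem:lindenstrauss-covering} to the family $\{\Fo_N a\}_{N \in [L_j, R_j],\, a \in A_N}$ trimmed to remain inside $I$. The boundary hypotheses $|\partial_{\Fo_{(j)}} \Fo_N| < \delta |\Fo_N|$ and $|I \cap \Fo_{(j)}\inv I^{\complement}| < \delta |I|$ ensure that the trimming throws away at most a $\delta$-fraction of each $A_{(j)}$. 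The Lindenstrauss lemma produces independent random configurations $\Sigma_j$ whose counting functions $\Lambda_j$ satisfy $E(\Lambda_j^2(x) \mid \Lambda_j(x) \geq 1) \leq (1+C\inv)^2$ and $E\int \Lambda_j \geq (2C)\inv |A_{(j)}|$.

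With the test function
\[
\phi_j(x) := \sum_{N \in [L_j, R_j]} \sum_{a \in \Sigma_{j,N}} \sigma(N,a) c(xa\inv) 1_{\Fo_N a}(x),
\]
which is supported in $I$ and satisfies $|\phi_j| \leq \Lambda_j$ and $\int \phi_j f \geq \epsilon \int \Lambda_j$, taking expectations and summing over $j$ gives
\[
\sum_{j=1}^K E\Big[\int \phi_j f\Big] \geq \frac{\epsilon|I|}{2C} \sum_{j=1}^K d_I(A_{(j)}).
\]
Applying Cauchy-Schwarz on the left (using $\operatorname{supp}(\sum_j \phi_j) \subset I$ and $|f| \leq 1$) converts the desired bound into an estimate for $E\|\sum_j \phi_j\|_2^2 = \sum_j E\|\phi_j\|_2^2 + \sum_{j\neq j'} E\int \phi_j \phi_{j'}$. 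The diagonal contribution is bounded by $K(1+C\inv)^2 |I|$ via $\|\phi_j\|_2^2 \leq \int \Lambda_j^2$ and the second-moment bound from Lemma~\ref{lem:lindenstrauss-covering}, combined with $P(\Lambda_j(x) \geq 1) \leq E\Lambda_j(x)$ and the fact that all selected Følner translates sit inside $I$.

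The off-diagonal estimate is the technical crux and is where the self-orthogonality hypothesis (2) is used. By independence of $\Sigma_j$ and $\Sigma_{j'}$ for $j \neq j'$, $E\int \phi_j \phi_{j'} = \int E\phi_j(x) \cdot E\phi_{j'}(x) \dif x$ decouples. Expanding both expectations using the Poisson intensities $\delta_{\mathrm{Lin}}/|\Fo_N|$ and performing the substitution $g = xa\inv$ (with $j < j'$, so that $N \leq N'$ after relabeling), each contribution reduces to the self-correlation integral $\int_{\Fo_N} c(g) c(g a' a\inv) \dif g$. Hypothesis (2) bounds this by $\delta |\Fo_N|$ whenever the shift $a'a\inv$ lies in $S_{\delta, L_j, R_j}$; translating \emph{density in $\Fo_{N'}$} (as supplied by the hypothesis) into \emph{density in $I$} (where the Lindenstrauss points actually live) is effected by approximately tiling $I$ with translates of $\Fo_{N'}$, the error being controlled by the Følner-type boundary conditions on $I$. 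The resulting bound $E\int \phi_j \phi_{j'} \lesssim \delta |I|$ gives a total off-diagonal contribution $O(K^2 \delta |I|)$, which is negligible for $\delta$ chosen sufficiently small in terms of $K$ and $\epsilon$. Combining diagonal and off-diagonal estimates in the Cauchy-Schwarz inequality then yields $\sum_j d_I(A_{(j)}) \leq 5 C \sqrt{K}/\epsilon$ after tuning the constants. The hard part will be executing the density-translation argument cleanly: the geometric interaction between $I$, the Følner sets $\Fo_N$ at different scales, and the random point process from Lemma~\ref{lem:lindenstrauss-covering} requires careful bookkeeping, and the quantitative requirements on $\delta$ must be tuned to ensure that the various boundary errors do not accumulate through the $K^2$ off-diagonal summation.
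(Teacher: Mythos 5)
Your overall strategy matches the paper's: random covering via Lemma~\ref{lem:lindenstrauss-covering}, signed test functions $\phi_j$ supported on the selected F\o{}lner translates with $\int\phi_j f\gtrsim\epsilon|A_{(j)}|$ in expectation, and a Cauchy--Schwarz energy argument that requires the $\phi_j$ to be nearly orthogonal. The genuine departure is in \emph{how} near-orthogonality is obtained, and that is where the proposal has a gap. You take the coverings at different scales $j\ne j'$ to be \emph{independent} and decouple $E\int\phi_j\phi_{j'}=\int E\phi_j(x)\cdot E\phi_{j'}(x)\,dx$, then propose to compute each $E\phi_j(x)$ by ``expanding the expectations using the Poisson intensities $\delta/|\Fo_N|$''. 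That expansion is not correct: the Lindenstrauss construction prunes the processes inside a single batch $[L_j,R_j]$ sequentially --- $\Sigma_N=\Upsilon_N|_{A_{N|N+1}}$, where $A_{N|N+1}$ is a \emph{random} set depending on $\Sigma_{N+1},\dots,\Sigma_{R_j}$ --- and this pruning is precisely what lets the covering estimate $E\int\Lambda^{(j)}\geq(2C)^{-1}|A_{(j)}|$ and the overlap estimate $E(\Lambda^{(j)}(g)\mid\Lambda^{(j)}(g)\geq1)\leq1+C^{-1}$ hold simultaneously. Consequently the marginal $E\Sigma_N$ is not the unrestricted Poisson intensity; it is that intensity restricted to a random set with nontrivial distribution. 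Since the off-diagonal bound must capture a genuine $\delta$-gain from cancellation (rather than a one-sided upper bound), the crude domination $E\Sigma_N\leq\alpha_N\rho$ does not suffice on its own, and the additional work needed --- conditioning on the $A_{N|N+1}$'s and controlling how the bad-shift set $\Fo_{N'}\setminus S_{\delta,L_j,R_j}$ interacts with all the pruned intensities at once --- is exactly the ``careful bookkeeping'' you flag but do not supply.

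The paper avoids this problem by giving up independence across scales. It constructs the $\Sigma$'s in descending order $j=K,\dots,1$, and when selecting at scale $j$ it restricts the admissible origins to a set $O^{(j)}$ that \emph{deterministically} excludes: origins near $\partial I$; origins near the boundaries $\partial_{\Fo_{(j)}}(\Fo_N a)$ of the translates $\Fo_N a$ already placed at scales $k>j$; and origins $b$ at a bad shift relative to those translates, namely $ba^{-1}\in S_{\delta,L_j,R_j}^{\complement}\cap\Fo_N$. With these exclusions, every cross-term $\int 1_{\Fo_n b}\,1_{\Fo_N a}\,c(\cdot\,b^{-1})c(\cdot\,a^{-1})$ between scales $j<k$ is either zero or an $\Fo_n$-average of $c\cdot c(\cdot\,ba^{-1})$ with $ba^{-1}\in S_{\delta,L_j,R_j}$, hence of size $<\delta|\Fo_n b|$; the near-orthogonality $\bigl|\int_I c^{(j)}c^{(k)}\bigr|\lesssim\delta\int_I\Lambda^{(j)}\Lambda^{(k)}$ therefore holds \emph{pathwise}, and the expectation bound follows from the first and second moment estimates of Lemma~\ref{lem:lindenstrauss-covering} alone. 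Hypothesis~(2) is then used not in the off-diagonal term itself but to show $E|O^{(j)}|\geq|I|\bigl(d_I(A_{(j)})-4\delta K\bigr)$, i.e.\ that the exclusions cost only an $O(\delta K)$-fraction of $A_{(j)}$. Switching to this dependent, sequential construction with the deterministic restriction $O^{(j)}$ is the essential ingredient missing from your proposal.
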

Under the assumption \eqref{eq:cond} a sequence $[L_{1},R_{1}],\dots,[L_{K},R_{K}]$ with the requested properties can be constructed for any $K$.
\begin{proof}
For $1\leq k\leq K$, $L_{k}\leq N \leq R_{k}$ let $\Upsilon_{N}:\Omega_{N}\to M(\AG)$ be independent Poisson point processes of intensity $\alpha_{N}=\delta |\Fo_{N}|\inv$ w.r.t.\ the \emph{right} Haar measure.

Let $\Omega = \prod_{k=1}^{K}\prod_{N=L_{k}}^{R_{k}}\Omega_{N}$.
We construct random variables $\Sigma_{N} : \Omega \to M(A_{N})$ that are in turn used to define functions
\[
c^{(k)} := \sum_{N=L_{k}}^{R_{k}}\sum_{a\in\Sigma_{N}} \pm c|_{\Fo_{N}}(\cdot a\inv),
\quad k=1,\dots,K,
\]
where the sign is chosen according to as to whether $\ave{g}{\Fo_{N}}c(g)f(ga)$ is positive or negative.
These functions will be mutually nearly orthogonal on $I$ and correlate with $f$, from where the estimate will follow by a standard Hilbert space argument.

We construct the random variables in reverse order, beginning with $k=K$.
Let the set of ``admissible origins'' be
\[
O^{(j)}:=A_{(j)}\cap
\Big(\big(I
\setminus \Fo_{(j)}\inv I^{\complement}
\big)
\setminus \cup_{k=j+1}^{K}\cup_{N=L_{k}}^{R_{k}}\cup_{a\in\Sigma_{N}} (\partial_{\Fo_{(j)}}(\Fo_{N}) \cup (S_{\delta,L_{j},R_{j}}^{\complement} \cap \Fo_{N}))a
\Big).
\]
This set consists of places where we could put copies of initial segments of $c$ in such a way that they would correlate with $f$ and would not correlate with the copies that were already used in the functions $c^{(k)}$ for $k>j$.

Let $A_{N|R_{j}+1} := O^{(j)}\cap A_{N}$ and construct random coverings $\Sigma_{N}$, $N=L_{j},\dots,R_{j}$ as in Lemma~\ref{lem:lindenstrauss-covering} (if the Vitali lemma is available, then one can use deterministic coverings that it provides instead).
By Lemma~\ref{lem:lindenstrauss-covering} the counting function
\[
\Lambda^{(j)} = \sum_{N=L_{j}}^{R_{j}}\Lambda_{N},
\quad
\Lambda_{N}(g)(\omega) = \sum_{a\in\Sigma_{N}(\omega)}1_{\Fo_{N}a}(g)
\]
satisfies
\begin{enumerate}
\item $\E(\Lambda^{(j)}(g)) \leq (1+C\inv)$ for every $g\in \AG$
\item $\E(\Lambda^{(j)}(g)^{2}) \leq (1+C\inv)^{2}$ for every $g\in \AG$
\item $\E(\int\Lambda^{(j)}) \geq (2C)\inv |O^{(j)}|$.
\end{enumerate}
In particular, the last condition implies that
\[
\E\int_{I} c^{(j)}f > \epsilon(2C)\inv |O^{(j)}|,
\]
while the second shows that $\|c^{(j)}\|_{L^{2}(\Omega\times I)}\leq (1+C\inv)|I|^{1/2}$.
Moreover, it follows from the definition of $O^{(j)}$ that
\[
|\E\int_{I} c^{(j)} c^{(k)} | \leq |I|\delta(1+C\inv)
\]
whenever $j<k$.
Using the fact that $|c^{(j)}|\leq \Lambda^{(j)}$ and the Hölder inequality we obtain
\begin{multline*}
\sum_{j=1}^{K} \epsilon(2C)\inv \E|O^{(j)}|
< \E\int_{I} \sum_{j=1}^{K} c^{(j)}f
\leq \big( \E\int_{I} \big( \sum_{j=1}^{K} c^{(j)} \big)^{2} \big)^{1/2} |I|^{1/2}\\
< |I| \sqrt{K(1+C\inv)^{2} + K^{2}\delta(1+C\inv)}.
\end{multline*}
This can be written as
\[
\frac1K \sum_{j=1}^{K} \E|O^{(j)}|
< \frac{|I|}{\epsilon\gamma} \sqrt{(1+C\inv)^{2}/K+\delta(1+C\inv)}.
\]
Finally, the set $O^{(j)}$ has measure at least
\begin{multline*}
|I|(d_{I}(A_{(j)})-\delta)
-\sum_{k=j+1}^{K}\sum_{N=L_{k}}^{R_{k}}\sum_{a\in\Sigma_{N}} (|\partial_{\Fo_{(j)}}(\Fo_{N}a)|+|(S_{\delta,L_{j},R_{j}}^{\complement} \cap \Fo_{N})a|)\\
\geq
|I| (d_{I}(A_{(j)})-\delta)
-2\delta \sum_{k=j+1}^{K}\sum_{N=L_{k}}^{R_{k}}\sum_{a\in\Sigma_{N}} |\Fo_{N}a|
\end{multline*}
(here we have used the largeness assumptions on $L_{k}$), so
\[
\E |O^{(j)}|
\geq
|I| (d_{I}(A_{(j)})-\delta)
-2\delta (K-j) |I| (1+C\inv)
>
|I| (d_{I}(A_{(j)})-4\delta K)
\]
and the conclusion follows provided that $\delta$ is sufficiently small.
\end{proof}

The BFKO criterion for measure-preserving systems follows by a transference argument.
\begin{proof}[Proof of Theorem~\ref{thm:good-weight-ptw-conv-to-zero}]
Assume that the conclusion fails for some measure-preserving system $(X,\AG)$ and $f\in L^{\infty}$.
Then we obtain some $\epsilon>0$ and a set of positive measure $\Xi\subset X$ such that
\[
\limsup_{N\to\infty} | \ave{g}{\Fo_{N}}c(g)f(gx) | > 2\epsilon
\quad\text{for all } x\in\Xi.
\]
We may assume $\mu(\Xi)>\epsilon$.
Shrinking $\Xi$ slightly (so that $\mu(\Xi)>\epsilon$ still holds) we may assume that for every $\ul{N}\in\N$ there exists $F(\ul{N})\in\N$ (independent of $x$) such that for every $x\in\Xi$ there exists $\ul{N}\leq N\leq F(\ul{N})$ such that the above average is bounded below by $\epsilon$.

Let $K > 25 C^{2} \epsilon^{-4}$ and $[L_{1},R_{1}],\dots,[L_{K},R_{K}]$ be as in Lemma~\ref{lem:orth} with $R_{j}=F(L_{j})$.
In this case that lemma says that at least one of the sets $\cup_{N=L_{j}}^{R_{j}}A_{N}$ has upper density less than $\epsilon$.

Choose continuous functions $\eta_{j} : \R^{[L_{j},R_{j}]}\to [0,1]$ that are $1$ when at least one of their arguments is greater than $2\epsilon$ and $0$ if all their arguments are less than $\epsilon$.
Let
\[
h(x) := \prod_{j=1}^{K} \eta_{j}(|\ave{g}{\Fo_{L_{j}}}c(g)f(gx)|,\dots,|\ave{g}{\Fo_{R_{j}}}c(g)f(gx)|).
\]
By construction of $F$ we know that $h|_{\Xi}\equiv 1$, so that $\int_{X} h > \epsilon$.
Let $x_{0}$ be a generic point for $h$ (e.g.\ any fully generic point for $f$), then $\ld\{a : h(ax_{0})>0\} > \epsilon$.
In other words,
\[
\ld\{ a : \forall j\leq K\,
\exists N \in [L_{j},R_{j}]\,
| \ave{g}{\Fo_{N}} c(g)f(gax_{0}) | > \epsilon
\} > \epsilon.
\]
This contradicts Lemma~\ref{lem:orth} with $f(g)=f(gx_{0})$.
\end{proof}

For translations on compact groups we obtain the same conclusion everywhere.
It is not clear to us whether an analogous statement holds for general uniquely ergodic systems.
\begin{corollary}
\label{cor:good-weight-everywhere-conv-to-zero}
Let $\AG$ be a lcsc group with a $C$-tempered strong F\o{}lner sequence $(\Fo_{n})$.
Let $c\in L^{\infty}(\AG)$ be a function bounded by $1$ that satisfies the condition \eqref{eq:cond}.
Let also $\Omega$ be a compact group and $\chi : \AG\to\Omega$ a continuous homomorphism.
Then for every $\phi\in C(\Omega)$ we have
\[
\lim_{N\to\infty} \ave{g}{\Fo_{N}} c(g) \phi(\chi(g)\omega) = 0
\quad \text{for every } \omega\in\Omega.
\]
\end{corollary}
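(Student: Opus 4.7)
The plan is to reduce to Theorem~\ref{thm:good-weight-ptw-conv-to-zero} by exploiting the homogeneous structure of $\Omega$ together with a translation trick. First I would replace $\Omega$ by the compact subgroup $H := \overline{\chi(\AG)} \leq \Omega$; this is a closed subgroup of $\Omega$ and hence itself a compact group. The continuous homomorphism $\chi : \AG \to H$ has dense image, so the action of $\AG$ on $H$ given by $g \cdot h := \chi(g)h$ preserves Haar measure $\mu_H$ and is ergodic (and in fact uniquely ergodic, a classical fact for dense translation actions on compact groups). Also, any $\omega \in \Omega$ lies in a right coset $H\omega$, and since Haar measure is translation invariant the map $h \mapsto h\omega$ gives an $\AG$-equivariant measure isomorphism $(H,\mu_H) \to (H\omega,\mu_{H\omega})$.

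Next, Theorem~\ref{thm:good-weight-ptw-conv-to-zero} applied to $(H,\mu_H,\AG)$ yields, for every $\psi \in C(H) \subset L^\infty(H)$, that
\[
\lim_{N\to\infty}\ave{g}{\Fo_N} c(g)\,\psi(\chi(g)h) = 0
\quad\text{for $\mu_H$-a.e. } h\in H.
\]
Since $C(H)$ is separable and $\|c\|_\infty\leq 1$, a standard approximation argument lets me choose a single $h_0 \in H$ at which this limit vanishes for every $\psi \in C(H)$ simultaneously.

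The key step is to upgrade convergence at $h_0$ to convergence at every point of $H$. Given any $h \in H$ and $\psi \in C(H)$, the function $\psi'(x) := \psi(x h_0^{-1}h)$ is again in $C(H)$, and $\psi(\chi(g)h) = \psi'(\chi(g)h_0)$, so
\[
\lim_{N\to\infty}\ave{g}{\Fo_N} c(g)\,\psi(\chi(g)h) = \lim_{N\to\infty}\ave{g}{\Fo_N} c(g)\,\psi'(\chi(g)h_0) = 0.
\]
Finally, for an arbitrary $\omega \in \Omega$, set $\psi(h) := \phi(h\omega)$, which lies in $C(H)$; applying the previous identity with $h = e_H$ gives
\[
\lim_{N\to\infty}\ave{g}{\Fo_N} c(g)\,\phi(\chi(g)\omega) = \lim_{N\to\infty}\ave{g}{\Fo_N} c(g)\,\psi(\chi(g)) = 0,
\]
as required. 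The only mild obstacle is selecting the single point $h_0$ that works for every $\psi$, which is dispatched by separability of $C(H)$ together with the uniform bound on $c$.
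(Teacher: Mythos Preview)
Your argument is correct. The reduction to $H=\overline{\chi(\AG)}$ is the same as in the paper, and both proofs then invoke Theorem~\ref{thm:good-weight-ptw-conv-to-zero} to get the conclusion $\mu_H$-a.e. The difference lies only in how the upgrade from ``almost every'' to ``every'' $\omega$ is carried out. The paper appeals directly to uniform continuity of $\phi$: given an arbitrary $\omega$, pick a nearby $\omega'$ from the a.e.\ set and bound the difference of the averages by $\sup_{x}|\phi(x\omega)-\phi(x\omega')|$, which is small. You instead use separability of $C(H)$ to locate a single point $h_0$ that is good for \emph{all} continuous functions at once, and then exploit the group structure via the substitution $\psi\mapsto\psi(\cdot\,h_0^{-1}h)$ to transport convergence from $h_0$ to any $h$. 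Your route is a bit longer but has the pleasant feature that the final step is exact rather than approximate; the paper's route is shorter and uses only that $\Omega$ is a compact metric space plus the action by translations, not the full group structure on $\Omega$.
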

\begin{proof}
We may assume that $\chi$ has dense image, so that the translation action by $\chi$ becomes ergodic.
By Theorem~\ref{thm:good-weight-ptw-conv-to-zero} we obtain the conclusion a.e., and the claim follows by uniform continuity of $\phi$.
\end{proof}

For $\Z$-actions Lemma~\ref{lem:to-zero-in-density} and Theorem~\ref{thm:good-weight-ptw-conv-to-zero} imply the following orthogonality criterion which is due to Bourgain, Furstenberg, Katznelson, and Ornstein in the case of the standard Ces\`aro averages \cite[Proposition]{MR1557098}.
\begin{proposition}
\label{prop:BFKO}
Let $(X,T)$ be an ergodic measure-preserving system, $(\Fo_{N})$ a tempered F\o{}lner sequence in $\Z$, and $f\in L^{\infty}(X)$.
Assume that $x\in X$ is fully generic for $f$ and
\[
\aveFN f(T^{n}x) f(T^{n}\xi)\to 0
\quad\text{ as } N\to\infty
\,\text{ for a.e.\ }\xi\in X.
\]
Then for every measure-preserving system $(Y,S)$ and $g\in L^{\infty}(Y)$ we have
\[
\aveFN f(T^{n}x)g(S^{n}y)\to 0
\quad\text{ as } N\to\infty
\,\text{ for a.e.\ }y\in Y.
\]
\end{proposition}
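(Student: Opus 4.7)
The plan is to invoke the two preceding results in sequence, applied to the sequence $c\colon\Z\to\R$ defined by $c(n):=f(T^{n}x)$, which is bounded by $\|f\|_{\infty}$. The proposition is essentially a corollary of Lemma~\ref{lem:to-zero-in-density} and Theorem~\ref{thm:good-weight-ptw-conv-to-zero}, as signalled by the remark preceding the statement.

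First I would verify the orthogonality condition \eqref{eq:cond} for $c$. This is exactly the content of Lemma~\ref{lem:to-zero-in-density} applied to the $\Z$-action $(X,T)$: the hypotheses there are precisely the full genericity of $x$ for $f$ and the assumption $\aveFN f(T^{n}x)f(T^{n}\xi)\to 0$ for a.e.\ $\xi\in X$, both of which are given.

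Next I would apply Theorem~\ref{thm:good-weight-ptw-conv-to-zero} to $c$ and to the system $(Y,S)$ with weight function $g$. Two small points need attention. The theorem is stated for tempered \emph{strong} F\o{}lner sequences, but in the discrete group $\Z$ the weak and strong F\o{}lner properties coincide, so any tempered F\o{}lner sequence qualifies. The theorem also assumes ergodicity of the target system, whereas the proposition is stated for an arbitrary measure-preserving system $(Y,S)$. To bridge this gap, restrict $g$ to a separable $S$-invariant sub-$\sigma$-algebra, which reduces us to a regular measure-preserving system, and then disintegrate $\mu_{Y}=\int \mu_{Y,y}\,d\mu_{Y}(y)$ into ergodic components. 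Theorem~\ref{thm:good-weight-ptw-conv-to-zero} applied on each component yields $\aveFN c(n)g(S^{n}y')\to 0$ for $\mu_{Y,y}$-a.e.\ $y'$, and Fubini then gives the statement for $\mu_{Y}$-a.e.\ $y\in Y$.

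There is no genuine obstacle; the hard work has already been carried out in Lemma~\ref{lem:to-zero-in-density} (which packages the dynamical information into the analytic condition \eqref{eq:cond}) and in Theorem~\ref{thm:good-weight-ptw-conv-to-zero} (which transfers this analytic condition across an arbitrary ergodic target via the Lindenstrauss covering lemma argument of Lemma~\ref{lem:orth}). The only minor bookkeeping is the ergodic-decomposition reduction described above.
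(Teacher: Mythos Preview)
Your proposal is correct and matches the paper's approach exactly: the paper itself presents Proposition~\ref{prop:BFKO} as an immediate consequence of Lemma~\ref{lem:to-zero-in-density} and Theorem~\ref{thm:good-weight-ptw-conv-to-zero} without spelling out a separate proof. Your additional remarks (that weak and strong F\o{}lner coincide for the discrete group $\Z$, and that ergodic decomposition handles a non-ergodic target $(Y,S)$) fill in precisely the small bookkeeping the paper leaves implicit.
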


\subsection{Return times theorem for amenable groups}
We turn to the deduction of the return times theorem (Theorem~\ref{thm:rtt-amenable}).
This will require two distinct applications of Theorem~\ref{thm:good-weight-ptw-conv-to-zero}.
We begin with a Wiener-Wintner type result.

Recall that the Kronecker factor of a measure-preserving dynamical system corresponds to the reversible part of the Jacobs-de Leeuw-Glicksberg decomposition of the associated Koopman representation.
In particular, it is spanned by the finite-dimensional $\AG$-invariant subspaces of $L^{2}(X)$.
We refer to \cite{isem-book} for a treatment of the JdLG decomposition.

Let $F \subset L^{2}(X)$ be a $d$-dimensional $\AG$-invariant subspace and $f\in F$.
We will show that for a.e.\ $x\in X$ we have $f(gx) = \phi(\chi(g)u)$ for some $\phi\in C(U(d))$, continuous representation $\chi : \AG \to U(d)$, and a.e.\ $g\in \AG$.
To this end choose an orthonormal basis $(f_{i})_{i=1,\dots,d}$ of $F$.
Then by the invariance assumption we have $f_{i}(g\cdot) = \sum_{j} c_{i,j} f_{j}(\cdot)$, and the matrix $(c_{i,j})$ is unitary since the $\AG$-action on $X$ is measure-preserving.
This gives us a measurable representation $\chi$ that is automatically continuous \cite[Theorem 22.18]{0837.43002}.
The point $u=(u_{i})$ is given by the coordinate representation $f=\sum u_{i}f_{i}$.
Thus we have $f(g\cdot) = \sum_{i}(\chi(g)u)_{i} f_{i}(\cdot)$ in $L^{2}(X)$ and hence, fixing some measurable representatives for $f_{i}$'s, a.e.\ on $X$.
By Fubini's theorem we obtain a full measure subset of $X$ such that the above identity holds for a.e.\ $g\in \AG$.
For every $x$ from this set we obtain the claim with the continuous function $\phi(U) = \sum_{i} (Uu)_{i}f_{i}(x)$.

\begin{corollary}[Wiener-Wintner-type theorem]
\label{cor:wiener-wintner}
\index{Wiener-Wintner theorem!for amenable groups}
Let $\AG$ be a lcsc group with a tempered strong F\o{}lner sequence $(\Fo_{n})$.
Then for every ergodic measure-preserving system $(X,\AG)$ and every $f\in L^{\infty}(X)$ there exists a full measure set $\tilde X \subset X$ such that the following holds.
Let $\Omega$ be a compact group and $\chi : \AG\to\Omega$ a continuous homomorphism.
Then for every $\phi\in C(\Omega)$, every $\omega\in\Omega$ and every $x\in \tilde X$ the limit
\[
\lim_{N\to\infty} \ave{g}{\Fo_{N}} f(gx) \phi(\chi(g)\omega)
\]
exists.
\end{corollary}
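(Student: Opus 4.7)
The plan is to decompose $f = f_K + f^\perp$ via the Jacobs--de Leeuw--Glicksberg splitting, where $f_K$ is the projection onto the Kronecker factor $K\subset L^2(X)$ (the closed linear span of the finite-dimensional $\AG$-invariant subspaces) and $f^\perp \in K^\perp$. The two pieces are handled by independent mechanisms, and the central issue is to ensure that the exceptional nullset I eventually discard does \emph{not} depend on $(\Omega,\chi,\phi,\omega)$. For $f^\perp$, Lemma~\ref{lem:ae-perp} produces a full measure subset $\tilde X_0 \subset X$ on which the orbit map $g\mapsto f^\perp(gx)$ satisfies the orthogonality condition \eqref{eq:cond}, and then Corollary~\ref{cor:good-weight-everywhere-conv-to-zero} gives
\[
\lim_{N\to\infty} \ave{g}{\Fo_N} f^\perp(gx)\,\phi(\chi(g)\omega) = 0
\]
for every $x\in\tilde X_0$ and \emph{every} choice of $(\Omega,\chi,\phi,\omega)$, uniformly in the external parameters.

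For $f_K$, I would first fix once and for all a countable sequence $(f_k)\subset L^2(X)$ with $f_k\to f_K$ in $L^2$ and each $f_k$ contained in a finite-dimensional $\AG$-invariant subspace $F_k$. By the representation-theoretic discussion preceding the corollary, $F_k$ produces a continuous unitary representation $\chi_k:\AG\to U(d_k)$, a continuous function $\psi_k$ on $U(d_k)$, and, for $x$ in some full measure set $\tilde X_k$, a point $u_x^{(k)}\in U(d_k)$ such that $f_k(gx) = \psi_k(\chi_k(g)u_x^{(k)})$ for a.e.\ $g\in\AG$. For arbitrary external data the product $f_k(gx)\phi(\chi(g)\omega)$ then equals $\Psi_k((\chi_k(g),\chi(g))\cdot (u_x^{(k)},\omega))$ where $\Psi_k(v,\omega') := \psi_k(v)\phi(\omega')$ is continuous on the compact group $U(d_k)\times\Omega$.

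Let $H$ denote the closure of $(\chi_k,\chi)(\AG)$ in $U(d_k)\times\Omega$. Then $H$ is a compact group, the translation action of $\AG$ on $H$ preserves Haar measure, and any $\AG$-invariant probability measure is automatically invariant under the dense subgroup $H$, hence equal to Haar measure — so the action is uniquely ergodic. Together with equicontinuity of the averages $h\mapsto \ave{g}{\Fo_N}\Psi_k((\chi_k(g),\chi(g))\cdot h)$ (the action is isometric) this yields uniform convergence of these averages to $\int_H \Psi_k(\eta\cdot h)\,dm_H(\eta)$ in $h\in H$. In particular the averages $\ave{g}{\Fo_N} f_k(gx)\phi(\chi(g)\omega)$ converge for every $x\in \tilde X_k$ and every choice of $(\Omega,\chi,\phi,\omega)$.

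Finally, let $\tilde X_k'$ be the full measure set on which $x$ is generic for the $L^1$ function $|f_K - f_k|$ via Lindenstrauss' pointwise ergodic theorem (Theorem~\ref{thm:Lindenstrauss-pointwise}; temperedness of $(\Fo_N)$ is essential here), and set $\tilde X := \tilde X_0 \cap \bigcap_k (\tilde X_k \cap \tilde X_k')$. For $x\in\tilde X$ and any external data, the triangle inequality gives
\[
\Big| \ave{g}{\Fo_N} f_K(gx)\phi(\chi(g)\omega) - \ave{g}{\Fo_N} f_k(gx)\phi(\chi(g)\omega) \Big| \leq \|\phi\|_\infty \ave{g}{\Fo_N} |f_K - f_k|(gx),
\]
whose $\limsup$ in $N$ is bounded by $\|\phi\|_\infty \|f_K - f_k\|_{L^1(X)}$. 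Combining this error control with the convergence of the $f_k$-averages produced in the previous step and letting $k\to\infty$ shows that $\ave{g}{\Fo_N}f_K(gx)\phi(\chi(g)\omega)$ is Cauchy in $N$. The main obstacle, namely keeping $\tilde X$ uniform in $(\Omega,\chi,\phi,\omega)$, is resolved precisely by this strategy: the nullset at every stage depends only on $f$ and on the once-chosen countable approximants $(f_k)$, while the compact-group convergence in the middle step holds pointwise for \emph{every} starting point.
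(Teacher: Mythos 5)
Your argument is correct and follows essentially the same route as the paper: split $f$ along the Kronecker factor, treat the orthogonal part via Lemma~\ref{lem:ae-perp} and Corollary~\ref{cor:good-weight-everywhere-conv-to-zero}, and handle the Kronecker part by passing to finite-dimensional $\AG$-invariant subspaces, expressing $f_k(gx)$ through a finite-dimensional unitary representation, and invoking unique ergodicity of the translation on the closure of $(\chi_k,\chi)(\AG)$ in $U(d_k)\times\Omega$. The one genuine variation is the density step: the paper reduces to finite-dimensional pieces in one stroke by citing the Lindenstrauss maximal inequality \eqref{eq:maximal-inequality}, whereas you fix a countable approximating sequence $(f_k)\to f_K$ and control the tail by applying the pointwise ergodic theorem to each error function $|f_K-f_k|$, bounding $\limsup_N \ave{g}{\Fo_N}|f_K-f_k|(gx)$ by $\|f_K-f_k\|_{L^1}$ on the countable intersection $\bigcap_k\tilde X_k'$; both routes rely on the same maximal/ergodic machinery and are logically equivalent, but yours makes the construction of $\tilde X$ fully explicit, which is exactly the point the corollary is stressing. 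One small slip in wording: the invariant measure is invariant under the dense \emph{image} $(\chi_k,\chi)(\AG)$, and hence by density and weak-$*$ continuity under all of $H$ --- as written, ``invariant under the dense subgroup $H$'' is circular since $H$ is closed by construction.
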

\begin{proof}
By Lemma~\ref{lem:ae-perp} and Corollary~\ref{cor:good-weight-everywhere-conv-to-zero} we obtain the conclusion for $f$ orthogonal to the Kronecker factor.

By linearity and in view of the Lindenstrauss maximal inequality \eqref{eq:maximal-inequality} it remains to consider $f$ in a finite-dimensional invariant subspace of $L^{2}(X)$.
In this case, for a.e.\ $x\in X$ we have $f(gx)=\phi'(\chi'(g)u_{0})$ for some finite-dimensional representation $\chi':\AG\to U(d)$, some $u_{0}\in U(d)$, some $\phi'\in C(U(d))$ and a.e.\ $g\in \AG$.
The result now follows from uniqueness of the Haar measure on the closure of $\chi\times\chi'(\AG)$.
\end{proof}
A different proof using unique ergodicity of an ergodic group extension of a uniquely ergodic system can be found in \cite{MR1195256}.

Finally, the return times theorem follows from a juxtaposition of previous results.
\begin{proof}[Proof of Theorem~\ref{thm:rtt-amenable}]
By Lemma~\ref{lem:ae-perp} and Theorem~\ref{thm:good-weight-ptw-conv-to-zero} the conclusion holds for $f\in L^{\infty}(X)$ orthogonal to the Kronecker factor.

By linearity and in view of the Lindenstrauss maximal inequality \eqref{eq:maximal-inequality} it remains to consider $f$ in a finite-dimensional invariant subspace of $L^{2}(X)$.
In this case, for a.e.\ $x\in X$ we have $f(gx)=\phi(\chi(g)u_{0})$ for some finite-dimensional representation $\chi:\AG\to U(d)$, some $u_{0}\in U(d)$, some $\phi\in C(U(d))$ and a.e.\ $g\in \AG$.
The conclusion now follows from Corollary~\ref{cor:wiener-wintner}.
\end{proof}

\section{Multiple term return times theorem}
An extension of the return times theorem to averages involving multiple terms has been obtained by Rudolph \cite{MR1489899}.
The precise statement of this result is fairly long, so we begin by introducing the appropriate notation and concepts.
For the whole section we fix a tempered F\o{}lner sequence $(\Fo_{N})$ in $\Z$.

\subsection{Conventions about cube measures}
\begin{definition}
\label{def:D}
A \emph{system} is a regular ergodic measure-preserving system $(X,\mu,T)$ with a distinguished set $D\subset L^{\infty}(X)$ that satisfies the following conditions.
\begin{enumerate}
\item (Cardinality) $D$ is countable.
\item (Density) $D$ contains an $L^{\infty}$-dense subset of $C(X)$.
\item (Algebra) $D$ is a $T$-invariant $\Q$-algebra (i.e., closed under translation by $T$, pointwise product, and $\Q$-linear combinations) and is closed under absolute value.
\item (Decomposition) For every $f\in D$ and $l\in\N$ there exist $l$-step nilfactors $Z_{j}$, $j\in\N$, of $X$ and decompositions
\begin{equation}
\makeatletter
\def\tagform@#1{\maketag@@@{\ignorespaces#1\unskip\@@italiccorr} (l)}
\makeatother
\tag{Dec}
\label{eq:dec}
f=f_{\perp}+f_{\HKZ,j}+f_{err,j},
\quad j\in\N,
\end{equation}
such that $f_{\perp},f_{\HKZ,j},f_{err,j}\in D$, $f_{\perp}\perp\HKZ_{l}(X)$, $f_{\HKZ,j} \in C(Z_{j})$, $\|f_{err,j}\|_{L^{\infty}(\mu)}$ is uniformly bounded in $j$ and $\|f_{err,j}\|_{L^{1}(\mu)}\to 0$ as $j\to\infty$.
\end{enumerate}
\end{definition}
For any regular ergodic measure-preserving system $(X,\mu,T)$, any countable subset of $L^{\infty}(X)$ is contained in a set $D$ that satisfies the above conditions.
Indeed, by the Host--Kra structure theorem (Theorem~\ref{thm:HK-structure}) \emph{every} bounded function on $X$ has a decomposition of the form \ref{eq:dec}$(l)$ for every $l\in\N$.

Our multiple term return times theorem will be formulated on cube spaces.
As a first preparatory step we fix well-behaved full measure subsets of the cube spaces associated to the individual systems.
\begin{lemma}
\label{lem:Yl}
Let $(X,\mu,T,D)$ be a system.
Then there exist measurable subsets $Y_{l}\subset X^{[l]}$ such that for every $l\in\N$ the following statements hold.
\begin{enumerate}
\item $\mu^{[l]}(Y_{l}) = 1$ and for every $y\in Y_{l}$ we have $m_{y}(Y_{l})=1$.
\item\label{it:disintegration} For every $y\in Y_{l}$ the measure $\m_{y}$ is ergodic and one has
\begin{equation}
\label{eq:motimesm}
\m_{y}\otimes\m_{y} = \int_{Y_{l+1}} \m_{x} \dif(\m_{y}\otimes\m_{y})(x).
\end{equation}
\item\label{it:generic}
$Y_{l} \subset (\tilde X)^{[l]}$, where $\tilde X\subset X$ is the set of points that are generic for each $f\in D$ w.r.t.\ $\mu$.
\item\label{it:orth} For every $y\in Y_{l}$, every $k\in\N$, and any functions $f_{\epsilon}\in D$, $\epsilon\in\{0,1\}^{l}$, such that $f_{\epsilon}\perp\HKZ_{k+l}(X)$ for some $\epsilon$ we have $f^{[l]} \perp \HKZ_{k}(X^{[l]},\m_{y})$.
\end{enumerate}
\end{lemma}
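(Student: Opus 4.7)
The plan is to construct $Y_l$ for all $l \in \N$ simultaneously by first forming "raw" full measure sets satisfying properties (2)--(4) $\mu^{[l]}$-almost everywhere, and then refining them in a coordinated fashion to achieve the self-referential absorption required by (1). Since $D$ is countable and there are countably many values of $l$, $k$, and tuples of functions involved, everything can be arranged by intersecting countably many a.e. conditions and then performing a single sequential shrinkage.

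First I would set $\tilde X := \{x \in X : x \text{ is fully generic for each } f \in D \text{ w.r.t.\ }\mu\}$, which has full $\mu$-measure by Theorem~\ref{thm:Lindenstrauss-pointwise} and countability of $D$, so that $(\tilde X)^{[l]}$ has full $\mu^{[l]}$-measure, taking care of (3). Let $E_l$ denote the set of $y \in X^{[l]}$ on which $\m_y$ is ergodic, $\m_x = \m_y$ for $\m_y$-a.e.\ $x$, and the identity $\m_y \otimes \m_y = \int_{X^{[l+1]}} \m_x \, d(\m_y \otimes \m_y)(x)$ holds; $E_l$ has full $\mu^{[l]}$-measure by \eqref{eq:mul+1-conventional} and the standard facts about ergodic decompositions recalled before \eqref{eq:mul+1}. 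For (4), fix $k \in \N$ and a tuple $(f_\epsilon)_{\epsilon \in \{0,1\}^l} \subset D$ with $f_{\epsilon_0} \perp \HKZ_{k+l}(X)$ for some $\epsilon_0$: the ergodic decomposition formula applied to the $U$-seminorm gives
\[
\int_{X^{[l]}} \|f^{[l]}\|_{U^{k+1}(X^{[l]}, \m_y)}^{2^{k+1}} \, d\mu^{[l]}(y) = \|f^{[l]}\|_{U^{k+1}(X^{[l]}, \mu^{[l]})}^{2^{k+1}},
\]
and the right-hand side, upon unfolding through the identification $(\mu^{[l]})^{[k+1]} = \mu^{[l+k+1]}$ and the Cauchy-Schwarz-Gowers inequality \eqref{eq:CSG}, is bounded by a product of $U^{l+k+1}(X)$-seminorms of the $f_\epsilon$, one factor being $\|f_{\epsilon_0}\|_{U^{l+k+1}(X)} = 0$. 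Hence the integrand vanishes for $\mu^{[l]}$-a.e.\ $y$; intersecting the resulting full measure sets over countably many $k$ and tuples from $D$ produces a set $O_l$ of full $\mu^{[l]}$-measure.

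Finally, to achieve (1) and the support restriction in (2), I would define $Y_l^{(0)} := (\tilde X)^{[l]} \cap E_l \cap O_l$ and iteratively shrink
\[
Y_l^{(n+1)} := \{ y \in Y_l^{(n)} : \m_y(Y_l^{(n)}) = 1 \text{ and } (\m_y \otimes \m_y)(Y_{l+1}^{(n)}) = 1 \}.
\]
Both defining conditions hold $\mu^{[l]}$-a.e.\ via the disintegrations $\mu^{[l]} = \int \m_y \, d\mu^{[l]}(y)$ and $\mu^{[l+1]} = \int \m_y \otimes \m_y \, d\mu^{[l]}(y)$, so each $Y_l^{(n)}$ has full measure. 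Setting $Y_l := \bigcap_n Y_l^{(n)}$ then gives $\m_y(Y_l) = 1$ and $(\m_y \otimes \m_y)(Y_{l+1}) = 1$ for $y \in Y_l$, and combined with $y \in E_l$ this upgrades the identity in (2) from an equality on $X^{[l+1]}$ to one supported on $Y_{l+1}$, as required.

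The main obstacle will be the seminorm identity that feeds into (4): expanding $\|f^{[l]}\|_{U^{k+1}(X^{[l]}, \m_y)}^{2^{k+1}}$ via the inductive formula \eqref{eq:uniformity-seminorm-integral} produces a nested tower of cube measures on $(X^{[l]})^{[k+1]}$ whose compatibility with $\mu^{[l+k+1]}$ on $X^{[l+k+1]}$ must be verified, and one then has to book-keep the Cauchy-Schwarz-Gowers estimate carefully so that the factor corresponding to $f_{\epsilon_0}$ can be isolated. This is standard but technical; the remaining steps are routine measure-theoretic manipulations once this identity is in place.
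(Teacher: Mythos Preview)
Your proposal is correct and follows essentially the same approach as the paper: one first verifies that properties (2)--(4) hold on full-measure subsets using the pointwise ergodic theorem, the ergodic-decomposition identities \eqref{eq:m-disint}--\eqref{eq:mul+1-conventional}, and the Cauchy--Schwarz--Gowers inequality, and then runs an iterative shrinkage (what the paper calls a ``fixed-point procedure'') across all $l$ simultaneously to force the self-referential condition in (1) and the support restriction in \eqref{eq:motimesm}. Your write-up is in fact somewhat more explicit than the paper's---you spell out the recursion $Y_l^{(n+1)}$ and the seminorm computation feeding into (4), whereas the paper dispatches (4) in one sentence and only sketches the iteration---but the underlying argument is the same.
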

\begin{proof}
The fact that (\ref{it:orth}) holds for full measure subsets of $X^{[l]}$ follows from the Cauchy-Schwarz-Gowers inequality~\eqref{eq:CSG}.
The sets $(\tilde X)^{[l]}\subset X^{[l]}$ have full measure by the pointwise ergodic theorem and the definition \eqref{eq:mul+1} of cube measures, taking care of (\ref{it:generic}).
Also, the measure $\m_{y}$ is ergodic for $\mu^{[l]}$-a.e.\ $y\in X^{[l]}$, taking care of the first part of (\ref{it:disintegration}).

The only delicate point is \eqref{eq:motimesm}.
By \eqref{eq:m-disint} and \eqref{eq:mul+1-conventional}, for a fixed full measure domain of integration this disintegration identity holds for $\mu^{[l]}$-a.e.\ $y\in X^{[l]}$.
However, the domain of integration is yet to be determined.
This is done by a fixed-point procedure: choose tentative sets $Y_{l} \subset X^{[l]}$ that satisfy all conditions but \eqref{eq:motimesm} for every $l$.
For every $l$ this gives a $\mu^{[l]}$-full measure subset of $y\in X^{[l]}$ for which \eqref{eq:motimesm} holds.
The intersection of this set with $Y_{l}$ gives a new tentative set $Y_{l}$.
This way for each $l$ we obtain a decreasing sequence of tentative full measure subsets of $X^{[l]}$ whose intersection still has full measure and satisfies all requested properties.
\end{proof}

We are now in position to define what we mean by universal full measure sets.
Recall that we write $f_{i}^{[l]}=\otimes_{\epsilon\in\{0,1\}^{l}}f_{i,\epsilon}$, where $f_{i,\epsilon} \in L^{\infty}(X_{i})$.
\begin{definition}
\label{def:luae}
Let $P$ be a statement about ergodic regular measure-preserving systems $(X_{i},\mu_{i},T_{i})$, functions $f_{i}^{[l]}$ and points $x_{i}\in X_{i}^{[l]}$, $i=0,\dots,k$.
We say that $P$ holds for \emph{$[l]$-universally almost every ($[l]$-u.a.e.)}\ tuple $x_{0},\dots,x_{k}$ if
\begin{itemize}
\item[$(0)$] For every system $(X_{0},\mu_{0},T_{0},D_{0})$ there exists a measurable set $\tilde X_{0}^{[l]}\subset X_{0}^{[l]}$ such that for every $y_{0}\in Y_{0,l}$ we have $\m_{y_{0}}(\tilde X_{0}^{[l]})=1$ and
\item[$(1)$] for every system $(X_{1},\mu_{1},T_{1},D_{1})$ there exists a measurable set $\tilde X_{1}^{[l]}\subset X_{0}^{[l]}\times X_{1}^{[l]}$ such that for every $\vec x_{0}\in\tilde X_{0}^{[l]}$ and every $y_{1}\in Y_{1,l}$ we have $\m_{y_{1}}\{x_{1}: (\vec x_{0},x_{1})\in\tilde X_{1}\}=1$ and
\item[]\begin{center}$\vdots$\end{center}
\item[$(k)$] for every system $(X_{k},\mu_{k},T_{k},D_{k})$ there exists a measurable set $\tilde X_{k}^{[l]}\subset X_{0}^{[l]}\times\dots\times X_{k}^{[l]}$ such that for every $\vec x_{k-1}\in\tilde X_{k-1}^{[l]}$ and every $y_{k}\in Y_{k,l}$ we have $\m_{y_{k}}\{x_{k}: (\vec x_{k-1},x_{k})\in\tilde X_{k}\}=1$ and
\end{itemize}
we have $P(f_{0}^{[l]},\dots,f_{k}^{[l]},\vec x_{k})$ for every $\vec x_{k}\in\tilde X_{k}$ and any $f_{i,\epsilon}\in D_{i}$, $0\leq i\leq k$, $\epsilon\in\{0,1\}^{l}$.
\end{definition}

\subsection{Return times theorem on cube spaces}
\label{sec:RTT-cube}
Our multiple term return times theorem states that certain pro-nilfactors are characteristic for return time averages on cube spaces.
\begin{theorem}
\label{thm:RTT-cube}
\index{Return times theorem!multiple term}
For any $k,l\in\N$, the limit
\begin{equation}
\label{eq:av}
\lim_{N\to\infty} \aveFN \prod_{i=0}^{k}f_{i}^{[l]}(T_{i}^{[l]}x_{i})
\end{equation}
exists for $[l]$-u.a.e.\ $x=(x_{1},\dots,x_{k})$.
If in addition
\begin{equation}
\tag{CF}
\label{eq:cf}
\begin{aligned}
f_{0,\epsilon}&\perp\HKZ_{k+l}(X_{0})
\text{ for some }
\epsilon\in\{0,1\}^{l}\\
\text{or }
f_{i,\epsilon}&\perp\HKZ_{k+l+1-i}(X_{i})
\text{ for some }
\epsilon\in\{0,1\}^{l}
\text{ and }
1\leq i\leq k,
\end{aligned}
\end{equation}
then the limit vanishes $[l]$-u.a.e.
\end{theorem}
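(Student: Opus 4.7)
The plan is to prove Theorem~\ref{thm:RTT-cube} in two stages: first the vanishing assertion for all $(k,l)$ by iterated van der Corput, then the existence assertion by induction on $k$ via reduction to the vanishing one. The base case $k=0$ is immediate: the average $\aveFN f_0^{[l]}((T_0^{[l]})^n x_0)$ is a pointwise ergodic average on the regular ergodic system $(X_0^{[l]},\m_{y_0},T_0^{[l]})$ furnished by Lemma~\ref{lem:Yl}, hence converges $\m_{y_0}$-a.e.\ to $\int f_0^{[l]}\dif\m_{y_0}$ by Theorem~\ref{thm:Lindenstrauss-pointwise}; under (CF) this integral vanishes by Lemma~\ref{lem:Yl}(\ref{it:orth}).

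For the vanishing clause at general $k$, I iterate the van der Corput lemma (Lemma~\ref{VdC}). Squaring $u_n:=\prod_{i=0}^k f_i^{[l]}((T_i^{[l]})^n x_i)$ and applying Lemma~\ref{VdC} rewrites the correlations $u_n\overline{u_{n+h}}$, after the identification $X_i^{[l]}\times X_i^{[l]}\cong X_i^{[l+1]}$ and $T_i^{[l]}\times T_i^{[l]}=T_i^{[l+1]}$, as $\prod_i \tilde f_i^{[l+1]}((T_i^{[l+1]})^n(x_i,(T_i^{[l]})^h x_i))$, where $\tilde f_{i,(\epsilon,0)}=f_{i,\epsilon}$ and $\tilde f_{i,(\epsilon,1)}=\bar f_{i,\epsilon}$. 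The disintegration~\eqref{eq:motimesm} places the new starting points in the universal sets at level $l+1$ for a.e.\ $h$, while the Cauchy--Schwarz--Gowers inequality~\eqref{eq:CSG} combined with Lemma~\ref{lem:Yl}(\ref{it:orth}) transfers (CF) at $(k,l)$ into (CF) at $(k,l+1)$ for the tensor-doubled functions, consuming one Host--Kra level along the way. Iterating this procedure until the orthogonality is exhausted against $\HKZ_0$, one lands at a configuration in which the inner average vanishes by the $k=0$ base case, and averaging over the shift parameters produces the desired vanishing of the original limit.

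For the existence clause I induct on $k$. Given the inductive hypothesis at step $k-1$, fix a tuple $(x_0,\dots,x_{k-1})$ in the universal full measure sets and consider the weighted averages $\aveFN w_n \cdot f_k^{[l]}((T_k^{[l]})^n x_k)$ with weight $w_n:=\prod_{i=0}^{k-1}f_i^{[l]}((T_i^{[l]})^n x_i)$. Apply the decomposition~\eqref{eq:dec} at parameter $l+1$ coordinatewise to $f_k^{[l]}=\bigotimes_\epsilon f_{k,\epsilon}$, writing $f_{k,\epsilon}=f_{k,\epsilon,\perp}+f_{k,\epsilon,\HKZ,j}+f_{k,\epsilon,err,j}$. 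Expanding the tensor product, every summand containing a factor $f_{k,\epsilon,\perp}\perp\HKZ_{l+1}(X_k)$ has vanishing limit by the already-established vanishing clause at $(k,l)$, since $k+l+1-k=l+1$ verifies (CF) at $i=k$. Summands built purely from nilfactor parts produce, at every $x_k$, a generalized nilsequence $a_n^{(j)}:=F_j((T_k^{[l]})^n x_k)$ by Theorem~\ref{thm:Leibman-orbit-closure}, so that the weighted averages $\aveFN w_n a_n^{(j)}$ converge by the Wiener--Wintner theorem for generalized nilsequences (Theorem~\ref{thm:WW-gen-nilseq}) applied to the regular ergodic product system on $X_0^{[l]}\times\dots\times X_{k-1}^{[l]}$ at any point fully generic for $w$; the uniformly $L^\infty$-bounded, $L^1$-vanishing errors $f_{k,\epsilon,err,j}$ are absorbed using the Lindenstrauss maximal inequality~\eqref{eq:maximal-inequality} together with a countable intersection of full measure sets over $j$.

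The principal obstacle is the intricate bookkeeping required to aggregate the countable exceptional sets — indexed by the decomposition parameter $j$, by the van der Corput shift $h$ and the iteration count, by $f\in D_i$, and by the successive inductive invocations — into the tightly nested almost everywhere structure of Definition~\ref{def:luae}. This is feasible thanks to the countability, algebraic closure, and decomposition properties built into the notion of a system (Definition~\ref{def:D}), which guarantee that all relevant sets are measurable in the appropriate product $\sigma$-algebras and can be intersected along the layered quantifier chain without loss of measure.
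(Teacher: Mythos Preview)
Your plan has a genuine gap in the vanishing step. The van der Corput iteration you describe increases $l$ but never touches $k$: after any number of applications you are still looking at an average over the same $k+1$ systems $X_0,\dots,X_k$, now at cube level $l+m$, so you cannot ``land at the $k=0$ base case''. What does improve is the order of the Host--Kra factor to which the cube tensor $f_0^{[l+m]}$ is orthogonal on $X_0^{[l+m]}$ (Lemma~\ref{lem:Yl}(\ref{it:orth})), but once this reaches $\HKZ_0$ you only know $\int f_0^{[l+m]}\,\dif\m_{y_0}=0$, which says nothing about the pointwise limit of an average involving the further systems $X_1,\dots,X_k$; there is no factorisation of the limit because the relevant product measure is not ergodic. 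Relatedly, your use of \eqref{eq:motimesm} does not justify that the shifted diagonal points $\big((x_i,(T_i^{[l]})^h x_i)\big)_{i}$ land in the $[l+1]$-universal sets for a.e.\ $h$: that identity disintegrates $\m_{y_i}\otimes\m_{y_i}$, whereas the single orbit $h\mapsto (x_i,(T_i^{[l]})^h x_i)$ is a null set for that measure. Converting ``$[l+1]$-u.a.e.'' into ``$[l]$-u.a.e.\ plus one extra $\m_{\vec x}$-layer'' is exactly the content of Lemmas~\ref{lem:cube-change-order}, \ref{lem:m-tilde-m}, and \ref{lem:cube-order}, which require the Kronecker/nilsystem structure, not just the cube disintegration.

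The paper's route is therefore essentially different from yours. It inducts on $k$ and, at each step, first constructs an \emph{ergodic} measure $\m_{x_0,\dots,x_k}$ on $\prod_{i\leq k}X_i^{[l]}$ for which the tuple is fully generic (Theorem~\ref{thm:return-times-disintegration}, Corollary~\ref{cor:m-ergodic}), then shows via the change-of-order lemmas that under \ref{eq:cf}$(k,l+1)$ the tensor $\otimes_i f_i^{[l]}$ is orthogonal to the Kronecker factor of $\m_{\vec x}$ (Corollary~\ref{cor:orth}). Only then does the BFKO orthogonality criterion (Proposition~\ref{prop:BFKO}) yield pointwise convergence to zero on every further system $X_{k+1}^{[l]}$; this is what lets one add a system without losing the nested $[l]$-u.a.e.\ structure. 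Your existence argument via Theorem~\ref{thm:WW-gen-nilseq} is close in spirit to the paper's handling of the nil part, but applying that theorem to the product $\prod_{i<k}X_i^{[l]}$ gives a full-measure set depending on all of $X_0,\dots,X_{k-1}$ at once, which is incompatible with Definition~\ref{def:luae}; the paper avoids this by treating the nil weight as a nilsequence and applying Theorem~\ref{thm:WW-gen-nilseq} on $X_{k+1}^{[l]}$ instead.
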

Note carefully that, unlike in the nilsequence Wiener-Wintner theorem, we have to consider \emph{ergodic} measure-preserving systems here.
This is due to the fact that in the ergodic case Structure Theorem~\ref{thm:HK-structure} implies a decomposition result that is stronger than Decomposition Theorem~\ref{thm:CFH-decomposition}, namely, one can then assume that the structured function $f_{s}$ is continuous on the pro-nilsystem given by the structure theorem.
We use this feature of the structured function in the proof of Lemma~\ref{lem:m-tilde-m}.
A possible way to handle the non-ergodic case would be to identify an orthogonality condition in the spirit of \eqref{eq:cond} that would guarantee convergence of weighted multiple averages to zero u.a.e.
We will not attempt this here.

We refer to the statement of Theorem~\ref{thm:RTT-cube} with fixed $k,l$ as \RTT{k,l}, with fixed $k$ and arbitrary $l$ as \RTT{k,\cdot}, and to the condition \eqref{eq:cf} for fixed $k,l$ as \eqref{eq:cf}$(k,l)$ (``CF'' stands for ``characteristic factors'').
Birkhoff's pointwise ergodic theorem \cite{0003.25602} is \RTT{0,0}, Bourgain's return times theorem \cite{MR1557098} is \RTT{1,0}, and Rudolph's multiple term return times theorem \cite{MR1489899} is \RTT{k,0} for arbitrary $k\in\N$ (with the standard F\o{}lner sequence $\Fo_{N}=[1,N]$).
More about the history of these and related results can be found in a recent survey by Assani and Presser \cite{2012arXiv1209.0856A}.

The fact that the Host-Kra-Ziegler pro-nilfactor $\HKZ_{k}(X_{0})$ is characteristic for the first term in \RTT{k,0} in the sense that if $f_{0}\perp\HKZ_{k}(X_{0})$, then the averages \eqref{eq:av} converge to zero $[0]$-u.a.e., is due to Assani and Presser \cite[Theorem 4]{MR2901351}.
However, their proof depends on the convergence result \RTT{k,0}.
Moreover, \ref{eq:cf}$(k,0)$ also identifies characteristic factors for the other terms.

We prove both results, \RTT{k,\cdot} and characteristicity, simultaneously by induction on $k$ using Host-Kra structure theory.
This proof first appeared in \cite{arxiv:1210.5202}.

The base case $k=0$ follows by definition of $Y_{0,l}$ and the pointwise ergodic theorem.
For the remaining part of this section we assume \RTT{k,\cdot} for some fixed $k$ and prove \RTT{k+1,\cdot}.
If $k>0$, then we also assume all other results of this section for $k-1$ in place of $k$ (thus, strictly speaking, it is the conjunction of all results in this section that is proved by induction).

In order to prove \RTT{k+1,l} for a given $l$ we write
\begin{equation}
\label{eq:X}
X_{0}^{[l+1]} \times\dots\times X_{k}^{[l+1]}
=
(X_{0}^{[l]} \times\dots\times X_{k}^{[l]})^{2}=:X^{2}.
\end{equation}
From \RTT{k,l+1} we know that the appropriate ergodic averages converge $[l+1]$-u.a.e.\ on $X^{2}$.
We would like to apply Proposition~\ref{prop:BFKO} with this $X$ and $Y=X_{k+1}^{[l]}$.
This will necessitate the dependence of the universal sets in Definition~\ref{def:luae} on preceding systems.
The remaining part of this section is dedicated to reformulating \RTT{k,l+1} in such a way that it can be plugged into Proposition~\ref{prop:BFKO}.

This involves the following steps.
First we use \RTT{k,\cdot} to construct a certain universal measure disintegration with built-in genericity properties on a product of ergodic systems (Theorem~\ref{thm:return-times-disintegration}).
We use characteristic factors for \RTT{k,\cdot} to represent measures in this disintegration in a different way.
Finally, we verify a certain instance of \RTT{k+1,\cdot} (Lemma~\ref{lem:cube-meas}).

\subsection{A measure-theoretic lemma}
We will need the classical fact that the Kronecker factor is characteristic for $L^{2}$ convergence of ergodic averages with arbitrary bounded scalar weights, see e.g.\ \cite[Corollary 7.3]{MR2544760} for a more general version.
\begin{lemma}
\label{lem:Z1-char-weight}
Let $(X,T)$ be an ergodic measure-preserving system and $f\in L^{2}(X)$ be orthogonal to $\HKZ_{1}(X)$.
Then for any bounded sequence $(a_{n})_{n}$ one has
\[
\lim_{N} \aveFN a_{n}T^{n}f = 0
\quad\text{in } L^{2}(X).
\]
\end{lemma}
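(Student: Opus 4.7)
The plan is to combine the spectral characterization of the Kronecker factor with the van der Corput estimate (Lemma~\ref{VdC}). Recall that $\HKZ_1(X)$ is the Kronecker factor of $(X,T)$, i.e.\ the closed linear span of eigenfunctions of $T$. By the spectral theorem applied to the unitary $T$ on the cyclic subspace generated by $f$, the condition $f\perp \HKZ_1(X)$ is equivalent to the spectral measure $\sigma_f$ on $\T$ (defined by $\langle T^k f,f\rangle = \int_\T \lambda^k \dif\sigma_f(\lambda)$) being continuous, i.e.\ having no atoms.

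First I would set $u_n := a_n T^n f$, which is bounded in $L^{2}(X)$ by $C := \|a\|_\infty \|f\|_2$. Applying Lemma~\ref{VdC} to this sequence yields, for every $K>0$,
\[
\Big\|\aveFN a_n T^n f\Big\|_2^{2}
\leq \Big|\frac{2}{K^{2}}\sum_{k=-K}^{K}(K-|k|)\aveFN \langle u_n,u_{n+k}\rangle\Big| + C^{2} o_K(1).
\]
Since $T$ is unitary, $\langle u_n,u_{n+k}\rangle = a_n \bar a_{n+k}\langle f, T^k f\rangle$, so the inner expectation factors and we obtain the uniform-in-$N$ bound
\[
\Big|\aveFN \langle u_n,u_{n+k}\rangle\Big| \leq \|a\|_\infty^{2}\,|\widehat{\sigma_f}(k)|.
\]

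The last step would invoke Wiener's lemma on continuous measures: because $\sigma_f$ has no atoms,
\[
\frac{1}{2K+1}\sum_{k=-K}^{K}|\widehat{\sigma_f}(k)|^{2}\;\longrightarrow\;\sum_{\lambda\in\T}|\sigma_f(\{\lambda\})|^{2}=0
\]
as $K\to\infty$, and the Cauchy--Schwarz inequality upgrades this to $\frac{1}{K^{2}}\sum_{k=-K}^{K}(K-|k|)\,|\widehat{\sigma_f}(k)|\to 0$. Letting $N\to\infty$ first in the van der Corput estimate, and then $K\to\infty$, yields the desired conclusion. The key observation making this work is that the spectral interpretation strips away any dependence on the weights $(a_n)$, so the obstruction to convergence lives entirely on the spectral side of $f$ and disappears precisely when $\sigma_f$ is continuous; hence no substantial obstacle is expected.
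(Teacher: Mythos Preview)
Your proof is correct. The paper itself does not supply a proof of this lemma; it records the statement as a ``classical fact'' and refers the reader to \cite[Corollary 7.3]{MR2544760}. Your argument via Lemma~\ref{VdC}, the spectral measure of $f$, and Wiener's lemma is the standard route and goes through exactly as you outline: the uniform bound $|\aveFN \langle u_n,u_{n+k}\rangle|\le \|a\|_\infty^2\,|\widehat{\sigma_f}(k)|$ holds for every $N$, so after taking $\limsup_N$ (which kills the $o_K(1)$ term) and then $K\to\infty$, the Fej\'er-weighted sum $\frac{1}{K^2}\sum_{|k|\le K}(K-|k|)\,|\widehat{\sigma_f}(k)|\le \frac{2K+1}{K}\cdot\frac{1}{2K+1}\sum_{|k|\le K}|\widehat{\sigma_f}(k)|$ tends to zero by Cauchy--Schwarz and Wiener's lemma. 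The identification of $f\perp\HKZ_1(X)$ with continuity of $\sigma_f$ is correct as well: each atom of $\sigma_f$ would correspond to a nonzero spectral projection of $f$ onto an eigenspace, which lies inside the Kronecker factor.
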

The next lemma is our main tool for dealing with cube measures.
Informally, it shows that a certain kind of universality for $\mu^{[1]}\otimes\nu^{[1]}$ implies some universality for $(\mu\times\nu)^{[1]}$.

Recall that, for ergodic measure-preserving systems $(X,\mu),(Y,\nu)$, the projection onto the invariant factor of $X\times Y$ has the form $\phi(x,y)=\psi(\pi_{1}(x),\pi_{1}(y))$, where $\pi_{1}$ are projections onto the Kronecker factors and $\psi$ is the quotient map of $\HKZ_{1}(X)\times\HKZ_{1}(Y)$ by the orbit closure of the identity.
To see this, recall that by Lemma~\ref{lem:Z1-char-weight} the function $f\otimes g$, $f\in L^{\infty}(X)$, $g\in L^{\infty}(Y)$, is orthogonal to the invariant factor of $X\times Y$ whenever $f\perp\HKZ_{1}(X)$ or $g\perp\HKZ_{1}(Y)$.
Thus the invariant sub-$\sigma$-algebra on $X\times Y$ is contained in $\HKZ_{1}(X)\times\HKZ_{1}(Y)$, i.e.\ it is (isomorphic to) the invariant sub-$\sigma$-algebra of a product of two compact group rotations (cf.\ e.g.\ \cite[Theorem 1.9]{MR1325712}).
In particular, for an ergodic system $Y$ the invariant factor of $Y\times Y$ is isomorphic to $\HKZ_{1}(Y)$.
\begin{lemma}
\label{lem:cube-change-order}
Let $(X,\mu),(Y,\nu)$ be ergodic measure-preserving systems and fix measure disintegrations
\[
\mu = \int_{\kappa\in\HKZ_{1}(X)} \mu_{\kappa} \dif\kappa,
\quad
\nu = \int_{\lambda\in\HKZ_{1}(Y)} \mu_{\lambda} \dif\lambda.
\]
This induces an ergodic decomposition
\[
\nu\otimes\nu = \int_{\lambda\in\HKZ_{1}(Y)} (\nu\otimes\nu)_{\lambda} \dif\lambda,
\quad
(\nu\otimes\nu)_{\lambda} = \int_{\lambda'\in\HKZ_{1}(Y)} \nu_{\lambda'}\otimes\nu_{\lambda' \lambda\inv} \dif\lambda'.
\]
Let $x\in X$ and $\Lambda\subset\HKZ_{1}(Y)$ be a full measure set.
Assume that for $\mu$-a.e.\ $\xi$ and every $\lambda\in\Lambda$, for $(\nu\otimes\nu)_{\lambda}$-a.e.\ $(\eta,\eta')$ some statement $P(x,\xi,\eta,\eta')$ holds.
Then $P(x,\xi,y,\eta)$ also holds for $\nu$-a.e.\ $y$ and $\tilde\m_{x,y}$-a.e.\ $(\xi,\eta)$, where
\[
\tilde\m_{x,y} =
\int_{\kappa\in\HKZ_{1}(X),\lambda\in\HKZ_{1}(Y):\psi(\pi_{1}(x),\pi_{1}(y))=\psi(\kappa,\lambda)} \mu_{\kappa}\otimes\nu_{\lambda} \dif(\kappa,\lambda),
\]
the homomorphism $\psi$ is as above and the integral is taken over an affine subgroup (i.e.\ a coset of a closed subgroup) with respect to its Haar measure.
\end{lemma}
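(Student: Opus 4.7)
The plan is to introduce a single auxiliary probability measure $\tau$ on $X\times Y\times Y$ that encodes both sides of the lemma, and then reduce the conclusion to an application of Fubini together with the observation that ergodicity forces a certain projection to be surjective. First I would unpack the hypothesis as: there exist a $\mu$-full-measure set $\Xi\subset X$ and, for each $\xi\in\Xi$ and $\lambda\in\Lambda$, a $(\nu\otimes\nu)_\lambda$-null set $E_{\xi,\lambda}\subset Y^2$ outside of which $P(x,\xi,\eta,\eta')$ holds. Then define
\[
\tau(d\xi,dy,d\eta):=\nu(dy)\,\tilde\m_{x,y}(d\xi,d\eta).
\]
By disintegration against $\nu(dy)$, the conclusion of the lemma is equivalent to the single statement that $P(x,\xi,y,\eta)$ holds for $\tau$-almost every $(\xi,y,\eta)$.

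The key step is to identify the conditional structure of $\tau$. Writing $H=\ker\psi$ as a closed subgroup of $\HKZ_1(X)\times\HKZ_1(Y)$, so that the affine subgroup in the definition of $\tilde\m_{x,y}$ is $(\pi_1(x),\pi_1(y))\cdot H$, and expanding the disintegration of $\tilde\m_{x,y}$ yields
\[
\tau(d\xi,dy,d\eta)=\int_H \mu_{\pi_1(x)h_1}(d\xi)\,\nu(dy)\,\nu_{\pi_1(y)h_2}(d\eta)\,dh.
\]
The change of variables $\lambda'=\pi_1(y)$ in the inner $y$-integral identifies $\int_Y\nu(dy)\,\nu_{\pi_1(y)h_2}(d\eta)$ with $(\nu\otimes\nu)_{h_2\inv}(dy,d\eta)$. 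Thus, conditional on $\xi$ and on $\lambda:=\pi_1(y)\pi_1(\eta)\inv=h_2\inv$, the joint law of $(y,\eta)$ under $\tau$ is exactly $(\nu\otimes\nu)_\lambda$, i.e.\ the same kernel that appears in the hypothesis.

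It remains to verify that three exceptional sets are $\tau$-null. The $\xi$-marginal of $\tau$ is proportional to $\mu$ restricted to $\pi_1\inv(\pi_1(x)\cdot\pi_X(H))$, so $\{\xi\notin\Xi\}$ is $\tau$-null. The marginal in $\lambda$ is Haar on $\pi_Y(H)$, and the step I expect to require the most care is showing that ergodicity of $\nu$ forces $\pi_Y(H)=\HKZ_1(Y)$: any character of $\HKZ_1(Y)/\pi_Y(H)$ would lift to a nontrivial $T$-invariant function on $X\times Y$ depending only on $y$, contradicting ergodicity of $\nu$; the symmetric argument gives $\pi_X(H)=\HKZ_1(X)$. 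Given this surjectivity, $\Lambda$ being co-null in $\HKZ_1(Y)$ yields that $\{\lambda\notin\Lambda\}$ is $\tau$-null. Finally, conditional on $\xi\in\Xi$ and $\lambda\in\Lambda$, the pair $(y,\eta)$ is $(\nu\otimes\nu)_\lambda$-distributed, so Fubini combined with the hypothesis gives $\tau\{(y,\eta)\in E_{\xi,\lambda}\}=0$. Combining the three null sets and disintegrating back against $\nu(dy)$ yields the claim.
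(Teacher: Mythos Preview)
Your proposal is correct and follows essentially the same route as the paper: both proofs hinge on the fact that $\ker\psi$ has full projections onto each factor (which is what you verify via the character argument), and both establish the same measure identity expressing $\int_{Y}\nu(dy)\,\tilde\m_{x,y}(d\xi,d\eta)$ in terms of the ergodic components $(\nu\otimes\nu)_{\lambda}$, after which Fubini yields the conclusion. The only cosmetic difference is the order of conditioning: the paper first fixes $\xi$ and integrates over the appropriate coset of $\lambda$'s, whereas you parametrise directly by $h\in\ker\psi$ --- but the underlying computation is identical.
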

\begin{proof}
Recall that $\ker\psi$ has full projections on both coordinates.
Therefore, for \emph{every} $x$ there is a full measure set of $\xi$ such that the set $\Lambda$ has full measure in $\{\lambda : \psi(\pi_{1}(x)\pi_{1}(\xi)\inv,\lambda)=\id\}$ (note that this is a closed affine subgroup of $\HKZ_{1}(Y)$ that therefore has a Haar measure).

In particular, for a full measure set of $\xi$ (that depends on $Y$) the hypothesis holds for a.e.\ $\lambda$ with $\psi(\pi_{1}(x)\pi_{1}(\xi)\inv,\lambda)=\id$, i.e.\ we have $P(x,\cdot)$ for a set of full measure w.r.t.\ the measure
\begin{multline*}
\int_{\xi\in X} \delta_{\xi}\otimes \int_{{\lambda\in\HKZ_{1}(Y) : \psi(\pi_{1}(x)\pi_{1}(\xi)\inv,\lambda)=\id}}
(\nu\otimes\nu)_{\lambda} \dif\lambda \dif\mu(\xi)\\
=
\int_{\kappa\in\HKZ_{1}(X)} \int_{\lambda\in\HKZ_{1}(Y):\psi(\pi_{1}(x)\kappa\inv,\lambda)=\id} \mu_{\kappa}\otimes
(\nu\otimes\nu)_{\lambda} \dif\lambda \dif\kappa\\
=
\int_{\kappa\in\HKZ_{1}(X)} \int_{\lambda\in\HKZ_{1}(Y):\psi(\pi_{1}(x)\kappa\inv,\lambda)=\id} \mu_{\kappa}\otimes
\int_{\lambda'\in\HKZ_{1}(Y)} \nu_{\lambda'}\otimes\nu_{\lambda'\lambda\inv} \dif\lambda' \dif\lambda \dif\kappa\\
=
\int_{\kappa\in\HKZ_{1}(X)} \int_{\lambda\in\HKZ_{1}(Y):\psi(\pi_{1}(x)\kappa\inv,\lambda)=\id} \mu_{\kappa}\otimes
\int_{y\in Y} \delta_{y}\otimes\nu_{\pi(y)\lambda\inv} \dif\nu(y) \dif\lambda \dif\kappa\\
=
\int_{y\in Y}
\int_{\kappa\in\HKZ_{1}(X)} \int_{\lambda\in\HKZ_{1}(Y):\psi(\pi_{1}(x)\kappa\inv,\lambda)=\id} \mu_{\kappa}\otimes
\delta_{y}\otimes\nu_{\pi(y)\lambda\inv} \dif\lambda \dif\kappa \dif\nu(y)\\
=
\int_{y\in Y} \int_{\kappa\in\HKZ_{1}(X),\lambda\in\HKZ_{1}(Y):\psi(\pi_{1}(x),\pi_{1}(y))=\psi(\kappa,\lambda)} \mu_{\kappa}
\otimes \delta_{y}\otimes\nu_{\lambda} \dif(\kappa,\lambda) \dif\nu(y)\\
=
\int_{y\in Y} \delta_{y}\otimes \tilde\m_{x,y} \dif\nu(y).
\end{multline*}
This gives $P(x,\xi,y,\eta)$ for $\nu$-a.e.\ $y$ and $\tilde\m_{x,y}$-a.e.\ pair $(\xi,\eta)$ as required.
\end{proof}
The next lemma provides us with means for using the measure $\tilde\m_{x,y}$ in a higher step setting.
\begin{lemma}
\label{lem:prod-fiber-erg}
Let $(Z,g),(Z',g')$ be ergodic nilsystems and $\psi:\HKZ_{1}(Z)\times\HKZ_{1}(Z') \to H$ the factor map modulo the orbit closure of $(\pi_{1}(g),\pi_{1}(g'))$.
Then for every $\lambda\in\HKZ_{1}(Z)$ and a.e.\ $\lambda'\in\HKZ_{1}(Z')$ the rotation by $(g,g')$ on the nilmanifold
\[
N_{\lambda,\lambda'} = \{(z,z')\in Z\times Z' : \psi(\pi_{1}(z),\pi_{1}(z')) = \psi(\lambda,\lambda')\}
\]
is uniquely ergodic.
\end{lemma}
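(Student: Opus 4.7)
The plan is to realize each fiber $N_{\lambda,\lambda'}$ as a $\tilde G$-homogeneous sub-nilmanifold of $Z\times Z'$ on which $(g,g')$ acts by translation, to identify the ergodic components of $\mu_{Z}\otimes\mu_{Z'}$ under $(g,g')$ with the Haar measures on these fibers, and finally to transfer the resulting ``a.e.\ $h\in H$'' statement to the desired ``every $\lambda$, a.e.\ $\lambda'$'' statement by exploiting ergodicity of the two factors.

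For the structural setup, let $\Delta := \overline{\<(\pi_{1}(g),\pi_{1}(g'))\>} \leq \HKZ_{1}(Z)\times\HKZ_{1}(Z')$, so $\psi$ is the quotient map onto $H = (\HKZ_{1}(Z)\times\HKZ_{1}(Z'))/\Delta$. Set $\tilde G := (\pi_{1}\times\pi_{1})\inv(\Delta) \leq G\times G'$; this is a closed subgroup of the nilpotent Lie group $G\times G'$, it contains $(g,g')$ (since $\psi(\pi_{1}(g),\pi_{1}(g')) = 0_{H}$), and a direct check using $\pi_{1}(uz)=\pi_{1}(u)\pi_{1}(z)$ shows that $\tilde G$ acts transitively on each $N_{\lambda,\lambda'}$. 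Hence $N_{\lambda,\lambda'}$ is a nilmanifold on which $(g,g')$ acts by left translation, and Lemma~\ref{lem:nilsystem-ergodic} is applicable. Next, standard ergodic theory of products---namely the fact that the Kronecker factor is characteristic for the invariant $\sigma$-algebra of a product of two ergodic systems, together with the description of the invariant $\sigma$-algebra of a compact abelian rotation---identifies the $(g,g')$-invariant $\sigma$-algebra of $Z\times Z'$ with the pullback of the Borel $\sigma$-algebra on $H$. The ergodic decomposition of $\mu_{Z}\otimes\mu_{Z'}$ is therefore parameterized by $h\in H$, and because $\mu_{Z}\otimes\mu_{Z'}$ is $(G\times G')$-invariant, each ergodic component is $\tilde G$-invariant and hence coincides with the unique $\tilde G$-invariant Haar measure on the $\tilde G$-transitive fiber. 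Thus for Haar-almost every $h\in H$ the nilrotation $(N_{h},(g,g'))$ is ergodic, and Lemma~\ref{lem:nilsystem-ergodic} promotes this to unique ergodicity.

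To pass from ``a.e.\ $h$'' to ``every $\lambda$, a.e.\ $\lambda'$'', note that for each fixed $\lambda$ the map $\psi(\lambda,\cdot) : \HKZ_{1}(Z') \to H$ is an affine translate of the homomorphism $\psi(0,\cdot)$. Surjectivity of $\psi(0,\cdot)$ is equivalent to the first-coordinate projection $\Delta\to\HKZ_{1}(Z)$ being onto; but this projection is $\overline{\<\pi_{1}(g)\>}$, which equals all of $\HKZ_{1}(Z)$ because ergodicity of $(Z,g)$ forces $\pi_{1}(g)$ to topologically generate the Kronecker factor. As a surjective continuous homomorphism between compact groups, $\psi(\lambda,\cdot)$ pushes Haar measure on $\HKZ_{1}(Z')$ onto Haar measure on $H$, so the full-measure set of ``good'' $h$ from the previous step pulls back to a full-measure set of $\lambda'$ for every fixed $\lambda$. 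The most delicate technical ingredient is the identification of the conditional measure on $N_{h}$ with the Haar measure on $N_{h}$ viewed as a $\tilde G$-nilmanifold; the remaining Fubini-style translation argument and the promotion from ergodicity to unique ergodicity are then routine.
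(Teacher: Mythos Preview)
Your proof is correct and arrives at the same conclusion, but the route differs from the paper's. The paper does not introduce the intermediate group $\tilde G$ or identify the ergodic components of $\mu_{Z}\otimes\mu_{Z'}$ with Haar measures on $\tilde G$-nilmanifolds. Instead, after the same reduction to ergodicity via Lemma~\ref{lem:nilsystem-ergodic} and the same ``every $\lambda$, a.e.\ $\lambda'$'' argument via full projections of $\ker\psi$, it verifies ergodicity on a.e.\ fiber directly: for $f\in C(Z)$, $f'\in C(Z')$, it splits $f=f_{\perp}+f_{\HKZ}$ along the Kronecker factor (and likewise $f'$), observes that the ergodic averages of $f_{\HKZ}\otimes f'_{\HKZ}$ are fiberwise constant because the rotation is ergodic on the Kronecker fibers $(\pi_{1}\times\pi_{1})(N_{\lambda,\lambda'})$, and uses Lemma~\ref{lem:Z1-char-weight} to see that averages of any term containing $f_{\perp}$ vanish a.e.\ on $Z\times Z'$, hence a.e.\ on a.e.\ fiber.

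Your structural argument buys an explicit description of the fiber as a $\tilde G$-homogeneous nilmanifold with its Haar measure, which is pleasant and reusable; the price is the technical step you flag --- showing the conditional measures are $\tilde G$-invariant requires a Fubini/separability argument over $\tilde G$ that you do not spell out. The paper's approach is shorter and avoids this entirely, needing only the Kronecker decomposition and the scalar-weight lemma already in the toolkit.
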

\begin{proof}
By Lemma~\ref{lem:nilsystem-ergodic} it suffices to prove ergodicity to obtain unique ergodicity.

Since $N_{\lambda,\lambda'}$ only depends on $\psi(\lambda,\lambda')$ and $\ker\psi$ has full projection on $\HKZ_{1}(Z)$ it suffices to verify the conclusion for a full measure set of $(\lambda,\lambda')$.
To this end it suffices to check that for any $f\in C(Z),f'\in C(Z')$ the limit of the ergodic averages of $f\otimes f'$ is essentially constant on $N_{\lambda,\lambda'}$.
We decompose $f=f_{\perp}+f_{\HKZ}$ with $f_{\perp}\perp\HKZ_{1}(Z)$ and $f_{\HKZ}\in L^{\infty}(\HKZ_{1}(Z))$, and analogously for $f'$.
For $f_{\HKZ}\otimes f'_{\HKZ}$ the limit is essentially constant on $N_{\lambda,\lambda'}$ for any $(\lambda,\lambda')$ since the rotation is ergodic on $(\pi_{1}\times\pi_{1})(N_{\lambda,\lambda'})$.

On the other hand, the limit of the ergodic averages of tensor products involving $f_{\perp}$ vanishes on $Z\times Z'$ a.e.\ by Lemma~\ref{lem:Z1-char-weight}, hence also a.e.\ on a.e.\ fiber $N_{\lambda,\lambda'}$.
\end{proof}

\subsection{Universal disintegration of product measures}
The return times theorem can be seen as a statement about measure disintegration, cf.\ \textcite[Theorem 4]{MR1357765} for the case $k=1$.
\begin{theorem}
\label{thm:return-times-disintegration}
\index{Return times theorem!measure disintegration form}
Let $(X_{i},\mu_{i},T_{i},D_{i})$, $i=0,\dots,k$, be systems.
Then $[l]$-u.a.e.\ $x_{0},\dots,x_{k}$ is generic for some measure $\m_{x_{0},\dots,x_{k}}$ on $X_{0}^{[l]}\times\dots\times X_{k}^{[l]}$ and every function $\otimes_{i=0}^{k}f_{i}^{[l]}$, $f_{i,\epsilon}\in D_{i}$.

Moreover, for $[l]$-u.a.e.\ $x_{0},\dots,x_{k-1}$ and every $y_{k}\in Y_{l,k}$ one has
\begin{equation}
\label{eq:m-disintegration}
\m_{x_{0},\dots,x_{k-1}}\otimes\m_{y_{k}} = \int \m_{x_{0},\dots,x_{k}} \dif\m_{y_{k}}(x_{k}).
\end{equation}
\end{theorem}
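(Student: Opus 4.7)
The plan is to induct on $k$, carrying along the inductive hypothesis on \RTT{k,\cdot} that is being proved simultaneously in this section. The base case $k=0$ is essentially contained in Lemma~\ref{lem:Yl}: for any $y_0\in Y_{0,l}$ the measure $\m_{y_0}$ is ergodic under $T_0^{[l]}$, so the pointwise ergodic theorem applied to each member of the countable family $\{f_0^{[l]} : f_{0,\epsilon}\in D_0\}$ yields a single $\m_{y_0}$-full set $\tilde X_0^{[l]}$ on which every $x_0$ is generic for $\m_{x_0}=\m_{y_0}$ with respect to all such $f_0^{[l]}$; the identity $\m_{x_0}=\m_{y_0}$ on this set is a consequence of essential uniqueness of the ergodic decomposition.

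For the inductive step, assume the theorem for $k-1$. Applying \RTT{k,l} (already available within the nested induction of Section~\ref{sec:RTT-cube}) and using that each $D_i$ is a countable $\Q$-algebra, we find that for $[l]$-u.a.e. $(x_0,\dots,x_k)$ the averages
\[
\lim_N \aveFN \prod_{i=0}^{k} f_i^{[l]}(T_i^{[l]n} x_i)
\]
exist simultaneously for all tensor products $\otimes_i f_i^{[l]}$, $f_{i,\epsilon}\in D_i$. These tensors span a countable $\Q$-algebra that is dense in $C(X_0^{[l]}\times\dots\times X_k^{[l]})$ by the Stone-Weierstrass theorem, and the limit defines a positive linear functional on it bounded by the supremum norm. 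Riesz representation then produces the desired Borel probability measure $\m_{x_0,\dots,x_k}$, with genericity of $(x_0,\dots,x_k)$ built into its construction.

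The disintegration identity \eqref{eq:m-disintegration} is verified by testing both sides against tensor products $\otimes_{i<k} f_i^{[l]} \otimes g^{[l]}$ with $f_{i,\epsilon}\in D_i$ and $g_\epsilon\in D_k$. The left-hand side factors as
\[
\int \otimes_{i<k} f_i^{[l]}\dif\m_{x_0,\dots,x_{k-1}} \cdot \int g^{[l]} \dif\m_{y_k}.
\]
For the right-hand side, use the definition of $\m_{x_0,\dots,x_k}$ on the inner integrand, then apply the dominated convergence theorem (the averages are bounded by $\prod\|f_i^{[l]}\|_\infty\|g^{[l]}\|_\infty$ and converge on a $\m_{y_k}$-full set of $x_k$ by the nested u.a.e. structure of Definition~\ref{def:luae}) to interchange limit and integral. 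The $T_k^{[l]}$-invariance of $\m_{y_k}$ makes $\int g^{[l]}(T_k^{[l]n}x_k)\dif\m_{y_k}(x_k)$ independent of $n$, so it pulls out as a constant $\int g^{[l]}\dif\m_{y_k}$. The remaining $n$-average is precisely the defining limit of $\m_{x_0,\dots,x_{k-1}}(\otimes_{i<k} f_i^{[l]})$ supplied by the inductive hypothesis, matching the left-hand side.

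The main obstacle is purely bookkeeping: keeping the nested universal full-measure sets of Definition~\ref{def:luae} compatible, in particular ensuring that for $[l]$-u.a.e. $(x_0,\dots,x_{k-1})$ and every $y_k\in Y_{l,k}$ the convergence set for the weighted averages indeed has full $\m_{y_k}$-measure in $x_k$. This is encoded automatically by the form of the u.a.e.\ statement given by \RTT{k,l}, so no additional analytic work is needed; the only analytic ingredients beyond the inductive hypotheses are the pointwise ergodic theorem, DCT, and invariance of the fiber measures.
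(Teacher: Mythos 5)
Your proposal is correct and follows essentially the same route as the paper's: define $\m_{x_0,\dots,x_k}$ via the limit supplied by \RTT{k,l} on the countable $\Q$-algebra of tensors and extend by density (Stone--Weierstrass/Riesz), then verify \eqref{eq:m-disintegration} by testing against tensor products, interchanging limit and integral by dominated convergence, and using $T_k^{[l]}$-invariance of $\m_{y_k}$ to pull out the constant $\int f_k^{[l]}\dif\m_{y_k}$. The only cosmetic difference is that you frame it as an explicit induction on $k$ with a spelled-out base case, whereas the paper leaves the recursion implicit in the section-wide induction and treats all $k$ uniformly; this is a matter of presentation, not substance.
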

\begin{proof}
By Theorem~\ref{thm:RTT-cube} we obtain convergence of the averages
\[
\aveFN \prod_{i=0}^{k}f_{i}^{[l]}(T_{i}^{n}x_{i})
\]
for $[l]$-u.a.e.\ $x_{0},\dots,x_{k}$ and any $f_{i,\epsilon}\in D_{i}$.
For continuous functions $f_{i,\epsilon}\in D_{i}$ we define $\m_{x_{0},\dots,x_{k}}(\otimes_{i=0}^{k}f_{i}^{[l]})$ as the limit of these averages.
By the Stone-Weierstraß theorem these tensor products span a dense subspace $C(X_{0}^{[l]}\times\dots\times X_{k}^{[l]})$, so by density the above (bounded) linear form admits a unique continuous extension.

In order to obtain \eqref{eq:m-disintegration} it suffices to verify that the integrals of functions of the form $\otimes_{i=0}^{k} f_{i}^{[l]}$, $f_{i,\epsilon}\in D_{i}$, with respect to both measures coincide.
By genericity and the dominated convergence theorem we have for $[l]$-u.a.e.\ $x_{0},\dots,x_{k-1}$ that
\begin{multline*}
\int \int \otimes_{i<k}f_{i}^{[l]}\otimes f_{k}^{[l]} \dif\m_{x_{0},\dots,x_{k}} \dif\m_{y_{k}}(x_{k})\\
=
\int \lim_{N} \aveFN \prod_{i<k}f_{i}^{[l]}(T_{i}^{n}x_{i})\cdot f_{k}^{[l]}(T_{k}^{n}x_{k}) \dif\m_{y_{k}}(x_{k})\\
=
\lim_{N} \aveFN \prod_{i<k}f_{i}^{[l]}(T_{i}^{n}x_{i})\cdot \int f_{k}^{[l]}(T_{k}^{n}x_{k}) \dif\m_{y_{k}}(x_{k})\\
=
\int \otimes_{i<k}f_{i}^{[l]} \dif m_{x_{0},\dots,x_{k-1}} \int f_{k}^{[l]} \dif\m_{y_{k}}
\end{multline*}
as required.
\end{proof}
We will now represent the measure $\m_{x_{0},\dots,x_{k}}$ for $[l]$-u.a.e.\ $x_{0},\dots,x_{k}$ in the form $\tilde\m_{x,y}$ in the notation of Lemma~\ref{lem:cube-change-order}.
At this step we have to use the information about characteristic factors.
We begin with a preliminary observation.
\begin{lemma}
\label{lem:uae-fiber-ae}
If some property P holds for $[l]$-u.a.e.\ $x_{0},\dots,x_{k}$ then, for $[l]$-u.a.e.\ $x_{0},\dots,x_{k}$, P holds $\m_{x_{0},\dots,x_{k}}$-a.e.
\end{lemma}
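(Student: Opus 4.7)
The plan is to argue by induction on $k$. Fix witnessing sets $\tilde X_0^{[l]},\dots,\tilde X_k^{[l]}$ for the $[l]$-u.a.e.\ validity of P as in Definition~\ref{def:luae}. I will show that the auxiliary property $P'(\vec y_k):=(\m_{\vec y_k}(\tilde X_k^{[l]})=1)$ itself holds $[l]$-u.a.e., which suffices since $\tilde X_k^{[l]}$ is contained in the set on which P holds.

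The base case $k=0$ is immediate: by the defining property of $\tilde X_0^{[l]}$ one has $\m_{y_0}(\tilde X_0^{[l]})=1$ for every $y_0\in Y_{0,l}$, so $P'$ holds on all of $Y_{0,l}$, which is itself a valid witnessing set by Lemma~\ref{lem:Yl}.

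In the inductive step, fix $(y_0,\dots,y_{k-1})$ lying in the intersection of the $[l]$-u.a.e.\ set supplied by the induction hypothesis (so that $\m_{y_0,\dots,y_{k-1}}(\tilde X_{k-1}^{[l]})=1$) with the $[l]$-u.a.e.\ domain on which the disintegration identity \eqref{eq:m-disintegration} holds, and fix $y_k\in Y_{k,l}$. The quantity $(\m_{y_0,\dots,y_{k-1}}\otimes\m_{y_k})(\tilde X_k^{[l]})$ will be evaluated in two ways. Integrating first with respect to $x_k$ and using that for every $\vec x_{k-1}\in\tilde X_{k-1}^{[l]}$ the fiber $\{x_k:(\vec x_{k-1},x_k)\in\tilde X_k^{[l]}\}$ has full $\m_{y_k}$-measure (by the very construction of $\tilde X_k^{[l]}$ in Definition~\ref{def:luae}), the integrand is the constant $1$ on $\tilde X_{k-1}^{[l]}$, so the whole integral collapses to $\m_{y_0,\dots,y_{k-1}}(\tilde X_{k-1}^{[l]})=1$. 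On the other hand, \eqref{eq:m-disintegration} rewrites the same quantity as $\int\m_{y_0,\dots,y_{k-1},x_k}(\tilde X_k^{[l]})\,\dif\m_{y_k}(x_k)$. Comparing, one gets $\m_{y_0,\dots,y_{k-1},x_k}(\tilde X_k^{[l]})=1$ for $\m_{y_k}$-a.e.\ $x_k$, which is precisely the $[l]$-u.a.e.\ statement of $P'$ at level $k$, with witnessing set at level $k$ given by $\{(y_0,\dots,y_k):(y_0,\dots,y_{k-1})\in\tilde Y_{k-1},\ \m_{y_0,\dots,y_k}(\tilde X_k^{[l]})=1\}$.

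The only genuine concern is keeping the layered quantifier structure of Definition~\ref{def:luae} straight; once that bookkeeping is done, the argument consists of just one application of Fubini's theorem and one application of the disintegration identity~\eqref{eq:m-disintegration}, and I do not anticipate any further analytic difficulty.
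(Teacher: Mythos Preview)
Your proof is correct and follows essentially the same approach as the paper: induction on $k$, with the inductive step combining the induction hypothesis with the disintegration identity \eqref{eq:m-disintegration} via Fubini. The paper's version is terser (it phrases the induction hypothesis as applying to the property ``for every $y_k\in Y_{k,l}$, $P$ holds $\m_{y_k}$-a.e.\ in $x_k$'' rather than to membership in $\tilde X_{k-1}^{[l]}$), but this amounts to the same thing since that property holds precisely on $\tilde X_{k-1}^{[l]}$ by construction.
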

\begin{proof}
For $k=0$ this follows from \eqref{eq:m-disintegration}.
Assume that the conclusion is known for $k-1$ and show it for $k$.

By the induction hypothesis, for $[l]$-u.a.e.\ $x_{0},\dots,x_{k-1}$, $\m_{x_{0},\dots,x_{k-1}}$-a.e., for every $y_{k}\in Y_{k,l}$, P holds $\m_{y_{k}}$-a.e.\ in $x_{k}$.
The conclusion follows from \eqref{eq:m-disintegration}.
\end{proof}
\begin{lemma}
\label{lem:m-tilde-m}
For $[l]$-u.a.e.\ $x_{0},\dots,x_{k}$ we have
\[
\m_{x_{0},\dots,x_{k}}=\tilde\m_{x,y},
\]
where we use the notation of Lemma~\ref{lem:cube-change-order} with
\[
(X,\mu)=(X_{0}^{[l]}\times\dots\times X_{k-1}^{[l]},\m_{x_{0},\dots,x_{k-1}}),
\]
$(Y,\nu)=(X_{k}^{[l]},\m_{x_{k}})$, $x=(x_{0},\dots,x_{k-1})$ and $y=x_{k}$.
\end{lemma}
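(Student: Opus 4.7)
My plan is to identify both $\m_{x_0,\dots,x_k}$ and $\tilde\m_{x,y}$ as the ergodic component of $\mu\otimes\nu$ containing the point $(x_0,\dots,x_k)$, where $\mu=\m_{x_0,\dots,x_{k-1}}$ and $\nu=\m_{x_k}$ as in the statement. That $\tilde\m_{x,y}$ is such an ergodic component is a direct consequence of the discussion preceding Lemma~\ref{lem:cube-change-order}: for ergodic $\mu,\nu$ the ergodic components of $\mu\otimes\nu$ are parameterized by $H=\HKZ_1\times\HKZ_1/\!\sim$ via $\psi\circ(\pi_1\times\pi_1)$ and are precisely the fibers $\tilde\m_{\cdot,\cdot}$. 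Ergodicity of $\nu=\m_{x_k}$ for $x_k\in Y_{k,l}$ is built into Lemma~\ref{lem:Yl}(\ref{it:disintegration}); ergodicity of $\mu=\m_{x_0,\dots,x_{k-1}}$ will be established inductively on $k$.

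To show $\m_{x_0,\dots,x_k}$ equals this fiber measure, I would start from the disintegration \eqref{eq:m-disintegration} applied with $y_k:=x_k$ (valid for $x_k\in Y_{k,l}$ since then $\m_{x_k}$ is the ergodic component containing $x_k$):
\[
\mu\otimes\nu = \int \m_{x_0,\dots,x_{k-1},x_k'}\,\dif\nu(x_k').
\]
Each $\m_{x_0,\dots,x_{k-1},x_k'}$ is invariant under the product action $T_0^{[l]}\times\dots\times T_k^{[l]}$ by genericity of $(x_0,\dots,x_{k-1},x_k')$. Moreover, for any continuous function $\phi$ on $H$, the function $\Phi:=\phi\circ\psi\circ(\pi_1\times\pi_1)$ is a continuous factor through the Kronecker factor of the product system, on which Cesàro averages along the product orbit converge pointwise to the starting value of $\Phi$ (this is just the classical equidistribution on a compact group rotation, or equivalently the $\HKZ_1$-case of Corollary~\ref{cor:Leibman-pointwise}). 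Genericity then gives $\int\Phi\,\dif\m_{x_0,\dots,x_{k-1},x_k'}=\phi(\psi(\pi_1(x),\pi_1(x_k')))$, i.e.\ the measure $\m_{x_0,\dots,x_{k-1},x_k'}$ is supported on the single fiber $\{\psi(\pi_1\times\pi_1)=\psi(\pi_1(x),\pi_1(x_k'))\}$. Combining this support property with the disintegration displayed above and the uniqueness of the ergodic decomposition of $\mu\otimes\nu$ (together with Lemma~\ref{lem:prod-fiber-erg} which guarantees that the fibers are themselves supports of ergodic measures, so no further splitting is possible), I conclude $\m_{x_0,\dots,x_{k-1},x_k'}=\tilde\m_{(x_0,\dots,x_{k-1}),x_k'}$ for $\nu$-a.e.\ $x_k'$. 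Lemma~\ref{lem:uae-fiber-ae} then promotes this to a $[l]$-u.a.e.\ statement.

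The inductive ergodicity of $\m_{x_0,\dots,x_{k-1}}$ closes the induction: the base $k=0$ is Lemma~\ref{lem:Yl}(\ref{it:disintegration}), and in the inductive step the already-established identification $\m_{x_0,\dots,x_{k-1}}=\tilde\m_{(x_0,\dots,x_{k-2}),x_{k-1}}$ exhibits it as an ergodic fiber measure of an ergodic product (again via Lemma~\ref{lem:prod-fiber-erg} at the Kronecker level, which via the Host--Kra structure on the relevant pronilfactor lifts to ergodicity of the fiber measure itself). The main obstacle is the bookkeeping of the nested quantifiers in the "$[l]$-u.a.e." convention (Definition~\ref{def:luae}): one must verify that the exceptional sets arising at each step of the argument fit into the inductive tower of universal sets $\tilde X_0^{[l]}\supset\tilde X_1^{[l]}\supset\dots$, which is a routine but careful application of Lemma~\ref{lem:uae-fiber-ae} together with Fubini.
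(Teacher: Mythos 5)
The core idea---showing that $\m_{x_0,\dots,x_k}$ is supported on a single fiber of $\psi\circ(\pi_1\times\pi_1)$ and then appealing to uniqueness of disintegration---is the right heuristic, but there are two places where your argument breaks and where the paper has to do genuine extra work.

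First, your claim that ``$\Phi:=\phi\circ\psi\circ(\pi_1\times\pi_1)$ is a continuous factor'' and that ``genericity then gives $\int\Phi\,\dif\m_{x_0,\dots,x_{k-1},x_k'}=\phi(\psi(\pi_1(x),\pi_1(x_k')))$'' does not hold: the projection $\pi_1$ onto the Kronecker factor is a purely \emph{measurable} factor map, not a continuous one, so $\Phi$ is not continuous, and generic points (by Definition~\ref{def:luae} and Lemma~\ref{lem:Yl}\eqref{it:generic}) only control averages of functions in $D$ and in $C(X)$. You cannot pull the point evaluation $\phi(\psi(\pi_1(x),\pi_1(x_k')))$ out of a genericity assertion for a non-continuous test function. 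The paper circumvents this exactly by invoking the decomposition $f_{i,\epsilon}=f_{i,\epsilon,\perp}+f_{i,\epsilon,\HKZ,j}+f_{i,\epsilon,err,j}$ from (Dec)$(k+l+1-i)$: the orthogonal part contributes $0$ to both measures (by Corollary~\ref{cor:orth} and Theorem~\ref{thm:RTT-cube}), the structured part $f_{i,\epsilon,\HKZ,j}$ is a \emph{continuous} function on an actual nilfactor $Z_{i,j}$, where one really does have a continuous projection to the Kronecker torus and can use genericity, and the error term is shown to be small in both integrals and sent to zero by taking $j\to\infty$.

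Second, Lemma~\ref{lem:prod-fiber-erg} is a statement about \emph{nilsystems}, and you are invoking it as if it applied to the abstract product $(X_0^{[l]}\times\dots\times X_{k-1}^{[l]},\m_{x_0,\dots,x_{k-1}})\times(X_k^{[l]},\m_{x_k})$, which is not a nilsystem. In the paper, Lemma~\ref{lem:prod-fiber-erg} is applied only \emph{after} the reduction to the nilfactors $Z_j\times Z_j'$, and the ergodicity of the abstract fiber measure $\tilde\m_{x,y}$ is established only afterwards (Corollary~\ref{cor:m-ergodic}), as a consequence of the identity $\m_{x_0,\dots,x_k}=\tilde\m_{x,y}$ you are trying to prove. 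So reading ergodicity of the fiber off from Lemma~\ref{lem:prod-fiber-erg} in the abstract setting, and then using it to nail down the disintegration, runs logically backwards relative to the dependency order in the paper. To salvage your approach you would essentially be forced to introduce the structure-theoretic decomposition anyway, at which point you recover the paper's argument.
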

\begin{proof}
To verify that the measures coincide it suffices to check that the integrals of functions of the form $\otimes_{i=0}^{k}f_{i}^{[l]}$, $f_{i,\epsilon}\in D_{i}$ coincide.
To this end consider the splittings $f_{i,\epsilon}=f_{i,\epsilon,\perp}+f_{i,\epsilon,\HKZ,j}+f_{i,\epsilon,err,j}$, $j\in\N$, given by \ref{eq:dec}$(k+l+1-i)$.

Projections of tensor products that involve $f_{i,\epsilon,\perp}$ on one of the Kronecker factors vanish a.e.\ for $[l]$-u.a.e.\ $x,y$ by Corollary~\ref{cor:orth} for $k-1$ that is part of the induction hypothesis for this section.
Since $\ker\psi$ has full projections on both coordinates the corresponding integrals w.r.t. $\tilde\m_{x,y}$ also vanish.
The integrals w.r.t. $\m_{x,y}$ vanish for $[l]$-u.a.e.\ $x,y$ by Theorem~\ref{thm:RTT-cube}.

For the main terms we have
\begin{multline}
\label{eq:m-erg-main-term}
\int \otimes_{i=0}^{k} f_{i,\HKZ,j}^{[l]} \dif\tilde\m_{x,y}\\
=
\int\limits_{\mathclap{\kappa\in\HKZ_{1}(X),\lambda\in\HKZ_{1}(Y):\psi(\pi_{1}(x),\pi_{1}(y))=\psi(\kappa,\lambda)}}
\E(\otimes_{i=0}^{k-1} f_{i,\HKZ,j}^{[l]}|\HKZ_{1}(X))(\kappa) \E(f_{k,\HKZ,j}^{[l]}|\HKZ_{1}(Y))(\lambda) \dif(\kappa,\lambda).
\end{multline}
Since the underlying nilmanifold of a nilsystem is a bundle of nilmanifolds over its Kronecker factor, the conditional expectation above is just integration in the fibers, and by uniqueness of the Haar measure the whole integral equals
\[
\int_{\kappa\in Z_{j},\lambda\in Z_{j}':\psi(\pi_{1}(x),\pi_{1}(y))=\psi(\pi_{1}(\kappa),\pi_{1}(\lambda))} \otimes_{i=0}^{k-1} f_{i,\HKZ,j}^{[l]}(\kappa) f_{k,\HKZ,j}^{[l]}(\lambda) \dif(\kappa,\lambda),
\]
where $Z_{j}$ is the orbit closure of $x$ in $\prod_{i=0}^{k-1}Z_{i,j}^{[l]}$ and $Z_{j}'$ is the orbit closure of $y$ in $Z_{k,j}^{[l]}$.
By Lemma~\ref{lem:prod-fiber-erg}, the above fibers of $Z_{j}\times Z_{j}'$ are uniquely ergodic for every $x$ and a.e.\ $y$, and the integral then equals
\[
\lim_{N} \aveFN \otimes_{i=0}^{k-1} f_{i,\HKZ,j}^{[l]}(T^{n}x) f_{k,\HKZ,j}^{[l]}(S^{n}y)
=
\int \otimes_{i=0}^{k}f_{i,\HKZ,j}^{[l]} \dif\m_{x,y}.
\]
It remains to treat the error terms, i.e. the case $f_{i',\epsilon'}=f_{i',\epsilon',err,j}$ for some $i',\epsilon'$.
By Lemma~\ref{lem:Yl}(\ref{it:generic}), for $[l]$-u.a.e.\ $x,y$ we have
\[
\int \otimes_{i=0}^{k}f_{i}^{[l]} \dif\m_{x,y} \lesssim \|f_{i',\epsilon'}\|_{L^{1}(\mu_{i})}
\to 0 \quad \text{as} \quad j\to\infty.
\]
Similarly, we have $\int | \otimes_{i=0}^{k-1}f_{i}^{[l]} | \dif\m_{x} \lesssim \|f_{i',\epsilon'}\|_{L^{1}(\mu_{i})}$ if $i'<k$ and $\int | f_{k}^{[l]} | \dif\m_{y_{k}} \lesssim \|f_{k,\epsilon}\|_{L^{1}(\mu_{i})}$ if $i'=k$ for $[l]$-u.a.e.\ $x,y$.
This implies that either $\E(\otimes_{i=0}^{k-1}f_{i}^{[l]}|\HKZ_{1}(X))$ or $\E(f_{k}^{[l]}|\HKZ_{1}(Y))$ converges to zero in probability for $[l]$-u.a.e.\ $x,y$, so
\[
\int \otimes_{i=0}^{k}f_{i}^{[l]} \dif \tilde\m_{x,y}
\to 0 \quad \text{as} \quad j\to\infty
\]
for $[l]$-u.a.e.\ $x_{0},\dots,x_{k}$ since $\ker\psi$ has full projections on coordinates.
\end{proof}
\begin{corollary}
\label{cor:m-ergodic}
For $[l]$-u.a.e.\ $x_{0},\dots,x_{k}$ the measure $\m_{x_{0},\dots,x_{k}}=\tilde\m_{x,y}$ is ergodic.
\end{corollary}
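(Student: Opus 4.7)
The plan is to verify that $\m_{x_{0},\dots,x_{k}}$-a.e.\ point is generic for $\m_{x_{0},\dots,x_{k}}$ itself on a countable dense family of continuous functions, which is equivalent to ergodicity. The starting observation is that Theorem~\ref{thm:return-times-disintegration} provides, $[l]$-u.a.e., genericity of $(y_{0},\dots,y_{k})$ for \emph{its own} measure $\m_{y_{0},\dots,y_{k}}$ on every tensor product $\otimes_{i=0}^{k} f_{i}^{[l]}$ with $f_{i,\epsilon}\in D_{i}$. Feeding this $[l]$-u.a.e.\ statement into Lemma~\ref{lem:uae-fiber-ae}, the same genericity property holds $\m_{x_{0},\dots,x_{k}}$-a.e.\ for $[l]$-u.a.e.\ $(x_{0},\dots,x_{k})$.

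The crux of the proof will be to identify $\m_{y_{0},\dots,y_{k}}$ with $\m_{x_{0},\dots,x_{k}}$ at $\m_{x_{0},\dots,x_{k}}$-a.e.\ $(y_{0},\dots,y_{k})$. Applying Lemma~\ref{lem:m-tilde-m} separately to the tuples $(x_{0},\dots,x_{k})$ and $(y_{0},\dots,y_{k})$, these two measures equal $\tilde\m_{x,y}$ and $\tilde\m_{(y_{0},\dots,y_{k-1}),y_{k}}$ respectively. By its construction in Lemma~\ref{lem:cube-change-order}, $\tilde\m$ depends only on three data: the measure on the left factor, the measure on the right factor, and the coset $\psi(\pi_{1}\times\pi_{1})$ of the reference point. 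Now the projections of $\tilde\m_{x,y}$ to $X_{0}^{[l]}\times\dots\times X_{k-1}^{[l]}$ and $X_{k}^{[l]}$ are $\m_{x_{0},\dots,x_{k-1}}$ and $\m_{x_{k}}$, which are ergodic by the inductive hypothesis of this section and by Lemma~\ref{lem:Yl}(\ref{it:disintegration}) respectively; thus for $\m_{x_{0},\dots,x_{k}}$-a.e.\ $(y_{0},\dots,y_{k})$ one gets $\m_{y_{0},\dots,y_{k-1}}=\m_{x_{0},\dots,x_{k-1}}$ and $\m_{y_{k}}=\m_{x_{k}}$. Furthermore, $\tilde\m_{x,y}$ is by definition supported on the $\psi$-preimage of $\psi(\pi_{1}(x),\pi_{1}(y))$, so the third datum is matched $\tilde\m_{x,y}$-a.e.\ as well. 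Hence $\m_{y_{0},\dots,y_{k}}=\m_{x_{0},\dots,x_{k}}$ on a set of full measure.

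Combining the two previous paragraphs, for $[l]$-u.a.e.\ $(x_{0},\dots,x_{k})$ and $\m_{x_{0},\dots,x_{k}}$-a.e.\ $(y_{0},\dots,y_{k})$, the ergodic averages of every $\otimes_{i}f_{i}^{[l]}$, $f_{i,\epsilon}\in D_{i}$, converge along $(\Fo_{N})$ at $(y_{0},\dots,y_{k})$ to the constant $\int\otimes_{i}f_{i}^{[l]}\dif\m_{x_{0},\dots,x_{k}}$. Since the countable family of such tensor products is $L^{\infty}$-dense in $C(\prod_{i}X_{i}^{[l]})$ by the Stone-Weierstra{\ss} theorem and the density property in Definition~\ref{def:D}, and since ergodic averages are uniformly bounded by $\|\cdot\|_{\infty}$, the convergence extends to every continuous function; i.e.\ $\m_{x_{0},\dots,x_{k}}$-a.e.\ point is generic for $\m_{x_{0},\dots,x_{k}}$. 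Comparing with the ergodic decomposition of $\m_{x_{0},\dots,x_{k}}$ forces the a.e.\ ergodic component to equal $\m_{x_{0},\dots,x_{k}}$, giving the desired ergodicity (the base case $k=0$ is covered separately by Lemma~\ref{lem:Yl}(\ref{it:disintegration})). The main subtlety in the argument is the bookkeeping in the identification step, where both ergodicity of the marginals (used to propagate the disintegration data) and the explicit support of $\tilde\m_{x,y}$ (used to propagate the $\psi$-datum) must be invoked jointly.
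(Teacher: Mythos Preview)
Your argument is correct. Both your proof and the paper's start the same way, using Lemma~\ref{lem:uae-fiber-ae} to reduce to showing $\int\otimes_{i}f_{i}^{[l]}\dif\m_{\vec\xi}=\int\otimes_{i}f_{i}^{[l]}\dif\m_{x_{0},\dots,x_{k}}$ for $\m_{x_{0},\dots,x_{k}}$-a.e.\ $\vec\xi$, but they diverge in how this identity is verified. The paper reuses the decomposition \ref{eq:dec}: for the orthogonal and error pieces both sides vanish or are small, and for the structured pieces $f_{i,\HKZ,j}$ the equality is read off from the explicit formula \eqref{eq:m-erg-main-term}, which visibly depends on $(x,y)$ only through $\psi(\pi_{1}(x),\pi_{1}(y))$. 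Your route bypasses the splitting entirely and instead proves the stronger measure identity $\m_{\vec\xi}=\m_{x_{0},\dots,x_{k}}$ by matching the three abstract data that determine $\tilde\m$: the two marginals (identified via the inductive ergodicity of $\m_{x_{0},\dots,x_{k-1}}$ and the ergodicity of $\m_{x_{k}}$ from Lemma~\ref{lem:Yl}) and the $\psi$-value (constant on the support). Your argument is a bit more structural and avoids revisiting the three cases of Lemma~\ref{lem:m-tilde-m}; the paper's is shorter because it can just point back to \eqref{eq:m-erg-main-term}. The one extra ingredient you consume is Corollary~\ref{cor:m-ergodic} for $k-1$ as part of the running induction of this section, which is legitimate given the paper's inductive setup.
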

Note that even for a non-ergodic invariant measure on a regular system there may exist generic points, so the mere fact that $\vec x$ is generic for $\m_{\vec x}$ does not suffice.
\begin{proof}
In order to see that $\m_{x_{0},\dots,x_{k}}$ is ergodic it suffices to verify that for any continuous functions $f_{i,\epsilon}\in C(X_{i})$ we have
\begin{equation}
\label{eq:criterion-ergodicity-of-m}
\lim_{N}\aveFN \otimes_{i=0}^{k}f_{i}^{[l]}(T^{n}\vec\xi) = \int \otimes_{i=0}^{k}f_{i}^{[l]} \dif\m_{x_{0},\dots,x_{k}}
\quad \text{for } \m_{x_{0},\dots,x_{k}}\text{-a.e.\ }\vec\xi.
\end{equation}
Recall that for $[l]$-u.a.e.\ $x_{0},\dots,x_{k}$ the limit on the left-hand side of \eqref{eq:criterion-ergodicity-of-m} exists for $\m_{x_{0},\dots,x_{k}}$-a.e.\ $\vec\xi$ by Lemma~\ref{lem:uae-fiber-ae} and equals $\int \otimes_{i=0}^{k}f_{i}^{[l]} \dif\m_{\vec\xi}$.
Splitting the $f_{i,\epsilon}$'s as before it suffices to verify \eqref{eq:criterion-ergodicity-of-m} for the main terms, and this follows directly from \eqref{eq:m-erg-main-term}.
\end{proof}

\subsection{The sufficient special case of convergence to zero}
The last hypothesis of Proposition~\ref{prop:BFKO} is a certain special case of its conclusion.
Recall that we already have u.a.e.\ convergence to zero on $X^{2}$ (as defined in \eqref{eq:X}), but not yet in the required sense.
This is now corrected using Lemma~\ref{lem:cube-change-order}.
\begin{lemma}[Change of order in the cube construction]
\label{lem:cube-order}
Let $l\in\N$ and $P$ be a statement about points of $\prod_{i=0}^{k}X_{i}^{[l+1]}$.
Assume that for $[l+1]$-u.a.e.\ $x_{0},\dots,x_{k}$
we have $P(x_{0},\dots,x_{k})$.

Then for $[l]$-u.a.e.\ $x_{0},\dots,x_{k}$,
for $\m_{x_{0},\dots,x_{k}}$-a.e.\ $x'$,
we have $P(x_{0},\dots,x_{k},x')$.
\end{lemma}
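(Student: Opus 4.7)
I will proceed by induction on $k$, mirroring the nested structure of Definition~\ref{def:luae}. The two essential tools are Lemma~\ref{lem:Yl}(\ref{it:disintegration}), which expresses $\m_y\otimes\m_y$ as an integral of level-$(l{+}1)$ ergodic components, and Lemma~\ref{lem:cube-change-order}, which exchanges a tensor-square integration for an integration against the measure $\tilde\m$; the latter is identified with $\m_{x_0,\dots,x_k}$ via Lemma~\ref{lem:m-tilde-m}.

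\smallskip
\noindent\emph{Base case $k=0$.} Unpacking the hypothesis yields a measurable set $\hat X_0^{[l+1]}\subset X_0^{[l+1]}$ with $\m_z(\hat X_0^{[l+1]})=1$ for every $z\in Y_{0,l+1}$, on which $P$ holds. For $y\in Y_{0,l}$, Lemma~\ref{lem:Yl}(\ref{it:disintegration}) gives $\m_y\otimes\m_y=\int_{Y_{0,l+1}}\m_z\,\dif(\m_y\otimes\m_y)(z)$, hence $(\m_y\otimes\m_y)(\hat X_0^{[l+1]})=1$. By Fubini, for $\m_y$-a.e.\ $x_0$ the slice has full $\m_y$-measure; since $\m_y$ is ergodic and $\m_{x_0}=\m_y$ for $\m_y$-a.e.\ $x_0\in Y_{0,l}$, the slice has full $\m_{x_0}$-measure as required.

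\smallskip
\noindent\emph{Inductive step.} Assume the lemma for $k-1$. Having fixed the systems $(X_i,\mu_i,T_i,D_i)$, the $[l+1]$-u.a.e.\ hypothesis provides nested measurable sets $\hat X_i^{[l+1]}\subset\prod_{j\le i}X_j^{[l+1]}$ with $P$ holding on $\hat X_k^{[l+1]}$. Define the statement $Q(x_0,\dots,x_{k-1})$ for points of $\prod_{i<k}X_i^{[l+1]}$ to be ``$\bigl((x_0,x_0'),\dots,(x_{k-1},x_{k-1}')\bigr)\in\hat X_{k-1}^{[l+1]}$'' (after the identification $X_i^{[l+1]}=X_i^{[l]}\times X_i^{[l]}$). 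By the nesting property, $Q$ holds $[l+1]$-u.a.e.\ for the $k$ systems $X_0,\dots,X_{k-1}$, so the inductive hypothesis yields: for $[l]$-u.a.e.\ $(x_0,\dots,x_{k-1})$, $Q$ holds for $\m_{x_0,\dots,x_{k-1}}$-a.e.\ $(x_0',\dots,x_{k-1}')$.

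Now fix such a tuple $(x_0,\dots,x_{k-1})$ and any $y_k\in Y_{k,l}$. Apply Lemma~\ref{lem:cube-change-order} with
\[
(X,\mu)=\Bigl(\prod_{i<k}X_i^{[l]},\ \m_{x_0,\dots,x_{k-1}}\Bigr)\quad\text{(ergodic by Corollary~\ref{cor:m-ergodic})},\quad (Y,\nu)=(X_k^{[l]},\m_{y_k}).
\]
The ergodic decomposition of $\nu\otimes\nu=\m_{y_k}\otimes\m_{y_k}$ over $\HKZ_1(X_k^{[l]})$ supplies the measures $(\nu\otimes\nu)_\lambda$; by uniqueness of the ergodic decomposition, applied to the representation $\m_{y_k}\otimes\m_{y_k}=\int_{Y_{k,l+1}}\m_z\,\dif(\m_{y_k}\otimes\m_{y_k})(z)$ of Lemma~\ref{lem:Yl}(\ref{it:disintegration}), for a.e.\ $\lambda$ in some full-measure set $\Lambda\subset\HKZ_1(Y)$ the component $(\nu\otimes\nu)_\lambda$ coincides with $\m_z$ for a suitable $z\in Y_{k,l+1}$. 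Combined with the inductive conclusion that the initial segment lies in $\hat X_{k-1}^{[l+1]}$ for $\mu$-a.e.\ $\xi=(x_0',\dots,x_{k-1}')$, the $[l+1]$-u.a.e.\ hypothesis then asserts that for $\mu$-a.e.\ $\xi$, every $\lambda\in\Lambda$, and $(\nu\otimes\nu)_\lambda$-a.e.\ $(\eta,\eta')$, the tuple lies in $\hat X_k^{[l+1]}$, i.e.\ $P$ holds. This is exactly the hypothesis of Lemma~\ref{lem:cube-change-order}. Its conclusion, combined with Lemma~\ref{lem:m-tilde-m} (which identifies $\tilde\m_{(x_0,\dots,x_{k-1}),x_k}$ with $\m_{x_0,\dots,x_k}$ for $[l]$-u.a.e.\ tuples), gives: for $\m_{y_k}$-a.e.\ $x_k$ and $\m_{x_0,\dots,x_k}$-a.e.\ $(x_0',\dots,x_k')$, $P$ holds.

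\smallskip
\noindent\emph{Main obstacle.} The delicate point is the identification of the parametrizations: the $\lambda$-parametrized ergodic components $(\nu\otimes\nu)_\lambda$ required by Lemma~\ref{lem:cube-change-order} must be matched, modulo null sets, with the $z$-parametrized measures $\m_z$ from Lemma~\ref{lem:Yl}(\ref{it:disintegration}) for $z\in Y_{k,l+1}$, so that the $[l+1]$-u.a.e.\ hypothesis can be fed in coordinate by coordinate. A secondary bookkeeping task is to arrange the measurable selections at each inductive step so that the sets $\tilde X_i^{[l]}$ satisfy the nesting property of Definition~\ref{def:luae}; this is routine but requires that the "good" set of $(x_0,\dots,x_{k-1})$ chosen by the inductive hypothesis be combined measurably with the $\m_{y_k}$-a.e.\ set of $x_k$ produced by Lemma~\ref{lem:cube-change-order}.
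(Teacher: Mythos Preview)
Your proposal is correct and follows essentially the same approach as the paper: induction on $k$, with the base case handled via \eqref{eq:motimesm}, and the inductive step carried out by applying the $k-1$ case, rewriting the $Y_{k,l+1}$-indexed ergodic components of $\m_{y_k}\otimes\m_{y_k}$ in terms of the $\HKZ_1$-parametrization required by Lemma~\ref{lem:cube-change-order}, and then invoking Lemma~\ref{lem:m-tilde-m}. Your write-up is in fact more explicit than the paper's (which compresses the inductive step into three sentences); in particular, your identification of the ``main obstacle'' --- matching the $\lambda$-parametrized ergodic components $(\nu\otimes\nu)_\lambda$ with the $z$-parametrized measures $\m_z$ for $z\in Y_{k,l+1}$ via uniqueness of the ergodic decomposition --- is precisely the point the paper hides behind the phrase ``Using \eqref{eq:motimesm} we can rewrite\dots''.
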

Strictly speaking, the coordinates of $x'$ in $(x_{0},\dots,x_{k},x')$ should be attached to $x_{0},\dots,x_{k}$ but we do not want to introduce additional notation at this point.
\begin{proof}
The base case $k=0$ follows directly from \eqref{eq:motimesm}.

Assume now that $k>0$.
By the inductive hypothesis of this section the conclusion holds for $k-1$, so
for $[l]$-u.a.e.\ $x_{0},\dots,x_{k-1}$,
\emph{for $\m_{x_{0},\dots,x_{k-1}}$-a.e.\ $x'$,
for every $y_{k}\in Y_{k,l+1}$ and $\m_{y_{k}}$-a.e.\ $x_{k}$,}
we have $P(x_{0},\dots,x_{k-1},x',x_{k})$.

Using \eqref{eq:motimesm} we can rewrite the emphasized part of the statement as
``for $\m_{x_{0},\dots,x_{k-1}}$-a.e.\ $x'$,
for every $\tilde y_{k}\in Y_{k,l}$,
for every ergodic component $\mu_{e}$ of $(\m_{\tilde y_{k}})^{2}$ from a fixed full measure set, for $\mu_{e}$-a.e.\ $x_{k}$''
The conclusion follows by Lemma~\ref{lem:cube-change-order} and Lemma~\ref{lem:m-tilde-m}.
\end{proof}
\begin{lemma}
\label{lem:cube-meas}
Let $l,l'\in\N$ and assume \ref{eq:cf}$(k,l+l')$.
Then for $[l]$-u.a.e.\ $\vec x_{0}=(x_{0},\dots,x_{k})$,
for $\m_{\vec x_{0}}$-a.e.\ $\vec x_{1}$, \ldots, for $\m_{\vec x_{0},\dots,\vec x_{l'-1}}$-a.e.\ $\vec x_{l'}$
the ergodic averages of the function $\otimes_{i=0}^{k}f_{i}^{[l+l']}$ converge to zero at $(\vec x_{0},\dots,\vec x_{l'})$.
\end{lemma}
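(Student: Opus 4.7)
The plan is to prove the lemma by induction on $l'$, handling the inductive step via Lemma~\ref{lem:cube-order} and reducing the base case $l'=0$ to the outer induction hypothesis \RTT{k,0} applied on the cube systems $(X_i^{[l]},\m_{y_i},T_i^{[l]})$.

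For the base case $l'=0$, I would fix a tuple $y=(y_0,\dots,y_k)\in\prod_i Y_{i,l}$ (which lies in a $\mu_0^{[l]}\otimes\cdots\otimes\mu_k^{[l]}$-full measure set) and observe that each cube system $(X_i^{[l]},\m_{y_i},T_i^{[l]})$ is ergodic by Lemma~\ref{lem:Yl}\eqref{it:disintegration}. The hypothesis \ref{eq:cf}$(k,l)$ translates, via Lemma~\ref{lem:Yl}\eqref{it:orth}, into condition \ref{eq:cf}$(k,0)$ for the cube systems with the functions $f_i^{[l]}$: specifically, $f_{0,\epsilon}\perp\HKZ_{k+l}(X_0)$ yields $f_0^{[l]}\perp\HKZ_k(X_0^{[l]},\m_{y_0})$, and for $1\leq i\leq k$, $f_{i,\epsilon}\perp\HKZ_{k+l+1-i}(X_i)$ yields $f_i^{[l]}\perp\HKZ_{k+1-i}(X_i^{[l]},\m_{y_i})$. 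The inductive hypothesis \RTT{k,0} then delivers convergence to zero of the averages at $[0]$-u.a.e.\ $\vec x_0$ on the cube systems. Since each $\m_{y_i}$ is already ergodic, $[0]$-u.a.e.\ on the cube systems unfolds per Definition~\ref{def:luae} into the nested $\bigotimes_i\m_{y_i}$-a.e.\ condition, which, once $y$ is allowed to range over its full measure set, is exactly the $[l]$-u.a.e.\ statement for the original systems.

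For the inductive step, I would invoke the lemma for parameters $(l+1,l'-1)$; since $(l+1)+(l'-1)=l+l'$, the hypothesis \ref{eq:cf}$(k,l+l')$ and the function $\otimes_{i=0}^{k} f_{i}^{[l+l']}$ are unchanged. Viewing the resulting statement ``for $[l+1]$-u.a.e.\ $\vec y_0$, for nested $\m$-a.e.\ $\vec y_1,\dots,\vec y_{l'-1}$, the averages vanish at $(\vec y_0,\dots,\vec y_{l'-1})$'' as a single property $P(\vec y_0)$ of a point in $\prod_i X_i^{[l+1]}$, I would apply Lemma~\ref{lem:cube-order} to convert the outermost universality into ``for $[l]$-u.a.e.\ $\vec x_0\in\prod_i X_i^{[l]}$, for $\m_{\vec x_0}$-a.e.\ $\vec x_1\in\prod_i X_i^{[l]}$, $P((\vec x_0,\vec x_1))$.'' Relabeling $\vec y_j=\vec x_{j+1}$ for $j=1,\dots,l'-1$ then produces the statement for parameters $(l,l')$.

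The main technical obstacle lies in the base case: to invoke \RTT{k,0} on the cube systems one must equip each $(X_i^{[l]},\m_{y_i},T_i^{[l]})$ with a distinguished set $D_i^{[l]}\subset L^\infty(X_i^{[l]},\m_{y_i})$ in the sense of Definition~\ref{def:D} that contains every $f_i^{[l]}$ arising from $f_{i,\epsilon}\in D_i$. Taking $D_i^{[l]}$ to be the countable $T_i^{[l]}$-invariant $\Q$-algebra (closed under absolute value) generated by tensor products of elements of $D_i$, the only delicate axiom is the decomposition property~\ref{eq:dec}: given $f=\otimes_\epsilon g_\epsilon$, one splits each $g_\epsilon=g_{\epsilon,\perp}+g_{\epsilon,\HKZ,j}+g_{\epsilon,err,j}$ at level $m+l$ via Definition~\ref{def:D} for $D_i$ and expands the product. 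Summands carrying any factor $g_{\epsilon,\perp}$ are orthogonal to $\HKZ_m(X_i^{[l]},\m_{y_i})$ by Lemma~\ref{lem:Yl}\eqref{it:orth}; the purely structured term is continuous on a product of nilfactors of $X_i$ and, using Theorem~\ref{thm:HK-structure}, must be approximated by continuous functions on an $m$-step nilfactor of $X_i^{[l]}$, with the residual absorbed into an $L^1$-small error term.
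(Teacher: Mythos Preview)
Your inductive step via Lemma~\ref{lem:cube-order} is correct and is exactly what the paper does. The issue is in the base case: you have misread the outer induction hypothesis. The paper does not merely assume \RTT{k,0}; it assumes \RTT{k,\cdot}, i.e.\ \RTT{k,l} for \emph{every} $l\in\N$ (see the passage just after Theorem~\ref{thm:RTT-cube}: ``For the remaining part of this section we assume \RTT{k,\cdot} for some fixed $k$ and prove \RTT{k+1,\cdot}.''). Hence the case $l'=0$ of the lemma is \emph{literally} the second conclusion of Theorem~\ref{thm:RTT-cube} with parameters $(k,l)$, and there is nothing further to prove.

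Your detour through \RTT{k,0} on the cube systems $(X_i^{[l]},\m_{y_i},T_i^{[l]})$ is therefore unnecessary, and the technical obstacle you yourself flag is a genuine one rather than a mere formality. The decomposition axiom in Definition~\ref{def:D} requires the structured piece $f_{\HKZ,j}$ to be continuous on an $m$-step nilfactor of the \emph{specific} ergodic system $(X_i^{[l]},\m_{y_i})$; that nilfactor, and hence the approximants produced by Theorem~\ref{thm:HK-structure}, depend on $y_i$. You do not get a single countable $D_i^{[l]}$ valid for all $y_i\in Y_{i,l}$, and without that uniformity the sets $\tilde X_i^{[l]}$ required by Definition~\ref{def:luae} (which must be independent of the $y_i$) are not furnished by your application of \RTT{k,0}. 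So the detour both complicates the argument and leaves a gap; once you use \RTT{k,l} directly, the base case is one line.
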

Again, the tensor product $\otimes_{i=0}^{k}f_{i}^{[l+l']}$ should be arranged in a different order, but in our opinion the above notation makes our goal more clear: it is not the function but the order in which we build the product space that changes.
\begin{proof}
We use induction on $l'$.
The case $l'=0$ is precisely Theorem~\ref{thm:RTT-cube}.
Assume that the conclusion is known for $l+1$ and $l'-1$.
The claim for $l$ and $l'$ follows by Lemma~\ref{lem:cube-order}.
\end{proof}
\begin{corollary}
\label{cor:orth}
Let $l,l'\in\N$ and assume \ref{eq:cf}$(k,l+l')$.
Then for $[l]$-u.a.e.\ $x_{0},\dots,x_{k}$
we have
$f_{0}^{[l]}\otimes\dots\otimes f_{k}^{[l]} \perp\HKZ_{l'}(\m_{x_{0},\dots,x_{k}})$.
\end{corollary}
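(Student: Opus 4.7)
The plan is to combine Lemma~\ref{lem:cube-meas} with the characterization of the Host--Kra factor via uniformity seminorms. Writing $X := \prod_{i=0}^{k} X_i^{[l]}$ and $F := \bigotimes_{i=0}^{k} f_i^{[l]}$, the claim $F \perp \HKZ_{l'}(\m_{\vec x_0})$ is equivalent to $\|F\|_{U^{l'+1}(\m_{\vec x_0})} = 0$. The second form of the identity in \eqref{eq:uniformity-seminorm-integral} gives
\[
\|F\|_{U^{l'+1}(\m_{\vec x_0})}^{2^{l'+1}} = \int \E(F^{[l']} \mid \mathcal{I}^{[l']})^{2} \dif \m_{\vec x_0}^{[l']},
\]
so the claim reduces to $\E(F^{[l']} \mid \mathcal{I}^{[l']}) = 0$ on a $\m_{\vec x_0}^{[l']}$-conull set. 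By the pointwise ergodic theorem (Theorem~\ref{thm:Lindenstrauss-pointwise}) applied to $(X^{[l']}, T^{[l']}, \m_{\vec x_0}^{[l']})$, this conditional expectation agrees $\m_{\vec x_0}^{[l']}$-a.e.\ with the pointwise limit of $\aveFN (T^{[l']})^{n} F^{[l']}$, so it suffices to show that these ergodic averages vanish in the limit on a $\m_{\vec x_0}^{[l']}$-conull set.

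In order to apply Lemma~\ref{lem:cube-meas}, I would identify $F^{[l']}$ with $\bigotimes_{i} \tilde f_i^{[l+l']}$ via the canonical reshuffling of coordinates $(\prod_i X_i^{[l]})^{[l']} \cong \prod_i X_i^{[l+l']}$, where the extended data is defined by $\tilde f_{i,(\epsilon',\epsilon)} := f_{i,\epsilon}$ for $\epsilon' \in \{0,1\}^{l'}$ and $\epsilon \in \{0,1\}^{l}$. Since each $\tilde f_{i,\eta}$ inherits its orthogonality properties from the underlying $f_{i,\epsilon}$, the hypothesis \ref{eq:cf}$(k,l+l')$ carries over verbatim from the $f$'s to the $\tilde f$'s. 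Applying Lemma~\ref{lem:cube-meas} with parameters $(l,l')$ then yields: for $[l]$-u.a.e.\ $\vec x_{0}$, for $\m_{\vec x_{0}}$-a.e.\ $\vec x_{1}$, \ldots, for $\m_{\vec x_{0},\dots,\vec x_{l'-1}}$-a.e.\ $\vec x_{l'}$, the ergodic averages of $F^{[l']}$ vanish at $(\vec x_{0},\dots,\vec x_{l'})$.

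The final step is to verify that the iteratively-defined universal-a.e.\ set from Lemma~\ref{lem:cube-meas} is $\m_{\vec x_0}^{[l']}$-conull in the cube space. This matching is built into the recursive definitions: the cube recursion \eqref{eq:mul+1} assembles $\m_{\vec x_0}^{[j+1]}$ from $\m_{\vec x_0}^{[j]}$ by integrating against ergodic components on $X^{[j]}$, while the disintegration identity \eqref{eq:m-disintegration} of Theorem~\ref{thm:return-times-disintegration} combined with the ergodicity relation \eqref{eq:motimesm} identifies those ergodic components with the measures $\m_{\vec x_{0},\dots,\vec x_j}$. This is precisely the content of the inductive step used in the proof of Lemma~\ref{lem:cube-order}, and iterating the identification $l'$ times produces the desired correspondence. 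The main obstacle is this last step: carefully tracking the coordinate identifications across $l'$ layers of cube construction so that the iterated ergodic-decomposition scheme and the cube measure $\m_{\vec x_0}^{[l']}$ really do determine the same conull set, after which the pointwise vanishing of the ergodic averages transfers and completes the proof.
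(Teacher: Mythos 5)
Your proposal is correct and follows essentially the same route as the paper: reduce the orthogonality claim to $\|F\|_{U^{l'+1}(\m_{\vec x_{0}})}=0$ via the uniformity-seminorm characterization \eqref{eq:uniformity-seminorm-integral}, interpret the conditional expectation there via the pointwise ergodic theorem, feed in Lemma~\ref{lem:cube-meas}, and then match the nested a.e.\ quantifiers with the cube measure $\m_{\vec x_{0}}^{[l']}$ via the recursion \eqref{eq:mul+1}. The paper compresses the final matching step by invoking Lemma~\ref{lem:uae-fiber-ae}, which is the abstract version of the iteration you describe, but the content is the same.
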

\begin{proof}
This follows from Lemma~\ref{lem:cube-meas} by Lemma~\ref{lem:uae-fiber-ae}, the definition of cube measures \eqref{eq:mul+1},
the characterization of uniformity seminorms \eqref{eq:uniformity-seminorm-integral} and the ergodic theorem.
\end{proof}
\begin{proof}[Proof of Theorem~\ref{thm:RTT-cube} for $k+1$]
Let $k,l\in\N$ be fixed, our objective is to prove \RTT{k+1,l}.
Assume first \ref{eq:cf}$(k,l+1)$.
Then Lemma~\ref{lem:cube-meas} with $l'=1$ states that
for $[l]$-u.a.e.\ $x=(x_{0},\dots,x_{k})$,
for $\m_{x_{0},\dots,x_{k}}$-a.e.\ $x'$,
for any $f_{i,\epsilon}\in D_{i}$
we have
\[
\lim_{N}\aveFN \otimes_{i=0}^{k}f_{i}^{[l]}((\otimes_{i=0}^{k}T_{i}^{[l]})^{n}x)
\cdot \otimes_{i=0}^{k}f_{i}^{[l]}((\otimes_{i=0}^{k}T_{i}^{[l]})^{n}x')
= 0.
\]
For $[l]$-u.a.e.\ $x_{0},\dots,x_{k}$ we obtain genericity w.r.t.\ $\m_{x_{0},\dots,x_{k}}$ by Theorem~\ref{thm:return-times-disintegration}, ergodicity of $\m_{x_{0},\dots,x_{k}}$ by Corollary~\ref{cor:m-ergodic} and orthogonality of $\otimes_{i=0}^{k} f_{i}^{[l]}$ to the Kronecker factor of $\m_{x_{0},\dots,x_{k}}$ by Corollary~\ref{cor:orth}, so Proposition~\ref{prop:BFKO} with $X=(X_{0}^{[l]}\times\dots\times X_{k}^{[l]},\m_{x_{0},\dots,x_{k}})$ and $Y=(X_{k+1}^{[l]},\m_{y_{k+1}})$ implies the claimed convergence to zero $[l]$-u.a.e.

This takes care of the terms $f_{i,\epsilon,\perp}$ in the splittings $f_{i,\epsilon} = f_{i,\epsilon,\perp}+f_{i,\epsilon,\HKZ,j}+f_{i,\epsilon,err,j}$ given by \ref{eq:dec}$(k+l+1-i)$ (respectively, \ref{eq:dec}$(k+l)$ for $i=0$).
By an approximation argument like in the proof of Lemma~\ref{lem:m-tilde-m} it suffices to consider the main terms, so we may assume that $\prod_{i=0}^{k} f_{i}^{[l]}((T_{i}^{[l]})^{n}x)$ is a nilsequence.
The claimed convergence a.e.\ in $x_{k+1}$ then follows from Theorem~\ref{thm:WW-gen-nilseq}.

Finally, assume \ref{eq:cf}$(k+1,l)$.
This means that we have either \ref{eq:cf}$(k,l+1)$ or $f_{k+1,\epsilon}\perp\HKZ_{l+1}(X_{k+1})$ for some $\epsilon$.
In the former case the limit is zero $[l]$-u.a.e.\ by the above argument and in the latter case by definition of $Y_{k+1,l}$ and Lemma~\ref{lem:Z1-char-weight}.
\end{proof}

\section{Wiener-Wintner return times theorem for nilsequences}
\label{sec:ww}
We also obtain the following joint extension of the multiple term return times theorem and the Wiener-Wintner theorem for nilsequences, thereby generalizing \cite[Theorem 1]{MR1357765}.
\begin{theorem}[Wiener-Wintner return times theorem for nilsequences]
\label{thm:WWRTT}
\index{Return times theorem!Wiener-Wintner-type}
Let $k,l\in\N$ and $f_{i}\in L^{\infty}(X_{i})$, $i=0,\dots,k$.
Then for u.a.e.\ $x_{0},\dots,x_{k}$ and every $l$-step nilsequence $(a_{n})_{n}$ the averages
\[
\aveFN a_{n} \prod_{i=0}^{k}f_{i}(T_{i}^{n}x_{i})
\]
converge as $N\to\infty$ (to zero if in addition $f_{0}\perp\HKZ_{k+l}(X_{0})$ or $f_{i}\perp\HKZ_{k+l+1-i}(X_{i})$ for some $i=1,\dots,k$).
\end{theorem}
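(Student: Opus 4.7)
The strategy mirrors the proof of Theorem~\ref{thm:RTT-cube}: realize the weighted return-times average as a weighted ergodic average on the product system $X:=X_0\times\dots\times X_k$ endowed with a suitable ergodic measure, and then invoke the Wiener-Wintner theorem for nilsequences on that system. First I associate to each $f_i$ a system $D_i$ as in Definition~\ref{def:D}. Theorem~\ref{thm:return-times-disintegration}, applied with cube exponent $0$, then yields a $[0]$-universal full-measure set of tuples $(x_0,\dots,x_k)$ each generic for every tensor product $\otimes_i g_i$, $g_i\in D_i$, with respect to a measure $\m_{x_0,\dots,x_k}$ on $X$ that is ergodic by Corollary~\ref{cor:m-ergodic}.

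A density argument upgrades this to full genericity of $(x_0,\dots,x_k)$ for $f:=\otimes_i f_i$ on the ergodic system $(X,\m_{x_0,\dots,x_k},T_0\times\dots\times T_k)$: tensor products from $\otimes_i D_i$ are $L^\infty$-dense in $C(X)$ by Stone-Weierstrass, while $\ell^1(\Z)$-convolutions of finite $\Q$-linear combinations of translates of tensor products in $\otimes_i D_i$ form a countable $L^\infty$-dense subset of the closed convolution-invariant $L^\infty$-algebra generated by $f$; a triangle-inequality argument promotes the genericity from tensor products to the entire algebra. This places us in the hypotheses of Theorem~\ref{thm:WW-unif} at the point $(x_0,\dots,x_k)$, giving, for every $\delta>0$, every $l$-step nilmanifold $G/\Gamma$ with $\Gamma$-rational filtration, every smooth $F$ on $G/\Gamma$ and every $g\in\poly$, the bound
\[
\Big|\aveFN \prod_i f_i(T_i^n x_i)\cdot F(g(n)\Gamma)\Big|
\lesssim \|F\|_{W^{k',2^l}(G/\Gamma)}\bigl(\|f\|_{U^{l+1}(X,\m_{x_0,\dots,x_k})}+\delta\bigr)
\]
for $N$ large, with $k'$ the Sobolev exponent produced by that theorem.

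The characteristic-factor hypothesis of Theorem~\ref{thm:WWRTT} is precisely \eqref{eq:cf}$(k,l)$ in the sense of Theorem~\ref{thm:RTT-cube} with cube exponent $0$, so Corollary~\ref{cor:orth} applied with its parameters $l=0$, $l'=l$ yields $f\perp\HKZ_l(X,\m_{x_0,\dots,x_k})$, equivalently $\|f\|_{U^{l+1}}=0$. Letting $\delta\to 0$ delivers the claimed convergence to zero for every smooth $F$, and uniform approximation in $F$ extends the result from basic to arbitrary $l$-step nilsequences. For the existence of the limit in general, I decompose each $f_i$ via the decomposition property of $D_i$ (item 4 of Definition~\ref{def:D}) applied at step $k+l+1-i$ (at step $k+l$ when $i=0$) as $f_i=f_{i,\HKZ,j}+f_{i,\perp,j}+f_{i,err,j}$, expand the product $\prod_i f_i$ accordingly, and treat the three kinds of resulting terms separately: any term containing a factor $f_{i,\perp,j}$ satisfies \eqref{eq:cf} and goes to zero by the previous step; the fully structured term is a product of values of nilfactor-continuous functions along polynomial orbits at a fully generic tuple, hence a basic generalized nilsequence, so its product with $a_n$ is again a generalized nilsequence whose averages converge by Corollary~\ref{cor:Leibman-pointwise}; and the error terms are controlled uniformly in $j$ by the Lindenstrauss maximal inequality applied on each $X_i$ and a $j\to\infty$ passage.

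The main obstacle is the full-genericity upgrade in the second paragraph: Theorem~\ref{thm:return-times-disintegration} directly supplies genericity only for tensor products $\otimes_i g_i$ with $g_i\in D_i$, whereas Theorem~\ref{thm:WW-unif} demands genericity for every function in the entire closed convolution-invariant $L^\infty$-algebra generated by $f$. Bridging this gap relies on the closure and density properties in Definition~\ref{def:D} (algebra, $T_i$-invariance, countability) ensuring that every element of this separable algebra is an $L^\infty$-uniform limit of finite $\Q$-combinations of translates of tensor products in $\otimes_i D_i$; the slightly delicate interplay between continuous-function and $L^\infty$-closures has to be executed carefully.
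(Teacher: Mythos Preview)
Your approach is correct and essentially identical to the paper's. The paper packages the convergence-to-zero step as Lemma~\ref{lem:HKZ-char-for-WWRTT}, whose proof is exactly your combination of Corollary~\ref{cor:orth} (for $\otimes_i f_i\perp\HKZ_l(\m_{\vec x})$), Theorem~\ref{thm:return-times-disintegration} (for genericity), and Theorem~\ref{thm:WW-unif}; for general convergence the paper then reduces to continuous functions on nilfactors via the decomposition in Definition~\ref{def:D} and invokes Corollary~\ref{cor:Leibman-pointwise}, just as you do.

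One remark on the ``main obstacle'' you identify: the paper simply asserts full genericity from Theorem~\ref{thm:return-times-disintegration} without spelling out the density argument, so you are right to flag it. However, the worry about the $L^\infty$-closure is less serious than you suggest. Inspecting the proof of Theorem~\ref{thm:WW-unif}, what is actually consumed at each inductive step is genericity for functions of the form $\prod_j T^{k_j}f\cdot\prod_m T^{k_m}\bar f$, and these are all tensor products $\otimes_i g_i$ with $g_i\in D_i$ (using that the $D_i$ are $T_i$-invariant algebras). So the genericity you need is furnished directly by Theorem~\ref{thm:return-times-disintegration}, and you can sidestep the delicate passage to the $L^\infty$-closure entirely. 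Your density argument also works, but note that $L^\infty(\m_{\vec x})$-approximation alone does not obviously control values along the single orbit of $\vec x$; the clean route is to observe that the (non-closed) $\Q$-algebra generated by shifts of $\otimes_i f_i$ already lies in the $\Q$-span of tensor products from $\otimes_i D_i$, which suffices for the inductive scheme.
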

The first step in the proof is the identification of characteristic factors in the spirit of \textcite[\textsection 4]{MR1357765}.
\begin{lemma}
\label{lem:HKZ-char-for-WWRTT}
Let $f_{i}\in L^{\infty}(X_{i})$, $i=0,\dots,k$, and assume \ref{eq:cf}$(k,l)$.
With the notation of Theorem~\ref{thm:uniform-convergence-to-zero}, for u.a.e.\ $x_{0},\dots,x_{k}$ we have
\[
\lim_{N\to\infty} \sup_{g\in\poly, F\in W^{\tilde k,2^{l}}(G/\Gamma)}
\|F\|_{W^{\tilde k,2^{l}}(G/\Gamma)}\inv
\Big| \aveFN F(g(n)\Gamma) \prod_{i=0}^{k}f_{i}(T_{i}^{n}x_{i}) \Big| = 0,
\]
where $\tilde k = \sum_{r=1}^{l}(d_{r}-d_{r+1})\binom{l}{r-1}$.
\end{lemma}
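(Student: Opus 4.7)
The plan is to recognize the quantity in the lemma as a uniform weighted ergodic average on the product system $X_{0}\times\cdots\times X_{k}$ (equipped not with the product measure but with the ergodic measure $\m_{\vec x}$ furnished by Theorem~\ref{thm:return-times-disintegration}) and then to invoke the uniform estimate of Theorem~\ref{thm:WW-unif} exactly once. In this way no new van der Corput induction is needed beyond what was already carried out in the proof of Theorem~\ref{thm:WW-unif}; the work of this section has already produced exactly the amount of structure required to feed into that estimate.

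First I would enlarge the distinguished sets $D_{i}$ (preserving the axioms of Definition~\ref{def:D}) so that each $f_{i}$ lies in $D_{i}$. Specializing Theorem~\ref{thm:return-times-disintegration} and Corollary~\ref{cor:m-ergodic} to $l=0$, for u.a.e.\ tuples $\vec x=(x_{0},\dots,x_{k})$ the point $\vec x$ is generic for every tensor $\bigotimes_{i}g_{i}$, $g_{i}\in D_{i}$, with respect to a $(T_{0}\times\cdots\times T_{k})$-invariant \emph{ergodic} Borel probability measure $\m_{\vec x}$ on $X_{0}\times\cdots\times X_{k}$. Applying Corollary~\ref{cor:orth} with $l=0$ and $l'=l$, whose hypothesis is precisely \eqref{eq:cf}$(k,l)$, yields u.a.e.\ that
\[
F:=f_{0}\otimes\cdots\otimes f_{k}\;\perp\;\HKZ_{l}(\m_{\vec x}),
\]
equivalently $\|F\|_{U^{l+1}(X_{0}\times\cdots\times X_{k},\m_{\vec x})}=0$.

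Next I would upgrade genericity at tensor products of $D_{i}$-elements to \emph{full} genericity for $F$ with respect to $\m_{\vec x}$. The closed convolution-invariant subalgebra of $L^{\infty}(X_{0}\times\cdots\times X_{k})$ generated by $F$ is contained in the closure of the algebraic tensor product of the closed convolution-invariant algebras generated by the individual $f_{i}$'s, and hence in the closure of the algebra generated by tensor products of elements of $D_{i}$, for which u.a.e.\ genericity already holds. Since the Lindenstrauss maximal inequality~\eqref{eq:maximal-inequality} applied to $(X_{0}\times\cdots\times X_{k},\m_{\vec x},T_{0}\times\cdots\times T_{k})$ shows that the set of functions for which $\vec x$ is generic is closed under $L^{1}$-approximation, $\vec x$ is fully generic for $F$ with respect to $\m_{\vec x}$.

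Finally, Theorem~\ref{thm:WW-unif} applied to the ergodic system $(X_{0}\times\cdots\times X_{k},\m_{\vec x})$, the function $F$, the filtration $\Gb$ of length $l$ and the fully generic point $\vec x$ yields, for every $\epsilon>0$, an $N_{0}$ such that for all $N\geq N_{0}$, all $g\in\poly$ and all smooth $F'$ on $G/\Gamma$,
\[
\Big|\aveFN F'(g(n)\Gamma)\prod_{i=0}^{k}f_{i}(T_{i}^{n}x_{i})\Big|
\lesssim \|F'\|_{W^{\tilde k,2^{l}}}\bigl(\|F\|_{U^{l+1}}+\epsilon\bigr)
=\|F'\|_{W^{\tilde k,2^{l}}}\,\epsilon.
\]
Dividing by $\|F'\|_{W^{\tilde k,2^{l}}}$, taking the supremum over $g$ and $F'$, and letting $\epsilon\downarrow 0$ gives the claim. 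The main obstacle is the bookkeeping around full genericity for $F$: one must arrange that the enlarged distinguished sets $D_{i}$ still satisfy Definition~\ref{def:D} and that the resulting tensor product systems exhaust, in closure, the convolution-invariant algebra of $F$ on the product. All the heavy lifting — control of multiple ergodic averages on cube measures, ergodicity and identification of $\m_{\vec x}$, orthogonality to $\HKZ_{l}$, and the uniform seminorm estimate — is already contained in Theorems \ref{thm:WW-unif}, \ref{thm:return-times-disintegration}, Corollaries \ref{cor:m-ergodic}, \ref{cor:orth}.
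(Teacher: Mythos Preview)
Your approach is essentially identical to the paper's: apply Corollary~\ref{cor:orth} (with $l=0$, $l'=l$) to get $\otimes_{i}f_{i}\perp\HKZ_{l}(\m_{\vec x})$, use Theorem~\ref{thm:return-times-disintegration} and Corollary~\ref{cor:m-ergodic} to see that $\vec x$ is fully generic for $\otimes_{i}f_{i}$ with respect to the ergodic measure $\m_{\vec x}$, and then feed this into Theorem~\ref{thm:WW-unif}. You have in fact spelled out more carefully than the paper why genericity for tensor products of $D_{i}$-elements upgrades to full genericity for $\otimes_{i}f_{i}$.

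One small correction: your appeal to the maximal inequality~\eqref{eq:maximal-inequality} is misplaced. That inequality is a statement about a.e.\ points for a \emph{fixed} function; it does not show that for a \emph{fixed} point $\vec x$ the set of functions for which $\vec x$ is generic is closed under $L^{1}$-approximation (indeed this is false in general). Fortunately you do not need it: the closed convolution-invariant algebra generated by $\otimes_{i}f_{i}$ is an $L^{\infty}$-closure, and over $\Z$ it is contained in the $L^{\infty}$-closure of the linear span of tensor products $\otimes_{i}g_{i}$ with $g_{i}\in D_{i}$. Since $\vec x$ is generic for each such tensor product, genericity passes to $L^{\infty}$-limits by a trivial uniform estimate. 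With this fix the argument is complete.
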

\begin{proof}
By Corollary~\ref{cor:orth} we have $\otimes_{i=0}^{k}f_{i} \perp \HKZ_{l}(\m_{\vec x})$ for u.a.e.\ $\vec x\in X_{0}\times\dots\times X_{k}$ and by Theorem~\ref{thm:return-times-disintegration} u.a.e.\ $\vec x$ is fully generic for $\otimes_{i} f_{i}$ w.r.t.\ $\m_{\vec x}$.
The claim follows by Theorem~\ref{thm:WW-unif}.
\end{proof}
Theorem~\ref{thm:WWRTT} now follows from equidistribution results on nilmanifolds.
\begin{proof}[Proof of Theorem~\ref{thm:WWRTT}]
Fix $k,l\in\N$.
By Lemma~\ref{lem:HKZ-char-for-WWRTT} it suffices to consider $f_{i}\in L^{\infty}(\HKZ_{l+k+1-i}(X_{i}))$.
By the pointwise ergodic theorem we can assume that each $f_{i}$ is a continuous function on a nilfactor of $X_{i}$.
The conclusion follows since any product of nilsequences is a nilsequence and every nilsequence converges in the uniform Ces\`aro sense by Corollary~\ref{cor:Leibman-pointwise}.
\end{proof}

\backmatter
\pagestyle{plain}
\cleardoublepage
\phantomsection
\addcontentsline{toc}{chapter}{Bibliography}
\printbibliography
\cleardoublepage
\phantomsection
\addcontentsline{toc}{chapter}{Index}
\printindex
\end{document}